\newtheorem{cont}{cont}[section]
\newtheorem{theorem}[cont]{Theorem}
\newtheorem{proposition}[cont]{Proposition}
\newtheorem{lemma}[cont]{Lemma}
\newtheorem{corollary}[cont]{Corollary}
\newtheorem{definition}[cont]{Definition}
\newtheorem{pblm}[cont]{Problem}
\newtheorem{question}[cont]{Question}
\newtheorem*{Enunciato*}{Enunciato}
\newtheorem{conj}[cont]{Conjecture}
\numberwithin{equation}{section}
\newtheorem{remark}[cont]{Remark}
\newtheorem{example}[cont]{Example}
\newtheorem*{not*}{Notation}
\newcommand{\cO}{{\mathcal O}}
\newcommand{\shF}{\mathcal{F}}
\newcommand{\shG}{\mathcal{G}}
\newcommand{\cH}{\mathcal{H}}
\newcommand{\cA}{\mathcal{A}}
\newcommand{\cB}{\mathcal{B}}
\newcommand{\cC}{\mathcal{C}}
\newcommand{\cD}{\mathcal{D}}
\newcommand{\shI}{\mathcal{I}}
\newcommand{\PP}{\mathbb{P}}
\newcommand{\HH}{\mathbb{H}}
\newcommand{\ZZ}{\mathbb{Z}}
\newcommand{\QQ}{\mathbb{Q}}
\newcommand{\odi}[1]{\mathcal{O}_{#1}}
\DeclareMathOperator{\GradAlg}{GradAlg}
\DeclareMathOperator{\Hl}{H} 
\DeclareMathOperator{\Hilb}{Hilb} 
 \DeclareMathOperator{\Hom}{Hom}
\DeclareMathOperator{\Der}{Der}
\DeclareMathOperator{\Spec}{Spec}
\DeclareMathOperator{\depth}{depth}
\DeclareMathOperator{\codepth}{codepth}
\DeclareMathOperator{\Proj}{Proj} 
\DeclareMathOperator{\codim}{codim}
\DeclareMathOperator{\coker}{coker}
\DeclareMathOperator{\Ext}{Ext} \DeclareMathOperator{\pd}{pd}
\DeclareMathOperator{\Tor}{Tor} \DeclareMathOperator{\ext}{ext}
 \DeclareMathOperator{\im}{im}
\begin{document}
\title{Deformation and unobstructedness of determinantal schemes}

\author[Jan O.\ Kleppe, Rosa M.\ Mir\'o-Roig]{Jan O.\ kleppe, Rosa M.\
Mir\'o-Roig$^{*}$}
\address{Oslo Metropolitan University,
  Faculty of Technology, Art and Design, PB 4 St. Olavs plass, N-0130
  Oslo, Norway}
\email{jank@oslomet.no}
\urladdr{https://www.cs.hioa.no/\textasciitilde jank/}
\address{Facultat de Matem\`atiques i Inform\`{a}tica,
Departament de Matem\`{a}tiques i Inform\`{a}tica, Gran Via de les Corts Catalanes
585, 08007 Barcelona, SPAIN }
\urladdr{https://webgrec.ub.edu/webpages/000006/ang/miro.ub.edu.html}
 \email{miro@ub.edu}

\date{\today}
\thanks{$^*$ Partially supported by MTM2016-78623-P and PID2019-104844GB-100
\\ {\it Key words and phrases.} Determinantal schemes, Hilbert scheme, unobstructedness, deformation theory.
\\ {\it 2010 Mathematic Subject Classification.} 14M12,14C05,14H10,14J10.}

\begin{abstract}
  A closed subscheme $X\subset \PP^n$ 
  is said to be {\em determinantal} if its homogeneous saturated ideal can be
  generated by the $s\times s$ minors of a homogeneous $p\times q$ matrix
  satisfying $(p-s+1)(q-s+1)=n - \dim X$ and it is said to be {\em standard
    determinantal} if, in addition, $s=\min(p,q)$. Given integers
  $a_1\le a_2\le \cdots \le a_{t+c-1}$ and $b_1\le b_2 \le \cdots \le b_t$ we
  consider $t\times (t+c-1)$ matrices $\cA=(f_{ij})$ with entries
  homogeneous forms of degree $a_j-b_i$ and we denote by
  $\overline{W(\underline{b};\underline{a};r)}$ the closure of the locus
  $W(\underline{b};\underline{a};r)\subset \Hilb ^{p(t)}(\PP^{n})$ of
  determinantal schemes defined by the vanishing of the
  $(t-r+1)\times (t- r+1)$ minors of such $\cA$ for $\max\{1,2-c\} \le r < t$.
  $W(\underline{b};\underline{a};r)$  is an irreducible algebraic set.

  First of all, we compute an upper $r$-independent bound for the dimension of
  $W(\underline{b};\underline{a};r)$ in terms of $a_j$ and $b_i$ which is
  sharp for $r=1$. In the linear case ($a_j = 1, b_i=0$) and cases
  sufficiently close, we conjecture and to a certain degree prove that this
  bound is achieved for all $r$. Then, we study to what extent the family
  $W(\underline{b};\underline{a};r)$ fills in a generically smooth open
  subset of the corresponding component of the Hilbert scheme
  $\Hilb ^{p(t)}(\PP^{n})$ of closed subschemes of $\PP^n$ with Hilbert
  polynomial $p(t)\in \QQ[t]$. Under some weak numerical assumptions on the
  integers $a_j$ and $b_i$ (or under some depth conditions) we conjecture and
  often prove that $\overline{W(\underline{b};\underline{a};r)}$ is a
  generically smooth component. Moreover, we also study the depth of the
  normal module of the homogeneous coordinate ring of
  $(X)\in W(\underline{b};\underline{a};r)$ and of a closely related module.
  We conjecture, and in some cases prove, that their codepth is often 1 (resp.
  $r$). These results extend previous results on {\em standard determinantal}
  schemes to {\em determinantal} schemes; i.e. previous results of the authors
  on $W(\underline{b};\underline{a};1)$ to $W(\underline{b};\underline{a};r)$
  with $1\le r < t$ and $c\ge 2-r$. Finally, deformations of exterior powers
  of the cokernel of the map determined by $\cA$ are studied and proven to be
  given as deformations of $X \subset \PP^n$ if $\dim X \ge 3$.

  The work contains many examples which illustrate the results obtained and a
  considerable number of open problems; some of them are collected as
  conjectures in the final section.
 \end{abstract}

\maketitle

\tableofcontents

\section{Introduction}
In this monograph, we generalize previous results on deformation of standard
determinantal schemes to cover deformations of determinantal schemes. Recall
that a codimension $c$ subscheme $X\subset \PP^n$ is called {\em
  determinantal} if its homogeneous ideal $I(X)\subset R:=k[x_0.,\cdots ,x_n]$
can be generated by the $s\times s$ minors of a homogeneous $p\times q$ matrix
$\cA$ and $c=(p-s+1)(q-s+1)$. $X$ is said to be {\em standard determinantal}
if, in addition,  $s=\min(p,q)$. We would like to parameterize all determinantal schemes $X$
by looking at $X$ as a point $(X)$ of a component of the Hilbert scheme
$\Hilb ^{p_X(t)}(\PP^n)$, to study to what extent the family
$W(\underline{b};\underline{a};r)$ of determinantal schemes fills in this
component and whether $\Hilb ^{p_X(t)}(\PP^n)$ is generically smooth along
$W(\underline{b};\underline{a};r)$. The locus of $\Hilb ^{p_X(t)}(\PP^n)$ along
standard determinantal schemes is quite well understood and in this work we
study carefully 
the structure and dimension of $\Hilb ^{p_X(t)}(\PP^n)$ along determinantal
schemes.

A large and important class of classical varieties are cut by minors of a
homogeneous matrix: Veronese varieties, Segre varieties, rational normal
curves, Bordiga surface, Palatini scrolls, certain varieties of quasi-minimal
degree, and even more any variety is isomorphic to a determinantal variety
given by a matrix with linear entries (see \cite[Pg. 112]{Ha}). Determinantal
schemes have been a central topic in both algebraic geometry and commutative
algebra and their study has received considerable attention in the last
decades. For instance, in \cite{EKS}, Eisenbud, Koh and Stillman proved that
the homogeneous ideal of any curve $C\subset \PP^r$ of degree $d$ and genus
$g$ with $d\ge 4g+2$ is generated by the $2\times 2$ minors of a matrix with
linear entries. A quite recent result which shows the importance of
determinantal schemes is due to Sidman and Smith. In \cite[Theorem 1.1]{SS},
they prove that a sufficiently ample line bundle on a connected scheme $X$ is
determinantally presented. Here a property for a sufficiently
ample line bundle on $X$ holds if there is a line bundle $E$ such that the
property holds for all $L\in Pic(X)$ for which $L\otimes E^{-1}$ is ample and,
moreover, given a scheme $X$ embedded in $\PP ^n$ by a complete linear system
$L$ we say that $L$ is {\em determinantally presented} if $I(X)$ can be
generated by the $2\times 2$ minors of a 1-generic matrix.

As we have just said the determinantal ideals $I_s(\cA )$ generated by the
$s\times s$ minors of a $p\times q$ homogeneous matrix $\cA $ have been
extensively studied by many people. One of the first important results is due
to Eagon and Hochster who proved that $I_s(\cA)$ is a Cohen-Macaulay ideal
\cite{e-h}; hence $R/I_s(\cA )$ has a minimal free $R$-resolution of length
equal to $n+1-\dim R/I_s(\cA )$. To find explicitly such a minimal free
resolution is a problem with a long history behind it. For $s=1$,
$s=\min(p,q)$ or $s=\min(p,q)-1$ such minimal free $R$-resolution is given by
the Koszul complex, the Eagon-Northcott complex \cite{e-n} and the
Akin-Buchsbaum-Weyman complex \cite{ABW0}, respectively; while the first in
giving a minimal free $R$-resolution of $I_s(\cA)$ for any
$1\le s\le \min(p,q)$ was Lascoux in \cite{L}. All these results are crucial in
our work.

Given integers $a_1,a_2,...,a_{t+c-1}$ and $b_1,...,b_t$ we denote
by $W(\underline{b};\underline{a};r)\subset \Hilb ^{p_X(t)}(\PP^{n})$
 the locus of  determinantal schemes $X\subset \PP^{n}$  defined by the $(t-r+1)\times (t-r+1)$ minors of a $t\times
(t+c-1)$ matrix $\cA=(f_{ij})^{i=1,...,t}_{j=1,...,t+c-1}$ where
$f_{ij}\in k[x_0,x_1,...,x_{n}]$ is a homogeneous polynomial of
degree $a_j-b_i$. $W(\underline{b};\underline{a};r)$
 is an irreducible algebraic set by Lemma \ref{lemaaux2}.

In this paper, we address the following four fundamental problems:

\begin{pblm} \label{MainPblms}

\begin{itemize}
\item[(1)] To determine  the dimension of
$W(\underline{b};\underline{a};r)$  in terms of $a_j$ and $b_i$ for all $r$.
\item[(2)] To determine whether the closure of
  $W(\underline{b};\underline{a};r)$ is an irreducible component of
  $\Hilb ^{p_X(t)}(\PP^{n})$.
\item[(3)] To determine  when $\Hilb
^{p_X(t)}(\PP^{n})$ is generically smooth along
$W(\underline{b};\underline{a};r)$.
\item[(4)] To determine whether any deformation of $X$ with
  $(X) \in W(\underline{b};\underline{a};r)$ comes from deforming its
  associated homogeneous matrix $\cA$.
\end{itemize}
\end{pblm}

Due to  Lemma~\ref{unobst}, if Problem \ref{MainPblms}{\rm (4)}
  holds,  Problems \ref{MainPblms}{\rm (2)} and {\rm (3)} also hold.

  The first important contribution to these problems was made in 1975 by
  Ellingsrud \cite{E}. He proved that any arithmetically Cohen–Macaulay,
  closed subscheme $X\subset \PP^n$, $n\ge 3$, of codimension 2 is
  unobstructed and computed the dimension of the Hilbert scheme at $(X)$ (see
  \cite{F} for the case $n=2$). The purpose of this work is to extend
  Ellingsrud's Theorem, viewed as a statement on standard determinantal
  schemes of codimension 2, to arbitrary determinantal schemes and to show that, in general, the component of the Hilbert scheme which parameterizes determinantal schemes behaves well, contrary to what may happen more generally as predicted by Vakil's Murphy's law for singularities of the Hilbert scheme (see \cite{Va}). The case of
  codimension $c$ standard determinantal schemes (i.e. the case $r=1$) was
  mainly solved in \cite{KMMNP} for $c=3$, in \cite{KM2005} for $c=4$ and
  partially for $c = 5$, and some cases when $c \ge 6$ and in \cite{K2014} for
  arbitrary $c$ (See also \cite{FF}, \cite{K2011}, \cite{K2018}, and
  \cite{KM2011} for more details). As our results and conjectures in this
  paper show, we think it is possible to solve the above four problems in
  full generality also for $r>1$, provided $\dim X > 2$. In \cite{KM2009} we
  focused our attention on the first unsolved case and we dealt with
  codimension-4 determinantal schemes $X\subset \PP^n$ defined by the
  submaximal minors of a homogeneous square matrix. In this monograph, we will
  address the general case following the ideas developed by the authors in
  \cite{KM2009}, see also \cite{KMMNP, KM2005, KM2011}. Indeed, we will prove
  our results by considering the smoothness of the Hilbert flag scheme of
  chains of closed subschemes obtained by deleting suitable columns and its
  natural projections into the usual Hilbert schemes. Then we recursively
  prove that the closure of $W(\underline{b};\underline{a};r)$ is a
  generically smooth component of $\Hilb ^{p_X(t)}(\PP^{n})$ under some mild
  numerical assumptions by starting the induction from the case where
  $W(\underline{b};\underline{a};1)$ parameterizes standard determinantal
  schemes (i.e. schemes defined by maximal minors). Our proof is based on the
  following key points: (i) A deep analysis of whether any deformation of a
  determinantal scheme $X$ of $W(\underline{b};\underline{a};r)$ comes from
  deforming its associated matrix, and (ii) the fact that any determinantal
  scheme $X=\Proj(A)$ 
  is defined by a regular section of a "nice" sheaf $\tilde
  {N}$ on a determinantal scheme
  $Y=\Proj(B)$ of lower codimension. More precisely 
let $\varphi:\oplus _{i=1}^tR(b_i)\longrightarrow \oplus _{j=1}^{t+c-1}R(a_j)$
be the morphism of free graded $R$-modules induced by the transpose of $\cA$
and
$\varphi _{t+c-2}:\oplus _{i=1}^tR(b_i)\longrightarrow \oplus
_{j=1}^{t+c-2}R(a_j)$
the morphism obtained deleting the last column of $\cA$. Set
$A=R/I_{t+1-r}(\varphi ^*)$, $B=R/I_{t+1-r}(\varphi ^*_{t+c-2})$,
$MI=\coker (\varphi^*)$ and $N=\coker (\varphi _{t+c-2}^*)$. Then there is a
regular section $\sigma ^*:B(-a_{t+c-1})\longrightarrow N\otimes B$ fitting
into an exact sequence
\begin{equation*}
0\longrightarrow B(-a_{t+c-1})\stackrel{\sigma ^* }{\longrightarrow} N\otimes
B \longrightarrow MI\otimes B\longrightarrow 0\, ,
\end{equation*}
such that $A=B/\im
(\sigma )$. Here $\sigma ^* $ is locally given by all minors of size
$t-r+1$
involving the deleted row. Such an exact sequence with properties as described
in Theorem~\ref{mainthm} has turned out to be very useful in the case of
maximal minors, not only in several of our papers where we often consider
deformation problems, but also for other purely algebraic classification
problems (\cite{KMNP}). In \cite{KM2009} this was generalized to submaximal
minors ($r=2$)
when $c=1$,
but now in full generality for minors of any size, only requiring $\dim
Y \ge
r$ and the codimension of the singular locus being at least 2. Our result
extends to cover even the artinian case of $A$,
as well as the case where the assumption $k$
a field, is replaced by $k$
a local artinian ring over which (the lifting of) $B$
is flat and allows a regular section of the corresponding $N\otimes
B$ (see Theorem~\ref{teo3} and the end of its proof for details). This is
sufficient for showing the just mentioned main result in (i) above, namely
that under some assumptions, any deformation of a determinantal scheme
$X$
comes from deforming its associated matrix (Theorem~\ref{teo3}). The key
results (i) and (ii) together with important results of Bruns in \cite{B} on
the maximal Cohen-Macaulayness of $N\otimes
B$ and its $B$-dual
$M$
which we prove lead to the vanishing of several $\Ext^i_B$-groups
involving $N\otimes
B$, $M$, $B$ and $I_{A/B} := \ker(B \to
A)$ (Propositions~\ref{vanish} and \ref{vanish2}), provide the basis upon
which we are able to partially solve Problem \ref{MainPblms}. For instance the
vanishing of these $\Ext^i_B$-groups
rather immediately solves Problems \ref{MainPblms}(1)-(3), without using (4),
for the case of submaximal minors satisfying $
_0\!\Ext^1_B(I_B/I^2_B, I_{A/B})=0$ (Proposition~\ref {propoA}).

\vskip 4mm

Next we outline the structure of the paper. Section 2 of the paper provides
background and preliminary results needed later on. Section 3 is inserted both
for sake of completeness and for including slightly new generalizations. It
contains a summary of the main results on deformation and unobstructedness of
{\em standard determinantal} schemes. In the next sections we will see that in
many cases the same behaviour can be established for {\em all} determinantal
schemes. The main result of section 4 states that any determinantal scheme
$X=\Proj(A)$ can be defined as the degeneracy locus of a regular section of a
sheaf $\tilde{N}$ on a determinantal scheme $Y=\Proj(B)$ of lower codimension
(Theorem \ref{mainthm}). In this section, we also show that if
$\depth_{J_A}A \ge 4$, $J_A:=I_{t-r}(\varphi ^*)$ then
$\Hom_B(I_{A/B},A) \cong MI\otimes A(a_{t+c-1})$ and
$ \Ext^1_A(I_{A/B}/I_{A/B}^2,A)=0$ (Proposition~\ref{vanish}). Since we
recursively want to transfer properties (dimension and smoothness) of
$ \Hilb ^{p_Y(t)}(\PP^{n})$ at $(Y)$ to $ \Hilb ^{p_X(t)}(\PP^{n})$ at $(X)$
the above result is important because it implies that the $2^{nd}$ projection,
$$p_2: \Hilb ^{p_X(t),p_Y(t)}(\PP^{n})\longrightarrow \Hilb ^{p_Y(t)}(\PP^{n})
\ { \ \rm given \ by \ } \ (X' \subset Y') \mapsto (Y') \, , $$
defined over the Hilbert-flag scheme, is smooth at $(X \subset Y)$ and with
tangential fiber dimension $ \dim (MI\otimes A)_{a_{t+c-1}}$.

We start section 5 characterizing when any deformation of a determinantal ring
$A$ comes from deforming its associated homogeneous matrix $\cA $ in terms of
the surjectivity of its tangent map (Lemma \ref{lemma2}). As an application we
get that it holds for so-called {\em generic determinantal} rings
$A_{(s)}:=R/I_s(\cA)$, $s=t+1-r$ where we have $R=k[x_{ij}]$, $1\le i \le t$,
$1\le j \le t+c-1$, and $\cA=(x_{ij})$ the $t\times (t+c-1)$ matrix of
indeterminates of $R$. Indeed every deformation of $A_{(s)}$ comes from
deforming $\cA=(x_{ij})$ provided $(s,c)\ne (t,1)$ (Proposition
\ref{deforminggenericcase}). Then we prove the main result of this section,
namely, Theorem~\ref{teo3} which
more precisely states that if the following property: ``any deformation of a
determinantal ring comes from deforming its associated matrix'', holds for
$B$, it also holds for $A$ provided $ _0\!\Ext^1_B(I_B/I^2_B, I_{A/B})=0$.
Thus it will be important to show the vanishing of
$ _0\!\Ext^1_B(I_B/I^2_B, I_{A/B})$, also because
one knows that its vanishing implies that the $1^{st}$
projection
$$p_1: \Hilb ^{p_X(t),p_Y(t)}(\PP^{n})\longrightarrow \Hilb ^{p_X(t)}(\PP^{n})
\ {\rm \ \ given \ by \ } \ \ (X' \subset Y') \mapsto (X') \, $$
defined over the Hilbert-flag scheme, is smooth at $(X \subset Y)$ and with
tangential fiber dimension \
$ \dim\,_0\!\Hom_B(I_B/I^2_B, I_{A/B})$.

In section 6, we address Problem \ref{MainPblms}(1). We fix integers
$\underline{b}=(b_1,...,b_t)$,
$\underline{a}=(a_1,a_2,...,a_{t+c-1})$,
$\underline{a'}=(a_1,\cdots ,a_{t+c-2})$ and
$1\le r < t$ and we give an upper $r$-independent bound for
$\dim W(\underline{b};\underline{a};r)$ in terms of $a_{i}$ and $b_{j}$
(Theorem \ref{ineqdimW}). The bound is achieved for $r=1$ provided
$a_{i-1}>b_{i}$ for $2\le i\le t$. We carefully analyze under which numerical
hypothesis the bound is also achieved for $r>1$. To do so,
we need to compute $ \dim (MI\otimes A)_{a_{t+c-1}}$ and
$ \dim\,_0\!\Hom_B(I_B/I^2_B, I_{A/B})$. We compute the former under the
assumption
$$(*): a_{t+c-1} -b_1 < \sum _{i=1}^{t-r+1}(a_i-b_{r+i-1})\ ,$$
which holds in the linear case and in cases sufficiently close. For the latter
we succeed in showing the expected formula
$\dim\, _0\!\Hom_B(I_B/I^2_B, I_{A/B})=\sum _{j=1}^{t+c-2}
{a_j-a_{t+c-1}+n\choose n}$
under some assumptions (e.g. $b_t = b_1 < a_1$ and $a_{t-r+1} < a_{t+c-1}$)
leading to the main result of this section (Theorem~\ref{dimW}) which states
that if also every deformation of $\Proj(B)$ of
$W(\underline{b};\underline{a'};r)$ comes from deforming its matrix,
$\dim B > r+2$, $ \dim W(\underline{b};\underline{a'};r)=\lambda _{c-1}$ and
$(*)$ holds, then $\dim W(\underline{b};\underline{a};r) = \lambda_c\, $ where
$$\lambda_c=\sum_{i,j}
    \binom{a_i-b_j+n}{n} + \sum_{i,j} \binom{b_j-a_i+n}{n} - \sum _{i,j}
    \binom{a_i-a_j+n}{n}- \sum _{i,j} \binom{b_i-b_j+n}{n} + 1\ .
    $$
    The expected formula for $\dim\, _0\!\Hom_B(I_B/I^2_B, I_{A/B})$, which
    leads to $\dim W(\underline{b};\underline{a};r) = \lambda_c$, remains
    conjectural if $\dim B > r+2$, while for $\dim B = r+2$ and $r=2$, i.e.
    $\dim A= 2$ there are counterexamples for which
    $\dim W(\underline{b};\underline{a};r) = \lambda_c-1$, see
    Remark~\ref{remconjdimW} and Example~\ref{qmdeg}. Moreover we include a
    lot of examples to illustrate our results and based on examples and
    results we conjecture that
    $\dim W(\underline{b};\underline{a};r) = \lambda_c $ for $\dim A > 2$
    (resp. $\dim A > 3$ if $c=1$) if
    $(*)$ and $a_1 > b_t$ hold.

    Section 7 is entirely devoted to solve Problems \ref{MainPblms}(2) and (3)
    by mainly using (4) to see when the closure of
    $W(\underline{b};\underline{a};r)$ fills in a generically smooth component
    of $\Hilb ^{p_X(t)}(\PP^n)$. The main result of this section (Theorem
    \ref{Wsmooth}) weakens the assumption in (4); every deformation of $A$
    comes from deforming its associated matrix $\cA $, to the corresponding
    assumption on $B$, by in addition assuming the natural map
    $\gamma: \ _0\!\Hom_R(I_A ,A)\to\ _0\!\Ext^1_B(I_B/I_B^2,I_{A/B})$ to be
    zero. Since we have not be able to show
    $\ _0\!\Ext^1_B(I_B/I_B^2,I_{A/B})=0$ under weak enough assumptions, we
    sometimes use Macaulay2 in which case one may check that $\gamma = 0$ by
    showing the equality of dimensions of ``normal modules'' in formula
    \eqref{thm61cond} (if directly showing
    $\ _0\!\Ext^1_B(I_B/I_B^2,I_{A/B})=0$ is too time-consuming or impossible
    or even false). Although these conditions are somewhat technical, it can
    be shown to be satisfied in a wide number of cases which we make explicit
    in a series of Remarks (cf. Remarks \ref{remthm61} and
    \ref{remconjWsmooth}), Corollaries and Examples (cf. Examples
    \ref{Wsmooth2}, \ref{examples712} and \ref{exbcn}). In particular, Problems
    \ref{MainPblms}(2) and (3) hold for generic determinantal rings (see
    Proposition \ref{deforminggenericcase} and Corollary \ref{corgendetW}).
    Moreover by considering the degrees of the relations in the
    Lascoux-resolution of $B$ and using the inclusion
    $\ _0\!\Ext^1_B(I_B/I_B^2,I_{A/B}) \subset \ _0\!\Ext^1_R(I_B,I_{A/B})$,
    we always get $\ _0\!\Ext^i_B(I_B/I_B^2,I_{A/B})=0$ for $i=0, 1$ if the
    increase in the sequence of numbers $a_{t-r+1} < a_{t-r+2} < a_{t-r+3}...$
    is large enough (cf. Corollary \ref{corWsmooth3}). Based on our results
    and examples we conjecture that Problems \ref{MainPblms}(2) and (3) hold
    provided $\dim A \ge 4$ (resp. $\dim A \ge 3$) for $c=1$ (resp. $c \ne 1$)
    and $a_1 > b_t$. Finally, we have also considered examples and results for
    the vanishing of $\ \Ext^i_B(I_B/I_B^2,I_{A/B})$ and
    $\ \Ext^i_B(I_B/I_B^2,B)$ for several $i \ge 1$ and they seem to coincide
    with expecting $\Hom_B(I_B/I_B^2,I_{A/B})$ and $\Hom_B(I_B/I_B^2,B)$ to
    have small codepth (Conjecture \ref{conjdepthNb}). Conjecture
    \ref{conjdepthNb} is true for generic determinantal schemes for $c \ge 0$
    by Proposition~\ref{genericdetring}. Moreover, Conjecture
    \ref{conjdepthNb} for $\Hom_B(I_B/I_B^2,I_{A/B})$ implies exactly that
    $\ _0\!\Ext^1_B(I_B/I_B^2,I_{A/B}) = 0$ for $\dim A \ge 4$ (resp.
    $\dim A \ge 3$) for $c=1$ (resp. $c \ne 1$) by Proposition~\ref{conj3ext}
    provided $c \ge 4-r$, whence that Problems \ref{MainPblms}(2) and (3)
    hold. In the remaining case $c=3-r$, we have
    $\ _0\!\Ext^1_B(I_B/I_B^2,I_{A/B}) \ne 0$ even when $X$ is a generic
    determinantal scheme (see Example~\ref{rem74ex}). Fortunately the
    condition $\gamma=0$ of Theorem \ref{Wsmooth} seems to hold in the generic
    case.

    In section 8, we weaken the assumptions in Theorems~\ref{dimW} and
    \ref{Wsmooth} by explicitly considering the recursively transfer of the
    property "any
    deformation 
    comes from deforming its associated homogeneous matrix" for the rings in a
    flag and thereby skipping this assumption on $B$ in Theorems~\ref{dimW}
    and \ref{Wsmooth} even though $\ _0\!\Ext^1_B(I_B/I_B^2,I_{A/B}) \ne 0$
    for $c=3-r$ provide problems. Also weakening $(*)$ in Theorem~\ref{dimW}
    and $\gamma = 0$ in Theorem~\ref{Wsmooth} are considered. Indeed, by
    successively deleting columns from the right hand side we construct a flag
    of determinantal rings
    $$A_{2-r}\twoheadrightarrow \cdots \twoheadrightarrow A_0\twoheadrightarrow
    A_1\twoheadrightarrow A_2\twoheadrightarrow \cdots \twoheadrightarrow
    A_{c-1}\twoheadrightarrow A_c\,,$$
    and letting $B:=A_{c-1}\twoheadrightarrow A:=A_c$, we show in
    Corollary~\ref{fiberpr2} how $ _0\!\Ext^i_B(I_B/I^2_B, I_{A/B})$ is
    related to $ _0\!\Ext^i_{A_{j}}(I_{A_{j}}/I^2_{A_{j}}, I_{A/B})$ for
    $i=0,1$ and $j \in \{2-r,3-r\}$. This has implications to the smoothness
    and the fiber dimension (tangential) of
    $p_1: \Hilb ^{p_X(t),p_Y(t)}(\PP^{n})\longrightarrow \Hilb
    ^{p_X(t)}(\PP^{n})$.
    We also consider the tangential fiber dimension of the projection
    $p_2: \Hilb ^{p_X(t),p_Y(t)}(\PP^{n})\longrightarrow \Hilb
    ^{p_Y(t)}(\PP^{n})$.
    This leads to Theorem~\ref{corWsmoothc} for the case $c=3-r$ and to
    Theorems~\ref{corWsmooth}, \ref{corWsmoothcnew} and \ref{corWsmoothnew2}
    when $c \ge 4-r$.
    We should have liked to recursively compute the dimension of
    $W(\underline{b};\underline{a};r)$ in terms of $a_{i}$ and $b_{j}$,
    starting the induction from the standard determinantal case ($j=2-r$), but
    we somehow have to start it from $j=3-r$ leaving the transfer from $j=3-r$
    to $j=2-r$ as an assumption on $\gamma$ in Theorem \ref{corWsmoothnew2}.
    Similarly in Theorem~\ref{corWsmooth} where the vanishing of two
    $\gamma's$ are assumptions. In Theorem~\ref{corWsmoothcnew} we manage to
    replace this vanishing of $\gamma's$ by a generalization of
    Theorems~\ref{dimW} and \ref{Wsmooth} where we weaken $\gamma = 0$ in the
    latter and transfer the assumptions on
    $\dim\,_0\!\Hom_B(I_B/I^2_B, I_{A/B})$ to
    $ \dim\,_0\!\Hom_{A_{3-r}}(I_{A_{3-r}}/I^2_{A_{3-r}}, I_{A/B})$ in the
    former, leading to formulas (\ref{a3r}) or (\ref{a3rr}) which imply a
    solution to Problem \ref{MainPblms}, see Corollary~\ref{corWsmoothnew3}.
    In Corollary~\ref{corWsmooth4}, we see how our results influence on the
    increase in the sequence of numbers
    $a_{t-r+1} < a_{t-r+2} < a_{t-r+3},\cdots $. After $ a_{t-r+3}$, no
    increase in $a_{t-r+3} \le a_{t-r+4} \le a_{t-r+5}$ is required, see
    Example~\ref{Wsmooth2f}.

    In section 9, we show that under some depth assumptions there is an
    isomorphism between the deformation functor of the exterior power
    $\wedge ^r MI$ of $MI$ and the deformation functor of the surjection
    $R\twoheadrightarrow R/I_{t-r+1}(\varphi ):=A_r$ (see Theorem
    \ref{defpower}). Moreover assuming 
    (*) above, e.g. $\cA$ linear, we are in Theorem~\ref{defWed} and
    Remark~\ref{rema97} able to highlight important consequences. Indeed using
    proven results, or say we assume that Conjectures~
    \ref{conjdimW} and \ref{conjWsmooth} hold for all $X_i:=\Proj(A_i)$ and that $\dim A_r \ge 4$
    and $c \ge 2$, then the local Hilbert functor $\Hilb_{X_i}(-)$ are for
    {\it all} $i$, $1 \le i \le r$ isomorphic to the local Hilbert functor
    $\Hilb_{X_1}(-)$ of deforming the scheme $X_1 \subset \PP^{n}$ defined by
    maximal minors. In particular, for all $i$,
    $ \overline{ W(\underline{b};\underline{a};i)} \subset \Hilb ^{p_{X_i}(t)}
    (\PP^{n})$
    is a generically smooth irreducible component of the same
    dimension $\lambda _c$ even though their Hilbert polynomials are very
    different.

    Finally, in the last section, we collect some of the questions which
    naturally come up from our work, apart from the ones stemming from the
    conjectures posed along this monograph. In particular it concerns the
    problems which are not completely solved:
\begin{itemize}
\item[(1)]
 \ Show:
$\dim\, _0\!\Hom_B(I_B/I^2_B, I_{A/B})=\sum _{j=1}^{t+c-2}
{a_j-a_{t+c-1}+n\choose n}$ if  $\dim A \ge 3$\,.

\item[(2)] \ Show: \ $_0\!\Ext^1_B(I_B/I^2_B, I_{A/B})= 0$
  if $ \dim A \ge 3$ (resp. $\dim A \ge 4$)  for $c \ne 1$ (resp. $c=1$)\,.

\item[(3)] \ Determine $ \dim (MI\otimes A)_{a_{t+c-1}}$ in the case:
  $a_{t+c-1} -b_1 \ge \sum _{i=1}^{t-r+1}(a_i-b_{r+i-1})$\,.
\end{itemize}

The work contains many examples. In some of them  we use  Macaulay2 \cite{Mac2} and others follow from our results, but most of them can be computed with Macaulay2. In the Appendix we include the code that we have used
 to compute the examples not covered by our results, to control our results or to give new evidences to the conjectures stated along the monograph; and the reader can check them.

\vskip 4mm
\noindent \underline{Acknowledgment} Part of this work was done while the
second author was a guest of the Oslo and Akershus University College of
Applied Sciences and she thanks the Oslo and Akershus University College for
its hospitality. The first author thanks the Universitat de Barcelona for its
kind hospitality during several visits to Barcelona.

\vskip 4mm
\noindent \underline{Notation.}
  Throughout this paper $k$ will be an algebraically closed field of characteristic zero, $R=k[x_0, x_1, \cdots ,x_n]$,  $\mathfrak{m}=(x_0, \ldots,x_n)$ and $\PP^n=\Proj(R)$.
For any graded Cohen-Macaulay quotient $A$ of $R$ of codimension $c$,
we let $I_A=\ker(R\twoheadrightarrow A)$ and  $K_A=\Ext^c_R (A,R)(-n-1)$ be its canonical module.  If $M$ is a
finitely generated graded $A$-module, let $\depth_{J}{M}$ denote
the length of a maximal $M$-sequence in a homogeneous ideal $J$
and let $\depth {M} = \depth_{\mathfrak m}{ M}$.
If $\shF$ and $\shG$ are two coherent $\cO
_X$-modules, we denote the group of morphisms from $\shF$ to $\shG$
by $\Hom_{\cO_X}(\shF,\shG)$ while $\cH om_{\cO_X}(\shF,\shG)$ denotes
the sheaf of local morphisms of $\shF$ into $\shG$. We often omit
$\cO_X$ in $\Hom_{\cO_X}(\shF,\shG)$ (resp. $\cH
om_{\cO_X}(\shF,\shG)$) if the underlying scheme $X$ is evident and we set $\hom(\shF,\shG)=\dim_k\Hom(\shF,\shG)$
 where $\dim_k$ denotes the dimension as
$k$-vector space. For any closed subschemes $X\subset Y\subset \PP^n$ we denote by ${\mathcal N}_Y$ the normal sheaf of $Y$ and we write ${\mathcal N}_{Y|X}:={\mathcal H}om_{{\mathcal O}_{\PP^n}} ({\mathcal I}_Y,{\mathcal O}_X)$.

Finally, we denote by $\Hilb ^{p_X(t)}(\PP^{n})$ the Hilbert scheme which
parameterizes closed subschemes $(X')$ of $\PP^{n}$ with Hilbert polynomial
$p_{X'}=p_X$ and we let $\Hilb ^{p_X(t),p_Y(t)}(\PP^{n})$ be the Hilbert flag
scheme parameterizing pairs of closed subschemes $(X' \subset Y')$ of
$\PP^{n}$ with Hilbert polynomials $p_{X'}=p_X$ and $p_{Y'}=p_Y$,
respectively. By definition a closed subscheme $X\subset \PP^n$ is {\em
  unobstructed} if $\Hilb ^{p_X(t)}(\PP^{n})$ is smooth at $(X)$. When we
define $A$ by writing $X=\Proj(A)$ for a given closed subscheme $X$ of $\PP^n$, we always take
$A=R/H^0_*(\shI _X)$, so $I_A=H^0_*(\shI _X)$ and we also write
$I(X)=H^0_*(\shI _X)$.


\section{Preliminaries}
\label{preli}

In this section we fix the definitions, notation and some basic results
that we are going to use in the sequel.

\begin{definition}
  \rm If $\cA$ is a homogeneous $p\times q$ matrix, we denote by $I(\cA)$ the
  ideal of $R$ generated by the maximal minors of $\cA$ and by $I_s(\cA)$ the
  ideal of $R$ generated by the $s \times s$ minors of $\cA$. Assume $p\le q$.
  $I(\cA)$ is called a \emph{standard determinantal} ideal if
  $\depth I(\cA)=q-p+1$, and $I_s(\cA)$ and $R/I_s(\cA)$ are said to be
  \emph{determinantal} if $\depth I_s(\cA)=(p-s+1)(q-s+1)$.
 \end{definition}

Let
$\varphi
: F=\oplus _{i=1}^tR(b_i)\longrightarrow G=\oplus _{j=1}^{t+c-1} R(a_j)$  be a morphism of free graded $R$-modules of
rank $t$ and $t+c-1$, respectively,
 $\varphi _{t+c-2}:F\longrightarrow \oplus _{j=1}^{t+c-2}R(a_j)=:G_{t+c-2}$ the morphism obtained
 deleting the last row,
 $\cA$  the homogeneous matrix of $\varphi ^*$ and ${\mathcal B}$ the homogeneous matrix of $\varphi ^*_{t+c-2}$ (i.e. $\cB$ is obtained deleting the last column of $\cA$).  For any integer $r$, $1\le r\le t$, let $B=R/I_{t+1-r}(\varphi _{t+c-2}^*)$, $A=R/I_{t+1-r}(\varphi ^*)$, $N=\coker (\varphi _{t+c-2}^*)$ and $MI=\coker (\varphi ^*).$ A codimension $c$ subscheme $X\subset \PP^n$ is said to be {\em standard determinantal} if its homogeneous saturated ideal $I(X)=I_t(\cA)$ for some $t\times (t+c-1)$ homogeneous matrix $\cA$ and it is said to be {\em determinantal} if $I(X)=I_s(\cA)$ for some $p\times q$ homogeneous matrix $\cA$ and $c=(p-s+1)\times (q-s+1)$.

\begin{example} \rm (i)  Complete intersections schemes $X\subset \PP^n$ of codimension $c\ge 1$ are examples of standard determinantal schemes.

\vskip 2mm
(ii)  Let $X\subset \PP^n$ be a
rational normal curve  defined as the
image of the map
$$v_n:\PP^1 \longrightarrow \PP^n$$
$$[a:b]\mapsto [a^n:a^{n-1}b:\cdots : ab^{n-1}:b^n].$$  The homogeneous ideal $I(X)$ of $X$ is
generated by the maximal minors of the homogeneous matrix $$ \left (
  \begin{array}{ccccccc} x_0 & x_1 & \cdots & x_{n-1} \\ x_1 & x_2 & \cdots &
    x_{n}
\end{array} \right).$$ Therefore, $X$ is a standard determinantal
subscheme of $\PP^n$.

\vskip 2mm
(iii)  Let $S\subset \PP^8$ be the 4-dimensional Segre variety  defined as the
image of the map
$$ \hskip -4.5cm s_{2,2}:\PP^2 \times \PP^2 \longrightarrow \PP^8$$
$$([a:b:c],[x:y:z])\mapsto [ax:bx:cx:ay:by:cy:az:bz:cz].$$  Fix coordinates $x_0,x_1,\cdots ,x_8$ in $\PP^8$. The homogeneous ideal $I(S)$ of $S$ is
generated by the $2\times 2$ minors of the homogeneous matrix $$ \left (
\begin{array}{ccccccc} x_0 & x_1 & x_{2} \\ x_3 & x_4  & x_{5} \\ x_6 & x_7 & x_8
\end{array} \right).$$ Therefore, $S$ is a  determinantal
subscheme of $\PP^8$.
\end{example}

Along this work, we assume $t\ge 2$ and $r<t$. The cases $t=1$ or $t=r$ for
determinantal ideals corresponds to the well known case of complete
intersections. When $c=1$ we also assume $r\ge 2$ (unless explicitly considering
$(c,r)=(1,1)$) since the case $(c,r)=(1,1)$ corresponds to hypersurfaces (see,
however, section \ref{conjec} for some interesting problems about
determinantal hypersurfaces).

\begin{definition}\label{defor} \rm We say that
  every deformation of $X=\Proj(A)$ (or $A$) comes from deforming
  $\cA=(f_{ij})$ if for every local artinian ring $T$ with residue field
  $k=T/{\mathfrak m}_T$ and every graded deformation $A_T$ of $A$ to $T$ there
  exists a homogeneous matrix $\cA _T$ with entries
  $f_{ij}^T\in R\otimes _kT:=R_T $ that map to $f_{ij}$ via
  $T\twoheadrightarrow k$. Such matrix $\cA _T$ is called a lifting of $\cA$
  to $T$. (By
  Remark~\ref{remMthm61}, the definition in \cite{K2011, K2014} which requires $T$ above to
  be a local ring, essentially of finite type over $k$, with
  $T/{\mathfrak m}_T=k$, is equivalent to Definition~\ref{defor}.)
\end{definition}

For $c=2$ and $r=1$, Ellingsrud proved in \cite{E} that any deformation of
$R/I_t(\varphi ^*)$ comes from deforming the $t\times (t+1)$ matrix $\cA $
associated to $\varphi ^*$. This was generalized in \cite[Theorem 5.8]{K2014}
to cover any $c \ge 2$ when $r=1$ and $n-c \ge 2$ but it is not always true
when $r=1$ and $n-c<2$ (see, for instance, \cite[Example 4.1]{K2011}). One of
the main goals of this monograph will be to generalize these results to
$1\le r < t$ and $c\ge 2-r$; and to study whether any deformation of
$R/I_{t-r+1}(\varphi ^*)$ comes from deforming the homogeneous matrix $\cA $.
We think that quite often it will be true that any deformation of a
determinantal scheme comes from deforming its associated homogeneous matrix
(cf. Conjecture \ref{conjWsmooth}). As in the case of standard determinantal
schemes, we prove that it is indeed true in many cases (see Lemma
\ref{lemma2}, Theorem \ref{teo3}, Corollary \ref{evdef}, Theorem \ref{Wsmooth}
and Corollary \ref{corWsmooth4}) but we have a few exceptions (see, for
instance, Example \ref{dimW2}).

\vskip 2mm Let
$W(\underline{b};\underline{a};r)\subset \Hilb ^{p_X(t)}(\PP^{n})$ be the
locus of determinantal schemes $X\subset \PP^{n}$ of codimension $r(r+c-1)$
defined by the $(t-r+1)\times (t-r+1)$ minors of a $t\times (t+c-1)$
homogeneous matrix $\cA=(f_{ij})^{i=1,...,t}_{j=1,...,t+c-1}$ where
$f_{ij}\in k[x_0,x_1,...,x_{n}]$ is a homogeneous polynomial of degree
$a_j-b_i$. Correspondingly, set $\underline{a'}=(a_1,a_2,...,a_{t+c-2})$ and
let
$W_{(\underline{b};\underline{a};r)}^{(\underline{b};\underline{a'};r)}
\subset \Hilb ^{p_X(t),p_Y(t)}(\PP^{n})$
be the locus of determinantal flags $X\subset Y\subset \PP^n$ of the
Hilbert-flag scheme with $X\in W(\underline{b};\underline{a};r)$ and
$Y\in W(\underline{b};\underline{a'};r)$ defined by the
$(t-r+1)\times (t-r+1)$ minors of the $t\times (t+c-2)$ matrix ${\mathcal B}$
that we obtain deleting the last column of $\cA$. Note that the restriction of
the natural projections
$$p_1: \Hilb ^{p_X(t),p_Y(t)}(\PP^{n})\longrightarrow \Hilb
^{p_X(t)}(\PP^{n})$$
and
$$p_2: \Hilb ^{p_X(t),p_Y(t)}(\PP^{n})\longrightarrow \Hilb
^{p_Y(t)}(\PP^{n})$$
to $W_{(\underline{b};\underline{a};r)}^{(\underline{b};\underline{a'};r)}$
maps into $W(\underline{b};\underline{a};r)$ and
$ W(\underline{b};\underline{a'};r)$, respectively. Since we will only
consider the case $W(\underline{b};\underline{a};1)\ne \emptyset$, from now on
we will assume (see \cite[(2.2)]{KM2011}):
\begin{equation} \label{paperassump1} a_1\le a_2\le \cdots \le a_{t+c-1},
  \quad b_1\le \cdots \le b_t, \quad b_{i}\le a_{i} \text{ for all }i \text{
    and } b_{i_0}<a_{i_0} \text{ for some } i_0\,.
\end{equation}
\begin{definition} \label{PYgeneric} \rm Let $X\subset Y \subset \PP^n$ be
  closed subschemes. We say that $X$ is {\em $p_Y$-generic} if there is an
  open subset of $\Hilb ^{p_X(t)}(\PP^n)$ containing $(X)$ whose members
  $(X')$ are subschemes of some closed scheme $Y'$ with Hilbert polynomial
  $p_Y$.
\end{definition}

In this paper we address the following  fundamental problems:

\begin{pblm} \label{mainPBLM}\rm
\begin{itemize} \item[(1)] To determine the dimension of
$W(\underline{b};\underline{a};r)$ in terms of $a_j$ and
$b_i$;
\item[(2)] To determine whether the closure of
$W(\underline{b};\underline{a};r)$ is an irreducible
component of $\Hilb ^{p_X(t)}(\PP^{n})$;
 \item[(3)] To determine
when $\Hilb ^{p_X(t)}(\PP^{n})$ is generically smooth along
$W(\underline{b};\underline{a};r)$, and;
\item[(4)] To determine whether any deformation of $X$,
  $(X) \in W(\underline{b};\underline{a};r)$ comes from deforming its
  associated homogeneous matrix $\cA$.
\end{itemize}
\end{pblm}

Due to  Lemma~\ref{unobst}, if Problem \ref{mainPBLM} {\rm (4)}
  holds,  Problems \ref{mainPBLM} {\rm (2)} and {\rm (3)} also hold.

The case $r=1$ was treated in \cite{E} and \cite{F} for $c=2$, \cite{KMMNP}
for $c=3$ and \cite{K2014} and \cite{KM2005}  for arbitrary $c$ (See also
\cite{FF}, \cite{K2011}, \cite{K2018}, \cite{KM2011} and
\cite{rm} for more details); while the case $r=2$ and $c=1$ was considered in
\cite{KM2009}. In the next section of this work, for sake of completeness, we
summarize and slightly generalize the case $r=1$ and we devote the remaining
part of the paper trying to generalize the mentioned results to $r>1$ and to
solve the above problems under weak numerical assumptions on the
integers $a_i$ and $b_j$ and under some depth or other cohomological conditions.

In this paper we often switch between a subscheme $X\subset \PP^n$, or a pair
$X\subset Y \subset \PP^n$ of subschemes, and the corresponding homogeneous
coordinate rings and their surjections $R \to A$ and $R \to B \to A$. Indeed
there is a scheme $\GradAlg(H_A)$ parameterizing graded surjections $R \to A$
of $\depth_{\mathfrak m}A \geq 1$ which is the stratum of Grothendieck's
Hilbert scheme $ \Hilb^{ p_X} (\PP^n)$ consisting of points
$(X=\Proj(A) \subset \PP^n)$ with Hilbert function $ H_X$ where
$H_X(v)=p_X(v)$ for $v \gg 0$, cf. \cite{K2007}. Note that we define the Hilbert
function of $X$, or $A$, by $H_X(v)=H_A(v):= \dim A_v$. $\GradAlg(H_A)$ has a
natural scheme structure whose tangent (resp.\ ``obstruction'') space at
$(X \subset \PP^n)$ is
$\ _0\!\Hom_A(I_{A}/I_A^2,A) \simeq \ _0\!\Hom_R(I_{A},A) \ $ (resp.\
$\ _0\!\Ext^1_A(I_{A}/I_A^2,A)$ provided $R \to A$ is generically a complete
intersection). $A$ is called {\em unobstructed} (as a graded
$R$-algebra) if $\GradAlg(H_A)$ is smooth at $(R \rightarrow A)$. And we may
use \cite{E} to see that the open subscheme of $\GradAlg(H_A)$ consisting of
graded Cohen-Macaulay quotients satisfying $\dim A \ge 2$ and the open
subscheme of $\!\ \Hilb^{p_X}(\PP^n)$ consisting of arithmetically
Cohen-Macaulay (ACM) schemes satisfying $\dim X \ge 1$ where $X=\Proj(A)$ are
isomorphic as {\it schemes}. Indeed if $\depth_{\mathfrak m}A \geq 2$, we have
\begin{equation} \label{Grad} \GradAlg(H_A) \simeq \Hilb^{ p_X}(\PP^n) \ \ \
  {\rm at} \ \ \ (X = \Proj(A) \subset \PP^n)\,.
\end{equation}
In particular if $\dim X \ge 1$ we can replace the phrase ``every graded
deformation $A_T$ of $A$ to $T$'' in Definition~\ref{defor} by ``every
deformation $X_T \subset \Proj(R \otimes_k T)$ of $X \subset \PP^n $ to $T$''.

Similarly there is a scheme $\GradAlg(H_B,H_A)$ representing the functor
deforming flags (surjections) $B \rightarrow A$ of graded quotients of $R$ of
positive depth at ${\mathfrak m}$ and with Hilbert functions $H_B$ and $H_A$
respectively. Also in this case, if $\depth_{\mathfrak m}A \geq 2$ and
$\depth_{\mathfrak m}B \geq 2$ we have
\begin{equation} \label{GradPair}
\GradAlg(H_B,H_A) \simeq
\Hilb
  ^{p_X(t),p_Y(t)}(\PP^{n}) \ \ \ {\rm at} \ \ \ (X \subset Y \subset \PP^n)\,.
\end{equation}
In particular the open subscheme of $\GradAlg(H_B,H_A)$ consisting of
Cohen-Macaulay pairs $ (B \twoheadrightarrow A)$ satisfying $\dim A \ge 2$ are
isomorphic (as {\it schemes}) to the corresponding open set of
$\Hilb ^{p_X(t),p_Y(t)}(\PP^{n})$. Thus {\it restricting} to
these open subschemes, the projection
$$p : \GradAlg(H_B,H_A) \rightarrow \GradAlg(H_A)$$ induced by sending
$(B' \rightarrow A')$ onto $(A')$ is isomorphic to the $1^{st}$ projection
$$p_1: \Hilb ^{p_X(t),p_Y(t)}(\PP^{n})\longrightarrow \Hilb ^{p_X(t)}(\PP^{n})$$
and $q : \GradAlg(H_B,H_A) \rightarrow \GradAlg(H_B)$ to the $2^{nd}$
projection $p_2$ which maps into $\Hilb ^{p_Y(t)}(\PP^{n})$. Since in this
paper we almost always work under assumptions that imply
$\depth_{\mathfrak m}A \geq 2$ (and $\depth_{\mathfrak m}B \geq 2$ if $B$ is
present), there is really no restriction to work with $\GradAlg$ instead of
$\Hilb$.

 \vskip 2mm

 We finish this section with the following useful comparison of cohomology
 groups. If $Z\subset X\subset \PP^{n}$ is a closed subset such that
 $U=X\setminus Z$ is a local complete intersection, $L$ and $N$ are finitely
 generated $R/I(X)$-modules, $\widetilde{N}$ is locally free on $U$ and
 $\depth_{I(Z)}L\ge r+1$, then the natural map
\begin{equation} \label{NM}
\Ext^{i}_{R/I(X)}(N,L)\longrightarrow
\Hl_{*}^{i}(U,{\mathcal H}om_{\odi{X}}(\widetilde{N},\widetilde{L}))\cong \oplus _{\nu \in \ZZ}\Ext^{i}_{\odi{X}}(\widetilde{N},\widetilde{L}(\nu ))
\end{equation}
is an isomorphism, (resp. an injection) for $i<r$ (resp. $i=r$)
cf. \cite[expos\'{e} VI]{SGA2}. Recall that we interpret $I(Z)$ as $\mathfrak{m}$ if $Z=\emptyset $.


\section{Families of standard determinantal schemes}
\label{satandard}

Until now all results we have proved about families of determinantal ideals
deal with standard determinantal schemes i.e. schemes of codimension $c$ in
$\PP^n$ defined by the maximal minors of a $t\times (t+c-1)$ homogeneous
matrix (according to our notation they correspond to the case $r=1$), except
those in \cite{KM2009} where the authors considered ideals generated by the
submaximal minors of a square homogeneous matrix (i.e. $c=1$ and $r=2$). This
last paper can be seen as a first attempt to address the general case (i.e.
$r\ge 1$ and $c\ge 2-r$). In this section, we state without proof what is
known about families of standard determinantal schemes and we refer to
\cite{FF}, \cite{K2011}, \cite{K2014}, \cite{K2018}, \cite{KM2005},
\cite{KM2011}, \cite{KMMNP} and \cite{rm} for a complete proof of these
results. A few generalizations with proofs are, however, also included.

Given sequences of integers $\underline{a}=(a_1,a_2,...,a_{t+c-1})$ and
$\underline{b}=(b_1,b_2,...,b_{t})$ satisfying \eqref{paperassump1} we denote
by
 $$W(\underline{b};\underline{a}):=W(\underline{b};\underline{a};1)\subset \Hilb ^{p_X(t)}(\PP^{n})$$
 the locus of standard determinantal schemes. As always we assume $t\ge 2$.
 Using the convention $ \binom{m}{n}=0$ for $m < n$ we define for any $r$,
 $\max\{1,2-c\} \le r < t$, the following invariants
\begin{equation}\label{lamda}
\begin{array}{rcl}
\ell_i & :=  & \sum_{j=1}^{t+i-1}a_j-\sum_{k=1}^tb_k, \ i \ge 1,\\
h_{i-3} & := &
  2a_{t+i-1}-\ell_i +n, \ \text{ for } 3\le i\le c,\\
\lambda_c & := & \sum_{i,j}
    \binom{a_i-b_j+n}{n} + \sum_{i,j} \binom{b_j-a_i+n}{n} - \sum _{i,j}
    \binom{a_i-a_j+n}{n}- \sum _{i,j} \binom{b_i-b_j+n}{n} + 1, \\
    K_3 & := & \binom{h_0}{n}, \\
    K_4 &:= & \sum_{j=1}^{t+2} \binom{h_1+a_j}{n}-
\sum_{i=1}^{t} \binom{h_1+b_i}{n}, \text{ and, } \\
%
K_{i} &: = &\sum _{p+q=i-3
  \atop p , q \ge 0} \sum _{1\le i_1< ...< i_{p}\le t+i-2 \atop 1\le
  j_1\le...\le j_q \le t } (-1)^{i-1-p} \binom{h_{i-3}+a_{i_1}+\cdots
  +a_{i_p}+b_{j_1}+\cdots +b_{j_q} }{n} \text{ for } 3\le i \le c.  \end{array}
\end{equation}

\vskip 2mm
Using the generalized Koszul complexes associated to a codimension $c$
standard determinantal ideal $I_t(\cA)$, one knows that the {\em
  Eagon-Northcott complex} (see \cite[Theorem 2.20]{b-v} or \cite[Corollaries
A2.12 and A2.13]{eise}) yields the following minimal free $R$-resolution of
$A:=R/I_t(\cA)$
 \begin{equation}\label{EN}0 \longrightarrow \wedge^{t+c-1}G^* \otimes S_{c-1}(F)\otimes
  \wedge^tF\longrightarrow \wedge^{t+c-2} G ^*\otimes S _{c-2}(F)\otimes \wedge ^tF\longrightarrow
  \ldots  \end{equation}
$$ \longrightarrow
\wedge^{t}G^* \otimes S_{0}(F)\otimes \wedge^tF\longrightarrow R \longrightarrow R/I_t(\cA) \longrightarrow 0
$$
 and that the  {\em Buchsbaum-Rim complex} yields a minimal free $R$-resolution of $\coker (\varphi ^* ) $
\begin{equation}\label{BR} 0 \longrightarrow \wedge^{t+c-1} G^* \otimes S_{c-2}F \otimes
  \wedge^t F \longrightarrow \wedge^{t+c-2} G^* \otimes S _{c-3}F \otimes
  \wedge ^t F \longrightarrow \ldots \longrightarrow
 \end{equation}
 $$\wedge^{t+1}G^* \otimes
  S_{0}F \otimes \wedge^t F \longrightarrow G^* \stackrel {\varphi ^* }{
    \longrightarrow}  F^* \longrightarrow \coker (\varphi ^* ) \longrightarrow
  0. $$
Note that (\ref{EN}) shows that any standard determinantal scheme is arithmetically Cohen-Macaulay (ACM). Analogous result holds for determinantal schemes. Indeed, using the minimal free $R$-resolution of a determinantal ideal given by Lascoux in \cite[Theorem 3.3]{L} we get that any determinantal scheme is ACM.

Using essentially induction on the codimension by successively deleting
columns from the right hand side (assuming, for instance, that the $c-2$
columns that we delete do not contain units, to be sure that the rings we get
by deleting columns are standard determinantal, cf. \cite{Br}), the
Eagon-Northcott complex associated to a standard determinantal ideal
$I_t(\cA)$, the Buchsbaum-Rim complex, the theory of Hilbert flags schemes and
the depth of certain mixed determinantal ideals, we can solve (for $r=1$)
Problem \ref{mainPBLM}(1) under some weak numerical assumptions and prove:

\begin{theorem}\label{std_det_case} Fix integers $t\ge 2$, $c\ge 2$,  $\underline{a}=(a_1,a_2,...,a_{t+c-1})$ and $\underline{b}=(b_1,b_2,...,b_{t})$ as above. Assume $a_{i-1}\ge b_i$ for $2 \le i \le t$. It holds:
\begin{itemize}
\item[(i)] $\dim W(\underline{b};\underline{a})\le \lambda _c+K_3+K_4+\cdots +K_c$.

\item[(ii)] For $c=2$ and $n-c\ge 0$, $\dim W(\underline{b};\underline{a})= \lambda _c$.

\item[(iii)] Assume $c\ge 3$. If $n-c>0$, or $n-c=0$, $a_1>b_t$ and $a_{t+c-1}>a_{t-1}$, then
 $$\dim W(\underline{b};\underline{a})= \lambda _c+K_3+K_4+\cdots +K_c.$$
\end{itemize}
\end{theorem}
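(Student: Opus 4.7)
The plan is to induct on the codimension $c$, exploiting the Hilbert-flag scheme $\Hilb^{p_X(t),p_Y(t)}(\PP^{n})$ of pairs $X\subset Y$, where $Y=V(I_t(\cA'))$ is obtained from $\cA$ by deleting the last column; hence $Y$ is standard determinantal of codimension $c-1$. For the base case $c=2$, Ellingsrud's theorem \cite{E} (or \cite{F} when $n-c=0$) gives that any codimension-$2$ ACM scheme is unobstructed, so $\dim W(\underline{b};\underline{a})=\dim\,_{0}\!\Hom(I_A,A)$, and a direct calculation from the Eagon--Northcott resolution~\eqref{EN} evaluates this tangent dimension to $\lambda_2$. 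Since $K_3+\cdots+K_c$ is empty for $c=2$, this settles (ii), and the upper bound (i) holds trivially in this case.

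For the inductive step I would fix the flag $X\subset Y$ as above and study the two projections $p_1,p_2$ of $\Hilb^{p_X(t),p_Y(t)}(\PP^{n})$. The $p_2$-fiber at $(X\subset Y)$ parameterizes regular sections $\sigma^*\colon B(-a_{t+c-1})\to N\otimes B$ of the kind announced in the introduction, so its tangential dimension is $\dim(MI\otimes A)_{a_{t+c-1}}$, computable from the Buchsbaum--Rim complex~\eqref{BR} and the ACMness of $A$. The $p_1$-fiber parameterizes codimension-$(c-1)$ schemes $Y'\supset X$ with Hilbert polynomial $p_Y$, whose tangent space is $\,_{0}\!\Hom_B(I_B/I_B^2,I_{A/B})$, computable via~\eqref{NM} from the Eagon--Northcott resolution of $B$. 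The hypothesis $a_{i-1}\ge b_i$ (together with the extra conditions in (iii)) is what guarantees the expected depths of the mixed determinantal ideals appearing along the induction, cf.\ \cite{Br}, so that the relevant $\Ext^i$ vanishings giving the correct fiber dimensions actually hold.

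For (i), the upper bound, I would use only surjectivity of $p_2$ onto $W(\underline{b};\underline{a'})$ together with an upper bound on the $p_2$-fiber and a lower bound on the $p_1$-fiber, yielding
$\dim W(\underline{b};\underline{a})\le \dim W(\underline{b};\underline{a'})+\dim(p_2\text{-fiber})-\dim(p_1\text{-fiber})$;
the $K_i$ terms then accumulate recursively through the binomial identities defining $\lambda_c$. For the equality in (iii), I would in addition need smoothness of $p_1$ and $p_2$ at $(X\subset Y)$, i.e.\ the vanishing of the corresponding $\Ext^1$ obstructions, and close the induction by checking that the inductive value $\lambda_{c-1}+K_3+\cdots+K_{c-1}$ plus the two fiber dimensions simplifies, after a direct binomial manipulation, to $\lambda_c+K_3+\cdots+K_c$.

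The main obstacle I foresee is the cohomological work underlying the two fiber-dimension computations: the formula for $\dim(MI\otimes A)_{a_{t+c-1}}$ requires controlling the higher graded cohomology $H^i_*$ of the Buchsbaum--Rim complex~\eqref{BR}, while the formula for the $p_1$-fiber dimension requires the vanishing of $\,_{0}\!\Ext^1_B(I_B/I_B^2,I_{A/B})$. The borderline case $n-c=0$ is especially delicate: the extra hypotheses $a_1>b_t$ and $a_{t+c-1}>a_{t-1}$ in (iii) are precisely what kill obstruction terms coming from $H^1_*$ of the dualizing sheaves of the flag members, which would otherwise inflate both the tangent and fiber dimensions and break the clean recursion.
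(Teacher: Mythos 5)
The paper states this theorem \emph{without proof}: its ``proof'' is a list of references to \cite{KM2005}, \cite{KM2011}, \cite{K2014}, \cite{K2018}, \cite{E}, \cite{F}, \cite{Got}. So the right comparison is against the cited arguments, which this paper reruns in detail for general $r$ in Sections~6--8 and, for $r=1$, in Theorems~\ref{Amodulethm5}--\ref{Amodulecor2}.

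Your induction on $c$ through the Hilbert-flag scheme is the right engine for the \emph{equalities} in (ii) and (iii), and it matches the spirit of the cited references. But what you compress into ``a direct binomial manipulation'' is precisely the identity \eqref{dimensioMI}, which says that $\dim (MI)_{a_{t+c-1}} - \sum_{j}\binom{a_j - a_{t+c-1}+n}{n}=\lambda_c-\lambda_{c-1}+K_c$, together with the depth and $\Ext$-vanishing results (Propositions~\ref{vanish} and \ref{fibdim2}, Corollary~\ref{fiberpr2}) needed to make $p_1$ and $p_2$ smooth at $(X\subset Y)$ with tangential fiber dimensions equal to the expected formulas. Essentially all of the work lives there, and you only gesture at it.

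For the upper bound (i), your route is genuinely different from the paper's and it has a gap. The paper's own argument (proof of Theorem~\ref{ineqdimW}(i)) does not induct at all: it observes that the dominating rational map $\HH om(F,G)\dashrightarrow W(\underline{b};\underline{a})$ factors, via Fitting's lemma, through the local moduli of the module $MI=\coker\varphi^*$; since $MI$ is unobstructed with $\dim\,_0\!\Ext^1_R(MI,MI)=\lambda_c+K_3+\cdots+K_c$ (from the Buchsbaum--Rim complex; cf.\ \eqref{homext}), the upper bound falls out in one step, with an alternative orbit-space count in Remark~\ref{differentproofs}. Your flag-scheme recursion instead requires a \emph{lower} bound on the generic $p_1$-fiber dimension $\,_0\!\hom_R(I_B,I_{A/B})$, and that is not free: the natural inclusion $\Hom(I_B,I_{A/B})\hookrightarrow\Hom(I_B,B)$ yields only an upper bound, and the paper needs the stronger numerical hypotheses of Corollary~\ref{prefiberpr2} (not guaranteed by $a_{i-1}\ge b_i$ alone) to pin this group down. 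Without the exact value the $K_i$ do not accumulate to the claimed sum, so for (i) you must either supply that lower bound or switch to the direct $MI$-moduli argument.

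One small miscue: in the boundary case $n-c=0$, the role of $a_{t+c-1}>a_{t-1}$ is to force $\,_0\!\Ext^1_R(I_{A_2},I_{c-1})=0$ by comparing degrees of relations of $I_{A_2}$ against degrees of generators of $I_{c-1}$ (see the end of the proof of Theorem~\ref{std_det_case1}), not to control $H^1_*$ of dualizing sheaves of the flag members.
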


\begin{proof} (i) See \cite[Theorem 3.5]{KM2005} and Remark \ref{aug2023}(3) below.

  (ii) See \cite{E} for the case $n-c\ge 1$, \cite[Remark
  4.6]{KM2005} (cf.\cite{F} and \cite[Appendix 3]{Got}) for the case $n-c=0$
  and \cite[Theorem 5.16]{K2014} for a generalization.

  (iii) See \cite[Theorems 3.5, 4.5 and Corollaries 4.7, 4.10, 4.14 and
  4.15]{KM2005} and \cite[Corollary 5.6]{K2014} for the case $n-c>0$. For the
  case $n-c=0$ the reader could look at \cite[Theorem 3.2]{KM2011} for the
  case $3\le c\le 5$ and \cite[Theorem 6.1]{K2018} for the case $c\ge 3$. (See
  also \cite{KMMNP} for the cases $c=3$).
\end{proof}

\begin{remark} \label{aug2023} \rm (1) For $n-c>0$ Theorem~\ref{std_det_case} as well as
  \cite[Corollary 5.6]{K2014} or \cite{FF} prove Conjecture 4.1 stated in
  \cite{KM2011}.

  (2) For $c\ge 3$ and $n-c=0$ the hypothesis $a_{t+c-1}>a_{t-1}$ in Theorem
  \ref{std_det_case} cannot be deleted. In fact,
  we consider the case $t=2$, $a_1=\cdots =a_{c+1}=1$ and $b_1=b_2=0$, i.e.,
  $\cA$ is a $2\times (c+1)$ matrix with entries general linear forms. The
  vanishing of the $2\times 2$ minors of $\cA$ defines a reduced scheme
  $X\subset \PP^c$ which consists in $c+1$ different points in $\PP^c$. So,
  $\dim _{(X)} \Hilb ^{p_X(t)}(\PP^c)=c(c+1)$ while $K_i=0$ for $3\le i \le c$
  and $\lambda _c=c^2+2c-2$. Hence,
  $\dim _{(X)} \Hilb ^{p_X(t)}(\PP^c)< \lambda _c$ for $c\ge3$, cf.
  \cite[Example 3.3]{K2011}.

  (3) Note that even though \cite{KM2005} assumes $\cA = (f_{ij})$ minimal,
  i.e. $f_{ij}=0$ when $ \deg f_{ij} = a_j - b_i=0$, \cite[Theorem
  3.5]{KM2005} holds without this assumption owing to \eqref{paperassump1}!
  Indeed Proposition 3.12 and (3.3) of \cite{KM2005} yield Theorem 3.5. But
  \cite[Proposition 3.12]{KM2005} is generalized in \cite[Proposition
  2.4]{K2018} where we only assume the last $c-2$ columns to be without units.
  Now as the degrees of the entries of any $\cA$ obey \eqref{paperassump1}, we
  either have $a_i = b_i$ or $a_i > b_i$. Take the smallest $i$ (say $i=i_0$)
  with $a_i = b_i$. First if $f_{ii} = 0$ and $f_{ij} \neq 0$ for some
  $j > i$ (resp. $f_{ij} = 0$ for all $j > i$) with $a_j = a_i$, then
  interchange the columns $\cA^i$ and $\cA^j$ of $\cA$ (resp. leave $\cA$
  unchanged). Next if $f_{ii} $ is (or has become by the operation above) a
  unit, then replace all columns $\cA^j$
  with $j > i$ and $a_j = a_i$ by
$$\cA^j - \frac{f_{ij}}{f_{ii}} \cA^i $$
  Then repeat the above taking the next smallest $i$ (say $i=i_1$) with
  $a_i = b_i$ and so on, and note that the column operations we perform when
  $i = i_1$ don't change the $0$'s already created in the $i_0$-th row in the
  case $a_{i_1} = a_{i_0}$ and certainly not when $a_{i_1} > a_{i_0}$. Finally
  if $a_i > b_i$ for some $i := i_2$, then all $f_{i_2j}$ for $j > i_2$ are
  non-units (in the $i_2$-th row). Note that the above elementary column
  operations don't change $I_t(\cA)$, cf. Fitting's lemma. In addition the
  $\cA$ we create in this way is without units in the last $c-1$ columns. Thus
  as \cite[(3.3)]{KM2005} holds (this doesn't dependent on a given $\cA$),
  \cite[Theorem 3.5 and Proposition 3.12]{KM2005} hold under the assumptions
  of this paper.

  (4) The following interesting observation also makes a correction to
  \cite[Proposition 5.1]{K2018}; note that the locus
  $W_s(\underline{b};\underline{a})$ inside $\GradAlg(H)$ of 
  graded, possibly artinian, $A:=R/I_t(\cA)$ of
  \cite{K2018} 
  equals the $W(\underline{b};\underline{a})$ of this paper via
  $(A) \mapsto (\Proj(A))$ whenever $\dim A > 0$ (i.e. when
  $H(v):= \dim A_v \neq 0$ for all $v \gg 0)$: Now suppose there exists
  $i,j'$
  with $a_{j'}
  = b_i$. Then using \eqref{paperassump1} as above there's a $j \le
  i$ with $a_j = b_i$, namely $j := j'$ if $j'< i$ and $j := i$ if $j'\ge
  i$. Fix $i$ and $j \le i$ with $a_j =
  b_i$. Repeatedly using the two column operations in (3) (also when $j' <
  j$), 
  we get a matrix $\cA$
  with $f_{ij}$
  either $0$
  or a unit and with $f_{ij'} = 0$ for all $j' \neq j$ with $a_{j'} = a_j$.

  In the case $f_{ij}$ is a unit we can further use the displayed formula in
  (3) to create a matrix with $f_{ij'}=0$ for all $j' \neq j$. Then if $\cA'$
  is the matrix obtained by deleting the $i$-th row and $j$-th column,
  $I_t(\cA) = I_{t-1}(\cA')$. Thus if
  $\underline{a'} = a_1,a_2,...,a_{j-1},a_{j+1},...,a_{t+c-1}$ and
  $\underline{b'}= b_1,b_2,...,b_{i-1},b_{i+1},...,b_t$ and if all $\cA$ with
  $(R/I_t(\cA)) \in W_s(\underline{b};\underline{a})$ belongs to this case
  where $f_{ij}$ has become a unit, we get
  $W_s(\underline{b'};\underline{a'})= W_s(\underline{b};\underline{a})$ as
  conversely, given $\cA'$ we can create an $\cA$ by inserting a new $i$-th
  row and $j$-th column consisting only of $0$'s except at the $i$-th and
  $j$-th coordinate where we put the unit $1$. 

  In the case $f_{ij}$ of $\cA$ is zero and
  $(R/I_t(\cA)) \in W_s(\underline{b};\underline{a})$, we consider the matrix
  $\cA_g$ obtained by letting $f_{ij}=u$ with $u \in k$ a unit and letting all
  the other $(l,m)$ entries be those of $\cA$ plus a general enough
  $g_{lm} \in R_{a_m-b_l}$. More precisely taking $\{u_{\nu}\}_{\nu=1}^N$ with
  $N = \dim {\HH}om(F,G)$ as indeterminates with $u_1=u$ and ${\it all}$ the
  other $g_{lm}$ as a sum of different $u_{\nu}$ times the monomials of
  $ R_{a_m-b_l}$, $2 \le \nu \le N$, then $R[u_1,...,u_N]/I_t(\cA_g)$
  restricts by e.g. \cite[Lemma 2.6]{K2018} to a flat family over some open
  dense subset of $\mathbb A^N_k$ containing $(0,...,0)$, with dense image in
  $ W_s(\underline{b};\underline{a})$ (cf. the text after \eqref{flag}) in which
  the zero $N$-tuple represents $R/I_t(\cA)$ and a general enough $N$-tuple
  represents a $k$-point, say,
  $R/I_t(\cA_1)$ 
  contained in the subset $W_s(\underline{b'};\underline{a'})$
  of $W_s(\underline{b};\underline{a})$
  (much as in \cite[Proposition 5.1, see third paragraph of its proof]{K2018};
  the argument above is more accurate concerning "general enough". As
  $(R/I_t(\cA_1))
  \in W_s(\underline{b'};\underline{a'})$, the generization
  $R/I_t(\cA_1)$
  removes all ghost terms in the Eagon-Northcott resolution of
  $R/I_t(\cA)$
  "arising from $a_j
  = b_i$", e.g. see \cite[Examples 5.3 and 5.4]{K2018}).

  Moreover $W_s(\underline{b'};\underline{a'})$
  is open in $
  W_s(\underline{b};\underline{a})$ as they both are constructible
  by 
  Chevalley's theorem (cf. \eqref{flag}); moreover if we take any generization
  $R_T/I_t(\cA_T)$
  {\bf in} ${W_s(\underline{b};\underline{a})}$,
  $T$
  some local $k$-algebra
  with residue field $k$
  and $\cA_T:=(f_{lm,T})$
  a matrix with $f_{lm,T}
  \in R_T:=R \otimes_k T$, $\deg f_{lm,T} = a_m-b_l$ satisfying $f_{lm,T}
  \otimes_T k =
  f_{lm}$ as in \cite[(2.14)]{K2018}, of an arbitrary
  $R/I_t(\cA_1)$
  of $W_s(\underline{b'};\underline{a'})$,
  then $f_{ij,T}$
  must be a unit as $f_{ij}$
  of $\cA_1$
  is, i.e. $R_T/I_t(\cA_T)$
  is a $T$-point of $ W_s(\underline{b'};\underline{a'})$. 
  So the latter is open in $
  {W_s(\underline{b};\underline{a}})$; in fact with
  $T$
  as the local ring at $x:=(R/I_t(\cA_1))$
  of an open affine $\Spec(V)
  \subset W_s(\underline{b};\underline{a})_{red}
  $ (reduced scheme structure), then there's an open dense subset
  $U$
  of $\Spec(V)$,
  $x
  \in U$, in which all entries
  $f_{lm,T}$,
  thus the pullback of $R_T/I_t(\cA_T)$
  to $U$,
  are defined; whence $U
  \subset W_s(\underline{b'};\underline{a'})$ as
  $f_{ij,T}$
  is a unit (so correct the $2^{nd}$
  sentence of \cite[Proposition 5.1]{K2018} "we have an inclusion $
  {W_s(\underline{b'};\underline{a'})} \subset
  {W_s(\underline{b};\underline{a}})$ of open irreducible subsets of
  $\GradAlg(H)$"
  by deleting "open" in "open irreducible.." and replacing "an inclusion" by
  "an open inclusion"). Hence we always have $\overline
  {W_s(\underline{b'};\underline{a'})} = \overline
  {W_s(\underline{b};\underline{a}})$ when $ a_j=b_i$ for some $i,
  j$. Thus we may prior to computing the invariants of
  $\eqref{lamda}$,
  successively delete from $\underline{b},
  \underline{a}$ all pairs $(a_j,b_i)$ with $a_j =
  b_i$, i.e. changing
  $\underline{b},\underline{a}$
  in this way will not change the invariants of $\eqref{lamda}$.
  \end{remark}

Concerning Problems \ref{mainPBLM} (2) and (3) we also gave in \cite{KM2005}
and \cite{KM2011}  an affirmative answer to both questions in the range $2\le
c\le 4$ and $n-c\ge 2$ and in the case $c\ge 5$ and $n-c\ge 1$ provided
certain numerical assumptions are verified. More precisely, we have got:

\begin{theorem}\label{std_det_case1} Fix integers $t\ge 2$, $c\ge 2$,
  $\underline{a}=(a_1,a_2,...,a_{t+c-1})$ and
  $\underline{b}=(b_1,b_2,...,b_{t})$ as above. Assume  $a_{i+1-min \{3,t\}
  }\ge b_{i}$ for $´min \{3,t \} \le i \le t$ and $n-c\ge 1$. It holds:

\begin{itemize}
\item[(i)] If $c=2$ then $\overline{W(\underline{b};\underline{a})}$ is a
  generically smooth irreducible component of $\Hilb ^{p_X(t)}(\PP^n)$ of
  dimension $\lambda _2$. Moreover $\Hilb ^{p_X(t)}(\PP^n)$ is smooth at every
  point $(X) \in W(\underline{b};\underline{a})$.

\item[(ii)]  Assume $c\ge 3$. If $n-c>1$, or $n-c=1$, $a_1>b_t$ and
  $a_{t+c-1}>a_t+a_{t+1}-b_1$, then
  $\overline{W(\underline{b};\underline{a})}$ is a generically smooth
  irreducible component of $\Hilb ^{p_X(t)}(\PP^n)$ of dimension $\lambda
  _c+K_3+\cdots +K_c$.
    \end{itemize}
\end{theorem}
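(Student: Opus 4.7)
The strategy is to reduce the statement to Problem \ref{mainPBLM}(4) via Lemma \ref{unobst}: if every deformation of $X \in W(\underline{b};\underline{a})$ comes from deforming its associated matrix $\cA$, then $\overline{W(\underline{b};\underline{a})}$ automatically fills a generically smooth component of $\Hilb^{p_X(t)}(\PP^n)$. Once (4) is established, the dimension count comes from Theorem \ref{std_det_case}, since the tangent space computation at a generic point matches $\lambda_c+K_3+\cdots+K_c$. Throughout, I work inside $\GradAlg(H_A)$ via the identification \eqref{Grad}, which is valid because the assumption $a_{i+1-\min\{3,t\}}\ge b_i$ together with $n-c\ge 1$ forces $\dim X\ge 1$ and the ACM property via \eqref{EN}.

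For part (i), the case $c=2$ is Ellingsrud's theorem \cite{E}: every codimension-two ACM subscheme of $\PP^n$ with $n\ge 3$ is unobstructed, its Hilbert scheme is smooth along such points, and the tangent space dimension at $(X)$ equals $\lambda_2$. Since Theorem \ref{std_det_case}(ii) already gives $\dim W(\underline{b};\underline{a})=\lambda_2$, and since every codimension-two ACM subscheme of $\PP^n$ is standard determinantal by Hilbert-Burch, $\overline{W(\underline{b};\underline{a})}$ must coincide with an entire component and smoothness extends to every point of $W(\underline{b};\underline{a})$.

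For part (ii) I proceed by induction on $c$, starting from $c=2$. Given $\cA$ of size $t\times(t+c-1)$, delete the last column to obtain $\cB$ and set $Y=\Proj(B)$ with $B=R/I_t(\cB)$. By induction, every deformation of $Y$ comes from deforming $\cB$, so $\overline{W(\underline{b};\underline{a'})}$ is a generically smooth component of $\Hilb^{p_Y(t)}(\PP^n)$ of dimension $\lambda_{c-1}+K_3+\cdots+K_{c-1}$. I pass to the Hilbert flag scheme $\Hilb^{p_X(t),p_Y(t)}(\PP^n)$ and analyze its two projections. The second projection $p_2$ is smooth at $(X\subset Y)$ with tangential fiber dimension $\dim(MI\otimes A)_{a_{t+c-1}}$, because $X$ is cut out of $Y$ by a regular section of $\widetilde{N}$, as in Theorem \ref{mainthm}. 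The first projection $p_1$ is smooth at $(X\subset Y)$ with fiber dimension $\dim{}_0\!\Hom_B(I_B/I_B^2,I_{A/B})$ provided ${}_0\!\Ext^1_B(I_B/I_B^2,I_{A/B})=0$. Combining smoothness of $p_2$ with the inductive smoothness at $(Y)$ shows that the flag scheme is smooth at $(X\subset Y)$; transferring through $p_1$ then gives smoothness of $\Hilb^{p_X(t)}(\PP^n)$ at $(X)$ together with the promised dimension formula, since the two fiber dimensions combine with $\dim W(\underline{b};\underline{a'})$ to reproduce $\lambda_c+K_3+\cdots+K_c$ by a direct numerical identity.

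The main obstacle is controlling the $\Ext$ groups on $Y$, specifically showing ${}_0\!\Ext^1_B(I_B/I_B^2,I_{A/B})=0$ and computing ${}_0\!\Hom_B(I_B/I_B^2,I_{A/B})$ exactly. The plan is to use the Eagon-Northcott resolution \eqref{EN} of $B$ to replace $I_B/I_B^2$ by its presentation via $\wedge^{t+c-1}G^*\otimes S_{c-2}F\otimes\wedge^tF$, and the Buchsbaum-Rim complex \eqref{BR} to resolve $N=\coker(\varphi^*_{t+c-2})$, from which $I_{A/B}$ inherits a controllable resolution through the regular section $\sigma^*$. The degree hypothesis $a_{t+c-1}>a_t+a_{t+1}-b_1$ is precisely what kills the relevant graded pieces of the $\Ext$-groups arising from these resolutions, while $n-c\ge 1$ (respectively $n-c>1$) supplies enough depth so that the comparison \eqref{NM} converts sheaf $\Ext$ into module $\Ext$. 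The boundary case $n-c=1$ with $a_1>b_t$ requires the sharper form of the degree inequality to compensate for the weaker depth, and the case $n-c=0$ is excluded precisely because $\lambda_c$ is no longer the correct answer, as illustrated in the subsequent remark.
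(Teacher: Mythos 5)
Your overall architecture is the same one the paper uses for the $n-c=1$ boundary case: pass to a flag obtained by deleting columns, use the smoothness of the two projections of the flag scheme to transfer unobstructedness from $Y$ down to $X$, and boil everything down to the vanishing of the single group $\ _0\!\Ext^1_B(I_B/I_B^2,I_{A/B})$ with $B=A_{c-1}$. Where your argument goes off the rails is in how you propose to establish that vanishing.

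You plan to compute $\ _0\!\Ext^1_B(I_B/I_B^2,I_{A/B})$ directly from the Eagon--Northcott resolution of $B$ (note also that $\wedge^{t+c-1}G^*\otimes S_{c-2}F\otimes \wedge^tF$ is the \emph{last} term of that resolution, not the presentation of $I_B$, so the phrasing is already confused). The problem is that $B$ has codimension $c-1$, so the largest degree of a relation of $I_B$ involves the top $t+1$ values of the $a_j$'s attached to $B$, roughly $\sum_{j=c-2}^{t+c-2}a_j-\sum_k b_k - b_1$. Comparing with the smallest degree of a generator of $I_{A/B}$, namely $\sum_{j=1}^{t-1}a_j + a_{t+c-1}-\sum_k b_k$, the vanishing condition becomes approximately $a_{t+c-1} > a_t + a_{t+1} + \cdots + a_{t+c-2}-a_1-\cdots-a_{c-3}-b_1$. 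For $c\ge 4$ this is strictly stronger than the hypothesis $a_{t+c-1}>a_t+a_{t+1}-b_1$ in the theorem, so your argument would only prove a weaker statement. The paper's proof avoids this by exploiting the chain of inclusions $\ _0\!\Ext^1_{A_{c-1}}(I_{A_{c-1}}/I_{A_{c-1}}^2,I_{c-1})\subset\ _0\!\Ext^1_{A_2}(I_{A_2}/I_{A_2}^2,I_{c-1})\subset\ _0\!\Ext^1_R(I_{A_2},I_{c-1})$ (citing \cite[(3.3) and (2.17)]{KM2011}) and computing the \emph{last} group via the Hilbert--Burch resolution of the \emph{codimension $2$} ring $A_2$, whose relations are all in degree at most $\ell_2-b_1$. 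That reduction to $A_2$ is exactly what lets the modest inequality $a_{t+c-1}>a_t+a_{t+1}-b_1$ suffice, and it is the missing step in your sketch.

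Two smaller points. For $n-c>1$ the paper does not run the degree argument at all; it simply cites \cite[Corollary 5.9]{K2014} (i.e.\ Theorem~\ref{Amodulethm3}(ii)) where the depth condition alone gives the conclusion. Your phrasing leaves the impression that the degree inequality is always in play, which it is not. And for (i), invoking Hilbert--Burch to say every codimension $2$ ACM scheme is standard determinantal is fine as a remark, but is not needed: Ellingsrud's theorem together with the dimension count of Theorem~\ref{std_det_case}(ii) already gives that $\overline{W(\underline b;\underline a)}$ is a generically smooth component and that the Hilbert scheme is smooth along it.
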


\begin{proof} (i) See  \cite[Th\'{e}or\`{e}me 2]{E} and \cite[Theorem 5.16]{K2014} for a generalization.

  (ii) Let $n-c\ge 1$. For $c=3$ and $4$ the reader can see \cite[Corollary
  5.10]{KM2005}. The case $c\ge 5$ was proved in \cite[Corollary 3.8]{KM2011}
  under the hypothesis $a_{t+4}>a_t+a_{t+1}-b_1$ improving quite a lot the
  previous results of the authors in \cite[Corollary 5.9]{KM2005}. If $n-c>1$,
  these results are generalized in \cite[Corollary 5.9]{K2014} (which is a
  corollary of Theorem~\ref{Amodulethm3}(ii) below), only assuming $n-c> 1$.

  We will now prove (ii) in the case $n-c = 1$ and
  $a_{t+c-1}>a_t+a_{t+1}-b_1$, generalizing \cite[Corollary 3.8]{KM2011} a
  bit. We get $ \dim \overline {W(\underline{b};\underline{a})}$ from
  Theorem~\ref{std_det_case}. Moreover by successively deleting columns from
  the right hand side of $\cA$, and taking maximal minors, one gets a flag of
  standard determinantal quotients
 $$ A_2\twoheadrightarrow A_3\twoheadrightarrow \cdots \twoheadrightarrow A_c
  =A$$
  and we let $I_{A_i}:=\ker(R\twoheadrightarrow A_i)$ and
  $I_i:=I_{A_{i+1}}/I_{A_i}$. Then since $\dim A_{c-1} = 3$, we get that every
  deformation of $\Proj(A_{c-1})$ comes from deforming its matrix $\cA_{c-1}$
  by Theorem~\ref{Amodulethm3}(ii) below. Due to \cite[Theorem 4.6 and
  Lemma 4.4]{K2011} it suffices to show that
  $\ _0\! \Ext^1_{A_{c-1}}(I_{A_{c-1}}/I_{A_{c-1}}^2,I_{c-1})=0$. Since
  $$\ _0\! \Ext^1_{A_{c-1}}(I_{A_{c-1}}/I_{A_{c-1}}^2,I_{c-1}) \subset \ _0\!
  \Ext^1_{A_{2}}(I_{A_{2}}/I^2_{A_{2}},I_{c-1})$$
  by \cite[(3.3) and (2.17)]{KM2011} and
  $\ _0\! \Ext^1_{A_{2}}(I_{A_{2}}/I^2_{A_{2}},I_{c-1})$ is a subgroup of
  $\ _0\! \Ext^1_{R}(I_{A_{2}},I_{c-1})$, it suffices to show
  $\ _0\! \Ext^1_{R}(I_{A_{2}},I_{c-1})=0$. By the
  Eagon-Northcott resolution \eqref{EN} we see that the largest possible
  degree of a relation for $I_{A_{2}}$ is $\ell_2-b_1$ and the smallest
  possible degree of a generator of $I_{c-1}$ is
  $\ell _c-\sum _{j=t}^{t+c-2} a_{j}$. Since
  $\ell _c= \ell _2 + \sum_{j=t+2}^{t+c-1}a_j$, we get
  $\ _0\! \Ext^1_{R}(I_{A_{2}},I_{c-1})=0$ from
  $$\ell_2-b_1 < \ell _2 + \sum_{j=t+2}^{t+c-1}a_j-\sum _{j=t}^{t+c-2} a_{j}
  = \ell _2 - a_{t} -a_{t+1} + a_{t+c-1} \ , $$
  i.e. from $a_{t+c-1} >a_t+a_{t+1}-b_1$ and we are done.
\end{proof}

\begin{remark} \label{improvement} \rm  (1) It is worthwhile to point out that
  Theorem \ref{std_det_case1} solves Conjecture 4.2 stated by  the authors in
  \cite{KM2011}. Also \cite{FF} and \cite{K2014} proves Conjecture 4.2.

  (2) For $n-c=1$ the hypothesis $a_{t+c-1}>a_t+a_{t+1}-b_1$ in Theorem
  \ref{std_det_case1} cannot be eliminated. In fact, as in \cite[Example
  4.1(ii)]{K2011} we consider the case $t=2$, $a_1=\cdots =a_{c}=1$,
  $a_{c+1}=2$ and $b_1=b_2=0$, i.e., $\cA$ is a $2\times (c+1)$ matrix with
  entries general linear forms everywhere except in the last column where we
  have general quadratic forms. The vanishing of the $2\times 2$ minors of
  $\cA$ defines a smooth irreducible curve $C\subset \PP^{c+1} $ of degree
  $\deg(C)=2c+1$ and genus $g(C)=c$. The degree and the genus of $C$ can be
  computed using the resolution of $I(C)$ given by the Eagon-Northcott complex
  (\ref{EN}):
$$
0 \longrightarrow R(-c-2)^c \longrightarrow R(-c)^{c-1}\oplus R(-c-1)^{c(c-1)}\longrightarrow \cdots $$
$$\longrightarrow R(-2)^{{c\choose 2}}\oplus R(-3)^c\longrightarrow R\longrightarrow R/I(C)\longrightarrow 0.
$$
Since $K_i=0$ for $3\le i \le c$, we have
 $$\dim \overline{W(0,0;1,\cdots , 1,2)}=\lambda  _c=c^2+7c+2.$$ On the other
 hand, $\dim _{(C)}\Hilb^{p_C(t)} (\PP^{c+1})$ is at least
 $$(c+2)\deg (C)+(c-2)(1-g(C))=c^2+8c.$$ Therefore, it follows that, for
 $c\ge 3$, $\overline{W(0,0;1,\cdots, 1,2)}$ is not an irreducible component
 of $\Hilb^{p_C(t)} (\PP^{c+1})$, whence not every deformation of $C$ comes
 from deforming its associated matrix $\cA$.

(3) Let $C\subset \PP^6$ be a smooth standard determinantal curve of
degree $21$ and arithmetic genus $15$ defined by the maximal minors of
a $3\times 7$ matrix $\cA$ with linear entries. The
closure of $W(\underline{b};\underline{a})$ inside $\Hilb
^{21t-14}(\PP^6)$ is not an irreducible  component. In fact, let $
H_{21,15}\subset \Hilb ^{21t-14}(\PP^6)$ be the open subset
parameterizing smooth connected curves of degree $d=21$ and
arithmetic genus $g=15$. It is well known that any irreducible
component of $H_{21,15}$ has dimension $\ge 7d+3(1-g)=105$; while
by Theorem \ref{std_det_case}, $\dim W(0,0,0;1,1,1,1,1,1,1)=90.$ Therefore, there exist deformations of $C$ which do not come from deforming $\cA$.
\end{remark}

To increase applications we will include a series of results that instead of
considering a matrix with general entries we consider standard determinantal
schemes defined by matrices with explicitly depth conditions. These results
could be used to treat for instance Example 2.2(ii). In this example we have
seen that the rational normal curve $C$ in $\PP^n$ is a standard determinantal
scheme whose associated matrix has no general linear entries but it is linear
and the singular locus has enough depth. Many results in our earlier papers,
as well as \cite{FF}, deal with general matrices, while \cite{K2014} proves
most results for determinantal schemes with depth conditions on the locus of
submaximal minors, allowing corollaries for determinantal schemes defined by
general matrices. In fact we prove in \cite{K2014} the results by directly
compare deformations of $R \twoheadrightarrow A$ to those of the $R$-module
$MI$ without deleting columns (except in \cite[Theorem 4.5]{K2014}), hence
entries which are units are allowed. To introduce these results we first
recall the definition of good determinantal scheme.

\begin{definition} \rm
 A codimension $c$ standard determinantal scheme $X\subset \PP^{n}$ is
called a \emph{good determinantal} scheme if its associated $t\times (t+c-1)$ homogeneous matrix $\cA$ contains a $(t-1)\times (t+c-1)$ submatrix
(allowing a change of basis if necessary) whose ideal of maximal
minors defines a scheme of codimension $c+1$.
 \end{definition}

\begin{remark}\label{good-standard} \rm
  It is well known that a good determinantal scheme $X\subset \PP^{n}$ is
  standard determinantal and the converse is true provided $X$ is a generic
  complete intersection (i.e. $\depth_{I_{t-1}(\cA)A}A \ge 1$), cf.
  \cite{KMNP}. Thus good or standard (determinantal) are
  equivalent assumptions in  Theorem~\ref{Amodulethm3} below.
\end{remark}

In \cite{K2014} the first author proved, for $c \ge 2$, the following theorem

\begin{theorem} \label{Amodulethm3} Let $X=\Proj(A) \subset \PP^{n}$,
  $A=R/I_{t}(\cA)$, be a good determinantal scheme of
  $ W(\underline{b};\underline{a})$.

\begin{itemize}
\item[(i)] If
  $\depth_{I_{t-1}(\cA)A}A \ge 3$, or if $n-c \ge 1$ and we
  get a local complete intersection (e.g. a smooth) scheme by deleting some column of $\cA$,
  then $$\dim W(\underline{b};\underline{a}) = \lambda_c + K_3+K_4+...+K_c \ .
 $$
In particular this equality holds if $n-c \ge 1$
 and $a_{i-1} \ge b_i$ for $
2\le i\le t$.
\item[(ii)] If $\depth_{I_{t-1}(\cA)A} A \ge 4$, or if $n - c\ge 2$ and we get
  a local complete intersection (e.g. a smooth) scheme by deleting some column of $\cA$, then the
  Hilbert scheme $\Hilb ^{p_X(t)}(\PP ^n)$ is smooth at $(X)$,
  $\dim_{(X)} \Hilb ^{p_X(t)}(\PP^n) = \dim W(\underline{b};\underline{a})$, and
  every deformation of $X$ comes from deforming $\cA$.
    \end{itemize}
\end{theorem}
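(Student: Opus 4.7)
The plan is to bypass the column-deletion induction and compare the deformation theory of the graded $R$-module $MI=\coker(\varphi^*)$ directly with that of the quotient $R\twoheadrightarrow A=R/I_t(\cA)$. Write $F=\bigoplus_i R(b_i)$, $G=\bigoplus_j R(a_j)$, and recall that $\varphi^*:G^*\to F^*$ presents $MI$ via the Buchsbaum-Rim resolution~\eqref{BR}. First I would compute $\ _0\Ext^1_R(MI,MI)$ by applying $\ _0\Hom_R(-,MI)$ to~\eqref{BR}. The image of $\ _0\Hom_R(F^*,MI)\to\ _0\Hom_R(G^*,MI)$ together with the corresponding ``change of basis on $F^*,G^*$'' directions produces a subspace of dimension exactly $\lambda_c$ (a standard gauge count). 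The contributions of the higher terms $\wedge^{t+i-1}G^*\otimes S_{i-2}F\otimes \wedge^tF$ for $i\ge 3$, paired with $MI$ in internal degree zero, yield precisely the binomial sums $K_3,\dots,K_c$ in~\eqref{lamda} by a Schur-functor bookkeeping matching the shape of the Eagon-Northcott complex~\eqref{EN} of $A$.

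Next I would relate $MI$-deformations to $A$-deformations through the map $\wedge^t\varphi^*:\wedge^tF^*\to\wedge^tG^*$ whose image is $I_A$: any lifting of $\varphi^*$ to a matrix $\cA_T$ over $T$ lifts $I_A$, yielding a natural transformation from $MI$-deformations to $A$-deformations. On the open set $U=X\setminus V(I_{t-1}(\cA))$ the scheme $X$ is a local complete intersection and $\widetilde{MI}|_U$ is locally free, so the isomorphism~\eqref{NM} lets me identify $\ _0\Hom_R(I_A,A)$ with the global sections of an appropriate sheaf $\Ext$ built from $MI$. The depth hypothesis $\depth_{I_{t-1}(\cA)A}A\ge 3$ in~(i) (or its LCI-after-deletion substitute combined with Theorem~\ref{std_det_case}(iii) on $U$) ensures that~\eqref{NM} is an isomorphism in degree $0$ and surjective in degree $1$, making the comparison of tangent spaces surjective. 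Combined with the irreducibility of $W(\underline{b};\underline{a})$ from Lemma~\ref{lemaaux2} and a dimension count of the fibers of the parametrization by $\cA$ modulo its gauge group, this yields the claimed formula $\dim W(\underline{b};\underline{a})=\lambda_c+K_3+\cdots+K_c$, proving~(i).

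For~(ii) the extra unit in the depth bound promotes~\eqref{NM} to an isomorphism also for $\Ext^1$, so that the obstruction space $\ _0\Ext^1_A(I_A/I_A^2,A)$ is computed sheaf-theoretically on $U$. A direct Buchsbaum-Rim computation on $U$, where everything is locally free, shows this obstruction vanishes; hence $\Hilb^{p_X(t)}(\PP^n)$ is smooth at $(X)$ with tangent dimension equal to $\dim W(\underline{b};\underline{a})$ by~(i). By Lemma~\ref{unobst} this smoothness and matching of dimensions implies that $\overline{W(\underline{b};\underline{a})}$ is a generically smooth component of the Hilbert scheme and that every deformation of $X$ comes from deforming $\cA$. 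In the alternative hypothesis ($n-c\ge 2$ with LCI-after-deleting-a-column), the deleted-column ring $A_{c-1}$ satisfies $\dim A_{c-1}\ge 3$ and is a regular section of a nice sheaf over it producing $A$, so one transfers smoothness via the projection $p_2$ on the Hilbert-flag scheme, reducing to the previously settled lower-codimension case.

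The main obstacle I anticipate is the middle bookkeeping step: identifying precisely which cohomological pieces of the Buchsbaum-Rim resolution, once paired with $MI$ in internal degree $0$, produce the exact binomial expressions defining $K_3,\dots,K_c$, and in parallel verifying that the numerical depth thresholds $3$ and $4$ are sharp for the isomorphism/injectivity assertions of~\eqref{NM} on $\Hom$ and $\Ext^1$ respectively. Everything else, including the reduction in the LCI-after-deletion alternative and the conclusion via Lemma~\ref{unobst}, is comparatively formal once the comparison~\eqref{NM} is under control.
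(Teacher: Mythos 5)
The paper does not give an argument for this statement at all: the proof is a citation to \cite[Theorem 5.5, Corollary 5.6 and Theorem 5.8]{K2014}. That said, the paper's subsequent Theorems~\ref{Amodulethm5}, \ref{Amodulecor1} and \ref{Amodulecor2} carry out, with full proofs, exactly the strategy your sketch gestures at (compare $\mathrm{Def}_{MI/R}$ with $\mathrm{Def}_{A/R}$, control the tangent map via the comparison \eqref{NM} under depth hypotheses on $I_{t-1}(\cA)$), and they even strengthen Theorem~\ref{Amodulethm3} by lowering the depth thresholds to $2$ and $3$. So your general plan is the one the authors find preferable, and it is consistent with the paper.

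There is, however, a genuine gap in your part (ii). You claim that under the depth-$4$ hypothesis ``a direct Buchsbaum-Rim computation on $U$, where everything is locally free, shows this obstruction vanishes'', i.e.\ that $\ _0\Ext^1_A(I_A/I_A^2,A)=0$. That is neither what the paper proves nor, in this generality, known to be true. The paper's route to smoothness of the Hilbert scheme never establishes the vanishing of the obstruction \emph{group}. Instead, using the spectral sequence \eqref{specseq} with $E_2^{0,1}\simeq\Hom_R(I(X),A)$, the depth hypothesis is used to show $\Ext_A^1(MI,MI)=\Ext_A^2(MI,MI)=0$ (Theorem~\ref{Amodulecor2}), which yields the isomorphism $\Ext_R^1(MI,MI)\cong\Hom_R(I(X),A)$ on tangent spaces. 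Since $\mathrm{Def}_{MI/R}$ is unobstructed (from the Buchsbaum-Rim resolution) and the tangent map $\psi$ of Lemma~\ref{lemma2} is an isomorphism, $\mathrm{Def}(\psi)$ is smooth and therefore $\mathrm{Def}_{A/R}$ is smooth --- i.e.\ the Hilbert scheme is smooth at $(X)$ \emph{without} ever asserting $\Ext^1_A(I_A/I_A^2,A)=0$. Indeed, the paper only obtains the vanishing $\Ext^1_A(I_A/I_A^2,A)=0$ \emph{conditionally} on Conjecture~\ref{conjdepthNb}(i), via Proposition~\ref{conj3ext}(i), and with stronger numerical thresholds ($\dim A\ge 5$ when $c=1$). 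If you want to salvage your argument for (ii) you should replace the claimed vanishing of the obstruction group with the vanishing of $\Ext_A^1(MI,MI)$ and $\Ext_A^2(MI,MI)$ and conclude via the spectral sequence and Lemma~\ref{lemma2} as above.

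A smaller issue: in (i) you treat the identification of $\dim\ _0\Ext_R^1(MI,MI)=\lambda_c+K_3+\cdots+K_c$ as ``Schur-functor bookkeeping'' and the dimension equality $\dim W(\underline b;\underline a)=\dim\ _0\Ext_R^1(MI,MI)$ as a ``gauge count of fibers.'' Those two steps are precisely where the work lies. The computation of $\dim\ _0\Ext_R^1(MI,MI)$ is the content of \cite[Theorem 3.1]{K2014} (see also Remark~\ref{modulerem} and \eqref{homext}), and the passage from the upper bound (Remark~\ref{differentproofs} gives $\dim W\le\ _0\ext_R^1(MI,MI)$) to the equality requires either the dominance of the parametrization together with unobstructedness of $MI$, or the surjectivity of $\psi$ --- you should say which, since without it you only obtain the inequality and not the claimed dimension formula.
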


\begin{proof} See\cite[Theorem 5.5, Corollary 5.6 and Theorem 5.8]{K2014}.
\end{proof}

Here we say that we get a local complete intersection (shortly, l.c.i.) scheme
by deleting some column if
\begin{equation} \label{delsmooth}
\mathfrak m = \sqrt{I_{t-1}(\cB)},
\end{equation}
where $\cB $ is the matrix obtained by deleting a column of $\cA$. It implies
$ \sqrt{ I_{t-1}(\cB)} = \sqrt{I_{t-1}(\cA)}$ and
$\depth_{I_{t-1}(\cA)A}A = \dim A$. 
Using this notion we succeeded in \cite{K2014},
with a rather complicated proof (see \cite[Theorem 4.5]{K2014}), to weaken the
assumption $\depth_{I_{t-1}(\cA)A}A \ge j$ as above in several theorems. In
this paper we  generalize Theorem~\ref{Amodulethm3} further and by
a much easier proof. Indeed, we will prove

\begin{theorem} \label{Amodulethm5} Let $X=\Proj(A) \subset \PP^{n}$,
  $A=R/I_{t}(\cA)$ be a standard determinantal scheme of
  $ W(\underline{b};\underline{a})$. 

\begin{itemize}
\item[(i)] If $\depth_{I_{t-1}(\cA)A}A \ge 2$ and $c \ge 2$, {\rm or}
  $\depth_{I_{t-1}(\cA)A}A \ge 3$ and $c = 1$, then
  $$\dim W(\underline{b};\underline{a}) = \lambda_c + K_3+K_4+...+K_c \ .
 $$

\item[(ii)] If $\depth_{I_{t-1}(\cA)A} A \ge 3$ and $c \ge 2$, then the
  Hilbert scheme $\Hilb ^{p_X(t)}(\PP ^n)$ is smooth at $(X)$,
  $$\dim_{(X)} \Hilb ^{p_X(t)}(\PP^n) = \dim W(\underline{b};\underline{a})\,
  ,$$ and
  every deformation of $X$ comes from deforming $\cA$.
    \end{itemize}
\end{theorem}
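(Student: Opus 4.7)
My plan is to adapt the strategy of \cite[Theorems 5.5, 5.8]{K2014}, but to replace the technical reduction of \cite[Theorem 4.5]{K2014} by a direct application of the local-to-global comparison \eqref{NM} at the sharp depth threshold. Let $Z \subset X$ be the closed subscheme cut out by $I_{t-1}(\cA)A$ and set $U := X \setminus Z$. On $U$ the matrix $\cA$ has rank $\ge t-1$ everywhere, so $X|_U$ is a local complete intersection, $\widetilde{MI}|_U$ and $\widetilde{I_A/I_A^2}|_U$ are locally free, and the Buchsbaum-Rim complex \eqref{BR} restricts on $U$ to a free resolution of $\widetilde{MI}|_U$. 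Since $\depth_{I(Z)} A \ge 2$ (resp.\ $\ge 3$), \eqref{NM} with $L = A$ is an isomorphism at $i = 0$ (resp.\ at $i = 0, 1$) and an injection one step further.

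For part (i), applying \eqref{NM} with $N = I_A/I_A^2$ yields
$$
\ _0\!\Hom_A(I_A/I_A^2, A) \;\cong\; H^0_*(U, \mathcal{N}_{X|U}).
$$
I would then compute the right-hand side using the Buchsbaum-Rim / Eagon-Northcott complexes \eqref{EN}--\eqref{BR} restricted to $U$, and identify the result with the tangent space to the parameter space of matrices defining members of $W(\underline{b};\underline{a})$ modulo the natural $\mathrm{GL}(F) \times \mathrm{GL}(G)$-action; the count gives $\lambda_c + K_3 + \cdots + K_c$. Together with the upper bound of Theorem \ref{std_det_case}(i), whose proof only uses the free-resolution structure of the relevant modules and carries over to the present depth assumption, this settles (i). The asymmetry between $c \ge 2$ and $c = 1$ appears because for $c = 1$ the leading term of the Buchsbaum-Rim complex is absent, so $MI$ has a less rigid cohomological behaviour and one extra unit of depth is required to compensate.

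For part (ii), the hypothesis $\depth_{I_{t-1}(\cA)A} A \ge 3$ upgrades \eqref{NM} to an isomorphism
$$
\ _0\!\Ext^1_A(I_A/I_A^2, A) \;\cong\; H^1_*(U, \mathcal{N}_{X|U}).
$$
Since the parameter space of matrices is formally smooth and its tangent map to $\ _0\!\Hom_A(I_A/I_A^2, A)$ is surjective by part (i), standard obstruction-theory in the spirit of Lemma~\ref{unobst} forces every class on the right-hand side to be killed by a matrix deformation, whence $\ _0\!\Ext^1_A(I_A/I_A^2, A) = 0$. Via the identification \eqref{Grad} this gives simultaneously smoothness of $\Hilb^{p_X(t)}(\PP^n)$ at $(X)$, the dimension formula inherited from (i), and the statement that every deformation of $X$ comes from deforming $\cA$.

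The main obstacle I expect is confirming that no obstruction class in $\ _0\!\Ext^1_A(I_A/I_A^2, A)$ is supported on the singular locus $Z$ at this borderline depth. In \cite{K2014} one gained an extra unit of depth by an inductive column-deletion argument, whereas here one must read the obstruction directly on $U$ through \eqref{NM}; the hypothesis $\depth_{I_{t-1}(\cA)A} A \ge 3$ is exactly what makes the injectivity/isomorphism step in \eqref{NM} sharp enough to rule out $Z$-supported classes. This is also why the $c = 1$ analogue of (ii) is not claimed: it would require $\depth \ge 4$ to close the same diagram.
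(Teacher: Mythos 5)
There is a genuine gap, and it is in part (i). Your plan is to compute $\dim\,_0\!\Hom_A(I_A/I_A^2,A)$ directly via \eqref{NM} and the Buchsbaum--Rim/Eagon--Northcott resolutions on $U$, and to identify it with the matrix-parameter count $\lambda_c+K_3+\cdots+K_c$. But $\dim\,_0\!\Hom_A(I_A/I_A^2,A)=\dim\,_0\!\Hom_R(I_A,A)$ is the dimension of the \emph{full} tangent space of $\Hilb^{p_X(t)}(\PP^n)$ at $(X)$, and under the hypotheses of (i) ($\depth_{I_{t-1}(\cA)A}A\ge 2$, $c\ge 2$) it is in general strictly larger than $\lambda_c+K_3+\cdots+K_c$. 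Remark~\ref{improvement}(2) in the paper gives an explicit example: for $t=2$, $\underline{a}=(1,\ldots,1,2)$, $\underline{b}=(0,0)$ and $c\ge 3$ the scheme is a smooth curve $C\subset\PP^{c+1}$ with $\depth_{I_{t-1}(\cA)A}A=\dim A=2$, yet
\[
\dim\,_0\!\Hom_R(I(C),A)\ \ge\ \dim_{(C)}\Hilb^{p_C(t)}(\PP^{c+1})\ \ge\ c^2+8c\ >\ c^2+7c+2=\lambda_c,
\]
so the purported equality fails even though (i) holds (namely $\dim W=\lambda_c$). The moral is that (i) does \emph{not} assert anything about the tangent space of the Hilbert scheme; it only asserts the dimension of the locus $W(\underline{b};\underline{a})$.

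The paper's route for (i) is therefore substantively different from yours. One never tries to identify $_0\!\Hom_R(I_A,A)$ with the matrix-parameter count. Instead one works with the module $MI=\coker(\varphi^*)$, whose deformation theory is genuinely controlled by Buchsbaum--Rim: $MI$ is unobstructed and $\dim\,_0\!\Ext_R^1(MI,MI)=\lambda_c+K_3+\cdots+K_c$ (Theorem~\ref{Amodulecor1}(iii)). Then, via the change-of-rings spectral sequence \eqref{specseq}, the depth hypothesis yields $\Ext_A^1(MI,MI)=0$ (Theorem~\ref{Amodulecor2}, proved by applying $H^0_{I(Z)}(-)$ and using $\depth\Ext_R^1(MI,MI)\ge\dim A-1$), giving only an \emph{injection} $\Ext_R^1(MI,MI)\hookrightarrow\Hom_R(I(X),A)$. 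This injection is the tangent map of $\mathrm{Def}_{MI/R}\to\mathrm{Def}_{A/R}$, and unobstructedness of the source of dimension $\lambda_c+K_3+\cdots+K_c$ gives the lower bound on $\dim W$, while the upper bound is the rational-map/orbit-count argument of Theorem~\ref{ineqdimW}. The difference is crucial: you need an injective tangent map from a smooth $(\lambda_c+\cdots)$-dimensional source, not a computation of the target.

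The gap propagates to your part (ii). You invoke surjectivity of the tangent map onto $_0\!\Hom_A(I_A/I_A^2,A)$ ``by part (i)'', but your (i) does not establish surjectivity (and cannot, since (i)'s hypothesis is too weak for it). To get surjectivity under $\depth\ge 3$ you must also kill $\Ext_A^2(MI,MI)$, so that the spectral sequence upgrades the injection to an isomorphism $\Ext_R^1(MI,MI)\cong\Hom_R(I(X),A)$; this is exactly the extra content of Theorem~\ref{Amodulecor2} at depth $3$. Only then does Lemma~\ref{lemma2} give that every deformation of $A$ comes from deforming $\cA$, and Corollary~\ref{unobstr} plus Lemma~\ref{unobst} convert this into smoothness of $\Hilb^{p_X(t)}(\PP^n)$ at $(X)$ and the component statement. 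Your sentence asserting $\,_0\!\Ext^1_A(I_A/I_A^2,A)=0$ is also stronger than what is needed or proved: unobstructedness of $A$ follows from smoothness of $\mathrm{Def}(\psi)$ and smoothness of $\mathrm{Def}_{MI/R}$ without any claim that the obstruction space itself vanishes. In short, the missing ingredient throughout is the passage through $MI$ and the spectral sequence; the local cohomology comparison \eqref{NM} alone does not see that $H^0_*(U,\mathcal{N}_{X|U})$ may strictly contain the image of the matrix-parameter tangent space.
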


To show it we need to generalize the following result from \cite{K2014}
(avoiding \cite[Theorem 4.5]{K2014}) in a similar way, i.e. by replacing
``$\depth_{I_{t-1}(\cA)A}A \ge j$, or $\dim A \ge j-1$ provided we get an
l.c.i. (e.g. a smooth) scheme by deleting some column of $\cA$'' by
``$\depth_{I_{t-1}(\cA)A}A \ge j-1$'' for $j=3, 4$.

\begin{theorem} \label{Amodulecor1} Let $A=R/I_{t}(\cA)$ be a standard
  determinantal ring, let $MI= \coker (\varphi ^* )$ and suppose

\begin{itemize}
\item[(i)] either $\depth_{I_{t-1}(\cA)A}A \ge 3$, or $\dim A \ge 2$
  provided we get an l.c.i. (e.g. a smooth) scheme by deleting some column of
  $\cA$. Then $\Hom_A(MI,MI) \simeq A$ and
 $$ \Ext_A^1(MI,MI) = 0\ .$$
\item[(ii)]  either
   $\depth_{I_{t-1}(\cA)A}A \ge 4$, or $\dim A \ge 3$ provided we get an
   l.c.i. (e.g. a smooth) scheme by deleting some column of $\cA$. Then
   $\Hom_A(MI,MI) \simeq A$ and
 $$ \Ext_A^i(MI,MI) = 0 \ \ {\rm \ for \ } 1\le i\le 2\ .   $$
 In particular \ \ \ $ \Ext_R^1(MI,MI) \simeq \Hom_R(I_t(\cA ),A)\,.$ \\[-3mm]
\item[(iii)] $\depth_{I_{t-1}(\cA)A}A \ge 1$. Then the local deformation
  functor, ${\rm Def}_{MI/R}$, of $MI$ as a graded $R$-module is formally smooth
  (i.e. $MI$ is unobstructed), $\depth \Ext_R^1(MI,MI) \ge \dim A -1$, and
 $$\dim \ _0\! \Ext_R^1(MI,MI) = \lambda_c + K_3+K_4+...+K_c \ . $$
     \end{itemize}
\end{theorem}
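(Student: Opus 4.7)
The plan is to reduce everything to sheaf cohomology on the open set $U = X \setminus V(I_{t-1}(\cA)A)$ with $X = \Proj(A)$, where $\widetilde{MI}|_U$ is a \emph{line bundle}. Indeed, on $U$ the map $\varphi^*$ has constant rank $t-1$, so $\widetilde{MI}|_U$ is a rank-$1$ locally free sheaf, and hence
$\cH om_{\cO_U}(\widetilde{MI}|_U, \widetilde{MI}|_U) \simeq \cO_U$.
All the $\Ext^i_A(MI,MI)$ computations then reduce via the comparison map \eqref{NM} to the local cohomology groups $H^i_*(U, \cO_U) \simeq H^{i+1}_{I_Z}(A)_*$, where $Z = V(I_{t-1}(\cA)A)$. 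Before applying \eqref{NM} I first transfer the depth hypothesis on $A$ to the same bound on $MI$, using Bruns' maximal-Cohen-Macaulayness result for $MI$ from \cite{B} (equivalently by chasing the Buchsbaum-Rim resolution \eqref{BR}). In the l.c.i.-by-deletion alternative, condition \eqref{delsmooth} forces $\depth_{I_Z}A = \dim A$, giving the required bound from $\dim A \ge j-1$.

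For parts (i) and (ii), the comparison \eqref{NM} with $N = L = MI$ and depth bound $\ge j$ (with $j=3$ for (i), $j=4$ for (ii)) yields
$$\Ext^i_A(MI,MI) \simeq H^i_*(U,\cO_U) \simeq H^{i+1}_{I_Z}(A)_* \text{\ \ for\ } i < j-1,$$
with an injection at $i = j-1$. Since $A$ is Cohen-Macaulay, $H^{i+1}_{I_Z}(A) = 0$ for $i+1 < \depth_{I_Z}A$, which gives $\Hom_A(MI,MI) \simeq A$ (using $\depth_{\mathfrak m}A \ge 2$, automatic from ACM) together with $\Ext^1_A(MI,MI) = 0$ in (i) and additionally $\Ext^2_A(MI,MI) = 0$ in (ii). In the l.c.i.-by-deletion alternative, only $\depth_{I_Z}A = \dim A \ge j-1$ is available, which is one short of the naive requirement of \eqref{NM}; the gap is bridged by the injection $\Ext^{j-1}_A \hookrightarrow H^{j-1}_*(U,\cO_U)$ coupled with a direct Koszul-type vanishing of $H^{j-1}_*(U,\cO_U)$ exploiting the complete-intersection structure of $V(I_{t-1}(\cB))$. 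The identification $\Ext^1_R(MI,MI) \simeq \Hom_R(I_t(\cA),A)$ in (ii) then follows from the change-of-rings spectral sequence $\Ext^p_A(\Tor^R_q(MI,A),MI) \Rightarrow \Ext^{p+q}_R(MI,MI)$ combined with Bruns' duality $\Hom_R(MI,A) \simeq I_t(\cA)$ (up to a degree shift) and the just-established vanishing of $\Ext^{0,1}_A(MI,MI)$.

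For part (iii), the formal smoothness of ${\rm Def}_{MI/R}$ reduces to a matrix-lifting argument: by Nakayama applied to the Buchsbaum-Rim resolution \eqref{BR}, every infinitesimal deformation $MI_T$ of $MI$ admits a presentation matrix $\varphi^*_T$ lifting $\varphi^*$, and such matrix liftings exist freely to any Artin base, so ${\rm Def}_{MI/R}$ is formally smooth and $MI$ is unobstructed. The tangent-space dimension formula $\dim\, _0\!\Ext^1_R(MI,MI) = \lambda_c + K_3 + \cdots + K_c$ is then obtained by counting matrix entries modulo the base-change action of $\Aut(F) \times \Aut(G)$ (yielding the terms of $\lambda_c$) and adding higher-syzygy corrections from \eqref{EN} (producing the $K_i$), and it matches Theorem~\ref{std_det_case}(iii) as expected. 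The depth bound $\depth \Ext^1_R(MI,MI) \ge \dim A - 1$ follows by applying $\Hom_R(-,MI)$ to \eqref{BR} and tracking depth through the short exact sequences of the resolution. The main obstacle will be the l.c.i.-by-deletion alternative in parts (i) and (ii): with only $\dim A \ge j-1$, the application of \eqref{NM} is borderline and the required vanishing of $H^{j-1}_*(U,\cO_U)$ needs a careful local analysis at $Z$ that replaces the heavier machinery of \cite[Theorem 4.5]{K2014}.
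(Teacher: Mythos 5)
The paper's own proof of this theorem is a one-line citation to \cite{K2014}, so the only internal comparison is with Theorem~\ref{Amodulecor2}, which is a refinement of (i)--(ii) proved by a different route. Your argument for (i) and (ii) under the pure hypothesis $\depth_{I_{t-1}(\cA)A}A \ge 3$ (resp.\ $\ge 4$) is sound: $MI$ is maximal Cohen--Macaulay over $A$ by the Buchsbaum--Rim resolution \eqref{BR} of length $c$, so $\depth_{I(Z)}MI = \depth_{I(Z)}A$, and then \eqref{NM} with $N=L=MI$ and $\widetilde{MI}|_U$ invertible gives $\Ext^i_A(MI,MI)\cong H^i_*(U,\cO_U)$ in the required range, which vanishes by Cohen--Macaulayness of $A$. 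The unobstructedness of ${\rm Def}_{MI/R}$ via lifting the Buchsbaum--Rim presentation matrix also matches what the paper records in Remark~\ref{modulerem}(i).

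There are, however, two genuine gaps, the first of which you flag yourself. In the l.c.i.-by-deletion alternative one has $\mathfrak m=\sqrt{I_{t-1}(\cA)}$, hence $Z=\emptyset$ and $\depth_{I(Z)}A=\dim A\ge j-1$, one short of what \eqref{NM} needs: for $\dim A=j-1$ one gets only an injection $\Ext^{j-2}_A(MI,MI)\hookrightarrow H^{j-2}_*(X,\cO_X)\cong H^{j-1}_{\mathfrak m}(A)$, and this last group is \emph{nonzero} for $A$ Cohen--Macaulay of dimension $j-1$, so the announced ``Koszul-type vanishing'' cannot hold as stated. The fix is not a local analysis at $Z$ but the mechanism the paper uses to prove Theorem~\ref{Amodulecor2}: the change-of-rings spectral sequence \eqref{specseq} gives an injection $\Ext^1_A(MI,MI)\hookrightarrow\Ext^1_R(MI,MI)$; since $\widetilde{MI}$ is invertible off $V(I(Z))$, $\Ext^1_A(MI,MI)$ is $I(Z)$-torsion, i.e.\ equals $H^0_{I(Z)}(\Ext^1_A(MI,MI))$; and this vanishes because part (iii) shows $\depth_{I(Z)}\Ext^1_R(MI,MI)\ge\depth_{I(Z)}A-1\ge1$. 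Second, the dimension formula in (iii) is not actually proved: ``counting matrix entries modulo the base-change action of $\Aut(F)\times\Aut(G)$'' computes (an upper bound for) $\dim W(\underline{b};\underline{a};1)$, not $\dim\,_0\!\Ext^1_R(MI,MI)$ (cf.\ Remark~\ref{differentproofs}), and the $K_i$ do not come from \eqref{EN} but from the identity $\dim\,_0\!\Hom_R(B_c,B_c)=1+K_3+\cdots+K_c$, $B_c=\coker\varphi$, which is a separate nontrivial lemma from \cite{KM2005}; the honest computation of $\dim\,_0\!\Ext^1_R(MI,MI)$ is via the four-term exact sequence \eqref{homext} obtained by applying $\Hom_R(-,MI)$ to \eqref{BR}. (A minor point: ``Bruns' duality $\Hom_R(MI,A)\simeq I_t(\cA)$'' is not what the ``in particular'' of (ii) rests on; one identifies $E_2^{0,1}\cong\Hom_R(I(X),A)$ directly from $\Tor_1^R(A,MI)\cong I(X)\otimes_R MI$, Hom--tensor adjunction, and $\Hom_A(MI,MI)\cong A$.)
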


\begin{proof} See \cite[Corollaries 4.7, 4.9 and Theorem 5.2]{K2014} for (i)
  and (ii) and \cite[Theorem 3.1]{K2014} for (iii). Note that $c \ge 2$ in
  \cite{K2014}, but the proof in \cite[Theorem 3.1]{K2014} easily applies to
  $c=1$, leading to
\begin{equation} \label{thm31c1}
\depth \Ext_R^1(MI,MI) \ge \dim A-2, \ \ {\rm and \ } \
 \dim\, _0\! \Ext_R^1(MI,MI) = \lambda_1\
\end{equation}
when $\depth_{I_{t-1}(\cA)A}A \ge 2$ because then $ \Hom_R(MI,MI) \simeq
A$ by \eqref{NM} with $Z=V(I_{t-1}(\cA)A)$.
 \end{proof}

 \begin{remark} \label{modulerem} \rm (i) By deformation theory
   $\ _0\! \Ext_R^1(MI,MI)$ is the tangent space of the local deformation
   functor ${\rm Def}_{MI/R}$. Moreover, using the Buchsbaum-Rim complex, we
   get that $MI$ is unobstructed and that every deformation of $MI$ comes from
   deforming $\cA$ (see the $1^{st}$ paragraph of the proof of \cite[Theorem
   3.1]{K2014}). The unobstructedness of $MI$ is also proved in Ile's thesis
   (see \cite{I01}, ch.\! 6).

   (ii) By observing that the map
   $$d_1: \wedge^{t+1}G^* \otimes S_{0}(F)\otimes \wedge^tF \to G^*$$ appearing
   in the Buchsbaum-Rim complex \eqref{BR} satisfies
   $\im d_1 \subset I_A \cdot G^*$ for  $c\ge 2$ ($d_1=0$ if $c=1$), we get
   $\ _0\! \Hom_R(d_1,MI)=0$ since $MI$ is an $A$-module. Applying
   $ \Hom_R(-,MI)$ onto \eqref{BR}, the definition of $\ _0\! \Ext_R^i(MI,MI)$
   yields, for $c \ge 1$, an exact sequence
\begin{equation} \label{homext}
  0 \to  \ _0\! \Hom_R(MI,MI) \to\ _0\! \Hom_R(F^*,MI) \to \ _0\!
  \Hom_R(G^*,MI) \to \ _0\! \Ext_R^1(MI,MI) \to 0 \ ,
\end{equation}
determining $\dim\, _0\! \Ext_R^1(MI,MI)$, and an injection
\begin{equation} \label{moduleremseq}
 \Ext_R^2(MI,MI) \hookrightarrow \  \Hom_R( \wedge^{t+1}G^* \otimes
S_{0}(F)\otimes \wedge^tF,MI)\ .
\end{equation}
\end{remark}

\vspace{3mm}
To see that the generalized version of Theorem~\ref{Amodulecor1} leads to
Theorem~\ref{Amodulethm5} we use the exact sequence
\begin{equation} \label{specseq} 0 \to \Ext_A^1(MI,MI) \to \Ext_R^1(MI,MI) \to
E_2^{0,1} \to \Ext_A^2(MI,MI) \to \Ext_R^2(MI,MI)\to E_2^{1,1} \to \
\end{equation}
associated to the spectral sequence
$$E_2^{p,q}:=
\Ext_A^p(\Tor_q^R(A,MI),MI) \ \ \Rightarrow  \ \ \Ext_R^{p+q}(MI,MI),$$
noting that $E_2^{p,0} \simeq \Ext_A^p(MI,MI)$ and that
$ \Tor_q^R(A,MI) \simeq \Tor_{q-1}^R(I({X}),MI)$ for $q \ge 1$. We get
$$E_2^{0,1} \simeq \Hom_A(I(X) \otimes_R MI,MI) \simeq \Hom_R(I(X),\Hom_R(MI,MI))
\simeq \Hom_R(I(X),A), $$
and looking carefully (cf. \cite[Lemma 3.3]{K2018}) we may identify
$\Ext_R^1(MI,MI) \to E_2^{0,1}$ with the tangent map from ${\rm Def}_{MI/R}$ into
the functor of graded deformations of $R \to A$ induced by Fitting's lemma,
cf. \cite[Corollary 20.4]{eise}. Hence, it suffices to prove

\begin{theorem} \label{Amodulecor2} Let $A=R/I_{t}(\cA)$ be a standard
  determinantal ring, let $MI= \coker (\varphi ^* )$ and suppose $c = 1$ and
  $\depth_{I_{t-1}(\cA)A}A \ge 3$, {\rm or } $c \ge 2$ and
  $\depth_{I_{t-1}(\cA)A}A \ge 2$ (resp. $\depth_{I_{t-1}(\cA)A}A \ge 3$).
  Then $\Hom_A(MI,MI) \simeq A$ and
 $$ \Ext_A^i(MI,MI) = 0 \ {\rm for} \ \ i=1 \ {\rm (resp.} \ \  i=1  {\rm \ and} \ 2) \ .$$
 In particular, if $\depth _{I_{t-1}(\cA )A}A\ge 3$ and $c \ge 2$, then
 $\Ext ^1_R(MI,MI)\simeq \Hom_R(I_t(\cA),A)$.
\end{theorem}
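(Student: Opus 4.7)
The strategy is to combine the local-to-global comparison \eqref{NM}, the change of rings spectral sequence \eqref{specseq}, and the descriptions \eqref{homext}, \eqref{moduleremseq} of $\Ext^1_R(MI,MI)$ and $\Ext^2_R(MI,MI)$ coming from the Buchsbaum-Rim complex \eqref{BR}. Set $Z := V(I_{t-1}(\cA)A)$ and $U := X \setminus Z$. Since the Buchsbaum-Rim complex is a minimal $R$-free resolution of $MI$ of length $c$, Auslander-Buchsbaum gives $\depth MI = \dim A$, i.e.\ $MI$ is maximal Cohen-Macaulay over $A$, so $\depth_{I(Z)} MI = \depth_{I(Z)} A \ge 2$ in all cases of the theorem. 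On $U$ the map $\varphi^*$ has constant rank $t - 1$, so $\widetilde{MI}|_U$ is invertible and $\mathcal{H}om(\widetilde{MI}, \widetilde{MI})|_U = \mathcal{O}_U$.

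First I would prove $\Hom_A(MI,MI) \simeq A$: since $MI$ is annihilated by $I_A$, $\Hom_A(MI,MI) = \Hom_R(MI,MI)$, and \eqref{NM} applied with $r = 1$ (first to $N = L = MI$, then to $N = L = A$) gives $\Hom_R(MI,MI) \simeq H^0_*(U, \mathcal{O}_U) \simeq A$. The case $c = 1$, $\depth_{I(Z)} A \ge 3$ is then immediate: \eqref{NM} with $r = 2$ yields $\Ext^1_A(MI,MI) \simeq H^1_*(U, \mathcal{O}_U)$, and applying \eqref{NM} once more to $N = L = A$ (using $\Ext^1_A(A, A) = 0$) gives $H^1_*(U, \mathcal{O}_U) = 0$. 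For the core case $c \ge 2$, $\depth \ge 2$, use the five-term exact sequence of \eqref{specseq}, whose $E_2^{0,1}$-term simplifies to $\Hom_R(I_A, A)$ thanks to $\Hom_A(MI,MI) \simeq A$:
\begin{equation*}
0 \to \Ext^1_A(MI,MI) \to \Ext^1_R(MI,MI) \xrightarrow{\tau} \Hom_R(I_A, A).
\end{equation*}
By \eqref{homext}, every class in $\Ext^1_R(MI,MI)$ is represented by a map $\psi\colon G^* \to MI$, which lifts to $\widetilde\psi\colon G^* \to F^*$ giving a first-order matrix deformation $\delta\cA = \widetilde\psi$ (cf.\ Remark \ref{modulerem}(i)); the map $\tau$ sends $[\psi]$ to the induced first-order deformation of $I_t(\cA)$. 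Injectivity of $\tau$ reduces to showing that any $\delta\cA$ with $I_t(\cA + \epsilon\,\delta\cA) \equiv I_t(\cA) \pmod{\epsilon^2}$ must have the form $\delta\cA = P\cA + \cA Q$ for some $P \in \End_R(F)_0$, $Q \in \End_R(G)_0$, since such row-and-column shifts lie in the image of $\Hom_R(F^*,MI) \to \Hom_R(G^*,MI)$ and so become trivial in $\Ext^1_R(MI,MI)$.

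For $\Ext^2_A(MI,MI) = 0$ under $\depth \ge 3$, $c \ge 2$, I would continue the five-term sequence: a sharper version of the same row-column argument (now permitted by the stronger depth hypothesis) shows $\tau$ is surjective, giving $\Ext^2_A(MI,MI) \hookrightarrow \Ext^2_R(MI,MI)$, which combined with \eqref{moduleremseq} embeds the group into $\Hom_R(\wedge^{t+1} G^* \otimes S_0(F) \otimes \wedge^t F, MI)$. A degree/depth analysis using \eqref{NM} and the MCM property of $MI$ then forces the image to vanish. The ``in particular'' statement follows by substituting these vanishings back into \eqref{specseq}. The main obstacle is the row-column claim: showing that an infinitesimal matrix deformation preserving $I_t(\cA)$ must be an infinitesimal row-column operation, under the weakened hypothesis $\depth_{I_{t-1}(\cA)A} A \ge 2$ (resp.\ $\ge 3$) with $c \ge 2$. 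This replaces the ``$\dim A \ge j-1$ plus l.c.i.\ by deleting a column'' alternative used in \cite[Theorem 4.5]{K2014}, and is precisely where the assumption $c \ge 2$ becomes essential---the extra column provides the freedom needed to realize an ideal-preserving deformation as a row-column change, a freedom unavailable when $c = 1$.
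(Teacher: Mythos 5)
Your treatment of $\Hom_A(MI,MI) \simeq A$ and of the case $c=1$, $\depth_{I(Z)}A \ge 3$ via \eqref{NM} is fine (the paper cites \cite[Lemma 3.2]{KM2005} for the $\Hom$ isomorphism, but your direct use of \eqref{NM} is equally valid). The problem is in the central cases.

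For $c \ge 2$ with $\depth_{I(Z)}A \ge 2$ (resp.\ $\ge 3$), you reduce $\Ext^1_A(MI,MI)=0$ to injectivity of the edge map $\tau\colon \Ext^1_R(MI,MI) \to \Hom_R(I_A,A)$, and then reformulate that injectivity as a ``row-column claim'': any first-order matrix deformation $\delta\cA$ inducing the trivial deformation of $I_t(\cA)$ must be of the form $P\cA + \cA Q$. This reformulation is correct but circular in substance -- it is exactly equivalent to the vanishing of $\Ext^1_A(MI,MI)$, via the identification of $\tau$ with the tangent map ${\rm Def}_{MI/R} \to {\rm Def}_{A/R}$ made in Remark~\ref{modulerem}(i), so you have restated the theorem rather than proved it. You acknowledge this yourself (``the main obstacle is the row-column claim''), and the heuristic appeal to the ``extra column'' giving freedom when $c\ge 2$ does not constitute a proof, particularly under the weak hypothesis $\depth_{I(Z)}A\ge 2$. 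The same issue recurs in your sketch for $\Ext^2_A$: you \emph{assume} $\tau$ surjective rather than derive it, and the closing ``degree/depth analysis'' is left unspecified.

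The ingredient your argument is missing is the depth estimate for the full $R$-module $\Ext^1_R(MI,MI)$, namely $\depth_{I(Z)}\Ext^1_R(MI,MI) \ge \depth_{I(Z)}A - 1$, which the paper draws from Theorem~\ref{Amodulecor1}(iii) (i.e.\ \cite[Theorem 3.1]{K2014}) and its $c=1$ variant \eqref{thm31c1}. With this in hand the argument is a short local-cohomology computation: since $\Ext^1_A(MI,MI) \hookrightarrow \Ext^1_R(MI,MI)$ is supported on $Z$ (because $\widetilde{MI}$ is a line bundle on $\Spec(A)\setminus Z$), applying $H^0_{I(Z)}(-)$ gives $\Ext^1_A(MI,MI) = H^0_{I(Z)}(\Ext^1_A) \hookrightarrow H^0_{I(Z)}(\Ext^1_R) = 0$. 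For $\Ext^2_A$ under $\depth \ge 3$, one first kills the cokernel $C$ of $\tau$ by $H^0_{I(Z)}(C) \hookrightarrow H^1_{I(Z)}(\Ext^1_R(MI,MI)) = 0$ (which \emph{yields} surjectivity of $\tau$ rather than assuming it), then uses $\Ext^2_A \hookrightarrow \Ext^2_R \hookrightarrow \Hom_R(\wedge^{t+1}G^*\otimes S_0(F)\otimes\wedge^t F,\,MI)$ from \eqref{moduleremseq} and $H^0_{I(Z)}(MI)=0$. This depth-on-$\Ext^1_R$ step is exactly what replaces the explicit deformation/matrix analysis you were trying to carry out, and it is what makes the weakened depth hypothesis ($\ge 2$ or $\ge 3$ instead of $\ge 3$ or $\ge 4$) attainable without the ``delete a column and get l.c.i.'' detour of \cite[Theorem 4.5]{K2014}.
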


\begin{proof} For $c \ge 2$, note that $\Hom_A(MI,MI) \simeq A$ by \cite[Lemma
  3.2]{KM2005}, owing to $\depth_{I_{t-1}(\cA)A}A \ge 1$.

  Let $Z=V(I_{t-1}(\cA)A)$ and suppose $\depth_{I_{t-1}(\cA)A}A \ge 2$ and
  $c \ge 2$. Using Theorem~\ref{Amodulecor1}(iii) we get
  $$\depth_{I(Z)} \Ext_R^1(MI,MI) \ge \depth_{I(Z)}A -1 \ge 1 \ .$$
  Applying the local cohomology functor $H^0_{I(Z)}(-)$, which is left exact,
  onto \eqref{specseq} we therefore get
  $H^0_{I(Z)}( \Ext_A^1(MI,MI)) \hookrightarrow H^0_{I(Z)}( \Ext_R^1(MI,MI)) =
  0$,
  whence $\Ext_A^1(MI,MI)=0$ because $\widetilde MI$ is locally free of rank 1
  over $\Spec(A) \setminus Z$. Using \eqref{thm31c1} we get
  $\Ext_A^1(MI,MI)=0$ if $\depth_{I_{t-1}(\cA)A}A \ge 3$ and $c = 1$ by the
  same argument.

  Suppose $\depth_{I_{t-1}(\cA)A}A \ge 3$ and $c \ge 2$ and define
  $C \hookrightarrow \Ext_A^2(MI,MI)$ such that the sequence
$$0 \to \Ext_R^1(MI,MI) \to \Hom_R(I(X),A) \to C \to 0$$ obtained from
\eqref{specseq} and $\Ext_A^1(MI,MI)=0$, is exact. Applying $H^0_{I(Z)}(-)$ to
this sequence we get
$$H^0_{I(Z)}(C) \hookrightarrow H^1_{I(Z)}(\Ext_R^1(MI,MI)) = 0\,.$$ But
$C \subset \Ext_A^2(MI,MI)$ and $\widetilde MI$ is locally free over
$\Spec(A) \setminus Z$. So $\widetilde C = 0$ over $\Spec(A) \setminus Z$ and
we get $C=0$. Finally using \eqref{specseq} and \eqref{moduleremseq} we get
injections
$$ \ \Ext_A^2(MI,MI) \hookrightarrow \ \Ext_R^2(MI,MI) \hookrightarrow \
\Hom_R( \wedge^{t+1}G^* \otimes S_{0}(F)\otimes \wedge^tF,MI)\ ,$$
whence $ H^0_{I(Z)}(\Ext_A^2(MI,MI))=0$ since $ H^0_{I(Z)}(MI)=0$ and we have
proved that $\Ext_A^2(MI,MI)=0$.

Finally remark how easily we get Theorem~\ref{Amodulethm5}(ii) in the case
$\depth_{I_{t-1}(\cA)A}A \ge 3$. Indeed by \eqref{specseq}
$ \Ext_R^1(MI,MI) \cong \Hom_R(I(X),A)$ are isomorphic and then
Theorem~\ref{Amodulecor1}(iii) concludes the proof.
\end{proof}

To illustrate Theorems \ref{std_det_case} and \ref{std_det_case1} and Remark
\ref{improvement}, we consider the particular case of standard determinantal
schemes defined by the maximal minors of a matrix with {\em all} entries of
the same degree $d\ge 1$. We have

\begin{corollary} \label{same_deg} Fix integers $d\ge 1$, $t\ge 2$, $c\ge 2$
  and set $a_i=d$ for $1\le i\le t+c-1$ and $b_j=0$ for $1\le j \le t$. Assume
  $n-c\ge 2$. Then, $\overline{W(\underline{0};\underline{d})}$ is a
  generically smooth irreducible component of $\Hilb ^{p_X(t)}(\PP^n)$ of
  dimension $$\lambda _c=t(t+c-1){n+d\choose d}-t^2-(t+c-1)^2+1.$$
\end{corollary}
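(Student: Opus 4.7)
The plan is to apply Theorem~\ref{std_det_case1} directly. With $a_i = d \ge 1$ and $b_j = 0$, the numerical hypothesis $a_{i+1-\min\{3,t\}} \ge b_i$ holds trivially, and the assumption $n - c \ge 2$ gives $n-c \ge 1$ (for the case $c=2$ in part (i)) as well as $n-c > 1$ (for the case $c \ge 3$ in part (ii), where the stronger alternative "$n-c = 1$ with extra inequalities" is not needed). Therefore, $\overline{W(\underline{0};\underline{d})}$ is a generically smooth irreducible component of $\Hilb^{p_X(t)}(\PP^n)$ of dimension $\lambda_c + K_3 + \cdots + K_c$, where $K_i = 0$ by convention when $c=2$.

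Next I would show that each $K_i$ vanishes in our setting, so that the dimension reduces to $\lambda_c$. From \eqref{lamda} one computes $\ell_i = (t+i-1)d$ and
\[
h_{i-3} = 2a_{t+i-1} - \ell_i + n = 2d - (t+i-1)d + n = n - (t+i-3)d.
\]
Each term contributing to $K_i$ has the form $\binom{h_{i-3} + a_{i_1} + \cdots + a_{i_p} + b_{j_1} + \cdots + b_{j_q}}{n}$ with $p+q = i-3$. Substituting $a = d$ and $b = 0$, this binomial becomes $\binom{n - (t+i-3-p)d}{n}$. Since $p \le i-3$, we have $t + i - 3 - p \ge t \ge 2$, so the top entry is at most $n - 2d \le n - 2 < n$. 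By the convention $\binom{m}{n} = 0$ for $m < n$, every summand vanishes, hence $K_i = 0$ for all $3 \le i \le c$.

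Finally I would compute $\lambda_c$ from its defining formula. The second sum $\sum_{i,j}\binom{b_j - a_i + n}{n} = \sum_{i,j}\binom{n - d}{n}$ vanishes since $n - d < n$. The third and fourth sums give $(t+c-1)^2$ and $t^2$ respectively, because each binomial $\binom{0 + n}{n}$ equals $1$ and the index ranges have sizes $(t+c-1)^2$ and $t^2$. The first sum contributes $t(t+c-1)\binom{n+d}{n}$. Hence
\[
\lambda_c = t(t+c-1)\binom{n+d}{d} - (t+c-1)^2 - t^2 + 1,
\]
matching the claimed dimension. No step presents any real obstacle; this is essentially a bookkeeping corollary of Theorems~\ref{std_det_case} and~\ref{std_det_case1}, the only mildly delicate point being the verification that the binomial coefficients appearing in $K_i$ are forced to vanish because the shift $h_{i-3}$ is too negative for the maximal possible correction by the $a_{i_k}$'s.
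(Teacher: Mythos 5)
Your proof is correct and takes exactly the route the paper intends: apply Theorem~\ref{std_det_case1} (using $n-c>1$ so no auxiliary inequalities are needed), then check that every $K_i$ vanishes because the top of each binomial is at most $n-2d<n$, and finally unwind the definition of $\lambda_c$. The computation of $K_i=0$ and of $\lambda_c$ matches what the formulas in \eqref{lamda} force, so there is nothing to add.
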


As another example we have:

\begin{example} \rm Let $m \ge 1$ and $n \ge 4$ be integers. In the case $n=4$
  we also suppose $m\ge 3$. Let $X\subset \PP^n$ be a standard determinantal
  scheme of codimension 3 given by the maximal minors of a $t\times (t+2)$
  matrix $[L,M]$ where $L$ is a matrix with linear entries and $M$ a column
  with entries homogeneous forms of degree $m$. Note that $X$ is a curve if
  $n=4$, but Theorem~\ref{std_det_case1}(ii) applies to any $n \ge 4$, to get
  that $\overline{W(\underline{0};1, \cdots,1,m)}$ is a generically smooth
  irreducible component of $\Hilb ^{p_X(t)}(\PP^n)$ of dimension
$$  \dim _{(X)}\Hilb ^{p_X(t)}(\PP^n)  =  \lambda _3+K_3 = t(t+2)(n+1)-2t^2-4t-3 \quad {\rm for}
\ m=1, \quad {\rm and } $$
$$
\begin{array}{rcl} \dim _{(X)}\Hilb ^{p_X(t)}(\PP^n) & = & \lambda _3+K_3 \\
& = & {m+n\choose n}t+t(t+1)(n-1)-1-(t+1){m+n-1\choose n}+{m+n-t-1\choose n}.
\end{array} \ .$$
Note that for  $n=4$ and $1 \le m \le 2$,  $\overline{W(\underline{0};1,
  \cdots,1,m)}$ is not always a component, see Remark~\ref{improvement}(2).
\end{example}

The goal of our work is to generalize all results on standard determinantal schemes collected in this section  and to study  the Hilbert scheme  $\Hilb ^{p_X(t)}(\PP^n)$ along the locus of determinantal schemes $
W(\underline{b};\underline{a};r)$ and to determine the dimension of $W(\underline{b};\underline{a};r)$ in terms of $a_i$ and $b_j$.


\section{Unobstructedness of quotients of zerosections}
\label{section}

The aim of this section is to demonstrate the following key result in
achieving our goals: Any determinantal subscheme $X=\Proj(A)\subset \PP^n$,
$A=R/I_{t+1-r}(\varphi ^*)$ 
is determined by a section $$\sigma^*: B\longrightarrow N\otimes B(a_{t+c-1}),$$
(as its degeneracy locus) where $B=R/I_{t+1-r}(\varphi_{t+c-2}^*)$ is the
determinantal ring of $(t+1-r)\times (t+1-r)$ minors of the matrix
obtained deleting the last column of $\varphi ^*$ and
$N=\coker (\varphi ^*_{t+c-2})$. This result will allow us to obtain results
for the Hilbert scheme $ \Hilb ^{p_X(t)}(\PP^{n})$ around $(X\subset \PP^{n})$
by using the deformation theory given in \cite{K2007} for deforming the
embedded scheme $Y=\Proj(B)\subset \PP^n$ together with deforming its regular
section, i.e. for deforming the pair $((N\otimes B)^*,\sigma)$,
$(-) ^*=\Hom_B(- ,B)$.

\vskip 2mm To fix a general setting for deforming $\Proj(B)\subset \PP^n$ and
its regular section $\sigma$, let $B=R/I_B$ be any graded quotient, $M$ a finitely
generated graded $B$-module and $p_Y(t)\in \QQ[t]$ the Hilbert polynomial of
$Y:=\Proj (B)\subset \PP^n$. Let $Z\subset Y$ be a closed subset such that
$\tilde{M}_{|U}$ is locally free of rank $r$ on $U:=Y\setminus Z$. Let
$\sigma _U\in \ H^0(U,\tilde{M}^*(s))$ be a regular section on $U$ inducing a
section $\sigma :M_1(-s)\longrightarrow B$ where
$M_{i}=H^0_*(U,\wedge ^{i}\tilde{M})$. Let $A=\coker (\sigma)$,
$X:=\Proj(A)$ and $I_{A/B}=\ker(B\twoheadrightarrow A)$. It holds
\begin{proposition}\label{propoA} Let $r=2$. With notations and conditions as
  above, suppose
\begin{itemize}
\item[{\rm (i)}] $\depth _{I(Z)}M_2\ge 3$, $\depth _{I(Z)}M\ge 2$,
  $\depth _{\mathfrak{m}}M_2\ge 4$ and $\depth _{\mathfrak{m}}M\ge 3$.

\item[{\rm (ii)}] $ _{(-s)}\!\Ext^2_B(M,M_2)=0$ and $ _s\!\Ext^1_B(M,B)=0$.

\item[{\rm (iii)}] $ _0\!\Ext^2_B(M,M)=0$, or $ _0\!\Ext^1_B(M,M)=0$ and $M$ is
  "liftable"  to any (graded) deformation of $B$.
  \end{itemize}
  Then the $2^{nd}$ projection
  $p_2: \Hilb ^{p_X(t),p_Y(t)}(\PP^{n})\longrightarrow \Hilb
  ^{p_Y(t)}(\PP^{n})$ is smooth at $(X \subset Y)$. Moreover if
\begin{itemize}
\item[{\rm (iv)}] $ _0\!\Ext^1_B(I_B/I^2_B, I_{A/B})=0$,
\end{itemize}
then the $1^{st}$ projection $p_1: \Hilb ^{p_X(t),p_Y(t)}(\PP^{n})\longrightarrow \Hilb
  ^{p_X(t)}(\PP^{n})$  is smooth at $(X \subset Y)$. Hence the
natural map $H^0(X,{\mathcal N}_X)\longrightarrow H^0(X,{\mathcal N}_{Y|X})$
is surjective, $X$ is $p_Y$-generic, and
$$h^0{\mathcal N}_X+hom_{{\mathcal O}_{\PP^n}}({\mathcal I}_Y,{\mathcal
  I}_{X/Y})=h^0{\mathcal N}_Y+hom_{{\mathcal O}_{Y}}({\mathcal
  I}_{X/Y},{\mathcal O}_{X})\ ,$$
$$\dim _{(X)}\Hilb ^{p_X(t)}(\PP^n)+hom_{{\mathcal O}_{\PP^n}}({\mathcal I}_Y,{\mathcal I}_{X/Y})=\dim _{(Y)}\Hilb ^{p_Y(t)}(\PP^n)+hom_{{\mathcal O}_{Y}}({\mathcal I}_{X/Y},{\mathcal O}_{X}).$$
In particular, $Y$ is unobstructed if and only if $X$ is unobstructed.
\end{proposition}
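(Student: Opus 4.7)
The strategy is to establish smoothness of the two projections $p_2$ and $p_1$ separately at $(X \subset Y)$, and then derive the dimension identities, the surjectivity of the normal sheaf map, the $p_Y$-genericity of $X$ and the unobstructedness equivalence as formal consequences of the standard theory of the Hilbert flag scheme.

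For $p_2$ I would exploit that, since $\sigma$ is regular and $r=2$, there is a Koszul-type exact sequence
\begin{equation*}
0 \longrightarrow M_2(-2s) \longrightarrow M(-s) \stackrel{\sigma}{\longrightarrow} B \longrightarrow A \longrightarrow 0,
\end{equation*}
exact on $U$ and passing to $B$-modules globally via the depth hypothesis (i) combined with the comparison \eqref{NM}. The fibre of $p_2$ at $(X \subset Y)$ parameterizes liftings of the regular section $\sigma$ over infinitesimal deformations of $Y$, so its smoothness reduces to showing that the pair $(M, \sigma)$ lifts compatibly with every graded deformation of $B$. Hypothesis (iii) lifts $M$ itself (either because $\,_0\!\Ext^2_B(M,M) = 0$, or because $M$ is explicitly assumed liftable together with the vanishing of $\,_0\!\Ext^1_B(M,M)$), while (ii) kills the two obstructions one meets in lifting $\sigma$: the obstruction to extending it as an element of $\,_s\!\Hom_B(M, B)$ (controlled by $\,_s\!\Ext^1_B(M, B)$) and the obstruction to preserving the Koszul shape of the above presentation (controlled by $\,_{(-s)}\!\Ext^2_B(M, M_2)$). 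This is exactly the zerosection deformation framework of \cite{K2007}.

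For $p_1$ I would use the standard identification of the obstruction to lifting a pair $X \subset Y$, given a fixed lifting of $X$ alone, with a class in $\,_0\!\Ext^1_B(I_B/I_B^2, I_{A/B})$; its vanishing in (iv) then gives smoothness of $p_1$ at $(X \subset Y)$. Granted the smoothness of both $p_1$ and $p_2$, the flag scheme $\Hilb^{p_X(t),p_Y(t)}(\PP^n)$ at $(X \subset Y)$, the Hilbert scheme $\Hilb^{p_X(t)}(\PP^n)$ at $(X)$, and $\Hilb^{p_Y(t)}(\PP^n)$ at $(Y)$ are all simultaneously smooth, which yields the unobstructedness equivalence and forces $X$ to be $p_Y$-generic. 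For the numerical identities, the tangential fibres of $p_1$ and $p_2$ are $\hom_{\mathcal{O}_{\PP^n}}(\mathcal{I}_Y, \mathcal{I}_{X/Y})$ and $\hom_{\mathcal{O}_Y}(\mathcal{I}_{X/Y}, \mathcal{O}_X)$ respectively; smoothness of each $p_i$ gives $\dim_{(X \subset Y)} \Hilb^{p_X(t),p_Y(t)}(\PP^n) = \dim(\text{target of } p_i) + (\text{fibre dim})$, and equating the two sides yields the stated equalities, both on the level of tangent spaces $h^0\mathcal{N}$ and of Hilbert scheme dimensions. Finally, the surjectivity $H^0\mathcal{N}_X \twoheadrightarrow H^0\mathcal{N}_{Y|X}$ follows by applying $H^0$ to $0 \to \mathcal{N}_{X/Y} \to \mathcal{N}_X \to \mathcal{N}_{Y|X} \to 0$, using that smoothness of $p_1$ makes the tangent map of the forgetful morphism from the flag scheme onto $\Hilb^{p_Y(t)}(\PP^n)$ surjective.

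The main obstacle will be step one: making the two separate vanishings in (ii)--(iii) match exactly with the obstruction calculus for deforming the pair $(M, \sigma)$ as a whole, rather than $M$ and $\sigma$ one at a time. The Koszul sequence above provides the dictionary between ``section of $\widetilde{M}^*(s)$'' and ``presentation of $I_{A/B}$'', while the depth hypotheses in (i) are what allow one to transfer local-on-$U$ statements into the algebraic $\Ext^i_B$-statements appearing in (ii)--(iii); once this translation is in hand, the machinery of \cite{K2007} delivers smoothness of $p_2$ mechanically.
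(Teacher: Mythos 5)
Your overall strategy matches the paper's: establish the four-term Koszul exact sequence
\begin{equation*}
0 \longrightarrow M_2(-2s) \longrightarrow M(-s) \stackrel{\sigma}{\longrightarrow} B \longrightarrow A \longrightarrow 0
\end{equation*}
from regularity of $\sigma_U$ and the depth hypotheses (i), then reduce to the deformation-theoretic machinery of \cite{K2007} (Proposition 13 and Theorem 47 there). The paper does exactly this and simply cites those results; you attempt to spell out the mechanism, which is a reasonable choice but introduces a few imprecisions worth flagging.

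Two concrete issues. First, your final paragraph on the surjectivity $H^0\mathcal{N}_X \twoheadrightarrow H^0\mathcal{N}_{Y|X}$ is not right as stated. The forgetful morphism from the flag scheme onto $\Hilb^{p_Y(t)}(\PP^n)$ is $p_2$, not $p_1$, so invoking ``smoothness of $p_1$'' for that map is a category error. More substantively, applying $H^0$ to $0 \to \mathcal{N}_{X/Y} \to \mathcal{N}_X \to \mathcal{N}_{Y|X} \to 0$ gives a right-exact tail ending in $H^1\mathcal{N}_{X/Y}$; without controlling the connecting map, surjectivity of $H^0\mathcal{N}_X \to H^0\mathcal{N}_{Y|X}$ does not drop out. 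The argument the paper relies on (see the proof of Proposition~\ref{everydef} and diagram \eqref{ZJ}) instead passes through the cartesian tangent-space square \eqref{cartesiansquare}: one shows $_0{\Hl}^2(B,A,A)=0$ (which the smoothness mechanism for $p_2$ in \cite[Theorem 47]{K2007} provides), which forces $\ _0\!\Hom_R(I_A,A)\twoheadrightarrow \ _0\!\Hom_R(I_B,A)$ and hence the displayed surjectivity. You should route the argument through that square rather than the sheaf sequence.

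Second, a smaller point: your argument lives at the level of graded deformation functors, but the conclusion is about Hilbert scheme projections. The paper explicitly observes that $\depth_{\mathfrak{m}}A \geq 2$ and $\depth_{\mathfrak{m}}B \geq 2$ (extracted from (i) via the Koszul sequence), which triggers the identifications \eqref{Grad} and \eqref{GradPair} and lets one transport smoothness of $p$ and $q$ on $\GradAlg$ to smoothness of $p_1$ and $p_2$ on the Hilbert flag scheme. This bridge should be stated, since it is a genuine hypothesis-check and not automatic. The rest of your outline — (iii) lifting $M$ across deformations of $B$, (ii) killing the two lifting obstructions for the section and the Koszul presentation, (iv) giving smoothness of $p_1$, and the dimension identities by computing $\dim_{(X\subset Y)}\Hilb^{p_X,p_Y}$ two ways — is the right picture and is consistent with how \cite[Theorem 47]{K2007} is used.
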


\begin{proof} The proposition follows from \cite[Proposition 13 and
  Theorem 47]{K2007}  after having remarked some details. Indeed, since $\sigma _U$ is a
  regular section on $U$, we have the Koszul resolution
$$0\longrightarrow \wedge ^2\tilde{M}(-2s) \longrightarrow \tilde{M}(s)  \stackrel{\sigma _U}{\longrightarrow} \tilde{B} \longrightarrow \coker \sigma _U\longrightarrow 0$$
which is exact over $U$. Splitting the Koszul resolution into two short exact
sequences and applying $H^0_*(U,-)$ we get that
\begin{equation} \label{liftsection}
 0\longrightarrow M_2(-2s)\longrightarrow M_1(-s)  \stackrel{\sigma
 }{\longrightarrow} B\longrightarrow H^0_*(U,\coker \sigma _U)
\end{equation}
is exact because $H^1_*(U,\wedge ^2\tilde{M})=0$ by the assumption (i). Since by
definition $A=B/ \im (\sigma)$ we have proved the exactness of
 $$ 0\longrightarrow M_2(-2s)\longrightarrow M_1(-s)\longrightarrow I_{A/B}
 \longrightarrow 0.$$

 If $\depth _{I(Z)}M_2\ge 4$ and $\depth _{I(Z)}M_1\ge 3$, we even get
 $H^1_*(U,\im (\sigma _U))=0$, whence $A\cong H^0_*(U,\coker (\sigma _U))$ and
 $\depth _{I(Z)}A\ge 2$, while the weaker assumptions of (i) only imply
 $\depth _{\mathfrak{m}}A\ge 2$. We also have $\depth _{\mathfrak{m}}B\ge 2$
 (since $\depth _{\mathfrak{m}}M_1\ge 3$), in which case the natural projection map
 $$p_2: \Hilb ^{p_X(t),p_Y(t)}(\PP^{n})\longrightarrow \Hilb
 ^{p_Y(t)}(\PP^{n})$$
 is isomorphic to the corresponding projection of the graded deformation
 functors at $(X \subset Y)$ by \eqref{Grad} and \eqref{GradPair}. Then
 \cite[Theorem 47]{K2007} (see its proof for the smoothness of $p_1$)
 applies since \cite[Proposition 13]{K2007} and (iii) above imply that the
 assumptions of \cite[Theorem 47]{K2007} are fulfilled.
\end{proof}

The proof in \cite[Theorem 47]{K2007} is actually just a proof of the fact
that any deformation $B_T \to A_T$, $T$ local artinian with residue field $k$,
fits into a lifting of $M_1(-s) \stackrel{\sigma }{\longrightarrow} B$ in
\eqref{liftsection} to $T$ (so to $B_T$) and that this lifting $\sigma _T$ can
be lifted further to any artinian local $T'$ with $T' \twoheadrightarrow T$.

\begin{remark} \rm (1) We have $M=M_1$ because $\depth _{I(Z)}M\ge 2$.
  Similarly if we assume $2\le \depth _{I(Z)}(\wedge ^2M)$ we get
  $M_2\cong \wedge ^2M$ because $M_2=H^0_*(U,\wedge ^2\widetilde{M})$. In this
  case we may in  Proposition \ref{propoA}(i) replace $M_2$ by
  $\wedge ^2M$.

(2) Due to the exact sequence
\begin{equation} \label{liftsectionI}
0\longrightarrow M_2(-2s)\longrightarrow
M_1(-s)\longrightarrow I_{A/B}\longrightarrow 0\,,
\end{equation}
 we get that
$$ {\rm (iv')} \quad \qquad \qquad  _0\!\Ext^1_B(I_B/I^2_B,M_1(-s))=0  \qquad \text{ and }
\qquad  _0\!\Ext^2_B(I_B/I^2_B, M_2(-s))=0 \qquad \qquad \qquad $$
imply (iv) of Proposition \ref{propoA}, i.e.  $ _0\!\Ext^1_B(I_B/I^2_B, I_{A/B})=0$. Using (iv') instead of (iv) in Proposition \ref{propoA} we get a result with  assumptions only on $B$ (and $M_{i}$, $i=1,2$).
\end{remark}

Our next goal will be to analyze if the section $\sigma$ exists in our
situation and whether the hypothesis (i)-(iv) of the above proposition are
satisfied. There are quite a lot of $\Ext$-groups that have to vanish, but
fortunately our applications are for determinantal schemes and for
determinantal schemes we will see that the assumptions are often fulfilled!
Note that if $c=1$, then \eqref{liftsectionI} leads to
\begin{equation} \label{introMs}
 {\footnotesize
0 \rightarrow K_B(t-2s)\rightarrow N_B(-s)  \stackrel{\sigma}{\rightarrow}  B
\rightarrow A \rightarrow 0 \,,
}
\end{equation}
where $ N_B:=\Hom_R(I_B ,B)$ is the normal module, i.e. exactly to the case we
studied in \cite{KM2009} in which many of the $\Ext$-groups above vanish
``almost for free'' because $K_B$ is the canonical module.

We keep the notation introduced in section \ref{preli}. So, we have a graded
morphism
$$\varphi: F:=\oplus _{i=1}^tR(b_{i})\longrightarrow \oplus _{j=1}^{t+c-1}R(a_j)=:G$$
and if $c \ge 3-r$, i.e. $t+c-2 \ge t+1-r$ we delete the last row to get
$$\varphi _{t+c-2}:F\longrightarrow \oplus _{j=1}^{t+c-2}R(a_j)=:G_{t+c-2}.$$
Let $B=R/I_{t+1-r}(\varphi _{t+c-2}^*)$, $A=R/I_{t+1-r}(\varphi ^*)$,
$N=\coker (\varphi _{t+c-2}^*)$, $MI:=\coker (\varphi ^*)$, $X=\Proj(A)$ and
$Y=\Proj(B)$. We are going to prove the main result of this section, namely
that $X$ is determined by a (twisted) regular section
$\sigma ^*$ of $N\otimes B$ where $\sigma ^*=\Hom_B(\sigma ,B)$. This result
generalizes \cite[Proposition 4.3]{KM2009} from $c=1$, $r=2$ to $c\ge 1$,
$r\ge 1$. With a slight change in the proof, mainly replacing $=\Proj(B)$ by
$=\Spec(B)$, the result also holds for $\dim B = r$. More precisely we have

\begin{theorem}\label{mainthm} With $B\twoheadrightarrow A$ and $N$ as above
  (with $B$ and $A$ determinantal),
  we suppose $\depth _{J_B}B\ge 2$, where $J_B=I_{t-r}(\varphi _{t+c-2}^*)$ and
  $\dim B > r$. Then the commutative diagram
\[
\begin{array}{cccccccccc}0 & & & & & & & \\
\downarrow & &  & & & & &
    \\
G_{t+c-2}^*\otimes B &  \longrightarrow & F^*\otimes B & \longrightarrow & N\otimes B & \longrightarrow & 0\\
\downarrow & & \| & & & & &
    \\
    G^*\otimes B &  \longrightarrow & F^*\otimes B & & & & & & \\
    \downarrow & &  & & & & & \\
    B(-a_{t+c-1}) & & & & & & &\\
    \downarrow & &  & & & & &
    \\
    0 & & & & & & &
\end{array}
\]
induces a homogeneous section
$\sigma ^*:B\longrightarrow N\otimes B(a_{t+c-1})$, regular on the open subset
$\Proj(B)\setminus V(J_B)$ where $\widetilde{N\otimes B}$ is locally free of
rank $r$, whose zero locus precisely defines $A$ as a quotient of $B$, i.e.
$A=B/\im (\sigma)$. Moreover a twist of $\sigma ^*$ fits into an exact
sequence
\begin{equation}\label{section}
0\longrightarrow B(-a_{t+c-1})\stackrel{\sigma ^* }{\longrightarrow} N\otimes
B \longrightarrow MI\otimes B\longrightarrow 0\, .
\end{equation}
\end{theorem}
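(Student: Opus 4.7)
The plan is to construct $\sigma^{*}$ by diagram chasing, deduce the exact sequence \eqref{section} via the snake lemma together with a depth argument, and identify $\im(\sigma)$ with $I_{A/B}$ by a local computation on $U := \Proj(B)\setminus V(J_{B})$.

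First, the inclusion $G_{t+c-2}\hookrightarrow G$ yields the short exact sequence $0 \to G_{t+c-2}^{*}\otimes B \to G^{*}\otimes B \to B(-a_{t+c-1}) \to 0$. Composing $\varphi^{*}\otimes \id_{B}:G^{*}\otimes B \to F^{*}\otimes B$ with the surjection $F^{*}\otimes B \twoheadrightarrow N\otimes B$ kills $G_{t+c-2}^{*}\otimes B$ by the very definition of $N$, so this composition factors through $B(-a_{t+c-1})$, producing the desired map $\sigma^{*}:B(-a_{t+c-1}) \to N\otimes B$. Comparing the presentations of $N$ and $MI$, the snake lemma applied to the resulting commutative square yields exactness of $B(-a_{t+c-1}) \stackrel{\sigma^{*}}{\longrightarrow} N\otimes B \twoheadrightarrow MI\otimes B$. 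Injectivity of $\sigma^{*}$ then follows from depth: on $U$, the corank of $\varphi^{*}_{t+c-2}$ is exactly $r$, so $\widetilde N|_{U}$ is locally free of rank $r$, and the local computation below shows $\sigma^{*}|_{U}$ is a regular section, hence injective. Therefore $\ker\sigma^{*}\subset B(-a_{t+c-1})$ is supported on $V(J_{B})$, and the assumption $\depth_{J_{B}}B\ge 2$ forces $H^{0}_{J_{B}}(B(-a_{t+c-1}))=0$, so $\ker\sigma^{*}=0$.

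For the local computation, on an affine patch of $U$ some $(t-r)\times(t-r)$ submatrix $\Delta$ of $\varphi^{*}_{t+c-2}$ is invertible. Using $\Delta$ to perform Gaussian row/column operations, one puts $\varphi^{*}_{t+c-2}$ locally over $B$ into the block form $\begin{pmatrix}I_{t-r}&0\\0&0\end{pmatrix}$ (the lower-right zero block uses that all $(t-r+1)\times(t-r+1)$ minors of $\varphi^{*}_{t+c-2}$ vanish in $B$). In these coordinates $\widetilde N|_{U}$ is trivialized as the quotient by the first $t-r$ rows of $F^{*}$, and $\sigma^{*}$ becomes the projection of the last column of $\varphi^{*}$ to its bottom $r$ entries; by Cramer's rule, these entries equal, up to multiplication by the unit $\Delta$, the $(t-r+1)\times(t-r+1)$ minors of $\varphi^{*}$ involving the deleted last column. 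Since those minors together with $I_{B}$ generate $I_{A}$, the image of $\sigma$ coincides with $I_{A/B}$ on $U$. Using $\dim B > r$, which guarantees $\codim_{Y}X = r$ (so $X\cap U$ is dense in $X$), and $\depth_{J_{B}}B\ge 2$, which makes $\im(\sigma)$ and $I_{A/B}$ both saturated with respect to $J_{B}$, this equality extends globally, giving $A = B/\im(\sigma)$.

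The main obstacle is the local identification of $\sigma^{*}|_{U}$ with the vector of ``missing'' minors; once this is in place, the snake lemma and depth/saturation arguments are routine. The hypothesis $\dim B > r$ is needed to guarantee that $X$ sits inside $Y$ with the expected codimension $r$, while $\depth_{J_{B}}B\ge 2$ is exactly what propagates the local picture from $U$ to all of $\Proj(B)$, both for injectivity of $\sigma^{*}$ and for recovering the ideal $I_{A/B}$ from its restriction to $U$.
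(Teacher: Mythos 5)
Your proof is correct and follows essentially the same strategy as the paper's: construct $\sigma^*$ from the diagram, identify its image locally on $U=\Proj(B)\setminus V(J_B)$ with $I_{A/B}$ by Gaussian elimination, and use depth to close the argument. One mild streamlining in your account is that you obtain $\sigma^*$ globally ``for free'' from the diagram chase (composing $\varphi^*\otimes\id_B$ with the quotient to $N\otimes B$ kills $G_{t+c-2}^*\otimes B$, hence factors through $B(-a_{t+c-1})$), and only afterwards do the local computation; the paper instead constructs $\sigma^*$ patch-by-patch via the explicit elimination matrices $I(f_{11})$, $I(f_{11},f_{12}^{12})$, etc., and then verifies the gluing relations such as $f_{1j}\cdot v_{11}=f_{11}\cdot v_{1j}$ by hand. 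Your route avoids that gluing check, which is a genuine economy, while the paper's route has the advantage of being fully explicit about the local vectors of minors defining $\sigma$.

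Two places in your write-up are compressed and would need to be filled in to match the paper's level of rigor, though both are sound: (1) you invoke a ``snake lemma'' for the three-row diagram, but the rows are only right-exact complexes, not short exact sequences; what one really has is the long exact sequence of homology of the short exact sequence of two-term complexes, which does give exactness of $B(-a_{t+c-1})\to N\otimes B\to MI\otimes B\to 0$ but not injectivity on the left — your separate depth argument (which only needs $\depth_{J_B}B\ge 1$, in fact) then supplies injectivity; and (2) the claim that locally over $D(\Delta)$ the only nonzero $(t-r+1)$-minors of $\cA$ modulo $I_B$ are (up to the unit $\det\Delta$) the entries of the bottom-$r$ piece of the transformed last column deserves the block-matrix computation: after row/column operations the matrix is $\left(\begin{smallmatrix}I_{t-r}&0&w_1\\0&0&w_2\end{smallmatrix}\right)$ over $B_\Delta$, and a $(t-r+1)$-minor not in $I_B$ must use the last column, all $t-r$ identity columns and all $t-r$ identity rows plus exactly one bottom row, forcing it to equal an entry of $w_2$ up to sign — this is exactly what makes $\im(\sigma)\otimes B_\Delta = I_{A/B}\otimes B_\Delta$.
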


\begin{proof}
We will start proving the theorem for $2\times 2$ minors by describing the maps in the diagram
\begin{equation}\label{exact*}
\begin{array}{cccccccccc}
  &  & & &  B(-a_{t+c-1}) & &\\
  &  & & & \sigma ^*\downarrow & &\\
G_{t+c-2}^*\otimes B &  \stackrel{\varphi^*_{t+c-2}\otimes id_B }{\longrightarrow} & F^*\otimes B & \longrightarrow & N\otimes B & \longrightarrow & 0
\end{array}
\end{equation}
over the open set $D(f_{11})\subset \Proj(B)$,  letting the matrix of $\varphi ^*$ be ${\mathcal A}=(f_{ij})_{1\le i\le t \atop 1\le j\le t+c-1}$.
We will use Gauss elimination as follows. Let
$$I(f_{11})=\begin{pmatrix}
1 & 0 & 0 &  \cdots & 0 \\
-f_{21} & f_{11} & 0 & \cdots & 0 \\
-f_{31} & 0 & f_{11} &  \cdots & 0 \\
\vdots & \vdots & \vdots & \ddots & \vdots \\
-f_{t1} & 0 & 0 &  \cdots & f_{11}
\end{pmatrix}.
$$
Then $$I(f_{11})\cdot {\mathcal A}=\begin{pmatrix}
f_{11} & f_{12} & f_{13} &  \cdots & f_{1t+c-1} \\
0 & f_{22}^2 & f_{23}^2 & \cdots & f_{2t+c-1}^2 \\
\vdots & \vdots & \vdots & \ddots & \vdots \\
0 & f_{t2}^2 & f_{t3}^2 &  \cdots & f_{tt+c-1}^2
\end{pmatrix},$$
where $f_{ji}^2=-f_{j1}f_{1i}+f_{11}f_{ji}$, $2\le i \le t+c-1$ and
$2\le j \le t$. These are all the $2\times 2$ minors involving $f_{11}$.
Moreover since $B=R/I_2(\varphi ^*_{t+c-2})$, all $2\times 2$ minors above
belong to $I_2(\varphi ^*_{t+c-2})$ and lead to a vanishing of a
corresponding map in (\ref{exact*}), except those involving the last
column. More
precisely, (\ref{exact*}) extended to $G^*$ and restricted to the open set
$D(f_{11})\subset \Proj(B)$ is given by

\[
\xymatrix{ &G^*\otimes B_{(f_{11})} \ar[r]^{\varphi^*\otimes id_{B_{(f_{11})}}} \ar[rd]  & F^*\otimes B_{(f_{11})} \ar[d]^{\simeq} \\
&   & (\oplus _{i=1}^tR(-b_{i}))\otimes B_{(f_{11})}  &
}
\]
where
$F^*\otimes B_{(f_{11})}\cong (\oplus _{i=1}^tR(-b_{i}))\otimes B_{(f_{11})} $
is given by multiplication with $I(f_{11})$, and the morphism
$$G^*\otimes B_{(f_{11})}\longrightarrow (\oplus _{i=1}^tR(-b_{i}))\otimes
B_{(f_{11})}$$ is induced by
$$\begin{pmatrix}
f_{11} & f_{12} &  \cdots & f_{1t+c-2} & f_{1t+c-1} \\
0 & 0 & \cdots & 0 &  f_{2t+c-1}^2 \\
\vdots & \vdots & \ddots & \vdots & \vdots \\
0 & 0 &  \cdots & 0 &  f_{tt+c-1}^2
\end{pmatrix} \ .
$$
So $G_{t+c-2}^*\otimes B_{(f_{11})} \stackrel{\begin{pmatrix} f_{1i} \\
    0 \end{pmatrix}}{\longrightarrow} \oplus _{i=1}^tB(-b_{i})_{(f_{11})}$
and we have $N\otimes B_{(f_{11})}\cong \oplus _{i=2}^t B(-b_{i})_{(f_{11})}$
since the ideal of $ B_{(f_{11})}$ generated by
$\{f_{11}, f_{12}, \cdots,f_{1t+c-2}\}$ contains $1=f_{11}/f_{11}$. Moreover
$\sigma ^*$ restricted to the degree zero part of $B(-a_{t+c-1})_{f_{11}}$ is
given by mapping $1\in B(-a_{t+c-1})_{f_{11}}$ onto the transpose of the
vector $$[f_{2t+c-1}^2 \ f_{3t+c-1}^2 \ \cdots \ f_{tt+c-1}^2]:=v_{11}^{tr} \,.$$
Note that this argument can be done for any entry $f_{ij}$, $j \ne t+c-1$ of
${\mathcal A}$, e.g. for $f_{1j}$, yielding a map
$B(-a_{t+c-1})_{f_{1j}} \to \oplus _{i=2}^t B(-b_{i})_{f_{1j}}$ taking
$1 \in B(-a_{t+c-1})_{f_{1j}}$ onto the corresponding $v_{1j}$ which for $j=2$
is $['f_{2t+c-1}^2 \ 'f_{3t+c-1}^2 \ \cdots \ 'f_{tt+c-1}^2]^{tr}$, where
$'f_{ji}^2=-f_{j2}f_{1i}+f_{12}f_{ji}$. Since
$f_{1j} \cdot v_{11} = f_{11} \cdot v_{1j}$ these local sections glue
together.

The argument above extends also to $f_{ij}$ for $i >1$. Indeed, taking the
$i$th row, we exchange it with the first row and proceed as above except for
being a little more careful with the gluing. Say we exchange the $1^{st}$ and
$2^{nd}$ row and we use Gauss elimination leaving the left upper corner, where now
$f_{21}$ sits, fixed. Then the section $\sigma ^*$ restricted to the degree
zero part of $B(-a_{t+c-1})_{f_{21}}$ is given by mapping
$1\in B(-a_{t+c-1})_{f_{21}}$ onto the transpose of
$[''f_{1t+c-1}^2 \ ''f_{3t+c-1}^2 \ \cdots \ ''f_{tt+c-1}^2]:=v_{21}^{tr}$,
where now $''f_{ji}^2=-f_{j1}f_{2i}+f_{21}f_{ji}$. Letting $F(f_{21})$ be the
following $(t-1) \times (t-1)$ matrix:
$$F(f_{21})=\begin{pmatrix}
-f_{11} & 0 & 0 &  \cdots & 0 \\
-f_{31} & f_{21} & 0 & \cdots & 0 \\
-f_{41} & 0 & f_{21} &  \cdots & 0 \\
\vdots & \vdots & \vdots & \ddots & \vdots \\
-f_{t1} & 0 & 0 &  \cdots & f_{21}
\end{pmatrix},
$$
the local sections glue because $F(f_{21}) \cdot v_{11} = f_{11} \cdot v_{21}$.
Thus we get $\sigma^* $ well defined over
$$\bigcup _{j\ne t+c-1}D(f_{ij})=\Proj (B)\setminus V(I_1(\varphi
^*_{t+c-2})) $$
and it extends to $\Proj (B)$ by the depth assumption. It follows that the
description of $\sigma $ in the Theorem holds noting that, with
$\{f^2_{ij}\}=\{f^2_{ij}\}^{2\le i \le t}_{2\le j \le t+c-2}$, we have the following
equalities of ideals 
$$I_2(\varphi ^*_{t+c-2})B_{(f_{11})}= B_{(f_{11})}(\{f^2_{ij}\})\, ,$$
and $I_2(\varphi ^*_{t+c-2})B_{(f_{21})}= B_{(f_{21})}(\{''f^2_{ij}\})$ and
correspondingly for all the $B_{(f_{ij})}$ having $j \ne t+c-1$.
%

Now we consider the case of $3\times 3$ minors and we will describe (\ref{exact*}) in the open set $D(f_{22}^2)$ of $\Proj(B)$, where $B=R/I_3(\varphi ^*_{t+c-2})$. To this end, we denote by $f^{i_1i_2\cdots i_k}_{j_1j_2\cdots j_k}$ the determinant of the submatrix of ${\mathcal A}$ consisting of the columns $i_1,i_2,\cdots, i_k$ and the rows $j_1,j_2,\cdots,j_k$. As above, to describe
(\ref{exact*}) in the open set $D(f_{12}^{12})\subset \Proj(B)$  we  use Gauss elimination. We multiply ${\mathcal A}$ by the $t\times t$ matrix:
$$I(f_{11},f_{12}^{12})=\begin{pmatrix}
1 & 0 & 0 &  0 &\cdots & 0 \\
-f_{21} & f_{11} & 0 & 0 & \cdots & 0 \\
f_{23}^{12} & -f_{13}^{12} & f_{12}^{12} & 0 &  \cdots & 0 \\
f_{24}^{12} & -f_{14}^{12} & 0 & f_{12}^{12} &   \cdots & 0 \\
\vdots & \vdots & \vdots & \vdots & \ddots & \vdots \\
f_{2t}^{12} & -f_{1t}^{12} &  0 & 0 &  \cdots & f_{12}^{12}
\end{pmatrix}
$$
and we get, using determinantal expansion along columns, that
$$I(f_{11},f_{12}^{12}){\mathcal A}=\begin{pmatrix}
f_{11} & f_{12} & f_{13} &  f_{14} & f_{15} & \cdots & f_{1t+c-1} \\
0 & f_{12}^{12} & f_{12}^{13} & f_{12}^{14} & f_{12}^{15} & \cdots & f_{12}^{1t+c-1} \\
0 & 0 & f_{123}^{123} & f_{123}^{124} & f_{123}^{125} &  \cdots & f_{123}^{12t+c-1} \\
0 & 0 & f_{124}^{123} & f_{124}^{124} & f_{124}^{125} &  \cdots & f_{124}^{12t+c-1} \\
\vdots & \vdots & \vdots & \vdots & \vdots & \ddots & \vdots \\
0 & 0 & f_{12t}^{123} & f_{12t}^{124} & f_{12t}^{125} &  \cdots & f_{12t}^{12t+c-1} \\
\end{pmatrix} ,
$$
where all $3\times 3$ minors belong to $I_3(\varphi ^*_{t+c-2})$, except those
involving the last column which define the section $\sigma ^*$ over the open
$D(f_{11})\cap D(f_{12}^{12})\subset \Proj(B)$. Note that all $3\times 3$
minors of the form $f_{12j}^{12i}$, $i,j\ge 3$ are obtained from the lower
$(t-2)\times (t+c-3)$ block. And, moreover, if we start the Gauss elimination
above by using the matrix $I(f_{12},f_{12}^{12})$ defined to be equal to
$I(f_{11},f_{12}^{12})$ for all rows except the 2nd row where
$[-f_{21} \ f_{11} \ 0 \ 0 \cdots ]$ is replaced by
$[-f_{22} \ f_{12} \ 0 \ 0 \cdots ]$, we see that $I(f_{12},f_{12}^{12})$ is
invertible over $D(f_{12})\cap D(f_{12}^{12})$ and that
$I(f_{12},f_{12}^{12}){\mathcal A}$ is equal to
$I(f_{11},f_{12}^{12}){\mathcal A}$ in all rows except the 2nd row where we get
$[-f_{12}^{12} \ 0 \ f_{12}^{23} \ f_{12}^{24} \ \cdots \ f_{12}^{2t+c-1}]$ for
$I(f_{12},f_{12}^{12}){\mathcal A}$. This defines $\sigma ^*$ in the open set
$D(f_{12})\cap D(f_{12}^{12})$ and since
$(f_{12}^{12})\subset (f_{11},f_{12})$ we have $\sigma ^*$ defined over
$$(D(f_{12})\cap D(f_{12}^{12}))\cup (D(f_{11})\cap D(f_{12}^{12}))=D(f_{12}^{12}).$$
Thus $$\sigma ^*:B(-a_{t+c-1})_{(f_{12}^{12})}\longrightarrow N\otimes B_{(f_{12}^{12})}=\oplus _{i=3}^tB(-b_{i})_{(f_{12}^{12})}$$
is given by the $3\times 3$ minors of the last column of $I(f_{11},f_{12}^{12}){\mathcal A}$, where $\im (\sigma) \otimes  B_{(f_{12}^{12})}$ is the ideal of $  B_{(f_{12}^{12})}$ generated by $\{ f_{123}^{12t+c-2},f_{124}^{12t+c-2},\cdots ,f_{12t}^{12t+c-2} \}$ and we conclude because a determinantal ideal does not change under elementary row/column operations.

It is now clear how to proceed to finish the proof. If $B=R/I_4(\varphi ^*_{t+c-2})$, we multiply ${\mathcal A}$ by
$$I_1=\begin{pmatrix}
1 & 0 & 0 &  0 & 0 & \cdots & 0 \\
-f_{21} & f_{11} & 0 & 0 & 0 & \cdots & 0 \\
f_{23}^{12} & -f_{13}^{12} & f_{12}^{12} & 0 &  0 & \cdots & 0 \\
-f_{234}^{123} & f_{134}^{123} & -f_{124}^{123} & f_{123}^{123} & 0 &   \cdots & 0 \\
-f_{235}^{123} & f_{135}^{123} & -f_{125}^{123} & 0 &  f_{123}^{123}  &   \cdots & 0 \\
\vdots & \vdots & \vdots & \vdots & \vdots & \ddots & \vdots \\

-f_{23t}^{123} & f_{13t}^{123} & -f_{12t}^{123}&  0 & 0 &  \cdots & f_{123}^{123}
\end{pmatrix}
$$
and determinantal expansions along columns imply that  $I_1 \cdot{\mathcal A}$ is equal to  $I(f_{11},f_{12}^{12}){\mathcal A}$
 in the first 3 rows and that the last $t-3$ rows of  $I_1 \cdot{\mathcal A}$  will be of the form
 $$[ 0  \ | \  f_{123j}^{123i}], \quad  4\le i \le t+c-1, \quad 4\le j\le t$$
 and $0$ a $(t-3)\times 3$ matrix of zeros which lead to
 $$\im (\sigma ) \otimes \Gamma (U_{i}, \tilde{B})\cong \Gamma(U_i,\widetilde{I_{A/B}})$$
 where $\{ U_{i} \}$ is an open covering of $\Proj(B)\setminus V(I_3(\varphi
 ^*_{t+c-2}))$.

 More precisely letting $I_2$ be $I_1$ except for the 2nd row which is replaced by $[-f_{22} \ f_{12} \ 0 \ 0 \cdots ]$ and proceeding as for $3\times 3$ minors above, the section $\sigma ^*$ will be defined over the open $D(f_{12}^{12})\cap D(f_{123}^{123})$. Then since
 \begin{equation} \label{f123}
 f_{123}^{123}=f_{31}f_{12}^{23}-f_{32}f_{12}^{13}+f_{33}f_{12}^{12}
 \end{equation}
 we replace the 3rd row of $I_1$
 by $[f_{23}^{13}
 \ -f_{13}^{13} \ f_{12}^{13} \ 0 \ 0 \ \cdots ]$ to get
 $I_3$,
 respectively $[f_{23}^{23}
 \ -f_{13}^{23} \ f_{12}^{23} \ 0 \ 0 \cdots ]$ to get $I_4$. Computing $I_{i}
 \cdot {\mathcal A}$ for $i=3,4$, we obtain $I_1 \cdot {\mathcal
   A}$ except in the 3rd row where we obtain $[0\ -f_{123}^{123} \ 0 \
 f_{123}^{134} \ f_{123}^{135} \ \cdots ]$ for $I_3 \cdot {\mathcal
   A}$; respectively, $[f_{123}^{123} \ 0 \ 0 \ f_{123}^{234} \ f_{123}^{235}
 \ \cdots ]$ for $I_4 \cdot {\mathcal
   A}$. Combining with replacing the 2nd row with $[-f_{22} \ f_{12} \ 0 \
 \cdots ]$, cf. $I_2$ above, we get the section $\sigma ^*$ defined over
 $$D(f_{12}^{13})\cap D(f_{123}^{123}), \ \text{respectively } D(f_{12}^{23})\cap D(f_{123}^{123}). $$
 Therefore,  $\sigma
 ^*$ is defined over
 $D(f_{123}^{123})$
 by (\ref{f123}). Continuing this process we get in general the existence of
 the section $\sigma ^*$ whose zero locus defines $A$ by $A=B/\im (\sigma)$.

Finally note that $\sigma ^*$ is injective because it is regular on
  $\Proj(B)\setminus V(J_B)$ Thus the sequence
\begin{equation*}
0\longrightarrow B(-a_{t+c-1})\stackrel{\sigma ^* }{\longrightarrow} N\otimes B \longrightarrow MI\otimes B\longrightarrow 0
\end{equation*}
is exact, and we are done.
\end{proof}

We will apply Proposition~\ref{propoA} to the section
$\sigma ^*:B\to N\otimes B(a_{t+c-1})$ given by Theorem~\ref{mainthm}. To do
so letting $M=(N\otimes B)^*:= \Hom_B(N\otimes B,B)$, we need to verify all
assumptions of  Proposition~\ref{propoA}.

\begin{proposition} \label{vanish} Let
  $M=(N\otimes B)^*$,
  $J_B:=I_{t-r}(\varphi _{t+c-2}^*)$ and assume  $c\ge 3-r$ and $r \ge 2$.  It holds:
\begin{itemize}
\item[{\rm (i)}] Suppose $\depth _{J_B}B\ge 1$ and $J_B \ne R$. If $c \ge 2$ then
  $M$ and $N\otimes B$ are maximal Cohen-Macaulay $B$-modules. If $c \le 1$,
  then $M$ is a maximal Cohen-Macaulay $B$-module while $N\otimes B$ has
  $\codepth 1$.
\item[{\rm (ii)}] If $\depth _{J_B}B\ge 3$ and $J_B \ne R$, then
  $\Ext^{i}_B(M,B)=0$ and $\Ext^{i}_B(N\otimes B,B)=0$ for $i=1,2$.
\item[{\rm (iii)}] Suppose $\depth _{J_B}A\ge 2$, $J_A:=I_{t-r}(\varphi ^*)\ne R$
  and if $3-r \le c \le 0$ we also suppose $\depth _{J_A}A\ge 3$. Then
  $\Ext^{1}_B(M,I_{A/B})=0$ and the normal module of $B\twoheadrightarrow A$
  is maximally Cohen-Macaulay (resp. of $\codepth 1$) for $c \ge 1$
  (resp. $3-r \le c \le 0$) and satisfies
  $$\Hom_B(I_{A/B},A) \cong\Hom_B(M,A)(a_{t+c-1})\cong MI\otimes A(a_{t+c-1})
  \, .$$
\item[{\rm (iv)}] Suppose $\depth _{J_B}A\ge 3$, $J_A \ne R$ and if
  $3-r \le c \le 0$ we also suppose $\depth _{J_A}A\ge 4$. Then
  $\Ext^{1}_B(M,A)=\Ext^1_A(I_{A/B}/I_{A/B}^2,A)=0$ and
  $\Ext^{i}_B(M,I_{A/B})=0$ for $i=1,2$.
  \end{itemize}
\end{proposition}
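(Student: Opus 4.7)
The plan rests on three tools: (a) the exact sequence \eqref{section} from Theorem~\ref{mainthm}, which presents $A=B/\im\sigma$ where $\sigma$ is dual to the regular section $\sigma^*$ of $\widetilde{N\otimes B}(a_{t+c-1})$; (b) the comparison \eqref{NM} between graded $\Ext$-groups and sheaf cohomology on $U:=\Proj(B)\setminus V(J_B)$; and (c) Bruns' theorems \cite{B} on the maximal Cohen-Macaulayness of $N\otimes B$ and its $B$-dual $M=(N\otimes B)^*$. Since $\varphi^*_{t+c-2}$ has constant rank on $U$, the sheaves $\widetilde{N\otimes B}$ and $\widetilde M$ are locally free of rank $r$ there.

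For (i), apply Bruns' theorems directly: when $c\ge 2$ the truncated matrix $\varphi^*_{t+c-2}$ has at least as many columns as rows, so both $N\otimes B$ and $M$ are MCM over $B$; when $c\le 1$ the shape of the matrix reverses, and one gets MCM for $M$ but codepth $1$ for $N\otimes B$. For (ii), assume $\depth_{J_B}B\ge 3$ and apply \eqref{NM} to the pairs $(M,B)$ and $(N\otimes B,B)$. Using (i), both sheaves are locally free of rank $r$ on $U$ and reflexive there, so $\cH om(\widetilde M,\widetilde B)|_U\cong\widetilde{N\otimes B}|_U$. The right-hand side $H^i_*(U,\widetilde{N\otimes B})$, which for $i\ge 1$ equals $H^{i+1}_{V(J_B)}(\widetilde{N\otimes B})$, vanishes in the required range by the depth hypothesis and (i); the Lascoux/Buchsbaum-Rim resolution of $N$ provides the precise information needed to control the higher Ext terms. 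Then \eqref{NM} forces $\Ext^i_B(M,B)=0$ and symmetrically $\Ext^i_B(N\otimes B,B)=0$ for $i=1,2$.

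For (iii) and (iv), the central local observation is that on $U\cap\Proj A$ the section $\sigma^*$ is a regular section of a rank-$r$ bundle, so the conormal module is $I_{A/B}/I_{A/B}^2\cong M\otimes A(-a_{t+c-1})$ and the normal module is its $A$-dual. Moreover, $\sigma^*\otimes_B A=0$: the components of $\sigma^*(1)$ are $(t-r+1)\times(t-r+1)$ minors of $\varphi^*$ involving the last column, which by construction lie in $I_{A/B}$. Tensoring \eqref{section} with $A$ over $B$ therefore yields $N\otimes A\cong MI\otimes A$, and combining with reflexivity on $U\cap\Proj A$ one obtains the natural isomorphism $\Hom_A(M\otimes A,A)\cong MI\otimes A$ locally on $U\cap\Proj A$; the depth hypothesis $\depth_{J_B}A\ge 2$ extends this globally via \eqref{NM}, yielding the chain $\Hom_B(I_{A/B},A)\cong\Hom_B(M,A)(a_{t+c-1})\cong MI\otimes A(a_{t+c-1})$. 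The remaining $\Ext^i_B$-vanishings in (iii)--(iv), and the MCM (resp.\ codepth~$1$) property of the normal module, then follow by applying $\Hom_B(-,I_{A/B})$, $\Hom_B(-,A)$ and $\Hom_B(M,-)$ to \eqref{section} and to $0\to I_{A/B}\to B\to A\to 0$, and combining (ii) with the additional depth hypotheses in the resulting long exact sequences. The vanishing $\Ext^1_A(I_{A/B}/I_{A/B}^2,A)=0$ in (iv) is immediate from $I_{A/B}/I_{A/B}^2\cong M\otimes A(-a_{t+c-1})$ together with its local freeness on $U\cap\Proj A$ and \eqref{NM} in the appropriate depth range.

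The main obstacle is the careful bookkeeping of the two depth regimes $c\ge 1$ and $3-r\le c\le 0$: in the latter the codepth-$1$ failure of $N\otimes B$ from (i) forces one extra unit of depth everywhere (e.g., $\depth_{J_A}A\ge 3$ instead of $\ge 2$ in (iii), and $\ge 4$ instead of $\ge 3$ in (iv)), and tracking this cleanly through every long exact sequence---especially in deriving $\Ext^1_B(M,A)=0$ in (iv), which passes through $\Ext^2_B(M,I_{A/B})$---is delicate. A secondary difficulty is that the transition from the local isomorphism on $U\cap\Proj A$ to a global one requires knowing the precise depth of $MI\otimes A$ along $V(J_B)\cap\Proj A$; this information must be extracted from the Lascoux resolution of $MI$ combined with (i).
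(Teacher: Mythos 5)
Your outline of parts (i), (iii), and (iv) tracks the paper's proof closely: (i) by quoting Bruns' results directly, and (iii)--(iv) by sheafifying the Koszul sequence \eqref{section}, observing that $\widetilde{\sigma^*}\otimes A$ dies on $U=U_B\cap\Proj(A)$ because its entries are minors lying in $I_{A/B}$, dualizing to get $\widetilde{M\otimes A}|_U\cong\widetilde{I_{A/B}(a_{t+c-1})\otimes A}|_U\cong\cH om(\widetilde{MI\otimes A},\tilde A)|_U$, and then globalizing via \eqref{NM} and the available depths of $MI\otimes A$ (extra depth from $\depth_{J_A}A$ in the range $3-r\le c\le 0$, exactly as you note). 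One small caution: the isomorphism $N\otimes A\cong MI\otimes A$ you assert by tensoring \eqref{section} should only be claimed after sheafifying and restricting to $U$, which is all the paper uses and all you actually need.

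The genuine gap is in your argument for (ii). You try to deduce $\Ext^i_B(M,B)=0$ for $i=1,2$ from (i), \eqref{NM}, and the identification $\cH om(\tilde M,\tilde B)|_U\cong\widetilde{N\otimes B}|_U$. That identification is fine, and it reduces the claim (for $i=1$) to $H^2_{J_B}(N\otimes B)=0$, and (for $i=2$) to at least the injectivity of $\Ext^2_B(M,B)\hookrightarrow H^3_{J_B}(N\otimes B)$, which would require $\depth_{J_B}(N\otimes B)\ge 4$. But the only hypothesis is $\depth_{J_B}B\ge 3$. When $c\ge 2$, (i) says $N\otimes B$ is MCM, which gives $\depth_{J_B}(N\otimes B)=\depth_{J_B}B\ge 3$ and hence $\Ext^1_B(M,B)=0$; it does not give $\depth_{J_B}(N\otimes B)\ge 4$, so the $i=2$ vanishing does not follow. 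When $c\le 1$ the situation is worse since $N\otimes B$ has codepth $1$. Your appeal to ``the Lascoux/Buchsbaum-Rim resolution of $N$'' to ``control the higher Ext terms'' acknowledges this but does not supply the missing step. The paper avoids the issue entirely: part (ii) is quoted from Bruns' Theorems 4 and 5 in \cite{B}, which give the $\Ext$-vanishing for determinantal cokernels and their duals without routing through a naive local-cohomology bound. If you want a self-contained argument along your lines, you need either to establish a better depth bound for $N\otimes B$ along $J_B$ (which appears to require nontrivial information about the structure of the module beyond MCM), or to invoke Bruns' theorems as the paper does.
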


\begin{remark} \label{minormax} \rm
(1) Proposition~\ref{vanish} (i) and (ii) hold also for $c \ge 2-r$ and $r=1$.

(2) Note that we have $\dim B-\dim A=r$ and $\dim B-\dim B/J_B=c+2r-1$ if
$B/J_B$
is {\it determinantal} (allowing $J_B:=J_B \cdot B$). Considering $J_B$
as an ideal in $A$, then $A/J_B\cong B/J_B$ and we get:
$$\depth _{J_B}A=\dim A-\dim A/J_B=c+r-1 \, ,$$
while $\depth _{J_B}A \le c+r-1$ holds in general. So assuming
$\depth _{J_B}A\ge 3$ in Proposition \ref{vanish}(iv) we implicitly assume
$c \ge 4-r$. If the matrix $\cA$ is defined by general homogeneous polynomials
and $b_t < a_1$, then we may replace $\depth _{J_B}A\ge 3$ by $c \ge 4-r$ (and
$\dim A\ge 3$), and simultaneously $\depth _{J_A}A\ge 4$ by $\dim A\ge 4$.
\end{remark}

\begin{proof} (i)  It follows from \cite[ Theorems 1 and 2]{B}.

(ii) It follows from \cite[Theorems 4 and 5]{B}.

(iii) Let $J=J_B$ and $U_B:=\Proj(B)\setminus V(J)$. Sheafifying the exact
sequence
\begin{equation}\label{seq}
0\longrightarrow B(-a_{t+c-1})\stackrel{\sigma ^* }{\longrightarrow} N\otimes B \longrightarrow MI\otimes B\longrightarrow 0
\end{equation}
we see that $\tilde{\sigma ^*}_{|U_B}$ is given by minors belonging to
$\widetilde{I_{A/B}}|U_B$, whence $(\tilde{\sigma ^*}\otimes id_A)_{|U}=0$,
where $U:=U_B\cap \Proj(A)$. It follows that
$\widetilde{N\otimes A}_{|U}\cong \widetilde{MI\otimes A}_ {|U}$.

Dualizing the exact sequence  (\ref{seq}), we get the diagram
$$
\begin{array}{cccccccccc}
& &  &  & 0 \\
& &  & & \downarrow \\
& &   M & \twoheadrightarrow & I_{A/B}(a_{t+c-1}) \\
& &  \| & & \downarrow \\
\Hom_R(MI,B) & \longrightarrow & \Hom_R(N,B) &   \stackrel{\sigma }{\longrightarrow} &  B(a_{t+c-1} ).
\end{array}
$$
If we sheafify and restrict to $U_B$, we get again that
$\tilde{M}_{|U_B}\stackrel{\tilde{\sigma}_{|U} }{\twoheadrightarrow}
\widetilde{I_{A/B}(a_{t+c-1})}_{|U_B}$
is generated by the minors of $\widetilde{I_{A/B}}$, whence
$\widetilde{M\otimes A}_{|U}\stackrel{\cong }{\longrightarrow}
\widetilde{I_{A/B}(a_{t+c-1})\otimes A}_{|U}$.
Since $N\otimes B$ is locally free over $U_B$ we have
$$
\begin{array}{rcl} \widetilde{M\otimes A}_{|U} & \cong & {\mathcal H}om(\widetilde{N\otimes
  B},\tilde{B})_{|U_B}\otimes \tilde{A}_{|U} \\
  & \cong & {\mathcal
  H}om(\widetilde{N\otimes A},\tilde{A})_{|U} \\
  & \cong &  {\mathcal
  H}om(\widetilde{MI\otimes A},\tilde{A})_{|U}.
  \end{array}
  $$

Let $c \ge 1$. Applying $\Hom _R(-,A)$ once more, and using
$\depth _JA=\depth _J(MI\otimes A)\ge 2$ we get
$$
\begin{array}{rcl}\Hom_R(M,A)& \cong &  H^0_*(U,{\mathcal H}om(\tilde{M},\tilde{A}))\\
 & \cong & H^0_*(U,\widetilde{MI\otimes A})=MI\otimes A
 \end{array}
 $$
and, similarly
$$
\begin{array}{rcl}\Hom_R(M,A) & \cong & H^0_*(U,{\mathcal H}om(\widetilde{I_{A/B}(a_{t+c-1})}\otimes \tilde{A},\tilde{A})) \\
& \cong & \Hom_A(I_{A/B}/I_{A/B}^2,A)(-a_{t+c-1}).\end{array}
$$
These are maximal Cohen-Macaulay $A$-modules because $MI\otimes A$ is a
maximal Cohen-Macaulay $A$-module for $c \ge 1$ by (i). If, however,
$3-r \le c \le 0$, then the assumption $\depth _{J_A}A\ge 3$ implies
$\depth _{J_A}(MI\otimes A)\ge 2$ and letting $U_A:=\Proj(A)\setminus V(J_A)$,
we get
$$
\begin{array}{rcl} MI\otimes A & \cong & H^0_*(U_A,\widetilde{MI\otimes A}) \\
& \cong &
H^0_*(U,\widetilde{MI\otimes A})\end{array}
$$
because $MI\otimes A$ is locally free over $U_A \supset U$ and
$\depth _JA \ge 2$. Thus the two displayed formulas for $\Hom_R(M,A)$ above
hold in this case too. Moreover, since we have an exact sequence
\begin{equation}\label{diag}
\begin{array}{ccccccccccccc}
0 & \rightarrow & \Hom_B(M,I_{A/B}) & \rightarrow & \Hom_B(M,B) & \rightarrow & \Hom_B(M,A) &\rightarrow & \Ext^1_B(M,I_{A/B}) & \rightarrow \\
& & & & \| & & \| \\
& & & & N\otimes B & \twoheadrightarrow & MI\otimes A
\end{array}
\end{equation}
and $\Ext^1_B(M,B)=0$ by (ii), we get $\Ext^1_B(M,I_{A/B})=0$.

(iv) Let $c \ge 1$. Since  $\depth _JA=\depth _J(MI\otimes A)\ge 3$ and
$\widetilde{M\otimes A}_{|U}\cong \widetilde{I_{A/B}\otimes
  A(a_{t+c-1})}_{|U}$
are locally free  we get
$$
\begin{array}{rcl}\Ext^1_B(M,A) & \cong & H^1_*(U_B,{\mathcal H}om(\tilde{M}\otimes \tilde{A},\tilde{A})) \\
& \cong & H^1_*(U_B,\widetilde{MI\otimes A}) \\
& = & H^2_J(MI\otimes A) \\
& = & 0\end{array} $$
and up to twist;
$$
\begin{array}{rcl} \Ext^1_A(I_{A/B}/I_{A/B}^2,A) & \cong &  H^1_*(U,{\mathcal H}om(\tilde{I_{A/B}}\otimes \tilde{A},\tilde{A})) \\
& \cong &  H^1_*(U_B,\widetilde{MI\otimes A}) \\
& = & H^2_J(MI\otimes A)\\
& = & 0.\end{array}$$
Then (\ref{diag}) leads to
$$
\longrightarrow\Ext^1_B(M,I_{A/B})\longrightarrow \Ext^1_B(M,B) \longrightarrow \Ext^1_B(M,A)\longrightarrow \Ext^2_B(M,I_{A/B})\longrightarrow
$$
and using $\Ext^{2}_B(M,B)=0$ from (ii), we get
$\Ext^2_B(M,I_{A/B})=0$ which proves (iv) when $c \ge 1$.

Finally if $3-r \le c \le 0$, then the assumption $\depth _{J_A}A\ge 4$ implies
$H^2_{J_A}(MI\otimes A)=0$, and it suffices to show
$ H^1_*(U_B,\widetilde{MI\otimes A}) \cong H^1_*(U_A,\widetilde{MI\otimes A})$.
This, however, follows from $\depth _JA \ge 3$ and the fact that $MI\otimes A$ is locally free over $U_A \supset U$.
\end{proof}

Now we restrict to $r=2$, i.e. to $k$-algebras defined by submaximal minors. We
have that
\begin{equation}\label{num1}
0\longrightarrow M_2\longrightarrow M_1\longrightarrow B(a_{t+c-1})\longrightarrow A(a_{t+c-1})\longrightarrow 0
\end{equation}
is exact, whence $M_2=\Hom_B(MI,B)$ is a maximal Cohen-Macaulay $B$-module.
Indeed, since $M_1$ is a maximal Cohen-Macaulay $B$-module and $B$ and $A$ are
determinantal, hence Cohen-Macaulay, the exact sequence (\ref{num1})
implies that $M_2$ is also a maximal Cohen-Macaulay $B$-module. In this
case we have the vanishing of the following $\Ext$-groups.

\begin{proposition}\label{vanish2} Let $B=R/I_{t-1}(\varphi ^*_{t+c-2}) \twoheadrightarrow A:=R/I_{t-1}(\varphi ^*)$ be determinantal rings and suppose $c\ge 1$ and $I_{t-2}(\varphi ^*)\ne R$. Let $M=(N\otimes B)^*$ and  ${J_B}=I_{t-2}(\varphi ^*_{t+c-2})$. Then, it holds:
\begin{itemize}
\item[{\rm (i)}] If $\depth_{J_B}B\ge 3$ (resp. $\ge 4$), we have $\Ext^{i}_B(M,M_2)=0$ for $i=1$ (resp. $i=1,2$).
\item[{\rm (ii)}] Suppose $\depth_{J_B}A\ge 2$
(resp. $\depth_{J_B}A\ge 3$).
Then we have
  $$\Ext^{i}_B(M,M)=0 \ {\rm for} \ i=1  {\rm \ (resp.} \  i=1, 2  {\rm )} \, .$$
\end{itemize}
\end{proposition}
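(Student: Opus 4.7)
My plan is to prove (i) first via a sheaf/local-cohomology reduction, and then deduce (ii) from (i) together with Proposition~\ref{vanish}(iii)--(iv).

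For (i), I would apply the comparison \eqref{NM} with $Z = V(J_B)$, $N = M$ and $L = M_2$. Since $M$ and $M_2$ are both maximal Cohen-Macaulay over $B$ (by Proposition~\ref{vanish}(i) and the paragraph preceding Proposition~\ref{vanish2}), one has $\depth_{J_B} M_2 = \depth_{J_B} B$, and $\tilde M$ is locally free of rank $2$ on $U_B := \Proj(B)\setminus V(J_B)$ since $\varphi^*_{t+c-2}$ has locally constant rank $t-2$ there. Hence \eqref{NM} yields
$$\Ext^i_B(M, M_2) \cong H^i_*(U_B, {\mathcal H}om(\tilde M, \tilde M_2))$$
whenever $\depth_{J_B} B \geq i+2$. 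Dualizing the exact sequence of Theorem~\ref{mainthm} on $U_B$ produces $0 \to \tilde M_2 \to \tilde M \to \tilde B(a_{t+c-1}) \to 0$, so $\det \tilde M \cong \tilde M_2(a_{t+c-1})$; combined with the standard rank-two identification $\tilde M^\vee \cong \tilde M \otimes (\det\tilde M)^{-1}$ this gives
$${\mathcal H}om(\tilde M, \tilde M_2) \cong \tilde M(-a_{t+c-1}) \quad \text{on } U_B.$$
It remains to show $H^i_*(U_B, \tilde M) = 0$ in the required range, which follows from a second application of \eqref{NM} to the pair $N = B$, $L = M$: the vanishing $\Ext^i_B(B, M) = 0$ for $i \geq 1$ combined with $M$ being MCM forces $H^i_*(U_B, \tilde M) = 0$ whenever $\depth_{J_B} B \geq i + 2$. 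This establishes (i).

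For (ii), I would extract from \eqref{num1} the short exact sequence
$$0 \to M_2 \to M \to I_{A/B}(a_{t+c-1}) \to 0 \qquad (*)$$
and apply $\Hom_B(M, -)$. The depth bookkeeping is the key point: the inclusion $I_{A/B} \subseteq J_B$ holds because any $(t-1)$-minor of $\varphi^*$ involving the last column expands along that column into $(t-2)$-minors of $\varphi^*_{t+c-2}$, so, since $I_{A/B}$ is a perfect ideal of grade $2$ in the Cohen-Macaulay ring $B$, one obtains $\depth_{J_B} A = \depth_{J_B} B - 2$. Therefore the hypothesis $\depth_{J_B} A \geq 2$ (resp.\ $\geq 3$) forces $\depth_{J_B} B \geq 4$ (resp.\ $\geq 5$), and part (i) provides $\Ext^i_B(M, M_2) = 0$ for $i = 1$ (resp.\ $i = 1, 2$). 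Combining this with $\Ext^i_B(M, I_{A/B}) = 0$ supplied by Proposition~\ref{vanish}(iii) or (iv) (applicable here since $c \geq 1$, so the auxiliary range $3-r \leq c \leq 0$ is irrelevant), the long exact sequence obtained from $(*)$ sandwiches $\Ext^i_B(M, M)$ between two zeros and forces $\Ext^i_B(M, M) = 0$ in the stated range.

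The main obstacle will be the sheaf-theoretic identification ${\mathcal H}om(\tilde M, \tilde M_2) \cong \tilde M(-a_{t+c-1})$ on $U_B$ via the rank-two duality; everything else amounts to careful bookkeeping of how the depth hypotheses transfer between the ideals $J_B$ and $I_{A/B}$, and between the rings $A$ and $B$.
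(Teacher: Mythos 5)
Your overall route is the same as the paper's: in (i), identify ${\mathcal H}om(\tilde M,\tilde M_2)$ with $\tilde M(-a_{t+c-1})$ on $U_B$ via the rank-$2$ structure and convert the $\Ext$-vanishing into local cohomology vanishing for the MCM module $M$; in (ii), run the long exact sequence from $(*)$ and sandwich $\Ext^i_B(M,M)$ between $\Ext^i_B(M,M_2)$ and $\Ext^i_B(M,I_{A/B})$, citing Proposition~\ref{vanish}(iii)/(iv) and part (i). The depth bookkeeping $\depth_{J_B}B = \depth_{J_B}A + 2$ via $I_{A/B}\subseteq J_B$ is also exactly what the paper uses.

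However, there is a genuine gap in your derivation of the line-bundle identification in (i). You claim that dualizing the sequence of Theorem~\ref{mainthm} produces a \emph{short} exact sequence
$$0 \longrightarrow \tilde M_2 \longrightarrow \tilde M \longrightarrow \tilde B(a_{t+c-1}) \longrightarrow 0 \quad \text{on } U_B,$$
and then you read off $\det\tilde M \cong \tilde M_2(a_{t+c-1})$. This sequence is not exact on the right: the map $\tilde M \to \tilde B(a_{t+c-1})$ is $\tilde\sigma$, whose cokernel on $U_B$ is $\tilde A(a_{t+c-1})|_{U_B}$, and this is nonzero. Indeed the hypothesis $I_{t-2}(\varphi^*)\neq R$ guarantees $V(J_B)\subsetneq \Proj(A)$, so $\Proj(A)\cap U_B$ is a dense open in $\Proj(A)$, precisely the degeneracy locus of the regular section $\sigma$. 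The correct statement on $U_B$ is the full four-term Koszul-type sequence \eqref{num1} twisted by $-a_{t+c-1}$, namely $0\to \tilde M_2(-a_{t+c-1})\to \tilde M(-a_{t+c-1})\to \tilde B\to \tilde A\to 0$, from which the identification $\tilde M_2|_{U_B}\cong \wedge^2\tilde M(-a_{t+c-1})|_{U_B}$ follows because the left three terms form the Koszul complex of the rank-$2$ regular section $\sigma$ — this is how the paper argues. Your determinant shortcut can be repaired: the truncated sequence \emph{is} short exact on the complement of $\Proj(A)$ in $U_B$, and since $\det\tilde M$ and $\tilde M_2(a_{t+c-1})$ are both reflexive rank-one sheaves and $\Proj(A)\cap U_B$ has codimension $\ge 2$, the isomorphism extends; but you must say this, since the three-term exactness you asserted is false as stated.

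Once the identification ${\mathcal H}om(\tilde M,\tilde M_2)\cong\tilde M(-a_{t+c-1})$ is in place, the rest of (i) and all of (ii) are correct and match the paper, modulo a small stylistic point: reducing $H^i_*(U_B,\tilde M)$ via $\Ext^i_B(B,M)$ and \eqref{NM} is a roundabout way of saying $H^i_*(U_B,\tilde M)\cong H^{i+1}_{J_B}(M)$, which vanishes for $i+1<\depth_{J_B}M=\depth_{J_B}B$ directly.
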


\begin{proof} (i) The exact sequence (\ref{num1}) is induced  from the Koszul resolution of a regular sequence of 2 elements, whence $\tilde{M_2}_{|U_B}\cong \wedge ^2\tilde{M}(-a_{t+c-1})_{|U_B}$. Moreover since the rank of $M$ is 2, we have isomorphisms
$$
\begin{array}{rcl}\tilde{M}^*\otimes \wedge ^2\tilde{M}_{|U_B} & \cong & \tilde{M}_{|U_B}\\
& \cong &  {\mathcal H}om(\tilde{M},\tilde{M_2}(a_{t+c-1}))_{|U_B}.\end{array}$$
Using $\depth _{J_B}B=\depth _{J_B}M=\depth _{J_B}M_2\ge 3$, we get
$$
\begin{array}{rcl}
\Ext^1_B(M,M_2) & \cong &  \Ext^1_{{\mathcal O}_{U_B}}(\tilde{M},\tilde{M_2}) \\
& \cong &  H^1_*(U_B,{\mathcal H}om(\tilde{M},\tilde{M_2})) \\
& \cong &  H^2_{J_B}(M(-a_{t+c-1})) \\
& = & 0 \end {array}
$$
and correspondingly, $\Ext^2_B(M,M_2)\cong H^3_{J_B}(M(-a_{t+c-1}))=0$ if $\depth_{J_B}B\ge 4$.

(ii) Using the exact sequence $$0\longrightarrow M_2\longrightarrow M_1=M\longrightarrow I_{A/B}(a_{t+c-1})\longrightarrow 0,$$ we deduce an exact sequence
$$
\longrightarrow\Ext^1_B(M,M_2)\longrightarrow\Ext^1_B(M,M)\longrightarrow \Ext^1_B(M,I_{A/B}(a_{t+c-1}))\longrightarrow\Ext^2_B(M,M_2)\longrightarrow.
$$
Since $\depth _{J_B}A\ge 2$ 
and $\depth _{J_B}B=\depth _{J_B}A+2\ge 4$, we get $\Ext^1_B(M,M)=0$ by
Proposition \ref{vanish} (iii) and Proposition \ref{vanish2} (i).

Similarly, if $\depth _{J_B}A\ge 3$,
we get $$\Ext^2_B(M,M)\cong \Ext^2_B(M,I_{A/B}(a_{t+c-1}))=0$$ by Proposition
\ref{vanish} (iv) and Proposition \ref{vanish2} (i), and we are done.
\end{proof}

\begin{corollary} \label{corSectRank2} Let
  $B=R/I_{t-1}(\varphi ^*_{t+c-2}) \twoheadrightarrow A:=R/I_{t-1}(\varphi
  ^*)$
  be determinantal rings defined by submaximal minors, let
  $J_B:=I_{t-2}(\varphi _{t+c-2} ^*)$ and suppose $c\ge 1$ and
  $I_{t-2}(\varphi ^*)\ne R$. Then:
\begin{itemize}
\item[{\rm (i)}] If $\depth_{J_B}A\ge 3$, then (i), (ii) and (iii) of
  Proposition \ref{propoA} hold.
\item[{\rm (ii)}] Suppose $\depth_{J_B}A = 2$ and that every deformation of
  $B$ comes from deforming its matrix, then (i), (ii) and (iii) of
  Proposition \ref{propoA} hold.
\end{itemize}
\end{corollary}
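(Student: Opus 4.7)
The plan is to verify, in each of cases (i) and (ii), the three conditions of Proposition~\ref{propoA} for the regular section $\sigma^*:B\to N\otimes B(a_{t+c-1})$ produced by Theorem~\ref{mainthm}, taking $M=(N\otimes B)^{*}$, $Z=V(J_B)$, and $s=a_{t+c-1}$, so that the sequence \eqref{num1} matches the setup of Proposition~\ref{propoA}.

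First I would collect depth information. By Proposition~\ref{vanish}(i), $M$ is a maximal Cohen--Macaulay $B$-module, and (as observed just before Proposition~\ref{vanish2}) the exact sequence \eqref{num1} together with the maximal Cohen--Macaulayness of $M$, $B$ and the depth of $A$ forces $M_2$ to be MCM as well. Since $M$ and $M_2$ have full support over the Cohen--Macaulay ring $B$, this gives $\depth_{\mathfrak{m}}M=\depth_{\mathfrak{m}}M_2=\dim B$ and $\depth_{J_B}M=\depth_{J_B}M_2=\depth_{J_B}B$. Using the identity $\depth_{J_B}B=\depth_{J_B}A+2$ (as in the proof of Proposition~\ref{vanish2}(ii)), case (i) yields $\depth_{J_B}B\ge 5$ and case (ii) yields $\depth_{J_B}B\ge 4$; in particular $\dim B\ge 4$ in both cases, so the four depth inequalities in Proposition~\ref{propoA}(i) all hold. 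Condition (ii) of Proposition~\ref{propoA} is then immediate: $\Ext^1_B(M,B)=0$ follows from Proposition~\ref{vanish}(ii) (which only requires $\depth_{J_B}B\ge 3$), and $\Ext^2_B(M,M_2)=0$ follows from Proposition~\ref{vanish2}(i) (which requires $\depth_{J_B}B\ge 4$).

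It remains to verify Proposition~\ref{propoA}(iii). In case (i), where $\depth_{J_B}A\ge 3$, Proposition~\ref{vanish2}(ii) directly gives $\Ext^i_B(M,M)=0$ for $i=1,2$, in particular $\Ext^2_B(M,M)=0$, which is the simpler alternative in condition (iii). In case (ii), only $\depth_{J_B}A=2$ is available, so Proposition~\ref{vanish2}(ii) only provides $\Ext^1_B(M,M)=0$; I would then switch to the alternative and establish the liftability of $M$. This is the step I expect to be the main obstacle, but it follows cleanly from the deformation hypothesis on $B$: given a local artinian $T$ with residue field $k$ and a graded deformation $B_T$ of $B$, the assumption supplies a lifting $\cB_T$ of the matrix $\cB$ (equivalently of $\varphi^*_{t+c-2}$) to $R\otimes_k T$, yielding a lifted morphism $\varphi_{t+c-2,T}^{*}$. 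Setting $N_T:=\coker(\varphi_{t+c-2,T}^{*})$ and $M_T:=\Hom_{B_T}(N_T\otimes B_T,B_T)$, the Buchsbaum--Rim (or Lascoux) resolution of $N$ lifts termwise to one of $N_T$, and since $M$ is MCM over $B$ its $B$-dual presentation lifts to a $B_T$-flat presentation of $M_T$, exhibiting $M_T$ as the desired lifting of $M$. With all three conditions of Proposition~\ref{propoA} verified in both cases, the corollary follows.
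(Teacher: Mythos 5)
Your proposal follows essentially the same route as the paper: reduce to checking the three hypotheses of Proposition~\ref{propoA} with $M=(N\otimes B)^*$, $Z=V(J_B)$, $s=a_{t+c-1}$; get the depth conditions from the maximal Cohen--Macaulayness of $M$ and $M_2$ established in Proposition~\ref{vanish}(i) and the discussion around \eqref{num1}, together with $\depth_{J_B}B = \depth_{J_B}A + 2 \ge 4$; get the required $\Ext$-vanishing from Proposition~\ref{vanish}(ii) and Proposition~\ref{vanish2}(i)--(ii); and for case (ii), invoke the alternative of Proposition~\ref{propoA}(iii), namely $\Ext^1_B(M,M)=0$ plus liftability of $M$. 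All of that is correct, and the bookkeeping of which depths are needed where matches.

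The one place you go wrong is the justification of the liftability of $M$ in case (ii). You write that once $N_T$ is constructed from the lifted matrix $\cB_T$, the module $M_T:=\Hom_{B_T}(N_T\otimes B_T,B_T)$ is a $T$-flat deformation of $M$ ``since $M$ is MCM over $B$''. Maximal Cohen--Macaulayness of $M$ over a determinantal (not Gorenstein) $B$ does not by itself imply that dualization into $B$ commutes with the base change $T\to k$ or that $\Hom_{B_T}(N_T\otimes B_T,B_T)$ is $T$-flat. The correct criterion, which the paper uses here (and again in the proof of Theorem~\ref{teo3}, via de Jong--van Straten), is the vanishing $\Ext^1_B(N\otimes B, B)=0$: this is what guarantees that $\Hom_{B_T}(N_T\otimes B_T, B_T)$ is $T$-flat and reduces to $M$ modulo $\mathfrak{m}_T$. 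You have already noted that $\Ext^1_B(N\otimes B, B)=0$ holds by Proposition~\ref{vanish}(ii) (since $\depth_{J_B}B\ge 3$), so this is a misattribution rather than a missing ingredient, but as written the step does not follow from the reason you give and should be replaced by the $\Ext^1$-vanishing argument.
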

\begin{proof} Since $\depth _{J_B}B=\depth _{J_B}A+2\ge 4$ and $M_2$ is
  maximally Cohen-Macaulay by \eqref{liftsectionI} and we may take $Z=V(J_B)$,
  all conclusions follow from Propositions~\ref{vanish} and \ref{vanish2},
  except possibly $\Ext^2_B(M,M)=0$ which however is true if
  $\depth _{J_B}A \ge 3$. Only assuming $\depth _{J_B}A \ge 2$ we have at
  least $\Ext^1_B(M,M)=0$. Moreover in this case, $M$ is "liftable" to any
  deformation of $B$. The argument for this is given right before
  \eqref{auxdiag} in the next section; the short version of that argument is
  that there is by assumption a matrix $\cB_T$ and a corresponding morphism
  $(\varphi _{t+c-2})_T^*$ which defines any given deformation $B_T$ of $B$ to
  a local artinian $T$. Set $N_T = \coker( (\varphi _{t+c-2})_T^*)$. Then $N_T$
  is a deformation of $N$ and thus the $T$-flat $\Hom(N_T \otimes B_T,B_T)$
  lifts $M$ to $T$, and we are done.
\end{proof}

In \cite{KM2009} we studied the case $c=1$ of Proposition~\ref{propoA}. Now we
take an example where $c=2$.

\begin{example} \label{prop41ex} \rm (Determinantal quotients of
  $R=k[x_0, x_1, \cdots ,x_n]$, using Proposition~\ref{propoA})

  Let $\cA= [\cB,v]$ be a general $3 \times 4$ matrix with $\cB$ a linear
  matrix and $v$ a column with all entries of degree $m$. Thus the degree
  matrix of $\cA$ is
  $\left(\begin{smallmatrix}1 & 1  & 1  & m\\
      1 & 1 & 1 & m\\ 1 & 1 & 1 & m\end{smallmatrix}\right)$
  \ and set $b_i=0$ for $1 \le i \le 3$. The vanishing of all $2 \times 2$
  minors defines a determinantal ring that satisfies all conditions of
  Corollary~\ref{corSectRank2} for all $m\ge 1$ provided $n\ge 8$. Indeed
  $\codim_R B = 4$ and $r=2$ (submaximal minors), so $n \ge 8$ is equivalent
  to $\dim A \ge 2$. To avoid some details we suppose $\dim A \ge 3$. So
  $n \ge 9$ and since $\cB$ is general, we may, after a linear coordinate
  change, suppose that $\cB$ is the ``generic'' linear matrix with entries
  $x_0,x_1,...,x_{8}$. For such a matrix, by results of the next sections, one
  know that the closure of the determinantal locus
  $\overline{ W(0,0,0;1,1,1;2)}$ containing $(\Proj(B))$, is a generically smooth
  component of $ \Hilb(\PP^{n})$ by Proposition~\ref{deforminggenericcase} and
  Corollary~\ref{gendetrem} and of dimension $64+9(n-8)=9n-8$
  (Example~\ref{exgendet}(i) and Corollary~\ref{cordimRdetW}(ii)). In fact
  $\lambda_c$ defined in \eqref{lamda} for $c=1$ is $9n-8$.

  We {\it claim} that $\overline{ W(0,0,0;1,1,1,m;2)}$ is a generically
  smooth irreducible component of $ \Hilb(\PP^{n})$ for every $m \ge 3$ of
  dimension
  $$\dim \overline{ W(0,0,0;1,1,1;2)} +
  \dim (MI \otimes A)_m = 9n-8 + \dim (MI \otimes A)_m\ .$$
  Indeed, all assumptions of Proposition~\ref{propoA} hold except possibly
  $_0\!\Ext^1_B(I_B/I^2_B, I_{A/B})=0$. To show that it vanishes we consider
  the minimal free resolution of $R/I_B$ given by  Lascoux in \cite{L} or, even easier, the
  Gulliksen-Negaard free resolution (\cite{GN}) or you may for simplicity run Macaulay2 to
  see that the minimal free resolution is
    \begin{equation} \label{G33} 0 \longrightarrow R(-6) \longrightarrow
      R(-4)^{9} \longrightarrow R(-3)^{16} \longrightarrow
      R(-2)^{9}\longrightarrow I_B \rightarrow 0\ .
\end{equation}
The generators of $I_{A/B}$ are $2 \times 2$ minors involving the last column
of $\cA$. So all generators of  $I_{A/B}$ have degree $m+1 \ge 4$. Applying
$\ _0\!\Hom_R(-, I_{A/B})$ to \eqref{G33} we get that all terms in
   $$0 \to \ _0\!\Hom_R(I_B, I_{A/B}) \longrightarrow\  _0\!\Hom_R(R(-2)^9, I_{A/B})
   \longrightarrow\ _0\!\Hom_R(R(-3)^{16}, I_{A/B}) \to $$
   vanish. So, by definition of $_0\!\Ext^i_R(I_B, I_{A/B})$ for $i=1,2$, we have
   $$\ _0\!\Hom_R(I_B, I_{A/B})\  =  \ _0\!\Ext^1_R(I_B, I_{A/B})\  =  \ 0 \ .$$
   Finally, since
   $_0\!\Ext^1_B(I_B/I^2_B, I_{A/B}) \subset\ _0\!\Ext^1_R(I_B, I_{A/B})$, it
   follows that $_0\!\Ext^1_B(I_B/I^2_B, I_{A/B}) = 0$. Thus
   Proposition~\ref{propoA} applies and since smooth morphisms maps
   irreducible components to irreducible components we get the claim. Indeed,
   using also the exact sequences in Proposition~\ref{propoA} we get
   $$\dim \overline{ W(0,0,0;1,1,1,m;2)} = \dim \overline{ W(0,0,0;1,1,1;2)} +
   \dim (MI \otimes A)_m \,$$
   because $\Hom_B(I_{A/B},A) \cong MI\otimes A(a_{t+c-1})$ by
   Proposition~\ref{vanish}.
     \end{example}

     Example~\ref{prop41ex} illustrates how Corollary~\ref{corSectRank2}
     ``generalizes'' the main results of \cite{KM2009}, which only holds for
     submaximal minors with $c=1$. Indeed, Corollary~\ref{corSectRank2} also
     deals with submaximal minors but it allows any $c \ge 1$. However in
     \cite{KM2009} we also compute
     $\dim \overline{W(\underline{b};\underline{a};r)}$ for $c=1$, which
     really means that $\dim (MI \otimes A)_{a_{t+c-1}} \,$ is computed, under
     quite weak assumption (which we generalize further in
     Theorem~\ref{dimW1}). For $c\ge 2$, we have not been able to compute
     $\dim (MI\otimes A(a_{t+c-1}))_0$, in general, but as we will see in the
     next sections we succeed to compute it under some assumptions.

     \begin{remark} \label{rem38} \rm By Corollary~\ref{corSectRank2}, if
       $\depth_{J_B}A\ge 3$ (resp. $\depth _{J_B}A\ge 2$ under an additional
       assumption) and $r=2$, then all assumptions of Proposition \ref{propoA}
       are satisfied (except possibly (iv)), hence the first conclusion holds.
       The nice thing to remark now is that Proposition \ref{vanish} holds for
       {\it any} $r \ge 2$. In particular, if $\depth _{J_B}A\ge 3$, and
       $\depth _{J_A}A\ge 4$ in the case $c \le 0$, then 
       \begin{equation}\label{num2}
  \Ext^1_A(I_{A/B}/I_{A/B}^2,A)=0
\end{equation}
for any  $r\ge2$. Since $\depth _{J_B}A\ge 3$ implies
$${\Hl}^2(B,A,A)\cong \Ext^1_A(I_{A/B}/I_{A/B}^2,A)$$
(because $\Hom({\Hl}_2(B,A,A),A)=0$, see \cite[Section1.1]{K04} and its
references for bagkground on algebra (co)homology) we get, also for $r\ge 3$,
{\it all conclusions} of Proposition \ref{propoA} using \cite[Proposition
4(ii)]{K04}. 
More precisely, recalling $J_A:=I_{t-r}(\varphi^*)$, we have
\end{remark}

\begin{proposition} \label{everydef} Let $B\twoheadrightarrow A$ be as in
  Theorem \ref{mainthm} and suppose $\depth _{J_B}A\ge 3$,
  $J_A:=I_{t-r}(\varphi ^*) \ne R$, $r \ge 2$ and $c\ge 3-r$. Suppose also
  $\depth _{J_A}A\ge 4$ if $c \le 0$. Then $ _0{\Hl}^2(B,A,A)=0$, and the
  $2^{nd}$ projection
  $p_2: \Hilb ^{p_X(t),p_Y(t)}(\PP^{n})\longrightarrow \Hilb ^{p_Y(t)}(\PP^{n})$ is
  smooth at $(X \subset Y)$.

  Moreover, if Proposition~\ref{propoA}(iv) holds,  or
 \begin{itemize}
 \item[{\rm (iv'')}]
  $ _0\!\Ext^1_B(I_B/I^2_B, I_{A/B})\hookrightarrow \ _0\!\Ext^1_B(I_B/I^2_B,
  B)$ is
  injective and $B$ unobstructed (as a graded algebra),
  \end{itemize}
  \noindent then the $1^{st}$ projection $p_1:\Hilb
  ^{p_X(t),p_Y(t)}(\PP^{n})\longrightarrow \Hilb ^{p_X(t)}(\PP^{n})$ is smooth
  at $(X \subset Y)$.
  Hence the natural map
  $H^0(X,{\mathcal N}_X)\twoheadrightarrow H^0(X,{\mathcal N}_{Y|X})$ is
  surjective, $X$ is $p_Y$-generic, and
$$h^0{\mathcal N}_X+hom_{{\mathcal O}_{\PP^n}}({\mathcal I}_Y,{\mathcal
  I}_{X/Y})=h^0{\mathcal N}_Y+hom_{{\mathcal O}_{Y}}({\mathcal
  I}_{X/Y},{\mathcal O}_{X}) \,,$$
$$\dim _{(X)}\Hilb ^{p_X(t)}(\PP^n)+hom_{{\mathcal O}_{\PP^n}}({\mathcal
  I}_Y,{\mathcal I}_{X/Y})=\dim _{(Y)}\Hilb ^{p_Y(t)}(\PP^n)+hom_{{\mathcal
    O}_{Y}}({\mathcal I}_{X/Y},{\mathcal O}_{X}).$$
In particular, $Y$ is unobstructed if and only if $X$ is unobstructed.
\end{proposition}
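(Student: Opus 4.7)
The plan is to first show $\,{}_0\Hl^2(B,A,A)=0$, derive smoothness of $p_2$ from it, and then treat smoothness of $p_1$ under each of the two alternative hypotheses. For the vanishing, I recall that the spectral sequence
\[ E_2^{p,q} = \Ext^p_A(\Hl_q(B,A,A), A) \Rightarrow \Hl^{p+q}(B,A,A) \]
together with $\Hom_A(\Hl_2(B,A,A), A) = 0$ --- which holds because $B \twoheadrightarrow A$ is locally a complete intersection on $\Spec A \setminus V(J_A)$ under our depth hypotheses --- yields an injection $\Hl^2(B,A,A) \hookrightarrow \Ext^1_A(I_{A/B}/I_{A/B}^2, A)$. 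By Remark~\ref{rem38} combined with Proposition~\ref{vanish}(iv), the target vanishes in degree zero under $\depth_{J_B}A \ge 3$ (and $\depth_{J_A}A \ge 4$ when $c \le 0$). Hence $\,{}_0\Hl^2(B,A,A)=0$, and \cite[Proposition~4(ii)]{K04}, together with the identifications \eqref{Grad}--\eqref{GradPair}, gives that $p_2$ is smooth at $(X \subset Y)$.

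For smoothness of $p_1$ under Proposition~\ref{propoA}(iv), the conclusion is essentially built into Proposition~\ref{propoA} via Theorem~\ref{mainthm}: the latter produces the regular section $\sigma^*$ realizing $A$ as the zero locus of a section on $Y=\Proj(B)$, and the Ext-vanishing conditions (i)--(iii) of Proposition~\ref{propoA} are supplied by Propositions~\ref{vanish} and \ref{vanish2} (for $r=2$ directly, and for $r\ge 3$ by invoking the algebra-cohomology version of the criterion as indicated in the last sentence of Remark~\ref{rem38}). Under the alternative hypothesis (iv''), I would instead run the long exact sequence obtained from $0 \to I_{A/B} \to B \to A \to 0$,
\[ \cdots \to {}_0\Ext^1_B(I_B/I_B^2, I_{A/B}) \stackrel{\iota}{\to} {}_0\Ext^1_B(I_B/I_B^2, B) \to {}_0\Ext^1_B(I_B/I_B^2, A) \to \cdots , \]
together with the standard identification of ${}_0\Ext^1_B(I_B/I_B^2, I_{A/B})$ as the obstruction space for lifting the flag $(X \subset Y)$ over a given lift of $X$: the image under $\iota$ of any such obstruction sits in the obstruction space for deforming $B$ alone, which vanishes because $B$ is unobstructed, and the injectivity of $\iota$ forces the original obstruction to vanish.

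Once both $p_1$ and $p_2$ are smooth at $(X \subset Y)$, the remaining statements are formal. The surjectivity $H^0(\mathcal{N}_X) \twoheadrightarrow H^0(\mathcal{N}_{Y|X})$ is the tangent statement of smoothness of $p_1$, read off from the normal-sheaf exact sequence; since smooth morphisms are open, $(X)$ lies in the open image of $p_1$, yielding $p_Y$-genericity of $X$ in the sense of Definition~\ref{PYgeneric}. The two displayed numerical identities follow by computing the tangent space of $\Hilb^{p_X(t),p_Y(t)}(\PP^n)$ at $(X \subset Y)$, and its local dimension there, in two ways through the smooth projections $p_1$ and $p_2$: the fibers have tangent spaces $\Hom_{\mathcal{O}_{\PP^n}}(\mathcal{I}_Y, \mathcal{I}_{X/Y})$ and $\Hom_{\mathcal{O}_Y}(\mathcal{I}_{X/Y}, \mathcal{O}_X)$ respectively, giving the identities at once. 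The unobstructedness equivalence is immediate: smoothness of $p_1$ and $p_2$ at $(X \subset Y)$ means the flag scheme is smooth there precisely when either of its target Hilbert schemes is smooth at the corresponding point.

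The main obstacle I foresee is the algebra-cohomology step --- specifically the injection $\Hl^2(B,A,A) \hookrightarrow \Ext^1_A(I_{A/B}/I_{A/B}^2, A)$ in degree zero, which requires $\Hom_A(\Hl_2(B,A,A), A)=0$. This rests on $A$ being locally a complete intersection over $B$ off a subset of sufficient codepth, a property one must extract from the determinantal structure and the depth hypotheses via the framework of \cite{K04}. A secondary delicate point is the obstruction-theoretic interpretation required under (iv''), which must be aligned carefully with the cohomology framework used for $p_2$ so that the unobstructedness of $B$ really does annihilate the obstruction pushed into ${}_0\Ext^1_B(I_B/I_B^2, B)$.
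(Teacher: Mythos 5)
Your proposal is correct in substance and follows essentially the same route as the paper: vanishing of $\,{}_0\Hl^2(B,A,A)$ (via $\Hom_A(\Hl_2,A)=0$ and $\Ext^1_A(I_{A/B}/I_{A/B}^2,A)=0$, i.e.\ Remark~\ref{rem38} plus Proposition~\ref{vanish}(iv)) gives smoothness of $q=p_2$, while (iv) or (iv$''$) gives smoothness of $p=p_1$, after which the numerical identities and the unobstructedness equivalence are formal consequences of the two smooth projections. One slip to note: you cite \cite[Proposition~4(ii)]{K04} for the smoothness of $p_2$, but that reference is the criterion the paper uses for the \emph{first} projection $p_1$ under hypothesis (iv); what gives smoothness of $p_2=q$ is the identification of $\,{}_0\Hl^2(B,A,A)$ with the obstruction group for lifting the surjection $B_S\to A_S$ over a given lift $B_T$ (the paper cites \cite[Remark~3]{K04} and \cite[p.\,234]{K2007} for this). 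Also, your route to $p_1$-smoothness under (iv) via Proposition~\ref{propoA} and the section $\sigma^*$ is somewhat more indirect than the paper's, which simply invokes \cite[Proposition~4(ii)]{K04} directly (and \cite[Proposition~4(iii)]{K04} under (iv$''$)); that said, your reading of Remark~\ref{rem38} to handle $r\ge3$ ultimately funnels through the same reference, so the substance agrees.
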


\begin{proof} By Remark \ref{rem38}, $ _0{\Hl}^2(B,A,A)=0$, whence we get the
  smoothness of the projection
$$q:\GradAlg(H_B,H_A)\longrightarrow \GradAlg(H_B)$$
given by $q((B'\rightarrow A'))=(B')$ (cf. \cite[p.\,234]{K2007}) which implies
the surjectivity below
\begin{equation} \label{ZJ}
{\small
\begin{array}{ccccccccccccc}
 \Hom_B(I_{A/B},A) & \hookrightarrow & \Hom_R(I_A,A) & \rightarrow & _0\!
                                                                     \Hom_R(I_B,A)
  &\rightarrow &  _0{\Hl}^2(B,A,A) & \rightarrow . \\
 & & \| & & \| &  & \| \\
 & & H^0(X,{\mathcal N}_X) & \twoheadrightarrow &  H^0(X,{\mathcal Hom}({\mathcal I}_Y,{\mathcal O}_{X}))  & & 0
\end{array}
}
\end{equation}
Indeed, the smoothness of $$q:\GradAlg(H_B,H_A)\longrightarrow \GradAlg(H_B)$$
is a consequence of the fact that $ _0{\Hl}^2(B,A,A)$ is the obstruction group
of deforming $A_S$ to $B_T$ in the diagram
$$
\begin{array}{ccccccccccccc}
B_T & & & \\
\downarrow &  &  & & \\
B_S &  \longrightarrow &  A_S \\
\downarrow &  &  & & \\
0 & & & &
\end{array}
$$
where $T\twoheadrightarrow S$ is a small artinian surjection and $B_T$, $B_S$
and $A_S$ are deformations of $B$ and $A$ (\cite[Remark 3]{K04}).

Finally either (iv): $_0\! \Ext^1_B(I_B/I_B^2,I_{A/B})=0$ and \cite[Proposition
4(ii)]{K04}, or the statement given in
(iv'') and  \cite[Proposition 4(iii)]{K04}, imply that the projection
$$p:\GradAlg(H_B,H_A)\longrightarrow \GradAlg(H_A)$$ given by
$q((B'\rightarrow A'))=(A')$ is smooth at $(B\rightarrow A)$. The smoothness
of $p$ and $q$ imply the smoothness of $p_1:\Hilb ^{p_X(t),p_Y(t)}(\PP^{n})\longrightarrow \Hilb ^{p_X(t)}(\PP^{n})$ and $p_2:\Hilb ^{p_X(t),p_Y(t)}(\PP^{n})\longrightarrow \Hilb ^{p_Y(t)}(\PP^{n})$ by \eqref{Grad} and
\eqref{GradPair} and hence all conclusions of Proposition \ref{propoA}.
\end{proof}


\section{Deformation of  minors}

Our next goal is to analyze whether the deformation of a determinantal scheme
comes from deforming the associated matrix. To this end, let ${\mathcal A}$
(resp. ${\mathcal B}$) be the homogeneous matrix representing $\varphi ^*$
(resp. $\varphi^*_{t+c-2}$ if $c \ge 3-r$). Let $\underline{\ell}$ be the
category of local
artinian $k$-algebras $T$ with residue field $k=T/{\mathfrak m}_T$ and with
morphisms inducing the identity over $k$. Set
$A=R/I_{i}(\varphi ^*)$ where $i = t+1-r$ with $2\le i\le t$ and $X=\Proj(A)$.

\begin{lemma} \label{lemma2} Let $\varphi ^*_T$ be the map induced by a
  lifting $\cA _T$ of $\cA$ to $T \in ob(\underline{\ell})$, let
  $MI_T:=\coker (\varphi ^*_T)$ and $A_T=R/I_{i}(\varphi ^*_T)$ and suppose
  $A=R/I_{i}(\varphi ^*)$ is determinantal. Then 
\begin{itemize}
\item[{\rm (i)}] $MI_T$ (resp. $A_T$) are deformations of $MI$ (resp. $A$). In particular, there exists well defined maps
$$\psi:\ _0\!\Ext^1_R(MI,MI)\longrightarrow \ _0\!\Hom(I_{i}(\varphi ^*),A), \text{ and }$$
$$Def(\psi )(-):Def_{MI}(-)\longrightarrow Def_{A}(-)\,,$$
the latter of local functors over $\underline{\ell}$, deforming $MI$ (resp. $A$) as an $R$-module (resp. $R$-quotient).
\item[{\rm (ii)}] The following statements are equivalent for a fixed $i= t+1-r$.
\begin{itemize}
\item[(a)] Every deformation of $A$ comes from deforming the matrix $\cA$
  associated to $A$.
\item[(b)] The morphism $Def(\psi )$ of local graded deformation functors is smooth.
\item[(c)] The map $\psi$ is surjective.
\end{itemize}
\end{itemize}
\end{lemma}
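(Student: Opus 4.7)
The plan for (i) is to exploit the finite free $R$-resolutions that determinantal ideals admit. For $MI=\coker(\varphi^*)$, the Buchsbaum--Rim complex \eqref{BR} (or more generally the Lascoux resolution when $r>1$) provides a finite free resolution over $R$. Lifting the matrix $\cA$ to $\cA_T$ produces a complex of $R_T$-modules of the same shape; since the depth conditions making the Buchsbaum--Rim/Lascoux acyclicity criterion apply are preserved by flat base change (the residue field reduction recovers the original acyclic complex, and $T$ is artinian), the lifted complex remains acyclic, and being termwise $T$-flat it exhibits $MI_T$ as a $T$-flat $R_T$-module, i.e.\ as a deformation of $MI$. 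The same argument applied to the Eagon--Northcott resolution (for $r=1$) or the Lascoux resolution of $A$ gives $T$-flatness of $A_T$. The morphism $\mathrm{Def}(\psi)$ then comes from Fitting's lemma: the Fitting ideal $\mathrm{Fitt}_{r-1}$ of a finitely presented module is intrinsic and commutes with arbitrary base change, and $I_i(\varphi^*)=\mathrm{Fitt}_{r-1}(MI)$, so any deformation $MI_T$ yields a canonical deformation $R_T/\mathrm{Fitt}_{r-1}(MI_T)=A_T$ of $A$. Specializing to $T=k[\epsilon]$ and identifying tangent spaces with $\ _0\!\Ext^1_R(MI,MI)$ and $\ _0\!\Hom_R(I_A,A)$ recovers the map $\psi$.

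For (ii), the backbone is that $\mathrm{Def}_{MI}$ is formally smooth: this is Theorem~\ref{Amodulecor1}(iii) together with Remark~\ref{modulerem}(i), which also shows that every deformation of $MI$ comes from a lift of the matrix $\cA$. Consequently, for a morphism of graded deformation functors whose source is formally smooth, smoothness is equivalent to surjectivity of the tangent map, yielding (b)$\iff$(c). The implication (a)$\Rightarrow$(c) is immediate by testing at $T=k[\epsilon]$: any first-order deformation of $A$ comes by hypothesis from a first-order lift $\cA_\epsilon$ of $\cA$, hence from the corresponding class in $\ _0\!\Ext^1_R(MI,MI)$, whose image under $\psi$ is the given deformation of $A$.

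The remaining implication (c)$\Rightarrow$(a) is the heart of the argument and will proceed by induction on $\dim_k T$ through small extensions $T'\twoheadrightarrow T$ in $\underline{\ell}$ with square-zero kernel $I$. Given $\cA_T$ with associated $A_T$, and a prescribed lift $A_{T'}$ of $A_T$, formal smoothness of $\mathrm{Def}_{MI}$ furnishes some lift $MI'_{T'}$ of $MI_T$, hence (by Fitting) some lift $A'_{T'}$ of $A_T$ that need not coincide with the prescribed $A_{T'}$. The set of lifts of $MI_T$ to $T'$ is a torsor under $\ _0\!\Ext^1_R(MI,MI)\otimes_k I$, the set of lifts of $A_T$ is a torsor under $\ _0\!\Hom_R(I_A,A)\otimes_k I$, and $\mathrm{Def}(\psi)$ is equivariant with respect to these torsor actions via $\psi\otimes\mathrm{id}_I$. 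Surjectivity of $\psi$ in (c) then lets us correct $MI'_{T'}$ to a lift $MI_{T'}$ whose Fitting quotient is exactly $A_{T'}$, and any minimal presentation of $MI_{T'}$ furnishes the sought matrix $\cA_{T'}$. The main technical point will be verifying this torsor-equivariance cleanly, i.e.\ checking that the first-order deformation class in $\ _0\!\Ext^1_R(MI,MI)$ measuring the difference of two lifts $MI_{T'}$ and $MI'_{T'}$ is carried by $\psi$ to the class in $\ _0\!\Hom_R(I_A,A)$ measuring the difference of the associated Fitting quotients; this is a cocycle-level computation tracing the connecting homomorphism for the Buchsbaum--Rim/Lascoux complex of $\cA_{T'}$ reduced modulo $I$ and identifying it with the Fitting comparison.
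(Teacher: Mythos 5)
Your plan is essentially the paper's own proof, phrased in somewhat more abstract language. For (i), both arguments rest on the Lascoux and Buchsbaum--Rim resolutions; you invoke preservation of acyclicity under the lift (a complex of free $R_T$-modules whose reduction modulo $\mathfrak{m}_T$ is exact is itself exact with $T$-flat $H_0$), while the paper lifts the explicit first syzygies of Lemma~\ref{mdr_mdg} term by term; these are interchangeable. For (ii), you reorder the implications, getting (b)$\iff$(c) from formal smoothness of $Def_{MI}$ and then attacking (c)$\Rightarrow$(a), whereas the paper takes (b)$\Rightarrow$(a)$\Rightarrow$(c) as immediate and proves (c)$\Rightarrow$(b); but the substance of the hard implication is the same in both, namely an inductive lift through small extensions, correcting an initial matrix lift by a class furnished via surjectivity of $\psi$.

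The step you explicitly postpone --- verifying the compatibility of $\psi$ with the torsor actions on lifts of $MI$ and on lifts of $A$ --- is exactly what the paper's concrete determinant computation supplies, and your argument is not finished without it. What needs to be checked: if $\eta\in\ _0\!\Hom(G^*,F^*)$ with matrix $\cD$ represents $\overline{\eta}\in\ _0\!\Ext^1_R(MI,MI)$, then modifying a lift $\cA_T$ to $\cA_T+t^n\cD$ (where $t^n$ generates the square-zero kernel of $T'\to T$) changes each $(t+1-r)\times(t+1-r)$ minor by exactly $t^n\cdot\psi(\overline{\eta})(\,\cdot\,)$. The paper establishes
$$\bigl[\det(\cA_1+\varepsilon\cD_1)-\det(\cA_1)\bigr]/\varepsilon=\det(\cA_1^{adj}\cdot\cD_1)\otimes 1_A$$
for every $(t+1-r)$-submatrix $\cA_1$, observes that the identical expansion holds with $\varepsilon$ replaced by $t^n$ (the higher terms dying because $(t^n)^2=0$), and identifies the right-hand side with $\psi(\overline{\eta})(\det\cA_1)$ by the explicit description of $\psi$. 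That identity is the crux of (c)$\Rightarrow$(b); it should be carried out rather than deferred as ``a cocycle-level computation,'' since it is precisely where the Fitting construction and the $\Ext^1$-torsor are reconciled.
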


\begin{proof} (i) Let $\cA_T$ be a lifting of $\cA$ (see
  Definition~\ref{defor}). Recall that $A_T$ is flat over $T$ if not only
  generators of $I_{i}(\varphi ^*)$ lift to polynomials in
  $I_{i}(\varphi ^*_T) \subset R_T$, but also relations lift to relations of
  $I_{i}(\varphi ^*_T)$. Hence we need to lift all relations since lifting of
  generators follows at once by taking appropriate minors of $\cA_T$. But the
  relations are given by the Lascoux resolution of $I_{i}(\varphi ^*)$ (see
  \cite{L}, or see Lemma~\ref{mdr_mdg} for details of {\it all} syzygies).
  Indeed a relation is either given by computing the determinant of any
  $(i+1) \times (i+1)$ matrix arising from a submatrix of $\cA$ of size
  $i \times (i+1)$ (resp. $(i+1) \times i$) by repeating one of its rows
  (resp. columns) and expanding it along the mentioned row (resp. column), or by computing the
  determinant of an $(i+1) \times (i+1)$ submatrix of $\cA$ by column and row
  expansions and taking their differences. The latter gives also non-trivial
  relations. But we can do exactly the same computations using $\cA_T$ instead
  of $\cA$. Since the corresponding determinant (i.e. relation) or difference
  of determinants map via $T\twoheadrightarrow k$ to the corresponding
  relation over $k$, we get that every relation lifts to a relation for
  $I_{i}(\varphi ^*_T)$. This shows that $A_T$ is $T$-flat, and it follows
  that $A_T$ is a deformation of $A$.

  For similar (and even easier) reasons $MI_T$ is $T$-flat. Indeed since
  $R/I_i(\varphi ^*)$ is determinantal, it follows by \cite[Corollary 1]{Br}
  that $R/I_t(\varphi ^*)$ is standard determinantal, whence that the
  Buchsbaum-Rim complex of $MI$, using $\cA$ is exact. Moreover this
  resolution of $MI$ commutes with the Buchsbaum-Rim resolution of $MI_T$
  using $\cA _T$, i.e. we can lift any relation of $MI$ to a relation of
  $MI _T$, whence $MI _T$ is a deformation of $MI$ (cf. \cite[Lemma
  4.2]{K2011}). Since $MI=\coker (\varphi ^*)$ it is clear that every element
  $(MI)_T\in Def _{MI}(T)$ is given by $MI_T=\coker (\varphi _T^*) $ for some
  matrix $\cA _T$ that lifts $\cA $ to $T$ where $\varphi ^*_T$ is the map
  induced by $\cA _T$. As proved above this matrix $\cA _T$ defines a
  deformation, i.e. an element $A_T\in Def_A(T)$. Since different matrices
  representing the same cokernel define the same determinantal ideal by e.g.
  Fitting's lemma, $Def(\psi )(T)$ is well-defined. Then we define $\psi $ as
  the tangent map of $Def(\psi )$, i.e. we let
  $\psi =Def(\psi )(k[\varepsilon]/\varepsilon^2)$, and (i) is proved.

  (ii) Since smooth maps are surjective on tangent spaces, the following
  implications are straightforward from Definition \ref{defor}:
  $(b)\Rightarrow (a) \Rightarrow (c)$. It only remains to prove
  $(c)\Rightarrow (b)$. To prove it, let us first describe $\psi $ more
  concretely. Take $\overline{\eta}\in\, _0\!\Ext^1_R(MI,MI)$, let
  $\eta'\in \, _0\!\Hom(G^*,MI)$ represent $\overline{\eta}$ and let
  $\eta\in \, _0\!\Hom(G^*,F^*)$, with matrix $\cD $, map to $\eta '$. Let
  $D=k[\varepsilon]/(\varepsilon^2)$ and let $\cA _1+\varepsilon \cD _1$ be
  some $i \times i$ submatrix of $\cA+\varepsilon\cD $ representing a
  corresponding composition
  $$G^*_1\hookrightarrow G^*
  \stackrel{\varphi^*+\varepsilon\eta}{\longrightarrow} F^*\twoheadrightarrow
  F^*_1 \ .$$
  Then
  $$\psi (\overline{\eta})(\det \cA_1)=[\det (\cA_1+\varepsilon
  \cD_1)-\det(\cA_1)]/\varepsilon =\det(\cA_1^{adj}\cdot \cD_1)\otimes 1_A$$
  is the image of $\det \cA_1$ via
  $\psi (\overline{\eta})\in\, _0\!\Hom(I_{i}(\varphi ^*),A)$ in $A$. If,
  say $i=3$, $\cA_1=[A^1 \ A^2 \ A^3]$ and $\cD_1=[D^1 \ D^2 \ D^3]$, then
  $$\psi (\overline{\eta})(\det \cA_1)=\det[D^1 \ A^2 \ A^3]+\det[A^1 \ D^2 \
  A^3]+\det[A^1\ A^2\ D^3] \,.$$

To prove the smoothness, let $T:=k[t]/(t^{n+1})\twoheadrightarrow
S:=k[t]/(t^n)$ and consider the diagram
\[
\begin{array}{cccccc}
Def_{MI}(S) &  \longrightarrow & Def_{A}(S) & \longleftarrow & Def_A(T) .\\
MI_S & \mapsto & A_S & \leftarrow & A_T
\end{array}
\]
Here $MI_S$ is a deformation of $MI$ given as the cokernel of $\varphi _S^*$
with matrix $\cA _S$ such that its $i$-minors define $A_S$. Moreover
$A_T:=R_T/I_T$ is an arbitrary deformation of $A_S$ to $T$.

Let $\cA _T$ be a lifting of the matrix $\cA _S$ to $T$. Since
$R_T \twoheadrightarrow R_S$ is surjective,  it exists and defines a
deformation $A'_T$ (resp $MI'_T$) of $A_S$ (resp. $MI_S$) to $T$. By
deformation theory since $A'_T=R_T/I_{i}(\varphi _T^*)$ and
$A_T=R_T/I_{T}$ are deformations of the same algebra
$A_S=R_S/I_{i}(\varphi _S^*)$ to $T$, then the difference of corresponding
generators of $I_{T}$ and $I_{i}(\varphi _T^*)$ maps to zero via
$(-) \otimes _TS$, i.e. these differences "belong" to
$$ \Hom _{R_S}(I_{i}(\varphi _S^*), A_S)\otimes _S(t^n)/(t^{n+1})\cong \Hom_R(I_{i}(\varphi ^*),A)\otimes _k(t^n)/(t^{n+1}).$$

By the surjectivity assumption (b) and the description of the tangent map
$\psi $ above there exists $\overline{\eta}\in\, _0\!\Ext^1_R(MI,MI)$ and
$\eta\in\, _0\!\Hom(G^*,F^*)$, with matrix $\cD $, that maps to
$\overline{\eta}$ and such that
$$t^n \cdot tr(\cA_1 ^{adj}\cD_1)\otimes 1_A \quad mod(t^{n+1})$$
is equal to the difference of the corresponding generators of
$I_{T}-I_{i}(\varphi _T^*)$ (and similarly for the other
$i$-minors).

Now look at the matrix $\cA_T+t^n\cD$ ($mod \ (t^{n+1})$). By the same
calculation as done for the map $\psi $, only replacing $\varepsilon $ by
$t^n$, one shows that
$$[\det((\cA_1)_T+t^n\cD_1)-\det((\cA_1)_T)]=t^n\det(\cA_1^{adj}\cD_1)\otimes
1_A.$$
It follows that the generators of $A_T$ are defined by the matrix
$\cA_T+t^n\cD$, i.e. given by its $(t+1-r)$-minors. Since the cokernel of the
map
given by $\cA_T+t^n\cD$ defines a deformation $MI_T$ to $T$ that maps to $A_T$
and reduces to $MI_S$ via $(-) \otimes _TS$, we have proved that $Def(\psi )(-)$
is (formally) smooth.
\end{proof}

\begin{corollary} \label{unobstr} Let $A=R/I_{t+1-r}(\cA)$ with $1\le r < t$
  be a determinantal ring, let $X=\Proj(A)$ and suppose that $A$ has the
  following property:

  \quad {\rm (*)} \quad every deformation of $A$ comes from deforming its matrix
  $\cA$.

  Then $A$ is unobstructed and the property {\rm (*)} is an open property in
  $\GradAlg(H_A)$. Hence if $\dim X \ge 1$, then $\Hilb ^{p_X(t)}(\PP^n)$ is
  smooth at $(X)$ and the property (*) is open in $\Hilb ^{p_X(t)}(\PP^n)$.
\end{corollary}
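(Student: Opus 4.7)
The plan is to apply Lemma~\ref{lemma2}(ii), which says that property {\rm (*)} at $(A)$ is equivalent to the morphism of local graded deformation functors $Def(\psi)\colon Def_{MI}\to Def_A$ being (formally) smooth; in particular $Def(\psi)(T)$ is surjective for every $T\in ob(\underline{\ell})$. Granting this, the task reduces to showing that $Def_{MI}$ itself is smooth, since a standard diagram chase will then transfer smoothness to $Def_A$: given $A_T\in Def_A(T)$ and a small extension $T'\twoheadrightarrow T$, pick a preimage $MI_T\in Def_{MI}(T)$ by surjectivity, lift it to $MI_{T'}\in Def_{MI}(T')$ by smoothness of $Def_{MI}$, and push it down via $Def(\psi)(T')$ to obtain a lift $A_{T'}\in Def_A(T')$ of $A_T$; hence $A$ will be unobstructed.

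To show $Def_{MI}$ is smooth, let $T'\twoheadrightarrow T$ be a small extension in $\underline{\ell}$ and $MI_T\in Def_{MI}(T)$. Nakayama applied to the minimal presentation $G^*\xrightarrow{\varphi^*}F^*\to MI\to 0$, together with $T$-flatness of $MI_T$, produces a compatible presentation $G^*\otimes_k T\xrightarrow{\varphi_T^*}F^*\otimes_k T\to MI_T\to 0$ in which $\varphi_T^*$ is given by a matrix $\cA_T$ lifting $\cA$ to $T$. I would then lift the polynomial entries of $\cA_T$ coefficient by coefficient to obtain a matrix $\cA_{T'}$ over $R_{T'}$ and set $MI_{T'}:=\coker(\varphi_{T'}^*)$. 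The argument in the proof of Lemma~\ref{lemma2}(i) shows $MI_{T'}$ is $T'$-flat: since $A$ is determinantal, $R/I_t(\varphi^*)$ is standard determinantal by \cite[Corollary~1]{Br}, so the Buchsbaum--Rim complex resolves $MI$ and its entry-wise lift remains exact over $R_{T'}$. Hence $MI_{T'}$ is a deformation of $MI$ restricting to $MI_T$, proving $Def_{MI}$ is smooth.

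For openness of {\rm (*)} in $\GradAlg(H_A)$, I would parameterize homogeneous $t\times(t+c-1)$ matrices whose entries have the prescribed bi-degrees $a_j-b_i$ by an affine space $\mathcal{M}$, and consider the natural morphism $f\colon\mathcal{M}^{\circ}\to\GradAlg(H_A)$, $\cA'\mapsto R/I_{t+1-r}(\cA')$, defined on the open locus $\mathcal{M}^{\circ}\subset\mathcal{M}$ where the quotient has Hilbert function $H_A$. By Lemma~\ref{lemma2}(ii) together with the unobstructedness just established, property {\rm (*)} at $(A)$ is equivalent to $f$ being (formally) smooth at the point $\cA\in\mathcal{M}^{\circ}$ representing $A$. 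Since smoothness of a morphism is open on the source, $f$ is smooth on an open neighborhood $V\subset\mathcal{M}^{\circ}$ of $\cA$; and since smooth morphisms are open, $f(V)$ is an open neighborhood of $(A)$ in $\GradAlg(H_A)$ on which {\rm (*)} holds. Finally, if $\dim X\ge 1$ then Cohen--Macaulayness of the determinantal ring $A$ gives $\depth_{\mathfrak m}A=\dim A\ge 2$, so the isomorphism \eqref{Grad}, $\GradAlg(H_A)\simeq\Hilb^{p_X(t)}(\PP^n)$ at $(X)$, transfers both the smoothness at $(X)$ and the openness of {\rm (*)} to the Hilbert scheme.

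The main delicacy is the unobstructedness of $MI$: one needs both that $MI_T$ admits a matrix presentation lifting $\cA$ (automatic by Nakayama for flat deformations) and that arbitrary entry-wise lifts of such a matrix produce flat cokernels; the latter is the nontrivial ingredient, which is where the determinantal hypothesis on $A$ really enters, via the persistence of exactness of the Buchsbaum--Rim resolution under such lifts.
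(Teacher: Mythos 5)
Your proof is correct and coincides with the \emph{alternative} route the paper itself sketches at the end of its proof of Corollary~\ref{unobstr}: establish that $Def_{MI}$ is smooth (unobstructed, via the Buchsbaum--Rim argument in Lemma~\ref{lemma2}(i)), and then transfer unobstructedness to $Def_A$ along $Def(\psi)$ using its smoothness from Lemma~\ref{lemma2}(ii). The paper's primary argument is shorter and bypasses $Def_{MI}$ entirely: given a small surjection $T\twoheadrightarrow S$ and a deformation $A_S$, property (*) gives directly a matrix $\cA_S$ with $A_S = R_S/I_{t+1-r}(\cA_S)$; one lifts the polynomial entries $f_{ij,S}$ to $f_{ij,T}$, and Lemma~\ref{lemma2}(i) (which already encodes the Lascoux/Buchsbaum--Rim flatness argument) gives that $A_T := R_T/I_{t+1-r}(\cA_T)$ is $T$-flat and restricts to $A_S$, proving unobstructedness in one step. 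In effect, both arguments lean on the same two ingredients — the smoothness criterion of Lemma~\ref{lemma2} and the persistence of the determinantal resolution under entry-wise lifting — but the paper applies them to $A$ directly, whereas you route through the moduli of $MI$. Your openness argument via the parameter space $\mathcal{M}^{\circ}$ of matrices is a legitimate way to make precise what the paper only states (``the smoothness also implies \dots''); the one place where you should be explicit is that the tangent map of $f$ at $\cA'$ is surjective exactly when $\psi$ at $A'$ is, because $\Hom_0(G^*,F^*)\twoheadrightarrow\ _0\!\Ext^1_R(MI,MI)$ always, and that smoothness of $f$ at a point with smooth source forces smoothness of the target at the image, so both unobstructedness and surjectivity of $\psi$ propagate to the image of the smooth locus.
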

\begin{proof} To see that $A$ is unobstructed, let $T \longrightarrow S$ be a
  surjection of artinian local rings whose kernel ${\mathfrak a}$ satisfies
  ${\mathfrak a} \cdot \mathfrak{m}_T=0$, and let $A_S$ be a deformation of
  $A$ to $S$. By assumption, $A_S=R_S/I_{t+1-r}(\cA_S)$ for some matrix
  $\cA_S=(f_{ij,S})$. Since $T \longrightarrow S$ is surjective, we can lift
  each $f_{ij,S}$ to a polynomial $f_{ij,T}$ with coefficients in $T$ such
  that $f_{ij,T} \otimes_T S=f_{ij,S}$. By Lemma~\ref{lemma2}, it follows that
  $A_T:=R_T/I_{t+1-r}(\cA_T)$ is flat over $T$. Since $A_T \otimes_T S = A_S$
  we get the unobstructedness of $A$, and by \eqref{Grad} that $\Hilb
  ^{p_X(t)}(\PP^n)$ is  smooth at $(X)$.

  Alternatively one may prove that $MI$ is unobstructed as an $R$-module by a
  similar argument and then get that $A$ is unobstructed as a consequence of
  $Def(\psi )$ being smooth by Lemma~\ref{lemma2}. The smoothness also implies
  that the property (*) is open in $\GradAlg(H_A)$, as well as in
  $\Hilb ^{p_X(t)}(\PP^n)$, cf.  the text accompanying
  \eqref{Grad}.
\end{proof}

It is worthwhile to point out that it is not always true that every
deformation of $A=R/I_{i}(\varphi ^*)$ comes from deforming the homogeneous
matrix $\cA$ associated  to $\varphi ^*$ (see, for instance, Remark
\ref{improvement}(2) and Examples \ref{ex1dimW} and \ref{examples712}).

We will now show for so-called {\em generic determinantal rings} $A_{(s)}$,
that the tangent map $\psi $ in Lemma \ref{lemma2} is surjective. Thus, we can
conclude that every deformation of $A_{(s)}$ comes from deforming its
associated matrix  $\cA$. Indeed, for $1\le i \le t$, $1\le j \le t+c-1$, let
$R=k[x_{ij}]$, $\cA=(x_{ij})$ be the $t\times (t+c-1)$ matrix of
indeterminates of $R$ and let $\varphi :F=R^t\longrightarrow G=R(1)^{t+c-1}$
be the morphism induced by the transpose $\cA ^T$ of $\cA$. Then
$A_{(s)}:=R/I_s(\cA)$, $s=t+1-r$, is called a {\em generic determinantal
  ring}. By \cite[Theorem 15.15]{b-v} $A_{(s)}$ is rigid for every $r$, $1 \le
r\le t$, $(r,c)\ne (1,1)$, i.e. the algebra cohomology group ${\rm H}^1(k,A_{(s)},A_{(s)})=0$ or, equivalently, the sequence
\begin{equation}\label{rigid}
0\longrightarrow \Der _k(A_{(s)},A_{(s)})\longrightarrow \Der _k(R,A_{(s)}) \stackrel{\gamma }{\longrightarrow} \Hom _R(I_s(\cA),A_{(s)}) \longrightarrow 0
\end{equation}
is exact. Here  $\Der _k(S,L)$ is the set of $k$-derivations from a
$k$-algebra $S$ to an $S$-module $L$.

\begin{proposition} \label{deforminggenericcase}
Let $A_{(s)}$ be a generic determinantal ring, $2 \le s \le t$. Then every deformation of $A_{(s)}$ comes from deforming the matrix $\cA$ above provided $(s,c)\ne (t,1)$.
\end{proposition}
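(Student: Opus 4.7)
By Lemma~\ref{lemma2}(ii) it suffices to show that the tangent map
$$\psi:\ _0\!\Ext^1_R(MI,MI)\longrightarrow\ _0\!\Hom_R(I_s(\cA),A_{(s)})$$
is surjective, where $MI=\coker(\varphi^*)$ for $\varphi^*$ the map with matrix $\cA=(x_{ij})$. The plan is to use the rigidity sequence~(\ref{rigid}) to produce enough derivations and then match them to elements of $_0\!\Ext^1_R(MI,MI)$ via the explicit Leibniz description of $\psi$ given in the proof of Lemma~\ref{lemma2}(ii).

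Since $(s,c)\ne(t,1)$ and $s\ge 2$, $A_{(s)}$ is rigid by \cite[Theorem 15.15]{b-v}, i.e.\ the sequence (\ref{rigid}) is exact; in particular $\gamma$ is a surjective map of graded $R$-modules, hence its degree-zero component
$$\gamma_0:\Der_k(R,A_{(s)})_0 \twoheadrightarrow\ _0\!\Hom_R(I_s(\cA),A_{(s)})$$
is surjective. Because $s\ge 2$ we have $(I_s(\cA))_1=0$, so $(A_{(s)})_1=R_1$, and a degree-zero derivation $D\in\Der_k(R,A_{(s)})_0$ is nothing but a choice of matrix $\cD=(\cD_{ij})$ of linear forms in $R$ via $D(x_{ij})=\cD_{ij}$.

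Next I factor $\gamma_0$ through $\psi$. Since $\cA$ has linear entries, $F=R^t$ and $G=R(1)^{t+c-1}$, so $\cD$ represents an element of $\,_0\!\Hom_R(G^*,F^*)$; composing with $F^*\twoheadrightarrow MI$ yields an element of $\,_0\!\Hom_R(G^*,MI)$, whose class $\overline{\cD}\in\,_0\!\Ext^1_R(MI,MI)$ is obtained from the exact sequence (\ref{homext}). By the formula for $\psi$ recalled in Lemma~\ref{lemma2}(ii), for any $s\times s$ submatrix $\cA_1=[A^1\ A^2\ \cdots\ A^s]$ of $\cA$ with corresponding submatrix $\cD_1=[D^1\ D^2\ \cdots\ D^s]$,
$$\psi(\overline{\cD})(\det \cA_1)=\sum_{k=1}^{s}\det[A^1\ \cdots\ D^k\ \cdots\ A^s]\otimes 1_{A_{(s)}}.$$
On the other hand the Leibniz rule for determinants gives $D(\det \cA_1)=\sum_{k}\det[A^1\ \cdots\ D^k\ \cdots\ A^s]$, so $\gamma_0(D)(\det \cA_1)=\psi(\overline{\cD})(\det \cA_1)$. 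As $\det$-minors generate $I_s(\cA)$, we conclude $\gamma_0=\psi\circ\theta$, where $\theta:\Der_k(R,A_{(s)})_0\to\,_0\!\Ext^1_R(MI,MI)$ sends $D$ to $\overline{\cD}$.

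The surjectivity of $\gamma_0$ then forces $\psi$ to be surjective, and Lemma~\ref{lemma2}(ii) concludes that every deformation of $A_{(s)}$ comes from deforming $\cA$. The only real work is the Leibniz identification in the previous paragraph; the rest is bookkeeping in the grading, using that $(A_{(s)})_1=R_1$ because $s\ge 2$, and the fact that rigidity in the excluded case $(s,c)=(t,1)$ (a codimension-one hypersurface situation) fails, which is exactly why this case must be excluded.
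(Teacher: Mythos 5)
Your proof is correct and takes essentially the same approach as the paper: invoke rigidity (sequence~(\ref{rigid})) to get $\gamma$ surjective, then verify via the Leibniz rule for determinants that $\gamma$ factors through $\psi$, so $\psi$ must be surjective. The only difference is a mild streamlining — you lift degree-zero derivations in $\Der_k(R,A_{(s)})_0$ directly to matrices of linear forms using $(A_{(s)})_1=R_1$, whereas the paper routes through the full graded identification $\Der_k(R,A_{(t)})\cong\Hom(G^*\otimes F,A_{(t)})$ and the surjection $\Der_k(R,A_{(t)})\twoheadrightarrow\Der_k(R,A_{(s)})$; the core commutativity check is the same.
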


\begin{proof} Let $MI=\coker \varphi ^*$. We claim that there exist morphisms fitting into the commutative diagram
$$\begin{array}{cccccc}
\Hom(G^*\otimes F, A_{(t)}) & \cong & \Hom(G^*,F^*\otimes A_{(t)}) & \twoheadrightarrow &
 \Ext^1_R(MI,MI) \\
\parallel & & & & \downarrow \psi \\
\Der _k(R, A_{(t)}) & \twoheadrightarrow & \Der _k(R, A_{(s)}) & \stackrel{\gamma }{\rightarrow} & \Hom_R(I_s(\cA),A_{(s)})
\end{array}
$$
for every $2\le s\le t$. By Lemma \ref{lemma2} this will prove the result because $\gamma $ is surjective by (\ref{rigid}).

To see the upper horizontal surjection we combine morphisms appearing in the two exact sequences
$$G^*\otimes A_{(t)}  \longrightarrow F^*\otimes  A_{(t)} \longrightarrow
MI\longrightarrow 0, \text{ and } $$
$$\Hom(F^*,MI)\longrightarrow \Hom(G^*,MI) \longrightarrow \Ext^1_R(MI,MI)
\longrightarrow 0$$
induced from
$ \cdots \stackrel{\epsilon}{\longrightarrow} G^*\longrightarrow
F^*\longrightarrow MI \longrightarrow 0$
($\epsilon$ is the splice map in the Buchsbaum-Rim resolution),
recalling that $\Hom(G^*,MI) \longrightarrow \Ext^1_R(MI,MI)$ is well-defined
and surjective by \cite[(3.1)]{K2014} (mainly because $\Hom(\epsilon,MI)=0$)
and that $MI\otimes A_{(t)}\cong MI.$ Moreover, the first lower surjection is
induced from the natural surjection $A_{(t)}\twoheadrightarrow A_{(s)}$ since
$I_t (\cA)\subset I_s(\cA)$. Finally the leftmost vertical isomorphism is a
natural identification of $\Hom(G^*\otimes F,A_{(t)})$ and
$\Der _k(R,A_{(t)})$ because the matrix of $G^*\longrightarrow F^*$ is
$\cA=(x_{ij})$ and $R=k[x_{ij}]$.

With this identification we now check that
\begin{equation}\label{tocheck}
  \psi(\overline{\eta})(f)=\gamma(D)(f) \end{equation}
for every $\eta \in \Hom(G^*\otimes F^*,A_{(t)})$ and every $s\times s$ minor $f$, where $\overline{\eta}$ (resp. $D$) is the image of $\eta $ in $\Ext^1_R(MI,MI)$ (resp. $\Der _k(R,A_{(s)})$). Running over the standard basis of the free $A_{(t)}$-module  $\Hom(G^*\otimes F,A_{(t)})$ it suffices to check (\ref{tocheck}) for each element of the basis. Indeed, we may just take $\eta=\begin{pmatrix}  1 & 0 & \cdots 0 \\
  0 & 0 & \cdots 0 \\
  \vdots & \vdots & \cdots  \vdots \\
  0 & 0 & \cdots 0 \end{pmatrix}$. For similar reasons we may let $f$ be the
determinant of the $s\times s$ matrix appearing in the left-upper corner of $\cA$. Then we expand this minor along its first column and we get  $f=x_{11}A_{11}-x_{21}A_{21}+\cdots $ where $A_{ij}$ is the ($s-1$)-minor corresponding to $x_{ij}$. We get
$$\gamma(D)(f)=\sum \frac{\partial f}{\partial x_{ij}}D(x_{ij})=\frac{\partial f}{\partial x_{11}}=A_{11}$$
because $D\in \Der _k(R,A_{(s)})$, the derivation corresponding to
$\overline{\eta }$, is given by $D(x_{11})=1$ and $D(x_{ij})=0$ otherwise. Moreover,
by the proof of Lemma \ref{lemma2}, we get
$$\psi (\overline{\eta})(f)=\det[D^1A^2A^3\cdots]+\det[A^1D^2A^3\cdots]+\cdots =\det \begin{pmatrix}1 & & & &  \\ 0 & & & &
\\ 0 & A^2 & A^3 \cdots  \\
 \vdots & & & & \\
0 & & & &
\end{pmatrix}= A_{11},$$
where  $\eta =[D^1D^2 \cdots ]$ and $\cA=[A^1A^2A^3\cdots ]$ are expressed by their columns. To check that also the signs corresponds, let $\eta =\begin{pmatrix}  0 & 1 & \cdots 0 \\
0 & 0 & \cdots 0 \\
\vdots & \vdots & \cdots  \vdots \\
0 & 0 & \cdots 0 \end{pmatrix}$. With the same $f$ as above, it is easy to see that $\gamma (D)(f)=\frac{\partial f}{\partial x_{12}}=-A_{12}$ and $\psi (\overline{\eta})(f)=\det[A^1D^2A^3\cdots]=-A_{12}$ and we are done.
\end{proof}

\begin{corollary} \label{gendetrem} \rm Let $A=R/I_s(\cA)$, $2 \le s \le t$ be
  a generic determinantal ring, or more generally a determinantal ring for which
  every deformation comes from deforming its matrix $\cA$. Let $S$ be a flat
  $R$-algebra and a polynomial ring over $k$, e.g.
  $S=R \otimes_k k[\underline y]$ where $y_1, \cdots, y_e$ are indeterminates.
  Then the matrix $\cA=(f_{ij})$ induces a corresponding matrix $\cA_S$ whose
  entries consists of the images of $f_{ij}$ in $S$ and we let
  $A_S=S/I_s(\cA_S)$. Then every deformation of $A_S$ comes from deforming its
  matrix $\cA_S$.
\end{corollary}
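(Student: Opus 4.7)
The plan is to reduce the statement for $A_S$ to the corresponding statement for $A$ via flat base change, using the characterization from Lemma~\ref{lemma2}(ii). Since $S$ is a polynomial $k$-algebra and $A_S$ is presented as $S/I_s(\cA_S)$, the arguments of Lemma~\ref{lemma2} go through verbatim with $R$ replaced by $S$: one still has a Buchsbaum–Rim resolution of $MI_S := \coker(\varphi_S^*)$, the property that $A_{S,T}=S_T/I_s(\cA_{S,T})$ is flat over $T$ for any lifting $\cA_{S,T}$ of $\cA_S$ follows from lifting relations term-by-term in the Lascoux resolution of $I_s(\cA_S)$, and the tangent map
\[
\psi_S : \Ext^1_S(MI_S, MI_S) \longrightarrow \Hom_S(I_s(\cA_S), A_S)
\]
is defined exactly as in the proof of Lemma~\ref{lemma2}. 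Hence the $S$-analog of Lemma~\ref{lemma2}(ii) holds, reducing the corollary to the surjectivity of $\psi_S$.

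Next I would exploit the flatness of $S$ over $R$ together with the fact that $S$ is a polynomial ring over $k$ (hence $R \to S$ is faithfully flat of finite presentation on each graded piece in the graded case, or simply flat in the ungraded case). Tensoring the Buchsbaum–Rim resolution of $MI$ over $R$ with $S$ produces the Buchsbaum–Rim resolution of $MI_S$, and tensoring the Lascoux resolution of $I_s(\cA)$ with $S$ produces the Lascoux resolution of $I_s(\cA_S)$. Since the modules involved are finitely presented and $S/R$ is flat, flat base change gives natural isomorphisms
\[
\Ext^1_S(MI_S, MI_S) \ \cong \ \Ext^1_R(MI, MI)\otimes_R S , \qquad
\Hom_S(I_s(\cA_S), A_S) \ \cong \ \Hom_R(I_s(\cA), A)\otimes_R S ,
\]
and under these identifications $\psi_S = \psi \otimes_R \id_S$.

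Finally, by hypothesis every deformation of $A$ comes from deforming $\cA$, so by Lemma~\ref{lemma2}(ii) the map $\psi$ is surjective. Tensoring a surjection of $R$-modules with the flat $R$-algebra $S$ preserves surjectivity, so $\psi_S$ is surjective, and therefore every deformation of $A_S$ comes from deforming $\cA_S$. In the generic determinantal case, surjectivity of $\psi$ is guaranteed by Proposition~\ref{deforminggenericcase}, so this covers the case $S=R\otimes_k k[\underline y]$ stated as the example.

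The main obstacle, and really the only delicate point, is checking that Lemma~\ref{lemma2} applies cleanly with $S$ in place of $R$: one must verify that the relation-lifting argument (which uses the explicit form of the Lascoux syzygies) and the explicit computation of the tangent map $\psi$ in terms of minor expansions $\det(\cA_1^{\mathrm{adj}}\cdot\cD_1)$ do not use anything about $R$ beyond being a Noetherian $k$-algebra over which $A$ is finitely presented. Once this routine verification is done, the flat base change identifications and the surjectivity transfer are automatic, and the corollary follows.
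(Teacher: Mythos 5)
Your proposal is correct and follows essentially the same route as the paper's proof: both identify $MI_S$ with $MI\otimes_R S$, apply flat base change to the Lascoux/Buchsbaum--Rim resolutions to obtain the isomorphisms $\Ext^1_S(MI_S,MI_S)\cong\Ext^1_R(MI,MI)\otimes_R S$ and $\Hom_S(I_s(\cA_S),A_S)\cong\Hom_R(I_s(\cA),A)\otimes_R S$ together with $\psi_S=\psi\otimes_R\id_S$, deduce surjectivity of $\psi_S$ from that of $\psi$, and invoke Lemma~\ref{lemma2}. The only difference is presentational: you pause to verify explicitly that Lemma~\ref{lemma2} applies verbatim over $S$, a point the paper leaves implicit.
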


\begin{proof} Let $\varphi _S$ be the morphism corresponding to the transpose
  of $\cA _S$ and let $MI_S:=\coker \varphi _S^*$. Then $MI_S=MI\otimes _R S$
  by the right-exactness of the tensor product. Moreover, if we apply
  $(-) \otimes _R S$ onto the first terms of the Lascoux resolution of $A$, the
  $R$-flatness of $S$ easily implies that $S/I:=A\otimes _R S$ satisfies
  $I=I_s(\cA _S)$, and that we have a commutative diagram
$$\begin{array}{cccc}
\Ext^1_R(MI,MI)\otimes _R S  &  \stackrel{\psi \otimes 1_S }{\longrightarrow} & \Hom(I_s(\cA),A)\otimes _R S \\
\downarrow \simeq & & \downarrow \simeq \\
\Ext^1_S(MI_S,MI_S)  &  \stackrel{\psi _S }{\longrightarrow} & \Hom(I_s(\cA _S),S/I)
\end{array}
$$
where $\psi _S$ is the map $\psi $ of Lemma \ref{lemma2} for the module
$MI_S$. In particular, $\psi _S$ is surjective as $\psi$ is surjective by
Proposition~\ref{deforminggenericcase} or assumption. By Lemma \ref{lemma2} we get that
every deformation of $S/I$ comes from deforming its matrix $\cA _S$.
\end{proof}

\vskip 2mm
The main theorem of this section is:

\begin{theorem}\label{teo3} Let $B=R/I_B\twoheadrightarrow A$ be
  determinantal algebras defined by $t+1-r$ minors of matrices ${\mathcal A}$
  (resp. ${\mathcal B}$) representing $\varphi ^*$ (resp.
  $\varphi^*_{t+c-2}$), and suppose $r\ge 2$, $c\ge 3-r$,
  $J_A:=I_{t-r}(\cA)\ne R$ and $\depth _JB\ge r+2$ where
  $J=I_{t-r}({\mathcal B})$. If $c \le 0$ we also suppose
  $\depth _{J_A}A\ge 3$. Moreover, suppose
  \begin{itemize}
  \item[{\rm (i)}]
  $_0\!\Ext^1_B(I_B/I_B^2,I_{A/B})\longrightarrow \ _0\!\Ext^1_B(I_B/I_B^2,B)$
  is injective,  and
\item[{\rm (ii)}] every deformation of $B$ comes from deforming
  ${\mathcal B}$.
  \end{itemize}
  Then every deformation of $A$ (or $\Proj(A)$) comes from deforming $\cA$.
\end{theorem}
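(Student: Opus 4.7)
The plan is to verify the criterion of Lemma~\ref{lemma2}(ii)(a) by combining the smoothness of the Hilbert-flag projection (furnished by hypothesis~(i)) with the matrix-lifting assumption~(ii), realized concretely via the section construction of Theorem~\ref{mainthm}. To begin, I would establish the smoothness of the projection
$$p : \GradAlg(H_B, H_A) \longrightarrow \GradAlg(H_A), \qquad (B' \twoheadrightarrow A') \mapsto (A'),$$
at $(B \twoheadrightarrow A)$. By Corollary~\ref{unobstr}, hypothesis~(ii) makes $B$ unobstructed as a graded $R$-algebra, so~(i) becomes exactly condition~(iv'') of Proposition~\ref{everydef}. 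Combined with the depth hypothesis $\depth_J B \ge r+2$, which via the depth lemma on $0 \to I_{A/B}\to B \to A \to 0$ and the $\Ext$-vanishing of Proposition~\ref{vanish} yields the needed bound $\depth_J A \ge 3$ (and $\depth_{J_A} A \ge 3$ when $c\le 0$ by assumption), all hypotheses of Proposition~\ref{everydef} are met, so $p$ is smooth at $(B \to A)$.

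Next, given $T \in \mathrm{ob}(\underline{\ell})$ and a deformation $A_T$ of $A$ to $T$, the smoothness of $p$ produces a flag deformation $B_T \twoheadrightarrow A_T$ with $B_T$ a graded lift of $B$. Hypothesis~(ii) then supplies a matrix $\cB_T$ lifting $\cB$ with $B_T = R_T/I_{t+1-r}(\varphi_{T,t+c-2}^*)$, and $N_T := \coker(\varphi_{T,t+c-2}^*)$ is a $T$-flat lift of $N$. Invoking the deformation-of-sections argument of \cite[Theorem~47]{K2007}, which is recalled just after Proposition~\ref{propoA} and whose hypotheses are furnished by Proposition~\ref{vanish}, the flag $B_T \twoheadrightarrow A_T$ is equivalent to a lift $\sigma_T^*\colon B_T(-a_{t+c-1}) \to N_T \otimes B_T$ of the regular section $\sigma^*$ constructed in Theorem~\ref{mainthm}, with $A_T = B_T/\mathrm{im}(\sigma_T)$.

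Finally, I would realize $\sigma_T^*$ as the section produced by Theorem~\ref{mainthm} applied to an augmented matrix $\cA_T := [\cB_T \mid v_T]$, where $v_T \in \bigoplus_{i=1}^t R_T(a_{t+c-1}-b_i)$ is a homogeneous column lifting the last column of $\cA$. The local construction in the proof of Theorem~\ref{mainthm} expresses $\sigma^*$ on each affine chart $D(f_{12\cdots k}^{12\cdots k}) \subset \Proj(B)$ as an explicit polynomial in the entries of the last column, so any lift of those entries to $R_T$ induces a local lift of $\sigma^*$, and conversely every local lift arises in this way up to a coboundary. Patching local column lifts into a single global $v_T$ is the core of the argument; the cocycle obstruction sits in $\Ext^1_B(M, I_{A/B})$, which vanishes by Proposition~\ref{vanish}(iii)--(iv) under our depth bound. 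Setting $\cA_T = [\cB_T \mid v_T]$, Theorem~\ref{mainthm} identifies the section for $\cA_T$ with $\sigma_T^*$, yielding $A_T = R_T/I_{t+1-r}(\cA_T)$ as required. The principal obstacle is precisely this globalization step, transferring the information in the abstract section-lift $\sigma_T^*$ to a single homogeneous matrix column; this is exactly what the depth condition $\depth_J B \ge r+2$ and the injectivity assumption~(i) are tailored to control.
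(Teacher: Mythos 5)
Your proposal contains a genuine gap, and it traces to a depth miscount that makes the approach collapse at the first step.

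You claim that $\depth_J B \ge r+2$ ``yields the needed bound $\depth_J A \ge 3$'' and hence that all hypotheses of Proposition~\ref{everydef} are met, giving smoothness of $p : \GradAlg(H_B,H_A) \to \GradAlg(H_A)$. This is false. Since $\dim B - \dim A = r$, one only obtains $\depth_J A \ge \depth_J B - r \ge 2$, not $\ge 3$; equivalently, $\depth_J B \ge r+2$ is the generic condition $c \ge 3-r$, whereas Proposition~\ref{everydef} (via Proposition~\ref{vanish}(iv) and the vanishing of $\H^2(B,A,A)$) genuinely requires $\depth_{J_B}A \ge 3$, i.e.\ $c \ge 4-r$. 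Theorem~\ref{teo3} deliberately covers the boundary case $c = 3-r$, where $B$ is defined by maximal minors and $\depth_J A = 2$; in that case Proposition~\ref{everydef} is simply not available, and one cannot conclude smoothness of $p$. This matters because smoothness of $p$ is precisely what would allow you to lift over an arbitrary small extension $T' \twoheadrightarrow T$; with only tangent-level data you cannot run your argument over general artinian $T$.

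The paper escapes this by a different reduction: Lemma~\ref{lemma2}(ii) shows that surjectivity of the tangent map $\psi$ (i.e.\ liftability over the dual numbers) already \emph{implies} smoothness of $Def(\psi)$. So one only has to produce, for each deformation $A_T$ with $T = k[\varepsilon]/(\varepsilon^2)$, a matrix $\cA_T$; no vanishing of the obstruction group $\H^2(B,A,A)$ is required. On the tangent level, hypothesis~(i) alone gives the surjection $_0\!\Hom_R(I_B,B) \twoheadrightarrow\ _0\!\Hom_R(I_B,A)$ via the long exact sequence~\eqref{longp}, and the cartesian square~\eqref{cartesiansquare} then produces the first-order flag $B_T \to A_T$ directly. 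Only $\Ext^1_B(N\otimes B,B)$, $\Ext^1_B(M,B)$ and $\Ext^1_B(M,I_{A/B})$ are used, all of which are available from Proposition~\ref{vanish}(i)--(iii) under the weaker bound $\depth_J A \ge 2$.

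A secondary issue: your final ``globalization'' step, patching local column lifts with a cocycle obstruction in $\Ext^1_B(M,I_{A/B})$, mischaracterizes what actually happens. In the paper, $\Ext^1_B(M(-a_{t+c-1}),I_{A/B})=0$ is used earlier, to lift the surjection $\sigma'\colon M(-a_{t+c-1}) \twoheadrightarrow I_{A/B}$ to $\sigma_T$ in diagram~\eqref{auxdiag}. The passage from $\sigma_T^*$ to the column $v_T$ in diagram~\eqref{diag4*} is not a gluing problem at all: $\sigma_T^*(1)$ lifts through the surjection $F^*\otimes B_T \twoheadrightarrow N_T\otimes B_T$ (by composing with a splitting, using that $F^*$ is free), and then from $F^*\otimes B_T$ to $F^*\otimes R_T$ by the surjectivity of $R_T \twoheadrightarrow B_T$ on a free module. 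No sheaf-theoretic patching and no $\Ext^1$-vanishing enters there; you are re-locating a vanishing that actually occurs one step earlier. Finally, the remaining step — identifying $I_{t+1-r}(\cA_T)/I_{t+1-r}(\cB_T) = I_{A_T/B_T}$ — requires re-running the Gauss-elimination argument of Theorem~\ref{mainthm} over $R\otimes_k T$, which the paper addresses explicitly by noting the invertibility of the elimination matrices persists mod nilpotents; your sketch does not address this.
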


\begin{remark} \rm \label{rem53} Computations with Macaulay2 suggest for
  $\dim A \ge 3$ (resp. $\dim A \ge 4$), that
  $_0\!\Ext^1_B(I_B/I_B^2,I_{A/B})=0$ if $c\ne 1$ (resp. $c=1$), but the
  dimension assumption seems important (see, for instance, Example
  \ref{examples712}). This vanishing of $_0\!\Ext^1_B(I_B/I_B^2,I_{A/B})$ is
  also true under some restrictions on $a_i$ (cf.
  Corollary~\ref{corWsmooth3} and Proposition~\ref{conj3ext}). So, quite often we
can eliminate the hypothesis (i) in the above theorem.
\end{remark}
\begin{proof} Let $M=(N\otimes B)^*$ and note that $\depth _JB\ge r+2$ implies
  $\depth _JA\ge 2$, and that we have
\begin{equation}\label{propo3}
\Ext^1_B(N\otimes B,B)=\Ext^1_B(M,B)=\Ext^1_B(M,I_{A/B})=0
\end{equation}
by Proposition \ref{vanish}. Since it suffices by Lemma \ref{lemma2} to show
that any deformation of $A$ to the dual numbers
$T:=k[\varepsilon]/( \varepsilon ^2)$ comes from deforming $\cA $, we only
consider deformations to $T$.

Let $A_T$ be any deformation of $A$ to $T$. By assumption (i) we have a surjection  in the diagram
\begin{equation} \label{auxdiag1}
\begin{array}{cccccc}
_0\!\Hom(I_B,B) & \twoheadrightarrow & _0\!\Hom(I_B,A) & \leftarrow & _0\!\Hom_R(I_A,A) & \leftarrow \\
\exists (B_T) & \dashrightarrow  & \bullet & \leftarrow & (A_T) \\
\end{array}
\end{equation}
Hence there exists a deformation $B_T$ of $B$ to $T$ and a morphism
$B_T\longrightarrow A_T$ reducing to $B\longrightarrow A$ via
$(-) \otimes _Tk$ (cf.\,Remark~\ref{rem0thm42}). Moreover by assumption (ii),
there exists a $t \times (t+c-2)$-matrix ${\mathcal B}_T$ such that
$B_T=R_T/I_{t-r+1}({\mathcal B}_T)$, $R_T=R\otimes _k T$. Let $N_T$ be the
cokernel of the morphism determined by ${\mathcal B}_T$. Since $B_T$ and $N_T$
are flat over $T$ by Lemma \ref{lemma2}, we get that $N_T\otimes B_T$ is
$T$-flat and $N_T\otimes B_T\otimes k=N\otimes B$. Then
$M_T:=\Hom _{B_T}(N_T\otimes B_T,B_T)$ is also $T$-flat because
$\Ext^1_B(N\otimes B,B)=0$ (see \cite[Proposition (A1)]{dJ}). Moreover,
$I_{A_T/B_T}:=\ker (B_T\longrightarrow A_T)$ is $T$-flat, and hence we have
deformations of $M$, $I_{A/B}$ and $B$ fitting into a diagram
\begin{equation}\label{auxdiag}
\begin{array}{ccccccccc}
  M_T(-a_{t+c-1}) & \stackrel{\exists \sigma _T}{\dashrightarrow} &  I_{A_T/B_T} &\hookrightarrow & B_T \\
  \downarrow &  & \downarrow  &  &\downarrow \\
  M(-a_{t+c-1}) & \stackrel{ \sigma' }{\twoheadrightarrow} &  I_{A/B} & \stackrel{ i }{\hookrightarrow} & B \\
  \downarrow &  & \downarrow  &  &\downarrow \\
  0 & & 0  & &  0 \\
\end{array}
\end{equation}
where $i \cdot \sigma' = \sigma$ and where the dotted arrow exists due to the fact that
$_0\!\Ext^1_B(M(-a_{t+c-1}) ,I_{A/B})=0$ and deformation theory. Note that
$\sigma _T$ is surjective by Nakayama's lemma. Dualizing (horizontal
compositions) once more, we get
\begin{equation}\label{diag44}
\begin{array}{ccccccccc}
B_T(-a_{t+c-1}) & \stackrel{ \sigma _T^*}{\longrightarrow} & \Hom_{B_T}(M_T,B_T) & \stackrel{ \alpha }{\leftarrow }& N_T\otimes B_T \\
\downarrow &  & \downarrow  &  &\downarrow \\
B(-a_{t+c-1}) &  \stackrel{ \sigma ^*}{\longrightarrow} &  \Hom_{B}(M,B) &\cong  & N\otimes B \\
& &  \downarrow  &  &\downarrow \\
&  & 0  & &  0 \\
\end{array},
\end{equation}
where $\alpha $ must be an isomorphism since $\Hom _{B_T}(M_T,B_T)$ is a deformation of $\Hom _{B}(M,B)$, due to $\Ext^1_B(M,B)=0$. Indeed taking the cokernel $C$ of $\alpha $, we get $C\otimes _Tk=0$, i.e. $C_{\nu }\otimes _T k=0$ where $C=\oplus C_{\nu }$ and hence $C_{\nu}=0$ by Nakayama's lemma for all $\nu $. In the same way the kernel of $\alpha $ vanishes. Since $\sigma ^*$ defines the last column of $\cA $ (restricted to $B$) we can use $\sigma_T^*$ to define $s_T$ in the diagram
\begin{equation}\label{diag4*}
\xymatrix{& & &   B_T(-a_{t+c-1})\ar[d]^{\sigma_T^*} \ar[dl]_{\exists s_T} \\
&G_{t+c-2}^*\otimes B_T   \ar[r] &  F^*\otimes B_T  \ar[r] & N_T\otimes B_T \ar[r] &0
}
\end{equation}
and to lift the column $s_T(1)$ to a column
$v_T \in  (F^*\otimes R_T)_{(a_{t+c-1})}$ such that $v_T$, by putting
$\varepsilon =0$, becomes exactly equal to the last column
$v \in \oplus _{i=1}^t R(a_{t+c-1}-b_{i})_0$, of $\cA$, i.e. $v=v_T\otimes _Tk.$
Set
\begin{equation} \label{diag45} \cA_T=[{\mathcal B}_T , v_T] \ .
\end{equation}
It remains to see that the determinantal ring given by the $t+1-r$ minors of
$\cA _T$ is $A_T$ or, equivalently, that $I_{A_T/B_T}$ is given by the $t+1-r$
minors involving the last column since $\cB _T$ is already given, i.e. it
suffices to show
$$ I_{t+1-r}(\cA _T)/I_{t+1-r} ({\mathcal B}_T)=I_{A_T/B_T}.$$
But Theorem \ref{mainthm} and its proof imply that
$ I_{t+1-r}(\cA _T)/I_{t+1-r} ({\mathcal B}_T) =\im (\sigma _T)$. Indeed in
the proof of Theorem \ref{mainthm} we nowhere used that $R$ was a
polynomial ring over $k$ as long as the invertible matrices used in the proof
are still invertible with entries in $R \otimes_k T$, e.g. that they map to
invertible matrices via $(-) \otimes_T k$. Since the diagram \eqref{auxdiag}
shows that $\im (\sigma _T)=I_{A_T/B_T}$ we are done.
\end{proof}

\begin{remark} \label{rem0thm42} \rm The proof shows also that every
  deformation $B_T \longrightarrow A_T$ of $B \longrightarrow A$ to the dual
  numbers $T$ is given by $(t+1-r)$-minors of the matrix
  $\cA_T =[{\mathcal B}_T , v_T]$ of \eqref{diag45} where ${\mathcal B}_T$
  defines $B_T$, without supposing (i) of Theorem \ref{teo3}. And, moreover,
  fixing $B_T$ and some matrix ${\mathcal B}_T$ defining $B_T$ and $N_T$,
  ``the family of choices'' of the last column $v_T$ which via \eqref{diag45}
  leads to deformations of $A$, corresponds precisely to the variation of
  $\sigma_T^*(1)_{(a_{t+c-1})}$ in the diagram \eqref{diag4*} whose dimension
  is $\dim (MI \otimes A)_{(a_{t+c-1})}$. Note that by
  Proposition~\ref{vanish}, $MI\otimes A\cong \Hom_B(I_{A/B}(a_{t+c-1}),A)$.
  Thus, every deformation of the fiber of the natural projection
  $$p_2: \Hilb ^{p_X(t),p_Y(t)}(\PP^{n})\longrightarrow \Hilb
  ^{p_Y(t)}(\PP^{n})$$  comes from deforming (the last column of) the matrix
  $ \cA.$
\end{remark}

\begin{remark} \label{remthm42} \rm It is possible to weaken the
  assumption (i) of Theorem \ref{teo3} and still get the same conclusion.
  Indeed, the assumption (i) is equivalent to the surjectivity of the morphism
  $$\,_0\!\Hom(I_B,B)  \longrightarrow \ _0\!\Hom(I_B,A)$$ in the diagram
  \eqref{auxdiag1}. We can enlarge this diagram and get the cartesian square
  (i.e. pullback diagram):
\begin{equation}\label{cartesiansquare}
\begin{array}{cccccc}
A^1_{(B\rightarrow A)} &  \stackrel{pr_2}{\longrightarrow}& _0\!\Hom _R(I_B ,B) \\
 \downarrow pr_1 & & \downarrow p \\
_0\!\Hom _R(I_A ,A) & \longrightarrow & _0\!\Hom _R(I_B ,A)
\end{array}
\end{equation}
where $A^1_{(B\rightarrow A)}$ is the tangent space of $\GradAlg(H_B,H_A)$ at $(B\rightarrow A)$. If we instead of the assumption (i) in Theorem \ref{teo3} assume the weaker assumption;
\begin{equation*}
  pr_1: \ A^1_{(B \to A)}\twoheadrightarrow\ _0\!\Hom_R(I_A ,A)
\end{equation*}
is surjective, we still get the existence of $B_T\longrightarrow A_T$ reducing
to
$B\longrightarrow A$ via $ (-)\otimes _T k$, and the rest of the proof holds.
Note
also that the surjectivity of
$pr_1: A^1_{(B \to A)}\longrightarrow\ _0\!\Hom_R(I_A ,A)$ is equivalent to
$\gamma =0$ where $\gamma$ is the composition
$$_0\!\Hom_R(I_A ,A)\longrightarrow\ _0\!\Hom_R(I_B ,A)\longrightarrow\
_0\!\Ext^1_B(I_B/I_B^2,I_{A/B})$$
of natural maps. This is easily seen since the map $p:\ _0\!\Hom_R(I_B ,B)
\longrightarrow \ _0\!\Hom _R(I_B ,A)$ in the diagram
\eqref{cartesiansquare} is part of a long exact sequence
 \begin{equation}\label{longp}
   \longrightarrow\ _0\!\Hom_R(I_B ,B) \stackrel{p}{\longrightarrow}\ _0\!\Hom
   _R(I_B ,A) \longrightarrow \
   _0\!\Ext^1_B(I_B/I_B^2,I_{A/B}) \longrightarrow \ _0\!\Ext^1_B(I_B/I_B^2,B)
   \longrightarrow .
\end{equation}

\noindent Note that $\gamma =0$ whenever $_0\!\Ext^1_B(I_B/I_B^2,I_{A/B})=0$.
See Remark \ref{rem53} for the vanishing of this Ext group.
\end{remark}

The notion ``every deformation of $B$ (or $\Proj(B))$ comes from deforming its
associated matrix'', has another important consequence:

\begin{proposition}\label{thmext3} Let $B=R/I_B\twoheadrightarrow A$ be
  determinantal algebras defined by $t+1-r$ minors of matrices ${\mathcal A}$
  (resp. ${\mathcal B}$) representing $\varphi ^*$ (resp. $\varphi^*_{t+c-2}$)
  and let $X=\Proj(A)$ and $Y=\Proj(B)$, and suppose $r\ge 2$, $c\ge 3-r$,
    $J_A:=I_{t-r}(\cA)\ne R$ and $\depth _JB\ge r+2$ where
  $J=I_{t-r}({\mathcal B})$. If $c \le 0$ we also suppose
  $\depth _{J_A}A\ge 3$. Moreover, suppose that every deformation of $Y$ comes
  from deforming ${\mathcal B}$. Then the natural projection
  $$p_2: \Hilb ^{p_X(t),p_Y(t)}(\PP^{n})\longrightarrow \Hilb ^{p_Y(t)}(\PP^{n})$$ given by
  $p_2((X' \subset Y'))=(Y')$, is smooth at $(X \subset Y)$.
\end{proposition}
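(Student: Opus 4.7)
The plan is to verify the smoothness of $p_2$ via the infinitesimal lifting criterion, working through the isomorphism \eqref{GradPair} (applicable since $\depth _JB \ge r+2$ forces $\depth _{\mathfrak m} B, \depth _{\mathfrak m} A \ge 2$), which reduces the assertion to the smoothness at $(B \to A)$ of the algebraic projection $q: \GradAlg(H_B,H_A) \to \GradAlg(H_B)$. Concretely, I would take a small extension $T' \twoheadrightarrow T$ in $\underline{\ell}$ with kernel $\mathfrak a$ satisfying $\mathfrak a \cdot \mathfrak m_{T'} = 0$, a flat deformation $B_T \twoheadrightarrow A_T$ of $B \twoheadrightarrow A$ over $T$, and a flat lift $B_{T'}$ of $B_T$ to $T'$, and produce a flat lift $B_{T'} \twoheadrightarrow A_{T'}$ of $B_T \twoheadrightarrow A_T$. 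Using the hypothesis that every deformation of $B$ comes from deforming ${\mathcal B}$, I first fix matrices $\cB_{T'}$ and $\cB_T = \cB_{T'} \otimes_{T'} T$ representing $B_{T'}$ and $B_T$; these determine $T'$- and $T$-flat deformations $N_{T'}, N_T$ of $N$ (by the argument in the proof of Lemma~\ref{lemma2}), and via $\Ext^1_B(N \otimes B, B) = 0$ from Proposition~\ref{vanish}(ii) (which applies since $\depth _JB \ge r+2 \ge 4$) and \cite[Proposition (A1)]{dJ}, also a $T'$-flat deformation $M_{T'}$ of $M = \Hom_B(N\otimes B, B)$.

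The central step is to construct, from the abstract subdeformation $A_T \subset B_T$, a section $\sigma^*_T: B_T(-a_{t+c-1}) \to N_T \otimes B_T$ by transporting the dualization argument in the proof of Theorem~\ref{teo3} to the $T$-relative setting. The vanishing of $\ _0\!\Ext^1_B(M(-a_{t+c-1}), I_{A/B})$ from Proposition~\ref{vanish}(iii) (valid under our depth hypotheses, including the supplementary $\depth _{J_A}A \ge 3$ when $c \le 0$), together with standard deformation theory, supplies a $T$-flat surjective lift $M_T(-a_{t+c-1}) \twoheadrightarrow I_{A_T/B_T}$ of $\sigma': M(-a_{t+c-1}) \twoheadrightarrow I_{A/B}$. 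Dualizing this lift as in the diagram \eqref{diag44} and identifying $\Hom_{B_T}(M_T, B_T)$ with $N_T \otimes B_T$ through the vanishing of $\Ext^1_B(M, B)$ (Proposition~\ref{vanish}(ii)) produces the desired $\sigma^*_T$, which fits into the $T$-analogue of the exact sequence \eqref{section} and whose image generates $I_{A_T/B_T} \subset B_T$.

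With $\sigma^*_T$ in hand, lifting to a homomorphism $\sigma^*_{T'}: B_{T'}(-a_{t+c-1}) \to N_{T'} \otimes B_{T'}$ reduces, since $B_{T'}(-a_{t+c-1})$ is a free $B_{T'}$-module of rank one, to lifting the single element $\sigma^*_T(1) \in (N_T \otimes B_T)_{a_{t+c-1}}$ to $(N_{T'} \otimes B_{T'})_{a_{t+c-1}}$, which is possible by the $T'$-flatness of $N_{T'} \otimes B_{T'}$. As in \eqref{diag4*}--\eqref{diag45} and Remark~\ref{rem0thm42}, this lift corresponds to a last column $v_{T'}$ completing $\cB_{T'}$ to a matrix $\cA_{T'} = [\cB_{T'}, v_{T'}]$, and I set $A_{T'} := R_{T'}/I_{t+1-r}(\cA_{T'})$; by (the proof of) Lemma~\ref{lemma2}, $A_{T'}$ is $T'$-flat with the desired surjection $B_{T'} \twoheadrightarrow A_{T'}$, and $A_{T'} \otimes_{T'} T = A_T$ since $\sigma^*_{T'}$ reduces to $\sigma^*_T$. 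The main obstacle is the middle step -- extracting the structured section $\sigma^*_T$ from the a priori unstructured subdeformation $A_T \subset B_T$ -- which depends critically on the depth hypothesis $\depth _JB \ge r+2$ through the Cohen--Macaulay and $\Ext$-vanishing package of Proposition~\ref{vanish}.
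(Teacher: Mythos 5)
Your proof is correct and follows essentially the same route as the paper: reduce via \eqref{GradPair} to smoothness of $q:\GradAlg(H_B,H_A)\to\GradAlg(H_B)$, use the matrix-lifting hypothesis on $B$ together with the $\Ext^1_B$-vanishings of Proposition~\ref{vanish} to recast the subdeformation $A_T\subset B_T$ as a lift of the section $\sigma^*$, and then lift the section one more step (over the small extension) to obtain the extra column $v$, hence the matrix defining the sought flat extension of $A_T$. The paper's proof uses the labels $T\twoheadrightarrow S$ where you use $T'\twoheadrightarrow T$, and it points to the diagrams \eqref{auxdiag}, \eqref{diag44}, \eqref{diag4*}, \eqref{diag45} of Theorem~\ref{teo3} exactly as you do, so the two arguments coincide in substance.
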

\begin{proof}
  To prove the smoothness of  $p_2: \Hilb ^{p_X(t),p_Y(t)}(\PP^{n})\longrightarrow \Hilb ^{p_Y(t)}(\PP^{n})$, or equivalently the smoothness at
  $(B \to A)$ of the projection
  $$\GradAlg(H_B,H_A) \longrightarrow \GradAlg^{H_B}(R) \ $$
  given by $(B' \to A') \mapsto (B')$ we can rather closely follow some of the
  arguments in the proof of Theorem~\ref{teo3}. But instead of only
  considering deformations to the dual numbers, we take a surjection of
  artinian local rings
  $(T,\mathfrak{m}_T) \longrightarrow (S, \mathfrak{m}_S)$ whose kernel
  ${\mathfrak a}$ satisfies ${\mathfrak a} \cdot \mathfrak{m}_T=0$. Let
  $B_S \longrightarrow A_S$ be a deformation of $B \longrightarrow A$ to $S$
  and let $B_T$ be a deformation of $B_S$ to $T$. By definition of smoothness (see \cite[Definition 2.2 and Remark 2.3]{Sc})
  it suffices to find a deformation $A_T$ of $A_S$ to $T$ and a map
  $B_T \longrightarrow A_T$ reducing to $B_S \longrightarrow A_S$ via
  $(-)\otimes_T S$. Let $I_{A_S/B_S} = \ker(B_S \longrightarrow A_S)$. Since
  $B_S$ is defined by a matrix ${\mathcal B}_S$ that lifts the matrix
  ${\mathcal B}$ by assumption, we can use the arguments leading to
  \eqref{auxdiag} and \eqref{diag44}, which relies on the vanishing of three
  $\Ext_B^1$-groups, to get $M_S$ and $\sigma_S$ so that \eqref{auxdiag} and
  \eqref{diag44} hold, only replacing there $T$ by $S$. Using that $B_T$ is
  defined by some matrix ${\mathcal B}_T$ that lifts ${\mathcal B}_S$ to $T$
  and letting $N_T$ be the cokernel of the map induced by ${\mathcal B}_T$ and
  $M_T:= \Hom_{B_T}(N_T,B_T)$, the vanishing of $\Ext_B^1$-groups yields a
  diagram as in \eqref{diag44}, only indexing the lower line there by $S$
  (i.e. $B_S$, $\sigma^*_S$ instead of $B$, $\sigma^*$ etc.) and replacing
  $\sigma^*_T$ by some deformation of $\sigma^*_S$. Then we argue as in
  Theorem~\ref{teo3} to get the column $v_T$, and we define the $T$-flat
  quotient $A_T$ by the minors of the matrix in \eqref{diag45} of size
  $t+1-r$. Thus we have found a deformation $A_T$ of $A_S$ and a map $B_T \longrightarrow A_T$ reducing to
  $B_S \longrightarrow A_S$ and we are done.
\end{proof}

\begin{remark} \rm If we suppose $\depth _JA\ge 3$ (i.e. $c \ge 4-r$ for $\cA$
  general), and $\depth _{J_A}A\ge 4$ if $c \le 0$, then the smoothness of
  $$p_2: \Hilb ^{p_X(t),p_Y(t)}(\PP^{n})\longrightarrow \Hilb
  ^{p_Y(t)}(\PP^{n})$$ is much easier to prove because Proposition
  \ref{vanish} implies that
  $\Ext_A^1(I_{A/B}/I_{A/B}^2,A)=0$, whence ${\rm H}^2(B,A,A)=0$, see
  Proposition \ref{everydef}. It is important to point out  that  in Proposition
  \ref{thmext3}  we ``only'' assume $c \ge 3-r$ because one of main theorems
  of this
  monograph recursively proves that the closure
  $ \overline{ W(\underline{b};\underline{a};r)}$ is a generically smooth
  component of $\Hilb ^{p_X(t)}(\PP^n)$ under some assumptions by starting the
  induction from a case where $c = 3-r$, i.e. the case where $B$ is defined by
  maximal minors.
 \end{remark}

Combining Lemma~\ref{lemma2}, Theorem \ref{teo3} and
Remark~\ref{remthm42} 
we get

\begin{corollary} \label{evdef} Let $B=R/I_B\twoheadrightarrow A$ be
  determinantal algebras defined by $t+1-r$ minors of matrices ${\mathcal A}$
  (resp. ${\mathcal B}$) representing $\varphi ^*$ (resp.
  $\varphi^*_{t+c-2}$). Suppose $\depth _JB\ge r+2$ where
  $J=I_{t-r}({\mathcal B})$, $r\ge 2$, $c\ge 3-r$ and
  $J_A:=I_{t-r}(\cA)\ne R$. Moreover suppose that every deformation of $B$
  comes from deforming ${\mathcal B}$. If $c \le 0$, suppose also
  $\depth _{J_A}A\ge 3$. Then the following statements are equivalent
\begin{itemize}
\item[(a)] Every deformation of $A$ comes from deforming the matrix $\cA$.
\item[(b)] The map $pr_1:A^1_{(B \to A)}\longrightarrow\ _0\!\Hom_R(I_A ,A)$ of
  \eqref{cartesiansquare} is surjective.
\item[(c)] The composition $\gamma$ of natural maps in
  Remark~\ref{remthm42} is zero.
\item[(d)] The map
  $\psi_A:\ _0\!\Ext^1 _R(MI,MI)\longrightarrow\ _0\!\Hom _R(I_A ,A)$ of
  Lemma~\ref{lemma2} is surjective.
\end{itemize}
If furthermore $\depth _JA\ge 3$, and in case $c \le 0$; $\depth _{J_A}A\ge 4$,
they are also equivalent to
   $$ {\rm (e)} \quad  _0\!\Ext^1_B(I_B/I_B^2,I_{A/B})\longrightarrow\
   _0\!\Ext^1_B(I_B/I_B^2,B) {  \ is \ injective}\,. \qquad \qquad
   \qquad \qquad \qquad \qquad $$
\end{corollary}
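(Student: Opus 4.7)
The plan is to verify the four-way equivalence via $(a) \iff (d)$, $(b) \iff (c)$, $(a) \iff (b)$, and finally $(b) \iff (e)$ under the additional depth hypothesis. The equivalence $(a) \iff (d)$ is immediate from Lemma~\ref{lemma2}(ii), so I would dispose of it first.

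For $(b) \iff (c)$, I would exploit the cartesian square \eqref{cartesiansquare} together with the long exact sequence \eqref{longp}. By the universal property of the pullback, $pr_1$ is surjective if and only if the image of ${}_0\!\Hom_R(I_A,A)$ in ${}_0\!\Hom_R(I_B,A)$ under the horizontal map $\alpha$ is contained in the image of $p$. By exactness of \eqref{longp}, the image of $p$ equals the kernel of the connecting map ${}_0\!\Hom_R(I_B,A) \to{}_0\!\Ext^1_B(I_B/I_B^2, I_{A/B})$. Since $\gamma$ is by definition the composition of $\alpha$ with this connecting map, $pr_1$ surjective is equivalent to $\gamma = 0$.

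The direction $(b) \Rightarrow (a)$ is essentially Theorem~\ref{teo3} via Remark~\ref{remthm42}, where it is explicitly noted that assumption (i) of Theorem~\ref{teo3} can be replaced by surjectivity of $pr_1$; the assumption (ii) there is precisely our standing hypothesis that every deformation of $B$ comes from deforming ${\mathcal B}$. Conversely, for $(a) \Rightarrow (b)$, I would take an arbitrary deformation $A_T$ of $A$ to the dual numbers $T=k[\varepsilon]/(\varepsilon^2)$ and use (a) to write $A_T = R_T/I_{t+1-r}(\cA_T)$ for some lifting $\cA_T$ of $\cA$. Deleting the last column produces $\cB_T$ and, by Lemma~\ref{lemma2}(i) applied to the determinantal algebra $B$, the quotient $B_T := R_T/I_{t+1-r}(\cB_T)$ is a deformation of $B$; the inclusion $I_{t+1-r}(\cB_T) \subset I_{t+1-r}(\cA_T)$ yields a surjection $B_T \twoheadrightarrow A_T$ lifting $B\twoheadrightarrow A$, producing a preimage of the class of $A_T$ in $A^1_{(B\to A)}$ under $pr_1$.

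Finally, for $(b) \iff (e)$ under the stronger hypothesis $\depth_J A \geq 3$ (and $\depth_{J_A}A \geq 4$ if $c\leq 0$), I would invoke Proposition~\ref{everydef} to obtain ${}_0\!\Hl^2(B,A,A)=0$, which via \eqref{ZJ} forces $\alpha: {}_0\!\Hom_R(I_A,A) \to {}_0\!\Hom_R(I_B,A)$ to be surjective. Under this surjectivity, the pullback analysis used for $(b) \iff (c)$ shows that $pr_1$ is surjective if and only if $p$ is surjective, and by exactness of \eqref{longp} this is equivalent to injectivity of ${}_0\!\Ext^1_B(I_B/I_B^2, I_{A/B}) \to {}_0\!\Ext^1_B(I_B/I_B^2, B)$, i.e.\ to (e). The main delicate step is to confirm that the depth bounds in Corollary~\ref{evdef} are sufficient to trigger Proposition~\ref{everydef}; this ultimately rests on the vanishing $\Ext^1_A(I_{A/B}/I_{A/B}^2, A)=0$ from Proposition~\ref{vanish}(iv), whose hypotheses match exactly the ones imposed for the equivalence with (e).
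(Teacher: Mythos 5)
Your proof is correct and follows essentially the same route as the paper's: $(a)\Leftrightarrow(d)$ via Lemma~\ref{lemma2}, $(b)\Leftrightarrow(c)$ by the pullback square together with the long exact sequence \eqref{longp}, $(b)\Rightarrow(a)$ via Theorem~\ref{teo3} and Remark~\ref{remthm42}, the explicit column-deletion argument for $(a)\Rightarrow(b)$, and the use of Proposition~\ref{everydef} to get surjectivity of the lower horizontal map in \eqref{cartesiansquare} for the equivalence with $(e)$. You spell out the cartesian-square bookkeeping in more detail than the published proof, but the ingredients and their order coincide.
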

\begin{proof} Since $(a) \Leftrightarrow (d)$ by Lemma~\ref{lemma2} and
  $(c)\Leftrightarrow (b) \Rightarrow (a)$ by Theorem \ref{teo3} and
  Remark~\ref{remthm42}, we must show $(a)\Rightarrow (b)$. To prove it take
  any $(A_T)\in\ _0\!\Hom_R(I_A ,A)$. By assumption (a), $A_T$ is of the form
  $A_T=R_T/I_{t+r-1}(\cA _T)$ where $T:=k[\varepsilon]/(\varepsilon^2)$. Now
  if we delete the last column of $\cA _T$, we get a matrix ${\mathcal B} _T$.
  By Lemma~\ref{lemma2}, $B_T:=R_T/I_{t+r-1}({\mathcal B} _T)$ is a
  deformation of $B$ to $T$. Hence we get an element
  $(B_T\longrightarrow A_T)\in A^1_{(B \to A)}$ that maps to $(A_T)$ via the
  map $$pr_1:A^1_{(B \to A)}\longrightarrow\ _0\!\Hom_R(I_A ,A)$$ of diagram
  (\ref{cartesiansquare}), i.e. $(b)$ is proved.

  Finally to see $(e) \Leftrightarrow (b)$ we notice that the lower horizontal
  map in the diagram \eqref{cartesiansquare} is surjective by
  Proposition~\ref{everydef}; indeed ${\rm H}^2(B,A,A)=0$. It follows that the
  maps $$p: \ _0\!\Hom_R(I_B ,B) \longrightarrow \ _0\!\Hom _R(I_B ,A)$$ and
  $pr_1:A^1_{(B \to A)}\longrightarrow \ _0\!\Hom_R(I_A ,A)$ of
  (\ref{cartesiansquare}) are surjective simultaneously and we conclude the
  argument by using the exact sequence \eqref{longp}.
\end{proof}

\section{The dimension of the determinantal locus}

Given integers $\underline{a}=(a_1,a_2,...,a_{t+c-1})$ and
$\underline{b}=(b_1,...,b_t)$ recall that 
$W(\underline{b};\underline{a};r)\subset \Hilb ^{p_X(t)}(\PP^{n})$ is the
locus parameterizing determinantal schemes $X\subset \PP^{n}$ of codimension
$r\cdot (c+r-1)$ defined as the vanishing of the $(t-r+1)\times (t-r+1)$
minors of a $t\times (t+c-1)$ matrix
$\cA=(f_{ij})^{i=1,...,t}_{j=1,...,t+c-1}$ where
$f_{ij}\in k[x_0,x_1,...,x_{n}]$ is a homogeneous polynomial of degree
$a_j-b_i$. Correspondingly, set $\underline{a'}=(a_1,a_2,...,a_{t+c-2})$ and
let $W_{(\underline{b};\underline{a};r)}^{(\underline{b};\underline{a'};r)}$
be the locus of determinantal flags $X\subset Y\subset \PP^n$ of the
Hilbert-flag scheme with $X\in W(\underline{b};\underline{a};r)$ and
$Y\in W(\underline{b};\underline{a'};r)$ defined by the
$(t-r+1)\times (t-r+1)$ minors of the $t\times (t+c-2)$ matrix ${\mathcal B}$
that we obtain deleting the last column of $\cA$.

As previously
$\varphi : F=\oplus _{i=1}^tR(b_i)\longrightarrow G=\oplus _{j=1}^{t+c-1}
R(a_j)$
is a graded morphism, $\varphi _{t+c-2}$ is obtained deleting the last row,
$B:=R/I_{t-r+1}(\varphi ^*_{t+c-2})\twoheadrightarrow A:=R/I_{t-r+1}(\varphi
^*)$,
$MI=\coker \varphi ^*$ and $N=\coker \varphi ^*_{t+c-2}$. We also assume that the integers
$\{a_j\}$ and $\{b_i\}$ satisfy the weak
conditions \eqref{paperassump1}.

\vskip 2mm
In this section, we address the first of the following three fundamental problems:

\begin{pblm}\label{pblmsagain} \begin{itemize}\item[{\rm (1)}]  To determine  the dimension of
$W(\underline{b};\underline{a};r)$ in terms of $a_j$ and $b_i$.
\item[{\rm (2)}] To determine the unobstructedness of a generic point of $W(\underline{b};\underline{a};r)$.

\item[{\rm (3)}] To determine whether  $W(\underline{b};\underline{a};r)$ fills in an open dense subset of the corresponding component of the Hilbert scheme.
\end{itemize}\end{pblm}


Our proof will go by induction and use the following diagram:

\begin{equation}\label{flag}
\begin{array}{cccccc}
\HH om(F,G) & & & & & \\
& \searrow q & & & & \\
& & W_{(\underline{b};\underline{a};r)}^{(\underline{b};\underline{a'};r)} &  \stackrel{p_{2W}}{\longrightarrow}& W(\underline{b};\underline{a'};r) \\
& & \downarrow p_{1W} & &  \\
& & W(\underline{b};\underline{a};r)
\end{array}
\end{equation}
where the maps $p_{iW}$ is the restriction to
$W_{(\underline{b};\underline{a};r)}^{(\underline{b};\underline{a'};r)}$ of
the projection $p_{i}$ of the Hilbert-flag scheme into its Hilbert schemes.
Since $q$ is an algebraic rational morphism (rational as $q$ is only defined
for $\varphi $ with $\coker \varphi ^*$ and $\coker \varphi ^*_{t+c-2}$ of
maximal codimensions), there is an open subset of $\HH om(F,G)$ that via $q$
(resp. $p_{1W}\cdot q$) maps surjectively onto an open dense subset of
$ W_{(\underline{b};\underline{a};r)}^{(\underline{b};\underline{a'};r)}$
(resp. $ W(\underline{b};\underline{a};r)$). Thus we have the first of the
following lemmas:

\begin{lemma}\label{lemaaux2}
  $ W(\underline{b};\underline{a};r)$ and
  $W_{(\underline{b};\underline{a};r)}^{(\underline{b};\underline{a'};r)}$ are
  irreducible algebraic sets.
\end{lemma}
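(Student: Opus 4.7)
The plan is to exhibit each of the two loci as the image (up to closure) of an irreducible parameter space under an algebraic morphism, which immediately gives irreducibility. The natural parameter space is $\HH om(F,G)$, which we identify with the affine space of homogeneous matrices $\cA=(f_{ij})$ whose entries are forms of degree $a_j-b_i$. As an affine space, $\HH om(F,G)$ is irreducible.

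Next I would describe the rational map $q$ precisely and the domain on which it is regular. Let $U\subset \HH om(F,G)$ be the open locus of those $\varphi$ for which both $\coker(\varphi^*)$ and $\coker(\varphi^*_{t+c-2})$ have maximal codimension; equivalently, both $R/I_{t-r+1}(\varphi^*)$ and $R/I_{t-r+1}(\varphi^*_{t+c-2})$ are determinantal rings of the expected codimensions $r(c+r-1)$ and $r(c+r-2)$ respectively. This is indeed an open condition, since the dimension of the vanishing loci of the ideals of minors is upper semicontinuous in $\cA$. On $U$, Fitting's lemma and the fact that the generators of $I_{t-r+1}(\varphi^*)$ and $I_{t-r+1}(\varphi^*_{t+c-2})$ are polynomial expressions in the entries of $\cA$ show that the assignment
\[
\cA \;\longmapsto\; \bigl(\Proj(R/I_{t-r+1}(\varphi^*))\subset \Proj(R/I_{t-r+1}(\varphi^*_{t+c-2}))\bigr)
\]
is a morphism $q\colon U\to \Hilb^{p_X(t),p_Y(t)}(\PP^n)$ whose image lies in $W_{(\underline{b};\underline{a};r)}^{(\underline{b};\underline{a'};r)}$. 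By construction, every point of $W_{(\underline{b};\underline{a};r)}^{(\underline{b};\underline{a'};r)}$ lies in the image of $q$, because such a flag $(X\subset Y)$ is by definition presented by some matrix $\cA$, and genericity of that matrix places it in $U$; in particular $q(U)$ is dense in $W_{(\underline{b};\underline{a};r)}^{(\underline{b};\underline{a'};r)}$.

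Since $U$ is a nonempty open subset of the irreducible space $\HH om(F,G)$, $U$ is irreducible, and therefore so is the image $q(U)$ and its closure. Thus $W_{(\underline{b};\underline{a};r)}^{(\underline{b};\underline{a'};r)}$ is an irreducible algebraic set. For the other locus, I would simply compose with the first projection: $p_{1W}\circ q\colon U\to W(\underline{b};\underline{a};r)$ is a morphism with dense image, and the same argument yields irreducibility of $W(\underline{b};\underline{a};r)$.

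There is no real obstacle here; the only points to state carefully are (i) that the condition defining $U$ is open (a standard semicontinuity argument on the codimension of determinantal loci, or equivalently on the depth of the ideals of minors), and (ii) that $q$ is an algebraic morphism on $U$, which follows from the functorial description of the Hilbert flag scheme together with the fact that the relevant ideals are generated by polynomial functions in the entries of $\cA$. With these in hand, the conclusion is immediate from the fact that the closure of the image of an irreducible variety under a morphism is irreducible.
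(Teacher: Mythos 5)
Your proposal is correct and follows essentially the same route as the paper's own argument (given just before the lemma): identify $\HH om(F,G)$ with an irreducible affine space, restrict the rational map $q$ to the open locus where both cokernels attain maximal codimension, and conclude irreducibility of $W_{(\underline{b};\underline{a};r)}^{(\underline{b};\underline{a'};r)}$ (and of $W(\underline{b};\underline{a};r)$ via $p_{1W}\circ q$) from irreducibility of the image of an irreducible source under a morphism. The one minor tightening worth noting is that no genericity argument is needed to see $q(U)$ hits every point of the flag locus: by definition every flag in $W_{(\underline{b};\underline{a};r)}^{(\underline{b};\underline{a'};r)}$ arises from a matrix $\cA$ whose relevant ideals of minors already have the expected codimension, so $\cA$ lies in $U$ automatically.
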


\begin{lemma} With the above notation, if the last column of $\cA $ does not
  contain any unit (e.g. if $a_1>b_t$), then $p_{1W}$ is surjective.
\end{lemma}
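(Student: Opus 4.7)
For surjectivity we must show that every $X \in W(\underline{b};\underline{a};r)$ lies in the image of $p_{1W}$, that is, there exists $Y \in W(\underline{b};\underline{a'};r)$ with $X \subset Y$ fitting into a flag defined by the \emph{same} matrix data. The natural candidate is constructed as follows: pick any $t\times (t+c-1)$ matrix $\cA = (f_{ij})$ of degrees $a_j-b_i$ with $I(X) = I_{t-r+1}(\cA)$, let $\cB$ be the $t\times(t+c-2)$ matrix obtained by deleting the last column of $\cA$, and set $Y := V(I_{t-r+1}(\cB))$. The degree pattern of $\cB$ is precisely $(\underline{b};\underline{a'})$, and the inclusion $I_{t-r+1}(\cB) \subset I_{t-r+1}(\cA)$ yields $X \subset Y$; hence $(X\subset Y)$ lies in $W_{(\underline{b};\underline{a};r)}^{(\underline{b};\underline{a'};r)}$ once we verify that $Y$ is determinantal of the expected codimension $r(c+r-2)$.

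The only substantive point is the codimension. Under the hypothesis that the last column of $\cA$ contains no unit (automatic when $a_1 > b_t$, since then each entry $f_{i,t+c-1}$ has strictly positive degree $a_{t+c-1}-b_i \ge a_1-b_t > 0$, hence cannot be a unit), I would invoke the column-deletion theorem of Bruns \cite{Br}, cited earlier in the excerpt in the paragraph preceding Theorem~\ref{std_det_case} for the standard case $r=1$: if $\cA$ has $(t-r+1)$-minors of maximal codimension $r(c+r-1)$ and a distinguished column with no unit, then the matrix obtained by deleting that column has $(t-r+1)$-minors of the corresponding maximal codimension $r(c+r-2)$. The Eagon--Hochster bound supplies the opposite inequality $\codim I_{t-r+1}(\cB) \le r(c+r-2)$, so equality holds and $Y \in W(\underline{b};\underline{a'};r)$, whence $p_{1W}(X\subset Y) = X$ and $p_{1W}$ is surjective.

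The hard point is the appeal to Bruns's result in the generality $r > 1$, since the paper explicitly cites it only for standard determinantal ideals. The proof in \cite{Br} proceeds by localizing at an entry of one of the columns which is \emph{not} deleted (the no-unit hypothesis ensuring that the generic point of a minimal prime of $I_{t-r+1}(\cB)$ lies in this localized chart) and reducing to the generic Eagon--Hochster situation, and this localization argument is formally independent of the size of the minors. Should one wish to avoid citing the generalization, an alternative is to note that the reverse degeneration---letting the last column contain a unit---forces $I_{t-r+1}(\cB)$ to factor through the $(t-r)$-minors of a $(t-1)\times(t+c-2)$ submatrix (of codimension at most $(r-1)(c+r-2) < r(c+r-2)$), exhibiting the no-unit condition as precisely what prevents codimension loss.
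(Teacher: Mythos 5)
Your proposal is correct and takes essentially the same route as the paper: given $X$, choose a defining matrix $\cA$, delete the last column to get $\cB$, and invoke Bruns's column-deletion result to conclude that $I_{t-r+1}(\cB)$ still has the expected codimension, so the resulting $Y$ gives a preimage of $X$ in the flag locus. Your extra worry about whether \cite{Br} applies for $r>1$ is unnecessary — Bruns's theorem (his Corollary~1) is stated for $s\times s$ minors of arbitrary size $s$, and the paper's proof cites it in exactly that generality (``if the $i\times i$ minors\ldots''), so no supplemental argument is needed.
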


\begin{proof} The result immediately follows from the well-known fact that if
  the $i\times i$ minors of an homogeneous matrix $\cA$ defines a
  determinantal scheme $X$ (i.e. $X$ has the expected codimension) the same is
  true for the scheme defined by the $i\times i$ minors of the matrix that we
  obtain deleting a column of $\cA$ without units, see \cite{Br}.
\end{proof}

Our first goal is to show that we have an upper bound for the dimension of
$ W(\underline{b};\underline{a};r)$. Indeed if $r=1$ we set
$W(\underline{b};\underline{a}):=W(\underline{b};\underline{a};1)$
and we have
$$\dim W(\underline{b};\underline{a}) = \lambda _c+K_3+K_4+\cdots +K_c$$ for
$n-c \ge 1$ and $c \ge 2$, with
$\dim W(\underline{b};\underline{a}) = \lambda _c$ for $c=2$, provided
$a_{i-1} \ge b_i$ for $i \ge 2$ by 
Theorem~\ref{std_det_case}. Here $ \lambda _c$ and the
non-negative numbers $K_i$ are defined in \eqref{lamda}, and as long as
$c \ge 1$, i.e. $\cA$ is a homogeneous $t \times (t+c-1)$ matrix with
$ t \le t+c-1$ the expression $\lambda _c+K_3+K_4+\cdots +K_c$ turns out to be
an upper bound for $\dim W(\underline{b};\underline{a};r)$ for every $r$,
$1 \le r \le t-1$. If $c \le 1$, we can find an upper bound for
$\dim W(\underline{b};\underline{a};r)$ from the $c \ge 1$ statement by
transposing the matrix $\cA$ because $(\cA)^{tr}$ is a $(t+c-1) \times t$
matrix with $(t+c-1) \le t$. Thus, other non-negative numbers, $K'_i$, given
in our next lemma, come into play and are needed to bound
$\dim W(\underline{b};\underline{a};r)$. These $K'_i$ are really the numbers
$K_i$ defined by \eqref{lamda} using $(\cA)^{tr}$, up to equivalent matrices,
instead of $\cA$. With more details we have

\begin{lemma} \label{tr} Let $\cA$ be a homogeneous $t \times (t+c-1)$ matrix
  with degree matrix $\cD=(d_{ij})^{i=1,...,t}_{j=1,...,t+c-1}$ where
  $d_{ij}=a_j-b_i$, and let $\lambda(D):=\lambda_c$ be the associated number
  given by \eqref{lamda}. Then the transposed matrix $(\cA)^{tr}$ with
  $(t+c-1) \times t$ degree matrix $\cD^{tr}$ and associated number
  $\lambda(\cD^{tr}):=\lambda'_{2-c}$ given by \eqref{lamda} satisfies
  $\lambda(\cD^{tr})=\lambda(\cD)$, i.e. $\lambda'_{2-c}=\lambda_{c}$.
  Moreover, for the loci of $(t-r+1)$-minors we have
$$W(-a_{t+c-1},-a_{t+c-2}, \cdots,-a_1;-b_t, \cdots,-b_1;c+r-1)=W(b_1,
\cdots, b_t;a_1,a_2, \cdots,a_{t+c-1};r).$$
Finally, if $c=2-r < 1$, these loci are defined by maximal minors and if
$a_{i-r} \ge b_i$ for $r+1 \le i \le t$ we have
 $$\dim W(\underline{b};\underline{a};r)=\lambda_{c} +K'_3+K'_4+\cdots
+K'_r \ \ {\text for \ }  n-r \ge 1 \quad {\text  where}$$ \\[-11mm]
\begin{equation}\label{lamdaprime}
\begin{array}{rcl}
\ell'_i & :=  & \sum_{j=1}^{t-r+1}a_j-\sum_{k=r-i+1}^tb_k, \\
h'_{i-3} & := &
  -2b_{r-i+1}-\ell'_i +n, \text{ for } 1\le i \le c,\\
    K'_3 & := & \binom{h'_0}{n}, \\
    K'_4 &:= & \sum_{j=r-2}^{t} \binom{h'_1-b_j}{n}-
\sum_{i=1}^{t-r+1} \binom{h'_1-a_i}{n}, \text{ and, in general,} \\
 K'_{i} &: = &\sum _{x+y=i-3
  \atop x , y \ge 0} \sum _{r-i+2\le i_1< ...< i_{x}\le t \atop 1\le
  j_1\le...\le j_y \le t-r+1} (-1)^{i-1-x} \binom{h'_{i-3}-b_{i_1}-\cdots
  -b_{i_x}-a_{j_1}-\cdots -a_{j_y} }{n}.  \end{array}
\end{equation}
\end{lemma}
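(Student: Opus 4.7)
The strategy is to recognize that the whole lemma is a systematic translation of the original data under matrix transposition, so essentially every claim reduces to a change of indices followed by an appeal to the standard determinantal case already recorded in Theorem~\ref{std_det_case}.

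First, I would establish the symmetry $\lambda(\cD^{tr}) = \lambda(\cD)$ by a direct substitution. Setting $\tilde{a}_i := -b_{t-i+1}$ for $1 \le i \le t$ and $\tilde{b}_j := -a_{t+c-j}$ for $1 \le j \le t+c-1$, the four binomial sums appearing in the formula \eqref{lamda} for $\lambda_c$ get permuted among themselves: the sum $\sum \binom{\tilde{a}_i - \tilde{b}_j + n}{n}$ over the transposed data becomes, after reindexing $i \mapsto t-i+1$ and $j \mapsto t+c-j$, exactly $\sum \binom{a_j - b_i + n}{n}$, and similarly for the other three terms (the $\binom{\tilde{a}_i - \tilde{a}_j + n}{n}$ sum maps to $\sum \binom{b_i - b_j + n}{n}$ and vice versa). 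Adding up, $\lambda'_{2-c} = \lambda_c$.

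Next, I would prove the equality of loci set-theoretically (and scheme-theoretically) by observing two things. The transposed matrix $\cA^{tr}$ has size $(t+c-1) \times t$ and its ideal of $s \times s$ minors coincides with the ideal of $s \times s$ minors of $\cA$ for any $s$. Under the convention in \eqref{paperassump1}, however, the rows and columns should be reordered so that the parameter sequences are non-decreasing; this reordering amounts to permuting rows and columns, which again does not change the ideal of minors. Now set $\tilde{t} := t+c-1$, $\tilde{c} := 2-c$, $\tilde{r} := c+r-1$; then $\tilde{t} - \tilde{r} + 1 = t - r + 1$, so the $(t-r+1) \times (t-r+1)$ minors of the original matrix are indeed the $(\tilde{t} - \tilde{r} + 1)$-minors of the transposed one, and the degree check $\tilde{a}_j - \tilde{b}_i = a_{t+c-i} - b_{t-j+1}$ shows that the entries have the correct degrees. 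Thus the two loci coincide.

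Finally, for the dimension formula in the case $c = 2-r$, note that $\tilde{c} = 2-c = r$ and $\tilde{r} = c + r - 1 = 1$, so the transposed locus parametrizes \emph{standard determinantal} schemes (maximal minors of a $\tilde t \times (\tilde t + \tilde c -1)$ matrix). The numerical hypothesis $a_{i-r} \ge b_i$ for $r+1 \le i \le t$ translates, via $i = t+c-j$ and $c = 2-r$, precisely to $\tilde{a}_{j-1} \ge \tilde{b}_j$ for $2 \le j \le \tilde t$, which is the standard hypothesis of Theorem~\ref{std_det_case}. The numerical invariants $\ell'_i$, $h'_{i-3}$, $K'_i$ in \eqref{lamdaprime} are exactly what one obtains from the invariants $\ell_i$, $h_{i-3}$, $K_i$ of \eqref{lamda} after the substitution $a \leftrightarrow -b$, $t \leftrightarrow t+c-1$ (a routine reindexing check). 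Applying Theorem~\ref{std_det_case}(iii) to the transposed locus, together with $\lambda'_r = \lambda_c$ from the first paragraph, yields
\[
\dim W(\underline{b};\underline{a};r) \;=\; \lambda_c + K'_3 + K'_4 + \cdots + K'_r.
\]
The only potentially delicate point is the bookkeeping to verify that the formulas for $\ell'_i, h'_{i-3}, K'_i$ in \eqref{lamdaprime} really are the images of $\ell_i, h_{i-3}, K_i$ under the substitution, but this is a mechanical change of summation indices.
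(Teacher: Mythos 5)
Your proof is correct and follows essentially the same route as the paper's: express $\lambda$ through the degree matrix to make the transposition symmetry visible, observe that the ideal of $(t-r+1)$-minors is unchanged under transposition (with the size check $\tilde t - \tilde r + 1 = t-r+1$), and then apply the known standard-determinantal dimension formula after reindexing. Two small corrections worth noting: since $\tilde c = r$ can equal $2$, the case $r=2$ needs Theorem~\ref{std_det_case}(ii) rather than (iii) (the paper avoids this by invoking Theorem~\ref{Amodulethm5}, which covers $c \ge 2$ uniformly), and the index relation should read $i-r = t+c-j$ (equivalently $i = t+2-j$), not $i = t+c-j$, though the translated hypothesis $\tilde a_{j-1} \ge \tilde b_j$ you arrive at is nevertheless correct.
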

\begin{proof} In the definition \eqref{lamda} of $\lambda(\cD):=\lambda_c$ we
  may put $a_j-a_i = d_{1j} -d_{1i}$ for $i,j \in \{1,...,t+c-1\}$ and
  $b_i-b_j = d_{i1} -d_{j1}$ for $ i,j \in \{i=1,...,t\}$. Applying
  \eqref{lamda} similarly onto $\lambda(\cD^{tr})$ with degree matrix given by
  $d^{tr}_{i,j}$ where $d^{tr}_{i,j}=d_{j,i}$, we get
  $d^{tr}_{1j} -d^{tr}_{1i}=b_j-b_i$ and $d^{tr}_{i1} -d^{tr}_{j1}=a_i-a_j$
  and so we see that the sum of the negative terms in \eqref{lamda} defining
  $\lambda$ is the same for $\lambda(\cD^{tr})$ and $\lambda(\cD)$. The
  positive terms in the definition of $\lambda$ in \eqref{lamda} are obviously
  equal, and we get $\lambda(\cD^{tr})=\lambda(\cD)$.

  Moreover, since it is clear that the locus of the $(t-r+1) \times (t-r+1)$
  minors of the $t \times (t+c-1)$ matrix $\cA$ is exactly the same as the
  locus of $(t-r+1) \times (t-r+1)$ minors of the $(t+c-1) \times t$
  transposed matrix $\cA^{tr}$ and $t+c-1-(c+r-1)+1=t-r+1$, we get the
  equality of the loci, noticing that the inequalities
  $$b_1 \le b_2\le \cdots \le b_t, a_1 \le a_2 \le \cdots \le a_{t+c-1}$$ of the
  numbers attached to $\cA$, are reversed for $\cA^{tr}$, i.e. they correspond
  to
  $$-b_1 \ge -b_2\ge \cdots \ge -b_t, -a_1 \ge -a_2 \ge \cdots \ge -a_{t+c-1}.$$
  Indeed, transforming $\cA$ by keeping the entry of degree $(a_1-b_t)$ in the
  lower left corner fixed (instead along the usual corner $(1,1)$) we
  get a $(t+c-1) \times t$ matrix $\cA'$ with degree matrix
  $(d'_{ji})^{j=1,...,t+c-1}_{i=1,...,t}$,
\begin{equation} \label{trrevdeg}
  d'_{ji}=a'_i-b'_j \ {\rm where}\ a'_i=-b_{t+1-i} \ {\rm for}\ {i=1,...,t} \
  {\rm and}\ b'_j=-a_{t+c-j}\ {\rm for}\ {j=1,...,t+c-1}
 \end{equation}
  which is equivalent to $\cA^{tr}$ (up to row- and column-equivalence). This
  $\cA'$ has the same property as $\cA$ with respect to inequalities between
  the entries, i.e. the one of the smallest degree sits in the lower left
  corner, and the degree of entries increases in all directions from there.
  Note also that we easily get $\lambda(\cD^{tr})=\lambda(\cD')$ by
  \eqref{trrevdeg} and the first part of the proof.

  In particular, for $c=2-r$ then $\cA^{tr}$, or rather its equivalent matrix
  $\cA'$, is a $(t-r+1) \times t$ matrix and since the inequalities
  $b_1 \le b_2\le \cdots \le b_t, a_1 \le a_2 \le \cdots \le a_{t-r+1}$
  attached to $\cA$ corresponds to
  $-a_{t-r+1} \le -a_{t-r} \le \cdots \le -a_1$,
  $ -b_t\le -b_{t-1} \le \cdots \le -b_1$ attached to $\cA'$, we set
  $b'_i = -a_{t-r+2-i}$ and $a'_i=-b_{t+1-i}$ using primes (') for $\cA'$.
  Then we get the lemma from \eqref{lamda} by substitution and the fact that
  $$\dim W(\underline{b'};\underline{a'}) = \lambda'_r+K'_3+K'_4+\cdots +K'_r$$ for
  $n-r \ge 1$ provided $a'_{i-1} \ge b'_i$ for $2 \le i \le t+1-r$ by
  Theorem~\ref{std_det_case}. 
\end{proof}

\begin{theorem}\label{ineqdimW} \begin{itemize}
\item[{\rm (i)}] Let  $c \ge 1$ and suppose $a_{i-1} \ge
  b_i$
  (and $a_{i-1} > b_i$ if $c=1$) for $2 \le i \le t$. Then for any $r$,
  $1 \le r < t$ we have
   $$\dim W(\underline{b};\underline{a};r) \le \lambda_c +K_3+K_4+\cdots +K_c
   \quad {\text for \ }\ n-r(c+r-1) \ge 1\, .$$
   In particular 
   if   
   $a_{t+c-1}< \ell_2=\sum_{i=1}^{t+1}a_i - \sum_{i=1}^tb_i$, then $K_i=0$ for
   $3 \le i \le c$, whence \
   $$\dim W(\underline{b};\underline{a};r) \le \lambda_c\, . $$
   \item[{\rm (ii)}] Let $c < 1$ and suppose $a_{i-r} \ge b_i$ for $r+1 \le i \le t$. Then
   for any $r$, $2-c \le r < t$, we have
   $$\dim W(\underline{b};\underline{a};r) \le \lambda_{c} +K'_3+K'_4+\cdots
   +K'_r \ \ {\text for \ }\ n-r(c+r-1) \ge 1 \, .$$
   In particular if $-b_1 < \ell'_2 = \sum_{j=1}^{t-r+1}a_j-\sum_{k=r-1}^tb_k$
   (e.g. if $r=2$), then $K'_i=0$ for $3 \le i \le r$ and
   $$\dim W(\underline{b};\underline{a};r) \le \lambda_{c}\, .$$
 \item[{\rm (iii)}] If $r=1$, $(r,c,n) \ne (1,1,2)$ in {\rm (i)}, resp.
   $r=2-c$ in {\rm (ii)}, then we have equalities in both displayed formulas
   of {\rm (i)}, resp. {\rm (ii)}.
   \end{itemize}
 \end{theorem}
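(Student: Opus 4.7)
My plan is to bound $\dim W(\underline{b};\underline{a};r)$ above by $\dim\,_0\!\Ext^1_R(MI,MI)$, where $MI=\coker\varphi^*$ depends only on $\cA$ and not on $r$, and then invoke Theorem~\ref{Amodulecor1}(iii) to identify this $\Ext^1$-dimension with $\lambda_c+K_3+\cdots+K_c$.

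By Lemma~\ref{lemma2}(i), for any local artinian $T$ a lifting $\cA_T$ of $\cA$ yields simultaneously a deformation $MI_T=\coker\varphi_T^*$ of $MI$ and a deformation $A_T=R_T/I_{t-r+1}(\varphi_T^*)$ of $A_r:=R/I_{t-r+1}(\varphi^*)$, producing a morphism of local graded deformation functors $Def(\psi)\colon Def_{MI}\to Def_{A_r}$ whose tangent map is $\psi\colon\, _0\!\Ext^1_R(MI,MI)\to\, _0\!\Hom_R(I_{A_r},A_r)$. Since every point of $W(\underline{b};\underline{a};r)$ is, by definition, produced from some such $\cA$, every first-order deformation at a generic point $(A_r)$ that stays inside $W(\underline{b};\underline{a};r)$ must lift to a deformation of $\cA$ (hence of $MI$); consequently the tangent space of $W(\underline{b};\underline{a};r)$ at $(A_r)$ is contained in $\im(\psi)$, giving
$$\dim W(\underline{b};\underline{a};r)\le \dim\im(\psi)\le \dim\, _0\!\Ext^1_R(MI,MI).$$
Under the hypothesis $a_{i-1}\ge b_i$ (strict if $c=1$) a generic $\cA$ satisfies $\depth_{I_{t-1}(\cA)A}A\ge 1$, so Theorem~\ref{Amodulecor1}(iii) applies and identifies the right-hand side with $\lambda_c+K_3+\cdots+K_c$. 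This proves~(i); the $r$-independence of the bound reflects the fact that $MI$ depends only on $\cA$.

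For~(ii) I use Lemma~\ref{tr}, which yields $W(\underline{b};\underline{a};r)=W(-a_{t+c-1},\ldots,-a_1;-b_t,\ldots,-b_1;c+r-1)$; the transposed matrix has ``$c$''-parameter $c'=2-c\ge 1$ and ``$r$''-parameter $r'=c+r-1$, and the hypothesis $a_{i-r}\ge b_i$ corresponds, via the reindexing $a'_i=-b_{t+1-i}$, $b'_j=-a_{t+c-j}$ from the proof of Lemma~\ref{tr}, to $a'_{i-1}\ge b'_i$. Applying~(i) to this transposed data gives $\dim W(\underline{b};\underline{a};r)\le \lambda_c+K'_3+\cdots+K'_r$, using $\lambda(\cD^{tr})=\lambda_c$ and the identification of the transposed $K$-numbers with the $K'_i$ of~\eqref{lamdaprime}. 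Part~(iii) then follows immediately: the case $r=1$ in~(i) and the case $r=2-c$ in~(ii) (after transposition) are precisely the standard determinantal settings, where equality is provided by Theorem~\ref{std_det_case}(ii)--(iii).

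For the ``in particular'' vanishings a term-by-term combinatorial check suffices. Under $a_{t+c-1}<\ell_2$, the identity $\ell_i=\ell_2+\sum_{k=2}^{i-1}a_{t+k}$ together with $a_{i_k}\le a_{t+i-2}$ and $b_{j_k}\le b_t\le a_t\le a_{t+c-1}$ shows that in every summand of $K_i$ the quantity $h_{i-3}+\sum_k a_{i_k}+\sum_k b_{j_k}$ is strictly less than $n$, so every binomial vanishes by our convention and $K_i=0$; the analogous argument for $K'_i=0$ under $-b_1<\ell'_2$ is symmetric by transposition. The main subtle point in the whole argument is justifying the inclusion $T_{(A_r)}W(\underline{b};\underline{a};r)\subseteq \im(\psi)$: it rests on the tautological description of $W(\underline{b};\underline{a};r)$ as the locus of schemes produced by matrices satisfying the prescribed degree conditions, ensuring that any first-order deformation remaining inside $W$ must come from deforming $\cA$.
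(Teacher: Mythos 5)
Your reduction of (i) to $\dim W(\underline{b};\underline{a};r)\le\dim\,_0\!\Ext^1_R(MI,MI)$ followed by Theorem~\ref{Amodulecor1}(iii) is indeed the paper's route; the paper gets the inequality more cleanly by observing that the dominant rational map from $\HH om(F,G)$ onto $W(\underline{b};\underline{a};r)$ factors (via Fitting's lemma) through the local moduli of $MI$, which is smooth of dimension $\dim\,_0\!\Ext^1_R(MI,MI)$ since $MI$ is unobstructed, so no tangent-space inclusion at a general point needs to be argued. Your formulation $T_{(A_r)}W\subseteq\im(\psi)$ is true but requires an explicit appeal to generic smoothness in characteristic $0$; and you should note that Theorem~\ref{Amodulecor1}(iii) is being applied to the \emph{standard} determinantal ring $A_1=R/I_t(\cA)$ (with the $c=1$ case needing depth $\ge 2$, cf.~\eqref{thm31c1}), not to $A_r$. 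The real gap, however, is your derivation of $K_i=0$ from $a_{t+c-1}<\ell_2$: the term-by-term bounds $a_{i_k}\le a_{t+i-2}$ and $b_{j_k}\le b_t\le a_t\le a_{t+c-1}$ are too crude. Take $t=5$, $c=5$, $b_i=0$, $\underline{a}=(1,1,1,1,1,1,1,5,5)$, so $\ell_2=6>5=a_{t+c-1}$; in the $(p,q)=(2,0)$ summand of $K_5$ one has $h_2=n-7$, and your bound yields $h_2+a_{i_1}+a_{i_2}\le n-7+2a_8=n+3$, which does \emph{not} exhibit vanishing, whereas the true maximum $a_{i_1}+a_{i_2}=a_7+a_8=6$ (since $i_1<i_2\le 8$) gives $n-1<n$. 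A correct combinatorial argument needs the sorted estimate $a_{i_k}\le a_{t+i-1-p+k-1}$ and then a matching of the $1+p+q=i-2$ left-hand terms against the $i-2$ terms $a_{t+2},\ldots,a_{t+i-1}$, finishing with $\ell_2>a_{t+i-1}$. The paper sidesteps this entirely by identifying $K_c$ with $\,_0\!\hom(\coker\varphi_{t+c-2},R(a_{t+c-1}))$ and reading the vanishing off the degrees in the Buchsbaum-Rim resolution.

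There is also a gap in (iii): you cite Theorem~\ref{std_det_case}(ii)--(iii), which is stated only for $c\ge 2$, so the case $(r,c)=(1,1)$ with $n\ge 3$ (not excluded by the hypothesis, which only removes $(1,1,2)$) is left uncovered. The paper appeals to Lemma~\ref{tr} together with Theorem~\ref{Amodulethm5}, whose part (i) handles $c=1$ under $\depth_{I_{t-1}(\cA)A}A\ge 3$ — a condition a general matrix satisfies exactly when $n\ge 3$, which is precisely why $(1,1,2)$ is excluded.
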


 \begin{remark} \rm Since the inequality $a_{t+c-1}< \ell_2$ in (i) is always
   satisfied for $c \in \{1,2\}$ the second displayed formula holds for
   $c \in \{1,2\}$ in (i), and similarly for $c=0$ in (ii). The assumptions on
   ${a_j}$ and ${b_i}$ are at least needed in (iii). Moreover, since in this
   paper we almost always work with determinantal schemes
   $X\in W(\underline{b};\underline{a};r)$ of dimension at least one and
   $r(c+r-1)$ is the codimension of $X$ in $\PP^n$, the assumption
   $n \ge r(c+r-1)+1$ in Theorem~\ref{ineqdimW} is mostly a reminder of
   the settings in which we work.
\end{remark}

\begin{proof} (i) To prove the first inequality we observe that the morphism
  $p_{1W} \cdot q$ which maps, say a sufficiently general
  $\varphi \in \HH om(F,G)$, onto
  $(\Proj(A)) \in \ W(\underline{b};\underline{a};r) $, $A=R/I_{t+1-r}(\cA)$,
  factors via $MI:=\coker (\varphi ^*)$ by the Fitting's lemma
  (\cite[Corollary-Definition 20.4]{eise}). Thus
  $p_{1W} \cdot q$ factors through the moduli space of such $MI$, a moduli
  space which at least exits at a very local level, say at this $MI$. Since
  $MI$ is unobstructed by \cite[Theorem 3.1]{K2014}, the dimension of the
  moduli space at $(MI)$ is equal to the dimension of its tangent space \
  $ _0\!\Ext^1_R(MI,MI)$. This argument implies
  $\dim W(\underline{b};\underline{a};r) \le \ _0\!\ext^1_R(MI,MI)$, and since
  we have $$ \ _0\!\ext^1_R(MI,MI)= \lambda_c+K_3+K_4+\cdots +K_c$$ by
  \cite[Theorem 3.1]{K2014}, we get the first inequality. Note that the part
  of the proof of \cite[Theorem 3.1]{K2014} that concerns the unobstructedness
  of $MI$ and the dimension of $ \ _0\!\Ext^1_R(MI,MI)$ hold for $c=1$ as well
  (with $E=0$ there, whence all $K_i=0$), cf. \eqref{thm31c1}.

  In \cite[Remark 3.4]{KM2005} we noticed that all the binomials in $K_i$
  vanish under the numerical assumption
  $\ell_c >2a_{t+c-1}+a_{t+c-2}+\cdots +a_{t+2}$ (when $c > 3$, cf. the
  published version). This assumption is
  equivalently to $a_{t+c-1}< \ell_2$. Indeed one may show that $K_i=0$ using the
  minimal free resolution of $\varphi _{t+c-2}^*$ given by the Buchsbaum-Rim
  complex (see (\ref{BR}))
 \begin{equation}\label{xxx}
 \wedge ^{t+1}G^*_{t+c-2}\otimes \wedge ^tF\longrightarrow
 G^*_{t+c-2}\longrightarrow F^*\longrightarrow\coker (\varphi
 _{t+c-2}^*)\longrightarrow 0\ ,
 \end{equation}
 and recalling that $K_c=\ _0\!\hom (B_{c-1},R(a_{t+c-1}))$ where
 $B_{c-1}=\coker (\varphi _{t+c-2})$, cf. \cite[(3.13)]{KM2005}. Then we get
 that $a_{t+c-1}< \ell_2$ implies $K_c=0$, also when $c=3$ (and all $K_i=0$) because
 $$_0\!\Hom ( \wedge ^{t+1}G^*_{t+c-2}\otimes \wedge ^tF, R(a_{t+c-1}))=0$$ by
 only considering the degree of the generators of this $\Hom$-group. This
 completes the proof when $c \ge 1$.

 (ii) If $c < 1$ and we look at the locus of $(t-r+1) \times (t-r+1)$ minors
 of the $(t+c-1) \times t$ transposed matrix $\cA^{tr}$ (or rather its
 equivalent matrix $\cA'$ to which \eqref{trrevdeg} belongs), we get a
 $t' \times (t'+(2-c)-1)$ matrix with $2-c > 1$ and we have
 $\dim W(\underline{b'};\underline{a'};r) \le \dim
 W(\underline{b'};\underline{a'};2-c)$
 by \eqref{trrevdeg} and the proven part of the theorem. Then we get the
 theorem from Lemma~\ref{tr}, noticing that $-b_1 < \ell'_2$ implies every
 $K'_i=0$ by the proven part and that $r=2$ implies $-b_1 < \ell'_2$.

 (iii) It follows from  Lemma~\ref{tr} and Theorem~\ref{std_det_case}.
\end{proof}

\begin{remark} \label{differentproofs} \rm (Alternative proof of
  Theorem~\ref{ineqdimW}(i))

The map $q$ in diagram \eqref{flag} factors through
  the orbit set $\HH om(F,G)//G$ since $G:=Aut(F) \times Aut(G)$ acts on
  $\HH om(F,G)$ in a natural way such that every element of an orbit maps to
  the same determinantal scheme, cf. \cite{KMMNP}, p. 97. Indeed the proof
  there works also for $ W(\underline{b},\underline{a};r)$ only replacing the
  Eagon-Northcott complex by the Lascoux resolution in the argument. Thus we
  obtain a dominating rational morphism
  $\HH om(F,G)//G \to W(\underline{b},\underline{a};r)$ and
  $\dim \HH om(F,G)//G $ is an upper bound for
  $\dim W(\underline{b},\underline{a};r)$. In \cite{KMMNP}, pp.\! 97-98 we
  compute $\dim G$ (see Propositions 10.2 and  10.3) and get
 $$ W(\underline{b},\underline{a};r) \le \lambda_c -1  +\  _0\!\hom(B_c,B_c)$$
 where $B_{c}=\coker (\varphi)$. Since we get
 $ _0\!\hom(B_c,B_c)= 1+K_3+K_4+\cdots +K_c$ by \cite[Proposition
 3.12]{KM2005} and Remark \ref{aug2023}(3), we have proved the first dimension
 formula.
\end{remark}

Note that we have equalities in Theorem~\ref{ineqdimW} if $r=1$ and $c >1$, e.g.
$\dim W(\underline{b};\underline{a})=\lambda_c $ provided $a_{t+c-1}< \ell_2$.
One of the main goals in this work is to generalize this formula and to show
$\dim W(\underline{b};\underline{a};r)=\lambda_c$ for any $r$ under some 
numerical assumptions. We will also analyze when the inequality
$\dim W(\underline{b};\underline{a};r)\le \lambda_c+K_3+\cdots +K_c$ (resp.
$\dim W(\underline{b};\underline{a};r)\le \lambda_c+K'_3+\cdots +K'_r$) turns
out to be an equality up to a correction term. To this end, we define
$$s_r:=\sum _{i=1}^{t-r+1}a_i-\sum
_{i=1}^{t-r}b_{r+i}\ ,$$
and note that $s_0=\ell_2$. Thus
$\dim W(\underline{b};\underline{a};r)=\lambda_c$ for $r=1$ and $c >1$ when
$a_{t+c-1}<s_0$ because
\begin{equation} \label{Kis0} K_i= 0 \quad {\rm for \ all} \quad 3 \le i \le c
  \quad {\rm provided} \quad a_{t+c-1}< s_0:=\ell_2
\end{equation}
by the proof of Theorem~\ref{ineqdimW}. In what follows we will try to prove that
$\dim W(\underline{b};\underline{a};r)=\lambda_c$ provided
$a_{t+c-1}<s_r-b_r+b_1$ which for $r=1$ is just $a_{t+c-1}<s_1$. This is a
slightly stronger assumption that implies that all $K_i=0$ because, in general, we
have $b_j \le a_j$ for $1 \le j \le t$, $b_{j_0} < a_{j_0}$ for some
$j_0$ by \eqref{paperassump1} which implies $ s_1 < s_0 $, as well as
\begin{equation} \label{Ki0}
s_r-b_r+b_1 < s_{r-1}-b_{r-1}+b_1 \le s_1 = s_0-a_{t+1}+b_1 \ \ {\rm for }\ 2 \le r \le t\ .
\end{equation}
Finally note that if we have $a_{t+c-1} \ge b_{r-1}$ (a very weak assumption),
we get that $a_{t+c-1} < s_r-b_r+b_1$ implies that all $K'_i=0$ by
Theorem~\ref{ineqdimW} because
$-b_1 < \ell'_2 = \sum_{j=1}^{t-r+1}a_j-\sum_{k=r-1}^tb_k=s_r-b_r-b_{r-1}$.

Now we consider the diagram of infinitesimal deformations that corresponds to
diagram (\ref{flag}) at a given point
$(X\subset Y)\in
W_{(\underline{b};\underline{a};r)}^{(\underline{b};\underline{a'};r)}$:
\begin{equation}\label{flagdef}
\begin{array}{cccccc}
A^1_{(B\rightarrow A)} &  \stackrel{pr_2}{\longrightarrow}& _0\!\Hom _R(I_B ,B) \\
 \downarrow pr_1 & & \downarrow  \\
_0\!\Hom _R(I_A ,A) & \longrightarrow & _0\!\Hom _R(I_B ,A)
\end{array} ,
\end{equation}
where $X=\Proj(A)$ and $Y=\Proj(B)$. Here the tangent space of the fiber,
$p_1^{-1}((X))$, of
$$p_1: \Hilb ^{p_X(t),p_Y(t)}(\PP^{n})\longrightarrow \Hilb
^{p_X(t)}(\PP^{n})$$
at $(X\subset Y)$ corresponds to the kernel, $_0\!\Hom _R(I_B ,I_{A/B})$, of
$pr_1$ 
and the fiber $p_2^{-1}((Y))$ of
$$p_2: \Hilb ^{p_X(t),p_Y(t)}(\PP^{n})\longrightarrow \Hilb ^{p_Y(t)}(\PP^{n})
$$
to the kernel $_0\!\Hom
_R(I_{A/B},A)\cong (MI\otimes A)_{a_{t+c-1}}$ of
$pr_2$ by Proposition \ref{vanish}(iii). 

Let us compute the dimensions of these tangent spaces to fibers under
appropriate assumptions, and we start by finding
$\dim (MI\otimes A)_{a_{t+c-1}}$.

\begin{lemma}\label{dimMI} Assume $r\ge 2$ and $\depth _JA\ge 2$ where
  $J:=I_{t-r}(\varphi _{t+c-2}^*)$.
\begin{itemize}
\item[\rm (i)] If $c\ge 2-r$ and $a_{t+c-1}+v< s_r-b_r+b_1$, then
   $$\dim (MI\otimes A)_{a_{t+c-1}+v} = \dim (MI)_{a_{t+c-1}+v} \ .$$
\item[\rm (ii)] Assume  $c > 2-r$. If $v$ is any
   integer such that $(MI\otimes I_{A/B})_{a_{t+c-1}+v}=0$ (e.g. $v < s_r-b_r-a_{t-r+1}+b_1$), then we have:
 $$\dim (MI\otimes A)_{a_{t+c-1}+v}=\dim (N\otimes B)_{a_{t+c-1}+v}-\dim
 B_v.$$
 \end{itemize}
 Moreover in the case $v < s_r-b_r-a_{t-r+1}+b_1$, we also have
 $\dim B_v = \dim R_v.$
 \end{lemma}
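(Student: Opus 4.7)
The strategy is to reduce both parts to elementary minimum-degree computations on minors, using for (ii) the structural exact sequence of Theorem~\ref{mainthm}. For (i), I would tensor $0\to I_A\to R\to A\to 0$ with $MI$ over $R$ to get the right-exact sequence $I_A\cdot MI\to MI\to MI\otimes A\to 0$, reducing the claim to showing $(I_A\cdot MI)_{a_{t+c-1}+v}=0$. The smallest-degree generator of $I_A=I_{t-r+1}(\varphi^*)$ is the minor on columns $1,\ldots,t-r+1$ (smallest $a_j$'s) and rows $r,\ldots,t$ (largest $b_i$'s), of degree $\sum_{k=1}^{t-r+1}a_k-\sum_{i=r}^t b_i=s_r-b_r$, while $MI$ is generated in minimum degree $b_1$. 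Hence $I_A\cdot MI$ lives in degrees $\ge s_r-b_r+b_1$, and the hypothesis $a_{t+c-1}+v<s_r-b_r+b_1$ finishes (i); the depth hypothesis plays no role here.

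For (ii), I would first invoke Theorem~\ref{mainthm}, available because $\depth_J A\ge 2$ together with the regular-section structure of $A$ over $B$ forces $\depth_J B\ge 2$ (the same depth bookkeeping as in Proposition~\ref{vanish}), to produce
\[ 0\to B(-a_{t+c-1})\to N\otimes B\to MI\otimes B\to 0, \]
whose degree-$(a_{t+c-1}+v)$ piece yields $\dim(MI\otimes B)_{a_{t+c-1}+v}=\dim(N\otimes B)_{a_{t+c-1}+v}-\dim B_v$. To replace $B$ by $A$ on the left, I would tensor $0\to I_{A/B}\to B\to A\to 0$ (as $B$-modules) with the $B$-module $MI\otimes_R B$, using the identification $(MI\otimes_R B)\otimes_B L\cong MI\otimes_R L$ to produce
\[ MI\otimes_R I_{A/B}\to MI\otimes_R B\twoheadrightarrow MI\otimes_R A\to 0, \]
so that the hypothesis $(MI\otimes I_{A/B})_{a_{t+c-1}+v}=0$ makes $(MI\otimes B)_{a_{t+c-1}+v}=(MI\otimes A)_{a_{t+c-1}+v}$, completing the formula. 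For the explicit bound, the minimum-degree generator of $I_{A/B}$ is the $(t-r+1)$-minor of $\cA$ using columns $1,\ldots,t-r,t+c-1$ and rows $r,\ldots,t$, of degree $s_r-b_r-a_{t-r+1}+a_{t+c-1}$; hence $MI\otimes I_{A/B}$ starts in degree $b_1+s_r-b_r-a_{t-r+1}+a_{t+c-1}$, giving the stated example.

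For the concluding claim, $I_B=I_{t-r+1}(\varphi^*_{t+c-2})$ is generated by $(t-r+1)$-minors of $\cB$ whose smallest degree is again $s_r-b_r$ by the same row/column choice as in (i) (the deleted last column is not needed for the minimizer). Using $b_1\le a_1\le a_{t-r+1}$, the hypothesis $v<s_r-b_r-a_{t-r+1}+b_1$ then forces $v<s_r-b_r$, so $(I_B)_v=0$ and $\dim B_v=\dim R_v$. The main delicacy is keeping the tensor products over $R$ and over $B$ straight in (ii) and verifying that the depth hypothesis actually permits us to invoke Theorem~\ref{mainthm}; the essential content is otherwise purely the combinatorics of minor degrees.
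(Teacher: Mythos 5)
Your proof is correct and follows essentially the same route as the paper's: (i) reduces to the observation that $I_A\cdot MI$ (equivalently $F^*\otimes I_A$) vanishes in the relevant degree because the smallest generator of $I_A$ has degree $s_r-b_r$ and $MI$ starts in degree $b_1$, while (ii) combines the exact sequence of Theorem~\ref{mainthm} with the observation that the generators of $I_{A/B}$ begin in degree $s_r-b_r-a_{t-r+1}+a_{t+c-1}$. The only cosmetic difference is that the paper reads off the generator degrees from the wedge-power presentations of $B$ and $A$ rather than by directly exhibiting the extremal minor, and it treats the applicability of Theorem~\ref{mainthm} as understood rather than spelling out the step from $\depth_J A\ge 2$ to $\depth_J B\ge 2$.
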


 \begin{proof} (i) Let $v'= a_{t+c-1}+v$. Since  the smallest degree of a
generator of $I_A:=\ker (R\twoheadrightarrow A)$ is
$s_r-b_r$ (cf. \eqref{digclassic} below) and since $b_1 \le a_1$ it follows that
$$(G^*\otimes I_A)(v')_0=(F^*\otimes I_A)(v')_0=0$$ by the assumption
$v'<s_r-b_r +b_1$. Considering the diagram
$$
\begin{array}{cccccccccc}
 &  &  G^* & \longrightarrow & F^* & \longrightarrow & MI & \longrightarrow &  0 \\
 &  & \downarrow & & \downarrow \\
  & &  G^* \otimes A & \longrightarrow & F^* \otimes A & \longrightarrow & MI\otimes A & \longrightarrow &  0 \\
\end{array}
$$
in degree $v'$ we easily conclude the proof of (i).

 (ii) Again set $v'= a_{t+c-1}+v$. We recall the exact sequence  \eqref{section}:
\begin{equation*}
0  \longrightarrow  B(-a_{t+c-1})  \longrightarrow  N\otimes B
\longrightarrow  MI\otimes B  \longrightarrow  0 \ .
\end{equation*}
Since  by assumption we have $(MI\otimes I_{A/B})_{v'}=0$ which implies
$(MI\otimes B)_{v'} \cong (MI\otimes A)_{v'}$, we get
$$\dim (MI\otimes A)_{v'}=\dim (N\otimes B)_{v'}-\dim B_v.$$ Moreover
$F^* \longrightarrow MI$ is surjective and we get $(MI\otimes I_{A/B})_{v'}=0$ if we can
show $(F^* \otimes I_{A/B})_{v'}=0$. To show it we consider the
following diagram
\begin{equation}\label{digclassic}
\begin{array}{cccccccc}
\wedge ^{t-r+1}G_{t+c-2}^*\otimes \wedge ^{t-r+1}F & \longrightarrow&  R & \longrightarrow &  B & \longrightarrow & 0\\
\downarrow &  & \| & & \downarrow  \\
\wedge ^{t-r+1}G^*\otimes \wedge ^{t-r+1}F & \longrightarrow & R & \longrightarrow &  A & \longrightarrow & 0\\
\end{array}
\end{equation}
of exact horizontal sequences.
The summand of $\wedge ^{t-r+1}G^*\otimes \wedge ^{t-r+1}F$ of the smallest
possible degree is $\sum _{i=1}^{t-r+1}a_{i}-\sum _{i=0}^{t-r}b_{i+r}=s_r-b_r$
and correspondingly for $\wedge ^{t-r+1}G_{t+c-2}^*\otimes \wedge ^{t-r+1}F$
while the smallest degree of a generator of $I_{A/B}$ is
$s_r-b_r-a_{t-r+1}+a_{t+c-1}$ because it must involve $a_{t+c-1}$. Hence we get $(F^*\otimes I_{A/B})(a_{t+c-1}+v)_0=0$
because $v<s_r-b_r-a_{t-r+1}+b_1$.
\end{proof}

\begin{remark} \rm (1) In the linear case  (i.e. $b_i=0$ and $a_j=1$ for all
  $i,j$) we have $s_r=t-r+1$, and Lemma \ref{dimMI} (i) and (ii) applies for $v<t-r$.

  (2) By \cite[ pgs 162-166]{b-v} we have an exact sequence:
\begin{equation*} 
\wedge ^{t-r+1}G^*\otimes \wedge ^{t-r}F\otimes A  \longrightarrow  G^*\otimes
A  \longrightarrow F^*\otimes A  \longrightarrow  MI\otimes A  \longrightarrow
0 \ .
\end{equation*}
Since the smallest degree of a summand of
$\wedge ^{t-r+1}G^*\otimes \wedge ^{t-r}F$ is $s_r$, we get the injectivity of
the morphism $G^*\otimes A \to F^* \otimes A$ in degree
$a_{t+c-1}+v$ only assuming $v<s_r-a_{t+c-1}$. For such $v$ we get 
$$\dim (MI\otimes A)_{a_{t+c-1}+v}=\dim (F^*\otimes A)_{a_{t+c-1}+v}-\dim
(G^*\otimes A)_{a_{t+c-1}+v}$$
which may be used to improve upon Lemma~\ref{dimMI} in the case $b_r >
b_1$.
\end{remark}

The problem of finding the dimension of the other fiber will mainly be
considered in Section 8. There we successively delete $c+r-2$ columns from the
right-hand side of the $t \times (t+c-1)$ homogeneous matrix $\cA$, and taking
the
$(t-r+1)\times (t-r+1)$ minors, we get a flag of determinantal rings:
$$A_{2-r}\twoheadrightarrow \cdots \twoheadrightarrow A_0\twoheadrightarrow
A_1\twoheadrightarrow A_2\twoheadrightarrow A_3\twoheadrightarrow \cdots
\twoheadrightarrow A_c =A, \quad X_{i}=\Proj(A_{i})$$
where e.g. $A_1$ (resp. $A_{2-r}$) is Gorenstein (resp. standard
determinantal) defined by the $(t-r+1)\times (t-r+1)$ minors of a $t\times t$
(resp. $t\times (t-r+1)$) matrix. Let $I_{i}=I_{A_{i+1}/A_{i}}$ be the ideal
defining $A_{i+1}$ in $A_{i}$ and let
$I_{A_{i}}=I_{t-r+1}(\varphi ^*_{t+i-1})$. We will prove that
$\dim W(\underline{b};\underline{a};r)=\lambda_c $ provided
$\dim W(\underline{b};\underline{a'};r)=\lambda _{c-1}$ and to check it we
will need
$$_0\! \hom_R(I_{A_{c-1}},I_{c-1})=\sum _{j=1}^{t+c-2} {a_j-a_{t+c-1}+n\choose
  n}.$$
In Section 8, we will show the last equality under the numerical
restrictions given in Corollary~\ref{fiberpr2}. In particular, we prove there

\begin{corollary} \label{prefiberpr2} Let $\cA $ be a general homogeneous $t\times (t+c-1)$ matrix with entries homogeneous forms of degree $a_j-b_i$. Let $c \ge 3-r$
  and suppose $a_1>b_t$, $r \ge 2$ and $\dim A \ge 2$ if $c>0$ and $\dim A \ge 3$ if $c \le 0$. If
  $b_r-b_1 < \sum _{i=1}^{t-r}(a_i-b_{r+i})+a_{t+c-1}- a_{t+c-2}$ and
  $a_{t-r+1} < a_{t+c-1} -\sum _{i=1}^{t-r+1}(b_{r+i-1}-b_{i})$, then
$$ _0\! \hom_R(I_{A_{c-1}},I_{c-1})=\sum
_{j=1}^{t+c-2}{a_j-a_{t+c-1}+n\choose n} \, .$$
Note that all the above inequalities (for $a_j, b_i$) hold if
 $b_t = b_1 < a_1$ and $a_{t-r+1} < a_{t+c-1}$ (and $r < t$).
\end{corollary}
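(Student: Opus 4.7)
\emph{Approach and setup.}
With the flag notation $B=A_{c-1}$ and $A=A_c$, the statement amounts to computing $\,{}_0\!\Hom_R(I_B,I_{A/B})$, where $I_B=I_{t-r+1}(\cB)$ is generated by the $(t-r+1)\times(t-r+1)$ minors of the deleted matrix $\cB$ and $I_{A/B}=I_A/I_B$ is generated by the $(t-r+1)\times(t-r+1)$ minors of $\cA$ that involve the last column. My plan is to prove the equality by sandwiching the dimension between a lower bound coming from explicit natural maps and a matching upper bound extracted from the Lascoux resolution of $I_B$, then to deduce the ``in particular'' case by substitution.

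\emph{Lower bound via column swaps.}
For each index $j\in\{1,\dots,t+c-2\}$ and each form $f\in R_{a_j-a_{t+c-1}}$, let $\cA^{(j,f)}$ be the $t\times(t+c-2)$ matrix obtained from $\cB$ by replacing its $j$-th column with $f$ times the $(t+c-1)$-th column of $\cA$. Sending a $(t-r+1)$-minor $\det(\cB[I,J])$ to the corresponding minor of $\cA^{(j,f)}$ in the same rows/columns, and projecting (killing those minors in which column $j$ is not used, which remain in $I_B$), defines a degree-zero $R$-linear map $\phi_{j,f}:I_B\to I_{A/B}$. Compatibility with the Pl\"ucker/Laplace syzygies among minors follows from multilinearity of determinants. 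Evaluating $\phi_{j,f}$ on a minor that does involve column $j$ recovers $f$ up to sign, so the $\phi_{j,f}$ are linearly independent; summing over $j$ and a basis of $R_{a_j-a_{t+c-1}}$ gives
\[
\dim\,{}_0\!\Hom_R(I_B,I_{A/B})\ \ge\ \sum_{j=1}^{t+c-2}\binom{a_j-a_{t+c-1}+n}{n}.
\]

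\emph{Upper bound via the Lascoux resolution.}
Apply $\Hom_R(-,I_{A/B})$ to the start $L_2\xrightarrow{\delta_2}L_1\to I_B\to 0$ of Lascoux's minimal free resolution. Then $\,{}_0\!\Hom_R(I_B,I_{A/B})$ equals $\ker\bigl((\delta_2)_0^*\bigr)\subset\bigoplus_{(I,J)}(I_{A/B})_{a_J-b_I}$. The hypothesis $a_{t-r+1}<a_{t+c-1}-\sum_{i=1}^{t-r+1}(b_{r+i-1}-b_i)$ is chosen so that the minimum degree of a generator of $I_{A/B}$, namely $\sum_{i=1}^{t-r}a_i+a_{t+c-1}-\sum_{i=r}^{t}b_i$, strictly exceeds the degrees of exactly those generators of $L_1$ that could contribute to the kernel outside the span of the $\phi_{j,f}$; this forces the corresponding summands of $\,{}_0\!\Hom_R(L_1,I_{A/B})$ to vanish. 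Symmetrically, the hypothesis $b_r-b_1<\sum_{i=1}^{t-r}(a_i-b_{r+i})+a_{t+c-1}-a_{t+c-2}$ ensures that the Pl\"ucker-type syzygies making up $L_2$ in the relevant degrees are too heavy to land inside $I_{A/B}$, so $(\delta_2)_0^*$ imposes no additional relations among the $\phi_{j,f}$ beyond those already accounted for. Combined with the lower bound above, this yields the asserted equality.

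\emph{The ``in particular'' claim and the main obstacle.}
If $b_t=b_1<a_1$, the ordering $b_1\le\cdots\le b_t$ collapses all $b_i$ to $b_1$, so $b_r-b_1=0$ and $b_{r+i-1}-b_i=0$. The first inequality then reads $0<\sum_{i=1}^{t-r}(a_i-b_1)+(a_{t+c-1}-a_{t+c-2})$, which holds since $a_1>b_1$ and $r<t$ guarantee at least one strictly positive summand, while the second reduces to the hypothesized $a_{t-r+1}<a_{t+c-1}$. The main obstacle in this program is the precise bookkeeping of degrees in $L_1$ and $L_2$ from the Lascoux resolution: one must identify which Schur functors contribute to $L_2$ and verify, under the two numerical hypotheses, that their twists into $I_{A/B}$ vanish except for precisely the constraints matching the span of the $\phi_{j,f}$. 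A safer path may be to first treat the linear case ($b_i=0$, $a_j=1$), where the Gulliksen--Negård / Akin--Buchsbaum--Weyman resolutions make the syzygy degrees transparent, and then bootstrap to the weighted case by degree comparisons analogous to those in Remark~\ref{improvement}(2) and the proof of Theorem~\ref{std_det_case1}(ii).
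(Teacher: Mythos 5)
Your proposal takes a genuinely different route from the paper, but the key step does not go through as written.

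The paper does not compute $\,{}_0\!\Hom_R(I_B,I_{A/B})$ by pairing explicit maps against the Lascoux resolution of $I_B$. Instead it inducts along the flag $A_{2-r}\twoheadrightarrow\cdots\twoheadrightarrow A_{c-1}=B$ built by deleting columns. Proposition~\ref{fibdim2} uses local-cohomology and depth arguments (via the singular loci $V(J_jA_j)$ and the exact sequence $0\to I_{c-1}\to A_{c-1}\to A\to 0$) to show that $\Hom_{A_j}(I_j,I_{c-1})$ is of codepth $r-1$ and, crucially, that $\Ext^1_{A_{j+1}}(I_j/I_j^2,I_{c-1})=0$ for $3-r\le j\le c-2$. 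This Ext-vanishing, through a Jacobi--Zariski sequence, produces the short exact sequences \eqref{aux*}; summing their Euler characteristics step by step gives the additive decomposition in Corollary~\ref{fiberpr2}(i), reducing everything to $\,{}_0\!\hom_R(I_{A_{2-r}},I_{c-1})$. Only at that base case does a degree comparison enter: the second numerical hypothesis in the statement is precisely $mdg(I_{A_{2-r}})<s(I_{c-1})$, which kills $\,{}_0\!\hom_R(I_{A_{2-r}},I_{c-1})$ by inspection of the Eagon--Northcott complex. The first hypothesis is what makes the degree window of Proposition~\ref{fibdim2}'s last displayed isomorphism contain $\mu=0$. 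This is an approach that works over the rings $A_j$, not over $R$, and leverages the Cohen--Macaulayness and depth properties of determinantal loci rather than the combinatorics of the Lascoux resolution.

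There is a genuine gap in your upper bound. You claim that applying $\Hom_R(-,I_{A/B})$ to $L_2\to L_1\to I_B\to 0$ together with the two degree hypotheses bounds $\dim\ker\bigl((\delta_2)^*_0\bigr)$ from above by $\sum_j\binom{a_j-a_{t+c-1}+n}{n}$, but you do not explain how. Since the target dimension is typically nonzero (it is $t+c-2$ in the linear case), the degree comparisons cannot simply kill $\,{}_0\!\Hom_R(L_1,I_{A/B})$; one would have to identify which summands survive and then prove that $(\delta_2)^*_0$ is injective on a complement of the span of your $\phi_{j,f}$, which is a delicate claim about the Lascoux syzygies that neither of your hypotheses addresses directly. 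Indeed the hypotheses match exactly the two inputs to the paper's stepwise induction (the base-case degree bound and the depth-compatible degree window); they do not match what a single-shot Lascoux computation would need. Your lower bound also needs more care: well-definedness of $\phi_{j,f}$ on $I_B$ (compatibility with Ma's first syzygies) and linear independence of the resulting elements of $\,{}_0\!\Hom_R(I_B,I_{A/B})$ are both asserted rather than proven, though both are plausible and could presumably be handled via Lemma~\ref{lemma2}/Fitting. Your ``in particular'' reduction is fine. The honest admission in your last paragraph that the degree bookkeeping is the main obstacle is accurate; the paper avoids that obstacle by never dealing with the full Lascoux resolution of $B$ at once.
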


 Let us come back to the diagram of infinitesimal deformations at a point $(X\subset Y)\in W_{(\underline{b};\underline{a};r)}^{(\underline{b};\underline{a'};r)}$:
$$
\begin{array}{cccccc}
A^1_{(B\rightarrow A)} &  \stackrel{pr_2}{\longrightarrow}& _0\!\Hom_R(I_B ,B) \\
 \downarrow pr_1 & & \downarrow  \\
_0\!\Hom_R(I_A ,A) & \longrightarrow & _0\!\Hom_R(I_B ,A)
\end{array}
$$
where $X=\Proj(A)$ and $Y=\Proj(B)$. By Proposition \ref{everydef}, if
$\depth _{J_B}A\ge 3$, and $\depth _{J_A}A\ge 4$ for $c \le 0$, then
$_0\!\, {\rm H}^2(B,A,A)\cong \ _0\! \Ext^1(I_{A/B}/I_{A/B}^2,A)=0$ which
implies that
$$pr_2: A^1_{(B\rightarrow A)} \longrightarrow\ _0\!\Hom_R(I_B ,B)$$ is
surjective and the corresponding projection
$$p_2: \Hilb ^{p_X(t),p_Y(t)}(\PP^{n})\longrightarrow \Hilb ^{p_Y(t)}(\PP^{n})$$ is smooth at
$(X\subset Y)$. Since  in our next proposition we need the property ``every
deformation of $B$ comes from deforming its matrix'' at  several places in the
proof, we rather use Proposition \ref{thmext3}, which then
implies that
$p_2$ 
is smooth at $(X\subset
Y)$ only assuming $\depth _{J_B}A\ge 2$ with $\depth _{J_A}A\ge
3$ if $c \le 0$. The map called
$$p_{2W}:
W_{(\underline{b};\underline{a};r)}^{(\underline{b};\underline{a'};r)} \longrightarrow
W(\underline{b};\underline{a'};r)$$
in diagram (\ref{flag}) is essentially the pullback of $p_2$ to
$ W(\underline{b};\underline{a'};r) \subset \Hilb ^{p_Y(t)}(\PP^n)$ and is
therefore smooth at $(X\subset
Y)$.
Invoking also Remark~\ref{remthm42} we get

\begin{proposition}\label{propo8}
  Let $c
  \ge 3-r$, let $B \twoheadrightarrow
  A$ be determinantal algebras defined by $(t+1-r)$-minors of matrices
  $\cA$
  and $\cB$
  representing $\varphi
  ^*$ and $\varphi^*_{t+c-2}$, respectively, and suppose $\depth _{J_B}B\ge
  r+2$, $J_B:=I_{t-r}(\cB)$, $J_A:=I_{t-r}(\cA) \ne
  R$ and that every deformation of $B$ comes from deforming $\cB$. If $c
  \le0$, we also suppose $\depth _{J_A}A\ge 3$. Letting $W :=
  W_{(\underline{b};\underline{a};r)}^{(\underline{b};\underline{a'};r)}$,
  then the closures $\overline
  W \subset \Hilb ^{p_X(t),p_Y(t)}(\PP^{n})$ and $
  \overline{W(\underline{b};\underline{a'};r)} \subset \Hilb
  ^{p_Y(t)}(\PP^{n})$ are generically smooth irreducible components and the
  restriction $p_{2W}$
  of the $2^{nd}$
  projection $p_2:
  \Hilb ^{p_X(t),p_Y(t)}(\PP^{n})\longrightarrow \Hilb
  ^{p_Y(t)}(\PP^{n})$ to $W$ is smooth at $(X\subset
  Y)$ with fiber of dimension $\dim (MI \otimes
  A)_{a_{t+c-1}}$. Supposing that the matrix $\cA$ is general, we have
  $$\dim W(\underline{b};\underline{a};r) = \dim
  W(\underline{b};\underline{a'};r)+\dim_k(MI \otimes A)_{(a_{t+c-1})}\ -\
  _0\! \hom_R(I_{B},I_{A/B}).$$
\end{proposition}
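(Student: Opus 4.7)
My plan is to assemble three classical ingredients: Proposition~\ref{thmext3} for smoothness of $p_2$ at $(X\subset Y)$, Corollary~\ref{unobstr} for unobstructedness of $B$, and Proposition~\ref{vanish}(iii) for the fiber-tangent identification $\,_0\!\Hom_R(I_{A/B},A)\cong(MI\otimes A)_{a_{t+c-1}}$. First I check that $\depth_{J_B}B\ge r+2$ forces $\depth_{J_B}A\ge 2$ via Remark~\ref{minormax}(2), so Proposition~\ref{thmext3} applies and gives smoothness of $p_2$ at $(X\subset Y)$. The hypothesis that every deformation of $B$ comes from deforming $\cB$ plays a double role: combined with Corollary~\ref{unobstr} it yields unobstructedness of $B$, and it simultaneously forces $\dim\,_0\!\Hom_R(I_B,B)=\dim W(\underline{b};\underline{a'};r)$. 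Together these make $\overline{W(\underline{b};\underline{a'};r)}$ a generically smooth irreducible component of $\Hilb^{p_Y(t)}(\PP^n)$ of dimension $\dim W(\underline{b};\underline{a'};r)$.

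Next I propagate smoothness up the flag. Smoothness of $p_2$ at $(X\subset Y)$ combined with smoothness of its target at $(Y)$ forces $\Hilb^{p_X(t),p_Y(t)}(\PP^n)$ to be smooth at $(X\subset Y)$ of dimension $\dim W(\underline{b};\underline{a'};r)+\dim(MI\otimes A)_{a_{t+c-1}}$, where the extra summand is the fiber-tangent dimension supplied by Proposition~\ref{vanish}(iii). Since $\overline{W}$ is irreducible, contains the smooth point $(X\subset Y)$, and is surjected onto $\overline{W(\underline{b};\underline{a'};r)}$ by $p_{2W}$ (any $Y'\in W(\underline{b};\underline{a'};r)$ with matrix $\cB'$ lifts to an $(X'\subset Y')\in W$ by appending a last column to $\cB'$), a dimension count identifies $\overline{W}$ with the component of $\Hilb^{p_X(t),p_Y(t)}(\PP^n)$ through $(X\subset Y)$. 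This gives the generically smooth component statement for $\overline{W}$ and the smoothness of $p_{2W}$ at $(X\subset Y)$ with fiber dimension $\dim(MI\otimes A)_{a_{t+c-1}}$.

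Finally I extract the dimension formula through $p_{1W}\colon\overline{W}\to\overline{W(\underline{b};\underline{a};r)}$. Generality of $\cA$ implies that no entry of the last column of $\cA$ is a unit, so the lemma right after Lemma~\ref{lemaaux2} yields surjectivity of $p_{1W}$. Since we work in characteristic zero and the source $\overline{W}$ is generically smooth, generic smoothness applied to this dominant morphism of irreducible varieties supplies a dense open subset of $\overline{W}$ over which $p_{1W}$ is smooth; for generic $\cA$ the point $(X\subset Y)$ lies in this locus. At such a smooth point $\overline{W}$ locally agrees with $\Hilb^{p_X(t),p_Y(t)}(\PP^n)$, so from \eqref{flagdef} the tangent space to the fiber of $p_{1W}$ equals the kernel $\,_0\!\Hom_R(I_B,I_{A/B})$ of $pr_1$. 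Smoothness of $p_{1W}$ at $(X\subset Y)$ converts this into an equality of fiber and tangent dimensions, and subtracting from $\dim\overline{W}=\dim W(\underline{b};\underline{a'};r)+\dim(MI\otimes A)_{a_{t+c-1}}$ gives the claimed formula. The hardest point will be precisely this last equality of dimensions: the tangent-space containment alone yields only an upper bound, and without the vanishing $\,_0\!\Ext^1_B(I_B/I_B^2,I_{A/B})=0$ one does not get smoothness of $p_1$ itself; the equality is recovered via generic smoothness in characteristic zero, which is exactly why genericity of $\cA$ enters only in the dimension formula and not in the component statements.
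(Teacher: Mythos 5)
Your proof matches the paper's skeleton closely: Proposition~\ref{thmext3} for smoothness of $p_2$, Corollary~\ref{unobstr} for unobstructedness of $B$, Proposition~\ref{vanish}(iii) for the identification $\,_0\!\Hom_B(I_{A/B},A)\cong(MI\otimes A)_{a_{t+c-1}}$, and generic smoothness in characteristic zero applied to the dominant $p_{1W}$, reading the tangent fiber $\,_0\!\Hom_R(I_B,I_{A/B})$ off diagram~\eqref{flagdef}. That final subtraction is exactly how the paper extracts the dimension formula.

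The gap is in the two component claims. You assert that the deformation-from-matrix hypothesis on $B$ ``simultaneously forces $\dim\,_0\!\Hom_R(I_B,B)=\dim W(\underline{b};\underline{a'};r)$,'' and that a dimension count through the surjection $p_{2W}$ identifies $\overline W$ with a component; neither step is free. What one needs in both cases is that the universal first-order deformation (of $B$, then of the flag $B\twoheadrightarrow A$) is itself determinantal, so that the full tangent space of the relevant Hilbert scheme is swept out by the family --- the paper packages this in Lemma~\ref{unobst1}, whose proof for the flag statement appeals to Remark~\ref{rem0thm42}. Your substitute for the flag claim conflates the fiber of $p_{2W}$ with the fiber of $p_2$: that these coincide near $(X\subset Y)$ is precisely Remark~\ref{rem0thm42} (every deformation of $X\subset Y$ fixing $Y$ comes from deforming the last column of $\cA$), and your parenthetical establishes only surjectivity of $p_{2W}$ onto its target, not the generic fiber dimension of $p_{2W}$. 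Without the lower bound $\dim\overline W\ge\dim W(\underline{b};\underline{a'};r)+\dim(MI\otimes A)_{a_{t+c-1}}$ the ``dimension count'' does not close, and since your last paragraph relies on ``$\overline W$ locally agrees with $\Hilb^{p_X(t),p_Y(t)}(\PP^n)$,'' the gap propagates to the dimension formula. Once you invoke Lemma~\ref{unobst1} (or redo its universal-deformation argument via Remark~\ref{rem0thm42}), the remainder of your proof is correct.
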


First we prove the following

\begin{lemma} \label{unobst1} Let $c$ and
  $B=R/I_{t+1-r}(\cB) \twoheadrightarrow A$ be as in Proposition~\ref{propo8}
  with $\cA$ not necessarily general. Let $X=\Proj(A)$ and $Y=\Proj(B)$. Then
  $\Hilb ^{p_Y(t)}(\PP^{n})$ is smooth at $(Y)$, and
  $ \overline{ W(\underline{b};\underline{a'};r)}$ is an irreducible component
  of $\Hilb ^{p_Y(t)}(\PP^n)$. Moreover $\Hilb ^{p_X(t),p_Y(t)}(\PP^{n})$ is
  smooth at $(X\subset Y)$, and $\overline W $ is an irreducible component of
  $\Hilb ^{p_X(t),p_Y(t)}(\PP^n)$.
\end{lemma}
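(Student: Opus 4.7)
The plan is first to deduce the statements (1) and (2) for $Y$ from Corollary~\ref{unobstr} applied to $B$, then to obtain (3) from Proposition~\ref{thmext3} combined with (1), and finally (4) by a parallel local argument on the Hilbert flag scheme. For (1) and (2), the hypothesis that every deformation of $B$ comes from deforming $\cB$ lets me invoke Corollary~\ref{unobstr}: $B$ is unobstructed, so (since $\dim Y = \dim A + r \ge 2$) the Hilbert scheme $\Hilb^{p_Y(t)}(\PP^{n})$ is smooth at $(Y)$, and the property (*) is Zariski-open in $\Hilb^{p_Y(t)}(\PP^{n})$. Let $V \subset \Hilb^{p_Y(t)}(\PP^{n})$ be the corresponding open neighborhood of $(Y)$; since (*) presupposes a matrix presentation with the given bidegrees, $V \subseteq W(\underline{b};\underline{a'};r)$. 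If $H$ denotes the unique irreducible component of $\Hilb^{p_Y(t)}(\PP^{n})$ through $(Y)$ (uniqueness by smoothness), then $H \cap V$ is a dense open of $H$ contained in $W(\underline{b};\underline{a'};r)$, so $H \subseteq \overline{W(\underline{b};\underline{a'};r)}$; the reverse inclusion is automatic from the irreducibility of $W(\underline{b};\underline{a'};r)$ (Lemma~\ref{lemaaux2}) and the fact that $(Y) \in H \cap \overline{W(\underline{b};\underline{a'};r)}$. This proves (2).

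For (3), Proposition~\ref{thmext3} applies directly under our hypotheses and asserts that the projection $p_2 : \Hilb^{p_X(t),p_Y(t)}(\PP^{n}) \to \Hilb^{p_Y(t)}(\PP^{n})$ is smooth at $(X \subset Y)$. Combining this with (1) --- smooth morphisms send smooth points of the base to smooth points of the source at the corresponding fiber point --- immediately yields that $\Hilb^{p_X(t),p_Y(t)}(\PP^{n})$ is smooth at $(X \subset Y)$.

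For (4), I would repeat the strategy of (2) one level up. Pick an open $U \subset \Hilb^{p_X(t),p_Y(t)}(\PP^{n})$ around $(X \subset Y)$ on which $p_2$ is smooth and $p_2(U) \subseteq V$. The claim is $U \subseteq W^{(\underline{b};\underline{a'};r)}_{(\underline{b};\underline{a};r)}$: for any $(X' \subset Y') \in U$ the point $(Y') \in V$ carries a presentation matrix $\cB'$ with the prescribed bidegrees, and the smoothness of $p_2$ at $(X' \subset Y')$ combined with the regular-section description of Theorem~\ref{mainthm} (interpreted in the family over $V$) produces a last column $v'$ so that the $(t-r+1)$-minors of $[\cB', v']$ cut out $X'$, i.e.\ $(X' \subset Y') \in W^{(\underline{b};\underline{a'};r)}_{(\underline{b};\underline{a};r)}$. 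Granting the claim, if $H'$ is the unique component of $\Hilb^{p_X(t),p_Y(t)}(\PP^{n})$ through $(X \subset Y)$, then $H' \cap U$ is a dense open of $H'$ contained in $W^{(\underline{b};\underline{a'};r)}_{(\underline{b};\underline{a};r)}$, whence $H' \subseteq \overline{W^{(\underline{b};\underline{a'};r)}_{(\underline{b};\underline{a};r)}}$; the reverse inclusion once more uses irreducibility (Lemma~\ref{lemaaux2}). The main obstacle is precisely the claim $U \subseteq W^{(\underline{b};\underline{a'};r)}_{(\underline{b};\underline{a};r)}$: one must upgrade the infinitesimal statement of Remark~\ref{rem0thm42} --- that every dual-number deformation of $(B \to A)$ arises by adjoining a last column --- to an honest Zariski-open version on the flag scheme. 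The cleanest remedy I can foresee is to construct on a neighborhood of $(Y)$ the tautological relative section $\sigma^*$ of Theorem~\ref{mainthm}, using that the cokernel $N$ and the vanishing of the relevant $\Ext^{1}$-groups are preserved under flat base change over $V$, and then to use smoothness of $p_2$ to see that this section accounts for all nearby flags.
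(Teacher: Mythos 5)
Your treatment of parts (1)--(3) matches the paper: smoothness of $\Hilb^{p_Y(t)}(\PP^n)$ at $(Y)$ follows from Corollary~\ref{unobstr}, smoothness of $p_2$ at $(X\subset Y)$ from Proposition~\ref{thmext3}, and these two combine to give smoothness of the flag scheme at $(X\subset Y)$. For part (2), you go via the openness of property (*) in Corollary~\ref{unobstr}; the paper itself notes that ``one may use Corollary~\ref{unobstr}'' here, so this route is sanctioned, though the paper also supplies a more self-contained alternative (which it says it ``will need later''): since the local ring $T$ of $\Hilb^{p_Y(t)}(\PP^n)$ at $(Y)$ is regular, it suffices to produce a matrix $\cB_{T_2}$ defining the universal object over $T_2 = T/\mathfrak{m}_T^2$ (immediate from the hypothesis), and Remark~\ref{remMthm61} then extends this to $\hat T$ and spreads out to an open set of $\Hilb$ contained in $W(\underline{b};\underline{a'};r)$.

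For part (4) you correctly put your finger on the genuine difficulty: a Zariski-open version of Remark~\ref{rem0thm42} on the flag scheme is not directly supplied, and a straight invocation of ``(*) is open'' is unavailable at flag level. Your proposed remedy --- building a relative section $\sigma^*$ over a neighborhood and using smoothness of $p_2$ to see it accounts for all nearby flags --- is plausible in spirit but would still require checking that the relevant $\Ext^1$-vanishings persist over the whole family and that the section's zero locus recovers every nearby $X'$, which is essentially as much work as the original statement. The paper avoids this by running the exact same argument as its alternative proof of (2), but on the flag scheme: let $T$ be the (now regular, by part (3)) local ring of $\Hilb^{p_X(t),p_Y(t)}(\PP^n)$ at $(X\subset Y)$, and observe that the universal object $B_{T_2}\twoheadrightarrow A_{T_2}$ over $T_2$ is cut out by a matrix $\cA_{T_2}=[\cB_{T_2},v_{T_2}]$ (with $\cB_{T_2}$ defining $B_{T_2}$) by Remark~\ref{rem0thm42}; the spreading-out of Remark~\ref{remMthm61} then produces an open neighborhood inside $W_{(\underline{b};\underline{a};r)}^{(\underline{b};\underline{a'};r)}$. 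This is a more economical closing of the gap than constructing a tautological relative section, because it works purely with the prorepresenting object at the single point $(X\subset Y)$ rather than over a whole neighborhood of $(Y)$.
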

\begin{proof} By Corollary~\ref{unobstr} we get that $\Hilb ^{p_Y(t)}(\PP^n)$
  is smooth at $(Y)$. By Proposition \ref{thmext3},
  $p_2: \Hilb ^{p_X(t),p_Y(t)}(\PP^{n})\longrightarrow \Hilb
  ^{p_Y(t)}(\PP^{n})$ is smooth at $(X\subset Y)$,
  hence $\Hilb ^{p_X(t),p_Y(t)}(\PP^{n})$, is smooth at $(X\subset Y)$.

  To show that $\overline{ W(\underline{b};\underline{a'};r)}$ is an
  irreducible component of $ \Hilb ^{p_Y(t)}(\PP^n)$, one may use
  Corollary~\ref{unobstr} or argue as follows (we will need the argument
  later). Let $(T,{\mathfrak m}_T)$ be the local ring of
  $\Hilb ^{p_Y(t)} (\PP^{n})$ at $(Y)$ and let $B_{T_2}$ be the pullback of
  the universal object of $\Hilb ^{p_Y(t)}(\PP^n)$ to $\Spec(T_2)$ where
  $T_2= T/{\mathfrak m}_T^2$. Since $T$ is a regular local ring, it suffices
  to show
  $\dim \overline{ W(\underline{b};\underline{a'};r)}=\dim {\mathfrak
    m}_T/{\mathfrak m}_T^2$,
  i.e. that the ``universal object'' $B_{T_2}$ is defined by some matrix
  $\cB_{T_2}$; $B_{T_2}=R_{T_2}/I_t(\cB_{T_2})$. This is, however, true by
  assumption, and even more is true (see Remark~\ref{remMthm61} for an
  extension).

  The proof of $\overline W $ being an irreducible component of
  $\Hilb ^{p_X(t),p_Y(t)}(\PP^n)$ is very similar, only changing
  $(T,{\mathfrak m}_T)$ to be the local ring of $\Hilb ^{p_X(t),p_Y(t)} (\PP^{n})$
  at $(X \subset Y)$ and noticing that the ``universal object''
  $B_{T_2} \twoheadrightarrow A_{T_2}$ is defined in terms of a matrix
  $\cA_{T_2}= [\cB_{T_2},v_{T_2}]$ lifting $\cA= [\cB,v]$ to $T_2$ such that
  $\cA_{T_2}$ defines $A_{T_2}$ and $\cB_{T_2}$ defines $B_{T_2}$ by
  Remark~\ref{rem0thm42}.
  \end{proof}

  \begin{proof} [Proof of Proposition~\ref{propo8}] By Lemma~\ref{unobst1},
    $ \overline{W{(\underline{b};\underline{a'};r)}} \subset \Hilb
    ^{p_Y(t)}(\PP^n)$
    and $\overline W \subset \Hilb ^{p_X(t),p_Y(t)}(\PP^{n})$ are generically smooth
    irreducible components, and $p_2: \Hilb ^{p_X(t),p_Y(t)}(\PP^{n})\longrightarrow \Hilb ^{p_Y(t)}(\PP^{n})$, as well as its restriction $p_{2W}$
    ($W$ is dense in a component), is smooth at $(X\subset Y)$ by Proposition
    \ref{thmext3}. Moreover the fiber $p_{2W}^{-1}((Y))$ of $p_{2W}$ is also
    smooth in some neighbourhood of $(X\subset Y)$, whence its dimension at
    $(X\subset Y)$ is given by the dimension of its tangent space which by
    Proposition \ref{vanish}(iii) is
    $ \ _0\! \hom_B(I_{A/B},A)=\dim_k(MI \otimes A)_{(a_{t+c-1})}$.

    It remains to see the dimension formula. To show it we endow
    $\overline{ W(\underline{b};\underline{a};r)}$ with its reduced scheme
    structure, and we use generic smoothness (since $char k = 0$) onto the
    restriction map $p_{1W}:W \longrightarrow \overline{
      W(\underline{b};\underline{a};r)}$ of
    the $1^{st}$ projection $p_1: \Hilb
    ^{p_X(t),p_Y(t)}(\PP^{n})\longrightarrow \Hilb ^{p_X(t)}(\PP^{n})$ to $W$.
    Since $p_{1W}$ is dominating, there
    is an open $ U \subset \overline{ W(\underline{b};\underline{a};r)}$ such
    that $p_{1}$ restricted to $p_{1W}^{-1}(U)$ is smooth, with fiber dimension
    $\ _0\! \hom_R(I_{B},I_{A/B})$. Hence we get
    $$ \dim W(\underline{b};\underline{a};r) + \ _0\! \hom_R(I_{B},I_{A/B}) =
    \dim W\,.$$
    Then we conclude the proof by using the smoothness of $p_{2W}$ which
    implies
    $$\dim W(\underline{b};\underline{a'};r)+\dim_k(MI \otimes
    A)_{(a_{t+c-1})}\ = \dim W\,.$$
\end{proof}
\begin{remark} \label{genericfib} \rm In general the dimension of fibers may be
  larger than the dimension of the ``generic'' fiber made explicit in the last
  paragraph of the proof above; whence we have
  $$\ _0\! \hom_R(I_{B},I_{A/B}) \ \le \ _0\! \hom_R(I_{B'},I_{A'/B'})$$
  with $(\Proj(A') \subset \Proj(B'))$ any point of $W$ and $B \to A$ as in
  Proposition~\ref{propo8}.
\end{remark}
Our goal is to compute $\dim W(\underline{b};\underline{a};r)$ in terms of
$\dim W(\underline{b};\underline{a'};r)$. Due to Proposition~\ref{propo8} we
need to find the difference,
$\dim (MI \otimes A)_{a_{t+c-1}}-\ _0\! \hom_R(I_B,I_{A/B})$.
Using the definition of $\lambda _c$ and $K_c$, and the exactness of the
Buchsbaum-Rim complex for $c \ge 2$, it is not difficult to show that
\begin{equation}
\label{dimensioMI}
\dim (MI)_{a_{t+c-1}}-\sum _{j=1}^{t+c-2}{a_j-a_{t+c-1}+n\choose n}=\lambda
_c-\lambda_ {c-1} + K_c\ ,
\end{equation}
cf. \cite[proof of Theorem  4.5]{KM2005}  for some details. Since, for
$2-r \le c \le 1$ and $\cA$ general, the sequence
$$0 \longrightarrow G^* \longrightarrow F^* \longrightarrow MI \longrightarrow
0$$
is exact, \eqref{dimensioMI} also holds for $2-r \le c \le 1$ letting $K_c:=0$
for $c \le 2$. Below $A=R/I_{t-r+1}(\cA)$ and $B=R/I_{t-r+1}(\cB)$ where $\cB$
is obtained by deleting the last column of $\cA$.

\begin{theorem}\label{dimW}  Let $\Proj(A) \in W(\underline{b};\underline{a};r)$
  be general with $\dim A \ge 2$ and suppose $3-r \le c$, $b_t < a_1$ and that
  every deformation of $\Proj(B)\in W(\underline{b};\underline{a'};r)$ comes
  from deforming its matrix. For $c \le0$, we also suppose $\dim A\ge 3$. Let
  $\kappa$ be an integer satisfying
  $ \dim W(\underline{b};\underline{a'};r)=\lambda _{c-1} -\kappa$ and suppose
  $a_{t+c-1} < s_r-b_r+b_1$. Moreover suppose \
  \begin{equation} \label{fibp1}_0\! \hom_R(I_B,I_{A/B}) \le \sum
    _{j=1}^{t+c-2} {a_j-a_{t+c-1}+n\choose   n}
  \end{equation}
  (resp. with equality in \eqref{fibp1}, e.g. that
  $b_t = b_1$ and $a_{t-r+1} < a_{t+c-1}$). Then $K_i=0$ for
  $3 \le i \le c$ and we have
   $$\dim W(\underline{b};\underline{a};r) \ge \lambda_c - \kappa \ \qquad
   ({\rm resp.} \quad \ \dim W(\underline{b};\underline{a};r) = \lambda_c -
   \kappa \ ) \ . $$ In particular if $\kappa=0$ and
   the {\rm inequality} in  \eqref{fibp1} hold,
   then  $$\dim W(\underline{b};\underline{a};r) = \lambda_c \ . $$
 \end{theorem}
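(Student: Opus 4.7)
The plan is to combine Proposition~\ref{propo8} (which expresses $\dim W(\underline{b};\underline{a};r)$ recursively in terms of $\dim W(\underline{b};\underline{a'};r)$) with Lemma~\ref{dimMI} and the identity \eqref{dimensioMI}, and to substitute the hypotheses on $\dim W(\underline{b};\underline{a'};r)$ and on $_0\!\hom_R(I_B,I_{A/B})$ at the very end.

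First I would dispose of the $K_i$. The hypothesis $a_{t+c-1}<s_r-b_r+b_1$ together with \eqref{Ki0} forces $a_{t+c-1}<s_0=\ell_2$, and then \eqref{Kis0} yields $K_i=0$ for $3\le i\le c$. In particular $K_c=0$, so \eqref{dimensioMI} simplifies to
\[
\dim(MI)_{a_{t+c-1}}=\lambda_c-\lambda_{c-1}+\sum_{j=1}^{t+c-2}\binom{a_j-a_{t+c-1}+n}{n}.
\]
Next I verify the hypotheses of Proposition~\ref{propo8}: for general $\cA$ with $b_t<a_1$ all entries have positive degree, and the standard codimension formula for determinantal ideals of a generic matrix gives $\depth_{J_B}B=c+2r-1\ge r+2$ (equivalent to $c\ge 3-r$), while for $c\le 0$ the hypothesis $\dim A\ge 3$ takes care of $\depth_{J_A}A\ge 3$; the assumption on deformations of $\Proj(B)$ is explicit in the statement. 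The proposition then yields
\[
\dim W(\underline{b};\underline{a};r)=\dim W(\underline{b};\underline{a'};r)+\dim(MI\otimes A)_{a_{t+c-1}}-{}_0\!\hom_R(I_B,I_{A/B}).
\]

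Finally I would apply Lemma~\ref{dimMI}(i) with $v=0$: the numerical assumption $a_{t+c-1}<s_r-b_r+b_1$ is exactly what the lemma demands in order to conclude $\dim(MI\otimes A)_{a_{t+c-1}}=\dim(MI)_{a_{t+c-1}}$. Substituting the boxed expression for $\dim(MI)_{a_{t+c-1}}$ and the assumption $\dim W(\underline{b};\underline{a'};r)=\lambda_{c-1}-\kappa$ into the displayed equality, everything collapses to
\[
\dim W(\underline{b};\underline{a};r)=\lambda_c-\kappa+\sum_{j=1}^{t+c-2}\binom{a_j-a_{t+c-1}+n}{n}-{}_0\!\hom_R(I_B,I_{A/B}).
\]
The inequality hypothesis \eqref{fibp1} now yields $\dim W(\underline{b};\underline{a};r)\ge \lambda_c-\kappa$, with equality whenever \eqref{fibp1} is an equality; the sufficient conditions $b_t=b_1$ and $a_{t-r+1}<a_{t+c-1}$ for equality in \eqref{fibp1} are exactly those supplied by Corollary~\ref{prefiberpr2}. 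The genuinely non-trivial content is packaged in hypothesis \eqref{fibp1} itself (i.e.\ controlling $_0\!\hom_R(I_B,I_{A/B})$); once that bound is granted, the theorem is a bookkeeping assembly of Proposition~\ref{propo8}, Lemma~\ref{dimMI}(i), \eqref{dimensioMI} and \eqref{Kis0}--\eqref{Ki0}.
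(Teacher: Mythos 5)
Your proposal reproduces the paper's own argument for the first two conclusions almost verbatim: kill the $K_i$ via \eqref{Ki0}/\eqref{Kis0}, verify the depth hypotheses of Proposition~\ref{propo8} (the paper does so through Remark~\ref{minormax}, you through the codimension formula for generic determinantal ideals --- same content, your phrasing is slightly more explicit), then apply Lemma~\ref{dimMI}(i) and \eqref{dimensioMI} to convert the recursion of Proposition~\ref{propo8} into $\dim W(\underline{b};\underline{a};r)=\lambda_c-\kappa+\bigl(\sum_j\binom{a_j-a_{t+c-1}+n}{n}-{}_0\!\hom_R(I_B,I_{A/B})\bigr)$, and finally invoke Corollary~\ref{prefiberpr2} for the e.g.-statement. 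All of that is correct.

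However, you stop before the theorem does. The final sentence of Theorem~\ref{dimW} asserts $\dim W(\underline{b};\underline{a};r)=\lambda_c$ under the assumptions $\kappa=0$ \emph{and only the inequality} in \eqref{fibp1}. Your chain of reasoning delivers only $\dim W(\underline{b};\underline{a};r)\ge\lambda_c$ in that situation; the equality \emph{does not} follow from \eqref{fibp1} being an equality (which you are not granted), and you offer nothing in its place. What is needed is the matching upper bound $\dim W(\underline{b};\underline{a};r)\le\lambda_c$, which the paper extracts from Theorem~\ref{ineqdimW}: under the present hypotheses $b_t<a_1$ forces $a_{i-1}>b_i$, and \eqref{Ki0} together with the surrounding text gives $a_{t+c-1}<\ell_2$ (for $c\ge 1$) resp.\ $-b_1<\ell'_2$ (for $c<1$), so that Theorem~\ref{ineqdimW} applies and yields $\dim W(\underline{b};\underline{a};r)\le\lambda_c$. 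Your proof should cite this upper-bound step explicitly; without it the last displayed equality of the theorem is unproved.
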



 \begin{proof} Since we assume $b_t < a_1$ 
   we have $s_r \le s_0$ and hence $K_i=0$ for $3 \le i \le c$ by
   \eqref{Kis0}. We also have
   $\depth _{I_{t-r}(\cB)A}A\ge \min\{\dim A,c+r-1\} \ge 2$ by
   Remark~\ref{minormax}, and similarly
   $\depth _{I_{t-r}(\cA)A}A\ge \min\{\dim A,c+2r\} \ge 3$ for $c \le 0$. Note
   that $b_t < a_1$ implies $I_{t-r}(\cA) \ne R$. Since we by
   Lemma~\ref{dimMI} and the assumption $a_{t+c-1} < s_r-b_r+b_1$ may replace
   $\dim (MI)_{a_{t+c-1}}$ by $\dim (MI \otimes A)_{a_{t+c-1}}$ in
   (\ref{dimensioMI}), it follows from (\ref{dimensioMI}), Proposition
   \ref{propo8} and assumptions that we have the inequality
$$
\begin{array}{rcl} \dim  W(\underline{b};\underline{a};r) & = & \dim (MI
                                                                \otimes A)_{a_{t+c-1}}- \dim\, _0\!\Hom (I_B ,I_{A/B}) +\dim  W(\underline{b};\underline{a'};r) \\
& \ge &  \lambda_c-\lambda_{c-1}+K_c+\lambda_{c-1}- \kappa = \lambda_c -\kappa
\end{array} \ $$  which, moreover, becomes an equality if we have equality in
\eqref{fibp1}. Since the e.g.-assumptions imply
$_0\! \hom_R(I_B,I_{A/B})=\sum _{j=1}^{t+c-2} {a_j-a_{t+c-1}+n\choose n}$
taking $A_{c-1}=:B=R/I_B$, $A_{c}=:A=R/I_A$ and $I_{A/B}=I_A/I_B$ in
Corollary~\ref{prefiberpr2}, we are done except for the final statement.

We have $\dim W(\underline{b};\underline{a};r) \ge \lambda_c $ by the first
part of the proof. Then we conclude the proof by Theorem~\ref{ineqdimW}
because $a_{t+c-1} < \ell_2$ and $-b_1 < \ell'_2$ holds by \eqref{Ki0} and the
text accompanying \eqref{Ki0}.
\end{proof}

Below we denote $\lambda_i$ and $W(\underline{b};\underline{a};r)$ by
$\lambda_i(R)$ and $W(\underline{b};\underline{a};r,R)$ respectively since
they obviously both depend on $R$. Moreover, as always, $\cB$ is obtained by
deleting the last column of $\cA$.

\begin{corollary} \label{cordimRdetW} {\rm (i)} Let
  $\Proj(A) \in W(\underline{b};\underline{a};r,R)$, $A:=R/I_{s}(\cA)$ with
  $s=t+1-r$, be a general determinantal scheme and suppose $\dim A \ge 2$,
  $c \ge 3-r$, $a_1 > b_t$ and that every deformation of
  $\Proj(B)\in W(\underline{b};\underline{a'};r,R)$ comes from deforming its
  matrix $\cB$. For $c \le 0$, we also suppose $\dim A\ge 3$. Moreover suppose
  that $ \dim W(\underline{b};\underline{a'};r,R)=\lambda _{c-1}(R)$ and
  $a_{t+c-1} < s_r-b_r+b_1$ and that
  $_0\! \hom_R(I_B,I_{A/B}) \le \sum _{j=1}^{t+c-2} {a_j-a_{t+c-1}+n\choose
    n}\, .$
  Then $$\dim W(\underline{b};\underline{a};r,R) = \lambda_c(R) \ .$$

  {\rm (ii)} Suppose $a_1=a_{t+c-2}$ and $b_1=b_t$ or more generally that
  $ _{v}\! \hom_R(I_B,I_{A/B}) = 0$ for all $v \le -1$. Taking
  $s \times s$-minors of the matrices $\cA$ and $\cB$ considered belonging to
  a larger polynomial ring $R':=R[\underline y]$ where
  $\underline y := y_1,\cdots,y_e$ are $e$ indeterminates and supposing
  $ \dim W(\underline{b};\underline{a'};r,R')=\lambda _{c-1}(R')$, then
  $A':=R'/I_{s}(\cA)$ and $B':=R'/I_{s}(\cB)$ satisfy all assumptions of {\rm
    (i)}, replacing $A, B, R$ there by $A', B', R'$, except that $A'$ is
  general. Indeed we have
  $\ _0\! \hom_R(I_{B},I_{A/B})= \ _0\! \hom_R(I_{B'},I_{A'/B'})$ and we
  get $$\dim W(\underline{b};\underline{a};r, R') = \lambda_c(R')\, .$$
\end{corollary}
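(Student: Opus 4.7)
The plan is to derive both parts from Theorem~\ref{dimW}.

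For part (i), the hypothesis $\dim W(\underline{b};\underline{a'};r,R)=\lambda_{c-1}(R)$ gives $\kappa=0$ in the notation of Theorem~\ref{dimW}, and the assumed Hom-bound is precisely the inequality in \eqref{fibp1}. All other hypotheses (generality, dimension conditions, $c\ge 3-r$, $b_t<a_1$, the deformation property of $B$, and $a_{t+c-1}<s_r-b_r+b_1$) are transferred directly from the hypotheses of (i). The final clause of Theorem~\ref{dimW} then yields $\dim W(\underline{b};\underline{a};r,R)=\lambda_c(R)$.

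For part (ii), my main task is to establish the Hom-identification
\[
\ _0\!\hom_{R'}(I_{B'},I_{A'/B'})\ =\ _0\!\hom_R(I_B,I_{A/B})\ .
\]
Since $R' = R\otimes_k k[\underline y]$ is flat over $R$ and $I_B$ is finitely presented, graded Hom-base-change gives
\[
\underline{\Hom}_{R'}(I_{B'},I_{A'/B'})\ \cong\ \underline{\Hom}_R(I_B,I_{A/B})\otimes_k k[\underline y]
\]
as bi-graded $k$-vector spaces (with $\deg y_i=1$). The degree-zero component decomposes as $\bigoplus_{i+j=0} \underline{\Hom}_R(I_B,I_{A/B})_i \otimes_k k[\underline y]_j$, and the vanishing hypothesis $\ _v\!\hom_R(I_B,I_{A/B})=0$ for $v \le -1$ forces only the bidegree $(0,0)$ summand to survive. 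To check this vanishing in the e.g.-case $a_1=a_{t+c-2}$ and $b_1=b_t$, note that all entries of $\cB$ share the common degree $d:=a_1-b_1$, so $I_B$ is generated in degree $sd$ while $I_{A/B}$ (generated by $s$-minors involving the last column) lies in degrees $\ge (s-1)d+(a_{t+c-1}-b_1)\ge sd$ since $a_{t+c-1}\ge a_1$; a nonzero graded morphism of negative degree is precluded on degree grounds.

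Once the Hom-identification is in hand, I would verify that the remaining hypotheses of (i) transfer to $A', B', R'$: the numerical data $(a_j,b_i,c,r)$ are unchanged; $\dim A' = \dim A + e$ preserves the dimension and depth conditions; the property ``every deformation of $B'$ comes from deforming its matrix'' follows from the analogous property for $B$ via Corollary~\ref{gendetrem}; and the Hom-bound for $R'$ follows from the one for $R$ together with the Hom-identification and the elementary monotonicity $\binom{m+n}{n}\le\binom{m+n+e}{n+e}$ (both sides vanishing for $m<0$). By the openness of the deformation and $\Ext$/depth conditions (cf.\ Corollary~\ref{unobstr}), these also hold at a general point of $W(\underline{b};\underline{a};r,R')$, so part (i) applies there and, combined with the assumption $\dim W(\underline{b};\underline{a'};r,R') = \lambda_{c-1}(R')$, yields $\dim W(\underline{b};\underline{a};r,R')=\lambda_c(R')$. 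The principal technical obstacle is the graded Hom-base-change step, which relies crucially on the degree-uniformity hypothesis enabling the vanishing of $\ _v\!\hom_R(I_B,I_{A/B})$ in negative degrees.
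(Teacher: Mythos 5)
Your proof is correct and follows essentially the same route as the paper. Part (i) is, as you say, just the final clause of Theorem~\ref{dimW}. For part (ii) you derive the key identity $\ _0\!\hom_{R'}(I_{B'},I_{A'/B'}) = \ _0\!\hom_R(I_B,I_{A/B})$ from flat base change and the negative-degree vanishing, and then pass to a general point of $W(\underline{b};\underline{a};r,R')$ by semicontinuity -- exactly the paper's structure (which invokes Corollary~\ref{gendetrem} for the matrix-deformation property of $B'$ and Remark~\ref{genericfib} for the semicontinuity step). The only spot where you are slightly terser than the paper is the step behind the ``graded Hom-base-change'': you implicitly use $I_{B'}=I_B\otimes_R R'$ and $I_{A'/B'}=I_{A/B}\otimes_R R'$, which the paper verifies explicitly via the Lascoux resolution; this is a routine fact for an ideal generated by the same elements after a flat polynomial extension, so no genuine gap. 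Your explicit mention of the binomial monotonicity $\binom{m+n}{n}\le\binom{m+n+e}{n+e}$ for transferring the bound \eqref{fibp1} is a welcome clarification that the paper leaves implicit.
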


\begin{remark} \label{minma} \rm (1) Note that in the case $c = 3-r$ of
  Corollary~\ref{cordimRdetW}, the assumptions ``every deformation of
  $\Proj(B)\in W(\underline{b};\underline{a'};r,R)$ comes from deforming
  $\cB$'', ``$ \dim W(\underline{b};\underline{a'};r,R)=\lambda _{c-1}(R)$''
  and ``$ \dim W(\underline{b};\underline{a'};r,R')=\lambda _{c-1}(R')$'' hold
  by Theorem~\ref{ineqdimW}(iii) and Theorem~\ref{Amodulethm5}(ii).

  (2) Since the smallest degree of a generator of $I_{A/B}$ is
  $s_r-b_r-a_{t-r+1}+a_{t+c-1}$,
  $s_r-b_r:=\sum _{i=1}^{t-r+1}(a_i-b_{r-1+i})$, and the largest degree of a
  generator of $I_B$ is
  $mdg(I_B):=\sum ^{t+c-2}_{j=c+r-2}a_j -\sum _{i=1}^{t+1-r}b_i=\sum
  _{i=1}^{t+1-r}(a_{c+r-3+i}-b_i)$,
  we get $ _{v}\! \hom_R(I_B,I_{A/B}) = 0$ for $v \le -1$ if
  $s_r-b_r-a_{t-r+1}+a_{t+c-1} \ge mdg(I_B)$, which holds if $a_1=a_{t+c-2}$
  and $b_1=b_t$ because $a_{t+c-1} \ge a_{t-r+1}$ always holds.
\end{remark}
\begin{proof}
  (i) This is really the main statement of Theorem~\ref{dimW} because we only
  need the inequality in \eqref{fibp1} to get
  $\dim W(\underline{b};\underline{a};r,R) = \lambda_c(R)$.

  (ii) Suppose
  $ \dim W(\underline{b};\underline{a'};r,R')=\lambda _{c-1}(R')$. Then we
  claim that all assumptions of (i) holds for $A', B', R'$ instead of
  $A, B, R$. Indeed by Corollary~\ref{gendetrem} we get that
  $\Proj(B')\in W(\underline{b};\underline{a'};r,R')$ comes from deforming its
  matrix $\cB$. The other assumptions are straightforward provided we can
  prove $\ _0\! \hom_R(I_{B},I_{A/B})= \ _0\! \hom_R(I_{B'},I_{A'/B'})$. However, this
  will follow if we can show
 \begin{equation} \label{kflatS} \Hom_R(I_{B},I_{A/B}) \otimes_k k[\underline
   y] \cong \
 \Hom_R(I_{B'},I_{A'/B'})
\end{equation}
 because we can then take the degree zero part to conclude using
 $ _{v}\! \hom_R(I_B,I_{A/B}) = 0$ for $v < 0$ and $\deg(y_i)= 1$ for all $i$.
 Hence, since $k[\underline y]$ is obviously $k$-flat, it  suffices  to prove
  $$I_{B}\otimes_k k[\underline y] = I_{B'} \quad {\rm and} \quad
  I_{A/B}\otimes_k k[\underline y] = I_{A/B} \ .$$
  To prove it we use $R'=R \otimes_k k[\underline y]$ and the first terms of
  the Lascoux resolution of $I_B$ as an ideal in $R$ and apply
  $(-) \otimes_k k[\underline y]$ to it. Since the matrices defining $I_B$ and
  $I_{B'}$ are the same, we get $I_{B}\otimes_k k[\underline y] = I_{B'}$. The
  same argument applies to see $I_{A}\otimes_k k[\underline y] = I_{A'}$ and
  then \eqref{kflatS} follows, and hence the claim, by applying
  $(-) \otimes_k k[\underline y]$ onto $0 \to I_B \to I_A \to I_{A/B} \to 0$.

  We remark that $\Proj(A')$ is a special element of
  $W(\underline{b};\underline{a};r, R')$. To get (ii) from (i) we still have
  to see that the general element of $W(\underline{b};\underline{a};r, R')$
  satisfies the assumptions in (i). Again this is not so difficult due to
  Corollary~\ref{unobstr} and Remark~\ref{genericfib}, noting that the latter
  gives us exactly the inequality the right way to be consistent with
  \eqref{fibp1}. Thus we have proved
  $\dim W(\underline{b};\underline{a};r, R') = \lambda_c(R')$.
\end{proof}

\begin{example} \label{ex1dimW}  \rm (Determinantal quotients of
  $R=k[x_0, x_1, \cdots ,x_n]$, using Theorem~\ref{dimW} for $r=2$)

  Let $\cA= [\cB,v]$ be a general $4 \times 4$ matrix with linear (resp.
  quadratic) entries in the first, second and third (resp. fourth) column,
  i.e. where $\cB$ is a linear $4 \times 3$ matrix. We {\it claim} the
  vanishing of all $3 \times 3$ minors of $\cA$ defines a determinantal ring
  that satisfies the assumption of Theorem~\ref{dimW} for $r=2$, $c=1$ and
  $n \ge 5$. Indeed since the $3 \times 3$ minors of $\cB$ are maximal minors
  defining $B$, one knows that every deformation of $B$ comes from deforming
  $\cB$ by Theorem~\ref{Amodulethm5} or more directly by \cite[Theorem
  5.16]{K2014}, cf. \cite{E}. By Theorem~\ref{ineqdimW} we
  get that the dimension of
  $$\overline{ W(0^4;1^3;2)}:= \overline{ W(0,0,0,0;1,1,1;2)}$$ is $ \lambda_0$.
  Since the assumption $a_t < s_2$ and the e.g. statement of
  Theorem~\ref{dimW} clearly hold we get the claim and hence
  that $$\dim \overline{ W(0^4;1^3,2;2)}=\lambda_1=2n^2+15n-12$$ by
  Theorem~\ref{dimW}. We have checked the answer by using Macaulay2 in the
  case $\cB$ is the generic linear matrix with entries $x_0,x_1,...,x_{11}$
  and the transpose of $v$ is $x_{12}^2,x_{13}^2,x_{14}^2,x_{15}^2$. We have
  got $\ \Ext_A^1(I_A/I_A^2,A) = 0$ and that the dimension of the tangent
  space of $ \Hilb^{p_X(t)}(\PP^{15})$ at $X:=\Proj(A)$ was $663$, whence
  $\dim_{(X)} \Hilb(\PP^{15}) =663$, coinciding with our formula when $n=15$.

  In fact the Macaulay2 computation also implies that
  $\overline{ W(0^4;1^3,2;2)}$ is a generically smooth irreducible component
  of $\Hilb^{p_X(t)}(\PP^{15})$, a problem which we will study closely in
  Section 7 and 8. 
  At this stage we remark that this is no longer true for small values of $n$.
  Indeed in \cite[Example 5.1]{KM2009} we considered an arithmetically
  Gorenstein scheme defined by the submaximal minors of a general $4\times 4$
  matrix with $\cA= [\cB,v]$ as above, and
  got that $\dim_{(X)}\Hilb ^{p_X(t)}(\PP^5)=125$ and
   $$\codim_{\Hilb ^{p_X(t)}(\PP^5)} W(0^4;1^3,2;2)=12$$  by direct
   calculations using the theory developed there. 
   Clearly the irreducible locus $\overline{ W(0^4;1^3,2;2)}$ is not a
   component of $ \Hilb(\PP^{15})$. Moreover note that we get
   $$\dim \overline{ W(0^4;1^3,2;2)}=113\, , $$ confirming our formula above
   with $n=5$.
\end{example}

\begin{example} \label{exgendet} \rm (Verifying the conditions of
  Theorem~\ref{dimW} by using Macaulay2)

  In this example we consider determinantal rings where the entries $x_{i,j}$
  of the $t \times (t+c-1)$ matrix $\cA$ are the indeterminates of $R$, i.e.
  so-called generic determinantal schemes $X=\Proj(A)$, and we use
  Macaulay2 to compute $ _0\! \hom_R(I_B,I_{A/B})$
  and check that
\begin{equation} \label{codgen}\begin{array}{rcl}
  _0\! \hom_R(I_B,I_{A/B}) & = & \sum _{j=1}^{t+c-2} {a_j-a_{t+c-1}+n\choose
    n}=t+c-2 
  \end{array}
  \end{equation}
  where $B=R/I_{t-r+1}(\cB)$ and $\cB$ is obtained by deleting the last column
  of $\cA= [\cB,v]$. Note that by Corollary~\ref{gendetrem}, every deformation
  of $B$ comes from deforming $\cB$, whence all
  assumptions of Theorem~\ref{dimW} hold and we conclude
  that
  $$ \dim W(\underline{b};\underline{a};r)= \lambda_c\ . $$

\vskip 2mm (i) Submaximal minors, i.e. $r=2$. Let $t=3$, $\cA= (x_{i,j})$ the
generic $3 \times (c+2)$ matrix and let $1 \le c \le 7$. The vanishing of all
$2 \times 2$ minors of $\cA$ defines a determinantal ring. We successively
show that all conditions of Theorem~\ref{dimW} are satisfied starting with the
case $c=1$. Due to Proposition~\ref{deforminggenericcase} and
Corollary~\ref{gendetrem} one knows that every deformation of $B$ comes from
deforming $\cB$, and by Theorem~\ref{ineqdimW}(iii) that
$$\dim \overline{ W(0^3;1^2;2)}= \lambda_0=3\cdot 2\cdot 9 -3^2-2^2+1=42.$$
Then \eqref{codgen} is
verified for $c=1$ by Macaulay2, and we get that
$$\dim \overline{ W(0^3;1^3;2)}= \lambda_1=64 \ $$  by Theorem~\ref{dimW}.
We get even more by Corollary~\ref{cordimRdetW}(ii), i.e. we get that
$$\dim \overline{ W(0^3;1^3;2;R')}= \lambda_1(R')$$ where $R':=R[\underline y]$
may contain more indeterminates that those of $R$. By
Corollary~\ref{gendetrem} we also know that every deformation of
$A':=R'/I_2(\cA)$ comes from deforming $\cA$. 

Now for every $c$, $2 \le c \le 7$ we verify \eqref{codgen}. Then we claim
that
$$\dim \overline{ W(0^3;1^{c+1};2)}= \lambda_{c-1}$$
and $\dim \overline{ W(0^3;1^{c+2};2)} = \lambda_c$ by induction on $c \ge 2$.
Indeed when we delete a column of the matrix $\cA$ of indeterminates defining
the ring $R$, the resulting matrix $\cB$ consists of indeterminates belonging
to a ring which has more variables that those appearing in $\cB$. So $B$ is as
$A'$ above and we get
$$\dim \overline{ W(0^3;1^{c+2};2)} = \lambda_c = 8(c+2)^2-8 \ $$
by Theorem~\ref{dimW} and in fact
$\dim \overline{ W(0^3;1^{c+2};2,R')} = \lambda_c(R')$ by
Corollary~\ref{cordimRdetW}(ii) and moreover that every deformation of
$R'/I_2(\cA)$ comes from deforming $\cA$ by Corollary~\ref{gendetrem}. Hence
we conclude by induction. Finally it is worthwhile to point out that
$\overline{ W(0^3;1^{c+2};2)}$ is a generically smooth irreducible component
of $\Hilb ^{p_X(t)}(\PP^{3c+5})$ for $1 \le c \le 6$ by Lemma~\ref{unobst1}, cf.
Lemma~\ref{unobst} which also covers the case $c=7$.

\vskip 2mm
(ii) Submaximal minors, i.e. $r=2$ with $t=4$. Let $\cA= (x_{i,j})$ be the
generic $4 \times (c+3)$ matrix and let $1 \le c \le 3$. The vanishing of all
$3 \times 3$ minors of $\cA$ defines a determinantal ring and we verify
\eqref{codgen} by using Macaulay2 for every $c$, $1 \le c \le 3$. Then we can
argue exactly as in (i) to get that
$$\dim \overline{ W(0^{4};1^{c+3};2)} = \lambda_c:=15(c+3)^2-15\, .$$
By Lemma~\ref{unobst1}, $\overline{ W(0^{4};1^{c+3};2)}$ is a generically
smooth irreducible component of $\Hilb ^{p_X(t)}(\PP^{4c+11})$ for
$1 \le c \le 2$ .

\vskip 2mm
(iii) ``Subsubmaximal'' minors, i.e. $r=3$. Let $t=4$, let $\cA= (x_{i,j})$ be
the generic $4 \times (c+3)$ matrix and let $1 \le c \le 4$. The vanishing of
all $2 \times 2$ minors of $\cA$ defines a determinantal ring $A$ and we
verify \eqref{codgen} by Macaulay2 for every $c$, $1 \le c \le 4$. Starting
with $(A) \in W(0^{4};1^4;3)$ where $c=1$, then $B$ belongs to
$W(0^{4};1^3;3)$, a locus which we considered in (i) above. Indeed by
Lemma~\ref{tr}, $ W(0^{3};1^4;2)= W(-1^{4};0^3;3)$ and obviously,
$ W(-1^{4};0^3;3)= W(0^{4};1^3;3)$. Hence we get
$$\dim \overline{ W(0^{4};1^3;3)} =\lambda_0=120$$ and that every deformation
of $B$ comes from deforming $\cB$. Similarly if $R':=R[y_1, \cdots y_e]$ where
the $y_i$ are indeterminates, we get from (i) that
$\dim \overline{ W(0^{4};1^3;3;R')} =\lambda_0(R')$ and that every deformation
of $R'/I_2(\cB)$ comes from deforming $\cB$. It follows that
$\dim \overline{ W(0^{4};1^{4};3)} = \lambda_1$ by Theorem~\ref{dimW}, that
$\dim \overline{ W(0^{4};1^{4};3,R')} = \lambda_1(R')$ by
Corollary~\ref{cordimRdetW}(ii) and that every deformation of $R'/I_2(\cA)$
comes from deforming $\cA$ by Corollary~\ref{gendetrem}. Then we can as
previously use induction and Corollary~\ref{cordimRdetW} to get
$$\dim \overline{ W(0^{4};1^{c+3};3)} = \lambda_c:=15(c+3)^2-15\, .$$
By Lemma~\ref{unobst1} we get that $\overline{ W(0^{4};1^{c+3};3)}$ for
$1 \le c \le 3$, is a generically smooth irreducible component of
$\Hilb ^{p_X(t)}(\PP^{4c+11})$. 
Note that Macaulay2 is used to verify \eqref{codgen} for
$1 \le c \le 4$.

\vskip 2mm (iv) Let $r=3$ (resp. $r=4$) and $t=5$. The vanishing of all
$3 \times 3$ (resp. $2 \times 2$) minors of a generic $5 \times 5$ matrix
$\cA$ defines a determinantal ring and we verify \eqref{codgen} by using
Macaulay2. Moreover by Lemma~\ref{tr},
$ W(0^{4};1^{5};2) = W(-1^{5};0^4;3)= W(0^{5},1^{4};3)$ (resp.
$ W(0^{4};1^{5};3) = W(0^{5};1^{4};4)$) and combining with (ii) (resp. (iii))
above, we get that a general $B$ of the $\lambda_0$-dimensional
$ W(0^{5};1^4;3)$ (resp. $ W(0^{5};1^4;4)$) comes from deforming $\cB$. Then
Theorem~\ref{dimW}  imply that
$$\dim \overline{ W(0^{5};1^{5};3)} = \dim \overline{ W(0^{5};1^{5};4)} =
\lambda_1=576\, .$$
\end{example}

Since we expect the condition on $_0\! \hom_R(I_B,I_{A/B})$ of
Theorem~\ref{dimW} to be true in general provided $\dim A \ge 3$ (and not only
under the e.g.- assumptions there), we suggest the following:

\begin{conj} \label{conjdimW} Let $\cA$ be a general homogeneous
  $t \times (t+c-1)$ matrix, let $A=R/I_{t-r+1}(\cA)$ where $1 \le r \le t-1$,
  $2-r \le c$ 
  and suppose that $\dim A\ge 2$ for $c \ne 1$ and  $\dim A\ge 3$ for $c= 1$.
  Moreover suppose
  $\Proj(A) \in W(\underline{b};\underline{a};r)$, $a_1 >b_t$ and
  $a_{t+c-1} < s_r-b_r+b_1$. Then
    $$\dim W(\underline{b};\underline{a};r) = \lambda_c \ .$$
  \end{conj}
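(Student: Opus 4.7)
The plan is to prove Conjecture \ref{conjdimW} by induction on $c$, using the flag of determinantal rings $A_{2-r}\twoheadrightarrow \cdots \twoheadrightarrow A_{c-1}\twoheadrightarrow A_c = A$ obtained by successively deleting columns from the right-hand side of $\cA$. The base case $c = 2-r$ is precisely the standard determinantal case, where $\dim W(\underline{b};\underline{a};r) = \lambda_{2-r}$ follows from Theorem \ref{Amodulethm5}(i) combined with Theorem \ref{ineqdimW}(iii); note that the hypothesis $a_{t+c-1} < s_r - b_r + b_1$ together with $a_1 > b_t$ forces all $K_i$ and $K'_i$ to vanish (by \eqref{Kis0}--\eqref{Ki0}), so the upper bound $\lambda_c$ of Theorem \ref{ineqdimW} is already available and the goal reduces to proving the reverse inequality.

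For the inductive step, assume $\dim W(\underline{b};\underline{a'};r) = \lambda_{c-1}$ and try to apply Theorem \ref{dimW} with $\kappa = 0$. This reduces the task to verifying two hypotheses on $B := R/I_{t-r+1}(\cB)$: (i) every deformation of $B$ comes from deforming $\cB$, and (ii) the inequality $\ _0\!\hom_R(I_B, I_{A/B}) \le \sum_{j=1}^{t+c-2}\binom{a_j - a_{t+c-1} + n}{n}$. For (i), the natural route is Corollary \ref{evdef}: it suffices to show that $\gamma: {}_0\!\Hom_R(I_B,B)\to {}_0\!\Hom_R(I_B,B')$ (equivalently, the injectivity of $\ _0\!\Ext^1_B(I_B/I_B^2,I_{A'/B'}) \hookrightarrow \ _0\!\Ext^1_B(I_B/I_B^2,B')$ at the corresponding flag level) vanishes at each step of the flag. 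One then has to transfer this property recursively down the flag, exactly as in Section 8 via Corollary \ref{fiberpr2}, comparing $ _0\!\Ext^i_B(I_B/I^2_B, I_{A/B})$ with $ _0\!\Ext^i_{A_{j}}(I_{A_{j}}/I^2_{A_{j}}, I_{A/B})$ for $j \in \{2-r,3-r\}$.

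For (ii), the plan is to compute both sides via the exact sequence \eqref{section}, $0 \to B(-a_{t+c-1}) \to N\otimes B \to MI\otimes B \to 0$, combined with Proposition \ref{vanish}(iii), which under $\depth_{J_B}A \ge 2$ gives $\Hom_B(I_{A/B},A) \cong MI\otimes A(a_{t+c-1})$, and with Lemma \ref{dimMI}, which under the numerical assumption $a_{t+c-1} < s_r - b_r + b_1$ identifies $\dim (MI\otimes A)_{a_{t+c-1}}$ with $\dim (MI)_{a_{t+c-1}}$. Together with \eqref{dimensioMI} and the inductive hypothesis, Proposition \ref{propo8} then yields the desired dimension count, provided the inequality in (ii) holds. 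The expected equality in (ii) should be derivable by a degree-by-degree analysis of the Lascoux resolution of $I_B$, using the inclusion $\ _0\!\Ext^i_B(I_B/I_B^2,I_{A/B}) \subset \ _0\!\Ext^i_R(I_B,I_{A/B})$ as in the proof of Corollary \ref{corWsmooth3}.

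The main obstacle is precisely the vanishing $\ _0\!\Ext^1_B(I_B/I_B^2,I_{A/B}) = 0$ (Problem (2) in the introduction), which is equivalent to the hypothesis needed to make (i) and the equality form of (ii) work. In the regime $c \ge 4-r$, this vanishing is expected to follow from Conjecture \ref{conjdepthNb} and Proposition \ref{conj3ext}, and in favorable ranges (large gap between $a_{t-r+1}$ and $a_{t-r+2}$) even directly from the Lascoux resolution via Corollary \ref{corWsmooth3}. The truly delicate case is the boundary $c = 3-r$, where Example \ref{rem74ex} shows that this Ext-group is generally nonzero; here one must instead prove that the composition $\gamma: \ _0\!\Hom_R(I_A,A) \to \ _0\!\Ext^1_B(I_B/I_B^2,I_{A/B})$ vanishes (as suggested in the generic case), and then conclude by the weakened version of Theorem \ref{teo3} described in Remark \ref{remthm42} together with the flag comparison of Section 8. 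Handling this boundary case uniformly — and in particular transferring the weakened property from $j = 3-r$ down to $j = 2-r$ without extra hypotheses on $\gamma$ — is the hard part of the argument.
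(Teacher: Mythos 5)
The statement you are trying to prove is explicitly labelled a \emph{conjecture} in the paper (Conjecture~\ref{conjdimW}), and the authors do not claim a proof; they offer partial results (Theorems~\ref{dimW}, \ref{corWsmoothc}, \ref{corWsmoothnew2}, \ref{corWsmoothnew1}, Corollaries~\ref{fiberpr2}, \ref{corWsmooth3}) and computational evidence, but leave the full statement open. Your ``proposal'' is therefore not a proof and cannot be one using only what the paper establishes; it is, however, an accurate reconstruction of the authors' own attack strategy, and — importantly — you correctly and honestly flag where it breaks down.

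Your outline matches the paper's approach closely: reduce to showing $\dim W(\underline{b};\underline{a};r)\ge\lambda_c$ (the upper bound being Theorem~\ref{ineqdimW}), run an induction on the flag obtained by deleting columns, and at each step invoke Theorem~\ref{dimW} / Proposition~\ref{propo8}, whose hypotheses are (i) every deformation of $B$ comes from deforming $\cB$ and (ii) the bound on $\ _0\!\hom_R(I_B,I_{A/B})$. You also correctly cite Proposition~\ref{vanish}(iii) and Lemma~\ref{dimMI} for converting $\dim(MI\otimes A)_{a_{t+c-1}}$ to $\dim(MI)_{a_{t+c-1}}$ under $a_{t+c-1}<s_r-b_r+b_1$. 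Your identification of the base case $c=2-r$ via Theorem~\ref{Amodulethm5} and Lemma~\ref{tr} is also what the paper does.

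The genuine gap — which you name — is exactly the one the paper itself isolates: establishing (i) recursively requires $\gamma=0$ (equivalently, in the range $c\ge 4-r$ and $\dim A\ge 3$, the vanishing of $\ _0\!\Ext^1_B(I_B/I_B^2,I_{A/B})$), and this is Problem~(2) of the introduction and Section~\ref{conjec}, still open. In the boundary case $c=3-r$, Example~\ref{rem74ex} shows that the Ext-group is nonzero even for generic determinantal rings, so one is forced to the weaker condition $\gamma=0$ of Remark~\ref{remthm42}, and the transfer from $j=3-r$ down to $j=2-r$ in the flag is left as an explicit assumption on $\gamma$ in Theorem~\ref{corWsmoothnew2}; the authors say as much in the introduction to Section~8. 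Two small technical slips in your writeup: in the second paragraph your description of $\gamma$ as a map ${}_0\!\Hom_R(I_B,B)\to{}_0\!\Hom_R(I_B,B')$ is garbled (the correct $\gamma$ is the composition $\ _0\!\Hom_R(I_A,A)\to\ _0\!\Hom_R(I_B,A)\to\ _0\!\Ext^1_B(I_B/I_B^2,I_{A/B})$, which you state correctly later), and (ii) as you phrase it only needs the inequality $\ _0\!\hom_R(I_B,I_{A/B})\le\sum_j\binom{a_j-a_{t+c-1}+n}{n}$ because the upper bound $\lambda_c$ from Theorem~\ref{ineqdimW} supplies the other direction — you have this right, but the phrase ``the equality form of (ii)'' in your last paragraph is not what is needed. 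Neither slip changes the verdict: you have not proved the conjecture (nobody has), and you have accurately located where and why the paper's machinery stops short.
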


  \begin{remark} \label{remconjdimW} \rm The conjecture is true for $r=1$ (and
    $c=2-r)$ by Theorem~\ref{ineqdimW}(iii), \eqref{Kis0} and the text
    accompanying \eqref{Ki0}. We have found examples in the case $c=1$ and
    $\dim A = 2$ (and none when $\dim A \ge 3$) where the conclusion of the
    conjecture is not true. Indeed, to support the conjecture we have for
    $r \ge 2$ and $\dim A \ge 2$ computed quite a lot of examples, mostly with
    linear matrices, to see if the assumption
    $$_0\! \hom_R(I_B,I_{A/B})=\sum _{j=1}^{t+c-2} {a_j-a_{t+c-1}+n\choose n}$$
    of Theorem~\ref{dimW} and hence Conjecture~\ref{conjdimW} hold, and we got
    $\dim W(\underline{b};\underline{a};r) = \lambda_c$ except for one in
    Example \ref{afterdeg}(1) below where $\dim A = 2$. It may be true that the
    conclusion of Conjecture~\ref{conjdimW} even holds for
    $\dim A = 2, c=1, r \ge 2$ and $t \ge 4$, see Example \ref{afterdeg}(2)
    and (3).
\end{remark}

\begin{example}\label{afterdeg}\rm
  \vskip 2mm (1) Let $R=k[x_0, x_1, \cdots ,x_{5}]$, $\cA=[\cB,v]$ a general
  linear $3 \times 3$ matrix, $v$ a column, and let $A$ and $B$ be the
  quotients of $R$ defined by their $2 \times 2$ minors. So $t=3$, $r=2$,
  $c=1$ and $\dim A = 2$. A Macaulay2 computation shows that
  $_0\! \hom_R(I_B,I_{A/B})=3$ and not $2$ as Theorem~\ref{dimW} assumes, and
  the proof of Theorem~\ref{dimW} yields
  $$\dim W({0^3};1^3;2) = \lambda_c -1 \ .$$ So
  $\dim W(\underline{b};\underline{a};r) < \lambda_c$ may occur (even though
  it turns out that $\overline{ W({0^3};1^3;2)}$ above is an irreducible
  component, cf. Example~\ref{examples712})! The ``same'' example with one
  more variable in $R$ yields $\dim A = 3$, $_0\! \hom_R(I_B,I_{A/B})=2$ and
  using Remark~\ref{minma}(1)
  we get $\dim W({0^3};{1^3};2) = \lambda_c$ by Theorems~\ref{dimW} or
  Corollary~\ref{cordimRdetW}(i). Also the ``same'' example with two more
  variable in $R$, as well as the generic one where $\dim R=9$, yields
  $_0\! \hom_R(I_B,I_{A/B})=2$ and hence $\dim W(0^3;1^3;2) = \lambda_c$. In
  fact since $_0\! \hom_R(I_B,I_{A/B})=2$ for $\dim R = 7$, we do not need to
  use Macaulay2 for further computations because
  Corollary~\ref{cordimRdetW}(ii) and Remark~\ref{minma}(1) apply and we
  conclude that $\dim W({0^3};{1^3};2) = \lambda_c$ for determinantal
  quotients $A$ of $R$ as above provided $\dim R \ge 7$.

  \vskip 2mm (2) If $R=k[x_0, x_1, \cdots ,x_{n}]$ and we take $\cA=[\cB,v]$
  to be a general linear $4 \times 4$ matrix, so $t=4$, $c=1$ and we let
  $r=3$, i.e. we define the determinantal rings $A$ and $B$ by the
  $2 \times 2$ minors of $\cA$ and $\cB$, then a Macaulay2 computation shows
  that $_0\! \hom_R(I_B,I_{A/B})=3$ in the case $n=10$ ($\dim A = 2$), as
  Theorem~\ref{dimW} or Corollary~\ref{cordimRdetW}(i) requires to get
  $\dim W({0^4};1^4;3) = \lambda_1=145$. But we still have to show the other
  assumptions of Theorem~\ref{dimW}. Therefore letting $\cB=[\cC,w]$ and $C$
  be the ring defined by the $2 \times 2$ minors of $\cC$ we verify
  $_0\! \hom_R(I_C,I_{B/C})=2$ and $_0\! \hom_R(I_B,B)=108$ by using
  Macaulay2. Hence Corollary~\ref{cordimRdetW}(i) and Remark~\ref{minma}(1)
  applies and we get $\dim W({0^4};1^3;3) = \lambda_{0}$. And since
  $\lambda_{0}=108$ we get $\dim W({0^4};1^3;3)=\ _0\! \hom_R(I_B,B)$. It
  follows that every deformation of $B$ comes from deforming $\cB$ by using
  Theorem~\ref{teo3}, Remark~\ref{remthm42} for $C \to B$, instead of
  $B \to A$ (this is a consequence of Theorem~\ref{Wsmooth} as explained in
  detail in Remark~\ref{remthm61}). Thus we have
  $$\dim W({0^4};1^4;3) = \lambda_1=145 \quad {\rm for} \quad n=10 \ (\dim A =
2) \ .$$
  Then using Corollary~\ref{cordimRdetW}(ii) twice, first for $C \to B$
  instead of $B \to A$, we get $\dim W({0^4};1^3;3) = \lambda_{0}$ for any
  $n \ge 10$, and then 
  for $B \to A$, and we conclude that $\dim W({0^4};1^4;3) = \lambda_1$ for
  every $n \ge 10$ ($\dim A \ge 2$).

\vskip 2mm
(3) Let $R=k[x_0, x_1, \cdots ,x_{n}]$ and let
  $\cA=[\cB,v]$ be a general linear $4 \times 4$ matrix, so $t=4$, $c=1$ but
  now we take $r=2$. Then a Macaulay2 computation shows that
  $_0\! \hom_R(I_B,I_{A/B})=3$, as Theorem~\ref{dimW} requires, for every
  $n \in \{5,6,7\}$
  and we get $\dim W({0^4};1^4;2) = \lambda_c$ in each of the cases
  $\dim A = 2, 3, 4$. But again Macaulay2 is only needed in the case
  $\dim A = 2$ ($n=5$) because then Remark~\ref{minma}(1) allows to use
  Corollary~\ref{cordimRdetW}(ii) to get $\dim W({0^4};1^4;2) = \lambda_c$ for
  every $n \ge 5$.
\end{example}

The case $r=2$ and $c=1$ of Theorem~\ref{dimW} is considered in \cite[Theorem
4.6]{KM2009} where a correction term $\kappa$ to the dimension formula is
introduced. Since \cite[Theorem 4.6]{KM2009} assumes
$a_{t} > a_{t-1}+a_{t-2}-b_1$ which is unnecessary for getting
$\dim W(\underline{b};\underline{a};2)$, we take the opportunity to generalize
the dimension formula of $ W(\underline{b};\underline{a};2)$ given there.
Recalling that we get $\underline{a'}$ by deleting $a_{t}$ in
$\underline{a}:=(a_1,...,a_{t})$ we have

\begin{theorem}\label{dimW1} Suppose  that $\Proj(A) \in
  W(\underline{b};\underline{a};2)$
  is general with $c=1$, $I_{t-2}(\cA) \ne R$,
  $\Proj(B)\in W(\underline{b};\underline{a'};2)$ and $\dim A \ge 2$. Moreover
  let $s:=\sum_{i=1}^t(a_i-b_i)$ and suppose $a_i \ge b_{i+3}$ for
  $1 \le i \le t-3$ ($a_1 \ge b_{t}$ for $t=3$) and that
  $_0\! \hom_R(I_B,I_{A/B})=\sum _{j=1}^{t-1} {a_j-a_{t}+n\choose n}$ (e.g.
  $t \ge 3$, $b_t = b_1 < a_1$ and $a_{t-1} < a_{t}$). Then we have
   $$\dim W(\underline{b};\underline{a};2) = \lambda_1 - \kappa_1 \qquad {\rm
     where} $$
$$\kappa_1=
\sum _{1\le j \le t \atop 1\le i \le k \le
t}{a_t-s-b_i-b_k+a_j+n\choose n}- \sum _{1\le i, j \le t \atop 1\le k \le t-1}{a_t-s-b_i-a_k+a_j+n\choose n}$$
$$ + \sum _{1\le i < k \le t-1\atop 1\le j \le t}{a_t-s-a_i-a_k+a_j+n\choose
n}-\sum _{2\le i\le t}{a_t-s+b_i-2b_1+n\choose n}.$$
  \end{theorem}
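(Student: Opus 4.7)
The plan is to combine Proposition~\ref{propo8} with identity~\eqref{dimensioMI} and a resolution-level computation that uses the Gulliksen--Negard resolution of $A$.

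The first step is to reduce the problem to computing $\dim(MI\otimes A)_{a_t}$. Since $c=1$ and $r=2$, the matrix $\cB$ obtained by deleting the last column of $\cA$ has size $t\times(t-1)$, and its $(t-1)\times(t-1)$ minors are the maximal minors of the transpose $\cB^{\mathrm{tr}}$. Hence $B=R/I_{t-1}(\cB)$ is standard determinantal of codimension $2$. Applying Theorems~\ref{std_det_case}(ii) and \ref{std_det_case1}(i) to the transposed configuration via Lemma~\ref{tr} should give $\dim W(\underline{b};\underline{a'};2)=\lambda_0$ as well as the fact that every deformation of $B$ comes from deforming $\cB$; the numerical hypothesis $a_i\ge b_{i+3}$ together with $I_{t-2}(\cA)\ne R$ should suffice to secure the depth bound $\depth_{J_B}B\ge r+2=4$ required by Proposition~\ref{propo8}. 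That proposition then yields
\begin{equation*}
\dim W(\underline{b};\underline{a};2) = \lambda_0 + \dim(MI\otimes A)_{a_t} - \ _0\!\hom_R(I_B,I_{A/B}).
\end{equation*}
Substituting the given formula for $\ _0\!\hom_R(I_B,I_{A/B})$ and using identity~\eqref{dimensioMI} at $c=1$ (where $K_c:=0$) reduces the claim to proving
\begin{equation*}
\kappa_1 \;=\; \dim(MI)_{a_t} - \dim(MI\otimes A)_{a_t}.
\end{equation*}

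The second step is to identify this correction with the explicit four-sum expression. From $0\to I_A\to R\to A\to 0$ tensored with $MI$, together with the vanishing $\Tor_1^R(A,MI)_{a_t}=0$ (which the generality of $\cA$ and standard depth estimates should supply), the quantity $\dim(MI)_{a_t}-\dim(MI\otimes A)_{a_t}$ equals the dimension in degree $a_t$ of the image of the multiplication $I_A\otimes_R MI\to MI$. To compute this, I would use the Gulliksen--Negard minimal free resolution of $A=R/I_{t-1}(\cA)$, the standard length-$4$ resolution for submaximal minors of a $t\times t$ matrix, with twists determined by $F=\oplus R(b_i)$, $G=\oplus R(a_j)$ and $s=\sum(a_i-b_i)$. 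Tensoring with the Buchsbaum--Rim-like presentation $0\to R(-s)\to G^*\to F^*\to MI\to 0$ of $MI$ (exact under the present depth assumption), and extracting the graded pieces in degree $a_t$ of the resulting double complex, the four alternating sums in $\kappa_1$ should emerge as the surviving contributions: the symmetric range $i\le k$ in the first sum and the antisymmetric range $i<k$ in the third should reflect the symmetric and exterior summands of the self-dual middle term of the Gulliksen--Negard complex, while the anomalous fourth sum with distinguished twist $-2b_1$ should encode a non-trivial pairing between the rank-one top $R(-2s)$ of the Gulliksen--Negard complex and the $R(-s)$ at the top of the presentation of $MI$, reminiscent of the Gauss-elimination pivot used in the proof of Theorem~\ref{mainthm}.

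The hardest step will be the last identification. It requires (i) carefully verifying the vanishing of the higher $\Tor_i^R(A,MI)_{a_t}$ that would otherwise contribute extra terms---a step where the generality of $\cA$ together with the hypothesis $a_i\ge b_{i+3}$ is used to control the depths of the relevant Fitting loci---and (ii) the precise bookkeeping of signs, twists and ranges (symmetric vs.\ antisymmetric) in the double complex so as to match the four-sum expression for $\kappa_1$ exactly. The asymmetric appearance of $b_1$ in the fourth sum is a signal that the cleanest proof will localise at a distinguished row (as in Theorem~\ref{mainthm}) and then patch the contributions together.
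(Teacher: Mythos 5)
Your first reduction is correct and does match the skeleton of the paper's proof: using Proposition~\ref{propo8} together with \eqref{dimensioMI} at $c=1$ (where $K_1=0$) one does arrive at
\[
\kappa_1 \;=\; \dim(MI)_{a_t} - \dim(MI\otimes A)_{a_t}\, ,
\]
and the preliminary observations about $B$ being a codimension-$2$ standard determinantal ring, the applicability of Ellingsrud's theorem, and the depth bound $\depth_{J_B}B\ge 4$ secured by $a_i\ge b_{i+3}$ are all sound (the paper invokes \cite{E} directly rather than going through Lemma~\ref{tr} and Theorems~\ref{std_det_case}--\ref{std_det_case1}, but the conclusion is the same).

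However, there are two genuine problems with the remainder. First, a concrete error: the claimed presentation $0\to R(-s)\to G^*\to F^*\to MI\to 0$ of $MI$ is wrong when $c=1$. Here $G^*$ and $F^*$ both have rank $t$ and $\varphi^*$ has non-zero determinant, so $\varphi^*$ is injective and the Buchsbaum--Rim complex degenerates to $0\to G^*\to F^*\to MI\to 0$ of length $1$; there is no $R(-s)$ term. Second, and more seriously, the heart of the argument is never carried out, and the speculation about its structure points in the wrong direction. The paper does not compute $\dim(MI\otimes A)_{a_t}$ by tensoring a resolution of $A$ with a resolution of $MI$. Instead it uses Proposition~\ref{vanish}(iii) to identify $MI\otimes A(a_t)\cong\Hom_B(I_{A/B},A)$, and then invokes a formula from \cite{K2007} (Lemma 28 there, applied with $M=N_B$) expressing $\,_0\!\hom(I_{A/B},A)$ in terms of $\dim(I_B/I_B^2)_s$, $\,_0\!\hom(I_B/I_B^2,I_B/I_B^2)$ and $\dim(K_B)_{n+1-2s}$ --- i.e.\ entirely in terms of the codimension-$2$ ring $B$. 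These are then computed via the Eagon--Northcott resolution of $I_B$ and, crucially, via the length-$3$ resolution \eqref{ImultI} of $I_B^2$: the symmetric range $1\le i\le k\le t$, the antisymmetric range $1\le i<k\le t-1$ and the mixed range in $\kappa_1$ come from the $S_2(F^*)$, $\bigwedge^2 G^*_{t-1}$ and $F^*\otimes G^*_{t-1}$ terms of that resolution, while the fourth sum with the shift $-2b_1$ comes from the right-hand end of the complex computing $K_B(n+1)$. Attributing these structures to the Gulliksen--Negard complex of $A$ and to a ``pivot pairing'' with $R(-2s)$ is not substantiated, and given where they actually arise, the proposed double-complex bookkeeping is unlikely to reproduce the four sums without essentially rediscovering the $I_B^2$-resolution route. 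If you want to complete the proof, the fastest way is to follow the paper: identify $MI\otimes A(a_t)$ with $\Hom_B(I_{A/B},A)$, apply the K2007 identity, and compute everything from the Eagon--Northcott structure on $B$.
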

  \begin{remark} \label{remthm622} \rm The first sentence implicitly implies
    that $W(\underline{b};\underline{a};2) \ne \emptyset $ and
    $W(\underline{b};\underline{a'};2) \ne \emptyset$, while the assumption
    $a_i \ge b_{i+3}$ for $1 \le i \le t-3$ ($a_1 \ge b_{t}$ for $t=3$)
    implies that $\depth _{I_{t-2}(\cB)}B \ge 4$ by \cite[Remark 2.7]{KM2005}.
    Then $I_{t-2}(\cA) \ne R$ is equivalent to $a_i > b_{i+2}$ for some $i$,
    $1 \le i \le t-2$.
\end{remark}
  \begin{proof} Note that $B$ is defined by the maximal minors of the matrix
    $\cB$ associated to $B$. By \cite{E} it follows that every deformation of
    $B$ comes from deforming $\cB$ and that
    $\dim W(\underline{b};\underline{a'};2) = \dim (N_B)_0$ where
    $N_B:= \Hom(I_{B},B)$. We also have
    $\depth _{I_{t-2}(\cB)}B = \depth _{J_B}A +2 \ge 4$, $J_B=I_{t-r}(\cB)A$ by
    Remark~\ref{remthm622}. Hence, using Proposition~\ref{propo8} we get
    that $pr_2$ is surjective and that
$$
\begin{array}{rcl} \dim  W(\underline{b};\underline{a};2) & =
& \dim (MI \otimes A)_{a_{t}}- \dim\, _0\!\Hom (I_B ,I_{A/B}) +\dim  W(\underline{b};\underline{a'};2)
\end{array}
$$
and note that the assumptions of Proposition \ref{vanish}(iii) ($c+r \ge3$,
$\depth _{J_B}A\ge 2$) hold, which implies
$MI\otimes A(a_{t+c-1}) \cong \Hom(I_{A/B},A)$. Applying \cite[Lemma
28]{K2007} in our situation where $M=N_B$ and $M^*=I_B/I_B^2$, cf. \cite[(33)
and Corollary 41]{K2007} and using $s:=\sum_{i=1}^t(a_i-b_i)=s_0-a_{t+1}$ we
have
$$ _0\! \hom(I_{A/B},A) = \dim (I_B/I_B^2)_{s}-\ _0\!
\hom(I_B/I_B^2,I_B/I_B^2)+\dim(K_B)_{n+1-2s}$$
because
$\Hom(I_B/I_B^2,I_B/I_B^2) \cong \Hom_{\mathcal{O}_U}(\widetilde {I_B/I_B^2}^*
\arrowvert_U, \widetilde {I_B/I_B^2}^* \arrowvert_U)) \ \cong \
\Hom_B(N_B,N_B)$
by \eqref{NM} and $\ \Ext_B^1(N_B,B)=0$ and $\Ext_B^1(N_B,N_B)=0$ for similar
reasons, cf. \cite[proof of Corollary 41]{K2007} for more details. By
\cite[Remark 35]{K2007} or \cite[section 2.3]{KM2009}, the exact sequence
$$0\longrightarrow \oplus_{j=1}^{t-1} R(-a_j+a_t-s)\longrightarrow \oplus_{i=1}^t R(-b_i+a_t-s)\longrightarrow
I_B\longrightarrow 0$$
induces exact sequences
$$ 0 \longrightarrow R \longrightarrow \oplus_i I_B(s-a_t+b_i) \longrightarrow \oplus_j
I_B(s-a_t+a_j) \longrightarrow N_B \longrightarrow 0,$$
$$
0 \longrightarrow \Hom_B(I_B/I_B^2,I_B/I_B^2) \longrightarrow \oplus_i
I_B/I_B^2(s-a_t+b_i) \longrightarrow \oplus_j I_B/I_B^2 (s-a_t+a_j) \longrightarrow
N_B \longrightarrow 0$$
and $$ ..\longrightarrow \oplus_j B(s-a_t+a_j) \longrightarrow K_B(n+1) \longrightarrow 0.$$
 Thus if
$\eta(v)=\dim (I_B/I_B^2)_v$ we have
$$ \dim
W(\underline{b};\underline{a};2)=\eta(s)-\sum_{i=1}^{t-1}\eta(a_t-s-b_i)+\sum_{j=1}^{t}\eta(a_t-s-a_j)-
\sum _{j=1}^{t} {a_j-a_{t}+n\choose n}+1 $$
because $ 0=B(s-a_t+a_j-2s)_0 \twoheadrightarrow K_B(n+1-2s)_0$ and
$_0\! \hom_R(I_B,I_{A/B})=\sum _{j=1}^{t-1} {a_j-a_{t}+n\choose n}$ by
assumption. To compute $\eta(v)$ we use the exact sequences
$$0 \longrightarrow I_B^2 \to I_B \longrightarrow I_B/I_B^2 \longrightarrow 0 $$ and
\begin{equation} \label{ImultI}
0\longrightarrow \oplus _{1\le i<j\le
t-1}R(-a_i-a_j+2a_t-2s) \longrightarrow
\oplus _{1\le i \le t \atop 1\le j \le
t-1}R(-b_i-a_j+2a_t-2s)\end{equation}
$$ \longrightarrow \oplus _{1\le
i\le j\le t}R(-b_i-b_j+2a_t-2s)\longrightarrow
I_B^2\longrightarrow 0
$$
and we refer to \cite{KM2009} to see that
$$\eta(s)-\sum_{i=1}^{t-1}\eta(a_t-s-b_i)+\sum_{j=1}^{t}\eta(a_t-s-a_j) =
\sum _{1\le i \le t \atop 1\le j \le
  t}{a_j-b_i+n\choose n}-$$
$$\sum _{1\le i\le t-1 \atop 1\le j \le
  t}{a_j-a_i+n\choose n}-\sum _{1\le i \le t \atop 1\le j \le
  t}{b_i-b_j+n\choose n}+\sum _{1\le i\le t \atop 1\le j \le
  t-1}{b_i-a_j+n\choose n}-\kappa_1\! , \quad \rm{where}$$
$$\kappa_1= \sum _{1\le j \le t \atop 1\le i \le k \le
  t}{a_t-s-b_i-b_k+a_j+n\choose n}- \sum _{1\le i, j \le t \atop 1\le k \le
  t-1}{a_t-s-b_i-a_k+a_j+n\choose n}$$
$$ + \sum _{1\le i < k \le t-1\atop 1\le j \le t}{a_t-s-a_i-a_k+a_j+n\choose
  n}-\sum _{2\le i\le t}{a_t-s+b_i-2b_1+n\choose n}.$$
To get $ \dim W(\underline{b};\underline{a};2)$ we only need to subtract
$\sum _{j=1}^{t-1} {a_j-a_{t}+n\choose n}$ which amounts to change the
indices of the second sum of binomials from ${1\le i\le t-1, 1\le j \le t}$ to
${1\le i\le t, 1\le j \le t}$. We can do exactly the same change for the
indices of the $4^{\rm th}$ sum of binomials because the extra binomials we
add are zero provided $a_t > b_t$. Hence we get
$ \dim W(\underline{b};\underline{a};2)=\lambda_1-\kappa_1$ and we are done.
\end{proof}

\begin{example} \label{dimW2}  \rm (Determinantal quotients of
   $R=k[x_0, x_1, \cdots ,x_n]$, using  Theorem~\ref{dimW1} with $r=2$)

   Let $\cA= [\cB,v]$ be a general $3 \times 3$ matrix with linear (resp.
   quadratic) entries in the first and second (resp. third) column. The degree
   matrix of $\cA$ is
   $\left(\begin{smallmatrix}1 & 1  & 2 \\
       1 & 1 & 2 \\ 1 & 1 & 2 \end{smallmatrix}\right)$
   and $b_i=0$ for $1 \le i \le 3$. The vanishing of all $2 \times 2$ minors
   of $\cA$ defines a determinantal ring that satisfies all conditions of
   Theorem~\ref{dimW} (with $r=2$, $t=3$, $c=1$, noting that
   $$\dim \overline{ W(0^3;1^2;2)}= \lambda_0$$ by Lemma~\ref{tr}), except
   $a_t < s_2 = 2$. However, the weak assumption of Theorem~\ref{dimW1},
   $a_{t-1} < a_t$ holds, and it follows that
   $$\dim \overline{ W(0^3;1^2,2;2)}= \lambda_1-\kappa_1=(3n^2+17n-24)/2$$ by
   Theorem~\ref{dimW1}. Indeed $\kappa_1=6$. To check the answer using
   Macaulay2, let $\cB$ be the generic linear matrix with entries
   $x_0,x_1,...,x_{5}$ and let $v^{tr}=(x_{6}^2,x_{7}^2,x_{8}^2)$. We get
   $\ \Ext_A^1(I_A/I_A^2,A) = 0$ and $\dim_{(X)} \Hilb^{p_X(t)}(\PP^{8}) =152$
   for $X:=\Proj(A)$, coinciding with our formula for
   $ \dim \overline{ W(0^3;1^2,2;2)}$ when $n=8$. We have also checked the
   cases $5 \le n \le7$ using Macaulay2, and for $n=7$ they coincide while in
   the cases $5 \le n \le 6$,
   $ \dim \overline{ W(0^3;1^2,2;2)} < \dim_ {(X)} \Hilb^{p_X(t)}(\PP^{n})$,
   showing that $\overline{W(0^3;1^2,2;2)}$ is not an irreducible component of
   $ \Hilb^{p_X(t)}(\PP^{n})$ for $n < 7$, i.e. when $1 \le \dim X \le 2$, so
   deforming $X$ is not equivalent to deforming its associated homogeneous
   matrix. Note that the curve case ($n=5$) of this example was thoroughly
   analysed in \cite[Example 23]{K2007}.
 \end{example}


\section{Generically smooth components of the Hilbert scheme}

The goal of this section is to address Problems \ref{pblmsagain} (2) and (3)
and to examine when $\overline{ W(\underline{b};\underline{a};r)}$ is a
generically smooth irreducible component of $ \Hilb ^{p_X(t)}(\PP^n)$. Letting
$\underline{a'}=a_1,a_2, \cdots, a_{t+c-2}$ as previously we have

\begin{theorem}\label{Wsmooth}  Let  $c > 2-r$, let $B=R/I_B\twoheadrightarrow
  A$
  be determinantal algebras defined by $(t+1-r)$-minors of matrices $\cA$ and
  $\cB$ representing $\varphi ^*$ and $\varphi^*_{t+c-2}$, respectively, and
  suppose that $X:=\Proj(A) \in W(\underline{b};\underline{a};r)$ is general
  and satisfies $\dim A\ge 2$ and $a_1 > b_t$. If $c \le0$, we also
  suppose $\dim A\ge 3$. Moreover let $\gamma$ be the composition
  $$_0\!\Hom_R(I_A ,A)\longrightarrow\ _0\!\Hom_R(I_B ,A)\longrightarrow\
  _0\!\Ext^1_B(I_B/I_B^2,I_{A/B})$$
  and suppose $\gamma =0$ (e.g.\
  $_0\!\Ext^1_B(I_B/I_B^2,I_{A/B})\hookrightarrow\ _0\!\Ext^1_B(I_B/I_B^2,B)$
  is injective) and that every deformation of $B$ comes from deforming $\cB$.
  Then $ \overline{W(\underline{b};\underline{a};r)}$ is a generically smooth
  irreducible component of $\Hilb ^{p_X(t)}(\PP^n)$ and every deformation of $A$
  comes from deforming $\cA$. Moreover
  $$\dim W(\underline{b};\underline{a};r) = \dim
  W(\underline{b};\underline{a'};r)+\dim_k(MI \otimes A)_{(a_{t+c-1})}\ -\ _0\!
    \hom_R(I_{B},I_{A/B}).$$
  \end{theorem}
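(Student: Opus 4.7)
The plan is to deduce the theorem from Corollary~\ref{evdef}, Corollary~\ref{unobstr}, and Proposition~\ref{propo8}, together with a tangent-space argument modeled on Lemma~\ref{unobst1}.

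First I would verify the depth hypotheses required by the machinery. Since $B$ and $A$ are determinantal with $J_A, J_B \ne R$ (the latter ensured by $a_1 > b_t$), Remark~\ref{minormax} gives $\depth_{J_B} B = c + 2r - 1$, and the assumption $c > 2-r$, i.e.\ $c \ge 3-r$, forces $\depth_{J_B} B \ge r+2$. When $c \le 0$, the extra hypothesis $\dim A \ge 3$ yields $\depth_{J_A} A \ge 3$. These are precisely the standing assumptions of Corollary~\ref{evdef}. The hypothesis $\gamma = 0$ is condition (c) of that corollary (and the ``e.g.'' injectivity is the sufficient condition (e), via the exact sequence \eqref{longp}), while the hypothesis that every deformation of $B$ comes from deforming $\cB$ places us in the setting of Corollary~\ref{evdef}. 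Equivalently, one may invoke Theorem~\ref{teo3} together with Remark~\ref{remthm42}. The conclusion is condition (a): every deformation of $A$ comes from deforming $\cA$.

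Next, since $\dim X \ge 1$, Corollary~\ref{unobstr} yields unobstructedness of $A$ and hence smoothness of $\Hilb^{p_X(t)}(\PP^n)$ at $(X)$. To promote this to the statement that $\overline{W(\underline{b};\underline{a};r)}$ is an irreducible component, I would argue as in the proof of Lemma~\ref{unobst1}. Let $(T,\mathfrak m_T)$ be the local ring of $\Hilb^{p_X(t)}(\PP^n)$ at $(X)$; this ring is regular by smoothness. Writing $T_2 = T/\mathfrak m_T^2$, the universal object pulled back to $\Spec T_2$ is a deformation $A_{T_2}$ of $A$, and by the previous step $A_{T_2} = R_{T_2}/I_{t+1-r}(\cA_{T_2})$ for some lifting $\cA_{T_2}$ of $\cA$. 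This matrix induces a morphism $\Spec T_2 \to \overline{W(\underline{b};\underline{a};r)}$ whose tangent map is an isomorphism, showing
$$\dim \overline{W(\underline{b};\underline{a};r)} \;\ge\; \dim \mathfrak m_T/\mathfrak m_T^2 \;=\; \dim_{(X)} \Hilb^{p_X(t)}(\PP^n).$$
Combined with irreducibility of $\overline{W(\underline{b};\underline{a};r)}$ (Lemma~\ref{lemaaux2}) and the reverse inclusion, this forces $\overline{W(\underline{b};\underline{a};r)}$ to be a component, generically smooth since $(X)$ is a general point and $\Hilb^{p_X(t)}(\PP^n)$ is smooth there.

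For the dimension formula, all hypotheses of Proposition~\ref{propo8} are now in force: $c \ge 3-r$, $\depth_{J_B}B \ge r+2$, $J_A \ne R$, every deformation of $B$ comes from deforming $\cB$, and if $c \le 0$ then $\depth_{J_A}A \ge 3$; the matrix $\cA$ is general by hypothesis. A direct invocation of Proposition~\ref{propo8} produces the stated equality. The principal obstacle lies in the first step: to apply Corollary~\ref{evdef} under the minimal depth hypothesis $\dim A \ge 2$ (rather than $\dim A \ge 3$ or more), one must work through the equivalent formulation $\gamma = 0$ rather than the more restrictive vanishing of $\,_0\!\Ext^1_B(I_B/I_B^2, I_{A/B})$, and the ``e.g.'' injectivity provides a concrete way to verify $\gamma = 0$ from the degrees of the Lascoux resolution of $B$.
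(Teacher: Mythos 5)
Your proof follows essentially the same route as the paper's proof: verify the depth hypotheses via Remark~\ref{minormax}, apply Theorem~\ref{teo3}/Remark~\ref{remthm42} (equivalently Corollary~\ref{evdef}) to conclude that every deformation of $A$ comes from deforming $\cA$, invoke Corollary~\ref{unobstr} and the argument of Lemma~\ref{unobst1} (as packaged in Lemma~\ref{unobst}) for the component statement, and read off the dimension formula from Proposition~\ref{propo8}. One small imprecision: $\depth_{J_B}B$ equals $\min\{\dim B,\, c+2r-1\}$ rather than $c+2r-1$ outright, but since $\dim B = \dim A + r \ge r+2$ and $c \ge 3-r$ implies $c+2r-1 \ge r+2$, the required bound $\depth_{J_B}B \ge r+2$ still follows, so the conclusion is unaffected.
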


  Since we have $\depth _{I_{t-r}(\cB)A}A\ge \min\{\dim A,c+r-1\} \ge 2$ by
  Remark~\ref{minormax}, the theorem is an immediate consequence of
  Theorem~\ref{teo3}, Remark~\ref{remthm42}, Proposition~\ref{propo8} and the
  following

  \begin{lemma} \label{unobst} Set $A=R/I_{t+1-r}(\varphi ^*)$, let
    $X:=\Proj(A) \in W(\underline{b};\underline{a};r)$, and suppose $\dim
    X \ge 1$ and
    that every deformation of $A$ comes from deforming $\cA$. Then
    $\overline{ W(\underline{b};\underline{a};r)}$ is a generically smooth
    irreducible component of $\Hilb ^{p_X(t)}(\PP^n)$.
\end{lemma}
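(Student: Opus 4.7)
The plan is to show that $\overline{W(\underline{b};\underline{a};r)}$ contains a Zariski-open neighborhood of $(X)$ in $\Hilb^{p_X(t)}(\PP^n)$, whence it is a (necessarily unique) irreducible component through $(X)$; generic smoothness will then be inherited from smoothness of the ambient Hilbert scheme at $(X)$. First I would invoke Corollary~\ref{unobstr}: the hypothesis together with $\dim X \ge 1$ gives that $\Hilb^{p_X(t)}(\PP^n)$ is smooth at $(X)$, so $\mathcal{O}_{\Hilb,(X)}$ is regular (hence integral) and a unique irreducible component $V$ of $\Hilb^{p_X(t)}(\PP^n)$ passes through $(X)$. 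Moreover $\overline{W(\underline{b};\underline{a};r)}$ is irreducible by Lemma~\ref{lemaaux2} and passes through $(X)$, so $\overline{W(\underline{b};\underline{a};r)}\subset V$ locally at $(X)$.

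Next I would recall the morphism $q:U\to\Hilb^{p_X(t)}(\PP^n)$ of diagram~\eqref{flag}, defined on an open subset $U\subset\HH om(F,G)$ containing $\cA$ and with set-theoretic image $W(\underline{b};\underline{a};r)$; here $q(\cA)=(X)$. By Lemma~\ref{lemma2}(ii), the hypothesis is equivalent to smoothness of the induced morphism $Def(\psi):Def_{MI}\to Def_A$ of local graded deformation functors. Composing with the (obviously smooth) morphism from matrix-deformations of $\cA$ to module-deformations of $MI=\coker\varphi^*$, the functor map $Def_{\cA}\to Def_A$ is formally smooth at $\cA$. Identifying $Def_A$ with the local functor of $\Hilb^{p_X(t)}(\PP^n)$ at $(X)$ via~\eqref{Grad} (valid since $\dim X\ge 1$) and using that $\HH om(F,G)$ is smooth together with the standard upgrade ``formally smooth $+$ locally of finite presentation $=$ smooth'', one concludes that $q$ itself is smooth at $\cA$.

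Since smooth morphisms are open, the image $W(\underline{b};\underline{a};r)$ of $q$ contains a Zariski-open neighborhood of $(X)$ in $\Hilb^{p_X(t)}(\PP^n)$; hence so does $\overline{W(\underline{b};\underline{a};r)}$, forcing the equality $\overline{W(\underline{b};\underline{a};r)}=V$ in a neighborhood of $(X)$, and therefore $\overline{W(\underline{b};\underline{a};r)}$ is an irreducible component of $\Hilb^{p_X(t)}(\PP^n)$. Its smoothness at $(X)$ is inherited from $\Hilb^{p_X(t)}(\PP^n)$, and combined with irreducibility this yields generic smoothness. The main obstacle is the translation from smoothness of the deformation-functor map (Lemma~\ref{lemma2}(ii)) to smoothness of the actual scheme morphism $q$ at $\cA$: this relies on pairing Artinian prorepresentability (both functors have hulls identified with the completions $\hat{\mathcal{O}}_{\HH om(F,G),\cA}$ and $\hat{\mathcal{O}}_{\Hilb,(X)}$ respectively) with the finite-type criterion above. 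An alternative route, closer in spirit to the proof of Lemma~\ref{unobst1}, is to argue directly that the universal formal family at $(X)$ is realized by a formal matrix lifting $\cA$ (using the hypothesis at each Artinian quotient $\hat{\mathcal{O}}_{\Hilb,(X)}/\mathfrak{m}^n$), so the induced map $\hat{\mathcal{O}}_{\Hilb,(X)}\to\hat{\mathcal{O}}_{\HH om(F,G),\cA}$ is split injective, whence the scheme-theoretic image of $q$ already contains a neighborhood of $(X)$.
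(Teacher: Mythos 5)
Your proposal is correct and takes essentially the same approach as the paper: both first invoke Corollary~\ref{unobstr} (via the hypothesis, smoothness of $\Hilb^{p_X(t)}(\PP^n)$ at $(X)$ and hence a unique component through $(X)$), and then observe that the hypothesis forces the matrix parameter space to dominate a Zariski neighbourhood of $(X)$, so that $\overline{W(\underline{b};\underline{a};r)}$ is that component. The paper realizes the second step by noting that the universal object over $T_2 = T/\mathfrak m_T^2$ (and, in Remark~\ref{remMthm61}, over $\hat T$ and then a Zariski-open $\Spec(D)$) is cut out by a matrix lifting $\cA$, which is exactly the ``alternative route'' you sketch at the end; your main route, deducing smoothness of $q$ at $\cA$ from formal smoothness of $Def_{\cA}\to Def_A$ plus finite presentation, is a standard repackaging of the same content.
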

\begin{proof} By Corollary~\ref{unobstr}, $\Hilb ^{p_X(t)}(\PP^n)$
  is smooth at $(X)$. Then the proof in the $2^{nd}$ paragraph of
  Lemma~\ref{unobst1}, with $A$ instead of $B$, applies, or see
  Remark~\ref{remMthm61} which proves even more.
 \end{proof}

 \begin{remark} \label{remMthm61} \rm Let $(T,{\mathfrak m}_T)$ be the local
   ring of $\Hilb ^{p_X(t)} (\PP^{n})$ at $(X)$, $X=\Proj(A)$ of dimension
   $\ge 1$ (cf. \eqref{Grad}) and let $\Proj(A_{T_2})$ be the pullback of the
   universal object of $\Hilb ^{p_X(t)}(\PP^n)$ to $\Spec(T_2)$ where
   $T_m= T/{\mathfrak m}_T^m$, $m \ge 2$ and $T_2=k\{t_1, \cdots,t_k\}$. In
   the proof of Lemma~\ref{unobst1} we observed that the ``universal object''
   $A_{T_2}$ is defined by some matrix by assumption. Even more is true. In
   fact we can extend the pullback of the universal object of
   $\Hilb ^{p_X(t)}(\PP^n)$ to $\Spec({\hat T})$ where ${\hat T}$ is the
   completion of the regular local ring $T$ with respect to ${\mathfrak m}_T$.
   And since any deformation of the ``universal quotient''
   $R\otimes_k T_2 \to A_{T_2}$ to ${\hat T}$ suffices to define the
   prorepresenting object by \cite{La}, proof of Theorem 4.2.4, up to
   isomorphism, we may take the matrix $\cA_{\hat T}$ of $A_{\hat T}$ as
   defined by some lifting (e.g. take the entries to be of degree one in the
   $t_i$) of $\cA_{T_2}$ to ${\hat T}$, whence the generators of
   $I_{t+1-r}(\cA_{\hat T})$ are polynomials, and not power series in $t_i$.
   Thus we can further extend the entries of $\cA_{\hat T}$ to polynomials
   $f_{ij,D}$ with coefficients in $D$ where $\Spec(D)$ is a small enough open
   set of $ \Hilb ^{p_X(t)}(\PP^n)$ containing $(X)$ for which the Lascoux
   complex associated to the matrix $\cA_D=(f_{ij,D})$ is exact at any
   $(X') \in \Spec(D)$.
\end{remark}

\begin{remark} \label{remthm61} \rm (1) We often use Theorem~\ref{Wsmooth}
  verifying\ $ _0\!\Ext^1_B(I_B/I_B^2,I_{A/B})=0$, but the corresponding
  injectivity assumption in Theorem~\ref{Wsmooth} is a priori weaker and, in
  fact, equivalent to the vanishing of the connecting map
  $ _0\!\Hom _R(I_B ,A) \to \ _0\!\Ext^1_B(I_B/I_B^2,I_{A/B})$, so equivalent to
  the exactness of
\begin{equation} \label{thm61cond1}
 0 \longrightarrow\ _0\!\Hom _R(I_B ,I_{A/B}) \longrightarrow\ _0\!\Hom
    _R(I_B ,B) \longrightarrow\ _0\!\Hom _R(I_B ,A) \longrightarrow 0 \ .
\end{equation}
Since the left-exactness always holds, the exactness may be verified by
Macaulay2 by computing dimensions of these $\Hom$-groups which may be faster
than computing $ _0\!\Ext^1_B(I_B/I_B^2,I_{A/B})$.

    (2) Suppose all assumptions of Theorem~\ref{Wsmooth}, except $\gamma=0$,
    hold. Then the map $pr_2$ in diagram \eqref{cartesiansquare} is surjective
    by Proposition~\ref{thmext3}. By Corollary~\ref{evdef} the map $pr_1$ in
    \eqref{cartesiansquare} is surjective if and only if $\gamma=0$.
    Hence we get from diagram \eqref{cartesiansquare} that one may verify the
    condition
    $\gamma = 0$ by showing
    \begin{equation} \label{thm61cond}\ _0\! \hom_R(I_{A},A) = \ _0\!
      \hom_R(I_{B},B)+\dim_k(MI \otimes A)_{(a_{t+c-1})}\ -\ _0\!
      \hom_R(I_{B},I_{A/B})\, ,
\end{equation}
and conversely that $\gamma=0$ implies \eqref{thm61cond}. Moreover $\gamma=0$
is further equivalent to
$\overline{ W(\underline{b};\underline{a};r)} \subset \Hilb ^{p_X(t)}(\PP^n)$
being a generically smooth irreducible component. Indeed we have one way by
Theorem~\ref{Wsmooth}, and conversely, we use the dimension formula of
Proposition~\ref{propo8} to see that \eqref{thm61cond} holds. Thus $\gamma=0$
is also equivalent to
$\dim \overline{ W(\underline{b};\underline{a};r)} =\ _0\! \hom_R(I_{A},A)$.
Finally we notice that if also the assumptions of Theorem~\ref{dimW} hold with
$\kappa=0$, then \eqref{thm61cond} is equivalent to
$\ _0\! \hom_R(I_{A},A) = \lambda_c$.

(3) Suppose all assumptions of Theorem~\ref{Wsmooth}, except $\gamma=0$, hold.
Then the a priori weaker injectivity assumption of (1) is expected to be
weaker only when $\dim A = 2$ and in the cases $(c,r)=(2,2)$ and $r=1$ because
$ _0\!\Ext^1_B(I_B/I_B^2,B)=0$ is expected for $\dim B \ge 4$, $c \ne 2$ by
Conjecture~\ref{conjdepthNb}(i) and Proposition~\ref{conj3ext}(i), see
\cite[Theorem 5.11]{K2014} for $r=1$. Moreover it is clear that
\eqref{thm61cond1} implies $\gamma=0$ by the definition of $\gamma$. And
conversely due to Remark~\ref{minormax} and (2) above we get that $\gamma=0$
implies \eqref{thm61cond1} provided $c \ge 4-r$. However, if $c = 3-r$ they
are not equivalent, even in the generic case as Example~\ref{rem74ex} shows.
\end{remark}

\begin{example} \label{rem74ex} \rm Here is an example in the case $c=3-r$
  where $ _0\!\Ext^1_B(I_B/I_B^2,I_{A/B})\ne 0$ and
  $ _0\!\Ext^1_B(I_B/I_B^2,B)= 0$, but $\gamma = 0$. Indeed let $A$ be the
   generic determinantal ring defined by the ideal of all
  $2 \times 2$ minors of the $3 \times 3$ matrix $\cA= (x_{i,j})$ and let
  $B=R/I_{2}(\cB)$ where $\cA= [\cB,v]$ and $R=k[x_{i,j}]$. In this case
  $ \ _0\! \hom_{R}(I_{B},B)=42$, $ \ _0\! \hom_{R}(I_{B},I_{A/B})=2$ and
  $ \ _0\! \hom_{R}(I_{A},A)=\lambda_{1}=64$ by Example~\ref{exgendet}(i) with
  $c=1$. Using Macaulay2 we get $ \ _0\! \hom_R(I_B,A)=48$ and
  $\dim_k(MI \otimes A)_{(1)}=24$, 
  whence $\gamma
  = 0$, i.e. \eqref{thm61cond} holds while \eqref{thm61cond1} is not true.
\end{example}

\begin{corollary} \label{corgendetW} Let $X=\Proj(A) \in W(0^t;1^{t+c-1};r)$,
  $1 \le r < t$, $(r,c) \ne (1,1)$ be a generic determinantal scheme defined
  by $s \times s$-minors of a $t\times (t+c-1)$ matrix $\cA=(x_{ij})$ of
  indeterminates of the polynomial ring $R:=k[x_{ij}]$, i.e. $A:=R/I_{s}(\cA)$
  with $s={t+1-r}$. Then $ \overline{W(0^t;1^{t+c-1};r)}$ is a generically
  smooth irreducible component of $\Hilb ^{p_X(t)}(\PP^n)$, $n:=t(t+c-1)-1$
  and every deformation of $A$ comes from deforming $\cA$.

  More generally taking $s \times s$-minors of a matrix $\cA=(x_{ij})$ of
  indeterminates belonging to a larger polynomial ring
  $R[\underline y]:=k[x_{ij},y_k]$, $1 \le k \le e$ and letting
  $A':=R[\underline y]/I_{s}(\cA)$ and $W(0^t;1^{t+c-1};r;R[\underline y])$ be
  the locus of all determinantal rings defined by $s \times s$-minors of
  linear $t\times (t+c-1)$ matrices with coefficients in $R[\underline y]$,
  then $ \overline{W(0^t;1^{t+c-1};r;R[\underline y])}$ is a generically
  smooth irreducible component of $\Hilb ^{p_X(t)}(\PP^{n+e})$ and every
  deformation of $A$ comes from deforming $\cA$.
   \end{corollary}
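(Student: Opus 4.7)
The statement is a direct synthesis of results established earlier in the section, so the plan is simply to combine them with care regarding the excluded case $(r,c)=(1,1)$ and the condition $\dim X \ge 1$ needed for Lemma~\ref{unobst}.

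For the first part, set $s = t+1-r$. The hypothesis $1 \le r < t$ together with $(r,c) \ne (1,1)$ is equivalent to $(s,c) \ne (t,1)$, which is exactly the numerical restriction in Proposition~\ref{deforminggenericcase}. That proposition then gives that every deformation of the generic determinantal ring $A = R/I_s(\cA)$ comes from deforming $\cA$. I would next verify the dimension condition $\dim X \ge 1$ required by Lemma~\ref{unobst}: since $\dim A = t(t+c-1) - r(r+c-1) = (t-r)(t+r+c-1)$ with $t-r \ge 1$ and, using $c \ge 2-r$, $t+r+c-1 \ge t+1 \ge 3$, we have $\dim A \ge 3$, so $\dim X \ge 2$. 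Applying Lemma~\ref{unobst} then yields that $\overline{W(0^t;1^{t+c-1};r)}$ is a generically smooth irreducible component of $\Hilb^{p_X(t)}(\PP^n)$.

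For the generalization to $R[\underline y]$, I would invoke Corollary~\ref{gendetrem}: since $R[\underline y]$ is a flat polynomial $R$-algebra and the generic-matrix property has been established for $A$ over $R$, it follows that every deformation of $A' = R[\underline y]/I_s(\cA)$ comes from deforming $\cA$, viewed now as a matrix with entries in $R[\underline y]$. Observe that $(X'= \Proj(A')) \in W(0^t;1^{t+c-1};r;R[\underline y])$ because $\cA$ is a linear matrix with coefficients in $R[\underline y]$. The dimension of $A'$ only grows with $e$, so $\dim X' \ge 1$ is automatic. A second application of Lemma~\ref{unobst}, now in $\Hilb^{p_{X'}(t)}(\PP^{n+e})$, delivers that $\overline{W(0^t;1^{t+c-1};r;R[\underline y])}$ is a generically smooth irreducible component.

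There is no real obstacle here; the work has been done in Proposition~\ref{deforminggenericcase}, Corollary~\ref{gendetrem} and Lemma~\ref{unobst}. The only verifications are the bookkeeping in identifying $(s,c) \ne (t,1)$ with $(r,c) \ne (1,1)$ and checking that the generic determinantal ring has positive dimensional $\Proj$, both of which are immediate from the standing hypotheses.
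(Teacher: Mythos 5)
Your proof is correct and follows what the paper itself identifies as the more direct route: combining Proposition~\ref{deforminggenericcase} (which handles $(s,c)\ne(t,1)$, equivalently $(r,c)\ne(1,1)$) with Lemma~\ref{unobst} for the first claim, and Corollary~\ref{gendetrem} with Lemma~\ref{unobst} for the extension to $R[\underline y]$. The only addition you make is the explicit check that $\dim A = (t-r)(t+r+c-1)\ge 3$, which the paper leaves implicit; that verification is correct and harmless.
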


\begin{proof}
  This follows from Theorem~\ref{Wsmooth}, Remark~\ref{remthm61}(2),
  Corollary~\ref{evdef} and Proposition~\ref{deforminggenericcase} (or more
  directly by combining Lemma~\ref{unobst} and
  Proposition~\ref{deforminggenericcase}). The final statement follows from
  Lemma~\ref{unobst} and Corollary~\ref{gendetrem} because
  $(\Proj(A')) \in W(0^t;1^{t+c-1};r;R[\underline y])$.
 \end{proof}

  Since $ _0\!\Ext^1_B(I_B/I_B^2,I_{A/B}) \subset\ _0\!\Ext^1_R(I_B,I_{A/B})$
  we see that if the degree of all generators of $I_{A/B}$ is larger than the
  maximum of the degree of the relations, $mdr(I_B)$, of $I_B$ appearing in
  the Lascoux resolution, the mentioned \,$ _0\!\Ext^1$-groups vanish. Here we
  define $mdr(I)$ by $mdr(I)=\max\{n_{2,j}\}$ and $mdg(I)$ by
  $mdg(I)=\max\{n_{1,i}\}$ where
  $$ \longrightarrow \oplus_j R(-n_{2,j}) \longrightarrow \oplus_i R(-n_{1,i})
  \longrightarrow I \longrightarrow 0$$ is a minimal
  presentation of a graded ideal $I$. To apply Theorem~\ref{Wsmooth} it is of
  interest to compute both $mdg(I)$ and  $mdr(I)$. We have:

  \begin{lemma} \label{mdr_mdg} Let $\cA$ be a $t \times (t+c-1)$ matrix and
    let $A=R/I_{t+1-r}(\cA)$ be determinantal. We have:

\begin{itemize}
\item[{\rm (i)}] $mdg(I_{t+1-r}(\cA))=\sum ^{t+c-1}_{j=c+r-1}a_j -\sum _{i=1}^{t+1-r}b_i$.

\vskip 2mm

 \item[  {\rm (ii)}] $mdr(I_{t+1-r}(\cA))=\sum_{j=c+r-2}^{t+c-1}a_j-\sum_{i=1}^{t+2-r}b_i+m$ where $m=\max \{ b_{t+2-r}-b_1, a_{t+c-1}- a_{c+r-2} \}$.
 \end{itemize}
  \end{lemma}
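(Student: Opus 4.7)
The plan is to read off both numbers directly from the Lascoux resolution of $I_{t+1-r}(\cA)$ (\cite{L}) — or, for $r=1$, the Eagon-Northcott complex \eqref{EN} — whose generator and first-syzygy bidegrees are explicit. First I would handle (i), which is essentially combinatorial. The generators of $I_{t+1-r}(\cA)$ are the $(t+1-r)\times(t+1-r)$ minors of $\cA$; a minor indexed by rows $i_1<\cdots<i_{t+1-r}$ and columns $j_1<\cdots<j_{t+1-r}$ is homogeneous of degree $\sum_{k}a_{j_k}-\sum_{k}b_{i_k}$. Since $(a_j)$ and $(b_i)$ are non-decreasing by \eqref{paperassump1}, this expression is maximised by taking the top $t+1-r$ columns $\{c+r-1,\ldots,t+c-1\}$ and the bottom $t+1-r$ rows $\{1,\ldots,t+1-r\}$, yielding $mdg(I_{t+1-r}(\cA))=\sum_{j=c+r-1}^{t+c-1}a_j-\sum_{i=1}^{t+1-r}b_i$, which is (i).

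For (ii), the minimal first syzygies of $I_{t+1-r}(\cA)$, as produced by the Lascoux resolution (cf.\ also the description of syzygies reviewed in the proof of Lemma~\ref{lemma2}), come in two families, coming from expansions of $(t+2-r)\times(t+2-r)$ determinants with a repeated row or a repeated column:
\begin{itemize}
\item[(a)] (Row relations.) Fix columns $j_1<\cdots<j_{t+2-r}$ and rows $i_1<\cdots<i_{t+1-r}$, and choose $i'\in\{i_1,\ldots,i_{t+1-r}\}$. Laplace expansion along the repeated row of the resulting $(t+2-r)\times(t+2-r)$ matrix produces the relation $\sum_{k}\pm f_{i',j_k}M_k=0$, where $M_k$ is the $(t+1-r)$-minor obtained by deleting the $k$-th chosen column. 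Its total degree is $\sum_{k=1}^{t+2-r}a_{j_k}-\sum_{m=1}^{t+1-r}b_{i_m}-b_{i'}$.
\item[(b)] (Column relations.) Symmetrically, fix columns $j_1<\cdots<j_{t+1-r}$, rows $i_1<\cdots<i_{t+2-r}$, and choose $j'\in\{j_1,\ldots,j_{t+1-r}\}$; the corresponding relation has degree $\sum_{l=1}^{t+1-r}a_{j_l}+a_{j'}-\sum_{m=1}^{t+2-r}b_{i_m}$.
\end{itemize}

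I would then maximise each of the two expressions using monotonicity of $(a_j)$ and $(b_i)$. For (a), the optimum takes the top $t+2-r$ columns $\{c+r-2,\ldots,t+c-1\}$, the bottom $t+1-r$ rows $\{1,\ldots,t+1-r\}$, and $i'=1$, giving
\[
\sum_{j=c+r-2}^{t+c-1}a_j-\sum_{i=1}^{t+1-r}b_i-b_1
\;=\;\sum_{j=c+r-2}^{t+c-1}a_j-\sum_{i=1}^{t+2-r}b_i+(b_{t+2-r}-b_1).
\]
For (b), the optimum takes the top $t+1-r$ columns $\{c+r-1,\ldots,t+c-1\}$, the bottom $t+2-r$ rows, and $j'=t+c-1$, giving
\[
\sum_{j=c+r-1}^{t+c-1}a_j+a_{t+c-1}-\sum_{i=1}^{t+2-r}b_i
\;=\;\sum_{j=c+r-2}^{t+c-1}a_j-\sum_{i=1}^{t+2-r}b_i+(a_{t+c-1}-a_{c+r-2}).
\]
Taking the larger of these two values produces the formula of (ii).

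The step that actually carries the content is showing that these candidate degrees really appear in the \emph{minimal} first syzygy module, not merely in some possibly non-minimal presentation. For this I would invoke the minimality of the Lascoux resolution: its first syzygy term decomposes, after the appropriate Schur functor description, into summands whose generator degrees are exactly the expressions in (a) and (b) above, so no cancellation can occur at the extremal degrees (for $r=1$ this reduces to reading off $\wedge^{t+1}G^*\otimes F\otimes\wedge^{t}F$ in the Eagon-Northcott complex, which only contributes family (a)). Once minimality is invoked, both extremal degrees survive and $mdr(I_{t+1-r}(\cA))$ equals the maximum of the two, as stated.
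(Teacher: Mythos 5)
Your proof of (i) follows the paper's argument exactly: the generators are the $(t+1-r)$-minors, and monotonicity of the $a_j$ and $b_i$ pins down which choice of rows and columns gives the largest degree. No issue there.

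For (ii) you take the same route as the paper — read the degrees off an explicit description of the generators of the first syzygy module — but you record only two of the three families. The paper cites \cite{Ma} (and the three constructions are also spelled out in its own proof of this lemma, and sketched in the proof of Lemma~\ref{lemma2}): besides your row-repetition relations (a) and column-repetition relations (b), there is a third family obtained by taking a genuine $(t+2-r)\times(t+2-r)$ submatrix of $\cA$ (no repeated row or column), expanding its determinant once along a row and once along a column, and subtracting. Your families (a) and (b) do not generate all first syzygies without this third family, so the assertion that the Lascoux first syzygy term "decomposes into summands whose generator degrees are exactly the expressions in (a) and (b)" is not correct as stated. The omission happens not to affect the formula: a relation of the third type sits in degree $\sum_{l}a_{j_l}-\sum_{m}b_{i_m}$, whose maximum is $\sum_{j=c+r-2}^{t+c-1}a_j-\sum_{i=1}^{t+2-r}b_i$, and this is never larger than your (a) bound since $b_{t+2-r}-b_1\ge 0$. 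To make the argument complete you should include the third family and observe that it cannot raise the maximum, rather than claiming (a) and (b) exhaust the first syzygies.

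A smaller point: "no cancellation can occur at the extremal degrees" is asserted but not checked. What you and the paper actually have is a generating set of syzygies and hence an upper bound on $mdr$; that the extremal degree persists in a \emph{minimal} presentation requires knowing that Ma's set (or the Lascoux term) is minimal, which you invoke in passing but do not verify. The paper leaves the same point implicit, so this is not a defect unique to your proposal, but your appeal to an explicit Schur-functor decomposition of the Lascoux term is doing more work than you have shown.
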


  \begin{proof} (i) The generators
    $f_{i_1\cdots i_{t+1-r}}^{j_1\cdots j_{t+1-r}}$ of $I_{t+1-r}(\cA)$ are
    the determinant of the ${t\choose t+1-r}{t+c-1\choose t+1-r}$ minors of
    size $(t+1-r)\times (t+1-r)$ that we obtain choosing $t+1-r$ rows
    ($1\le i_1<i_2<\cdots <i_{t+1-r}\le t+c-1$) and $t+1-r$ columns
    ($1\le j_1<j_2<\cdots <j_{t+1-r}\le t$) of $\cA$.
    Therefore,
    $$mdg(I_{t+1-r}(\cA))=\sum ^{t+c-1}_{j=c+r-1}a_j -\sum
    _{i=1}^{t+1-r}b_i.$$

  (ii)  According to \cite{Ma} the first syzygies of $I_{t+1-r}(\cA)$ are given by one of the following constructions:
\begin{itemize}
\item[(a)] We choose $t+1-r$ rows $1\le i_1<i_2<\cdots <i_{t+1-r}\le t+c-1$ of
  $\cA$ and we construct a $(t+2-r) \times (t+c-1)$ matrix
  $\cA_{i_1\cdots i_{t+1-r}i_v}^{1\cdots t+c-1}$ using all these rows $i_v$
  and repeating one of the $i_v$. The determinant of the
  ${t+c-1\choose t-r+2}$ minors of size $(t+2-r)\times (t+2-r)$ of the matrix
  $\cA_{i_1\cdots i_{t+1-r}i_v}^{1\cdots t+c-1}$ gives us a syzygy of
  $I_{t+1-r}(\cA)$ .
\item[(b)] We choose $t+1-r$ columns $1\le j_1<j_2<\cdots <j_{t+1-r}\le t$ of
  $\cA$ and we construct a $t \times (t+2-r)$ matrix
  $\cA_{1\cdots t}^{j_1\cdots j_{t+1-r}j_v}$ using all these columns $j_v$ and
  repeating one of them. The determinant of the
  ${t\choose t-r+2}$ minors of size $(t+2-r)\times (t+2-r)$ of the matrix
  $\cA_{1\cdots t}^{j_1\cdots j_{t+1-r}j_v}$ gives us a syzygy of
  $I_{t+1-r}(\cA)$ .
\item[(c)] We consider a $(t+2-r)\times (t+2-r)$ minor
  $\cA _{i_1\cdots i_{t+2-r}}^{j_1\cdots j_{t+2-r}}$ of $\cA $ choosing
  $t+2-r$ rows ($1\le i_1<i_2<\cdots <i_{t+2-r}\le t+c-1$) and $t+2-r$ columns
  ($1\le j_1<j_2<\cdots <j_{t+2-r}\le t$) of $\cA$. Call
  $D _{i_1\cdots i_{t+2-r}}^{j_1\cdots j_{t+2-r}}(p,q)$ the relation that we
  get taking the difference of the expansion of the determinant of
  $\cA _{i_1\cdots i_{t+2-r}}^{j_1\cdots j_{t+2-r}}$ along the $p$-th row and
  along the $q$-th column. This also gives us a syzygy of $I_{t+1-r}(\cA)$.
\end{itemize}

From the above description of the first syzygies we immediately get that
$$mdr(I_{t+1-r}(\cA))=\sum_{j=c+r-2}^{t+c-1}a_j-\sum_{i=1}^{t+2-r}b_i +m
\text{ where } m=\max \{ b_{t+2-r}-b_1, a_{t+c-1}- a_{c+r-2} \}.$$
  \end{proof}

  Due to results in Section 8 (cf.
  Theorem~\ref{corWsmooth}) it suffices to compute $mdr(I_B)$ when $I_B$ is
  defined by submaximal (or maximal) minors of a homogeneous matrix $\cA$. In
  these particular cases we have

\begin{corollary} \label{maxrel}  Let  $\cA$ be a $t \times (t+c-1)$ matrix
  and let
  $A=R/I_{t+1-r}(\cA)$ be determinantal.
\begin{itemize}
\item[{\rm (i)}] If $r = 2$, $c \ge 1$, then
  $mdr(I_A)=\sum _{i=c}^{t+c-1}a_i-\sum_{i=1}^{t}b_{i}+m$ where
  $m=\max\{a_{t+c-1}-a_c,b_{t}-b_1\}$.
\item[{\rm (ii)}] If $c=3-r$ and $2 \le r \le t$ then $A$ is defined by
  submaximal
  minors and
\begin{equation}
mdr(I_A)=\sum _{i=1}^{t-r+2}a_i-\sum_{i=1}^{t-r+2}b_{i}+m \quad where \quad
m=\max\{a_{t-r+2}-a_1,b_{t-r+2}-b_1\}\,.
\end{equation}
\item[{\rm (iii)}] If $c=2-r$ and $2 \le r \le t-1$ then $A$ is defined by
  maximal
minors and $$mdr(I_A)= a_{t-r+1}+\sum _{i=1}^{t-r+1}a_i-\sum_{i=1}^{t-r+2}b_{i}.$$
\end{itemize}
\end{corollary}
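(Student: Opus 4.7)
The plan is to treat parts (i) and (ii) as direct specializations of Lemma~\ref{mdr_mdg}(ii), and to handle part (iii) by revisiting the argument of that lemma, since the formula there formally requires $a_{c+r-2}=a_0$ when $c=2-r$ and does not literally apply.

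For (i), substituting $r=2$ into the general expression $mdr(I_{t+1-r}(\cA))=\sum_{j=c+r-2}^{t+c-1}a_j-\sum_{i=1}^{t+2-r}b_i+m$, with $m=\max\{b_{t+2-r}-b_1,\ a_{t+c-1}-a_{c+r-2}\}$, immediately yields the claimed formula because $c+r-2=c$ and $t+2-r=t$. For (ii), substituting $c=3-r$ gives $c+r-2=1$ and $t+c-1=t+2-r$, and again the formula reduces to the displayed one. Both parts are genuinely corollaries in the literal sense; no new degree computation is required.

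For (iii) the matrix $\cA$ has size $t\times(t-r+1)$ and the $(t-r+1)$-minors are the maximal minors of a tall matrix (since $r\ge 2$). In the proof of Lemma~\ref{mdr_mdg}(ii), constructions~(a) and~(c) each build a $(t+2-r)\times(t+c-1)$ submatrix by augmenting rows (respectively by picking a $(t+2-r)$-column subset of $\cA$), and both require $t+2-r\le t+c-1$, i.e.\ $c\ge 3-r$. When $c=2-r$ this fails, so (a) and (c) produce no first syzygies; only construction~(b) contributes. In case~(b) one chooses $t+1-r$ columns and repeats one, but since $\cA$ has exactly $t+1-r$ columns, the column set is forced to be $\{1,\ldots,t+1-r\}$, leaving only the choice of which column $j_v\in\{1,\ldots,t+1-r\}$ to repeat. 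The relevant syzygy has degree equal to the degree of a $(t+2-r)\times(t+2-r)$ minor of the resulting $t\times(t+2-r)$ matrix: namely
\[
\sum_{l=1}^{t+1-r}a_l \ +\ a_{j_v}\ -\ \sum_{l=1}^{t+2-r}b_{i_l},
\]
taken over choices of $j_v$ and of rows $i_1<\cdots<i_{t+2-r}$ in $\{1,\ldots,t\}$.

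Maximizing is then an immediate monotonicity argument: $a_{j_v}$ is largest when $j_v=t+1-r$, and, since $b_1\le\cdots\le b_t$, the sum $\sum b_{i_l}$ is smallest when $\{i_1,\ldots,i_{t+2-r}\}=\{1,\ldots,t+2-r\}$. Substituting gives $mdr(I_A)=a_{t-r+1}+\sum_{i=1}^{t-r+1}a_i-\sum_{i=1}^{t-r+2}b_i$, as claimed. The main (and essentially only) obstacle is the combinatorial bookkeeping: one must be certain that constructions (a) and (c) genuinely contribute no first syzygies when $c=2-r$, which follows cleanly from the column-count inequality above, and that no hidden relation of larger degree can arise from the Eagon--Northcott resolution of $I_A$ (whose second term is, up to twist, $S_1(F)\otimes\wedge^{t-r+2}G^*\otimes\wedge^{t-r+1}F$ and whose generator degrees match precisely the case~(b) maximum just computed).
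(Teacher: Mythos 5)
Your proof is correct, and it is somewhat more self-contained than the paper's. For (i) both you and the paper substitute $r=2$ directly into Lemma~\ref{mdr_mdg}(ii). For (ii) you diverge: you observe that $c=3-r$ gives $c+r-2=1$ and $t+c-1=t+2-r$, so the Lemma's formula is already well-defined at this boundary and the corollary drops out by pure substitution. The paper instead transposes $\cA$ to a $(t-r+2)\times t$ matrix (for which the "new" $r$ is $2$ and the "new" $c$ is $r-1\ge 1$), applies (i), and then re-translates the reversed degree sequences via the recipe of Lemma~\ref{tr}. Your route is cleaner; the paper's emphasizes the transposition duality that is used systematically elsewhere. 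For (iii), where the Lemma's formula genuinely fails (since $a_{c+r-2}=a_0$ is undefined when $c=2-r$), the paper only asserts "immediate from Eagon--Northcott," whereas you revisit Ma's three syzygy types and correctly identify that both (a) and (c) require a size-$(t+2-r)$ column set inside the $t+c-1 = t-r+1$ available columns, which is impossible, so only (b) survives; your maximization over the forced column set $\{1,\dots,t-r+1\}$ plus a repeated $j_v$ and over row subsets then gives exactly the claimed formula. That fills in real content the paper elides.

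One small slip in your concluding cross-check: you write the second Eagon--Northcott term as (up to twist) $S_1(F)\otimes\wedge^{t-r+2}G^*\otimes\wedge^{t-r+1}F$. Since here $G=\oplus_{j=1}^{t-r+1}R(a_j)$ has rank $t-r+1$, the factor $\wedge^{t-r+2}G^*$ vanishes, so the term as written is zero. The intended term, for the resolution of $R/I_{t-r+1}(\varphi)$ coming from the (generically surjective) map $\varphi\colon F\to G$ with $\operatorname{rk}F=t>\operatorname{rk}G=t-r+1$, is $\wedge^{t-r+2}F\otimes S_1(G)^*\otimes\wedge^{t-r+1}G^*$. Its summands have degree $\sum_{j=1}^{t-r+1}a_j+a_{j'}-\sum_{i\in T'}b_i$ with $j'\in\{1,\dots,t-r+1\}$ and $T'$ a $(t-r+2)$-subset of $\{1,\dots,t\}$, which indeed maximizes to the value you computed. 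This does not affect the main argument in (iii), which you established independently from Ma's construction (b).
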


\begin{proof} (i) It follows from Lemma \ref{mdr_mdg} (ii).

  (ii) If we transpose the matrix $\cA$ then we get a $(t-r+2) \times t$
  matrix which fits into the set-up of (i), but note that the inequalities
  $b_1 \le b_2\le \cdots \le b_t, a_1 \le a_2 \le \cdots \le a_{t-r+2}$
  attached to $\cA$ are reversed, i.e we have the inequalities
  $-b_1 \ge -b_2\ge \cdots \ge -b_t, -a_1 \ge -a_2 \ge \cdots \ge -a_{t-r+1}$,
  attached to $\cA^{tr}$. Thus if we for $\cA^{tr}$ also ``transpose'' both
  rows and columns or, more precisely, using primes (') for $\cA^{tr}$ we
  have $-b_i=a'_{t+1-i}$ and $-a_i=b'_{t-r+3-i}$. Then (i) implies
  $$mdr(I_A)=\sum _{i=1}^{t-r+2}(-b_i)-\sum_{i=1}^{t-r+2}(-a_{i})+m$$ where
  $m=\max\{-b_1-(-b_{t-r+2}),-a_1-(-a_{t-r+2})\}$.

  (iii) This is rather immediate to see using the Eagon-Northcott resolution.
\end{proof}

\begin{corollary}\label{corWsmooth3}
  Let $B=R/I_B\twoheadrightarrow A$ be as in (the first two sentences of)
  Theorem~\ref{Wsmooth}, 
  and suppose $2 \le r \le t-1$ and that every deformation of $B$ comes from
  deforming $\cB$.
  If \begin{itemize}
  \item[{\rm (i)}] $c=3-r$ and $a_{t-r+2} > 2a_{t-r+1} +
  \sum_{i=r}^{t}b_{i}-\sum_{i=1}^{t-r+2}b_{i}$,  or
  \item[{\rm (ii)}] $c=4-r$ and $a_{t-r+3} > \sum _{i=t-r+1}^{t-r+2}a_i+
  \sum_{i=r}^{t}b_{i}-\sum_{i=1}^{t-r+2}b_{i}+\max\{a_{t-r+2}-a_1,b_{t-r+2}-b_1\}$
  \end{itemize}
  then $ _0\!\Ext^i_R(I_B,I_{A/B})=0$ for $i=0,1$. In particular
  $ \overline{W(\underline{b};\underline{a};r)}$ is a generically smooth
  irreducible component of $\Hilb ^{p_X(t)}(\PP^n)$ and every deformation of $A$
  comes from deforming $\cA$. Furthermore
  $$\dim W(\underline{b};\underline{a};r) = \dim
  W(\underline{b};\underline{a'};r)+\dim_k(MI \otimes A)_{(a_{t+c-1})}\,.$$
\end{corollary}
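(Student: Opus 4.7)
The proof will reduce to a direct comparison of degrees of generators of $I_{A/B}$ with the degrees in a minimal free $R$-presentation of $I_B$. Indeed, if $\oplus_j R(-n_{2,j}) \to \oplus_i R(-n_{1,i}) \to I_B \to 0$ is the minimal presentation, applying $\Hom_R(-,I_{A/B})$ gives
\[
0 \to \Hom_R(I_B,I_{A/B}) \to \oplus_i I_{A/B}(n_{1,i}) \to \oplus_j I_{A/B}(n_{2,j})\,,
\]
so in degree zero we obtain $\ _0\!\Ext^i_R(I_B,I_{A/B})=0$ for $i=0,1$ as soon as every generator of $I_{A/B}$ has degree strictly larger than $mdr(I_B)$ (this automatically covers $mdg(I_B)\le mdr(I_B)$ as well).

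Next I would compute the two relevant invariants. The smallest degree of a generator of $I_{A/B}$ comes from the $(t-r+1)$-minor using the columns $1,\dots,t-r,t+c-1$ and the rows $r,r+1,\dots,t$ of $\cA$, giving
\[
\mathrm{mind}(I_{A/B})=\sum_{j=1}^{t-r}a_j+a_{t+c-1}-\sum_{i=r}^{t}b_i\,.
\]
In case (i), $\cB$ is $t\times(t-r+1)$ and $B$ is standard determinantal (maximal minors), so Corollary~\ref{maxrel}(iii) (applied with $\cA$ replaced by $\cB$) yields
\[
mdr(I_B)=a_{t-r+1}+\sum_{i=1}^{t-r+1}a_i-\sum_{i=1}^{t-r+2}b_i
        =2a_{t-r+1}+\sum_{i=1}^{t-r}a_i-\sum_{i=1}^{t-r+2}b_i\,.
\]
In case (ii), $\cB$ is $t\times(t-r+2)$, i.e.\ $B$ is defined by submaximal minors of $\cB$, so Corollary~\ref{maxrel}(ii) gives
\[
mdr(I_B)=\sum_{i=1}^{t-r+2}a_i-\sum_{i=1}^{t-r+2}b_i+\max\{a_{t-r+2}-a_1,\,b_{t-r+2}-b_1\}\,.
\]
A direct rearrangement of $mdr(I_B)<\mathrm{mind}(I_{A/B})$ is, in each case, precisely the numerical hypothesis displayed in (i) and (ii); noting $a_{t+c-1}=a_{t-r+2}$ in case (i) and $a_{t+c-1}=a_{t-r+3}$ in case (ii).

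With $\ _0\!\Ext^i_R(I_B,I_{A/B})=0$ for $i=0,1$ established, the inclusion
\[
\ _0\!\Ext^1_B(I_B/I_B^2,I_{A/B})\ \subset\ \ _0\!\Ext^1_R(I_B,I_{A/B})=0
\]
forces $\ _0\!\Ext^1_B(I_B/I_B^2,I_{A/B})=0$, so the map $\gamma$ of Theorem~\ref{Wsmooth} is trivially zero. Since the remaining assumptions of Theorem~\ref{Wsmooth} hold by hypothesis (using $c\ge 3-r$, $a_1>b_t$ and the assumption that every deformation of $B$ comes from deforming $\cB$), Theorem~\ref{Wsmooth} applies and yields both the generic-smooth component statement and the conclusion that every deformation of $A$ comes from deforming $\cA$. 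Finally, since the vanishing $\ _0\!\hom_R(I_B,I_{A/B})=0$ is also part of what we proved, the dimension formula in Theorem~\ref{Wsmooth} collapses to
\[
\dim W(\underline{b};\underline{a};r)=\dim W(\underline{b};\underline{a'};r)+\dim_k(MI\otimes A)_{(a_{t+c-1})}\,.
\]

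The only real obstacle is bookkeeping: matching the indices in Corollary~\ref{maxrel} (which was stated for $\cA$) to those for $\cB$, and verifying that the algebraic rearrangement of $mdr(I_B)<\mathrm{mind}(I_{A/B})$ produces exactly the inequality in the statement. Everything else is a formal invocation of Theorem~\ref{Wsmooth} once the two $\Ext^i_R$ vanishings are in hand.
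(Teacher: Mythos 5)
Your proof is correct and follows the same route as the paper's: compute the minimum degree $s(I_{A/B})$ of a generator of $I_{A/B}$, compute $mdr(I_B)$ via Corollary~\ref{maxrel} applied to $\cB$ (case (iii) when $B$ is given by maximal minors, case (ii) when by submaximal minors), observe that the numerical hypotheses are exactly $mdr(I_B) < s(I_{A/B})$, deduce the vanishing of $_0\!\Ext^i_R(I_B,I_{A/B})$ for $i=0,1$ from the minimal presentation of $I_B$, and invoke Theorem~\ref{Wsmooth} via the inclusion $_0\!\Ext^1_B(I_B/I_B^2,I_{A/B}) \subset {_0}\!\Ext^1_R(I_B,I_{A/B})$. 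The only cosmetic difference is that you spell out the $\Hom_R(-,I_{A/B})$-sequence and the inclusion explicitly, whereas the paper leaves both implicit; the bookkeeping and the invocation of Theorem~\ref{Wsmooth} are identical.
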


\begin{proof} (i) It is easy to see that the smallest degree of the minimal
  generators of $I_{A/B}$ is
  $$ s(I_{A/B}): =a_{t-r+2}+\sum _{i=1}^{t-r}a_i-\sum _{i=r}^{t}b_{i}$$ while
  the maximum degree of the relations, $mdr(I_B)$, of $I_B$ is
  $$mdr(I_B)=\sum _{i=1}^{t-r+1}a_i-\sum_{i=1}^{t-r+2}b_{i} + a_{t-r+1}$$ by
  Corollary~\ref{maxrel}(iii). Indeed $B$ is defined by maximal minors. Thus by
  the definition of $ _0\!\Ext^1_R(I_B,I_{A/B})$ this $ _0\!\Ext^1_R$-group
  (as well as $ _0\!\Hom_R(I_B,I_{A/B})$) vanish if
  $ mdr(I_{B}) < s(I_{A/B})$, which is equivalent to assumption (i) and
  Theorem~\ref{Wsmooth} applies.

(ii) Now the smallest degree of a generator of $I_{A/B}$ is
$$ s(I_{A/B}): =a_{t-r+3}+\sum _{i=1}^{t-r}a_i-\sum _{i=r}^{t}b_{i}.$$ Since $B$
is defined by submaximal minors, we have by  Corollary~\ref{maxrel}(ii)  a formula
for the maximum degree of relations of $I_B$. Again it is clear
by definition of $ _0\!\Ext^1_R(I_B,I_{A/B})$ that this group vanishes if
$ mdr(I_{B}) < s(I_{A/B})$. Since this is equivalent to the assumption of
(ii), we get $ _0\!\Ext^i_R(I_B,I_{A/B})=0$ for $i=1$ and certainly also for
$i=0$, and we conclude the corollary by Theorem~\ref{Wsmooth}.
\end{proof}

\begin{example} \label{Wsmooth2} \rm (i) For any $r$, $2 \le r \le t-1$, let
  $\cA= [\cB,v]$ be a general $t \times (t-r+2)$ matrix with $\cB$ linear and
  $v$ a column of cubic entries, let $B=R/I_B\twoheadrightarrow A$ be defined
  by $(t-r+1)$-minors of $\cB$, resp. $\cA$, and suppose $R$ is large enough
  so that $\dim A \ge 3$. Since $B$ is defined by maximal minors and
  $\dim B \ge 4$ one know that every deformation of $B$ comes from deforming
  $\cB$ by Theorem~\ref{Amodulethm5}. Since the numerical conditions of
  Corollary~\ref{corWsmooth3}(i) are satisfied,
  $ \overline{W(\underline{b};\underline{a};r)}$ is a generically smooth
  irreducible component of $\Hilb ^{p_X(t)}(\PP^n)$ and every deformation of $A$
  comes from deforming its associated matrix $\cA$. Note that for $r=2$,
  $ \dim \overline{W(\underline{b};\underline{a};r)}$ is given by
  Theorem~\ref{dimW1} and that this case was considered in \cite{KM2009}.

\vskip 2mm
  (ii) Let $\cA= [\cB,w]$ be a general $t \times (t-r+3)$ where the
  $t \times (t-r+2)$ matrix $\cB$ is exactly the matrix $\cA$ in (i) above and
  $w$ a column whose entries are of degree $7$. Let
  $B=R/I_B\twoheadrightarrow A$ be defined by $(t-r+1)$-minors of $\cB$, resp.
  $\cA$, and suppose that $R$ is large enough so that $\dim A \ge 3$. Then
  every deformation of $B$ comes from deforming $\cB$ by (i) above and since
  $B$ is defined by submaximal minors and the numerical conditions of
  Corollary~\ref{corWsmooth3}(ii) are satisfied,
  $ \overline{W(\underline{b};\underline{a};r)}$ is a generically smooth
  irreducible component of $\Hilb ^{p_X(t)}(\PP^n)$ and every deformation of $A$
  comes from deforming $\cA$.
 \end{example}

 Examples seem to indicate that $ _0\!\Ext^1_B(I_B/I_B^2,I_{A/B})=0$ or more
 generally that \eqref{thm61cond} hold for $\dim A$ large enough, and
 further evidence to this observation is given in Section 8. We expect

 \begin{conj} \label{conjWsmooth} Let $r \ge 1$, $c \ge 2-r$,
   $(r,c) \ne (1,1)$ and let $X=\Proj(A) \in W(\underline{b};\underline{a};r)$
   be defined by the vanishing of the $(t-r+1)\times (t-r+1)$ minors of a
   general $t\times (t+c-1)$ matrix $\cA$ with $a_1 > b_t$ and suppose that
   $\dim A \ge 4$ for $c = 1$ and $\dim A\ge 3$ for $c \ne 1$. Then,
   $ \overline{W(\underline{b};\underline{a};r)}$ is a generically smooth
   irreducible component of $\Hilb ^{p_X(t)}(\PP^n)$ and every deformation of
   $A$ comes from deforming $\cA$.
  \end{conj}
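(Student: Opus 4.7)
The plan is to proceed by induction on $c \ge 2-r$, using the already-established generically smooth components on standard determinantal loci as the base case and Theorem \ref{Wsmooth} as the inductive step, with the central obstacle being the vanishing of the map $\gamma$ in the boundary case $c = 3-r$.

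For the base: if $r = 1$ then the hypothesis $(r,c)\neq (1,1)$ forces $c \ge 2$, and Theorem \ref{Amodulethm5}(ii) directly gives the conclusion (the depth hypothesis $\depth_{I_{t-1}(\cA)A}A \ge 3$ follows from the generality of $\cA$ together with $\dim A \ge 3$). If $r \ge 2$, the case $c = 2-r \le 0$ corresponds via Lemma \ref{tr} to a standard determinantal locus defined by the maximal minors of the $(t-r+1) \times t$ transposed matrix; after checking that $a_1>b_t$ and the dimension hypothesis translate to the standard hypotheses of Theorem \ref{Amodulethm5}(ii), the base case is settled.

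For the inductive step, assume the conjecture holds for the shortened sequence $\underline{a'}=(a_1,\dots,a_{t+c-2})$, so that $Y=\Proj(B) \in W(\underline{b};\underline{a'};r)$ is general, every deformation of $B$ comes from deforming $\cB$, and $\overline{W(\underline{b};\underline{a'};r)}$ is a generically smooth component. By Theorem \ref{Wsmooth} it then suffices to verify that the composition
\[
\gamma\colon {}_{0}\!\Hom_R(I_A,A) \longrightarrow {}_{0}\!\Hom_R(I_B,A) \longrightarrow {}_{0}\!\Ext^1_B(I_B/I_B^2, I_{A/B})
\]
vanishes. For $c \ge 4-r$ the plan is to deduce the stronger statement ${}_{0}\!\Ext^1_B(I_B/I_B^2, I_{A/B}) = 0$ from the expected small-codepth property of the module $\Hom_B(I_B/I_B^2, I_{A/B})$; more precisely, one combines Conjecture \ref{conjdepthNb} (which is already established in the generic case by Proposition \ref{genericdetring}, and which I would propagate to the general case via the specialization arguments of Section 8) with Proposition \ref{conj3ext} to get the required Ext vanishing under $\dim A \ge 4$ for $c=1$ and $\dim A \ge 3$ otherwise.

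The hard part will be the boundary case $c = 3-r$, where Example \ref{rem74ex} shows that ${}_{0}\!\Ext^1_B(I_B/I_B^2, I_{A/B})$ can be nonzero \emph{even} for the generic determinantal ring, so the Ext-vanishing route is no longer available. Here my strategy is to exploit the equivalent reformulation from Remark \ref{remthm61}(2) that $\gamma = 0$ is equivalent to the dimension identity
\[
{}_{0}\!\hom_R(I_A,A) = {}_{0}\!\hom_R(I_B,B) + \dim_k(MI\otimes A)_{a_{t+c-1}} - {}_{0}\!\hom_R(I_B,I_{A/B}),
\]
and to transfer this identity from the generic determinantal situation --- where Corollary \ref{corgendetW} already yields $\gamma = 0$ --- to a general matrix of the prescribed degree type. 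Concretely, after presenting $\cA$ as a specialization of a generic linear matrix in a large polynomial ring $R[\underline{y}]$ in the style of Corollary \ref{cordimRdetW}(ii) (and its companion Corollary \ref{gendetrem}), the plan is to show by semicontinuity that each of the four $\hom$-dimensions on the two sides is preserved under the specialization, so that the identity descends. Controlling the jumps of ${}_{0}\!\hom_R(I_A, A)$ and of ${}_{0}\!\hom_R(I_B, I_{A/B})$ under specialization --- in particular keeping $\Ext^1$ from jumping upward in a way that would break the equality --- is, I expect, the principal technical hurdle, and is precisely the reason the case $c = 3-r$ still resists an unconditional proof.
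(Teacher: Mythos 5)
The statement you are attempting to prove is a \emph{conjecture} in the paper, not a theorem: the authors do not provide a proof, and they explicitly pose the vanishing of $\gamma$ (equivalently, Problems (1)--(2) of Section~\ref{conjec}) as open. Your proposal correctly mirrors the paper's own partial attack --- base cases from Theorem~\ref{Amodulethm5} and Lemma~\ref{tr} ($r=1$ or $c=2-r$) and Corollary~\ref{corgendetW} (generic case), inductive step via Theorem~\ref{Wsmooth} using $\gamma=0$, and Proposition~\ref{conj3ext} to derive $\gamma=0$ from a codepth bound on $\Hom_B(I_B/I_B^2,I_{A/B})$ when $c\ge 4-r$ --- and you honestly flag that the case $c=3-r$ remains unresolved. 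That honesty is to your credit, but it also means the proposal is not a proof and cannot be one with the tools currently available in the paper.

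Two concrete gaps in the outline deserve emphasis. First, even for $c\ge 4-r$ you derive the required Ext-vanishing from Conjecture~\ref{conjdepthNb}, which is itself unproven outside the generic case (Proposition~\ref{genericdetring}); your proposed ``propagation to the general case via Section 8'' is not carried out anywhere in the paper, and codepth of the relevant $\Hom$-modules is not known to behave well under specialization of a generic matrix to a general one. So the inductive step for $c\ge 4-r$ is still conditional on a conjecture. Second, for the critical case $c=3-r$, your idea of transferring the dimension identity \eqref{thm61cond} from the generic to the general matrix by semicontinuity is blocked precisely where you suspect: the four $\hom$-dimensions are each upper semicontinuous under specialization (cf.~Remark~\ref{genericfib}), but they appear on both sides of the identity, so semicontinuity by itself gives no inequality in a consistent direction. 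One would need genuine control of the jumps (e.g.~a flatness or $\Ext$-vanishing statement along the specialization), and the paper provides no such tool; indeed Example~\ref{examples712} shows that $\gamma=0$ can genuinely fail when the dimension is slightly too small, so the conclusion is delicate and any specialization argument must use the dimension hypothesis in an essential way.

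In short, the proposal is a correct map of the known reductions and of the obstruction, consistent with the paper, but it is not a proof of the conjecture --- and the paper itself has none.
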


  \begin{remark} \label{remconjWsmooth} \rm The conjecture is true for $r=1$
    and $c=2-r$ by Theorem~\ref{Amodulethm5} and Lemma~\ref{tr}, and for the
    component $\overline{W(0^t;1^{t+c-1};r)}$, $r \ge 1$ containing a generic
    determinantal scheme, whence for components
    $\overline{W(0^t;1^{t+c-1};r)}$ with $\dim R \ge t(t+c-1)$, by
    Corollary~\ref{corgendetW}. For $c \in \{0,2\}$ the conclusion of
    Conjecture~\ref{conjWsmooth} may even be true for $\dim A\ge 2$, as it is
    for $(r,c)=(1,2)$. We have considered many examples and used Macaulay2
    to check if \eqref{thm61cond} or $\gamma=0$ (or
    $ _0\!\Ext^1_B(I_B/I_B^2,I_{A/B})=0$) of Theorem~\ref{Wsmooth} hold, and
    it seems that only the Gorenstein case $(c=1)$ requires $\dim A \ge 4$
    while in all other cases, $\dim A \ge 3$ suffices. We now list some
    examples, mostly where the conclusion of Conjecture~\ref{conjWsmooth}
    fails.
\end{remark}

\begin{example} \rm \label{examples712} (i) Let
  $R=k[x_0, x_1, \cdots ,x_{9}]$, let $\cA=[\cB,v]$, $v$ a column, be a
  general $3 \times 5$ matrix with linear entries, and let $A$ and $B$ be the
  quotients of $R$ defined by the $2 \times 2$ minors of $\cA$ and $\cB$,
  respectively. So $t=3$, $r=2$, $c=3$ and $\dim A = 2$. A Macaulay2
  computation shows that $_0\! \hom_R(I_B,I_{A/B})=4$,
  $_0\! \hom_R(I_B,B)=96$, $_0\! \hom_R(I_A,A)=120$ and
  $\dim_k(MI \otimes A)_{(1)}=25$ and \eqref{thm61cond} is not satisfied. Let
  us check that every deformation of $B$ comes from deforming $\cB=[\cC,w]$.
  Indeed if $C$ is defined by the $2 \times 2$ minors of the $3 \times 3$
  matrix $\cC$, then every deformation of $C$ comes from deforming $\cC$ owing
  to Remark~\ref{remconjWsmooth} and $\dim R \ge 9$. Then
  $ _0\!\Ext^1_C(I_C/I_C^2,I_{B/C})=0$ is easily checked using Macaulay2.
  Applying Theorem~\ref{Wsmooth}, replacing $B \to A$ there by $C \to B$, we
  get that every deformation of $B$ comes from deforming $\cB$. It follows
  from Remark~\ref{remthm61}(2) that $ \overline{W(0^3;1^5;2)}$ is not a
  generically smooth irreducible component of $\Hilb ^{p_X(t)}(\PP^9)$.

\vskip 2mm
    (ii) If we try to treat example (i) by deleting a row, one may
    transpose the matrix and instead delete a column, i.e. we look at a
    general $5 \times 3$ matrix $\cA=[\cB,v]$ with linear entries from
    $R=k[x_0, x_1, \cdots ,x_{9}]$, $v$ a column, and we let $A$ and $B$ be
    the quotients of $R$ defined by their $2 \times 2$ minors, respectively.
    So $t=5$, $r=4$, $c=-1$ and $\dim A=2$. The only problem with this
    approach is that Theorem~\ref{Wsmooth} and its corollaries require
    $\dim A \ge 3$ when $c=-1$, i.e. they do not apply.

    \vskip 2mm (iii) Let $R=k[x_0, x_1, \cdots ,x_{n}]$ with $n=6$,
    $\cA=[\cB,v]$ a general linear $3 \times 3$ matrix, $v$ a column, and let
    $A$ and $B$ be the quotients of $R$ defined by their $2 \times 2$ minors.
    So $t=3$, $r=2$, $c=1$ and $\dim A = 3$. Using Macaulay2 we get that
    $_0\! \hom_R(I_B,I_{A/B})=2$ and $_0\! \hom_R(I_A,A)=46$. Hence
    $\dim W(0^3;1^3;2) = \lambda_1$ by Theorem~\ref{dimW}, and since
    $\lambda_1=46$, $ \overline{W(0^3;1^3;2)}$ is a generically smooth
    irreducible component of $\Hilb ^{p_X(t)}(\PP^6)$, cf.
    Remark~\ref{remthm61}(2). The ``same'' example with $n=7$ yields
    $_0\! \hom_R(I_B,I_{A/B})=2$ and $_0\! \hom_R(I_A,A)=55$, and arguing as
    above, we get that $ \overline{W(0^3;1^3;2)}$ is a generically smooth
    irreducible component of $\Hilb ^{p_X(t)}(\PP^7)$ of dimension
    $\lambda_1=55$. For $n \ge 8$ with $n=8$ for the generic case, we do not
    need to use Macaulay2 because it follows from Corollary~\ref{corgendetW}
    that $ \overline{W(0^3;1^3;2)}$ is a generically smooth irreducible
    component of $\Hilb ^{p_X(t)}(\PP^n)$.

    The ``same'' example with $n=5$ yields $_0\! \hom_R(I_B,I_{A/B})=3$, and
    $$ \dim \overline{W(0^3;1^3;2)}= \lambda_c-1=36\,,$$ as noticed in
    Example~\ref{afterdeg}(1). Since we have $_0\! \hom_R(I_A,A)=36$ by
    Macaulay2, $ \overline{W(0^3;1^3;2)} \subset \Hilb ^{p_X(t)}(\PP^5)$ is a
    generically smooth component of dimension $ \lambda_c-1$.

    \vskip 2mm (iv) We repeat (iii) above with one change, namely we let
    $\cA=[\cB,v]$ be a general $3 \times 3$ matrix, where $\cB$ is linear
    while all entries of the column $v$ are of degree $2$. This example was
    considered in Example~\ref{dimW2} where we computed
    $\dim \overline{W(0^3;1^2,2;2)}$ in terms of $n$. Using Macaulay2 we show
    $_0\! \hom_R(I_A,A)=94$ (resp. $71$) for $n=6$ (resp. $n=5$), which is
    different (for $n \le 6$ only) from the values of
    $ \dim \overline{W(0^3;1^2,2;2)}$ we found in Example~\ref{dimW2}. This
    shows that $ \overline{W(0^3;1^2,2;2)}$ is not a generically smooth
    irreducible component of $\Hilb ^{p_X(t)}(\PP^n)$. Note that $c=1$ and
    $\dim A=3$ (resp. $\dim A=2$) for $n=6$ (resp. $n=5$), cf.
    Conjecture~\ref{conjWsmooth} and Remark~\ref{remconjWsmooth}.

    \vskip 2mm (v) More ``submaximal minors'': Let
    $R=k[x_0, x_1, \cdots ,x_{n}]$ with $n=6$, let $\cA=[\cB,v]$ a general
    linear $4 \times 4$ matrix, $v$ a column, and let $A$ and $B$ be the
    quotients of $R$ defined by their $3 \times 3$ minors. So $t=4$, $r=2$,
    $c=1$ and $\dim A = 3$. A Macaulay2 computation shows that
    $_0\! \hom_R(I_B,I_{A/B})=3$, $_0\! \hom_R(I_B,B)=60$,
    $_0\! \hom_R(I_A,A)=88$ and $\dim_k(MI \otimes A)_{(1)}=24$ and since
    \eqref{thm61cond} is not satisfied, $ \overline{W(0^4;1^4;2)}$ is not a
    generically smooth irreducible component of $\Hilb ^{p_X(t)}(\PP^6)$ by
    Remark~\ref{remthm61}(2). Corresponding calculations of the ``same''
    example, only changing $n$ to $n=5$, so $\dim A=2$ yields
    $_0\! \hom_R(I_B,I_{A/B})=3$, $\dim \overline{W(0^4;1^4;2)}=65$ and
    $_0\! \hom_R(I_A,A)=80$ and since \eqref{thm61cond} is not satisfied,
    $ \overline{W(0^4;1^4;2)} \subset \Hilb ^{p_X(t)}(\PP^5)$ is not a
    generically smooth irreducible component, cf. \cite[Example 5.1]{KM2009}.

    \vskip 2mm (vi) Finally we consider the case $t=4$, $r=3$, $A$ Gorenstein
    with $\dim A = 3$ of ``subspecifical minors'', i.e. let
    $R=k[x_0, x_1, \cdots ,x_{11}]$ and $\cA=[\cB,v]$ a general linear
    $4 \times 4$ matrix, $v$ a column, and let $A$ and $B$ be the quotients of
    $R$ defined by the $2 \times 2$ minors of $\cA$ and $\cB$ respectively. By
    Example~\ref{afterdeg}(2), $\dim W(0^4;1^4;3) = \lambda_c=161$ while we have
    $_0\! \hom_R(I_A,A)=162$ by Macaulay2, whence
    $ \overline{W(0^4;1^4;4)} \subset \Hilb ^{p_X(t)}(\PP^{11})$ is not a
    generically smooth irreducible component.
\end{example}

\vskip 2mm
\begin{example} \label{qmdeg} \rm {\em Varieties of quasi-minimal degree. } It
  is a well known result in Algebraic Geometry that for any non-degenerate
  (reduced and irreducible) variety $X\subset \PP^n$ we have
  $\deg (X)\ge \codim X+1$. The classification of varieties of minimal degree
  is well understood (see, for instance, \cite{EH}) and as an attempt to
  classify varieties of quasi-minimal degree (i.e. $\deg(X)=\codim X+2$) Hoa
  described their minimal graded free resolution and proved (cf. \cite[Theorem
  1]{H}):

  Let $X\subset \PP^n$ be a non-degenerate ACM variety of $\dim X\ge 1$ and
  quasi-minimal degree, i.e. $\deg X=\codim X+2$. Then, $I(X)$ has a minimal
  free resolution of the following type:
    $$0\longrightarrow R(-c-2)\longrightarrow R(-c)^{\alpha
      _{c-1}}\longrightarrow\cdots \longrightarrow R(-2)^{\alpha
      _1}\longrightarrow R\longrightarrow R/I(X)\longrightarrow 0$$
    where $c=\codim(X)$ and
    $$\alpha _{i}=i{c+1\choose i+1}-{c\choose i-1} \text{ for } 1\le i\le c-1.$$
    In particular, $X$ is arithmetically Gorenstein and generated by hyperquadrics. To better
    understand the structure of $X$ we can ask whether $I(X)$ is generated by
    the $2\times 2$ minors of a $t\times t$ matrix with linear entries.
    Looking at the graded Betti numbers this is possible if and only if $t=3$.
     So, in these cases it is natural to ask whether any
    codimension 4 AG subscheme $X\subset \PP^n$ of degree 6
    can be defined by the $2\times 2$ minors of a $3\times 3$ matrix with
    linear entries.

    The answer is yes. Indeed, by Example \ref{examples712}(iii) we have
    $\dim W(0^3;1^3;2)=\dim_{(X)} \Hilb ^{p_X(t)}(\PP^n)$
    and we conclude that a general non-degenerate ACM variety
    $X\subset \PP^n$ of quasi-minimal degree is arithmetically
    Gorenstein, generated by 9 hyperquadrics and these hyperquadrics are the
    $2\times 2$ minors of a $3\times 3$ matrix with linear entries.
\end{example}

  An algebraic related conjecture is concerned with the depth of the following
  ``normal  modules'':
  \begin{conj} \label{conjdepthNb} Let $r \ge 1$, $c \ge 2-r$ and let
    $A=R/I_A$ (resp. $B=R/I_B$ if $c > 2-r$) be defined by the vanishing of
    the $(t-r+1)\times (t-r+1)$ minors of a general $t\times (t+c-1)$ matrix
    $\cA=[\cB,v]$ with $a_1 > b_t$, $v$ a column. Let $N_A:=\Hom_R(I_A,A)$ and
    suppose that $\dim A \ge 3$. Set $I_{A/B} = I_A/I_B$.
\begin{itemize}
\item[{\rm (i)}] If $c \notin \{0,1,2\}$ then the $A$-module $N_A$ satisfies
  \ $$\codepth(N_A)= 1\,.$$ If $c=1$ (resp. $c =0, 2$), then
  $\codepth(N_A)= \min\{ 2,\dim A - 2\} $ (resp. $\codepth(N_A)= 0$).\\[-4mm]
  \item[{\rm (ii)}] Let $(r,c)\ne (1,2)$. If $c\ge 2$ (resp. $3-r\le c\le 1$)
    then the $B$-module $\Hom_R(I_B,I_{A/B})$ satisfies \
    $$\codepth\,(\Hom_R(I_B,I_{A/B})) = r \ ({\it resp.}\ r+1)\,.$$
   \end{itemize}
    \end{conj}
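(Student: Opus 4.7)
The plan is to prove both parts by recursion on $c$ using the flag $A_{2-r}\twoheadrightarrow \cdots \twoheadrightarrow A_{c-1}=B\twoheadrightarrow A_c = A$ obtained by successively deleting the last column, combined with the fundamental exact sequence
\begin{equation*}
0\longrightarrow B(-a_{t+c-1})\stackrel{\sigma ^* }{\longrightarrow} N\otimes B \longrightarrow MI\otimes B\longrightarrow 0
\end{equation*}
from Theorem~\ref{mainthm}. The essential point is that part (ii) is derivable from part (i) applied to $B$, so I would first establish (i) and then bootstrap to (ii).

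For (i), I would apply $\Hom_R(-,A)$ to $0\to I_B\to I_A\to I_{A/B}\to 0$ to obtain the four-term exact sequence
\begin{equation*}
0\longrightarrow \Hom_B(I_{A/B},A)\longrightarrow N_A\longrightarrow \Hom_R(I_B,A)\longrightarrow \Ext^1_R(I_{A/B},A)\longrightarrow \cdots,
\end{equation*}
and use Proposition~\ref{vanish}(iii) to identify $\Hom_B(I_{A/B},A)\cong MI\otimes A(a_{t+c-1})$, which by Proposition~\ref{vanish}(i) is maximal Cohen--Macaulay for $c\ge 1$ and has codepth $1$ for $c\le 0$. The term $\Hom_R(I_B,A)$ is analyzed through the comparison map \eqref{NM} on the open set $U=\Spec(A)\setminus V(J_A)$ where $\widetilde{I_B}$ is locally free, together with the inductive knowledge of $N_B$ (being part (i) for $B$, which has one fewer column). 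For $c\notin\{0,1,2\}$ a depth chase on this sequence yields $\codepth(N_A)=1$; the Gorenstein case $c=1$ requires a separate argument exploiting the canonical isomorphism $N_A\cong K_A(2a_t)$, forcing saturation at $\min\{2,\dim A-2\}$, while the cases $c\in\{0,2\}$ correspond to a degeneration of the Koszul-type sequence making $N_A$ maximal Cohen--Macaulay.

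For (ii), apply $\Hom_B(I_B/I_B^2,-)$ to $0\to I_{A/B}\to B\to A\to 0$, obtaining
\begin{equation*}
0\longrightarrow \Hom_B(I_B/I_B^2,I_{A/B})\longrightarrow N_B\longrightarrow \Hom_B(I_B/I_B^2,A)\longrightarrow \Ext^1_B(I_B/I_B^2,I_{A/B})\longrightarrow \cdots
\end{equation*}
Here $N_B$ has known codepth by part (i) for $B$, and the quotient $\Hom_B(I_B/I_B^2,A)$ is an $A$-module whose depth drops by exactly $r=\dim B-\dim A$ (this is Remark~\ref{minormax}). Combined with Proposition~\ref{vanish}(iv), which for $c\ge 4-r$ gives $\Ext^1_B(M,I_{A/B})=0$ and hence controls the rightmost term, one extracts $\codepth\Hom_R(I_B,I_{A/B})=r$; the extra unit for $3-r\le c\le 1$ mirrors the jump of $\codepth(N\otimes B)$ from $0$ to $1$ past $c=0$ in Proposition~\ref{vanish}(i). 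The main obstacle will be the boundary $c=3-r$ of (ii) and the Gorenstein case $c=1$ of (i), where the inductive hypothesis on $B$ does not apply directly. For $c=3-r$ one must anchor the induction on the standard determinantal base $c=2-r$ via Theorem~\ref{Amodulethm5} and a delicate local cohomology computation on the Lascoux resolution of $I_B$; for the Gorenstein case a duality argument with $K_A$ is needed. I expect, as in Proposition~\ref{genericdetring} for generic matrices, that once the initial step is anchored the general case will follow by flatness and semicontinuity of depth, reducing to explicit syzygy computations.
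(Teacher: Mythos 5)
This statement is labeled \emph{Conjecture}~\ref{conjdepthNb} in the paper — the authors do not prove it. What they provide is only partial evidence: Proposition~\ref{genericdetring} establishes it for \emph{generic} determinantal rings (using an explicit projective-dimension bookkeeping through the two exact sequences you also write down), Proposition~\ref{conj3ext} shows it would imply the desired vanishing of $\Ext^1_B(I_B/I_B^2,I_{A/B})$ and $\Ext^1_A(I_A/I_A^2,A)$, Remark~\ref{conjr=1} handles $r=1$ via prior work, and Example~\ref{exbcn} records Macaulay2 computations. So there is no ``paper's own proof'' to compare against; the problem you were handed is open.

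Your proposal, read as a proof, is not one — and you essentially concede this. You write down the correct governing exact sequences
$0\to \Hom_B(I_{A/B},A)\to N_A\to \Hom_R(I_B,A)\to\cdots$
and
$0\to \Hom_R(I_B,I_{A/B})\to N_B\to\Hom_R(I_B,A)\to\cdots$,
and you correctly identify the roles of Propositions~\ref{vanish}(i),(iii),(iv) and of Remark~\ref{minormax}. This is in fact the same skeleton that Proposition~\ref{genericdetring} uses for the generic case. But the content of the conjecture is the \emph{exact} codepth, and your ``depth chase'' is never carried out: you never analyze the connecting map $\Hom_R(I_B,A)\to\Ext^1_R(I_{A/B},A)$ or control the $\Ext^1$-cokernel well enough to pin down depth on the nose rather than bounding it from one side. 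You also explicitly flag the two problematic cases, $c=3-r$ in (ii) and $c=1$ in (i), as obstacles you have not resolved. Finally, the appeal to ``flatness and semicontinuity of depth'' at the end cannot close the argument even in principle: depth is \emph{lower} semicontinuous on flat families, so from the generic fiber you obtain only a one-sided bound on $\codepth$, whereas the conjecture asserts an equality. Without a separate argument for the opposite inequality (e.g.\ a nonvanishing of some local cohomology group in every fiber), semicontinuity alone cannot upgrade the generic-ring result of Proposition~\ref{genericdetring} to the general statement. In short: the approach is compatible with what the paper knows, but it does not contain a proof, and it does not supply the missing ideas that make this a conjecture rather than a theorem.
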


    \begin{remark} \label{conjdepthNbdim2} \rm If $\dim A = 2$ and otherwise
      with notations and assumptions as in Conjecture~\ref{conjdepthNb}, it
      seems that we often have $\codepth \Hom_R(I_B,I_{A/B})=r$ and certainly
      $\codepth(N_A)=0$ because in general any $\Hom_A(-,A)$-group has
      $\depth$ at least $2$ if $\depth A \ge 2$.
\end{remark}

Conjecture~\ref{conjdepthNb}(i) is related to the smoothness
    of $\Hilb ^{p_X(t)}(\PP^n)$ along
    $ W(\underline{b};\underline{a};r)$ while
    Conjecture~\ref{conjdepthNb}(ii) is concerned with
    $ \overline{W(\underline{b};\underline{a};r)}$ being an irreducible
    component of
    $\Hilb ^{p_X(t)}(\PP^n)$ as well as to the property ``every deformation of
    $A$ comes from deforming its matrix''. Moreover, under the assumptions of
    Theorem~\ref{Wsmooth}, we also get generically smoothness of
    $ \overline{W(\underline{b};\underline{a};r)}$ from
    Conjecture~\ref{conjdepthNb}(ii), so that conjecture is really what we
    need in this paper.

    To see the connection between Conjecture~\ref{conjdepthNb} and
    Conjecture~\ref{conjWsmooth}, note the following

    \begin{proposition} \label{conj3ext}  Let $d$ and $e \ge -2$ be integers,
      let $A=R/I_A$ (resp. $B=R/I_B$) be defined by the vanishing of the
      $(t-r+1)\times (t-r+1)$ minors of a general $t\times (t+c-1)$ matrix
      $\cA=[\cB,v]$  with $a_1 > b_t$ (resp. $\cB$), $v$ a column, and let
      $J_A:= I_{t-r}(\varphi ^*)$ and $J_B:=I_{t-r}(\varphi _{t+c-2}^*)$.
\begin{itemize}
\item[{\rm (i)}] Let $\dim A \ge c+2r$. If \ $\codepth(N_A)=d$ then \
  $\Ext^i_A(I_A/I_A^2,A)=0$ for $1 \le i \le c+2r-d-2$. In particular if
  $d \le c+2r-3$ (i.e. $\depth_{J_A}A \ge d+3$),
  then $\Ext^1_A(I_A/I_A^2,A)=0$. Hence if
  Conjecture~\ref{conjdepthNb}{\rm (i)} holds and $\dim A \ge 4$ for
           $c \ne 1$ (resp. $\dim A \ge 5$ for $c = 1$) then
           $$\Ext^1_A(I_A/I_A^2,A)=0\,.$$
         \item[{\rm (ii)}] Let $\dim A \ge c+r-1$. If \
           $\codepth\,(\Hom_R(I_B,I_{A/B})) = r+e$, then
           $\Ext^i_B(I_B/I_B^2,I_{A/B})=0$ for $1 \le i \le c+r-e-3$. In
           particular, if $e \le c+r-4$ (i.e. $\depth_{J_B}A \ge e+3$) then
           $\Ext^1_B(I_B/I_B^2,I_{A/B})=0$. Hence if
           Conjecture~\ref{conjdepthNb}{\rm (ii)} holds and $c+r \ge 4$ for
           $c \ne 1$ (resp. $r \ge 4$ for $c = 1$) then
           $$\Ext^1_B(I_B/I_B^2,I_{A/B})=0\,.$$
           \end{itemize}
\end{proposition}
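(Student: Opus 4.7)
The plan is to convert each desired Ext-vanishing into the vanishing of an algebraic local cohomology group supported on $J_A$ (resp.\ $J_B$), and then read this off from the codepth hypothesis via a standard depth inequality. For part (i), set $U := \Proj(A) \setminus V(J_A)$; since $A$ is a local complete intersection on $U$, the conormal sheaf $\widetilde{I_A/I_A^2}|_U$ is locally free and $\cH om_{\cO_U}(\widetilde{I_A/I_A^2},\tilde A) \cong \widetilde{N_A}|_U$. The determinantal codimension formula gives $\depth_{J_A} A = (r+1)(c+r) - r(c+r-1) = c+2r$ under the assumption $\dim A \ge c+2r$, so \eqref{NM} applied with $N = I_A/I_A^2$, $L = A$, together with the standard graded-$\Proj$ identification $H^i_*(U,\widetilde{N_A}|_U) \cong H^{i+1}_{J_A}(N_A)$ for $i \ge 1$, yields
$$\Ext^i_A(I_A/I_A^2,A) \cong H^{i+1}_{J_A}(N_A) \quad \text{for } 1 \le i \le c+2r-2.$$

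To bound the right-hand side, I would invoke the inequality $\depth_{\mathfrak{m}} N_A \le \depth_{J_A} N_A + \dim A/J_A$, which follows from Grothendieck's iterated-local-cohomology spectral sequence $H^p_{\mathfrak{m}/J_A}(H^q_{J_A}(N_A)) \Rightarrow H^{p+q}_{\mathfrak{m}}(N_A)$. With $\depth_{\mathfrak{m}} N_A = \dim A - d$ and $\dim A/J_A = \dim A - (c+2r)$, this gives $\depth_{J_A} N_A \ge c+2r-d$, hence $H^{i+1}_{J_A}(N_A) = 0$ for $1 \le i \le c+2r-d-2$, which is the stated Ext-vanishing. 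Specializing to the conjectural values $d \in \{0,1,2\}$ supplied by Conjecture~\ref{conjdepthNb}(i) and combining with $\dim A \ge 4$ (resp.\ $\dim A \ge 5$) for $c \ne 1$ (resp.\ $c=1$), a case-check verifies that $c+2r-d-2 \ge 1$ in each listed case, producing $\Ext^1_A(I_A/I_A^2,A) = 0$.

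Part (ii) runs along the same template with $B$ in place of $A$, $I_B$ in place of $I_A$, and $I_{A/B}$ as the coefficient module. The only substantive change is that \eqref{NM} now requires control of $\depth_{J_B} I_{A/B}$ rather than $\depth_{J_B} A$; I would extract this depth from the Koszul-type exact sequence $0 \to B(-a_{t+c-1}) \to N \otimes B \to MI \otimes B \to 0$ of Theorem~\ref{mainthm} together with the maximal Cohen--Macaulay properties of $N \otimes B$ and $MI \otimes B$ established in Proposition~\ref{vanish}(i). The same depth-inequality argument, using $\depth_{J_B} B = c+2r-1$ and the codepth hypothesis $\codepth \Hom_R(I_B,I_{A/B}) = r+e$, then yields $\depth_{J_B}\Hom_R(I_B,I_{A/B}) \ge c+r-e-1$ and hence Ext-vanishing in $1 \le i \le c+r-e-3$; the conjectural consequence of (ii) follows directly.

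The main obstacle is the clean application of the depth inequality to the non--Cohen--Macaulay modules $N_A$ and $\Hom_R(I_B,I_{A/B})$. While the inequality itself is standard, its utility here rests on two ingredients that must be checked carefully: first, that these modules are locally free precisely on $U$ (which is what makes the sheaf-cohomology-to-algebraic-local-cohomology translation sharp in the stated range); and second, that the determinantal ideals $J_A$ and $J_B$ realize their expected codimensions under the dimension hypotheses $\dim A \ge c+2r$ and $\dim A \ge c+r-1$---so that $\dim A/J_A = \dim A - (c+2r)$ and the analogous formula for $B$ hold, allowing the depth bound to deliver the sharp range.
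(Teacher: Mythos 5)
Your proposal follows essentially the same route as the paper: translate each $\Ext$-group into algebraic local cohomology supported on $J_A$ (resp.\ $J_B$) via \eqref{NM}, then use a depth bound to kill the relevant local cohomology. The one place you add substance is in justifying the depth transfer: the paper asserts $\depth_{J_A}N_A = c+2r-d$ as though it were immediate, whereas you derive the inequality $\depth_{J_A}N_A \ge \depth_{\mathfrak m}N_A - \dim A/J_A = c+2r-d$ from the iterated local cohomology spectral sequence (equivalently, the standard grade inequality $\operatorname{grade}(J,M)\ge \depth M - \dim M/JM$). This is actually cleaner than the paper's formulation, and the inequality direction is all that is needed for the vanishing.

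There is one misstep worth flagging. In (ii) you correctly note that applying \eqref{NM} with $L = I_{A/B}$ requires a lower bound on $\depth_{J_B}I_{A/B}$, but you propose to extract this from the Koszul-type sequence $0\to B(-a_{t+c-1})\to N\otimes B\to MI\otimes B\to 0$ of Theorem~\ref{mainthm} together with Proposition~\ref{vanish}(i). That sequence does not involve $I_{A/B}$, and for general $r$ there is no direct Koszul presentation of $I_{A/B}$ (only for $r=2$ does one get $0\to M_2(-2s)\to M_1(-s)\to I_{A/B}\to 0$ as in \eqref{liftsectionI}). The correct and simpler tool here is the short exact sequence $0\to I_{A/B}\to B\to A\to 0$: taking local cohomology $H^i_{J_B}(-)$ and using $\depth_{J_B}B = c+2r-1$ and $\depth_{J_B}A = c+r-1$ (Remark~\ref{minormax}) gives $\depth_{J_B}I_{A/B}= c+r$ and hence $\codepth I_{A/B}=r-1$, which is precisely what fixes the validity range $1\le i\le c+r-2$ (isomorphism), $i=c+r-1$ (injection) in \eqref{NM}. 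Since your main depth argument for (ii) — applying the grade inequality to $\Hom_R(I_B,I_{A/B})$ using $\depth_{J_B}B=c+2r-1$ and the codepth hypothesis $r+e$ — is independent of this, the misdirection does not break the proof, but you should replace the Koszul reference with the short exact sequence $0\to I_{A/B}\to B\to A\to 0$ and note that the constraint $e\ge -2$ is exactly what keeps $c+r-e-3$ within the range $\le c+r-1$ where the injection from \eqref{NM} still applies.
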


\begin{proof} (i) Let  $X=\Proj(A)$. Since
  $\cA$ is general, it is well known that $\depth_{J_A}A = c+2r$. It follows
  that $\depth_{J_A}N_A = c+2r-d$ by assumption. Since
  $U_A:=X\setminus V(J_A)$ is a local complete intersection, $\widetilde{N_A}$
  is locally free on $U_A$, and using \eqref{NM} we get natural isomorphisms
  \begin{equation*} \label{NMA} \Ext^{i}_{A}(I_A/I_A^2,A )\cong
    \Hl_{*}^{i}(U_A,{\mathcal H}om_{\odi{X}}(\widetilde{I_A/I_A^2},\odi{X}))
  \cong  \Hl_{J_A}^{i+1}(\Hom_{A}(I_A/I_A^2,A))
\end{equation*}
for $1 \le i \le c+2r-2$. Since the latter group vanishes for
$i+1 \le c+2r-d-1$, we get (i).

(ii) If   $Y=\Proj(B)$, then $\depth_{J_B}B = c-1+2r$.
Taking local cohomology $ \Hl_{J_B}^{i} (-)$ of the sequence
$$ 0 \longrightarrow I_{A/B} \longrightarrow B \longrightarrow A
\longrightarrow 0$$  it follows that  $\codepth(I_{A/B})=r-1$ which
implies $\depth_{J_B}(I_{A/B})=c+r$. Using \eqref{NM} we get a natural
isomorphism (resp. an injection)
\begin{equation*} \label{NMB} \Ext^{i}_{B}(I_B/I_B^2,I_{A/B} ) \hookrightarrow
  \Hl_{*}^{i}(U_B,{\mathcal
    H}om_{\odi{Y}}(\widetilde{I_B/I_B^2},\widetilde{I_{A/B}})) \cong
  \Hl_{J_B}^{i+1}(\Hom_{B}(I_B/I_B^2,I_{A/B}))
\end{equation*}
for $1 \le i \le c+r-2$ (resp. $i = c+r-1$). Since we by assumption have
$$\depth_{J_B}(\Hom_{B}(I_B/I_B^2,I_{A/B})) = c-1+2r-(r+e),$$ we get that the
latter group vanishes for $i+1 \le c-1 +r-e-1$, i.e. for
$1 \le i \le c+r-e-3$, cf. Remark~\ref{minormax}. Since the
other statements are straightforward we are done.
\end{proof}

\begin{proposition} \label{genericdetring} Let $A$ be a generic determinantal
  ring. Then Conjecture \ref{conjdepthNb}(i) holds. Moreover, Conjecture
  \ref{conjdepthNb}(ii) holds for $c\ge 0$, $c\ne 2$ while for
  $4-r\le c\le -1$, resp. $c=2$, we have
$$\codepth \Hom _R(I_B,I_{A/B})\le r+1 \text{ (resp. r)}.$$
\end{proposition}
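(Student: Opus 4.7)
The plan is to verify Conjecture~\ref{conjdepthNb} for the generic determinantal ring $A = A_{(s)} = R/I_s(\cA)$ with $\cA = (x_{ij})$ by exploiting the rigidity of $A$ together with the structural sequences developed in Section~\ref{section}. Throughout, $B = R/I_s(\cB)$ is again generic (obtained by deleting the last column), so part~(i) applied to $B$ will feed directly into part~(ii).

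For part~(i), rigidity \cite[Theorem 15.15]{b-v} gives the short exact sequence \eqref{rigid}
$$0 \to \Der_k(A,A) \to \Der_k(R,A) \to N_A \to 0\,,$$
in which the middle term is $A$-free of rank $\dim R$, hence maximal Cohen-Macaulay. By the depth lemma, $\codepth_A N_A$ is controlled by $\codepth_A \Der_k(A,A)$, and I would split into the three regimes predicted by the conjecture. For $c\in\{0,2\}$, the symmetry $c\leftrightarrow 2-c$ coming from matrix transposition (cf.\ Lemma~\ref{tr}) together with the free resolution of $\Der_k(A,A)$ should produce $\codepth = 0$; for $c=1$ ($A$ Gorenstein) the Gulliksen-Negaard complex of $I_A$ together with local duality should give the formula $\min\{2,\dim A-2\}$; and for the remaining values of $c$ the asymmetry forces exactly one obstruction, giving $\codepth = 1$. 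In each case the equality (rather than mere upper bound) should be witnessed by identifying a specific non-vanishing $H^i_{\mathfrak{m}}$ via the Lascoux resolution of $A$.

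For part~(ii), I would analyze $\Hom_R(I_B, I_{A/B})$ via the short exact sequence $0 \to I_{A/B} \to B \to A \to 0$, which after applying $\Hom_R(I_B, -)$ gives
$$0 \to \Hom_R(I_B, I_{A/B}) \to N_B \to \Hom_R(I_B, A) \to \Ext^1_R(I_B, I_{A/B}) \to \cdots$$
Since part~(i) applied to $B$ determines $\codepth_B N_B$, and since Proposition~\ref{vanish}(iii) identifies $\Hom_B(I_{A/B},A)(-a_{t+c-1}) \cong MI\otimes A$ with Bruns' maximal Cohen-Macaulay module when $c\ge 1$ (\cite[Theorems 1--2]{B}), one can chase depths along this sequence together with \eqref{section}. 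In the ranges $c\ge 3$ and $3-r \le c \le 1$ the chase pins down $\codepth = r$ resp.\ $r+1$ exactly, while in the boundary cases $c=2$ and $4-r \le c \le -1$ the same depth calculation only yields the upper bounds $\le r$ and $\le r+1$ because the relevant local cohomology group in $B$ is no longer forced to vanish by rigidity.

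The main obstacle will be the \emph{exact} (as opposed to upper-bound) determination of codepths, particularly pinning down $\depth\Der_k(A,A)$ in part~(i) via an explicit minimal free presentation, and verifying the non-vanishing of the critical $\Ext^{c+r-3}_B(I_B/I_B^2, I_{A/B})$ in part~(ii). Both reduce ultimately to tracking a single graded strand in the Lascoux or Buchsbaum-Rim complex, and I expect the boundary failures for $c=2$ and $4-r\le c\le -1$ to reflect precisely the loss of one degree of freedom in these resolutions.
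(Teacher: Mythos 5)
Your proposal identifies the right objects but has two genuine gaps.

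For part~(i), the paper simply invokes \cite[Theorem 15.10]{b-v} (and its Supplement), which computes the depth of $\Der_k(A,A)$ for generic determinantal rings directly, from which the codepth of $N_A$ follows via the rigidity sequence \eqref{rigid}. You instead sketch a re-derivation of this, but you say explicitly that pinning down $\depth \Der_k(A,A)$ is the ``main obstacle'' and leave it unresolved. That obstacle is the entire content of the claim, so what you have is not a proof; and since the needed computation is already in Bruns--Vetter, re-deriving it is the wrong move.

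For part~(ii), you write down the correct long exact sequence
$$0 \to \Hom_R(I_B,I_{A/B}) \to N_B \to \Hom_R(I_B,A) \to \Ext^1_R(I_B,I_{A/B}) \to \cdots$$
but you never explain why the connecting map into $\Ext^1_R(I_B,I_{A/B})$ vanishes. Without that, the sequence does not terminate in a short exact sequence, and the projective-dimension/depth chase you propose does not determine $\codepth\Hom_R(I_B,I_{A/B})$ from $\codepth N_B$ and $\codepth\Hom_R(I_B,A)$. In the paper this surjectivity is the crux: it is obtained from the deformation-theoretic fact that for generic determinantal rings every deformation comes from deforming the matrix (Proposition~\ref{deforminggenericcase}, Corollary~\ref{evdef}), applied in its ungraded form so that the full module map $\tilde p : \Hom_R(I_B,B)\to\Hom_R(I_B,A)$ is surjective, not just its degree-zero piece. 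You would need to supply this. You also omit the second short exact sequence
$$0 \to \Hom_R(I_{A/B},A) \to N_A \to \Hom_R(I_B,A) \to 0,$$
obtained from $\Ext^1_A(I_{A/B}/I_{A/B}^2,A)=0$ (Proposition~\ref{vanish}(iv)), which is needed to control $\pd\Hom_R(I_B,A)$ in terms of $\pd N_A$ and $\pd\Hom_R(I_{A/B},A)$; the depth chase requires both sequences, and a mapping-cone/projective-dimension bookkeeping by cases of $c$, which your outline does not carry out.
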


\begin{proof} Conjecture \ref{conjdepthNb}(i) holds by \cite[Theorem
  15.10]{b-v}, see also Supplement to
  Theorem 15.10.

  To see that Conjecture \ref{conjdepthNb}(ii) almost holds for $c\ge 4-r$, we
  use Proposition \ref{vanish} (iv) which implies that
  $\Ext^1_A(I_{A/B}/I_{A/B}^2,A)=0$. Thus there exists an exact sequence:
 \begin{equation}
  \label{seqex1}
  0\longrightarrow \Hom _R(I_{A/B},A)\longrightarrow \Hom _R(I_A,A)\longrightarrow \Hom _R(I_B,A)\longrightarrow 0.
  \end{equation}
  Moreover, by Proposition \ref{deforminggenericcase} and Corollary
  \ref{evdef} it follows that $pr_1$ in the following diagram
\begin{equation}\label{cartesiansquare1}
\begin{array}{cccccc}
A^1_{(B\rightarrow A)} &  \stackrel{pr_2}{\longrightarrow}& _0\!\Hom _R(I_B ,B) \\
 \downarrow pr_1 & & \downarrow p \\
_0\!\Hom _R(I_A ,A) & \longrightarrow & _0\!\Hom _R(I_B ,A)
\end{array}
\end{equation}
is surjective. Hence, taking degree zero in (\ref{seqex1}), we get that
$p:\ _0\!\Hom _R(I_B ,B)\longrightarrow\ _0\!\Hom _R(I_B ,A)$ is surjective.
We can argue similarly for the surjectivity of the corresponding
$\tilde{p} :\Hom_R (I_B,B)\longrightarrow \Hom_R (I_B,A)$ using the entire
$\Hom$-groups. Indeed, the proof of Proposition \ref{deforminggenericcase}
shows that $\Ext^1_R(MI,MI)\longrightarrow \Hom_R (I_A,A)$, and not only the
degree zero part of this morphism, is surjective. Also the argument of
Corollary 5.9 holds for non-graded deformations. Thus there exists a diagram
similar to (\ref{cartesiansquare1}) where the lower index zero is removed and
$A^1_{(B\rightarrow A)}$ correspondingly redefined. That diagram implies that
$\tilde{p}$ is surjective. It follows that there is an exact sequence
 \begin{equation} \label{seqex3}
  0\longrightarrow \Hom _R(I_{B},I_{A/B})\longrightarrow
  \Hom _R(I_B,B)\stackrel{\tilde{p}}{\longrightarrow } \Hom
  _R(I_B,A)\longrightarrow 0.
  \end{equation}

  Using (\ref{seqex1}), (\ref{seqex3}), and say \cite[Corollary 18.6]{eise} or
  rather the mapping cone construction, we can quite closely determine the
  depth of $H_1:=\Hom _R(I_{B},I_{A/B})$ by first using (\ref{seqex1}) for
  finding the depth of $H_2:=\Hom _R(I_B,A)$. Indeed note that since $A$ is a
  generic determinantal ring, Conjecture \ref{conjdepthNb}(i) holds for the
  $A$-module $N_A:=\Hom_R (I_A,A)$, whence also applies for the $B$-module
  $N_B$, due to Corollary~\ref{gendetrem} and taking into account that
  $\dim B- \dim A=r$. Moreover, Proposition 4.5(iii) implies that
  $H_3:=\Hom _R(I_{A/B},A)$ is maximally CM for $c\ge 1$ and of $\codepth $ 1
  if $c\le 0$. Letting $\pd (H)$ being the length of an free $R$-resolution of
  $H$, we can find $\pd (H_1)$ by considering different cases of $c$ in the
  table:

$$
  \begin{tabular}{ c | c | c| c |c | c | c}
   c & $\pd (H_3)$ & $\pd (N_A)$ & $\pd (H_2)$ & $\pd (N_B)$ & $\pd (H_1)$ & $\codepth H_1$ \\
     \hline
    $\ge $ 3 & $\ell $ & $\ell +1$ & $\ell +1$ & $\le \ell -r+1$ & $\ell$ & $ r$ \\
     2 & $\ell $ &  $\ell $  & $\le \ell + 1$ & $\ell -r+2$ & $\le \ell $ & $\le r $ \\
     1 & $\ell $ & $\ell +2 $& $\ell +2$ &$ \ell -r $& $ \ell +1$ &$ r+1$ \\
     0 & $\ell +1$ &$ \ell $ & $\ell +2$ & $\ell -r+1$ & $ \ell +1$ &$ r+1$ \\
     $\le -1$ & $\ell +1$ &$ \ell +1$ & $\le \ell +2$ &$ \ell -r +1$& $\le \ell +1$ & $\le r+1 $
    \end{tabular}
$$

\vskip 2mm
\noindent
where $\ell :=\pd (R)$ and the column of $\pd (H_2)$, resp. $\pd (H_1)$, is determined by 2 columns to the left of $\pd (H_2)$, resp. $\pd (H_1)$. Then the codepth column is just obtained by transferring the $\pd (H_1)$ column to depth and then to codepth by using the Auslander-Buchsbaum formula, $\pd (H_1)+\depth H_1=\dim R$, and we are done.
\end{proof}

\begin{remark} \label{conjr=1} \rm Conjecture \ref{conjdepthNb}(i) and (ii)
  essentially holds for $r=1$, i.e. we have $\codepth N_A\le 1$ (resp. 0) for
  $c\ge 3$ (resp. $c=2$) by Theorem 3.11 and Theorem 3.9(iii) (resp. e.g.
  \cite[Corollary 3.7]{KM2015}). Moreover,
  $\codepth (\Hom _R(I_B,I_{A/B}))\le 1$ by \cite[Corollary 3.8 and Remark 3.6
  of latest arXiv version]{KM2015}, using these results for $B$ instead of
  $A$. We have for $r \ge 2$ considered several examples using Macaulay2 to
  check if the depth of the modules satisfies the conjectures. We list some of
  them below in Example \ref{exbcn}. The computations with Macaulay2 were
  time-consuming (or aborted), and we should have liked to check more
  examples, e.g. in the range where $\dim A \in \{3, 4\}$ and especially for
  Conjecture~\ref{conjdepthNb}(ii). 
\end{remark}
\begin{example} \label{exbcn}\rm (1) Submaximal minors, i.e. $r=2$. Let $t=3$.
  We have checked Conjecture~\ref{conjdepthNb}(i) and (ii) for the following
  determinantal rings $R/I_{t-r+1}(\cA)$: For every $c$, $1 \le c \le 4$ let
  $\cA = (x_{ij})$ by the generic $3 \times (c+2)$ matrix, or a non-linear
  matrix of the same size whose entries are powers of the corresponding
  entries of the generic one (so $R$ contains at least $3(c+2)$ variables and
  various such non-linear matrices are considered), and the conjectures hold.
  For $c=1$ we have also checked non-generic (for every $\dim A \in \{3,4\}$)
  as well as some non-linear determinantal rings and all satisfy
  Conjectures~\ref{conjdepthNb}(i) and (ii). The same pattern is valid for
  $c=2$ for non-generic linear determinantal rings with
  $\dim A \in \{3,4, 5\}$.

  \vskip 2mm (2) Let $\cA$ be the generic $t \times (t-r+2)$ matrix, i.e. $A$ a
  generic determinantal ring and let $t-r=1$. Then we have checked that
  Conjecture~\ref{conjdepthNb}(ii) holds for each $t \in \{4,5,6\}$ confirming
  Proposition~\ref{genericdetring} and that we have
  equality in the codepth formula there. 
  For $t=4$ we have checked non-generic linear determinantal rings for every
  $\dim A \in \{3,4\}$, and also some non-linear determinantal rings with
  $\dim A = 6$ and both conjectures hold.

  \vskip 2mm (3) Let $\cA$ be the generic $4 \times 4$ matrix, or some
  non-linear matrix whose entries are powers of the corresponding entries of
  the generic one (so $R$ contains at least $16$ variables and various such
  non-linear matrices are considered). Let $r=2$, i.e. $A$ is defined by
  submaximal minors. In this case Conjecture~\ref{conjdepthNb}(i) and (ii)
  holds. Then we consider the ``subsubmaximal case'' of the same $4 \times 4$
  matrices, i.e. we let $r=3$. Again both conjectures hold.
\end{example}

\begin{remark} \rm For a ``generic'' $5 \times 5$ matrix $\cA$, taking $r=3$
  we have checked using Macaulay2 that Conjecture~\ref{conjdepthNb}(ii) holds.
  In
  this case we have also got $ \Ext^1_B(I_B/I_B^2,I_{A/B})=0$ indicating that the
  case $c=1$ of Proposition~\ref{conj3ext}(ii) may be improved to $r \ge 3$.
\end{remark}

\section{Computing dimensions by deleting columns}

Since Theorem~\ref{Wsmooth} assumes $_0\!\Ext^1_B(I_B/I_B^2,I_{A/B})=0$ or the
related assumption $\gamma =0$ and Theorems~\ref{dimW} and ~\ref{Wsmooth} need
the numbers $\ _0\!\hom_B(I_B/I_B^2,I_{A/B})$ and
$\dim (MI\otimes A)_{(a_{t+c-1})}$ to find
$\dim W(\underline{b};\underline{a};r)$, the goal of this section is to
compute the dimension of $_0\!\Ext^i_B(I_B/I_B^2,I_{A/B})$ for $i=0,1$ and
$ (MI\otimes A)_{(a_{t+c-1})}$ more effectively. Indeed this $_0\!\Ext^0_B$\;-
(resp. $ (MI\otimes A)_{(a_{t+c-1})}$)-group is important because it is
isomorphic to the tangent space of the fiber of the projection
$$p_1: \Hilb ^{p_X(t),p_Y(t)} (\PP^{n}) \longrightarrow \Hilb ^{p_X(t)}(\PP^n)
\quad ({\rm resp.}\ p_2: \Hilb ^{p_X(t),p_Y(t)} (\PP^{n}) \longrightarrow \Hilb ^{p_Y(t)}(\PP^n))$$
 at
$(B \longrightarrow A)$, cf. \eqref{flagdef} while the vanishing of
$_0\!\Ext^1_B(I_B/I_B^2,I_{A/B})$ not only gives the smoothness of this fiber,
but in fact the smoothness of $p_1$ at $(B \longrightarrow A)$.

To find $\dim\, _0\!\Ext^i_B(I_B/I_B^2,I_{A/B})$ for $i=0,1$, we consider a flag
of determinantal rings:
\begin{equation} \label{fullflag} A_{2-r}\twoheadrightarrow \cdots
  \twoheadrightarrow A_0\twoheadrightarrow A_1\twoheadrightarrow
  A_2\twoheadrightarrow A_3\twoheadrightarrow \cdots \twoheadrightarrow A_c
  =A, \quad B=A_{c-1}, \quad X_{i}=\Proj(A_{i})
\end{equation}
with corresponding cokernels $$N_{2-r} \longrightarrow \cdots \longrightarrow N_{j}\longrightarrow \cdots \longrightarrow N_c=MI$$ of $\varphi ^*_{t+j-1}$, obtained by successively deleting $c+r-2$ columns from the right-hand side of
the $t \times (t+c-1)$ matrix $\cA$, and letting the $(t-r+1)\times (t-r+1)$
minors $I_{t-r+1}(\varphi ^*_{t+i-1})$ define $A_i$. Then e.g. $A_1$ (resp.
$A_{2-r}$) is Gorenstein (resp. standard determinantal) defined by the
$(t-r+1)\times (t-r+1)$ minors of a $t\times t$ (resp. $t\times (t-r+1)$)
matrix. Now recall that when we in Proposition \ref{vanish}(iii) proved
$$\Hom _B(M,A)\cong MI\otimes A\cong \Hom(I_{A/B}(a_{t+c-1}),A)$$
under some assumptions on $\depth _{J_i}A$ where
$J_i=I_{t-r}(\varphi ^*_{t+i-1})$, we first remarked that we have
\begin{equation}\label{MA}
\widetilde{M\otimes A}_{|U}\cong\widetilde{I_{A/B}(a_{t+c-1})\otimes A}_{|U},
\text{ and}
\end{equation}
\begin{equation}\label{NA}
\widetilde{N\otimes A}_{|U}\cong\widetilde{MI\otimes A}_{|U}
\end{equation}
where $U:=\Proj(A)\setminus V(J_{c-1}A)$. These isomorphisms are also needed for
describing the fiber of $pr_1:A^1_{(B \to A)}\longrightarrow\ _0\!\Hom_R(I_A ,A)$. Define
\begin{equation}
M_i:=\Hom_{A_{i}}(N_{i}\otimes _RA_{i}, A_{i}) \text{ where }
N_{i}:=\coker (\varphi ^*_{t+i-1}).
\end{equation}
 Let $I_{i}=I_{A_{i+1}/A_{i}}$ be the
ideal defining $A_{i+1}$ in $A_{i}$ and let
$I_{A_{i}}=I_{t-r+1}(\varphi ^*_{t+i-1})$. Assuming
$b_t<a_1$  and $\cA $ general; and using \cite[Theorem 2.7 and Example 2.5]{KM2011}, we obtain
\begin{equation}\label{aux5}
\dim R/(J_i+I_{A_{c-1}})=\dim A_{c-1}-r-i \quad {\rm for} \quad 2-r\le i \le c-2 .
\end{equation}
Thus, we have
$$\codim _{A_{c-1}} R/(J_i+I_{A_{c-1}})=\depth _{J_{i}A_{c-1}}A_{c-1}=r+i
\quad \text{ for } \quad 2-r\le i\le c-2$$
and $$\codim _{A_c} R/(J_i+I_{A_{c}})=\depth _{J_{i}A_{c}}A_{c}=r+i$$ for the
same reason (provided $\dim A_c \ge r+i$). Letting
$U_i=X_i\setminus V(J_iA_i)$ and assuming $\dim A_c \ge 2$, we get
$$
\Hom _{A_{i}}(I_i,A_j)\cong \Hom _{{\mathcal O}_{U_{i}}}(\widetilde{I_i},\widetilde{A_j})\quad \text{ for }\quad c-1\le j\le c.
$$
Then the left exact sequence
$$
0 \longrightarrow \Hom _{A_i} (I_i,I_{c-1})\longrightarrow \Hom  _{A_i} (I_i,A_{c-1})\longrightarrow \Hom _{A_i} (I_i,A_c)
$$
and the corresponding one for the global $\Hom $ of sheaves imply
\begin{equation} \label{59}
\Hom _{A_i} (I_i,I_{c-1}) \cong \Hom _{{\mathcal O}_{U_{i}}}(\widetilde{I_i},\widetilde{I_{c-1} })\quad \text{ for }\quad 2-r\le i\le c-1,
\end{equation}
and we get
$\Hom(M_i,I_{c-1})\cong \Hom _{{\mathcal
    O}_{U_{i}}}(\widetilde{M_i},\widetilde{I_{c-1} }) $
by the same argument. Continuing these left exact sequences by including
$\Ext^1(-,-)$ to the right and using the five-lemma we get that
\begin{equation} \label{510} \Ext^1 _{A_i}(M_i,I_{c-1}) \cong \Ext^1
  _{{\mathcal O}_{U_{i}}}(\widetilde{M_i},\widetilde{I_{c-1} })\quad \text{
    for }\quad 2-r < i\le c-1,
\end{equation}
and correspondingly for $\Ext^2(-,-)$ if $3-r < i\le c-1$. This leads to

\begin{proposition} \label{fibdim2} Let $\cA $ be general and suppose
  $a_1>b_t$, $r \ge 2$ and $\dim A \ge 2$. If $c \le 0$ suppose also
  $\dim A \ge 3$. Let $j$ be any integer satisfying $2-r \le j \le c-2$. Then
  $\Hom _{A_j}(I_j,I_ {c-1})$ is an $A_{c-1}$-module of codepth $r-1$ and we
  have isomorphisms
  $$\Hom _{A_j}(M_j,I_{c-1}) \cong \Hom
  _{A_{c-1}}(M_{c-1},I_{c-1}) \, , \quad {\rm and} \ $$
  $$\begin{array}{rcl} \Hom _{A_j}(M_j(-a_{t+j}),I_{c-1}) &  \cong & \Hom _{A_j}(I_j,I_{c-1}) \\
  & \cong &
  \Hom _{A_{c-2}}(I_{c-2},I_{c-1})(a_{t+j}-a_{t+c-2}) \, .\end{array} $$
  Moreover,
  $ _v\! \hom_R(M_{j},I_{c-1}) = \dim (N_{c-1}\otimes A_{c-1})_v - \dim
  (N_{c}\otimes A_{c})_v$ for any integer $v$, and if \ $ 2-r < i\le c-2$, then
 $$\Ext^1_{A_i}(M_i,I_ {c-1})=\Ext^1_{A_{c-1}}(M_{c-1},I_ {c-1})=0 \quad and \quad
 \Ext^1_{A_{i+1}}(I_i/I_i^2,I_ {c-1})=0\, .$$
Furthermore, we have
 $$_{\mu}\! \Hom _{A_j}(I_j,I_{c-1})\cong R(a_{t+j}-a_{t+c-1})_{\mu} \ \ \text{
   provided } \ \ a_{t+j}-a_{t+c-1}+ \mu <s_r-b_r-a_{t-r+1}+b_1.$$
\end{proposition}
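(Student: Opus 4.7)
The plan is to produce a natural injective $R$-linear map
\[
\psi\colon R(a_{t+j}-a_{t+c-1}) \hookrightarrow \Hom_{A_j}(I_j, I_{c-1})
\]
and then verify bijectivity in the stated range by a dimension count, using the formula for $_v\!\hom_R(M_j, I_{c-1})$ already established in the proposition together with Lemma~\ref{dimMI}(ii). The two key inputs are the iso chain $\Hom_{A_j}(I_j, I_{c-1}) \cong \Hom_{A_{c-1}}(M_{c-1}, I_{c-1})(a_{t+j})$ coming from the isomorphisms just proved, and the canonical surjection $\pi\colon M_{c-1} \twoheadrightarrow I_{c-1}(a_{t+c-1})$ from Theorem~\ref{mainthm} applied to $A_{c-1}\twoheadrightarrow A_c$.

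I would define $\psi$ by $f \mapsto f\cdot\pi$, viewed as an element of $\Hom_{A_{c-1}}(M_{c-1}, I_{c-1})_{a_{t+c-1}+\deg f}$ and transferred through the iso chain above. For injectivity in degree $\mu$, suppose $f\in R_{\mu+a_{t+j}-a_{t+c-1}}$ has $f\cdot\pi=0$; then surjectivity of $\pi$ forces the image $\bar f$ of $f$ in $A_{c-1}$ to annihilate $I_{c-1}$. Because $A_{c-1}$ is Cohen--Macaulay and $I_{c-1}$ has positive height (it cuts out $A_c$ in $A_{c-1}$), the ideal $I_{c-1}$ contains a non-zero-divisor, so $\bar f = 0$, i.e.\ $f\in I_{A_{c-1}}\subset R$. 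A combinatorial check shows that each $(t-r+1)$-minor of $\cB$ has degree $\sum_{j\in J} a_j - \sum_{i\in I} b_i$ with $|I|=|J|=t-r+1$, the smallest such being $s_r-b_r$; since $a_{t-r+1}\ge a_1>b_t\ge b_1$ gives $-a_{t-r+1}+b_1<0$, the hypothesis forces $\deg f<s_r-b_r$, and so $f=0$.

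For bijectivity, the already-established dimension formula reads $_{\mu'}\!\hom_R(M_j, I_{c-1}) = \dim(N_{c-1}\otimes A_{c-1})_{\mu'} - \dim(N_c\otimes A_c)_{\mu'}$. Setting $\mu' = \mu + a_{t+j}$ and $v=\mu'-a_{t+c-1}=\mu+a_{t+j}-a_{t+c-1}$, the hypothesis is precisely $v<s_r-b_r-a_{t-r+1}+b_1$, which is the condition of Lemma~\ref{dimMI}(ii); that lemma supplies $\dim(N_{c-1}\otimes A_{c-1})_{\mu'} - \dim(N_c\otimes A_c)_{\mu'} = \dim R_v$. Therefore $\dim \Hom_{A_j}(I_j, I_{c-1})_\mu = \dim R_v = \dim R(a_{t+j}-a_{t+c-1})_\mu$, and the injection $\psi$ is automatically a $k$-linear isomorphism in this degree. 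The only technical point is the non-zero-divisor claim for $I_{c-1}$, which is standard from Cohen--Macaulayness of $A_{c-1}$ and the positive height of $I_{c-1}$.
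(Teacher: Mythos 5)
Your proposal addresses only the final displayed claim of the proposition, namely that $_\mu\!\Hom_{A_j}(I_j,I_{c-1})\cong R(a_{t+j}-a_{t+c-1})_\mu$ in the stated range. You explicitly take as given ``the isomorphisms just proved,'' ``the formula for $_v\!\hom_R(M_j,I_{c-1})$ already established in the proposition,'' and in effect the surjectivity of $p:N_{c-1}\otimes A_{c-1}\to N_c\otimes A_c$. Those are not separate lemmas; they \emph{are} the earlier assertions of Proposition~\ref{fibdim2} that the proof must supply. The parts you omit --- the chain of isomorphisms $\Hom_{A_j}(I_j,I_{c-1})\cong\Hom_{A_{c-1}}(M_{c-1},I_{c-1})(a_{t+j})$ obtained by sheafifying on $U_j$ via \eqref{MA}--\eqref{59}, the codepth claim for $\Hom_{A_j}(I_j,I_{c-1})$ from the maximal Cohen--Macaulayness in Proposition~\ref{vanish}(i), the dimension formula for $_v\!\hom_R(M_j,I_{c-1})$ coming from the exact sequence $0\to\Hom_{A_{c-1}}(M_{c-1},I_{c-1})\to N_{c-1}\otimes A_{c-1}\to N_c\otimes A_c$, and the $\Ext^1$-vanishing via local cohomology on $U_j$ using \eqref{aux5} and \eqref{510} --- together make up the bulk of the argument, and none of it is in your proposal. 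So the main issue is one of scope: you have written a proof of the last sentence, not of the proposition.

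For the last sentence itself, your argument is correct and is essentially the same computation the paper performs, with a minor packaging difference. The paper simply applies Lemma~\ref{dimMI}(ii) (with $B=A_{c-1}$, $A=A_c$) to read off $\dim\Hom_{A_{c-1}}(M_{c-1},I_{c-1})_{a_{t+c-1}+v}=\dim B_v=\dim R_v$ for $v<s_r-b_r-a_{t-r+1}+b_1$, then transports this through the already-proved isomorphism to $\Hom_{A_j}(I_j,I_{c-1})$; since the claimed $\cong$ is an equality of finite-dimensional graded pieces, the dimension count suffices. You instead build the explicit map $\psi(f)=f\cdot\pi$, where $\pi:M_{c-1}\twoheadrightarrow I_{c-1}(a_{t+c-1})$ is the dual of $\sigma^*$ from Theorem~\ref{mainthm}, prove injectivity in the given degree (correctly, using that $I_{c-1}$ contains a non-zero-divisor in $A_{c-1}$, that the kernel of $R\to A_{c-1}$ is generated in degrees $\ge s_r-b_r$, and that $a_{t-r+1}>b_1$ forces $\deg f<s_r-b_r$), and then invoke the same dimension count to conclude bijectivity. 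This is slightly longer but not wrong; it makes the isomorphism explicit, which is a genuine plus, though in this paper only the dimension is used downstream. One small phrasing caveat: you open by saying you will ``produce a natural injective $R$-linear map'' $\psi$, but $\psi$ as a graded $R$-module map has kernel $I_{A_{c-1}}(a_{t+j}-a_{t+c-1})$; it is injective only degree by degree in the stated range, which is exactly what your degree argument shows. To turn your proposal into a proof of the whole proposition, you would need to supply all the preceding parts as well.
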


\begin{proof} Using (\ref{MA}) - (\ref{59}), letting $U_i=X_i\setminus V(J_iA_i)$, we have isomorphisms
$$\begin{array}{rcll}
\Hom _{A_j}(I_j,I_ {c-1}) & \cong & \Hom _{{\mathcal O}_{U_j}}(\widetilde{I_j}_{|U_j},\widetilde{I_{c-1}}_{|U_j}) & \text{ (by } (\ref{59}) \text{)} \\
 & \cong & \Hom _{{\mathcal O}_{U_j}}(\widetilde{M_j}(-a_{t+j}),\widetilde{I_{c-1}})) & \text{ (by (\ref{MA}))  } \\
 & \cong & \Hom _{{\mathcal O}_{U_{j}}}({\mathcal
           Hom}(\widetilde{N_{j}},\widetilde{A_{j}}) \otimes
           \widetilde{A_{c-1}},\widetilde{I_{c-1}}(a_{t+j})) & (I_{c-1} \
                                                               \text{an} \ A_{c-1}\text{-module}) \\
 & \cong & \Hom _{{\mathcal O}_{U_{c-1}}}(\widetilde{M_{c-1}},\widetilde{I_{c-1}}(a_{t+j})) & (\text{by (\ref{NA})) }\\
& \cong &  \Hom _{A_{c-1}}(M_{c-1},I_{c-1})(a_{t+j}) & \text{ (because } \depth_{J_{c-1}}I_{c-1}\ge 2 \text{) }
\end{array}
$$
and also
$ \Hom _{A_j}(M_j,I_ {c-1}) \cong \Hom _{{\mathcal
    O}_{U_{j}}}(\widetilde{M_{j}},\widetilde{I_{c-1}}) \cong \Hom
_{A_{c-1}}(M_{c-1},I_{c-1})$.
Then the exact sequence
$$0\longrightarrow I_{c-1}\longrightarrow A_{c-1} \longrightarrow A\longrightarrow 0$$ and
Proposition \ref{vanish}(iii) yield the exact sequence
$$
0 \longrightarrow \Hom _{A_{c-1}}(M_{c-1},I_{c-1}) \longrightarrow  \Hom _{A_{c-1}}(M_{c-1},A_{c-1})\cong N_{c-1}\otimes A_{c-1}$$ $$ \longrightarrow \Hom _{A_{c-1}}(M_{c-1},A)\cong N_{c}\otimes A_{c}.
$$
By Proposition \ref{vanish}(i), $ N_{c}\otimes A_{c}$ is a maximal
Cohen-Macaulay $A$-module for $c > 0$ and of codepth $1$ for $c \le 0$, and
similarly for $ N_{c-1}\otimes A_{c-1}$, and since
$N_{c-1}\otimes A_{c-1} \stackrel{p}{\twoheadrightarrow} N_{c}\otimes A_{c}$
is surjective, we get the formula for $ _v\! \hom_R(M_{j},I_{c-1})$ and the
codepth of $A_{c-1}$-module $\Hom _{A_{c-1}}(M_{c-1},I_{c-1})$.
Moreover, the dimension of $\ker p$ in degree
$v+a_{t+c-1}$ is determined in Lemma \ref{dimMI}. Indeed, we have
$$R_v\cong \Hom _{A_{c-1}}(M_{c-1},I_{c-1})_{a_{t+c-1}+v}\cong \Hom
_{A_{j}}(I_{j},I_{c-1})(a_{t+c-1}-a_{t+j})_{v}$$
provided $v < s_r-b_r-a_{t-r+1}+b_1$, and we get the last statement of
Proposition \ref{fibdim2}.

Finally let $2-r < j \le c-1$. Then we have $\depth_{J_{j}A_{c-1}}A_{c-1}-1 \ge 3$
by \eqref{aux5}. Applying $H^0_*(U_j,-)$ to the exact sequence
$$0 \longrightarrow {\mathcal Hom} _{{\mathcal
    O_U}_{j}}(\widetilde{M_{j}},\widetilde{I_{c-1}}) \longrightarrow
\widetilde{N_{c-1}}\otimes \widetilde{A_{c-1}}_{|U_j} \longrightarrow
\widetilde{N_{c}}\otimes \widetilde{A_{c}}_{|U_j} \longrightarrow 0$$
and noticing that
$H^0_*(U_j, \widetilde{ N_{i}}\otimes \widetilde{ A_{i}}) \cong { N_{i}}
\otimes {A_{i}}$
for $i=c-1,c$ and that
$p:N_{c-1}\otimes A_{c-1} \longrightarrow N_{c}\otimes A_{c}$ is surjective,
we get an injection
$$
H^1_*(U_j,{\mathcal Hom} _{{\mathcal
    O_U}_{j}}(\widetilde{M_{j}},\widetilde{I_{c-1}})) \hookrightarrow
H^1_*(U_j,\widetilde{N_{c-1}}\otimes \widetilde{A_{c-1}}) \cong H^2
_{J_{j}A_{c-1}}( N_{c-1} \otimes {A_{c-1}}).$$
If $c > 1$ (resp. $j > 3-r$) the latter group vanishes because
$$\depth _{J_{j}A_{c-1}}( N_{c-1} \otimes {A_{c-1}})=\depth
_{J_{j}A_{c-1}}A_{c-1}\ge 3$$
$$({\rm resp.} \quad
\depth _{J_{j}A_{c-1}}( N_{c-1} \otimes {A_{c-1}}) \ge \depth
_{J_{j}A_{c-1}}A_{c-1}-1 \ge 4-1= 3$$
by \eqref{aux5}, noting that $\dim A_{c-1} = \dim A_c+r \ge 4$). Recalling that $ \widetilde{M_{j}}$ is a locally free
${{\mathcal O_U}_{j}}$-Module, we get $\Ext^1_{A_j}(M_j,I_ {c-1})=0$ by
\eqref{510}. For $j\le c-2$ we also get
$\Ext^1_{A_{j+1}}(I_j/I_j^2,I_ {c-1})=0$ because
$I_j \otimes A_{j+1} \cong I_j/I_j^2$ is locally free over
$U_j \cap \Proj(A_{j+1})$ by \eqref{MA}.

It remains to consider the case $c \le 1$ and $j=3-r \le c-1$ where we
unfortunately only have
$\depth _{J_{j}A_{c-1}}(N_{c-1} \otimes {A_{c-1}}) \ge 2$.
If $j \le c-2$, we can, however, apply \eqref{aux5} onto the larger ideal
$J_{j+1}A_{c-1} \supset J_{j}A_{c-1}$. Indeed, we get
$\depth _{J_{j+1}A_{c-1}}(N_{c-1} \otimes {A_{c-1}}) \ge 3$ by \eqref{aux5}
which implies
$ H^1_*(U_{j+1},\widetilde{N_{c-1}}\otimes \widetilde{A_{c-1}}) = 0$.
Observing that
${\mathcal Hom}_{{\mathcal
    O_U}_{j+1}}(\widetilde{M_{j+1}},\widetilde{A_{c-1}}) \cong
\widetilde{N_{c-1}}\otimes \widetilde{A_{c-1}}$
and that $\widetilde{M_{j+1}}$ is a locally free over $U_{j+1}$, whence the
depth of $ \widetilde{N_{c-1}}\otimes \widetilde{A_{c-1}}$ and
$\widetilde{A_{c-1}}$ coincide at every point of $U_{j+1}$, we get
$$ H^1_*(U_{j},\widetilde{N_{c-1}}\otimes \widetilde{A_{c-1}}) \cong
H^1_*(U_{j+1},\widetilde{N_{c-1}}\otimes \widetilde{A_{c-1}}) = 0\,.$$
It follows that
$H^1_*(U_j,{\mathcal Hom} _{{\mathcal
    O_U}_{j}}(\widetilde{M_{j}},\widetilde{I_{c-1}})) =0$
and we get $$\Ext^1_{A_j}(M_j,I_ {c-1})=\Ext^1_{A_{j+1}}(I_j/I_j^2,I_ {c-1})=0$$
by \eqref{510} and \eqref{MA} as previously. Since, in the special case
$j=3-r = c-1$, we directly get $\Ext^1_{A_j}(M_j,I_ {c-1})=0$ from Proposition \ref{vanish}(iii), we are done.
\end{proof}

\begin{remark} \label{remprop81} \rm Suppose $\dim A \ge 3$, and if
  $c \le 0$ we also suppose $\dim A \ge 4$. Then using \eqref{aux5}, we can
  mainly argue as in the last part of the proof above to get, for
  $ 3-r < i\le c-2$, that
 $$\Ext^2_{A_i}(M_i,I_ {c-1})=\Ext^2_{A_{c-1}}(M_{c-1},I_ {c-1})=0 \quad {\rm and} \quad
 \Ext^2_{A_{i+1}}(I_i/I_i^2,I_ {c-1})=0\, .$$
\end{remark}

\begin{corollary} \label{fiberpr2} Let $\cA $ be general, let $c \ge 3-r$ and
  suppose $a_1>b_t$, $r \ge 2$, $\dim A \ge 2$ and
  $a_{t-r+1}+ a_{t+c-2}-a_{t+c-1} < s_r-b_r+b_1$. If $c \le 0$ suppose also
  $\dim A \ge 3$. Then it holds
 \begin{itemize} \item[{\rm (i)}] If $c > 3-r$,  then $$  _0\!
   \hom_R(I_{A_{c-1}},I_{c-1})= \ _0\!
  \hom_R(I_{A_{3-r}},I_{c-1})+\sum _{j=t-r+3}^{t+c-2} {a_j-a_{t+c-1}+n\choose
    n}$$  and $$
 _0\! \hom_R(I_{A_{3-r}},I_{c-1}) \le \ _0\!
 \hom_R(I_{A_{2-r}},I_{c-1})+{a_{t-r+2}-a_{t+c-1}+n\choose n}. $$
 Moreover, for $3-r \le j \le c-2$, we have
 $$ \ _0\! \ext^1_{A_{j+1}}(I_{A_{j+1}}/I_{A_{j+1}}^2,I_{c-1}) \le \ _0\!
 \ext^1_{A_j}(I_{A_{j}}/I_{A_{j}}^2,I_{c-1}).$$
\item[{\rm (ii)}] If $ \ _0\! \hom_R(I_{A_{i}},I_{c-1})=0$ for  some $i$, $2-r
< i \le c-1$, or  $_0\! \hom_R(I_{A_{2-r}},I_{c-1}) =0$ and $a_{t-r+1}
< a_{t+c-1}$, then
 $$_0\! \hom_R(I_{A_{3-r}},I_{c-1})= {a_{t-r+2}-a_{t+c-1}+n\choose n}=\sum
 _{j=1}^{t-r+2}{a_j-a_{t+c-1}+n\choose n},$$
 whence
 $$ _0\! \hom_R(I_{A_{c-1}},I_{c-1})=\sum
 _{j=1}^{t+c-2}{a_j-a_{t+c-1}+n\choose n} \, .$$
 \item[{\rm (iii)}] If
 $a_{t-r+1} < a_{t+c-1} -\sum _{i=1}^{t-r+1}b_{r+i-1}+\sum
 _{i=1}^{t-r+1}b_{i}$
 (e.g. $a_{t-r+1} < a_{t+c-1}$ if $b_1=b_t$), then
 $_0\! \hom_R(I_{A_{2-r}},I_{c-1}) =0$ and we have
$$ _0\! \hom_R(I_{A_{c-1}},I_{c-1})=\sum
 _{j=1}^{t+c-2}{a_j-a_{t+c-1}+n\choose n} \, .$$
\end{itemize}
\end{corollary}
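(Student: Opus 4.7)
The plan centers on the short exact sequence of ideals $0\to I_{A_j}\to I_{A_{j+1}}\to I_j\to 0$ coming from the flag~\eqref{fullflag}, combined with the $\Ext^1$-vanishings supplied by Proposition~\ref{fibdim2}. I would begin by observing that $I_{c-1}$ is annihilated by $I_{A_{c-1}}\supset I_{A_j}$, so every $R$-linear map from $I_{A_j}$, $I_{A_{j+1}}$, or $I_j$ into $I_{c-1}$ factors through the appropriate conormal module and becomes an $A_{j+1}$-linear map after base change. Consequently, applying $\Hom_{A_{j+1}}(-,I_{c-1})$ to the conormal fundamental sequence for $R\to A_j\to A_{j+1}$,
\begin{equation*}
 I_{A_j}/I_{A_j}^2\otimes_{A_j}A_{j+1}\longrightarrow I_{A_{j+1}}/I_{A_{j+1}}^2\longrightarrow I_j/I_j^2\longrightarrow 0,
\end{equation*}
and using hom/tensor adjunction yields the four-term exact sequence
\begin{equation*}
0\to \Hom_R(I_j,I_{c-1})\to \Hom_R(I_{A_{j+1}},I_{c-1})\to \Hom_R(I_{A_j},I_{c-1})\to \Ext^1_{A_{j+1}}(I_j/I_j^2,I_{c-1}).
\end{equation*}
For $3-r\le j\le c-2$ the last term vanishes in degree zero by Proposition~\ref{fibdim2}, and that same proposition identifies $\ _0\!\hom_R(I_j,I_{c-1})=\binom{a_{t+j}-a_{t+c-1}+n}{n}$ precisely under the numerical hypothesis of the corollary (which ensures $a_{t+j}-a_{t+c-1}<s_r-b_r-a_{t-r+1}+b_1$ for every $j\le c-2$). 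Iterating this exact sequence in degree zero from $j=3-r$ to $j=c-2$ produces the first equality of (i); at $j=2-r$ the $\Ext^1$-vanishing is no longer available, and only the upper bound remains, giving the second inequality of (i). The $\ _0\!\Ext^1$-inequality of (i) is obtained by prolonging the same exact sequence one step further and using the $\Ext^1$-vanishing on $I_j/I_j^2$ to make the natural map from $\ _0\!\Ext^1_{A_{j+1}}(I_{A_{j+1}}/I_{A_{j+1}}^2,I_{c-1})$ to $\ _0\!\Ext^1_{A_j}(I_{A_j}/I_{A_j}^2,I_{c-1})$ injective.

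For (ii), I plan to exploit the convention $\binom{m}{n}=0$ for $m<n$, which makes all binomials appearing in (i) non-negative. If $\ _0\!\hom_R(I_{A_i},I_{c-1})=0$ for some $i$ with $3-r<i\le c-1$, then the identity of (i) truncated at $A_i$ forces simultaneously $\ _0\!\hom_R(I_{A_{3-r}},I_{c-1})=0$ and $\binom{a_{t+j}-a_{t+c-1}+n}{n}=0$ for $3-r\le j\le i-1$; in particular $a_{t-r+2}<a_{t+c-1}$, which makes $\binom{a_{t-r+2}-a_{t+c-1}+n}{n}=0$, and the claimed equality holds. The boundary case $i=3-r$ and the ``or''-case $\ _0\!\hom_R(I_{A_{2-r}},I_{c-1})=0$ are handled by the exact sequence at $j=2-r$ combined with the last formula of Proposition~\ref{fibdim2}, which directly yields $\ _0\!\hom_R(I_{A_{3-r}},I_{c-1})=\ _0\!\hom_R(I_{2-r},I_{c-1})=\binom{a_{t-r+2}-a_{t+c-1}+n}{n}$. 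The auxiliary assumption $a_{t-r+1}<a_{t+c-1}$ kills all binomials $\binom{a_j-a_{t+c-1}+n}{n}$ with $j\le t-r+1$, which produces the second equality of (ii); substituting into (i) gives the closed formula for $\ _0\!\hom_R(I_{A_{c-1}},I_{c-1})$.

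Finally, for (iii) I plan to verify directly that the numerical hypothesis forces $\ _0\!\hom_R(I_{A_{2-r}},I_{c-1})=0$ and then invoke (ii). Since $A_{2-r}$ is standard determinantal, the maximum degree of a generator of $I_{A_{2-r}}$ is $\sum_{j=1}^{t-r+1}a_j-\sum_{i=1}^{t-r+1}b_i$, while the minimum degree of a generator of $I_{c-1}$ (a $(t-r+1)$-minor involving the deleted last column) is $a_{t+c-1}+\sum_{j=1}^{t-r}a_j-\sum_{i=r}^{t}b_i$. An elementary rearrangement shows that the hypothesis of (iii) is exactly the strict inequality
\begin{equation*}
\sum_{j=1}^{t-r+1}a_j-\sum_{i=1}^{t-r+1}b_i \ < \ a_{t+c-1}+\sum_{j=1}^{t-r}a_j-\sum_{i=r}^{t}b_i,
\end{equation*}
which implies that any degree-zero homomorphism $I_{A_{2-r}}\to I_{c-1}$ must send every generator to a zero degree component of $I_{c-1}$, hence vanishes. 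The same inequality manifestly implies $a_{t-r+1}<a_{t+c-1}$ (since $b_{r+i-1}\ge b_i$), so (ii) applies and delivers the closed formula. The main technical hurdle throughout will be the hom/tensor-and-conormal identification of the relevant $R$-$\Ext$-groups with the $A_{j+1}$-$\Ext$-groups controlled by Proposition~\ref{fibdim2}; once that is in place, the rest is bookkeeping with exact sequences and elementary degree comparisons.
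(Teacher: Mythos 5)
Your overall architecture matches the paper's: you iterate the short exact sequence coming from $0\to I_{A_j}\to I_{A_{j+1}}\to I_j\to 0$, use the $\Ext^1$-vanishings of Proposition~\ref{fibdim2} to propagate equalities from $j=3-r$ up to $j=c-2$, handle the boundary $j=2-r$ case with only left-exactness, and dispose of (iii) by the elementary degree comparison $mdg(I_{A_{2-r}})<s(I_{c-1})$. Parts (ii) and (iii), and the $\ _0\!\ext^1$-inequality of (i), are essentially the paper's arguments.

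The gap is in your derivation of the four-term exact sequence. You claim that applying $\Hom_{A_{j+1}}(-,I_{c-1})$ to the conormal sequence
$I_{A_j}/I_{A_j}^2\otimes_{A_j}A_{j+1}\to I_{A_{j+1}}/I_{A_{j+1}}^2\to I_j/I_j^2\to 0$,
together with hom/tensor adjunction, produces
$0\to \Hom_R(I_j,I_{c-1})\to \Hom_R(I_{A_{j+1}},I_{c-1})\to \Hom_R(I_{A_j},I_{c-1})\to \Ext^1_{A_{j+1}}(I_j/I_j^2,I_{c-1})$.
But the conormal sequence is merely right-exact (its leftmost map has kernel $(I_{A_j}\cap I_{A_{j+1}}^2)/I_{A_j}I_{A_{j+1}}$, which need not vanish), so applying $\Hom$ gives only the three-term left-exact piece; no $\Ext^1$-term appears. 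Nor does the alternative route of applying $\Hom_R(-,I_{c-1})$ to $0\to I_{A_j}\to I_{A_{j+1}}\to I_j\to 0$ immediately work: the connecting map lands in $\Ext^1_R(I_j,I_{c-1})$, and it is not clear that it factors through the subgroup $\Ext^1_{A_{j+1}}(I_j/I_j^2,I_{c-1})$, since the pushout extension of $I_j$ by $I_{c-1}$ along $\phi\in\Hom_R(I_{A_j},I_{c-1})$ is not naturally an extension of $A_{j+1}$-modules. What the paper actually uses is the Jacobi-Zariski long exact sequence in algebra cohomology for $R\to A_j\to A_{j+1}$ with coefficients $I_{c-1}$, and then identifies the relevant $H^2$-groups with $\Ext^1$ of conormal modules because the $\Hom(H_2(\cdot),I_{c-1})$-terms in the comparison spectral sequence vanish under the depth condition \eqref{aux5}. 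You need that (or an equivalent argument showing the connecting map in the $\Hom_R$ long exact sequence factors through $\Ext^1_{A_{j+1}}(I_j/I_j^2,I_{c-1})$) to make the four-term sequence, and hence the iteration in (i), rigorous. Once this is fixed the rest of your proposal is sound.
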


\begin{proof} (i) Since by Proposition \ref{fibdim2},
  $\Ext^1_{A_{j+1}}(I_{{j}}/I_{{j}}^2,I_{c-1})=0$
  we get an exact sequence
\begin{equation}\label{aux*}
0\longrightarrow\ _0\! \Hom(I_j,I_{c-1})\longrightarrow\ _0\! \Hom(I_{A_{j+1}},I_{c-1})\longrightarrow\ _0\! \Hom(I_{A_{j}},I_{c-1})\longrightarrow  0
\end{equation}
induced by $$0\longrightarrow I_{A_j}\longrightarrow I_{A_{j+1}}\longrightarrow
I_j\longrightarrow 0.$$
Indeed a long exact sequence of algebra cohomology, often called the
Jacobi-Zariski sequence, implies (\ref{aux*}), because we can continue to the
right the left-exact part of (\ref{aux*}) by
 $$ \longrightarrow \ _0\! \Ext^1_{A_{j+1}}(I_{{j}}/I_{{j}}^2,I_{c-1}) \longrightarrow \ _0\!
 \Ext^1_{A_{j+1}}(I_{A_{j+1}}/I_{A_{j+1}}^2,I_{c-1}) \ \longrightarrow \ _0\!
 \Ext^1_{A_j}(I_{A_{j}}/I_{A_{j}}^2,I_{c-1}) \longrightarrow \,.$$
 Note that e.g. (\ref{aux5}) shows that the algebra cohomology groups are just
 these $\Ext^1$-groups above because their $\Hom(-,I_{c-1})$ terms in a well
 known spectral sequence relating algebra cohomology to algebra homology vanish. 
 Since $\Ext^1_{A_{j+1}}(I_{{j}}/I_{{j}}^2,I_{c-1})=0$, we also get the
 inequality
 $$ \ _0\! \ext^1_{A_{j+1}}(I_{A_{j+1}}/I_{A_{j+1}}^2,I_{c-1}) \le \ _0\!
 \ext^1_{A_j}(I_{A_{j}}/I_{A_{j}}^2,I_{c-1}) \text{ for } 3-r \le j \le c-2.$$
  From (\ref{aux*}) and Proposition
 \ref{fibdim2} which implies
 $\dim\!\, _0\! \Hom(I_j,I_{c-1})={a_{t+j}-a_{t+c-1}+n\choose n}$ for
 $2-r\le j\le c-2$, we get
$$_0\! \hom_R(I_{A_{j+1}},I_{c-1})=\ _0\! \hom_R(I_{A_{j}},I_{c-1})+{a_{t+j}-a_{t+c-1}+n\choose n} \text{ for } 2-r<j<c-1$$
and hence the equality of (i). Finally since $\Hom(-,-)$ is
left-exact, \eqref{aux*} holds also for $j=2-r$ except for the surjectivity to the right. Since
Proposition \ref{fibdim2} holds for $j$ in the range $2-r \le j \le c-2$ we
get
$$_0\! \hom_R(I_{A_{3-r}},I_{c-1}) \le \ _0\! \hom_R(I_{A_{2-r}},I_{c-1}) +
{a_{t-r+2}-a_{t+c-1}+n\choose n}$$ and (i) is proved.

(ii) From the last inequality we get that $a_{t-r+1} < a_{t+c-1}$ and
$_0\! \hom_R(I_{A_{2-r}},I_{c-1})=0$ imply
$$ _0\! \hom_R(I_{A_{3-r}},I_{c-1}) \le {a_{t-r+2}-a_{t+c-1}+n\choose n} = \sum
_{j=1}^{t-r+2}{a_j-a_{t+c-1}+n\choose n}$$
while the left-exactness of $\Hom(-,-)$ in \eqref{aux*} for $j=2-r$ implies
equality. Moreover, for every $j$ such that $2-r < j < i$, the exact sequence
\eqref{aux*} shows that if $ _0\! \hom_R(I_{A_{j+1}},I_{c-1})=0$ then we have
$_0\! \hom_R(I_{A_{j}},I_{c-1})=0$ as well as \
$0=\ _0\! \hom_R(I_j,I_{c-1})={a_{t+j}-a_{t+c-1}+n\choose n},$ i.e.
$a_{t+j} < a_{t+c-1}$ and hence $a_{t-r+2} < a_{t+c-1}$. Using this repeatedly
we get $_0\! \hom_R(I_{A_{3-r}},I_{c-1})=0$ and we are done since
$ \sum
_{j=1}^{t-r+2}{a_j-a_{t+c-1}+n\choose n}=0$.

(iii) By (ii) it suffices to verify that \
$_0\! \hom_R(I_{A_{2-r}},I_{c-1}) =0$ and $a_{t-r+1} < a_{t+c-1}$. Since
$ -\sum _{i=1}^{t-r+1}b_{r+i-1}+\sum _{i=1}^{t-r+1}b_{i} \le 0$, the
inequality follows from the assumption of (iii). To see that
$_0\! \hom_R(I_{A_{2-r}},I_{c-1}) =0$, we look at the degree of the minimal
generators of $I_{A_{2-r}}$. By Proposition~\ref{mdr_mdg} the largest degree
of the minimal generators of $I_{A_{2-r}}$ is
$$ mdg(I_{A_{2-r}}):= \sum _{j=1}^{t-r+1}a_j-\sum _{i=1}^{t-r+1}b_{i},$$ while the smallest degree of a generator of $I_{c-1}$ is just
$$s(I_{c-1}): =a_{t+c-1}+\sum _{i=1}^{t-r}a_i-\sum _{i=1}^{t-r+1}b_{r+i-1}.$$
Hence $ mdg(I_{A_{2-r}}) < s(I_{c-1})$, i.e. the assumption of (iii), implies
$ \ _0\! \hom_R(I_{A_{2-r}},I_{c-1})=0$.
\end{proof}

\begin{remark} \rm \label{remcor83}
  {\rm (1)} The general conditions of Corollary~\ref{fiberpr2} are quite
  weak (and is not needed for sequence \eqref{aux*}, nor for the inequality of
  $ \ext^1$ in (i)). Indeed writing the
  condition on $s_r$ as $$b_r-b_1 < (s_r -a_{t-r+1})+ (a_{t+c-1}-a_{t+c-2})$$
  and noting that the right hand side is $ \ge t-r$, due to
  $ s_r-a_{t-r+1}=\sum _{i=1}^{t-r}(a_i-b_{r+i})$ and $a_1>b_t$, we see that the
  condition is at least satisfied if $ b_r-b_1\le t-r-1$.

  (2) Under the assumptions in Remark \ref{remprop81}, most importantly
assuming $j> 3-r$, we have
 $$ \ _0\! \ext^1_{A_{j+1}}(I_{A_{j+1}}/I_{A_{j+1}}^2,I_{c-1}) = \ _0\!
 \ext^1_{A_j}(I_{A_{j}}/I_{A_{j}}^2,I_{c-1}) \, .$$
 So if we in Corollary~\ref{fiberpr2}(i) increase the dimension assumptions by
 $1$ it is only for $j=3-r$ where strict inequality in
 $$ \ _0\! \ext^1_{A_{4-r}}(I_{A_{4-r}}/I_{A_{4-r}}^2,I_{c-1}) \le \ _0\!
 \ext^1_{A_{3-r}}(I_{A_{3-r}}/I_{A_{3-r}}^2,I_{c-1})\, $$
 may occur. To give an example where this happens, let $A=R/I_A$ be defined by
 the $2 \times 2$ minors of a general linear $3 \times 5$ matrix
 $\cA= [\cB,v]$, let $B=R/I_B (=A_{4-r})$, resp. $C=R/I_C (=A_{3-r})$, be
 correspondingly defined by $\cB$, resp. $\cC$, where $\cB = [\cC,w]$ is a
 $3 \times 4$ matrix and $w$ a column. Set $I_{B/C}:=I_B/I_C$. In this case we
 have used Macaulay2 to check that $\Ext^1_{C}(I_C/I_C^2,I_{A/B}) \ne 0$ and
 even $_0\! \Ext^1_{C}(I_C/I_C^2,I_{A/B}) \ne 0$, while we get
 $$\Ext^1_{C}(I_C/I_C^2,I_{B/C})= \Ext^1_{B}(I_B/I_B^2,I_{A/B})= 0\, ,$$
cf. Proposition~\ref{conj3ext}(ii). Deleting one more column of $\cC$ we get
 $\cA_{2-r}$, and letting $I_D:=I_{2}(\cA_{2-r})$ then computations
 show \ $_0\! \Ext^1_{D}(I_{D}/I_{D}^2,I_{j})\ne 0$ for
 $2-r \le j \le 4-r$ ($r=2$), see Remark~\ref{remthm61}(3).
\end{remark}

To give further evidence to Conjecture~\ref{conjdimW} we remark that
Corollary~\ref{fiberpr2} reduces the computation of
$ _0\! \hom_R(I_{A_{c-1}},I_{c-1})$ to computing
$ _0\! \hom_R(I_{A_{3-r}},I_{c-1})$. Using Macaulay2 this at least allows a much
faster verification of the conditions of Theorem~\ref{dimW} which
imply Conjecture~\ref{conjdimW}.

\begin{example} \label{exgendetcor83} \rm  In Example~\ref{exgendet} we
  considered many generic determinantal schemes $X=\Proj(A)$ for which
  Conjecture~\ref{conjdimW} holds. We now extend the result to cover many
  more cases by only verifying
 \begin{equation} \label{exgenassump}
   \ _0\! \hom_R(I_{A_{3-r}},I_{c-1})=\sum _{j=1}^{t-r+2}
  {a_j-a_{t+c-1}+n\choose n} = t-r+2
\end{equation}
   because then Corollary~\ref{fiberpr2}(i) implies that $ _0\!
   \hom_R(I_{A_{c-1}},I_{c-1})=  \sum _{j=1}^{t+c-2} {a_j-a_{t+c-1}+n\choose n}$.
   Hence we can argue as in Examples~\ref{exgendet} to get
   $\dim W(\underline{b};\underline{a};r) = \lambda_c$ and
   $\dim W(\underline{b};\underline{a};r;R') = \lambda_c(R')$ for
   $R'=R[\underline y]$, only using Corollary~\ref{cordimRdetW}(ii). In
   (i) of Examples~\ref{exgendet} we have considered the additional cases
   $8 \le c \le 18$ of $2 \times 2$ minors of a generic $3 \times (c+2)$
   matrix, in (ii) the cases $4 \le c \le 5$ of $3 \times 3$ minors of the
   generic $4 \times (c+3)$ matrix, in (iii) the cases $5 \le c \le 9$ of
   $2 \times 2$ minors of the generic $4 \times (c+3)$ matrix and in (iv) the
   cases of $2 \times 2$ minors of the generic $5 \times (c+4)$ matrix for
   $c \in \{2,3\}$, as well as the case of $3 \times 3$ minors of the generic
   $5 \times 6$ matrix, and everyone satisfies \eqref{exgenassump}, whence
   Conjecture~\ref{conjdimW} holds in all these cases.
\end{example}

To describe the fiber at $(B \longrightarrow A)$ of the other projection,
$p_2: \Hilb ^{p_X(t),p_Y(t)} (\PP^{n})\longrightarrow \Hilb ^{p_Y(t)}(\PP^n) $ with tangent map
$pr_2: A^1_{(B\rightarrow A)} \longrightarrow\ _0\!\Hom _R(I_B ,B)$, cf. diagram \eqref{flagdef}, we consider the flag
$$ A_{2-r}\twoheadrightarrow \cdots
  \twoheadrightarrow A_0\twoheadrightarrow A_1\twoheadrightarrow
  A_2\twoheadrightarrow A_3\twoheadrightarrow \cdots \twoheadrightarrow A_c
  =A, \quad B=A_{c-1}, \quad X_{i}=\Proj(A_{i})
$$
 of
determinantal rings
obtained by successively deleting columns from the right-hand side of a
general matrix $\cA$ with $a_1 > b_t$.
As usual $$N_{2-r} \longrightarrow \cdots \longrightarrow N_{j}\longrightarrow
\cdots \longrightarrow N_c=MI$$ is the corresponding sequence of cokernels.
Applying Lemma~\ref{dimMI}(ii) onto
$A_j \longrightarrow A_{j+1}$ we get
$$ \dim (N_{j+1} \otimes A_{j+1})_{a_{t+j}+v} = \dim (N_{j} \otimes
A_{j})_{a_{t+j}+v} - \dim R_v$$ provided $ v < s_r-b_r-a_{t-r+1}
+b_1$
where $ s_r-a_{t-r+1}:=\sum _{i=1}^{t-r}(a_i-b_{r+i})$. Letting
$v=a_{t+k} -a_{t+j}$ where $j \le k\le c-1$ and assuming
$ a_{t+c-1} < s_r-b_r+a_{t+j}-a_{t-r+1} +b_1$ we obtain
\begin{equation} \label{dimMIAflag}
 \dim (N_{j+1} \otimes A_{j+1})_{a_{t+k}} = \dim
  (N_{j} \otimes A_{j})_{a_{t+k}} -
  {a_{t+k}-a_{t+j}+n\choose n}\
\end{equation}
because $a_{t+k} \le a_{t+c-1}$. We also know that
$$0 \longrightarrow D_j(-a_{t+j}) \longrightarrow N_j \longrightarrow N_{j+1} \longrightarrow 0$$ is exact where
$D_j=R/I_t(\varphi^*_{t+j-1})$ for $j>0$ (e.g. see the text after (3.1) of
\cite{KMMNP}) and $D_j=R$ for $j \le 0$ (because $0 \longrightarrow G_{t+j-1}^*
   \longrightarrow  F^*   \longrightarrow N_{j} \longrightarrow 0$ is exact for $j \le 1$). Since
   $I_t(\varphi^*_{t+j-1})_v =0$ for $v < s_1-b_1$ we get
\begin{equation} \label{dimMImaxFlag} \dim (N_{j+1})_{a_{t+k}} = \dim (N_{j}
  )_{a_{t+k}} - {a_{t+k}-a_{t+j}+n\choose n}\
\end{equation}
provided $a_{t+c-1} < s_1-b_1$. Hence assuming
$ a_{t+c-1} < s_r-b_r+a_{t+j}-a_{t-r+1} +b_1$,  both \eqref{dimMIAflag}
and \eqref{dimMImaxFlag} holds by \eqref{Ki0}. 
Note that if we apply Lemma~\ref{dimMI}(i) onto $N_j$ and $A_j$ and assume
$ a_{t+c-1} < s_r-b_r+b_1$ we get
$\dim (N_{j} \otimes A_{j})_{a_{t+c-1}} = \dim (N_{j})_{a_{t+c-1}}$, as
previously. But the approach above using (ii) instead of (i) in
Lemma~\ref{dimMI}, leads to better results if we are able to find $R$-free
resolutions of $N_{j} \otimes A_{j}$. For $r=2$ and $c=1$ this is somehow done
in Theorem~\ref{dimW1} where $N_0 \cong I_B(s-a_t)$ and
$$\dim (N_{0})_{a_{t+c-1}}-\dim (N_{0} \otimes B)_{a_{t+c-1}}=\dim
I_B^2(s-a_t+a_{t+c-1})_0\, ,$$
which one may compute using the free resolution of $I_B^2$ given in
\eqref{ImultI}. More generally for $c=3-r$, $r \ge 2$ we have by
\eqref{dimMIAflag} that
\begin{equation}  \label{NoA}
\dim (N_{3-r} \otimes A_{3-r})_{a_{t-r+2}} = \dim (N_{2-r} \otimes
A_{2-r})_{a_{t-r+2}} - 1
\end{equation}
provided $b_r-b_1 < s_r-a_{t-r+1}$.
Hence a method to compute $\dim (N_{2-r} \otimes A_{2-r})_{a_{t-r+2}}$ is needed.
\begin{lemma} \label{kapp} Let $r \ge 2$, $B_i:= \coker \varphi _{t+i-1}$,
  $J_{A_i}:=I_{t-r}(\varphi _{t+i-1}^*) \ne R$ and suppose
  $\depth _{J_{A_i}}A_i \ge 1$ ($\cA$ not necessarily general). Then for
  $i \le 1$ the following sequence of maximal Cohen-Macaulay $A_i$-modules
 \begin{equation*}
   0  \longrightarrow \Hom_{A_i}(B_i \otimes A_i,A_i) \longrightarrow
   G_{t+i-1}^* \otimes A_i
   \longrightarrow  F^* \otimes A_i  \longrightarrow   N_i \otimes A_i
   \longrightarrow 0
\end{equation*}
is exact. Moreover, if $i=2-r$  then $A_i$ is defined by
maximal minors in which case we have the following minimal $R$-free resolution
of $(B_i\otimes A_i)^*:=\Hom_{A_i}(B_i \otimes A_i,A_i)$:
$$ \  0 \longrightarrow S_{t-1}( G_{t+i-1}^*)\longrightarrow
F^* \otimes S_{t-2}( G_{t+i-1}^*)\longrightarrow \ldots $$
$$\longrightarrow \wedge ^{t-2}
F^* \otimes G_{t+i-1}^* \longrightarrow \wedge ^{t-1} F^* \longrightarrow (B_i\otimes A_i)^*(\ell_{2-r}) \longrightarrow 0 \,
.$$
\end{lemma}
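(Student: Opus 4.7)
The plan is to deduce the four-term exact sequence by dualizing the presentation of $B_i\otimes A_i$, and then, in the standard determinantal case $i=2-r$, to produce the explicit resolution by combining the Eagon–Northcott resolution of $A_{2-r}$ with the short exact sequence for $N_{2-r}$ over $R$.

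For the first part, tensoring the right-exact sequence $G_{t+i-1}^*\to F^*\to N_i\to 0$ with $A_i$ gives the last three terms of the desired sequence. Dually, applying $\Hom_{A_i}(-,A_i)$ to the presentation $F\otimes A_i\to G_{t+i-1}\otimes A_i\to B_i\otimes A_i\to 0$ gives the first three terms, with the shared middle map being $\varphi_{t+i-1}^*\otimes A_i$; the two halves splice into the required four-term exact sequence. For the maximal Cohen–Macaulay property, $A_i$ is Cohen–Macaulay since it is determinantal (by \cite{e-h} or the Lascoux resolution), the middle free modules are trivially MCM, $N_i\otimes A_i$ is MCM (or of controlled codepth) under the hypothesis $\depth_{J_{A_i}}A_i\ge 1$ by Bruns' results \cite{B}, and the depth lemma applied to the two short exact sequences obtained by splitting the four-term sequence then gives the MCM-ness of $\Hom_{A_i}(B_i\otimes A_i,A_i)$.

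For the second part with $i=2-r$, the map $\psi^*:=\varphi_{t-r+1}^*:G_{t-r+1}^*\to F^*$ is injective over $R$ because $I_{t-r+1}(\psi^*)$ has the expected codimension $r$, so $0\to G_{t-r+1}^*\to F^*\to N_{2-r}\to 0$ is a short exact sequence over $R$. Combined with the four-term sequence of the first part, this yields an identification
$$ \Hom_{A_{2-r}}(B_{2-r}\otimes A_{2-r},A_{2-r}) \cong \Tor_1^R(N_{2-r},A_{2-r})\,. $$
Resolving $A_{2-r}$ by its Eagon–Northcott complex and tensoring with the two-term complex $[G_{t-r+1}^*\to F^*]$ (equivalently, applying the horseshoe lemma to the resulting double complex) produces an $R$-free resolution of the left-hand side whose terms assemble into the pattern $\wedge^{t-1-k}F^*\otimes S_k(G_{t-r+1}^*)$ for $k=0,1,\ldots,t-1$; the overall twist $\ell_{2-r}=\sum_{j=1}^{t-r+1}a_j-\sum_{k=1}^t b_k$ accumulates from the graded shifts at the top of the Eagon–Northcott complex.

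The main obstacle will be identifying the differentials of the final complex with the standard ones from the Eagon–Northcott / Akin–Buchsbaum–Weyman-type construction and verifying that the resulting complex is minimal with the claimed overall twist; this is essentially a bookkeeping exercise with Schur functors in characteristic zero, but it must be handled carefully to confirm ranks, graded shifts, and minimality.
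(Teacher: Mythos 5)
Your approach to the first part is essentially the paper's. The four–term sequence is obtained exactly as the paper does it: apply $\Hom_{A_i}(-,A_i)$ to a presentation $F\otimes A_i\to G_{t+i-1}\otimes A_i\to B_i\otimes A_i\to 0$ to get the left-exact part, apply $(-)\otimes A_i$ to $G_{t+i-1}^*\to F^*\to N_i\to 0$ to get the right-exact part, and splice along the common middle map $\varphi^*_{t+i-1}\otimes A_i$. For the maximal Cohen--Macaulay assertions the paper invokes Proposition~\ref{vanish} directly (which in turn rests on Bruns' theorems) for $B_i\otimes A_i$ and its $A_i$-dual; your depth-lemma bookkeeping is a slightly different organization of the same facts and ultimately leans on the same results of Bruns. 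Either route is fine, and yours has the minor advantage that it works even when $N_i\otimes A_i$ only has codepth~$1$.

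The second part, however, is where your argument departs from the paper's and where there is a genuine gap. Your identification $(B_{2-r}\otimes A_{2-r})^*\cong\Tor_1^R(N_{2-r},A_{2-r})$ is correct: since $G_{t-r+1}^*\to F^*$ is injective over $R$ (the maximal-minor ideal has positive grade), tensoring the short exact sequence $0\to G_{t-r+1}^*\to F^*\to N_{2-r}\to 0$ with $A_{2-r}$ and comparing with the four-term sequence gives the isomorphism. But your next step --- tensoring the Eagon--Northcott resolution $P_\bullet$ of $A_{2-r}$ with the two-term resolution $[G_{t-r+1}^*\to F^*]$ of $N_{2-r}$ and invoking the horseshoe lemma --- does \emph{not} produce an $R$-free resolution of $\Tor_1^R(N_{2-r},A_{2-r})$. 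The total complex of that tensor product computes $N_{2-r}\otimes^{\mathbf L}_R A_{2-r}$; its homology is $N_{2-r}\otimes A_{2-r}$ in degree $0$ and $\Tor_1^R(N_{2-r},A_{2-r})$ in degree $1$, and because the degree-$0$ homology does not vanish the complex is not a resolution of the degree-$1$ homology. Extracting a minimal free resolution of $\Tor_1$ from this data is not a horseshoe-lemma or Schur-functor bookkeeping step; it requires a genuinely different construction. The paper avoids the issue entirely by quoting the known minimal $R$-free resolution from \cite[pg.\ 595]{eise} (the duality among the complexes $\mathcal C^i$ associated to a generic map) and simply rewriting it in the stated form. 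So for the resolution you should either prove the duality statement you need directly, or cite Eisenbud as the paper does.
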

\begin{proof} Note that Proposition~\ref{vanish} applies to
  $N_i=\coker \varphi _{t+i-1}^*$ as well as to
  $B_i:= \coker \varphi _{t+i-1}$. In the latter case we get, for $i \le 1$,
  that $B_i\otimes A_i$ and $\Hom_{A_i}(B_i \otimes A_i,A_i)$ are maximal
  Cohen-Macaulay $A_i$-modules. 
  Moreover applying $\Hom_{A_i}(-,A_i)$ onto
  $$ \longrightarrow F \otimes A_i \longrightarrow G_{t+i-1} \otimes A_i
  \longrightarrow B_i \otimes A_i \longrightarrow 0$$ we
  get the
  leftmost part of the first exact sequence while applying $(-) \otimes A_i$
onto $$ \longrightarrow G_{t+i-1}^* \longrightarrow F^* \longrightarrow N_i
\longrightarrow 0$$ takes care of the rightmost part.
  Finally in the case of maximal minors, the minimal $R$-free resolution of
  $B_{2-r}^*$ is well known (\cite[pg. 595]{eise}), and that we may put it on
  the form above.
\end{proof}

By applying Theorem~\ref{Wsmooth} to every surjection
$A_{i-1}\twoheadrightarrow A_{i}$, $i > 2-r$ in the flag \eqref{fullflag} and
using Corollary~\ref{fiberpr2} which implies that
$ _0\!\Ext^1_B(I_B/I_B^2,I_{A/B})=0$ provided
$ _0\!\Ext^1_{A_{3-r}}(I_{A_{3-r}}/I_{A_{3-r}}^2,I_{c-1})=0$ where
$I_{j}=I_{A_{j+1}/A_j}$, $B=A_{c-1}$ and $A=A_c$, we are able to prove the
main results of this section. First we consider the case $c=3-r$, $B=A_{2-r}$
and $A=A_{3-r}$ which we need to start the induction and is of interest in
itself. Here Lemma~\ref{kapp} allows us to compute the invariants involved in
(ii).

\begin{theorem}\label{corWsmoothc}  Suppose  that $\Proj(A) \in
  W(\underline{b};\underline{a};r)$
  is general with $c = 3-r$, $r \ge 2$ and $\dim A \ge 2$. If $c \le 0$, we
  also suppose $\dim A\ge 3$. Moreover, let $\gamma$ be the composed map
  $\gamma:\ _0\!\Hom_{R}(I_{A_{}},A_{})\longrightarrow\ _0\!\Hom_R(I_{B},A_{})
  \longrightarrow\ _0\!\Ext^1_{B}(I_{B}/I_{B}^2,I_{A/B})$
  and suppose $a_1 > b_t$. \begin{itemize}
  \item[{\rm (i)}] If\ \ $\gamma =0$ or equivalently if \eqref{thm61cond} holds, then
  $ \overline{W(\underline{b};\underline{a};r)}$ is a generically smooth
  irreducible component of $\Hilb ^{p_X(t)}(\PP^n)$ and every deformation of $A$
  comes from deforming its matrix $\cA$.
\item[{\rm (ii)}] Let $MI=\coker \varphi^*_{t-r+2}$ and
    $N=\coker \varphi^*_{t-r+1}$. If
      $_0\! \hom_R(I_{B},I_{A/B})=\sum _{i=1}^{t-r+1} {a_i-a_{t-r+2}+n\choose
        n}$
      (e.g. $b_t = b_1$ and $a_{t-r+1} < a_{t-r+2}$), then
   $$\dim W(\underline{b};\underline{a};r) = \lambda_{c} +K'_3+K'_4+\cdots
   +K'_r -\kappa' $$
   where $K'_i$ is defined in \eqref{lamdaprime} and $\kappa' \ge 0$ is given
   by:
   $$\kappa'= \dim_k(MI)_{(a_{t-r+2})}- \dim_k(MI \otimes A)_{(a_{t-r+2})}.$$ In
   particular, if also $a_{t-r+2} < s_r-b_r+b_1$ then every $K'_i=0$,
   $\kappa' =0$ and $$\dim W(\underline{b};\underline{a};r) = \lambda_{c}\, .$$
   More generally, if $a_{t-r+1} < s_r-b_r+b_1$ or, equivalently,
   $b_r-b_1<\sum _{i=1}^{t-r}(a_i-b_{r+i})$ then $K'_i=0$,
   $\kappa'= \dim_k(N)_{(a_{t-r+2})}- \dim_k(N \otimes B)_{(a_{t-r+2})}$ and
   $\dim W(\underline{b};\underline{a};r) = \lambda_{c}-\kappa'$ where
   $\kappa'$ may be expressed in terms of binomials using Lemma~\ref{kapp}
   for $i=2-r$, i.e. with $N=N_{2-r}$ and $B = A_{2-r}$.
   \end{itemize}
    \end{theorem}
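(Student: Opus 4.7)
My plan is to derive the ``more generally'' case by applying the general dimension formula already established in part (ii) and verifying each simplification under the weaker hypothesis $a_{t-r+1} < s_r - b_r + b_1$.

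The first step is to show $K'_3 = \cdots = K'_r = 0$. The standing hypothesis $a_1 > b_t$ combined with $b_t \ge b_{r-1}$ gives $a_{t-r+1} \ge a_1 > b_{r-1}$. Then the computation in the text around \eqref{Ki0} shows that $a_{t-r+1} < s_r - b_r + b_1$ together with $a_{t-r+1} > b_{r-1}$ implies $-b_1 < \ell'_2$, so Theorem~\ref{ineqdimW}(ii) yields $K'_i = 0$ for $3 \le i \le r$. Thus the general formula of (ii) collapses to $\dim W(\underline{b};\underline{a};r) = \lambda_c - \kappa'$, conditional on identifying $\kappa'$.

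Next I would compute $\kappa' = \dim(MI)_{a_{t-r+2}} - \dim(MI \otimes A)_{a_{t-r+2}}$. By Lemma~\ref{dimMI}(ii) applied with $v = 0$ — the strict inequality $0 < s_r - b_r - a_{t-r+1} + b_1$ is exactly our hypothesis — we obtain $\dim(MI \otimes A)_{a_{t-r+2}} = \dim(N \otimes B)_{a_{t-r+2}} - 1$. For the corresponding $R$-level identity, I invoke the short exact sequence $0 \to D_{2-r}(-a_{t-r+2}) \to N \to MI \to 0$ from the discussion preceding \eqref{dimMImaxFlag}; since $j = 2-r \le 0$ forces $D_{2-r} = R$, the sequence reads
\[
0 \to R(-a_{t-r+2}) \to N \to MI \to 0
\]
and is exact unconditionally, giving $\dim(MI)_{a_{t-r+2}} = \dim N_{a_{t-r+2}} - 1$. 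Subtracting the two equalities yields $\kappa' = \dim N_{a_{t-r+2}} - \dim(N \otimes B)_{a_{t-r+2}}$.

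Finally, to express $\kappa'$ in binomials, I apply Lemma~\ref{kapp} with $i = 2-r$: the four-term exact sequence
\[
0 \to (B_{2-r} \otimes B)^{*} \to G^*_{t-r+1} \otimes B \to F^* \otimes B \to N \otimes B \to 0
\]
together with the minimal $R$-free resolution of $(B_{2-r} \otimes B)^{*}$ supplied there (in terms of $S_k(G^*_{t-r+1})$ and $\wedge^k F^*$) expresses $\dim(N \otimes B)_{a_{t-r+2}}$ as an alternating sum of binomials twisted by the Hilbert function of the standard determinantal ring $B$ (itself known via the Eagon-Northcott complex). Likewise, the Buchsbaum-Rim resolution \eqref{BR} of $N$ over $R$ writes $\dim N_{a_{t-r+2}}$ as a binomial sum. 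Their difference is the desired binomial expression for $\kappa'$. The main obstacle is precisely the unconditional exactness over $R$ of $0 \to R(-a_{t-r+2}) \to N \to MI \to 0$ (and not only after tensoring with $B$): this is what lets us transfer the computation from $MI$ and $MI \otimes A$ — which are harder to control directly — to $N$ and $N \otimes B$, which fit into the concrete resolutions of Lemma~\ref{kapp}. The special role of $j = 2-r \le 0$, through $D_{2-r} = R$, is the key structural fact that makes the binomial description feasible.
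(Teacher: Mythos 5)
Your proof is correct and follows essentially the same route as the paper: the $K'_i$ vanishing via $b_{r-1}<a_{t-r+1}<s_r-b_r+b_1\Rightarrow -b_1<\ell'_2$, the transfer $\dim(MI\otimes A)_{a_{t-r+2}}=\dim(N\otimes B)_{a_{t-r+2}}-1$ via Lemma~\ref{dimMI}(ii) at $v=0$ (which is precisely \eqref{NoA}), the $R$-level exact sequence $0\to R(-a_{t-r+2})\to N\to MI\to 0$ (the $D_{2-r}=R$ case, since $2-r\le 0$), subtraction to get $\kappa'=\dim N_{a_{t-r+2}}-\dim(N\otimes B)_{a_{t-r+2}}$, and Lemma~\ref{kapp}. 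One small bonus: you silently correct the paper's displayed sequence, whose first term reads $B(-a_{t+2-r})$ but should be $R(-a_{t+2-r})$, consistent with the convention $D_j=R$ for $j\le 0$ stated before \eqref{dimMImaxFlag}.
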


    \begin{proof} Let $\cB$ be the matrix of $\varphi _{t+1-r}^*$, i.e. the
      matrix whose $(t+1-r)$-minors define $B$. Since we know that every
      deformation of $B$ comes from deforming $\cB$ by
      Theorem~\ref{Amodulethm5} and $I_{t-r}(\varphi^*) \ne R$ by $a_1 > b_t$,
      we get (i) from Theorem~\ref{Wsmooth} and Remark~\ref{remthm61}.
      Theorem~\ref{Wsmooth} also implies that
 $$\dim W(\underline{b};\underline{a};r) = \dim
 W(\underline{b};\underline{a'};r)+\dim_k(MI \otimes A)_{(a_{t+2-r})}\ -\ _0\!
 \hom_R(I_{B},I_{A/B})$$
 where $MI=\coker(\varphi^*_{t+2-r})$,
 $\varphi _{t+2-r}=\varphi$. By \eqref{dimensioMI} and assumption we get
 $$\dim_k(MI \otimes A)_{(a_{t+2-r})}\ - \ _0\!
 \hom_R(I_{B},I_{A/B}) = -\kappa' + \lambda_{3-r}-\lambda_{2-r}$$
 while Lemma~\ref{tr} implies
 $$\dim W(\underline{b};\underline{a'};r) = \lambda_{2-r} +K'_3+K'_4+\cdots
 +K'_r,$$ and we get the displayed dimension formula.

 Finally if $a_{t-r+2} < s_r-b_r+b_1$, we get $\kappa'=0$ by
 Lemma~\ref{dimMI}(i), and every $K'_i=0$ by Lemma~\ref{tr} because
 $-b_1 < \ell'_2 = \sum_{j=1}^{t-r+1}a_j-\sum_{k=r-1}^tb_k=s_r-b_r-b_{r-1}$
 and $a_{t-r+2} \ge b_{r-1}$. Also the assumption $a_{t-r+1} < s_r-b_r+b_1$
 implies that every
 $K'_i=0$. Moreover since the map $\varphi^*_{t+i-1}$ is injective for
   $i \le 1$ we get an exact sequence
   $$0 \longrightarrow B(-a_{t+2-r}) \longrightarrow N_{2-r} \longrightarrow
   N_{3-r} \longrightarrow 0$$ where $ N_{3-r}=MI$,
   $ N_{2-r}=N$, by the snake lemma. Combining with \eqref{NoA}, we see that
   $$\kappa'= \dim_k(N)_{(a_{t-r+2})}- \dim_k(N \otimes B)_{(a_{t-r+2})}$$ and
   we conclude the theorem by using Lemma~\ref{kapp}.
\end{proof}

\begin{remark} \rm If $r=2$ in Theorem~\ref{corWsmoothc}, we get
  $\dim W(\underline{b};\underline{a};r) = \lambda_{3-r} -\kappa'$. In this
  case we proved in Theorem~\ref{dimW1} that
  $\dim W(\underline{b};\underline{a};r) = \lambda_{1} -\kappa_1$ under
  weaker assumptions. It follows that $\kappa'= \kappa_1$. In particular we
  have a very explicit formula for $\kappa'$ given in Theorem~\ref{dimW1}
  when $r=2$ and $c=1$.
\end{remark}

\begin{example} \label{dimW3}  \rm (Determinantal quotients of
   $R=k[x_0, x_1, \cdots ,x_n]$, using  Theorem~\ref{corWsmoothc}(ii))

   Let $\cA= [\cB,v]$ be a general $4 \times 3$ matrix with linear (resp.
   quadratic) entries in the first and second (resp. third) column. The degree
   matrix of $\cA$ is
   $\left(\begin{smallmatrix}1 & 1  & 2 \\
       1 & 1 & 2 \\ 1 & 1 & 2 \\ 1 & 1 & 2 \end{smallmatrix}\right)$
   and $b_i=0$ for $1 \le i \le 4$. The vanishing of all $2 \times 2$ minors
   of $\cA$ (resp. $\cB$) defines a determinantal ring $A$ (resp. $B$) with
   $r=3$, $t=4$, $c=0$ (resp. $c=-1$). Our goal is to find
   $\dim \overline{ W(0^4;1^2,2;2)}$. Since the condition $a_{t-1} < s_3 = 2$
   of Theorem~\ref{dimW} does not hold, we will use the generalization in
   Theorem~\ref{corWsmoothc}(ii) to find the dimension. Indeed since
   $a_{t-2} < a_{t-1}$ and $a_{t-2} < s_3 = 2$ we have
   $\dim \overline{ W(0^4;1^2,2;2)}= \lambda_0-\kappa'$ by
   Theorem~\ref{corWsmoothc}(ii) where
   $ \kappa'= \dim_k(N)_{(a_{3})}- \dim_k(N \otimes B)_{(a_{3})}$ and
   $N= \coker (G_{2}^* \hookrightarrow F^*)$, $G_{2}^*=R(-1)^2$ and $F^*=R^4$.
   Moreover by Lemma~\ref{kapp} the sequences
   $$0 \longrightarrow (B_{-1} \otimes B)^* \longrightarrow G_{2}^* \otimes B \longrightarrow F^* \otimes B \longrightarrow N
   \otimes B \longrightarrow 0$$ and
$$ \  0 \longrightarrow S_{3}( G_{2}^*)\longrightarrow
F^* \otimes S_{2}( G_{2}^*) \longrightarrow \wedge ^{2} F^* \otimes G_{2}^* \longrightarrow
\wedge ^{3} F^* \longrightarrow (B_{-1}\otimes B)^*(2) \longrightarrow 0
$$
are exact. The first of these sequences together with the definition of $N$
imply that $ \kappa'=24-\dim (B_{-1}\otimes B)^*_2$, taking into account that
$I_B$ has 6 minimal generators of degree $2$. Then the next displayed sequence
implies that $\dim (B_{-1}\otimes B)^*_2=4$. It follows that
$$\begin{array}{rcl} \dim \overline{ W(0^4;1^2,2;2)} & = & \lambda_0-\kappa' \\
& = & 8(n+1) + 4 {n+2\choose
  2}-16 -(5+2(n+1))+1 - \kappa' \\
  & = &  2n^2+12n-30\end{array}
  $$
  for $n \ge 8$ by definition of $ \lambda_0$. To check the answer using
  Macaulay2, let $\cB$ be the generic linear matrix with entries
  $x_0,x_1,...,x_{7}$ and let $v^{tr}=(x_{8}^2,x_{9}^2,x_{10}^2,x_{11}^2)$.
  Computations show that $\ \Ext_A^1(I_A/I_A^2,A) = 0$ and
  $\dim_{(X)} \Hilb^{p_X(t)}(\PP^{11}) =344$ at $X:=\Proj(A)$, coinciding with
  our formula for $ \dim \overline{ W(0^4;1^2,2;2)}$ when $n=11$. It also
  implies that $\overline{ W(0^4;1^2,2;2)}$ is a generically smooth
  irreducible component of $ \Hilb^{p_X(t)}(\PP^{11})$. We also checked the
  case $n=8$ where $\dim A=3$ by using Macaulay2, and we have got that
  \eqref{thm61cond} holds with $_0\! \hom_R(I_{A},A)=194$. Thus
  $\overline{ W(0^4;1^2,2;2)}$ is a generically smooth irreducible component
  of $ \Hilb^{p_X(t)}(\PP^{8})$ of dimension $194$, coinciding with our
  formula for $\dim \overline{ W(0^4;1^2,2;2)}$ and confirming
  Conjectures~\ref{conjdimW} and \ref{conjWsmooth} in this case.
 \end{example}

 To state our next result, let us index the compositions, previously called
 just $\gamma$,
 as follows: \\[-4mm]

 $\quad \gamma_{3,2}: \ _0\!\Hom_{R}(I_{A_{3-r}},A_{3-r})\longrightarrow\
 _0\!\Hom_R(I_{A_{2-r}},A_{3-r}) \longrightarrow \
 _0\!\Ext^1_{A_{2-r}}(I_{A_{2-r}}/I_{A_{2-r}}^2,I_{2-r})$ ,  and
  $$\gamma_{4,3}: \ _0\!\Hom_{R}(I_{A_{4-r}},A_{4-r})\longrightarrow\
 _0\!\Hom_R(I_{A_{3-r}},A_{4-r}) \longrightarrow \
 _0\!\Ext^1_{A_{3-r}}(I_{A_{3-r}}/I_{A_{3-r}}^2,I_{3-r})\quad  {\rm etc.}$$

\begin{theorem}\label{corWsmooth}  Suppose  that $\Proj(A) \in
  W(\underline{b};\underline{a};r)$
  is general with $c \ge 4-r$, $r \ge 2$ and $\dim A \ge 2$. If $c \le 0$, we
  also suppose $\dim A\ge 3$. Moreover suppose that $a_1 > b_t$ and that the
  composed maps $ \gamma_{3,2}$ and $\gamma_{4,3}$ are both zero. If\ \
  $ _0\!\Ext^1_{A_{4-r}}(I_{A_{4-r}}/I_{A_{4-r}}^2,I_{j})=0$ for
  $4-r \le j \le c-1$, then $ \overline{W(\underline{b};\underline{a};r)}$ is
  a generically smooth irreducible component of $\Hilb ^{p_X(t)}(\PP^n)$ and every
  deformation of $A$ comes from deforming its matrix.
     \end{theorem}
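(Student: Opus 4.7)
The plan is to apply Theorem~\ref{corWsmoothc} and Theorem~\ref{Wsmooth} iteratively along the flag
$$A_{2-r}\twoheadrightarrow A_{3-r}\twoheadrightarrow\cdots\twoheadrightarrow A_{c-1}\twoheadrightarrow A_c=A$$
of Section~8, showing by induction on $j$, $2-r \le j \le c$, the two properties: \emph{(a)} $\overline{W(\underline{b};\underline{a}_j;r)}$ is a generically smooth irreducible component of the corresponding Hilbert scheme, and \emph{(b)} every deformation of $A_j$ comes from deforming its associated matrix. The base case $j=2-r$ is covered by Theorem~\ref{Amodulethm5}(ii), since $A_{2-r}$ is standard determinantal. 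The dimension and depth hypotheses needed along the flag hold from the global assumptions $\dim A\ge 2$ (resp.\ $\dim A\ge 3$ if $c\le 0$), because $\dim A_j$ is monotone non-increasing in $j$ in the flag.

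For the transition $j=2-r\Rightarrow j=3-r$, I would apply Theorem~\ref{corWsmoothc}(i) to the pair $A_{2-r}\to A_{3-r}$. The $\gamma$ appearing there for this pair is precisely $\gamma_{3,2}$, which vanishes by hypothesis, so this step simultaneously produces \emph{(a)} and \emph{(b)} at level $A_{3-r}$. For the next transition $j=3-r\Rightarrow j=4-r$, I would feed the just-obtained property \emph{(b)} at $A_{3-r}$ into Theorem~\ref{Wsmooth} applied to the pair $A_{3-r}\to A_{4-r}$; the missing hypothesis $\gamma=0$ for this pair is exactly the given $\gamma_{4,3}=0$.

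For the remaining inductive steps $5-r\le j\le c$, I would again use Theorem~\ref{Wsmooth} on the pair $A_{j-1}\to A_j$, with \emph{(b)} at $A_{j-1}$ supplied by the inductive hypothesis. The only condition that is not immediately available is the vanishing of $\gamma$ for this pair, which is implied by $\ _0\!\Ext^1_{A_{j-1}}(I_{A_{j-1}}/I_{A_{j-1}}^2,I_{j-1})=0$. To get this vanishing, I would rerun Corollary~\ref{fiberpr2}(i) on the truncated flag $A_{2-r}\to\cdots\to A_{j-1}\to A_j$, treating $A_j$ as the ``$A$'' of that Corollary: its telescoping chain of inequalities yields
$$_0\!\ext^1_{A_{j-1}}(I_{A_{j-1}}/I_{A_{j-1}}^2,I_{j-1})\ \le\ \ _0\!\ext^1_{A_{4-r}}(I_{A_{4-r}}/I_{A_{4-r}}^2,I_{j-1}),$$
and the right-hand side is zero by the last standing hypothesis of the theorem, applied with the index $j-1\in[4-r,c-1]$. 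Taking $j=c$ gives \emph{(a)} and \emph{(b)} for $A$, which is the theorem.

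The argument is conceptually straightforward once the flag is set up; the only delicate point — which I expect to be the main thing to verify carefully — is that Corollary~\ref{fiberpr2}(i) may be reapplied at each intermediate level with the truncated flag, since the only input it needs is the determinantal nature of the rings in the flag together with the generality and depth hypotheses, all of which are inherited from the global assumptions. There is no genuine analytic obstacle: the whole proof is an induction machine driven by Theorems~\ref{corWsmoothc} and~\ref{Wsmooth}, with Corollary~\ref{fiberpr2}(i) used to convert the single $\Ext^1$-vanishing hypothesis at level $A_{4-r}$ into a uniform supply of $\gamma$-vanishings all the way up the flag.
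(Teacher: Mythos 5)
Your proof is correct and follows essentially the same inductive flag strategy as the paper's own proof: initiate at $A_{2-r}\to A_{3-r}$ via Theorem~\ref{corWsmoothc} (using $\gamma_{3,2}=0$), pass to $A_{4-r}$ via Theorem~\ref{Wsmooth} (using $\gamma_{4,3}=0$), and then climb the flag using Theorem~\ref{Wsmooth} at each level, with the required $\gamma$-vanishing supplied by transferring the $\Ext^1$-vanishing from $A_{4-r}$ via the monotonicity in Corollary~\ref{fiberpr2}(i). The only bookkeeping detail worth making explicit (which the paper handles via Remark~\ref{remcor83}) is that Corollary~\ref{fiberpr2}(i)'s $\ext^1$-inequality does not actually require the auxiliary numerical condition $a_{t-r+1}+a_{t+c-2}-a_{t+c-1}<s_r-b_r+b_1$ stated in that corollary, so it is legitimately applicable to each truncated flag $A_{2-r}\to\cdots\to A_j$; your proof would benefit from citing that remark. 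The paper indexes its induction on $c$ rather than on the flag position $j$ and therefore performs the telescope once per inductive step, while you redo the telescope separately for each $j$, but these are cosmetically different formulations of the same argument.
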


     \begin{proof}
       We will use induction on $c \ge 4-r$. In the initial case $c=4-r$ we
       know that every deformation of $A_{3-r}$ comes from deforming its
       matrix by Theorem~\ref{corWsmoothc}. Then we get the same conclusion
       for $A_{4-r}$, and also that
       $ \overline{W(\underline{b};\underline{a};r)}$ is a generically smooth
       irreducible component of $\Hilb ^{p_X(t)}(\PP^n)$, by
       Theorem~\ref{Wsmooth}.

       If $c>4-r$, we have by induction that every deformation of $A_{c-1}$
       comes from deforming its matrix. Since, for $3-r \le j \le c-2$, we have
       $$ \ _0\! \ext^1_{A_{j+1}}(I_{A_{j+1}}/I_{A_{j+1}}^2,I_{c-1}) \le \ _0\!
       \ext^1_{A_j}(I_{A_{j}}/I_{A_{j}}^2,I_{c-1})$$
       by Corollary~\ref{fiberpr2}(i) and Remark~\ref{remcor83} we get
       $ \ _0\! \Ext^1_{A_{c-1}}(I_{A_{c-1}}/I_{A_{c-1}}^2,I_{c-1})=0$ from
       the assumption
       $ _0\!\Ext^1_{A_{4-r}}(I_{A_{4-r}}/I_{A_{4-r}}^2,I_{c-1})=0$. Then we
       conclude the proof by Theorem~\ref{Wsmooth}.
\end{proof}

\begin{remark} \label{remthm89} \rm (1) An analogues result with the same
  conclusion, where we assume $ \gamma_{32}=0$ and
  $ _0\!\Ext^1_{A_{3-r}}(I_{A_{3-r}}/I_{A_{3-r}}^2,I_{j})=0$ for
  $3-r \le j \le c-1$, is true. The reason for not stating this result is seen
  in Remark~\ref{remcor83}(2) which implies
  $$ _0\!\Ext^1_{A_{4-r}}(I_{A_{4-r}}/I_{A_{4-r}}^2,I_{c-1})=\
  _0\!\Ext^1_{A_{c-1}}(I_{A_{c-1}}/I_{A_{c-1}}^2,I_{c-1})$$
  for $\dim A > 3$. Since we expect the latter group to vanish by
  Proposition~\ref{conj3ext}(2), it follows that
  $ _0\!\Ext^1_{A_{4-r}}(I_{A_{4-r}}/I_{A_{4-r}}^2,I_{c-1})$ should also
  vanish while $ _0\!\Ext^1_{A_{3-r}}(I_{A_{3-r}}/I_{A_{3-r}}^2,I_{j})$ for
  $j > 3-r$ and $ _0\!\Ext^1_{A_{2-r}}(I_{A_{2-r}}/I_{A_{2-r}}^2,I_{2-r})$ may
  be non-vanishing as Remark~\ref{remcor83}(2) shows. The compositions
  $ \gamma_{3,2}$ and $\gamma_{4,3}$ seem, however, to
  vanish. 
  All this is supported by Macaulay2 computations. Moreover the vanishing
  above is indeed a main reason for expecting Conjecture~\ref{conjWsmooth},
  and to a certain degree Conjecture~\ref{conjdepthNb} to be true.

  (2) Let $B=A_{c-1}$ and $A=A_{c}$ and suppose $\dim A \ge 3$, and
  $\dim A\ge 4$ in case $c \le 0$. If $A$ is general and $c \ge 4-r$ then by
  Proposition~\ref{vanish}(iv), $\Ext_A^1(I_{A/B}/I_{A/B}^2,A)=0$, whence
  $ _0\!\Hom_R(I_{A},A) \longrightarrow \ _0\!\Hom_R(I_{B},A)$ is surjective
  by Remark~\ref{rem38} and \eqref{ZJ}. It follows that the assumption
  $\gamma_{c+r,c+r-1}=0$ for $c \ge 4-r$ is equivalent to the surjectivity of
  $ _0\!\Hom_R(I_{B},B) \longrightarrow \ _0\!\Hom_R(I_{B},A)$ by
  Remark~\ref{remthm42}, cf. Remark~\ref{remthm61}(3). In particular this
  applies to $\gamma_{43}$ of Theorem~\ref{corWsmooth}, but not to
  $ \gamma_{32}$.
\end{remark}

 In the same way as the flag \eqref{flag} allows us to simplify the
 calculation of $\dim\, _0\!\Ext^i_B(I_B/I_B^2,I_{A/B})$ for $i=0,1$, very
 similar arguments lead to a simplification of the normal modules.

 \begin{proposition} \label{normdim} Let $\cA $ be general and suppose
   $a_1>b_t$, $r \ge 2$ and $\dim A \ge 3$. If $c \le 0$ suppose also
   $\dim A \ge 4$. Let $j$ be any integer satisfying $2-r \le j \le c-1$. Then
   $\Hom _{A_j}(I_j,A)$ is a maximal Cohen-Macaulay $A$-module for $c > 0$ and
   of codepth $1$ for $c \le 0$, and we have
   isomorphisms 
  $$\Hom_{A_{c}}(M_{c},A) \cong \Hom _{A_j}(M_j,A)   \cong \Hom
  _{A_j}(I_j(a_{t+j}),A) \cong  N_{c}\otimes A_{c} \, . $$
   In
  particular, for $c \ge 3-r$ we have
$$ \ _0\! \hom_R(I_{A},A)= \ _0\!
\hom_R(I_{A_{3-r}},A)+ \sum_{j=3-r}^{c-1} \dim (N_{c} \otimes A_c)_{(a_{t+j})}\, .$$
Moreover for $3-r \le j \le c-1$, we have
$$\Ext^1_{A_j}(M_j,A)=\Ext^1_{A_{j+1}}(I_j/I_j^2,A)=\Ext^1_{A_c}(M_c,A)=0
\quad {\rm and} $$
 $$ \ _0\! \ext^1_{A_{j+1}}(I_{A_{j+1}}/I_{A_{j+1}}^2,A) \le \ _0\!
 \ext^1_{A_j}(I_{A_{j}}/I_{A_{j}}^2,A)\, ;$$
thus we get that $ \ _0\! \Ext^1_{A_j}(I_{A_j}/I_{A_j}^2,A)= 0$ for some $j
\ge 3-r$ implies $\ _0\!
 \Ext^1_{A}(I_{A}/I_{A}^2,A)=0\, .$
\end{proposition}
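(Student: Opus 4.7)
The plan is to deduce the final implication directly by iterating the displayed inequality $\ _0\!\ext^1_{A_{j+1}}(I_{A_{j+1}}/I_{A_{j+1}}^2,A) \le \ _0\!\ext^1_{A_j}(I_{A_j}/I_{A_j}^2,A)$ along the flag from the chosen $j$ up to $c$, noting that $A = A_c$ by construction of \eqref{fullflag}. Thus, assuming $\ _0\!\Ext^1_{A_{j_0}}(I_{A_{j_0}}/I_{A_{j_0}}^2,A)=0$ for some $j_0$ with $3-r \le j_0 \le c-1$, a telescoping application of the inequality yields
\[
\ _0\!\ext^1_{A_c}(I_{A_c}/I_{A_c}^2,A) \le \ _0\!\ext^1_{A_{c-1}}(I_{A_{c-1}}/I_{A_{c-1}}^2,A) \le \cdots \le \ _0\!\ext^1_{A_{j_0}}(I_{A_{j_0}}/I_{A_{j_0}}^2,A) = 0,
\]
whence $\ _0\!\Ext^1_{A}(I_A/I_A^2,A)=0$ since $I_A = I_{A_c}$.

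Since the inequality itself has already been established in the immediately preceding line of the proposition, the main obstacle is really just a matter of bookkeeping about the range of $j$. I would first emphasize that the inequality is valid for every $j$ in the range $3-r \le j \le c-1$ (so that both endpoints $A_j$ and $A_{j+1}=A_c$ lie in the flag when $j=c-1$), and then apply it $c-j_0$ times, each time reducing the subscript of the Ext group by one. In the edge case $j_0=c$ there is nothing to prove, and for $j_0=c-1$ a single application suffices, so the only genuine content is the iteration.

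As a forward-looking remark, the key input that makes the telescoping work is the already-proven vanishing $\Ext^1_{A_{j+1}}(I_j/I_j^2,A)=0$ for $3-r \le j \le c-1$, because the Jacobi-Zariski long exact sequence associated with $0 \to I_{A_j} \to I_{A_{j+1}} \to I_j \to 0$ reads
\[
\ldots \to \Ext^1_{A_{j+1}}(I_j/I_j^2,A) \to \Ext^1_{A_{j+1}}(I_{A_{j+1}}/I_{A_{j+1}}^2,A) \to \Ext^1_{A_j}(I_{A_j}/I_{A_j}^2,A) \to \ldots
\]
and the vanishing of the leftmost term produces an injection, giving exactly the dimension inequality used above. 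No further ingredients are needed for the final implication, and the argument is complete.
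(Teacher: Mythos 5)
Your proposal establishes only the final sentence of the proposition, not the proposition itself. The statement asserts (1) that $\Hom_{A_j}(I_j,A)$ is maximal Cohen--Macaulay (or of codepth $1$), (2) the chain of isomorphisms $\Hom_{A_c}(M_c,A)\cong \Hom_{A_j}(M_j,A)\cong\Hom_{A_j}(I_j(a_{t+j}),A)\cong N_c\otimes A_c$, (3) the dimension formula for $_0\!\hom_R(I_A,A)$, (4) the vanishings $\Ext^1_{A_j}(M_j,A)=\Ext^1_{A_{j+1}}(I_j/I_j^2,A)=\Ext^1_{A_c}(M_c,A)=0$, and (5) the inequality on $_0\!\ext^1$; the last implication is merely a corollary. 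You explicitly treat (5) as ``already established in the immediately preceding line of the proposition,'' and you do not prove (1)--(4) at all. So the argument you give---telescoping the inequality from $j_0$ up to $c$---is correct but addresses only the least substantial part of the claim. Your remark that (5) follows from the vanishing $\Ext^1_{A_{j+1}}(I_j/I_j^2,A)=0$ via the Jacobi--Zariski sequence attached to $0\to I_{A_j}\to I_{A_{j+1}}\to I_j\to 0$ is the right idea and matches the paper, but it is only a pointer, not an argument, because the vanishing (4) is precisely what has to be established.

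The real work, which your proposal skips, is to prove the isomorphisms and the $\Ext^1$-vanishings. The paper does this by the same $U_j$-comparison technique used in Proposition~\ref{fibdim2}: passing to the open set $U_j=X_j\setminus V(J_jA_j)$, where the relevant modules are locally free, replacing the target $I_{c-1}$ there with $A_{c-1}$ (and ultimately $A=A_c$), invoking \eqref{NM} to identify the $\Ext$-groups with sheaf cohomology $H^1_*(U_j,\cdot)$, and then killing that cohomology by the depth estimates from \eqref{aux5}. The strengthened hypotheses $\dim A\ge 3$ (and $\dim A\ge 4$ for $c\le 0$) are exactly what makes the relevant $\depth_{J_j A_{c-1}}$ large enough for these vanishings. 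None of this is present in your proposal, and without it neither the inequality (5) nor the dimension formula (3) is available. If you want to salvage the proposal, you would need to carry out this depth/sheaf-cohomology argument (or at minimum cite the places in the paper where each of (1)--(4) is proved and check the hypotheses), and only then append your telescoping step.
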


\begin{proof} To use the proof of Proposition~\ref{fibdim2} effectively we
  remark that it suffices to prove Proposition~\ref{normdim} for $A_{c-1}$
  (instead of $A_c$) under the assumption $2-r \le j \le c-2$ and the
  dimension assumptions: $\dim A_{c-1} \ge 3$, and $\dim A_{c-1} \ge 4$ if
  $c-1 \le 0$. Then we get the first displayed formula using the arguments in
  the very first part of Proposition~\ref{fibdim2}, replacing $I_{c-1}$ there
  by $A_{c-1}$ and noticing that
  $ \Hom _{A_{c-1}}(M_{c-1},A_{c-1})\cong N_{c-1}\otimes A_{c-1}$ and that
  $N_{c-1}\otimes A_{c-1}$ is a maximal Cohen-Macaulay $A_{c-1}$-module for
  $c > 1$ and of codepth $1$ for $c \le 1$ by Proposition \ref{vanish}.
  Moreover following the proof of Proposition~\ref{fibdim2} where we got
  $\Ext^1_{A_j}(M_j,I_ {c-1})=\Ext^1_{A_{j+1}}(I_j/I_j^2,I_ {c-1})=0$ for
  $j \ge 3-r$ by showing
  $$H^1_*(U_{j},\widetilde{N_{c-1}}\otimes \widetilde{A_{c-1}}) \cong
  H^1_*(U_j,{\mathcal Hom} _{{\mathcal
      O_U}_{j}}(\widetilde{M_{j}},\widetilde{A_{c-1}})) =0\, ,$$
  we get the latter also now. We only need to be a little careful with the
  dimension of $A_{c-1}$ to get large enough depth, and one checks that our
  dimension assumptions suffice. Hence we get the vanishing of
  $\Ext^1_{A_j}(M_j,A_ {c-1})$ and $\Ext^1_{A_{j+1}}(I_j/I_j^2,A_ {c-1})$. We
  also get $\Ext^1_{A_c}(M_c,A)=0$ by Proposition \ref{vanish}(ii).

  Using $\Ext^1_{A_{j+1}}(I_j/I_j^2,A_{c-1})$ for $j \ge 3-r$ we get an exact
  sequence
\begin{equation}\label{auxNo}
  0\longrightarrow\ _0\! \Hom(I_j,A_{c-1})\longrightarrow\ _0\!
  \Hom(I_{A_{j+1}},A_{c-1})\longrightarrow\ _0\!
  \Hom(I_{A_{j}},A_{c-1})\longrightarrow  0
\end{equation}
induced by
$$0\longrightarrow I_{A_j}\longrightarrow I_{A_{j+1}}\longrightarrow I_j\longrightarrow 0.$$
Indeed the long exact Jacobi-Zariski sequence implies (\ref{auxNo}) since we
can continue  (\ref{auxNo}) to the right by
 $$ 0=\ _0\! \Ext^1_{A_{j+1}}(I_{{j}}/I_{{j}}^2,A_{c-1}) \longrightarrow \ _0\!
 \Ext^1_{A_{j+1}}(I_{A_{j+1}}/I_{A_{j+1}}^2,A_{c-1}) \ \longrightarrow \ _0\!
 \Ext^1_{A_j}(I_{A_{j}}/I_{A_{j}}^2,A_{c-1}) \longrightarrow \,.$$
 The latter sequence shows the inequality of $\ _0\! \ext^1(-,-)$ of
 Proposition \ref{normdim} while repeatedly using (\ref{auxNo}) and
 $\Hom(I_j,A_{c-1}) \cong N_{c-1}\otimes A_{c-1}(a_{t+j})$ for every $j$,
 $3-r \le j \le c-2$ we get a dimension formula for
 $ \ _0\! \hom_R(I_{A_{c-1}},A_{c-1})$ which precisely corresponds to the
 displayed dimension formula for $ \ _0\! \hom_R(I_{A},A)$ of
 Proposition~\ref{normdim}, and we are done.
\end{proof}

\begin{corollary} \label{cornormdim} Let $\cA $ be general and suppose
  $a_1>b_t$, $r \ge 2$, $c \ge 4-r$ and $\dim A \ge 3$. If $c \le 0$ suppose
  also $\dim A \ge 4$. In the flag \eqref{fullflag}, let $B=A_{c-1}$, $A=A_{c}$
  and $N_c=MI$. Then we have
\begin{equation} \label{a3r} \ _0\! \hom_R(I_{A_{3-r}},A_{c-1})= \
_0\!\hom_R(I_{A_{3-r}},A_{c})+ \ _0\!\hom_R(I_{A_{3-r}},I_{c-1})\,
\end{equation}
if and only if \eqref{thm61cond} holds, i.e.
\begin{equation*} \ _0\! \hom_R(I_{A},A) = \ _0\! \hom_R(I_{B},B)+\dim_k(MI
  \otimes A)_{(a_{t+c-1})}\ -\ _0\! \hom_R(I_{B},I_{A/B})\, .
\end{equation*}
\end{corollary}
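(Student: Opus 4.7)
The plan is to reinterpret both \eqref{thm61cond} and \eqref{a3r} as the surjectivity of one and the same kind of natural map at two different points of the flag, and then to propagate this surjectivity through the flag by a snake-lemma argument. For $3-r\le j\le c-1$, let $(\star_j)$ denote the statement that the map
$$f_j:\ _0\!\Hom_R(I_{A_j},A_{c-1})\longrightarrow\ _0\!\Hom_R(I_{A_j},A_c)$$
induced by $A_{c-1}\twoheadrightarrow A_c$ is surjective. Applying $\ _0\!\Hom_R(I_{A_j},-)$ to $0\to I_{c-1}\to A_{c-1}\to A_c\to 0$ shows that $(\star_j)$ is equivalent to the equality
$$\ _0\!\hom_R(I_{A_j},A_{c-1})=\ _0\!\hom_R(I_{A_j},A_c)+\ _0\!\hom_R(I_{A_j},I_{c-1}),$$
so $(\star_{3-r})$ is literally \eqref{a3r}. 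For the top of the flag, Proposition~\ref{vanish}(iv) (whose depth hypotheses hold since $\cA$ is general, $c\ge 4-r$ and $\dim A\ge 3$) gives $\ _0\!\Ext^1_A(I_{A/B}/I_{A/B}^2,A)=0$, and combined with Proposition~\ref{vanish}(iii) this produces the identity $\ _0\!\hom_R(I_A,A)=\dim(MI\otimes A)_{a_{t+c-1}}+\ _0\!\hom_R(I_B,A)$; substituting into \eqref{thm61cond} and cancelling $\dim(MI\otimes A)_{a_{t+c-1}}$ gives exactly $(\star_{c-1})$. The corollary thus reduces to proving $(\star_{j+1})\Leftrightarrow(\star_j)$ for $3-r\le j\le c-2$.

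For such $j$, I would feed the short exact sequence $0\to I_{A_j}\to I_{A_{j+1}}\to I_j\to 0$ into $\ _0\!\Hom_R(-,M)$ for each $M\in\{I_{c-1},A_{c-1},A_c\}$. The three required vanishings $\Ext^1_{A_{j+1}}(I_j/I_j^2,A_c)=0$, $\Ext^1_{A_{j+1}}(I_j/I_j^2,A_{c-1})=0$ and $\Ext^1_{A_{j+1}}(I_j/I_j^2,I_{c-1})=0$ are provided by Proposition~\ref{normdim} (applied both to the original flag topped by $A$ and, after truncation, to the one topped by $A_{c-1}$, noting that $\dim A_{c-1}=\dim A+r\ge 5$ so the dimension hypotheses of that proposition are comfortably met) and by Proposition~\ref{fibdim2}. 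Hence all three applications yield short exact sequences in the inductive range $3-r\le j\le c-2$.

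Now I would apply the snake lemma to the $2\times 3$ commutative diagram whose horizontal rows are the short exact sequences for $M=A_{c-1}$ and $M=A_c$, with vertical maps $f_1,f_2,f_3$ induced by $A_{c-1}\twoheadrightarrow A_c$ at the three slots $I_j$, $I_{A_{j+1}}$, $I_{A_j}$. The relevant tail of the snake sequence reads
$$\ker f_3\longrightarrow\coker f_1\longrightarrow\coker f_2\longrightarrow\coker f_3\longrightarrow 0.$$
The decisive input is that $\coker f_1=0$: by Proposition~\ref{normdim}, $f_1$ is identified in degree zero with the degree $a_{t+j}$ piece of $N_{c-1}\otimes A_{c-1}\to N_c\otimes A_c$, and this map factors as the surjection of \eqref{section} followed by the surjection $N_c\otimes A_{c-1}\twoheadrightarrow N_c\otimes A_c$, hence is surjective. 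Consequently $\coker f_2\cong\coker f_3$, that is, $(\star_{j+1})\Leftrightarrow(\star_j)$, and iterating from $j=c-2$ down to $j=3-r$ completes the proof.

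The hard part is the preliminary identification of \eqref{thm61cond} with $(\star_{c-1})$, which critically uses Proposition~\ref{vanish}(iv) and allows us to avoid the assumption that every deformation of $B$ comes from deforming $\cB$. Once this reduction is in place, the snake-lemma equivalence of consecutive $(\star_j)$'s is essentially automatic. The main bookkeeping task is verifying the dimension hypotheses so that Proposition~\ref{normdim} applies simultaneously to $A_c$ and to the truncated flag ending at $A_{c-1}$, uniformly for all $j$ in the range $3-r\le j\le c-2$; this is precisely why the dimension assumption in Corollary~\ref{cornormdim} is one step stronger than what is needed in Proposition~\ref{propo8}.
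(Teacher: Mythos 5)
Your proof is correct and reaches the same conclusion as the paper, but the route is genuinely different. The paper applies Proposition~\ref{normdim} twice to express $\ _0\!\hom_R(I_{A},A)$ and $\ _0\!\hom_R(I_{B},B)$ as $\ _0\!\hom_R(I_{A_{3-r}},-)$ plus telescoping sums of $\dim(N_\bullet\otimes A_\bullet)_{(a_{t+j})}$, uses the formula $\ _0\!\hom(I_j,I_{c-1})=\dim(N_{c-1}\otimes A_{c-1})_{(a_{t+j})}-\dim(N_c\otimes A_c)_{(a_{t+j})}$ from Proposition~\ref{fibdim2}, and then substitutes into \eqref{thm61cond} and cancels. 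You instead recast both \eqref{thm61cond} and \eqref{a3r} as cokernel-vanishing statements $(\star_j)$ for the natural maps $\ _0\!\Hom_R(I_{A_j},A_{c-1})\to\ _0\!\Hom_R(I_{A_j},A_c)$ at the endpoints $j=c-1$ and $j=3-r$ of the flag, and then propagate the equivalence level by level: the reduction of \eqref{thm61cond} to $(\star_{c-1})$ via Proposition~\ref{vanish}(iii)--(iv) is a nice observation that the paper does not make explicit (the paper effectively encodes it inside Proposition~\ref{normdim}), and the snake-lemma comparison of two rows (for $M=A_{c-1}$ and $M=A_c$) with $\coker f_1=0$ coming from the surjectivity of $N_{c-1}\otimes A_{c-1}\twoheadrightarrow N_c\otimes A_c$ — the same surjection the paper uses in proving Proposition~\ref{fibdim2} — is structurally cleaner than tracking numerical equalities. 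Your approach makes the logical content of the equivalence more transparent (it is simply that exactness at each rung of the flag is preserved when the target drops from $A_{c-1}$ to $A_c$), at the cost of requiring the naturality of the isomorphism in Proposition~\ref{normdim} with respect to the coefficient map $A_{c-1}\to A_c$; this is true because those identifications are built from functorial sheaf manipulations, but a careful write-up should state it. One small redundancy: you list $\Ext^1_{A_{j+1}}(I_j/I_j^2,I_{c-1})=0$ among the three needed vanishings, but your snake-lemma diagram has only two rows, so only the vanishings for $A_{c-1}$ and $A_c$ are actually used; the surjectivity of $f_1$ could also be deduced directly from $\Ext^1_{A_j}(I_j,I_{c-1})=0$, which would make the $I_{c-1}$-vanishing do real work, but as written the argument does not need it.
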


\begin{proof} Using Proposition~\ref{normdim}, we get that
  $$\ _0\! \hom_R(I_{A},A) =\ _0\!\hom_R(I_{A_{3-r}},A_{c})+ \sum_{j=3-r}^{c-1} \dim (N_{c} \otimes
  A_c)_{(a_{t+j})} .$$
  Since $c-1 \ge 3-r$,  Proposition~\ref{normdim} also implies
  $$_0\! \hom_R(I_{B},B)=\ _0\!\hom_R(I_{A_{3-r}},A_{c-1})+ \sum_{j=3-r}^{c-2}
  \dim (N_{c-1} \otimes A_{c-1})_{(a_{t+j})}.$$
  Inserting these expressions into \eqref{thm61cond} and using $MI \otimes A =
  N_c \otimes A_c$ we get that \eqref{thm61cond} is equivalent to
$$\ _0\!\hom_R(I_{A_{3-r}},A_{c})= \ _0\!\hom_R(I_{A_{3-r}},A_{c-1})+
\sum_{j=3-r}^{c-2}\, _0\! \hom_R(I_{j},I_{c-1}) -\ _0\! \hom_R(I_{B},I_{A/B})\,$$
because we have
$$_0\! \hom_R(I_{j},I_{c-1})= \dim (N_{c-1} \otimes A_{c-1})_{(a_{t+j})}- \dim (N_{c} \otimes
A_c)_{(a_{t+j})}$$ by Proposition~\ref{fibdim2}. Now note that the exact
sequence \eqref{aux*}
holds for $3-r \le j \le c-2$ by  Proposition~\ref{fibdim2}. Using
\eqref{aux*} repeatedly for every such $j$, we get
$$ \sum_{j=3-r}^{c-2}\ _0\! \hom_R(I_{j},I_{c-1})=\ _0\!
\hom_R(I_{A_{c-1}},I_{c-1})-\ _0\!\hom_R(I_{A_{3-r}},I_{c-1})\, .$$
Since $ _0\!\hom _R(I_B ,I_{A/B})=\ _0\!
\hom_R(I_{A_{c-1}},I_{c-1})$ we are done.
\end{proof}

\begin{remark} \label{remcornormdim} \rm Let $j_0$ be some fixed integer
  satisfying $c-1 \ge j_0 \ge 3-r$. Using the proof above replacing
  $I_{A_{3-r}}$ by $I_{A_{j_0}}$ and $ \sum_{j=3-r}^{c-2}$ by
  $ \sum_{j=j_0}^{c-2}$ several times we get that \eqref{thm61cond} is also
  equivalent to
$$ \ _0\! \hom_R(I_{A_{j_0}},A_{c-1})= \
_0\!\hom_R(I_{A_{j_0}},A_{c})+ \ _0\!\hom_R(I_{A_{j_0}},I_{c-1})\, .$$
\end{remark}

\begin{theorem}\label{corWsmoothcnew}  Suppose  that $\Proj(A) \in
  W(\underline{b};\underline{a};r)$
  is general with $c \ge 4-r$, $r \ge 2$, $a_1 > b_t$ and $\dim A \ge 3$. If
  $c \le 0$, we also suppose $\dim A\ge 4$. Moreover in the flag
  \eqref{fullflag}, let $B=A_{c-1}$ belong to
  $W(\underline{b};\underline{a'};r)$ 
  and suppose that every
  deformation of $B$ comes from deforming its matrix $\cB$. \\[-3mm]
\begin{itemize}
\item[{\rm (i)}] If \eqref{a3r} holds, then
  $ \overline{W(\underline{b};\underline{a};r)}$ is a generically smooth
  irreducible component of $\Hilb ^{p_X(t)}(\PP^n)$ and every deformation of $A$
  comes from deforming its matrix $\cA$.
  \item[{\rm (ii)}] If
  $_0\! \hom_R(I_{A_{3-r}},I_{A/B})=\sum _{i=1}^{t-r+2}
  {a_i-a_{t+c-1}+n\choose n}$
  (e.g. $b_t = b_1$ and $a_{t-r+1} < a_{t-r+2}$),
  $a_{t+c-1} < s_r-b_r+b_1$ (e.g. $\cA$ linear) and
  $\dim W(\underline{b};\underline{a'};r) = \lambda_{c-1}$, then
   $$\dim W(\underline{b};\underline{a};r) = \lambda_{c}\, .$$
  \end{itemize}
  \end{theorem}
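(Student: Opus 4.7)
The plan is to deduce both parts by reducing to Theorems~\ref{Wsmooth} and~\ref{dimW} already established, using the flag \eqref{fullflag} together with Corollary~\ref{fiberpr2} and Corollary~\ref{cornormdim} to translate the hypotheses into the form required by those theorems.

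For part (i), my plan is to invoke Theorem~\ref{Wsmooth}. That theorem requires two things beyond the standing hypotheses: that every deformation of $B$ comes from deforming $\cB$ (assumed here), and that the composed map $\gamma$ vanishes. The key is the chain
$$\eqref{a3r} \iff \eqref{thm61cond} \iff \gamma = 0,$$
where the first equivalence is exactly the content of Corollary~\ref{cornormdim} (which applies since $c \ge 4-r$ and the dimension hypotheses on $A$ match), and the second is Remark~\ref{remthm61}(2). Hence assuming \eqref{a3r} forces $\gamma = 0$, and Theorem~\ref{Wsmooth} delivers simultaneously that $\overline{W(\underline{b};\underline{a};r)}$ is a generically smooth irreducible component of $\Hilb^{p_X(t)}(\PP^n)$ and that every deformation of $A$ comes from deforming $\cA$.

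For part (ii), my plan is to apply Theorem~\ref{dimW} with $\kappa = 0$. Its numerical and depth hypotheses are directly satisfied: $c \ge 3-r$, $a_1 > b_t$, $\dim A \ge 2$ (and $\ge 3$ for $c\le 0$), the assumption on deformations of $B$, the inequality $a_{t+c-1} < s_r - b_r + b_1$, and $\dim W(\underline{b};\underline{a'};r) = \lambda_{c-1}$. The one non-obvious hypothesis is the equality $\ _0\!\hom_R(I_B, I_{A/B}) = \sum_{j=1}^{t+c-2} \binom{a_j - a_{t+c-1} + n}{n}$, which does not appear in our assumptions. To verify it, I would invoke Corollary~\ref{fiberpr2}(i): its numerical requirement $a_{t-r+1} + a_{t+c-2} - a_{t+c-1} < s_r - b_r + b_1$ follows from $a_{t-r+1} \le a_{t+c-2} \le a_{t+c-1} < s_r - b_r + b_1$, whence
$$\ _0\!\hom_R(I_B, I_{A/B}) = \ _0\!\hom_R(I_{A_{3-r}}, I_{c-1}) + \sum_{j=t-r+3}^{t+c-2} \binom{a_j - a_{t+c-1} + n}{n}.$$
Combining with the assumed value of $\ _0\!\hom_R(I_{A_{3-r}}, I_{A/B})$ yields exactly the required equality, and Theorem~\ref{dimW} concludes $\dim W(\underline{b};\underline{a};r) = \lambda_c$.

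The only substantive step in assembling the argument is the chain of equivalences in (i): that the Hom-level equality \eqref{a3r} at the $A_{3-r}$-level of the flag captures exactly the vanishing of the tangent-level obstruction $\gamma$ governing smoothness of $p_1$ at $(X \subset Y)$. Both Corollary~\ref{cornormdim} and Remark~\ref{remthm61}(2) have already been established, so no genuinely new computation is needed; the proof is an assembly of these pieces together with the numerical check enabling Corollary~\ref{fiberpr2}(i).
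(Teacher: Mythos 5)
Your proof is correct and follows essentially the same route as the paper: part (i) is verbatim the paper's chain $\eqref{a3r}\Leftrightarrow\eqref{thm61cond}\Leftrightarrow\gamma=0$ via Corollary~\ref{cornormdim} and Remark~\ref{remthm61}(2) followed by Theorem~\ref{Wsmooth}. The only difference in (ii) is cosmetic: you invoke Theorem~\ref{dimW} after verifying equality in \eqref{fibp1} via Corollary~\ref{fiberpr2}(i), whereas the paper re-derives the dimension formula directly from Proposition~\ref{propo8}, Lemma~\ref{dimMI}(i), Corollary~\ref{fiberpr2}(i), \eqref{dimensioMI} and \eqref{Ki0} --- the very ingredients from which Theorem~\ref{dimW} was built, so the two arguments coincide at the lemma level.
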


  \begin{remark} \label{remcornorm} \rm (1) In Remark~\ref{remthm61} we
    noticed that the injectivity assumption
    $_0\!\Ext^1_B(I_B/I_B^2,I_{A/B})\hookrightarrow\
    _0\!\Ext^1_B(I_B/I_B^2,B)$
    in Theorem~\ref{Wsmooth} is equivalent to the exactness of
    $$ 0 \longrightarrow\ _0\!\Hom _R(I_B ,I_{A/B}) \longrightarrow\ _0\!\Hom
    _R(I_B ,B) \longrightarrow\ _0\!\Hom _R(I_B ,A) \longrightarrow 0\,
    ,$$ which in the case $c \ge
    4-r$ is equivalent to \eqref{thm61cond} (or to
    $\gamma=0$)
    by Proposition~\ref{vanish}, as explained in Remark~\ref{remthm89}(2). By
    Corollary~\ref{cornormdim} we now see that \eqref{thm61cond} is further
    equivalent to the exactness of
$$ 0 \longrightarrow\ _0\!\Hom _R(I_{A_{3-r}} ,I_{A/B}) \longrightarrow\ _0\!\Hom
_R(I_{A_{3-r}} ,B) \longrightarrow\ _0\!\Hom _R(I_{A_{3-r}} ,A)
\longrightarrow 0\, .$$
The exactness of the latter sequence is often faster to verify by Macaulay2
than using \eqref{thm61cond} by computing dimensions of the $\Hom$-groups
involved.

(2) Replacing $a_{t+c-1} < s_r-b_r+b_1$ in Theorem~\ref{corWsmoothcnew}(ii)
by the weaker assumption $$a_{t+c-1} < s_r -a_{t-r+1} +a_{t+2-r}-b_r+b_1$$ one
may use the proof of Theorem~\ref{corWsmoothnew2} to see that
$\dim W(\underline{b};\underline{a};r) = \lambda_{c}-\kappa_c$ where
$$\kappa_c:= \dim_k(N_{2-r})_{(a_{t+c-1})}- \dim_k(N_{2-r} \otimes
A_{2-r})_{(a_{t+c-1})}.$$
Moreover $\kappa_c$ may be expressed in terms of binomials using
Lemma~\ref{kapp} for $i=2-r$.
\end{remark}

\begin{proof}({\it of theorem~\ref{corWsmoothcnew}\! }) (i) This follows from
  Theorem~\ref{Wsmooth}, Remark~\ref{remthm61} and Corollary~\ref{cornormdim}.

    (ii) Since every deformation of $B$ comes from deforming $\cB$ and
    $a_{t+c-1} < s_r-b_r+b_1$, we have
         $$\dim W(\underline{b};\underline{a};r) = \dim
       W(\underline{b};\underline{a'};r)+ \dim_k(N_c)_{(a_{t+c-1})} - \ _0\!
       \hom_R(I_{A_{c-1}},I_{c-1})\, $$
       by Proposition~\ref{propo8} and Lemma~\ref{dimMI}(i). Then by
       Corollary~\ref{fiberpr2}(i) and assumption we get that
       $ _0\! \hom_R(I_{A_{c-1}},I_{c-1})= \sum
       _{i=1}^{t+c-1}{a_i-a_{t+c-1}+n\choose n}$.
       Using \eqref{dimensioMI} it follows that
 $$\dim_k(N_c)_{(a_{t+c-1})} - \ _0\! \hom_R(I_{A_{c-1}},I_{c-1}) =
 \lambda_{c}-\lambda_{c-1} +K_c \ .$$
 Since $K_c=0$ by Theorem~\ref{ineqdimW} and \eqref{Ki0} and we are done.
\end{proof}

To give more evidence to Conjectures~\ref{conjdimW} and \ref{conjWsmooth}, we
have considered new examples using Theorem~\ref{corWsmoothcnew} and Macaulay2
because it is much faster to verify \eqref{a3r} than \eqref{thm61cond}.

\begin{example} \label{exgendetWit} \rm (i) An aspect in the conjectures is
  how low we can take $\dim A$ and still expect the conjectures to be true.
  Since the computations are time consuming in low dimensional cases, we have
  only checked a few examples in addition to those considered in Sections 6
  and 7 (Remark~\ref{remconjdimW}, Example~\ref{afterdeg},
  Remark~\ref{remconjWsmooth} and Example~\ref{examples712}). Indeed we have
  checked that the quotient $A$ of dimension 3 defined by the $2 \times 2$
  minors of a general $3 \times 6$ matrix of linear entries in a polynomial
  ring $R$ with 13 variables satisfies \eqref{a3r}, as well as the assumptions
  of Theorem~\ref{corWsmoothcnew}(ii), provided we can show
  $\dim \overline{W(0^3;1^{5};2)} = \lambda_{3}$ and that every deformation of
  $B$ comes from deforming its matrix. $B$ is, however, defined by the
  $2 \times 2$ minors of a general $3 \times 5$ matrix of entries in $R$ and
  can be treated similarly, i.e. by deleting a column to get a quotient $C$
  defined in the same manner. Since we may suppose that the entries of the
  $3 \times 4$ matrix of $C$ are different indeterminates of $R$, we can use
  Example~\ref{exgendet} to get $\dim \overline{W(0^3;1^{4};2)} = \lambda_{2}$
  and Remark~\ref{remconjWsmooth} to see that every deformation of $C$ comes
  from deforming its matrix. Since we have used Macaulay2 to check \eqref{a3r}
  and the first assumption of Theorem~\ref{corWsmoothcnew}(ii) for $B$, $C$,
  we conclude $\dim \overline{W(0^3;1^{5};2)} = \lambda_{3}$ and that every
  deformation of $B$ comes from deforming its matrix. Then we get the
  corresponding conclusion for $A$ by Theorem~\ref{corWsmoothcnew}.

  \vskip 2mm (ii) Finally we have checked the quotient $A$ of dimension 3
  defined by the $2 \times 2$ minors of a general $3 \times 7$ matrix of
  entries in a polynomial ring $R$ with 15 variables. We delete one (resp. 2)
  columns to get $B$ (resp. $C$). Using Macaulay2 we show \eqref{a3r} and
  that the first assumption of Theorem~\ref{corWsmoothcnew}(ii) hold for $B$,
  $C$ as well as for $A$, $B$. Now since $C$ is defined by a general
  $3 \times 5$ matrix of linear entries in a polynomial ring $R$ with 15
  variables, we may suppose that $C$ is a generic determinantal ring, whence
  every deformation of $C$ comes from deforming its matrix by Proposition
  \ref{deforminggenericcase} and
  $\dim \overline{W(0^3;1^{5};2)} = \lambda_{3}$ by Examples~\ref{exgendet}.
  Then using Theorem~\ref{corWsmoothcnew} for $B$, $C$ and then for $A$, $B$
  we get that Conjectures~\ref{conjdimW} and \ref{conjWsmooth} hold.
  \end{example}

  In Example~\ref{exgendetWit} we successively deleted columns to get a flag
  and used \eqref{a3r} several times until we got a ring $A_{j_0}$ for which
  all assumptions of Theorem~\ref{corWsmoothcnew} were satisfied. We could
  even have relaxed a little upon showing \eqref{a3r} each time, due to

\begin{corollary} \label{corWsmoothnew3}
Let $\Proj(A) \in
  W(\underline{b};\underline{a};r)$
  be general with $c \ge 4-r$, $r \ge 2$, $a_1 > b_t$,
  $a_{t+c-1} < s_r-b_r+b_1$ (e.g. $\cA$ linear) and suppose that
  $\dim A \ge 3$, and $\dim A\ge 4$ if $c \le 0$. Let
  $A_{j_0}\twoheadrightarrow A_{j_0+1}\twoheadrightarrow \cdots
  \twoheadrightarrow A_c =A$,
  $j_0 \ge 3-r$ be the flag we get by deleting columns from the right-hand
  side, and suppose that every deformation of $A_{j_0}$ comes from deforming
  its matrix. Moreover suppose that
  $_0\! \hom_R(I_{A_{3-r}},I_{j})=\sum _{i=1}^{t-r+2} {a_i-a_{t+j}+n\choose
    n}$ for every $j$, $j_0 \le j \le c-1$, and
  that $\dim W(\underline{b};\underline{a''};r) = \lambda_{j_0}$ where
  $\Proj(A_{j_0}) \in W(\underline{b};\underline{a''};r)$. If also
 \begin{equation} \label{a3rr} \ _0\! \hom_R(I_{A_{3-r}},A_{j_0})= \
    _0\!\hom_R(I_{A_{3-r}},A_{c})+ \sum_{j=j_0}^{c-1}\
    _0\!\hom_R(I_{A_{3-r}},I_{j})\, .
\end{equation}
then $ \overline{W(\underline{b};\underline{a};r)}$ is a generically smooth
irreducible component of $\Hilb ^{p_X(t)}(\PP^n)$ of dimension $\lambda_c$ and
every deformation of $A$ (and of each $A_j$) comes from deforming its matrix.
     \end{corollary}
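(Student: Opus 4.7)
The plan is to induct on $j$, $j_0 \le j \le c$, establishing at each step the two statements that (a) every deformation of $A_j$ comes from deforming its associated matrix, and (b) $\dim W(\underline{b};\underline{a}(j);r) = \lambda_j$, where $\underline{a}(j) := (a_1,\dots,a_{t+j-1})$. The base case $j=j_0$ is exactly the given hypotheses. The inductive engine will be Theorem~\ref{corWsmoothcnew} applied to each one-column extension $A_j \twoheadrightarrow A_{j+1}$, with $B:=A_j$ and $A:=A_{j+1}$.

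First, I would derive the per-step equality~\eqref{a3r} from the aggregate equality~\eqref{a3rr}. Applying $\Hom_R(I_{A_{3-r}},-)$ to $0 \to I_j \to A_j \to A_{j+1} \to 0$ and taking degree zero gives a left-exact sequence, hence the inequality
\[
\ _0\!\hom_R(I_{A_{3-r}},A_j) \;\le\; \ _0\!\hom_R(I_{A_{3-r}},A_{j+1}) + \ _0\!\hom_R(I_{A_{3-r}},I_j)
\]
for every $j_0 \le j \le c-1$. Summing (telescoping) these inequalities yields precisely the inequality form of~\eqref{a3rr}. Because~\eqref{a3rr} is assumed to hold as an equality, each individual inequality must also be an equality; that is, \eqref{a3r} holds for every pair $(A_j,A_{j+1})$, which is equivalent to the surjectivity of the connecting map and, via Corollary~\ref{cornormdim} (and Remark~\ref{remcornormdim}), to condition~\eqref{thm61cond} for $A_j \twoheadrightarrow A_{j+1}$.

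Next, I would check that the numerical hypotheses of Theorem~\ref{corWsmoothcnew} propagate down the flag for each transition. Since $a_{t+j+1} \le a_{t+c-1} < s_r - b_r + b_1$, the $s_r$-condition holds at every step; since $\dim A_{j+1} \ge \dim A \ge 3$ (resp.\ $\ge 4$ for $c \le 0$), the depth conditions carry through; and $a_1 > b_t$, $c \ge 4-r$, $r \ge 2$ are preserved. The assumption
\[
\ _0\!\hom_R(I_{A_{3-r}},I_j) \;=\; \sum_{i=1}^{t-r+2}\binom{a_i-a_{t+j}+n}{n}
\]
is precisely the input required by Theorem~\ref{corWsmoothcnew}(ii) applied to the pair $(A_j,A_{j+1})$ (viewing $I_j = I_{A/B}$). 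Finally, generality of $\Proj(A)$ together with the construction of the flag by column deletion makes each $A_j$ play the role of a general determinantal ring.

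With these preparations, the induction proceeds cleanly. Assume (a) and (b) for some $j \ge j_0$. Then Theorem~\ref{corWsmoothcnew}(i), fed with the just-derived~\eqref{a3r} and the inductive hypothesis (a), concludes that every deformation of $A_{j+1}$ comes from deforming its matrix and that $\overline{W(\underline{b};\underline{a}(j+1);r)}$ is a generically smooth irreducible component of $\Hilb^{p_{X_{j+1}}(t)}(\PP^n)$. Theorem~\ref{corWsmoothcnew}(ii), fed with (a), (b), and the numerical conditions just verified, then yields $\dim W(\underline{b};\underline{a}(j+1);r) = \lambda_{j+1}$. Iterating until $j+1 = c$ gives the conclusion for $A = A_c$. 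The only real obstacle is the extraction of the individual~\eqref{a3r}'s from the summed~\eqref{a3rr}, which is where the telescoping trick together with the left-exactness of $\Hom_R(I_{A_{3-r}},-)$ does the essential work; everything else is a routine book-keeping of hypotheses along the flag.
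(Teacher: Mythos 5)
Your proof is correct and takes essentially the same approach as the paper: both extract the per-step equalities~\eqref{a3r} from the aggregate equality~\eqref{a3rr} via the telescoping of the left-exact inequalities for $\ _0\!\Hom_R(I_{A_{3-r}},-)$, and then feed these into Theorem~\ref{corWsmoothcnew} iterated along the flag. Your write-up just makes the implicit induction in the paper's terse ``Then we conclude by using Theorem~\ref{corWsmoothcnew}'' explicit, together with the routine check that the numerical hypotheses (the $s_r$-bound, depth, $a_1 > b_t$) propagate down the flag.
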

\begin{proof}
  By the left-exactness of $\ _0\!\Hom_R(I_{A_{3-r}},-)$ we always have an
  inequality
$$  \ _0\! \hom_R(I_{A_{3-r}},A_{j})\le \
_0\!\hom_R(I_{A_{3-r}},A_{j+1})+ \ _0\!\hom_R(I_{A_{3-r}},I_{j})\, , \quad j_0
\le j \le c-1$$
in \eqref{a3r} that adds to a corresponding inequality the same way in
\eqref{a3rr}. Thus assuming equality in \eqref{a3rr} we get equality in
\eqref{a3r} for every $j$. Then we conclude by using
Theorem~\ref{corWsmoothcnew}.
\end{proof}

By applying Theorem~\ref{corWsmoothcnew} to every surjection
$A_{i-1}\twoheadrightarrow A_{i}$, $i > 3-r$ in the flag \eqref{fullflag} and
Theorem~\ref{corWsmooth} to start the induction, we are able to prove the
following main results related to the flag \eqref{fullflag}. Note that using
Remark~\ref{remcornormdim} for $j_0=4-r$ we get that the following corollary
to Theorem~\ref{corWsmoothcnew} generalizes Theorem~\ref{corWsmooth} (provided
we increase the dimension assumptions by 1).

\begin{corollary}\label{corWsmoothnew1}  Suppose  that $\Proj(A) \in
  W(\underline{b};\underline{a};r)$
  is general with $c \ge 4-r$, $r \ge 2$, $a_1 > b_t$ and $\dim A \ge 3$. If
  $c \le 0$, we also suppose $\dim A\ge 4$. Moreover, suppose that the
  composition
  $\gamma_{32}$ of Theorem~\ref{corWsmooth}
  is zero. If, for every $j$,
  $
  \ 3-r \le j \le c-1$, the maps $$
  _0\!\Hom_R(I_{A_{3-r}},A_{j+1})\longrightarrow \
  _0\!\Ext^1_{A_{3-r}}(I_{A_{3-r}}/I_{A_{3-r}}^2,I_{j})$$ are zero, {\rm or}
  equivalently if \eqref{a3r} holds, {\rm or} the maps $$
  _0\!\Hom_R(I_{A_{3-r}},A_{j})\longrightarrow \
  _0\!\Hom_R(I_{A_{3-r}},A_{j+1})$$ are surjective, then $
  \overline{W(\underline{b};\underline{a};r)}$ is a generically smooth
  irreducible component of $\Hilb
  ^{p_X(t)}(\PP^n)$ and every deformation of
  $A$ comes from deforming its matrix.
     \end{corollary}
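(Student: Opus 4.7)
The plan is to establish the stated three-way equivalence first and then to induct on $c \ge 4-r$, using Theorem~\ref{corWsmoothc} to start the induction and Theorem~\ref{corWsmoothcnew}(i) at each subsequent step.

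For the equivalences, I fix $j$ with $3-r \le j \le c-1$ and apply $\ _0\!\Hom_{A_{3-r}}(I_{A_{3-r}}/I_{A_{3-r}}^2,-)$ to the short exact sequence $0 \to I_j \to A_j \to A_{j+1} \to 0$ of $A_{3-r}$-modules, where $I_j = I_{A_{j+1}/A_j}$. Identifying this functor with $\ _0\!\Hom_R(I_{A_{3-r}},-)$ on $A_{3-r}$-modules, one obtains the four-term exact sequence
\begin{equation*}
0 \to\ _0\!\Hom_R(I_{A_{3-r}},I_j) \to\ _0\!\Hom_R(I_{A_{3-r}},A_j) \to\ _0\!\Hom_R(I_{A_{3-r}},A_{j+1}) \to\ _0\!\Ext^1_{A_{3-r}}(I_{A_{3-r}}/I_{A_{3-r}}^2,I_j).
\end{equation*}
Vanishing of the rightmost connecting arrow is exactly the surjectivity of the third map, and, by comparing dimensions along this sequence, that surjectivity is equivalent to the analogue of \eqref{a3r} obtained by replacing $(A_{c-1},A_c)$ by $(A_j,A_{j+1})$. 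Hence the three formulations coincide.

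For the inductive argument, the base case $c = 4-r$ reduces $A_{c-1}$ to $A_{3-r}$, so the hypothesis $\gamma_{3,2}=0$ together with Theorem~\ref{corWsmoothc}(i) (whose dimension hypotheses hold because $\dim A_{3-r} = \dim A + (c+r-3)r \ge \dim A + r$) guarantees that every deformation of $A_{3-r}$ comes from deforming its matrix and that the closure of the relevant $W$-locus at $(\Proj(A_{3-r}))$ is a generically smooth irreducible component. Theorem~\ref{corWsmoothcnew}(i) then applies with $B = A_{3-r}$ and $A = A_c$; its hypothesis \eqref{a3r} is precisely the $j=c-1=3-r$ case of our standing assumption, yielding the conclusion for $A_c$. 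For the inductive step $c > 4-r$, applying the corollary to $A_{c-1}$ (whose codimension parameter $c-1 \ge 4-r$ satisfies the required bound and for which the other hypotheses are just the restriction of the data of $A_c$ to the flag truncated after $t+c-2$ columns) shows that every deformation of $A_{c-1}$ comes from deforming its matrix. A single application of Theorem~\ref{corWsmoothcnew}(i) to $B = A_{c-1}$, $A = A_c$, with the $j=c-1$ instance of the hypothesis providing \eqref{a3r}, then completes the step.

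The main technical point is that the numerical and depth hypotheses of Theorem~\ref{corWsmoothcnew}(i) and of the inductive hypothesis propagate cleanly along the flag $A_{3-r} \twoheadrightarrow \cdots \twoheadrightarrow A_c$. Since $\dim A_j = \dim A + (c-j)r$ is nondecreasing as $j$ decreases, and the defining matrix of $A_j$ is the submatrix of $\cA$ formed by its first $t+j-1$ columns, the generality of $\cA$, the inequality $a_1 > b_t$, and the assumed lower bounds on $\dim A$ all transfer automatically. Apart from this bookkeeping, the whole argument rests on the per-step equivalence of the three formulations, which identifies the condition $\gamma_{j+r+1,j+r}=0$ needed implicitly in each application of Theorem~\ref{corWsmoothcnew} with the hypothesis recorded in the corollary at level $j$, in the spirit of Corollary~\ref{cornormdim} and Remark~\ref{remcornormdim}.
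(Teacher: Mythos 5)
Your argument is correct and follows the paper's own inductive scheme: inducting on $c \ge 4-r$ and invoking Theorem~\ref{corWsmoothcnew}(i) at each step. The only cosmetic difference is in the base case $c=4-r$, where the paper cites Theorem~\ref{corWsmooth} together with Remark~\ref{remthm89}(2), whereas you unwind that reference one level further by invoking Theorem~\ref{corWsmoothc}(i) for $A_{3-r}$ before a single application of Theorem~\ref{corWsmoothcnew}(i) --- precisely the route Theorem~\ref{corWsmooth} takes internally, so the two coincide.
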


\begin{proof}
 We will use induction on
  $c \ge 4-r$. The initial case $c=4-r$ follows from
  Theorem~\ref{corWsmooth} and Remark~\ref{remthm89}(2).

  If $c>4-r$, we have by induction that every deformation of $B:=A_{c-1}$
  comes from deforming $\cB$.  Then we conclude the proof by
  Theorem~\ref{corWsmoothcnew}.
\end{proof}

Note that all assumptions in Corollary~\ref{corWsmoothnew1} are fulfilled if
$ _0\!\Ext^1_{A_{2-r}}(I_{A_{2-r}}/I_{A_{2-r}}^2,I_{2-r})=0$ and
$ _0\!\Ext^1_{A_{3-r}}(I_{A_{3-r}}/I_{A_{3-r}}^2,I_{j})=0$ for every $j$,
$ \ 3-r \le j \le c-1$. Moreover since
$ _0\!\Ext^1_B(I_B/I_B^2,I_{A/B}) \subset\ _0\!\Ext^1_R(I_B,I_{A/B})$ we see
that if the degree of all generators of $I_{A/B}$ is larger that the maximum
of the degree of the relations of $I_B$ appearing in the Lascoux resolution,
both \ $ _0\!\Ext^1$-groups vanish. Also the corresponding \
$ _0\!\Ext^0$-groups vanish. The latter allows us to find
$\dim W(\underline{b};\underline{a};r)$, and we get a result which extends the
$c=4-r$ case of Corollary~\ref{corWsmooth3} substantially. But first we will
prove our final theorem in this section which finds
$\dim W(\underline{b};\underline{a};r)$ under some assumptions when a flag
\eqref{fullflag} is given. This somehow completes Theorem~\ref{ineqdimW}.

\begin{theorem}\label{corWsmoothnew2}
  Suppose that $\Proj(A) \in W(\underline{b};\underline{a};r)$ is general with
  $c \ge 4-r$, $r \ge 2$ and $\dim A \ge 2$. If $c \le 0$, we also suppose
  $\dim A\ge 3$. Moreover suppose $a_1 > b_t$, $b_r-b_1 < s_r -a_{t-r+1}$ and
  that the composed map $\gamma_{32}$ of Theorem~\ref{corWsmooth}
   is zero. If $c \ge 5-r$ suppose also that the maps
  $ _0\!\Hom_R(I_{A_{3-r}},A_{j+1})\longrightarrow \
  _0\!\Ext^1_{A_{3-r}}(I_{A_{3-r}}/I_{A_{3-r}}^2,I_{j})$
  are zero for every $j$, $ \ 3-r \le j \le c-2$. If\ \
  $b_t = b_1$ and $a_{t-r+1} < a_{t-r+2}$, or more
  generally, if  \\[3mm]
  {\rm (*):}
  $_0\! \hom_R(I_{A_{2-r}},I_{2-r})=\sum _{i=1}^{t-r+1}
  {a_i-a_{t-r+2}+n\choose n}$
  \quad and \quad
  $ _0\! \hom_R(I_{A_{3-r}},I_{j})= \sum _{i=1}^{t-r+2}{a_i-a_{t+j}+n\choose
    n}$ \\[-5mm]

 \quad for every $j$, $ \ 3-r \le j \le c-1$, then
   $$\dim W(\underline{b};\underline{a};r) = \lambda_c +K_3+K_4+\cdots
   +K_c -\kappa $$
   where $\kappa$ is the following non-negative integer:
   $$\kappa= \sum_{j=3-r}^c( \dim_k(N_j)_{(a_{t+j-1})}- \dim_k(N_j \otimes
   A_j)_{(a_{t+j-1})}).$$ Moreover if $a_{t+c-1} < s_r -a_{t-r+1}
   +a_{t+2-r}-b_r+b_1$ then every $K_i=0$ and $\dim
   W(\underline{b};\underline{a};r) = \lambda_{c}-\kappa$ where
   $$\kappa= \sum_{j=3-r}^c(\dim_k(N_{2-r})_{(a_{t+j-1})}- \dim_k(N_{2-r} \otimes
   A_{2-r})_{(a_{t+j-1})}),$$
    whence
   $\kappa$ may be expressed in terms of binomials using Lemma~\ref{kapp} for
   $i=2-r$.
   In particular, if also $a_{t+c-1} < s_r-b_r+b_1$ then every $K_i=0$,
   $\kappa =0$ and $$\dim W(\underline{b};\underline{a};r) = \lambda_c\, .$$
     \end{theorem}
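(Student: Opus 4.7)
The plan is to induct along the flag \eqref{fullflag}, successively applying Proposition~\ref{propo8} from the standard-determinantal base $A_{2-r}$ up to $A_c = A$, and then to organise the resulting telescoping sum via (*), Corollary~\ref{fiberpr2}(i) and the identity \eqref{dimensioMI}.

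\emph{Step 1 (propagating the matrix-deformation property).} I would first show, by induction on $j$ with $3-r \le j \le c-1$, that every deformation of $A_{j}$ comes from deforming its matrix $\cA_{j}$. The base case $j = 3-r$ is Theorem~\ref{corWsmoothc}(i), which applies because $\gamma_{32}=0$. For the inductive step $A_{j}$ to $A_{j+1}$, the assumed vanishing of $\ _0\!\Hom_R(I_{A_{3-r}},A_{j+1}) \to \ _0\!\Ext^1_{A_{3-r}}(I_{A_{3-r}}/I_{A_{3-r}}^2,I_{j})$ (for $3-r \le j \le c-2$) is precisely the \eqref{a3r}-condition of Theorem~\ref{corWsmoothcnew}(i) at level $j+1$, which then carries the property up the flag. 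This suffices to reach $A_{c-1}$, which is exactly what Proposition~\ref{propo8} needs in its last invocation (no assumption on $A_c$ itself is required, since only the dimension of $W$, not its unobstructedness, is being computed).

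\emph{Step 2 (telescoping the dimension).} For each $j$ with $3-r \le j \le c$, Proposition~\ref{propo8} yields
$$\dim W(\underline{b};\underline{a}_j;r) - \dim W(\underline{b};\underline{a}_{j-1};r) = \dim_k(N_j \otimes A_j)_{(a_{t+j-1})} - \ _0\!\hom_R(I_{A_{j-1}}, I_{j-1}).$$
Combining (*) with Corollary~\ref{fiberpr2}(i) rewrites the last $\hom$-term as $\sum_{i=1}^{t+j-2}\binom{a_i - a_{t+j-1} + n}{n}$, while the identity \eqref{dimensioMI} applied at level $j$ (with $MI$, $c$, $a_{t+c-1}$ replaced by $N_j$, $j$, $a_{t+j-1}$) identifies this same sum with $\dim_k(N_j)_{(a_{t+j-1})} - (\lambda_j - \lambda_{j-1} + K_j)$. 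Writing $\kappa_j := \dim_k(N_j)_{(a_{t+j-1})} - \dim_k(N_j \otimes A_j)_{(a_{t+j-1})}$, the one-step increment collapses to $\lambda_j - \lambda_{j-1} + K_j - \kappa_j$. For the starting value I would invoke Theorem~\ref{Amodulethm5} (via Lemma~\ref{tr}) to obtain $\dim W(\underline{b};\underline{a}_{2-r};r) = \lambda_{2-r}$, since $b_r-b_1 < s_r - a_{t-r+1}$ forces all $K'_i=0$ as in Theorem~\ref{corWsmoothc}(ii). Telescoping and using $K_j = 0$ for $j \le 2$ then give the main formula
$$\dim W(\underline{b};\underline{a};r) = \lambda_c + K_3 + \cdots + K_c - \kappa, \qquad \kappa = \sum_{j=3-r}^{c}\kappa_j.$$

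\emph{Step 3 (refinements and main obstacle).} Under $a_{t+c-1} < s_r - a_{t-r+1} + a_{t+2-r} - b_r + b_1$, the identities \eqref{dimMIAflag} and \eqref{dimMImaxFlag} hold at base level $j = 2-r$ for every $k \in \{t+2-r,\dots,t+c-1\}$, so each $\kappa_j$ is invariant under replacing $(N_j,A_j)$ by $(N_{2-r},A_{2-r})$; the same bound also forces $K_i = 0$ via \eqref{Ki0}--\eqref{Kis0}, yielding the intermediate formula. The strongest assumption $a_{t+c-1} < s_r - b_r + b_1$ then triggers Lemma~\ref{dimMI}(i) at every level, making $\kappa_j = 0$ and giving $\dim W(\underline{b};\underline{a};r) = \lambda_c$. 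The hard part is not a single step but the bookkeeping: the hypotheses are tuned so that $\gamma_{32}=0$ feeds only the base of Step~1, the zero-map conditions drive Theorem~\ref{corWsmoothcnew}(i) precisely up to $A_{c-1}$, and (*) is calibrated so that, after Corollary~\ref{fiberpr2}(i), the Proposition~\ref{propo8}-increment collapses term-by-term against \eqref{dimensioMI}; the subtle verification in the refinement is that \eqref{dimMIAflag}--\eqref{dimMImaxFlag} truly extend across the whole flag from the base level $j = 2-r$.
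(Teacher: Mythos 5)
Your proof is correct and follows the same strategy as the paper: establish the matrix-deformation property up the flag using $\gamma_{32}=0$ and the vanishing maps (the paper does this in one stroke via Corollary~\ref{corWsmoothnew1}, you unroll the induction through Theorem~\ref{corWsmoothc}(i) and Theorem~\ref{corWsmoothcnew}(i)), then telescope the dimension increment against \eqref{dimensioMI} via Corollary~\ref{fiberpr2} and (*). The only organizational difference is that you anchor the telescope at $A_{2-r}$ with Theorem~\ref{Amodulethm5}/Lemma~\ref{tr} and invoke Proposition~\ref{propo8} uniformly for every step (valid, since Proposition~\ref{propo8}'s dimension formula needs only the matrix-deformation property for the base $B$, not $\gamma=0$), whereas the paper bundles the first step into Theorem~\ref{corWsmoothc}(ii) to start from $\dim W(\underline{b};\underline{a}_{(3-r)};r)=\lambda_{3-r}-\kappa'$; the two bookkeepings agree since $\kappa'=\kappa_{3-r}$.
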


  \begin{remark} \rm By Corollary~\ref{fiberpr2}, assumption (*) above holds
    in the following cases:

  (1) 
  $a_{t-r+1} < a_{t-r+2} -\sum _{i=1}^{t-r+1}b_{r+i-1}+\sum
  _{i=1}^{t-r+1}b_{i}$.

%
   (2) $ \ _0\! \hom_R(I_{A_{2-r}},I_{j}) =0$ for $2-r \le j \le c-1$ and $a_{t-r+1}
< a_{t-r+2}$
\end{remark}

\begin{proof} Due to assumptions we know that every deformation of
  $B:=A_{c-1}$ comes from deforming $\cB$ by Corollary~\ref{corWsmoothnew1}. In
  Corollary~\ref{corWsmoothnew1} we used Theorem~\ref{Wsmooth} at each step of
  the induction, but there is also a dimension formula attached to
  Theorem~\ref{Wsmooth} (the same formula applies in
  Theorems~\ref{corWsmoothc} and ~\ref{corWsmooth} because these results are
  directly deduced from Theorem~\ref{Wsmooth}). Thus letting
  $\underline{a}_{(j)}=a_1, \cdots,a_{t+j-1}$ and using that the
  $\Ext^1$-vanishing for (ii) only holds for $3-r \le j \le c-2$, we get
  that
  $$\dim W(\underline{b};\underline{a}_{(j)};r) = \dim
  W(\underline{b};\underline{a}_{(j-1)};r)+ \dim_k(N_j \otimes
  A_j)_{(a_{t+j-1})} - \ _0\! \hom_R(I_{A_{j-1}},I_{j-1}), \ 4-r \le j < c$$
  Since every deformation of $B$ comes from
  deforming its matrix, it follows from Proposition~\ref{propo8} that the above
  dimension formula holds for $j=c$ as well. From Theorem~\ref{corWsmoothc} we
  get that
  $$\dim W(\underline{b};\underline{a}_{(3-r)};r)= \lambda_{3-r}-\kappa'$$
  because $a_{t-r+1} < s_r-b_r+b_1$ implies $-b_1 < \ell'_2$, whence $K'_i=0$
  for all $i$ by Lemma~\ref{tr}. Summing all these dimensions  we
  get
 $$\dim W(\underline{b};\underline{a};r) = \lambda_{3-r}-\kappa'
 +\sum_{j=4-r}^c \dim_k(N_j \otimes A_j)_{(a_{t+j-1})} \ - \sum_{j=4-r}^c \
 _0\! \hom_R(I_{A_{j-1}},I_{j-1}).$$
 In this formula Corollary~\ref{fiberpr2} and assumption (*) imply that
 $ _0\! \hom_R(I_{A_{j}},I_{j})= \sum _{i=1}^{t+j-1}{a_i-a_{t+j}+n\choose n}$
 for every $j$, $ \ 2-r \le j \le c-1$. Using \eqref{dimensioMI} it follows
 that
 $$\dim_k(N_j)_{(a_{t+j-1})} - \ _0\! \hom_R(I_{A_{j-1}},I_{j-1}) =
 \lambda_{j}-\lambda_{j-1} +K_j \ , \ \ {\rm for } \ \ 4-r \le j \le c$$
 where $K_j=0$ for $j < 3$. Inserting this formula and recalling that
 $$\kappa'= \dim_k(N_{3-r})_{(a_{t-r+2})}- \dim_k(N_{3-r} \otimes
 A_{3-r})_{(a_{t-r+2})}$$
 by Theorem~\ref{corWsmoothc} we get the $1^{st}$ displayed dimension formula
 of Theorem~\ref{corWsmooth}. Moreover if
 $a_{t+c-1} < s_r -a_{t-r+1} +a_{t+2-r}-b_r+b_1$, then \eqref{dimMIAflag} and
 \eqref{dimMImaxFlag} applies with $j=2-r$ and we get the last expression for
 $\kappa$ which we may compute using Lemma~\ref{kapp}. We also get $K_i=0$ for
 $3 \le i \le c$ by Theorem~\ref{ineqdimW} and \eqref{Ki0}. Finally if
 $a_{t+c-1} < s_r-b_r+b_1$, we get $a_{t+j} < s_r-b_r+b_1$ for $j \le c-1$ and
 hence $\kappa=0$ by Lemma~\ref{dimMI}(i) and we are done.
\end{proof}

\begin{corollary}\label{corWsmooth4}
  Let $\Proj(A) \in W(\underline{b};\underline{a};r)$ be general with
  $c \ge 4-r$ and suppose $\dim A \ge 2$. If $c \le 0$, we also suppose
  $\dim A\ge 3$. Set $b:=\sum_{i=r}^{t}b_{i}-\sum_{i=1}^{t-r+2}b_{i}$ and
  suppose $a_1 > b_t$, $r \ge 2$ and
  $b_r-b_1 < s_r -a_{t-r+1}:=\sum _{i=1}^{t-r}(a_i-b_{r+i})$. If
  $$a_{t-r+2} > 2a_{t-r+1} + b \ \ and \ \
  a_{t-r+3} > \sum _{i=t-r+1}^{t-r+2}a_i+
  b+\max\{a_{t-r+2}-a_1,b_{t-r+2}-b_1\}$$
  then $ \overline{W(\underline{b};\underline{a};r)}$ is a generically smooth
  irreducible component of $\Hilb ^{p_X(t)}(\PP^n)$ and every deformation of $A$
  comes from deforming $\cA$.
  Moreover$$\dim W(\underline{b};\underline{a};r) = \lambda_c +K_3+K_4+\cdots
  +K_c -\kappa $$
  where $\kappa$ is the following non-negative integer:
  $$\kappa= \sum_{j=3-r}^c( \dim_k(N_j)_{(a_{t+j-1})}- \dim_k(N_j \otimes
  A_j)_{(a_{t+j-1})}).$$
  In particular, if also $a_{t+c-1} < s_r-b_r+b_1$ then every $K_i=0$,
  $\kappa =0$, whence $\dim W(\underline{b};\underline{a};r) = \lambda_c$.
\end{corollary}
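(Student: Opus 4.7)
The strategy is to reduce the corollary to Theorem~\ref{corWsmoothnew2} by showing that the two explicit inequalities in the hypothesis force all the abstract Hom/Ext vanishings that Theorem~\ref{corWsmoothnew2} requires. Set $B_j:=A_{j}$ and $I_j:=I_{A_{j+1}/A_j}$ as in the flag \eqref{fullflag}. Since $c\ge 4-r$, $a_1>b_t$ and $b_r-b_1<s_r-a_{t-r+1}$ are hypotheses both here and in Theorem~\ref{corWsmoothnew2}, the remaining points to verify are: (a) the composition $\gamma_{32}=0$; (b) the maps $\ _0\!\Hom_R(I_{A_{3-r}},A_{j+1})\to\ _0\!\Ext^1_{A_{3-r}}(I_{A_{3-r}}/I_{A_{3-r}}^2,I_{j})$ vanish for $3-r\le j\le c-2$; and (c) condition (*) of that theorem.

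The first step is to apply Corollary~\ref{corWsmooth3}. Applied to the surjection $A_{2-r}\twoheadrightarrow A_{3-r}$ (i.e.\ playing the role of $B\twoheadrightarrow A$ with that corollary's $c$ equal to $3-r$), the first displayed inequality $a_{t-r+2}>2a_{t-r+1}+b$ is precisely the numerical hypothesis of Corollary~\ref{corWsmooth3}(i), giving
\[
\ _0\!\Ext^i_R(I_{A_{2-r}},I_{2-r})=0\quad\text{for }i=0,1.
\]
Applied to $A_{3-r}\twoheadrightarrow A_{4-r}$ (with that corollary's $c$ equal to $4-r$), the second displayed inequality matches Corollary~\ref{corWsmooth3}(ii), giving
\[
\ _0\!\Ext^i_R(I_{A_{3-r}},I_{3-r})=0\quad\text{for }i=0,1.
\]

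The second step is to propagate these vanishings to all $I_j$ with $j\ge 3-r$, up to $j=c-1$. The minimal free resolutions of $I_{A_{2-r}}$ (Eagon--Northcott) and $I_{A_{3-r}}$ (Lascoux) have fixed maximal relation degrees $mdr(I_{A_{2-r}})$ and $mdr(I_{A_{3-r}})$ (Lemma~\ref{mdr_mdg} and Corollary~\ref{maxrel}), while the smallest degree of a generator of $I_j$ is $a_{t+j}+\sum_{i=1}^{t-r}a_i-\sum_{i=r}^{t}b_i$, which is non-decreasing in $j$. Hence the degree estimates proving the step~1 vanishings remain valid after replacing $I_{2-r}$ (resp.\ $I_{3-r}$) by any $I_j$ with $j\ge 2-r$ (resp.\ $j\ge 3-r$); in particular one obtains
\[
\ _0\!\Ext^i_R(I_{A_{3-r}},I_j)=0\quad\text{for }3-r\le j\le c-1,\ i=0,1,
\]
and analogously for $A_{2-r}$. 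Using the inclusion $\ _0\!\Ext^i_{A_{j}}(I_{A_j}/I_{A_j}^2,I_{j'})\subset \ _0\!\Ext^i_{R}(I_{A_j},I_{j'})$ (as employed throughout Section~7), one deduces $\ _0\!\Ext^1_{A_{2-r}}(I_{A_{2-r}}/I_{A_{2-r}}^2,I_{2-r})=0$ and $\ _0\!\Ext^1_{A_{3-r}}(I_{A_{3-r}}/I_{A_{3-r}}^2,I_{j})=0$ for $3-r\le j\le c-2$. The first vanishing forces the composition $\gamma_{32}$ (as defined in Theorem~\ref{corWsmooth}) to be zero by inspection of its target; the second forces the maps in (b) above to be zero.

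For (c), note that the inequalities force $a_i<a_{t-r+2}$ for $i\le t-r+1$ and $a_i<a_{t+j}$ for $i\le t-r+2$, $3-r\le j\le c-1$, so every binomial $\binom{a_i-a_{t-r+2}+n}{n}$ and $\binom{a_i-a_{t+j}+n}{n}$ appearing in condition (*) vanishes; and the corresponding $\ _0\!\hom_R$ groups on the left of (*) vanish by the $i=0$ case of step~2. Hence (*) holds trivially. With (a), (b), (c) verified, Theorem~\ref{corWsmoothnew2} applies directly and yields the claimed smoothness of the component, the property that every deformation of $A$ comes from deforming $\cA$, together with the dimension formula $\lambda_c+K_3+\cdots+K_c-\kappa$ and the expression for $\kappa$; the final ``in particular'' is the closing sentence of Theorem~\ref{corWsmoothnew2}. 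The main technical obstacle in executing this plan is the careful bookkeeping in step~2 to confirm that the degree inequalities established for $j=2-r,3-r$ really do propagate uniformly to all $j$ up to $c-1$ despite the varying column count in the defining matrix; once that is settled the rest is a direct invocation of Corollary~\ref{corWsmooth3} and Theorem~\ref{corWsmoothnew2}.
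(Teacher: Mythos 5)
Your strategy is the same as the paper's: establish the required $\Ext^1$-vanishings by comparing the smallest generator degree $s(I_j)=a_{t+j}+\sum_{i=1}^{t-r}a_i-\sum_{i=r}^{t}b_i$ against the largest relation degree $mdr(I_{A_{2-r}})$ or $mdr(I_{A_{3-r}})$ (the two displayed numerical inequalities are exactly these comparisons), and then invoke Section 8 machinery. Your steps (a)--(c) and the propagation from $j=3-r$ to all $j\le c-1$ using the monotonicity of $s(I_j)$ are correct and match the paper in substance.

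The gap is in the final invocation. Theorem~\ref{corWsmoothnew2} is a \emph{dimension} statement only: its conclusion is $\dim W(\underline{b};\underline{a};r)=\lambda_c+K_3+\cdots+K_c-\kappa$, and it does \emph{not} assert that $\overline{W(\underline{b};\underline{a};r)}$ is a generically smooth irreducible component, nor that every deformation of $A$ comes from deforming $\cA$. Those two assertions have to come from Theorem~\ref{corWsmooth} (the route the paper takes), whose hypotheses --- $\gamma_{3,2}=\gamma_{4,3}=0$ and $\ _0\!\Ext^1_{A_{4-r}}(I_{A_{4-r}}/I_{A_{4-r}}^2,I_j)=0$ for $4-r\le j\le c-1$ --- you have in fact established: $\gamma_{4,3}=0$ and the $\Ext^1_{A_{4-r}}$-vanishings follow from your $\ _0\!\Ext^1_{A_{3-r}}$-vanishings via the monotonicity in Corollary~\ref{fiberpr2}(i). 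So the mathematics is present, but the theorem you cite does not deliver the conclusion you attribute to it. Note also that Corollary~\ref{corWsmoothnew1}, which \emph{would} yield the smoothness conclusion, cannot be substituted here: it requires $\dim A\ge 3$ (resp.\ $\ge 4$ if $c\le 0$), one more than Corollary~\ref{corWsmooth4} assumes, which is precisely why the paper reverts to Theorem~\ref{corWsmooth}, stated at $\dim A\ge 2$. You should add the explicit invocation of Theorem~\ref{corWsmooth} for the smoothness/deformation part and reserve Theorem~\ref{corWsmoothnew2} for the dimension formula.

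A smaller point on (c): the assertion that the inequalities force $a_{t-r+1}<a_{t-r+2}$ (so that the binomials on the right of (*) vanish) is correct but not immediate --- one needs $a_{t-r+1}+b>0$, which follows from $a_1>b_t$ together with the monotonicity of the $b_i$'s (one shows $-b\le b_{r-1}$ or $-b\le b_{t-r+2}$ depending on whether $2r\le t+2$). That is worth spelling out.
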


\begin{proof} Consider the flag \eqref{fullflag} where
  $I_{j}=I_{A_{j+1}/A_{j}}$. By Theorem~\ref{corWsmooth} and
  Corollary~\ref{fiberpr2}(i), cf. Remark~\ref{remcor83}(1), it suffices to show
  $ _0\!\Ext^1_{A_{2-r}}(I_{A_{2-r}}/I_{A_{j}}^2,I_{2-r})=0$ and that
  $ _0\!\Ext^1_{A_{j}}(I_{A_{3-r}}/I_{A_{3-r}}^2,I_{j})=0$ for every
  $ \ 3-r \le j \le c-1$ under the given assumptions on $\dim A$ (which is one
  less than Corollary~\ref{corWsmoothnew1} requires). We have
  $ _0\!\Ext^1_{A_{2-r}}(I_{A_{2-r}}/I_{A_{2-r}}^2,I_{2-r})=0$ by
  Corollary~\ref{corWsmooth3} and assumption. Since the smallest degree of the
  minimal generators of $I_{j}$ is
  $$ s(I_{j}): =a_{t+j}+\sum _{i=1}^{t-r}a_i-\sum _{i=r}^{t}b_{i}$$ and the
  maximum degree of the relations of $I_{A_{3-r}}$ is
  $$mdr(I_{A_{3-r}})=\sum _{i=1}^{t-r+1}a_i-\sum_{i=1}^{t-r+2}b_{i} +
  a_{t-r+1}$$
  by Corollary~\ref{maxrel}(iii), we get the vanishing of the
  $ _0\!\Ext^1_R$-groups (as well as for the corresponding $ _0\!\Hom$-groups
  which means that (*) of Theorem~\ref{corWsmoothnew2} holds)
  if $ mdr(I_{A_{3-r}}) < s(I_{j})$, or equivalently if
  $ a_{t+j} > \sum _{i=t-r+1}^{t-r+2}a_i+
  b+\max\{a_{t-r+2}-a_1,b_{t-r+2}-b_1\}$
  for every $j$. Since, in general, we assume $a_{t+j} \ge a_{t-r+3}$ for
  $j \ge 3-r$, we are done by an assumption in Corollary~\ref{corWsmooth4}
\end{proof}

Let us extend Example~\ref{Wsmooth2} a lot.

\begin{example} \label{Wsmooth2f} \rm For any $r$, $2 \le r \le t-1$ and any
  $c \ge 4-r$, let $\cA= [\cB,\cC]$ be a general $t \times (t+c-1)$ matrix
  where $\cB$ is a $t \times (t-r+2)$ matrix that is exactly equal to the
  matrix $\cA$ in Example~\ref{Wsmooth2}(i), i.e. all entries in the last
  column are of degree $3$ and all other entries are of degree one (and take
  all $b_i=0$, thus $b_r-b_1 < s_r -a_{t-r+1}$ holds). Moreover let all
  entries of the 
  $t \times (c+r-3)$ matrix $\cC$ be of degree at least $7$, and otherwise
  arbitrary, i.e. $ 7 \le a_{t-r+3} \le a_{t-r+4}, \le \cdots, a_{t+c-1}$, and
  suppose that $\dim R$ is large enough so that $\dim A \ge 3$. Then
  $ \overline{W(\underline{b};\underline{a};r)}$ is a generically smooth
  irreducible component of $\Hilb ^{p_X(t)}(\PP^n)$ and every deformation of $A$
  comes from deforming $\cA$ by Corollary~\ref{corWsmooth4}. Moreover
  $\dim W(\underline{b};\underline{a};r) = \lambda_c$ holds if $t-r$ is large
  enough. More precisely, $\dim W(\underline{b};\underline{a};r) = \lambda_c$
  provided $a_{t+c-1} < s_r-b_r+b_1$, i.e. if $t-r \ge a_{t+c-1}$.
 \end{example}

 \section{Deformations of exterior powers of modules over determinantal schemes}

  We keep the notation of the previous sections. So,
 $\varphi: F:=\oplus _{i=1}^tR(b_i)\longrightarrow G:=\oplus _{j=1}^{t+c-1}
 R(a_j)$
 is a graded $R$-morphism between two free $R$-modules of rank $t$ and
 $t+c-1$, respectively, and we suppose $a_1 > b_t$ in this section. Set
 $MI=\coker (\varphi ^*)$, $I_A:=I_{t-r+1}(\varphi ^* )$, $A=R/I_A$ and
 $X:=\Proj(A)$. If we simultaneously work with $A$ for different $r$, we use
 the notation $I_{A_r}=I_{t-r+1}(\varphi ^* )$, $A_r=R/I_{A_r}$ and
 $X_r:=\Proj(A_r)$. We assume that $A$ has codimension $r(r+c-1)$ in $R$, i.e.
 that $A$ is determinantal. In this section we also suppose $c \ge 1$ so that
 $MI$ is an $A_1$-module, i.e. supported at $X_1$. By \cite[Proposition 5]{V} and \cite[Theorems 1 and 2]
{B}, $ \bigwedge ^rMI$ is a maximal Cohen-Macaulay $A$-module and
 letting $J_A=I_{t-r}(\varphi ^* )$ we also suppose that $\bigwedge ^rMI$ is
 invertible in $X\setminus V(J_A)$, e.g. that the matrix $\cA$ of $\varphi ^*$
 is general.
%

\begin{lemma}\label{42} If $\depth _{J_A}A\ge j+1$ for some
  $j\ge 1$, then $\Hom _{A}(\bigwedge ^rMI,\bigwedge ^rMI)\cong A$
  and
  $$\Ext^{i}_{A}(\bigwedge ^rMI,\bigwedge ^rMI)=0 \quad {\rm for} \quad 1\le i
  \le j-1 \ .$$
  In particular, if $\depth _{J_A}A\ge 4$, then
  $$\Hom _{A}(\bigwedge ^rMI,\bigwedge ^rMI)\cong A \quad {\rm and} \quad
  \Ext^{i}_{A}(\bigwedge ^rMI,\bigwedge ^rMI)=0 \quad {\rm for} \quad i=1,2\,
  .$$
\end{lemma}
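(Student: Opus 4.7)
The plan is to reduce everything to a computation of local cohomology of $A$ along $J_A$ via the sheaf-to-module comparison map \eqref{NM}. Set $U = X \setminus V(J_A)$ and note two key local properties over $U$: by assumption the coherent sheaf $\widetilde{\bigwedge^{r}MI}$ is invertible on $U$, and the scheme $U$ is a local complete intersection (since $J_A$ cuts out precisely the singular locus of $X$ when $\cA$ is general). Hence the local $\mathcal{H}om$ on $U$ is trivial:
$$\mathcal{H}om_{\cO_U}\!\bigl(\widetilde{\textstyle\bigwedge^{r}MI},\widetilde{\textstyle\bigwedge^{r}MI}\bigr) \;\cong\; \cO_U.$$

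Next I would apply \eqref{NM} with $X=\Proj(A)$, $Z=V(J_A)$, and $N=L=\bigwedge^{r}MI$. Because $\bigwedge^{r}MI$ is maximal Cohen--Macaulay by \cite[Prop.~5]{V} and \cite[Thms.~1,2]{B}, one has $\depth_{J_A}\bigwedge^{r}MI = \depth_{J_A}A \ge j+1$, so \eqref{NM} yields an isomorphism
$$\Ext^{i}_{A}\!\bigl(\textstyle\bigwedge^{r}MI,\bigwedge^{r}MI\bigr) \;\xrightarrow{\;\sim\;}\; H^{i}_{*}(U,\cO_U) \qquad \text{for } 0\le i\le j-1,$$
and an injection for $i=j$. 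Thus the problem collapses to computing $H^{i}_{*}(U,\cO_U)$ for small $i$.

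For this I would use the standard exact sequence relating cohomology on the complement $U$ to local cohomology along $V(J_A)$: a four-term sequence gives $H^{0}_{*}(U,\cO_U) \cong A$ provided $H^{0}_{J_A}(A)=H^{1}_{J_A}(A)=0$, and isomorphisms $H^{i}_{*}(U,\cO_U) \cong H^{i+1}_{J_A}(A)$ for $i\ge 1$. The depth hypothesis $\depth_{J_A}A \ge j+1$ forces $H^{k}_{J_A}(A)=0$ for all $k\le j$, so $H^{0}_{*}(U,\cO_U)\cong A$ and $H^{i}_{*}(U,\cO_U)=0$ for $1\le i\le j-1$. Combining with the displayed isomorphism of Ext groups gives $\Hom_A(\bigwedge^{r}MI,\bigwedge^{r}MI)\cong A$ and the vanishing of $\Ext^{i}_{A}(\bigwedge^{r}MI,\bigwedge^{r}MI)$ in the asserted range. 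The particular case $\depth_{J_A}A\ge 4$ is just $j=3$.

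The only nontrivial point in the execution is the verification of the two local hypotheses on $U$ needed to legitimately invoke \eqref{NM}: that $\widetilde{\bigwedge^{r}MI}$ is truly locally free of rank one on $U$, and that $U$ itself is l.c.i. Both are already built into the setup of this section (the former is stated in the paragraph preceding the lemma; the latter is a standard fact for determinantal schemes away from the rank-drop locus $V(J_A)$), so no new work is required and the proof reduces essentially to bookkeeping with local cohomology.
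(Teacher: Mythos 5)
Your proof is correct and takes essentially the same route as the paper's: both reduce the $\Ext$-groups to sheaf cohomology $H^i_*(U,\cO_U)$ of the complement of $V(J_A)$ via the locally-free-of-rank-one property of $\widetilde{\bigwedge^r MI}$, and then compute via the local cohomology of $A$ along $J_A$. You are merely more explicit than the paper in invoking \eqref{NM} and in spelling out that the MCM property of $\bigwedge^r MI$ gives $\depth_{J_A}\bigwedge^r MI = \depth_{J_A}A$, a step the paper's terse argument leaves implicit.
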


\begin{remark} \rm \label{rema92} (1) The case $r=1$ in Lemma \ref{42} was considered in
  \cite{K2014} and slightly generalized in Theorem~\ref{Amodulecor2}. Indeed
  the hypothesis $\depth _{J_A} A\ge j+1$, for $\varphi $ general, is weakened
  to $\depth _{J_A} A \ge j$ in Theorem~\ref{Amodulecor2} and Lemma
  \ref{42} still works. Examples computed with Macaulay2 indicate that this
  also holds for $r>1$.

  (2) If $c=2$, then the conclusions of Lemma \ref{42} hold, without assuming
  $\depth _{J_A}A\ge 4$. Indeed, $\bigwedge ^r MI$ is a twist of the canonical
  module $K_{A}$ by \cite[Proposition 3.5]{KM2017} and it is well known that
  $ A\cong \Hom (K_{A},K_{A})$ and $\Ext^{i}_{A}(K_{A},K_{A})=0$ for $i\ge 1$
  hold in general for Cohen-Macaulay rings.
\end{remark}

\begin{proof} Since $\depth _{J_A}A\ge 2$, we have  $$\Hom
  _{A}(\bigwedge ^r MI,\bigwedge ^rMI)\cong H ^0_*(X\setminus
  V(J_A),\cO_{X})\cong A$$
  and
 $\Ext^{i}_{A}(\bigwedge ^rMI,\bigwedge ^rMI)\cong
 \Ext^{i}_{\cO_{X'}}(\widetilde{\bigwedge ^rMI},\widetilde{\bigwedge ^rMI})$ with
  $X':=\Spec(A)\setminus V(J_A)$.

   Since $\widetilde{\bigwedge ^rMI}$ is locally free of rank 1 on $X'$, we get
   $$\Ext^{i}_{\cO_{X'}}(\widetilde{\bigwedge ^rMI},\widetilde{\bigwedge
     ^rMI})\cong H ^i(X',\cH om(\widetilde{\bigwedge
     ^rMI},\widetilde{\bigwedge ^rMI}))\cong H ^{i}(X',\cO_{X'})=0 $$ (since
   $H^i(X',\cO_{X'})\cong H ^{i+1}_{J_A}(A)=0$ by $\depth
   _{J_A}A\ge j+1$) for $1\le i\le j-1$.
\end{proof}

We suppose $\depth _{J_A}A\ge 4$ and using Lemma \ref{42} we will compare
deformations of $X\subset \PP^n$ with deformations of $\bigwedge ^r MI$. Note
that
$\Hom _{R}(\bigwedge ^r MI,\bigwedge ^rMI)\cong \Hom _{A}(\bigwedge ^r
MI\otimes A,\bigwedge ^rMI)\cong A$
and recall that there is a spectral sequence
$$\Ext^{i}_{A}(\Tor _j^R(\bigwedge ^rMI,A), \bigwedge
^rMI)\Rightarrow \Ext^{\bullet}_{R}(\bigwedge ^rMI,\bigwedge ^rMI)$$
which induce an exact sequence
\begin{equation} \label{spectralseq}
0\longrightarrow \Ext^{1}_{A}(\bigwedge ^rMI,\bigwedge ^rMI)\longrightarrow \Ext^{1}_{R}(\bigwedge ^rMI,\bigwedge ^rMI)\longrightarrow \end{equation}
$$\Hom_{A}(\Tor _1^R(\bigwedge ^rMI,A),\bigwedge ^rMI)
\longrightarrow \Ext^{2}_{A}(\bigwedge ^rMI,\bigwedge ^rMI) \
$$ \\[-3mm]
where the $\Ext^{i}_{A}$-groups for $i=1, 2$ vanish.
Now the exact sequence
$0\longrightarrow I_{A}\longrightarrow R \longrightarrow A\longrightarrow 0$
leads to $$\Tor _1^R(\bigwedge ^rMI,A)\cong \bigwedge ^rMI\otimes I_A \ .$$
Therefore, we have
$$\begin{array}{rcl} \Hom_{A}(\Tor _1^R(\bigwedge ^rMI,A),\bigwedge ^rMI) &
\cong & \Hom_{A}(\bigwedge ^rMI\otimes I_A ,\bigwedge ^rMI) \\ & \cong & \Hom _R(I_A,A)\end{array}$$
because $\Hom _{A}(\bigwedge ^rMI,\bigwedge ^rMI)\cong A$. By
(\ref{spectralseq}) we get an isomorphism of the tangent spaces
$$ \Ext^{1}_{R}(\bigwedge ^rMI,\bigwedge ^rMI) \cong \Hom
_R(I_A,A)\cong \Hom
_{A}(I_A/I_A^2,A) \ $$
of the deformation functors we consider. Also note it is
possible to continue (\ref{spectralseq})  to see the injection
$$  \Ext^1 _{A}(I_A/I_A^2,A)\hookrightarrow \Ext^{2}_{R}(\bigwedge
^rMI,\bigwedge ^rMI)\ $$
of obstruction spaces of these functors. Thus we have the first isomorphism in
\begin{theorem} \label{defpower}  Let $c\ge 1$, $(c,r) \ne (1,1)$ and suppose
  $\depth _{J_A}A\ge 4$ for $c \ne 2$. Then
$$Def_{\bigwedge ^rMI /R}(-)\simeq Def_{A/R}(-) \simeq  \Hilb_{X}(-)$$
i.e. the deformation functors of $\bigwedge ^rMI$ as an $R$ module, the
deformation functor of the surjection $R\twoheadrightarrow A$ and the local
Hilbert functor are isomorphic.
\end{theorem}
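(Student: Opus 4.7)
The plan is to prove the first isomorphism $\Def_{\bigwedge^r MI/R} \simeq \Def_{A/R}$ by constructing a natural transformation and verifying that it is an isomorphism via the tangent-obstruction criterion; the second isomorphism $\Def_{A/R} \simeq \Hilb_X(-)$ is then immediate from \eqref{Grad}. First I would define a natural transformation $\Phi: \Def_{\bigwedge^r MI/R} \to \Def_{A/R}$ sending a deformation $(MI^r)_T$ of $\bigwedge^r MI$ to a local artinian $T$ to the quotient $A_T := R_T/I_{A_T}$, where $I_{A_T}$ is constructed from a Fitting ideal of $(MI^r)_T$ (equivalently, from its annihilator). Since $\widetilde{\bigwedge^r MI}$ is locally invertible on $X\setminus V(J_A)$, Fitting's lemma gives a canonical way to recover $I_A$ from $\bigwedge^r MI$ that commutes with base change, so the construction produces a $T$-flat quotient $A_T$ reducing to $A$ modulo ${\mathfrak m}_T$; this is analogous in spirit to how $\Phi$ is defined in Lemma~\ref{lemma2} using the cokernel $MI$ itself in the case $r=1$.

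Next I would identify the tangent and obstruction maps of $\Phi$ with the maps produced by the spectral sequence already displayed in the text preceding the theorem. Combining Lemma~\ref{42} (which under $\depth_{J_A}A \ge 4$, or $c=2$ via Remark~\ref{rema92}(2), yields $\Hom_A(\bigwedge^r MI,\bigwedge^r MI)\cong A$ and $\Ext^i_A(\bigwedge^r MI,\bigwedge^r MI)=0$ for $i=1,2$) with the sequence \eqref{spectralseq} and the calculation $\Tor_1^R(\bigwedge^r MI,A) \cong \bigwedge^r MI \otimes I_A$, the argument in the paper already gives an isomorphism on tangent spaces
$$d\Phi: \Ext^1_R(\bigwedge^r MI,\bigwedge^r MI) \stackrel{\simeq}{\longrightarrow} \Hom_R(I_A,A)$$
and an injection on obstruction spaces
$$\Ext^1_A(I_A/I_A^2,A) \hookrightarrow \Ext^2_R(\bigwedge^r MI,\bigwedge^r MI).$$
The verification that these agree with the differential and obstruction of $\Phi$ is the functorial content of Fitting's lemma: deforming $\bigwedge^r MI$ by $\eta \in \Ext^1_R(\bigwedge^r MI,\bigwedge^r MI)$ deforms its $0$-th Fitting ideal (hence $I_A$) by the image of $\eta$ under the edge morphism of the spectral sequence.

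With these two inputs in hand, I would invoke the standard criterion (see e.g.\ Fantechi--Manetti, or more classically Schlessinger): a morphism of deformation functors (both satisfying Schlessinger's conditions, which holds here since both are prorepresentable) that is bijective on tangent spaces and injective on obstruction spaces is an isomorphism. This yields $\Def_{\bigwedge^r MI/R} \simeq \Def_{A/R}$. For the second isomorphism, since $A$ is Cohen--Macaulay (Eagon--Hochster) of dimension at least $2$ (either from $\depth_{J_A}A \ge 4$ when $c\ne 2$, or handled directly when $c=2$), the hypothesis $\depth_{\mathfrak m}A \ge 2$ of \eqref{Grad} is satisfied, so $\GradAlg(H_A)$ and $\Hilb^{p_X}(\PP^n)$ agree as schemes at $(X)$, and their local deformation functors coincide: $\Def_{A/R}(-) \simeq \Hilb_X(-)$.

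The main obstacle is the verification that $\Phi$ is well-defined as a morphism of deformation functors and that its tangent (respectively, obstruction) map is exactly the edge map in \eqref{spectralseq}. Proving $T$-flatness of the Fitting-type quotient $A_T$ requires some care since $A$ is cut out by minors of a presentation matrix that itself deforms in a controlled way with $MI^r_T$; here the assumption that $\widetilde{\bigwedge^r MI}$ is invertible on the open locus $X\setminus V(J_A)$ (combined with $\depth_{J_A}A \ge 4$ to pass cohomological information across $V(J_A)$) is precisely what makes the construction well-posed, mirroring the role played by \eqref{NM} throughout the paper.
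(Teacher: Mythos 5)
Your proposal follows the paper's approach closely: the Fitting-ideal construction of $\Phi$, Lemma~\ref{42}, the spectral sequence \eqref{spectralseq} giving the tangent-space isomorphism, and \eqref{Grad} for the second isomorphism are all the same ingredients the paper cites, and your expansion of the terse ``follows from Lemma~\ref{42} and (\ref{spectralseq}) by the arguments above'' is faithful to what is intended.

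One directional subtlety deserves attention before you invoke the standard smoothness criterion. For a morphism $\phi\colon F\to G$ of deformation functors, that criterion requires the \emph{obstruction map of $\phi$}, a linear map $O_F\to O_G$, to be injective. With $\Phi\colon \mathrm{Def}_{\bigwedge^r MI/R}\to \mathrm{Def}_{A/R}$ as you construct it, this means an injection $\Ext^2_R(\bigwedge^r MI,\bigwedge^r MI)\to \Ext^1_A(I_A/I_A^2,A)$. But what the spectral sequence and Lemma~\ref{42} deliver is the \emph{opposite} injection $\Ext^1_A(I_A/I_A^2,A)\hookrightarrow \Ext^2_R(\bigwedge^r MI,\bigwedge^r MI)$. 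As stated, the criterion does not apply to $\Phi$; an injection of the smaller obstruction space into the larger one, by itself, does not rule out the target functor being unobstructed while the source is obstructed. The fix, and I believe the paper's actual intent, is that the vanishing $\Ext^1_A(\bigwedge^r MI,\bigwedge^r MI)=\Ext^2_A(\bigwedge^r MI,\bigwedge^r MI)=0$ from Lemma~\ref{42}, together with $\Hom_A(\bigwedge^r MI,\bigwedge^r MI)\cong A$, shows directly that for any deformation $A_T$ of $A$, the $A$-module $\bigwedge^r MI$ lifts to a flat $A_T$-module uniquely and without obstruction; this produces a well-defined natural transformation $\Psi\colon \mathrm{Def}_{A/R}\to \mathrm{Def}_{\bigwedge^r MI/R}$ to which your criterion applies with the injection you \emph{have} established, giving an isomorphism with $\Phi$ as its two-sided inverse (via Fitting). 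Either rephrase your last step to apply the criterion to $\Psi$, or argue the smoothness of $\Phi$ directly from the $\Ext_A$-vanishing; as written, the direction mismatch leaves a gap.
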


\begin{remark} \rm For $(c,r) = (1,1)$, $\depth _{J_A}A\ge 4$ is impossible.
  Note that the functors is defined on the category $\underline{\ell}$
  described in section 5 and that some of the local functors in the theorem are
  considered in Lemma~\ref{lemma2}.
\end{remark}

\begin{proof} The first isomorphism follows from Lemma~\ref{42} and
  (\ref{spectralseq}) by the arguments above  while the second follows from \eqref{Grad}.
\end{proof}

\begin{remark}\rm We have now the vertical isomorphism in
\[
\begin{array}{cccccc}
Def_{MI /R}(-) &  \dashrightarrow & Def_{\bigwedge ^rMI /R}(-)
\\ \cong \downarrow & &
\cong \downarrow  &     \\ \Hilb_{X_1}(-) &
  & \Hilb_{X_r}(-)
\end{array}
\]
\vskip 2mm
\noindent provided $\depth _{J_{A_r}}A_r\ge 4$ and $\depth _{J_{A_1}}A_1\ge 3$
(Theorem~\ref{Amodulecor2}). To define the dotted arrow, i.e. a morphism
$ \alpha(-): Def_{MI /R}(-) \longrightarrow Def_{\bigwedge ^rMI /R}(-)$ of
functors on $\underline{\ell}$, it suffices to observe that if $MI_S$ is a
deformation of $MI$, hence $S$-flat, then
$MI_S^{\otimes r}:=\overbrace{MI_S\otimes \cdots \otimes MI_S}^r$ is obviously
$S$-flat, and thinking of $\bigwedge ^rMI_S$ as a direct summand of
$MI_S^{\otimes r}$, we get that $\bigwedge ^rMI_S$ is $S$-flat. Thus, we may
define on ${\underline{\ell}}$:
$$
\alpha(S): Def_{MI /R}(S) \longrightarrow Def_{\bigwedge ^rMI /R}(S)$$
$$ \hskip 5mm MI_S\mapsto \bigwedge ^rMI_S.
$$
and use it to compare all four deformation functors above.
\end{remark}
We will, however, treat this a little differently, using the theory we have
developed. Indeed in the diagram of the remark above, there is a direct
well-defined morphism
$$Def(\psi )(-):Def_{MI/R}(-) \longrightarrow
Def_{A/R}(-) \cong  \Hilb_{X_r}(-) $$
of local functors over $\underline{\ell}$ defined in Lemma~\ref{lemma2}, with
tangent map
$$\psi:\ _0\!\Ext^1_R(MI,MI) \longrightarrow \ _0\!\Hom(I_{t-r+1}(\varphi ^*),A)$$
which, thanks to Lemma~\ref{lemma2}, is surjective if, and only if, every
deformation of $A$ comes from deforming its matrix $\cA$. Thus redefining
$\alpha(-)$ to be $Def(\psi )(-)$ composed with the inverse of the isomorphism
$Def_{\bigwedge  ^rMI /R}(-)  \stackrel{\cong}{\longrightarrow} \Hilb_{X_r}(-)$
we  get

\begin{theorem} \label{defWed} Let $c\ge 1$, $1 \le r <t$, $(c,r) \ne (1,1)$,
  set $A:=A_r$ and suppose $\dim A \ge 2$ and if $c\ne 2$ that
  $\depth _{J_A}A\ge 4$ and $\depth _{J_{A_1}}A_{1}\ge 3$. Moreover, we suppose
  that every deformation of $A$ comes from deforming its matrix $\cA$ and that
$$ \  _0\!\hom(I_{t-r+1}(\varphi ^*),A_r) = \lambda _c+K_3+K_4+\cdots +K _c \ .
$$
If $c \ne 1$ then the following 4 functors are isomorphic and smooth:
\[
\begin{array}{cccccc}
Def_{MI /R}(-) &  \stackrel{\cong}{\longrightarrow} & Def_{\bigwedge ^rMI /R}(-)
\\ \cong \downarrow & &
\cong \downarrow  &     \\ \Hilb_{X_1}(-) &
  & \Hilb_{X_r}(-)
\end{array}
\]
In particular the dimension of their tangent spaces are all equal, and also equal to
\begin{equation}\label{dim}   \lambda _c+K_3+K_4+\cdots +K _c = \dim
  W(\underline{b};\underline{a};1) = \dim
  W(\underline{b};\underline{a};r)\ .
\end{equation}
Moreover if $c=1$ then the functors
$ Def_{MI /R}(-) \cong Def_{\bigwedge ^rMI /R}(-) \cong \Hilb_{X_r}(-) $ are
isomorphic and smooth, and the dimension of their tangent spaces are equal to
$\lambda _1 = \dim W(\underline{b};\underline{a};r)\ .$
\end{theorem}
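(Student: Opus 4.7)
The plan is to establish the entire conclusion by assembling three ingredients: the smoothness of $Def(\psi)$ from Lemma~\ref{lemma2}, the unobstructedness of $MI$ with its tangent space dimension from Theorem~\ref{Amodulecor1}(iii), and Theorem~\ref{defpower}. The main technical observation is the standard deformation-theoretic fact that a formally smooth morphism of local functors on $\underline{\ell}$ whose tangent map is bijective is an isomorphism of functors; this will be used twice.

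First I would invoke Lemma~\ref{lemma2}(ii): since by hypothesis every deformation of $A=A_r$ comes from deforming $\cA$, the morphism $Def(\psi)(-): Def_{MI/R}(-) \to Def_{A_r/R}(-)$ is formally smooth, equivalently the tangent map $\psi: \ _0\!\Ext^1_R(MI,MI) \to \ _0\!\Hom_R(I_{A_r},A_r)$ is surjective. Theorem~\ref{Amodulecor1}(iii) gives that $MI$ is unobstructed with $\dim \ _0\!\Ext^1_R(MI,MI) = \lambda_c + K_3 + \cdots + K_c$, and the hypothesis on $_0\!\hom(I_{t-r+1}(\varphi^*),A_r)$ states that the target of $\psi$ has the same dimension. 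Hence $\psi$ is bijective, and the standard fact above produces an isomorphism $Def(\psi): Def_{MI/R} \xrightarrow{\cong} Def_{A_r/R}$. In particular $Def_{A_r/R}$ is formally smooth, i.e., $A_r$ is unobstructed.

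Next I would splice with Theorem~\ref{defpower} to complete the chain. The hypothesis $(c,r)\ne(1,1)$ together with either $\depth_{J_A}A \ge 4$ (case $c \ne 2$) or Remark~\ref{rema92}(2) (case $c=2$) places us in the setting of Theorem~\ref{defpower}, yielding $Def_{\bigwedge^r MI/R}(-) \cong Def_{A_r/R}(-) \cong \Hilb_{X_r}(-)$. Combined with the previous paragraph, this gives the three-functor chain $Def_{MI/R} \cong Def_{\bigwedge^r MI/R} \cong \Hilb_{X_r}$ always, and handles the case $c=1$. For $c \ne 1$, i.e. $c \ge 2$, the remaining identification $Def_{MI/R}(-) \cong \Hilb_{X_1}(-)$ is obtained by repeating the first step with $r=1$: since $\depth_{J_{A_1}}A_1 \ge 3$ and $c \ge 2$, Theorem~\ref{Amodulethm5}(ii) guarantees that every deformation of $A_1$ comes from deforming $\cA$, so Lemma~\ref{lemma2} applied to $r=1$ together with the tangent-dimension comparison (both sides equal to $\lambda_c+K_3+\cdots+K_c$ by Theorem~\ref{Amodulecor1}(iii) and Theorem~\ref{Amodulethm5}(i)) yields the fourth isomorphism.

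Finally I would read off the dimension formula \eqref{dim}. Since $A_r$ is unobstructed, $\dim_{(X_r)} \Hilb^{p_{X_r}(t)}(\PP^n) = \ _0\!\hom_R(I_{A_r},A_r) = \lambda_c + K_3 + \cdots + K_c$ by hypothesis, and Lemma~\ref{unobst} identifies $\overline{W(\underline{b};\underline{a};r)}$ as a generically smooth irreducible component of that Hilbert scheme, so $\dim W(\underline{b};\underline{a};r) = \lambda_c + K_3 + \cdots + K_c$. For $c \ne 1$, the parallel equality $\dim W(\underline{b};\underline{a};1) = \lambda_c + K_3 + \cdots + K_c$ follows from Theorem~\ref{Amodulethm5}(i), whose hypothesis $\depth_{J_{A_1}}A_1 \ge 3$ (together with $c \ge 2$) is precisely what we have assumed. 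No step presents a real obstacle; the entire argument is an organized use of previously established results, with the only non-formal input being the standard tangent-space criterion for a smooth morphism of deformation functors to be an isomorphism.
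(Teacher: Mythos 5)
Your argument is correct in substance and, for the chain $Def_{MI/R}\cong Def_{\bigwedge^r MI/R}\cong Def_{A_r/R}\cong\Hilb_{X_r}$ and for the dimension formula, it runs essentially as the paper does: Lemma~\ref{lemma2} and the hypothesis on $\ _0\!\hom(I_{t-r+1}(\varphi^*),A_r)$ make $\psi$ a surjection between spaces of the same dimension, so $Def(\psi)$ is an isomorphism, and Theorem~\ref{defpower} supplies the other leg. Where you diverge is the fourth identification $Def_{MI/R}\cong\Hilb_{X_1}$ for $c\neq 1$. You repeat the Lemma~\ref{lemma2} plus dimension-count argument at $r=1$, citing Theorem~\ref{Amodulethm5}(i)--(ii) both for the surjectivity of $\psi_1$ and for the target dimension. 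The paper instead applies Theorem~\ref{defpower} again at $r=1$ (so $\bigwedge^1 MI = MI$), invoking Remark~\ref{rema92}(1) to weaken the depth requirement there from $\geq 4$ to $\geq 3$, or Remark~\ref{rema92}(2) when $c=2$; the underlying fact is the module isomorphism $\Ext^1_R(MI,MI)\cong\Hom_R(I_{A_1},A_1)$ of Theorem~\ref{Amodulecor2}, so the paper gets bijectivity of $\psi_1$ directly rather than by a surjection plus a dimension count. Both routes work for $c\geq 3$ under the stated hypothesis $\depth_{J_{A_1}}A_1\geq 3$.

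The one point you need to repair is the $c=2$ case. There the theorem makes \emph{no} depth assumption on $J_{A_1}$, yet your citation of Theorem~\ref{Amodulethm5}(ii) (and, in the last paragraph, your appeal to its hypothesis for $\dim W(\underline{b};\underline{a};1)$) requires $\depth_{J_{A_1}}A_1\geq 3$, which is not given. The fix the paper uses is Remark~\ref{rema92}(2): for $c=2$ the module $MI$ is a twist of the canonical module $K_{A_1}$, so $\Hom_{A_1}(MI,MI)\cong A_1$ and $\Ext^i_{A_1}(MI,MI)=0$ for $i\geq 1$ hold unconditionally for a Cohen-Macaulay ring, and Theorem~\ref{defpower} at $r=1$ then yields $Def_{MI/R}\cong\Hilb_{X_1}$ with no depth hypothesis. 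The equality $\dim W(\underline{b};\underline{a};1)=\lambda_2$ in that case is Theorem~\ref{std_det_case}(ii), again with no depth condition (and the numerical hypothesis of that theorem is satisfied since $a_1>b_t$). With that substitution your proof goes through.
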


\begin{proof} By Theorem~\ref{Amodulecor1}(iii),
  $\dim\, _0\!\Ext^1_R(MI,MI) =\lambda _c+K_3+K_4+\cdots +K _c$. So
  $Def(\psi )(-)$ is an isomorphism on tangent spaces and since it is smooth
  by Lemma~\ref{lemma2}, $Def(\psi )(-)$ is an isomorphism. So for $c \ne 1$,
  all 4 functors are isomorphic by Theorems~\ref{defpower}, cf.
  Remark~\ref{rema92}, and smooth by Theorem~\ref{Amodulecor1}(iii) or
  Corollary~\ref{unobstr}. Finally using Lemma~\ref{unobst}, we get
  $$\dim \overline{ W(\underline{b};\underline{a};r)} = \
  _0\!\hom(I_{t-r+1}(\varphi ^*),A)$$ which concludes the proof because for
  $c=1$ similar arguments apply.
\end{proof}

\begin{remark} \label{rema97} \rm (1) A consequence of Theorem~\ref{defWed} is
  that we have
  $$ \ _0\!\Ext^1_R(MI,MI) \stackrel{\cong}{\longrightarrow} \
  _0\!\Ext^1_R(\bigwedge ^rMI,\bigwedge ^rMI)
$$
under the assumptions of  Theorem~\ref{defWed}.

(2) As we have seen the assumption ``every deformation of $A$ comes from
deforming $\cA$'' often holds (Theorems~\ref{Wsmooth} and
\ref{corWsmoothcnew}) and is really the main reason that allows us to state
Conjecture~\ref{conjWsmooth}. The displayed assumption is then a main
conclusion in Theorems~\ref{dimW} and \ref{corWsmoothcnew} which, together
with Macaulay2 computations, inspired
Conjecture~\ref{conjdimW}. Note that in these results we always assume
$a_{t+c-1} < s_r -b_r+b_1$, i.e.
$a_{t+c-1} -b_1 < \sum _{i=1}^{t-r+1}(a_i-b_{r+i-1})$ which implies $K_i=0$ by
\eqref{Kis0} and \eqref{Ki0}. In particular assuming
$a_{t+c-1} < s_r -b_r+b_1$ the displayed assumption in Theorem~\ref{defWed} is
plainly $ \dim \overline{ W(\underline{b};\underline{a};r)} =\lambda _c$. In
fact we are not aware of cases where the displayed assumption above holds
unless $r=1$, or $r>1$ and $a_{t+c-1} < s_r -b_r+b_1$. But if the latter
assumption holds, both these assumptions of Theorem~\ref{defWed} are expected
if $\dim A$ is large enough. More precisely if in addition $\dim A \ge 4$ for $c \ne 2$ and $\dim A\ge 2$ for $c = 2$
(cf. Conjecture~\ref{conjWsmooth} and Remark~\ref{remconjWsmooth}) the
conclusions of Theorem~\ref{defWed} are expected.

(3) As a consequence of (2) and \eqref{Ki0}, if $ a_{t+c-1} < s_r -b_r+b_1$
holds then $ a_{t+c-1} < s_i -b_i+b_1$ hold for all $1 \le i \le r$. This
implies that if the Conjectures~\ref{conjdimW} and \ref{conjWsmooth}  hold for
all $A_i$, which is highly expected, and if say $\dim A_r \ge 4$ and
$c \ge 2$, then the three isomorphic deformation functors
$Def_{A_i/R}(-) \cong \Hilb_{X_i}(-) \cong Def_{\bigwedge ^iMI /R}(-)$ are for
{\it all} $i$, $1 \le i \le r$ further isomorphic to e.g. the local Hilbert
functor
$\Hilb_{X_1}(-)$ of deforming the scheme $X_1 \subset \Proj(R)$ defined by
maximal minors. In particular $ \overline{
  W(\underline{b};\underline{a};i)}$ is for all $i$ a generically smooth irreducible
component of dimension $\lambda _c$ of $\Hilb ^{p_{X_i}(t)} (\PP^{n})$!! Here
the dimension of $X_i$ are very different and so are their Hilbert polynomials.
\end{remark}

Let us finish this section by seeing how precisely the assumption
$a_{t+c-1} < s_r -b_r+b_1$ lead to the isomorphisms given in
Theorem~\ref{defWed}. To illustrate the theorem we include also Macaulay2
computations of $ _0\!\Ext^1_R(\bigwedge ^rMI,\bigwedge ^rMI)$ for various $r$.

\begin{example} \label{exex910} \rm In all of this example $\cA$ is a
  $4\times 4$ matrix with 16 variable as in the generic case, and (i) is the
  generic case where all five deformation functors which we consider should  be
  isomorphic due to Theorem~\ref{defWed}. We only need to check that the
  tangent spaces are isomorphic since all functors are smooth. Below we let
  $M_r = \bigwedge ^rMI$. Since $c=1$, Theorem~\ref{defWed} does not include
  $ \Hilb_{X_1}(-) $.

  (i) We let $\cA=(x_{i,j})$ be the generic $4\times 4$ linear matrix and
  using Macaulay2 we get
  $$\ _0\!\ext^1(MI,MI)=\ _0\!\ext^1(M_2,M_2)=\ _0\!\ext^1(M_3,M_3) =225 \, .$$
  Correspondingly for the local graded deformation functor,
  $ \ _0\!\hom(I_{2}(\varphi ^*),A_3)=\ _0\!\hom(I_{3}(\varphi ^*),A_2)=225$.
  Hence all five deformation functors are isomorphic.
  Note that $A_r$ and $I_{A_r}$ are defined by the $t-r+1=5-r$ minors, so e.g.
  for $r=2$, $A_2=R/I_{A_2}$ is defined by $3 \times 3$ minors (submaximal
  minors), so $I_{A_2}=I_{3}(\varphi ^*)$ according to our notation.

  (ii) In this case we consider $\cA=(x_{i,j}^{r_j})$ with $r_1=r_2=1$ and
  $r_3=r_4=2$, i.e. with the degree matrix
  $\left(\begin{smallmatrix}1 & 1 & 2 & 2\\ 1 & 1 & 2 & 2\\ 1 & 1 & 2 & 2\\ 1
      & 1 & 2 & 2\end{smallmatrix}\right)$.
  This means that we can take all $b_i=0$, $a_1=a_2=1$ and $a_3=a_4=2$. Then
  we get $s_2=4$ and $s_3=2$ by the definition of $s_r:=\sum_{i=1}^{5-r}a_i$,
  whence $a_4 < s_2$ is satisfied while $a_4 < s_3$ is not. So according to
  Theorem~\ref{defWed} and Remark~\ref{rema97} the deformation functors of
  $A_3$, $M_3$ and $MI$ should be isomorphic. Moreover by
  Theorem~\ref{defpower} the deformation functors of $A_2$ and $M_2$ are
  isomorphic. Now
  $$\ _0\!\ext^1(MI,MI)=\ _0\!\ext^1(M_2,M_2)= \ _0\!\hom(I_{3}(\varphi
  ^*),A_2)=1129$$
  by using Macaulay2 while $ _0\!\hom(I_{2}(\varphi ^*),A_3)=\ _0\!\ext^1(M_3,M_3)=1089$,
  confirming what we expected by Theorems~\ref{defWed} and \ref{defpower}.

(iii) In this case we consider $\cA=(x_{i,j}^{r_j})$ with $r_1=1$ and
$r_2=r_3=r_4=2$, i.e. the degree matrix of $\cA$ is
$\left(\begin{smallmatrix}1 & 2 & 2 & 2\\ 1 & 2 & 2 & 2\\ 1 & 2 & 2 & 2\\ 1 &
    2 & 2 & 2\end{smallmatrix}\right)$.
So we may take $b_i=0$, $a_1=1$ and $a_2=a_3=a_4=2$. Then we get $s_2=5$
and $s_3=2$ by definition, whence both $a_4 < s_2$ and
$a_4 < s_3$ are satisfied. So the deformation functors of $A_i$ for $i=2, 3$
and those of $M_2$, $M_3$ and $MI$ are again supposed to be isomorphic by
Theorem~\ref{defWed}.
Using Macaulay2 we get
$$\ _0\!\ext^1(MI,MI)=\ _0\!\ext^1(M_2,M_2)= \ _0\!\ext^1(M_3,M_3) = 1623$$ as
well as
$$\ _0\!\hom(I_{2}(\varphi ^*),A_3)=\ _0\!\hom(I_{3}(\varphi ^*),A_2)=1623\, ,$$
confirming what we expected by Theorem~\ref{defWed}. So
all five deformation functors are isomorphic, as in (i).
\end{example}

\section{Final comments and conjectures} \label{conjec}

This last section contains a collection of some natural open
questions/conjectures arising from our work.

First of all, we would like to say few words about the (linear) representation
of a hypersurface. The Hilbert scheme which parameterizes hypersurfaces
$X\subset \PP^n$ of fixed degree $d\ge 1$ is isomorphic to $\PP^{N}$,
$N:={n+d \choose n}-1$ and there is an open dense subset $U\subset \PP^{N}$
parameterizing smooth hypersurfaces. Inside $\PP^{N}$ we have the
determinantal locus $W(b_1,\cdots ,b_t;a_1,\cdots,a_t;1)$ which parameterizes
hypersurfaces defined by a form $F\in k[x_0,\cdots ,x_n]$ of degree $d$ which
can be written as the determinant of a $t\times t$ matrix with entries
homogeneous forms of degree $a_j-b_i$. We can ask, for instance, for
$\codim _{\PP^{N}} W(b_1,\cdots ,b_t;a_1,\cdots,a_t;1)$. In fact, if
$n \ge 3$, this codimension is $N-\lambda_1$ by Theorem \ref{Amodulethm5}.
This leads us to a more basic problem: Given a {\em general} form
$F\in k[x_0,\cdots ,x_n]$ of fixed degree $d\ge 2$, we would like to determine
the minimum integer $r=r(n,d;s)$ (resp. $p=p(n,d;s)$) such that $F^r$ (resp.
$F^p$) can be written as the determinant of a matrix (resp. the pfaffian of a
skew symmetric matrix) $\cA$ with entries homogeneous polynomials of degree
$s\ge 1$. As a special case of great interest we highlight the case $s=1$. In
this case we are dealing with matrices with linear entries and, for
simplicity, we set $r(n,d):=r(n,d;1)$.

It is a classical result that $r(2,d)=r(3,3)=1$. Indeed, in \cite{D}, Dickson
proved that a general form of degree $d$ in $k[x,y,z]$ is the determinant of a
$d\times d$ matrix of linear forms and, in \cite{G}, Grassmann proved that a
general cubic form in $k[x,y,z,t]$ can be written as the determinant of a
$3\times 3$ matrix with linear entries. More recently Beauville has proved
that $r(3,d)=2$ for $3<d<16$ and $r(3,d)>2$ for $d>15$ (see \cite{Beau}). To
our knowledge no other results are known concerning the (linear)
representation of a {\em general} form $F\in k[x_0,\cdots ,x_n]$ of degree
$d> 2$ and this is a natural problem that we should address before studying
the determinantal locus of hypersurfaces. So we propose the following to
problems:

\begin{pblm} Fix integers $n\ge 2$ and $d\ge 3$. To determine $r(n,d)$ or at
  least lower/upper bounds for $r(n,d)$.
\end{pblm}

Fix $n\ge 3$ and $d\ge 3$. It follows from Theorem \ref{Amodulethm5} that
$r(n,d)=1$ if and only if $(n,d)=(3,3)$. So we not only recover Grassmann's
result but we also prove that the converse holds.

Now, we come back to determinantal schemes with $c\ge 2$ and we collect the
conjectures scattered throughout the work and introduce a new one.

\begin{conj} \label{conj1} Fix integers $t\ge 2$,
  $a_1\le a_2\le \cdots \le a_{t+c-1}$ and
  $b_1\le b_2\le \cdots ..\le b_{t}$. Let $\cA$ be a homogeneous
  $t \times (t+c-1)$ matrix with entries forms of degree $a_j-b_i$, let
  $A=R/I_{t-r+1}(\cA)$ where $1 \le r \le t-1$, $2-r \le c$ and suppose that
  $\dim A\ge 2$ for $c \ne 1$ and $\dim A\ge 3$ for $c = 1$. Moreover suppose
  $\Proj(A) \in W(\underline{b};\underline{a};r)$, $a_1 >b_t$ and
  $a_{t+c-1} -b_1 < \sum _{i=1}^{t-r+1}(a_i-b_{r+i-1})$. Then
    $$\dim W(\underline{b};\underline{a};r) = \lambda_c \ .$$
  \end{conj}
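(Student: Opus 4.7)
The plan is to proceed by induction on $c \ge 2-r$, exploiting the flag of determinantal rings
$$A_{2-r} \twoheadrightarrow A_{3-r} \twoheadrightarrow \cdots \twoheadrightarrow A_{c-1} \twoheadrightarrow A_c = A$$
obtained by successively deleting the rightmost columns of $\cA$, with corresponding ideals $I_{A_j} = I_{t+1-r}(\varphi^*_{t+j-1})$ and quotients $I_j := I_{A_{j+1}}/I_{A_j}$. At each step I would simultaneously propagate two properties of $A_j$: (a) the dimension equality $\dim W(\underline{b};\underline{a}_{(j)};r) = \lambda_j$, and (b) the unobstructedness statement that every deformation of $A_j$ comes from deforming its defining matrix. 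Note that the hypothesis $a_{t+c-1} - b_1 < \sum_{i=1}^{t-r+1}(a_i - b_{r+i-1})$, which is precisely $a_{t+c-1} < s_r - b_r + b_1$, propagates downward: it gives $a_{t+j-1} < s_r - b_r + b_1$ for all $j \le c$.

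For the base case $c = 2-r$ (so $A_{2-r}$ is standard determinantal after transposing), I would invoke Lemma~\ref{tr} together with Theorem~\ref{std_det_case}(iii) to get $\dim W(\underline{b};\underline{a};r) = \lambda_{2-r}$, and Theorem~\ref{Amodulethm5}(ii) to get the unobstructedness of $A_{2-r}$. For $c = 3-r$, Theorem~\ref{corWsmoothc}(ii) directly yields $\dim W = \lambda_{3-r}$ since the numerical hypothesis kills the correction $\kappa'$ via Lemma~\ref{dimMI}(i). For $c > 3-r$, given the inductive hypotheses (a) and (b) at level $c-1$, Theorem~\ref{dimW} with $\kappa = 0$ and $\lambda_{c-1}$-equality reduces the dimension claim at level $c$ to the single inequality
\begin{equation}\label{keyhom}
{}_0\!\hom_R(I_B, I_{A/B}) \;\le\; \sum_{j=1}^{t+c-2} \binom{a_j - a_{t+c-1} + n}{n}, \qquad B := A_{c-1}.
\end{equation}

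The main obstacle is establishing \eqref{keyhom} under the weak numerical hypothesis of the conjecture, rather than under the stronger e.g.-conditions (such as $b_1 = b_t$ and $a_{t-r+1} < a_{t+c-1}$) imposed in Theorem~\ref{dimW}. The natural route is through Corollary~\ref{fiberpr2}: by part (i) the quantity ${}_0\!\hom_R(I_B, I_{c-1})$ equals ${}_0\!\hom_R(I_{A_{3-r}}, I_{c-1}) + \sum_{j=t-r+3}^{t+c-2} \binom{a_j - a_{t+c-1} + n}{n}$, so \eqref{keyhom} reduces to
\begin{equation}\label{keyhomred}
{}_0\!\hom_R(I_{A_{3-r}}, I_{c-1}) \;\le\; \sum_{j=1}^{t-r+2} \binom{a_j - a_{t+c-1} + n}{n}.
\end{equation}
By the proof of Corollary~\ref{fiberpr2}(i), \eqref{keyhomred} in turn follows from ${}_0\!\hom_R(I_{A_{2-r}}, I_{c-1}) = 0$, whose current proof (part (iii)) requires $a_{t-r+1} < a_{t+c-1} - \sum_{i=1}^{t-r+1}(b_{r+i-1} - b_i)$. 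The gap between this condition and the conjectural hypothesis is precisely the $b_r - b_1$ slack: the hardest step in the plan is to refine the degree analysis of the minimal generators of $I_{A_{2-r}}$ (given by maximal minors, with smallest generator degree $s_r - b_r$) against the smallest generator of $I_{c-1}$ (degree $s_r - b_r - a_{t-r+1} + a_{t+c-1}$) to show the vanishing under only the strict inequality $a_{t+c-1} - b_1 < \sum_{i=1}^{t-r+1}(a_i - b_{r+i-1})$. This likely requires a more delicate Koszul/mapping-cone argument exploiting the Eagon--Northcott resolution of $A_{2-r}$ in tandem with the Lascoux resolution of $I_{c-1}$.

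Once \eqref{keyhom} is established, the unobstructedness (b) at level $c$ follows from Theorem~\ref{Wsmooth} (or its stronger variants Theorem~\ref{corWsmoothcnew}, Theorem~\ref{corWsmoothnew1}) applied to the flag, provided we verify $\gamma = 0$, i.e. the injectivity of ${}_0\!\Ext^1_B(I_B/I_B^2, I_{A/B}) \hookrightarrow {}_0\!\Ext^1_B(I_B/I_B^2, B)$. By Remark~\ref{remthm61}(2), this is equivalent to the dimension equality ${}_0\!\hom_R(I_A, A) = \lambda_c$, which follows from (a) at level $c$ together with Lemma~\ref{unobst}; thus (a) and (b) reinforce each other at the same stage. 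The dimension-sensitive cases $\dim A = 2$ for $c \ne 1$ and $\dim A = 3$ for $c = 1$ must be treated carefully: the counterexample in Example~\ref{afterdeg}(1) shows the threshold is sharp, so I expect the final piece of the argument to be a depth-based dichotomy refining Proposition~\ref{vanish} that separates the conjecturally valid range from the forbidden low-dimensional cases, likely via a more precise control of $H^2_{J_B}(MI \otimes A)$ in Proposition~\ref{vanish}(iv).
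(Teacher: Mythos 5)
This statement is Conjecture~\ref{conj1} (equivalent to Conjecture~\ref{conjdimW}): the paper \emph{does not prove it}, and in fact Section 10 records precisely the obstructions to proving it as open problems. Your ``proof'' is an honest plan that largely recapitulates the paper's own partial machinery (the flag \eqref{fullflag}, Theorem~\ref{dimW}, Theorem~\ref{corWsmoothc}, Corollary~\ref{fiberpr2}), and you correctly locate the same obstructions --- but those obstructions are not resolved, so this is not a proof.

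Two concrete gaps remain. First, your reduction of \eqref{keyhom} through Corollary~\ref{fiberpr2} to the vanishing ${}_0\!\hom_R(I_{A_{2-r}}, I_{c-1}) = 0$ is exactly the paper's Problem~10.8(1): Corollary~\ref{fiberpr2}(iii) delivers that vanishing only under the stronger inequality $a_{t-r+1} < a_{t+c-1} - \sum_{i=1}^{t-r+1}(b_{r+i-1}-b_i)$, whereas the conjecture only grants $a_{t+c-1} < s_r - b_r + b_1$, and you yourself note the $b_r - b_1$ slack between them. No ``more delicate Koszul/mapping-cone argument'' is supplied; what you propose there is a research direction, not a proof step.

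Second, the bootstrap between (a) and (b) at a fixed level $c$ is circular as written. To run Theorem~\ref{dimW} at level $c$ you need (b) at level $c-1$, and to propagate (b) from $c-1$ to $c$ via Theorem~\ref{Wsmooth} you need $\gamma = 0$; you claim $\gamma = 0$ is ``equivalent to ${}_0\!\hom_R(I_A,A)=\lambda_c$, which follows from (a) at level $c$ together with Lemma~\ref{unobst}.'' But Lemma~\ref{unobst} \emph{assumes} that every deformation of $A$ comes from deforming $\cA$ --- that is, it assumes (b) at level $c$, which is what you are trying to establish. The equality $\dim W = {}_0\!\hom_R(I_A,A)$ in Remark~\ref{remthm61}(2) does not follow from $\dim W = \lambda_c$ alone; one would also need the independent bound ${}_0\!\hom_R(I_A,A) \le \lambda_c$, which is not produced anywhere. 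This is Problem~10.8(2) / Conjecture~\ref{conjWsmooth}, and resolving it genuinely requires something beyond the framework you have outlined. Your diagnosis of the low-dimensional threshold via Example~\ref{afterdeg}(1) and the hoped-for refinement of Proposition~\ref{vanish} is similarly aspirational. In short: the plan is a faithful reconstruction of the paper's approach and correctly isolates the open problems, but it does not close them.
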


  In particular, we would like to know if the above conjecture is at least
  true when the entries of $\cA$ are all of fixed degree $e\ge 1$. Note that
  the main assumption $a_{t+c-1} -b_1 < \sum _{i=1}^{t-r+1}(a_i-b_{r+i-1})$
  holds in this case. More precisely,

\begin{conj} \label{conj1bisbis} Fix integers $r \ge 1$, $c > 2-r$, $t\ge 2$
  and $d\ge 1$. Let $\cA$ be a homogeneous $t \times (t+c-1)$ matrix with
  entries forms of degree $e$. Let $A=R/I_{t-r+1}(\cA)$ and suppose
  $\Proj(A) \in W(\underline{0};\underline{d};r)$, $\dim A\ge 2$ for $c \ne 1$
  and $\dim A\ge 3$ for $c = 1$. Then,
   $$\dim W(\underline{b};\underline{a};r) = t(t+c-1){e+n\choose
     n}-t^2-(t+c-1)^2+1.$$
  \end{conj}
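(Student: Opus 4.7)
The plan is to match upper and lower bounds on $\dim W(\underline{0};\underline{e};r)$, both equal to $\lambda_c$; this recognizes Conjecture~\ref{conj1bisbis} as the constant-degree instance of Conjecture~\ref{conj1} (= Conjecture~\ref{conjdimW}).

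First I would verify that the general expression for $\lambda_c$ in \eqref{lamda} specializes correctly when $a_j=e$ and $b_i=0$: the summand $\sum_{i,j}\binom{b_j-a_i+n}{n}$ vanishes since $\binom{n-e}{n}=0$ for $e\ge 1$, while $\sum_{i,j}\binom{a_i-a_j+n}{n}=(t+c-1)^2$ and $\sum_{i,j}\binom{b_i-b_j+n}{n}=t^2$, yielding $\lambda_c=t(t+c-1)\binom{n+e}{n}-t^2-(t+c-1)^2+1$, which matches the claimed value. Next, since $a_{t+c-1}=e<(t+1)e=\ell_2$, equation \eqref{Kis0} gives $K_i=0$ for every $3\le i\le c$, and since $r<t$ we also have $a_{t+c-1}=e<(t-r+1)e=s_r-b_r+b_1$. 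The hypothesis $a_{i-1}\ge b_i$ of Theorem~\ref{ineqdimW}(i) reads $e\ge 0$ and is trivial (with strict inequality $e>0$ when $c=1$), so that theorem immediately gives the upper bound
$$\dim W(\underline{0};\underline{e};r)\le \lambda_c+K_3+\cdots+K_c=\lambda_c.$$

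For the lower bound I would follow the group-action approach of Remark~\ref{differentproofs}. The affine parameter space $\HH om(F,G)=\bigoplus_{i=1}^{t}\bigoplus_{j=1}^{t+c-1}R_e$ has $k$-dimension $t(t+c-1)\binom{n+e}{n}$, and the reductive group $G:=GL_t(k)\times GL_{t+c-1}(k)$ acts via $(\alpha,\beta)\cdot\varphi=\beta\circ\varphi\circ\alpha^{-1}$, preserving every determinantal ideal $I_{t-r+1}(\varphi^*)$; the diagonal $k^*\hookrightarrow G$, $\lambda\mapsto(\lambda I_t,\lambda I_{t+c-1})$, acts trivially. The orbit map then produces a dominant rational morphism $\HH om(F,G)/\!/G\dashrightarrow W(\underline{0};\underline{e};r)$. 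By the analogue of \cite[Proposition 3.12]{KM2005} for the Lascoux resolution (as invoked in Remark~\ref{differentproofs}), the stabilizer in $G$ of a generic $\varphi$ has dimension $\ _0\!\hom(B_c,B_c)=1+K_3+\cdots+K_c=1$, so the generic $G$-orbit has dimension exactly $t^2+(t+c-1)^2-1$. Provided that the generic fiber of $\HH om(F,G)\to W$ coincides with the generic $G$-orbit, we obtain
$$\dim W\ge\dim\HH om(F,G)-(t^2+(t+c-1)^2-1)=\lambda_c,$$
which combined with the upper bound completes the proof.

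The main obstacle is precisely this last identification: showing that two matrices $\varphi,\varphi'\in\HH om(F,G)$ with $I_{t-r+1}(\varphi^*)=I_{t-r+1}((\varphi')^*)$ are generically $G$-equivalent. For $r=1$ this is classical Fitting applied to the minimal Buchsbaum-Rim presentation of $\coker\varphi^*$. For $r\ge 2$ the same strategy should work using the Lascoux resolution, but one additional step is needed: one must argue that the ideal $I_{t-r+1}(\varphi^*)$ (together with the generic depth conditions ensuring minimality of the resolution) determines the module $\coker\varphi^*$ up to isomorphism at a generic point, and hence determines $\varphi^*$ up to the $G$-action by uniqueness of minimal free resolutions. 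A possible bypass, available at least when $e=1$, is to realize the linear constant-degree case as a flat base change of a generic determinantal ring via Corollary~\ref{gendetrem} (introducing extra indeterminates as in Corollary~\ref{cordimRdetW}(ii)), whereupon Corollary~\ref{corgendetW} yields $\dim W=\lambda_c$ directly. For $e\ge 2$ no such embedding is available, so the Fitting-type verification inside the group-action argument is the unavoidable step.
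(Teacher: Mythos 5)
The statement you are addressing is stated in the paper as an open conjecture (Conjecture~\ref{conj1bisbis}), not a theorem; the paper offers no proof of it, and the paragraph immediately preceding it says only that it would be interesting to know whether Conjecture~\ref{conj1} holds in this constant-degree case. So there is no ``paper proof'' to compare against, and any complete argument here would settle a problem the authors left open.

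Your upper-bound half is sound. The specialization of $\lambda_c$ in \eqref{lamda} to $a_j=e$, $b_i=0$ is computed correctly, $K_i=0$ follows from \eqref{Kis0} since $e<(t+1)e=\ell_2$, and Theorem~\ref{ineqdimW}(i) (or (ii) when $c<1$, using $-b_1<\ell'_2=(t-r+1)e$, a case you should address explicitly since the conjecture allows $c\le 0$) gives $\dim W\le\lambda_c$.

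The lower-bound half, however, has a genuine gap that you essentially acknowledge but then understate. The group-action argument of Remark~\ref{differentproofs} yields only an \emph{upper} bound: the factoring $\HH om(F,G)\dashrightarrow \HH om(F,G)/\!/G \dashrightarrow W$ forces $\dim W\le \dim\HH om(F,G)/\!/G$, because distinct generic orbits might still map to the same determinantal scheme. Turning this into a lower bound requires proving that, generically, the fiber of $\HH om(F,G)\to W$ is a \emph{single} $G$-orbit, i.e. that $I_{t-r+1}(\varphi^*)=I_{t-r+1}((\varphi')^*)$ forces $\varphi$ and $\varphi'$ to be $G$-equivalent. For $r=1$ this follows because $G^*\to F^*$ is the tail of a minimal (Eagon-Northcott/Buchsbaum-Rim) resolution, so the matrix is recovered from $A$ up to base change. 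For $r\ge 2$ the Lascoux resolution of $A$ does not exhibit $\varphi^*$ as a syzygy map between $G^*$ and $F^*$; the ideal $I_{t-r+1}(\varphi^*)$ is a non-initial Fitting ideal $\mathrm{Fitt}_{r-1}(\coker\varphi^*)$, and a Fitting ideal does not determine the module up to isomorphism. Nothing in the paper, and no standard result I am aware of, closes this gap, and the paper's own lower-bound machinery (Theorems~\ref{dimW}, \ref{corWsmoothc}, \ref{corWsmoothcnew}, \ref{corWsmoothnew2}, \ref{corWsmooth4}) deliberately avoids the group action, working instead through flag-scheme projections and verified $\ _0\!\Hom$-dimensions.

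Your proposed bypass for $e=1$ is also not a proof. Corollary~\ref{corgendetW} gives that $\overline{W(0^t;1^{t+c-1};r)}$ is a generically smooth irreducible component and that every deformation comes from deforming the matrix, but it does \emph{not} give $\dim W=\lambda_c$; one still needs $\ _0\!\hom_R(I_A,A)=\lambda_c$. The paper obtains the dimension formula in the generic linear case (Examples~\ref{exgendet} and \ref{exgendetcor83}) by verifying $\ _0\!\hom_R(I_B,I_{A/B})$ (or \eqref{exgenassump}) with Macaulay2 for finitely many $(t,c)$ and bootstrapping via Theorem~\ref{dimW} and Corollary~\ref{cordimRdetW}(ii), and the sufficient numerical hypothesis in Corollary~\ref{fiberpr2}(iii), namely $a_{t-r+1}<a_{t+c-1}$, fails identically in the constant-degree case. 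So neither the $e=1$ case nor the general $e$ case is established, and the statement remains a conjecture.
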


  It will be also interesting to drop the assumption
  $a_{t+c-1} -b_1 < \sum _{i=1}^{t-r+1}(a_i-b_{r+i-1})$ and find
  $\dim W(\underline{b};\underline{a};r)$ more generally. To give our best
  guess, we delete $c-j$ columns from the right-hand side of the matrix $\cA$
  of $\varphi ^*$ to define $\varphi ^*_{t+j-1}$, and we let
  $A_j= R/I_{t-r+1}(\varphi ^*_{t+j-1})$ and $N_j=\coker(\varphi ^*_{t+j-1})$
  for $2-r \le j \le c$.

  \begin{question} \label{questkap} Fix integers $t\ge 2$,
    $a_1\le a_2\le \cdots \le a_{t+c-1}$ and
    $b_1\le b_2\le \cdots ..\le b_{t}$. Let $\cA$ be a homogeneous
    $t \times (t+c-1)$ matrix with entries forms of positive degree $a_j-b_i$,
    let $A=R/I_{t-r+1}(\cA)$ where $1 \le r \le t-1$, $2-r \le c$. Suppose
    $\Proj(A) \in W(\underline{b};\underline{a};r)$ and
    $\dim A \ge 2$. If $c = 1$, we also suppose $\dim A\ge 3$. When is
    $$\dim W(\underline{b};\underline{a};r) = \lambda_c +K_3+K_4+\cdots +K_c
    -\kappa $$
    where $\kappa$ is the following non-negative integer:
    $\kappa= \sum_{j=3-r}^c( \dim_k(N_j)_{(a_{t+j-1})}- \dim_k(N_j \otimes
    A_j)_{(a_{t+j-1})})$?
\end{question}

There is at least one case where this guess is correct, namely for $r=1$.
Indeed, then $N_j$ is annihilated by $I_{t}(\varphi ^*_{t+j-1})$, so
$N_j \otimes A_j \cong N_j$ and $\kappa = 0$ and we get
$\dim W(\underline{b};\underline{a};r)$ as above by Theorem~\ref{Amodulethm5}.
There is one more case where we expect the answer to be true, namely in the
case where $a_{t+c-1} -b_1 < \sum _{i=1}^{t-r+1}(a_i-b_{r+i-1})$ because
Lemma~\ref{dimMI}(i) for $v=0$ and $a_{t+j-1} \le a_{t+c-1}$ show
$\dim (N_j \otimes A_j)_{(a_{t+j-1})} = \dim (N_j)_{(a_{t+j-1})}$, i.e. that
$\kappa=0$. Hence we get Conjecture~\ref{conj1}. Unfortunately in all other
cases we are aware of, $\kappa \ne 0$. So the correction term $\kappa$ in
Question~\ref{questkap} should not be skipped. Moreover due to
Theorem~\ref{corWsmoothnew2} and the fact that the assumptions of
Theorem~\ref{corWsmoothnew2} are designed to be true as far as our results and
computations by Macaulay2 indicate, except for the inequality below, we expect
\begin{conj} \label{conj1bis} With notations and assumptions as
  in Question~\ref{questkap}, if $b_r-b_1 < s_r -a_{t-r+1}$, then
$$\dim W(\underline{b};\underline{a};r) = \lambda_c +K_3+K_4+\cdots +K_c
-\kappa\ .$$
 \end{conj}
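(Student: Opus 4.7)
\medskip
\noindent\emph{Proof plan for Conjecture~\ref{conj1bis}.} The plan is to reduce the conjecture to Theorem~\ref{corWsmoothnew2} by producing the missing cohomological inputs on the flag
$$A_{2-r}\twoheadrightarrow A_{3-r}\twoheadrightarrow\cdots\twoheadrightarrow A_c=A$$
obtained by successively deleting the rightmost columns of $\cA$. Under the single numerical hypothesis $b_r-b_1<s_r-a_{t-r+1}$, Theorem~\ref{corWsmoothnew2} already delivers exactly the desired formula $\lambda_c+K_3+\cdots+K_c-\kappa$ with the correction term expressed via Lemma~\ref{kapp} applied to $i=2-r$, so the entire task is to verify its auxiliary assumptions: the vanishing of $\gamma_{3,2}$, the vanishings of the composed maps $\,_0\!\Hom_R(I_{A_{3-r}},A_{j+1})\to\,_0\!\Ext^1_{A_{3-r}}(I_{A_{3-r}}/I_{A_{3-r}}^2,I_j)$ for $3-r\le j\le c-2$, and the $\,_0\!\hom_R$ equalities that constitute condition $(*)$.

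First I would reduce condition $(*)$. By Corollary~\ref{fiberpr2}(iii) and Remark~\ref{remcor83}(1), the inequality $b_r-b_1<s_r-a_{t-r+1}$ together with $a_1>b_t$ is essentially what guarantees $\,_0\!\hom_R(I_{A_{2-r}},I_j)=0$ and reduces the computation to the generator-degree count $\sum_{i=1}^{t-r+2}\binom{a_i-a_{t+j}+n}{n}$. The remaining ingredient is the sharper vanishing $\,_0\!\hom_R(I_{A_{3-r}},I_j)$ at the right value, which follows from the exact sequence \eqref{aux*} of Corollary~\ref{fiberpr2}(i) once we know $\,_0\!\Ext^1_{A_{j+1}}(I_j/I_j^2,I_{c-1})=0$, and this last vanishing is furnished by Proposition~\ref{fibdim2} (no extra hypothesis is needed provided $\dim A\ge 2$, resp.\ $\ge3$ when $c\le 0$). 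Thus condition $(*)$ is not an obstruction: it is essentially automatic from the assumption $b_r-b_1<s_r-a_{t-r+1}$ and the standing depth/genericity hypotheses.

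The serious input is the vanishing of $\gamma_{3,2}$ and of the higher maps $\,_0\!\Hom_R(I_{A_{3-r}},A_{j+1})\to\,_0\!\Ext^1_{A_{3-r}}(I_{A_{3-r}}/I_{A_{3-r}}^2,I_j)$. By Remark~\ref{remthm89}(2) these are equivalent, for $c\ge 4-r$ and under the depth assumptions of Proposition~\ref{vanish}(iv), to the surjectivity of $\,_0\!\Hom_R(I_{A_{j}},A_{j})\twoheadrightarrow\,_0\!\Hom_R(I_{A_j},A_{j+1})$, hence to Conjecture~\ref{conjdepthNb}(ii) for the relevant pair via Proposition~\ref{conj3ext}(ii). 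In the initial case $j=2-r$ of $\gamma_{3,2}$ one has to argue more directly, since Proposition~\ref{vanish}(iv) does not yet apply; here I would combine Theorem~\ref{corWsmoothc}(i) (which produces the component for $c=3-r$ as a consequence of Theorem~\ref{Amodulethm5} plus Theorem~\ref{Wsmooth}) with the characterisation in Remark~\ref{remthm61}(2) to see that $\gamma_{3,2}=0$ follows from the known component structure for $A_{2-r}$. This reduces the conjecture to the vanishing of the codepth of $\Hom_R(I_{A_{j}},I_j)$ predicted by Conjecture~\ref{conjdepthNb}(ii).

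The main obstacle is therefore precisely Conjecture~\ref{conjdepthNb}(ii): one needs to bound the codepth of the $A_{j}$-modules $\Hom_R(I_{A_{j}},I_{j})$ uniformly along the flag. In the generic case this is handled by Proposition~\ref{genericdetring}; in general I would attempt to propagate the depth bound inductively, using the exact sequences \eqref{aux*} and \eqref{auxNo} together with the maximal Cohen--Macaulay properties of $N\otimes B$ and $M$ established in Proposition~\ref{vanish}(i)--(ii) and \cite{B}. A reasonable attack is to run the depth estimate on the mapping cone of Lascoux's minimal free resolutions of $A_{j}$ and $A_{j+1}$ (as used in the proof of Proposition~\ref{genericdetring}) and extract $\pd(\Hom_R(I_{A_{j}},I_{j}))$ by the Auslander--Buchsbaum formula; once this codepth is controlled by $r$, Proposition~\ref{conj3ext}(ii) yields the required $\,_0\!\Ext^1$-vanishings and Theorem~\ref{corWsmoothnew2} concludes.
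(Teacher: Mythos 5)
This statement is labelled a \emph{conjecture} in the paper (it is posed in the final section as an open problem), so there is no proof in the paper to compare against; the authors explicitly state it is motivated by Theorem~\ref{corWsmoothnew2} and by Macaulay2 evidence, not established. Your proposal correctly locates the reduction to Theorem~\ref{corWsmoothnew2}, but then overstates what the hypothesis $b_r-b_1<s_r-a_{t-r+1}$ actually buys, in two places.

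First, your claim that condition $(*)$ of Theorem~\ref{corWsmoothnew2} is ``essentially automatic from the assumption $b_r-b_1<s_r-a_{t-r+1}$'' is wrong. Corollary~\ref{fiberpr2}(iii) requires the separate numerical condition $a_{t-r+1}<a_{t+c-1}-\sum_{i=1}^{t-r+1}b_{r+i-1}+\sum_{i=1}^{t-r+1}b_i$ (reducing to $a_{t-r+1}<a_{t+c-1}$ when $b_1=b_t$), and the ``e.g.''-hypothesis of Theorem~\ref{corWsmoothnew2} itself requires $a_{t-r+1}<a_{t-r+2}$. Neither is implied by $b_r-b_1<s_r-a_{t-r+1}$: in the central linear case ($a_i=1$, $b_i=0$, $r<t$) one has $b_r-b_1=0<t-r=s_r-a_{t-r+1}$, yet $a_{t-r+1}=a_{t-r+2}$, so the strict inequality fails and $(*)$ must be verified independently. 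The remark following Theorem~\ref{corWsmoothnew2} lists exactly which extra hypotheses yield $(*)$, and none of them is the conjecture's inequality alone; this is precisely why the statement remains conjectural.

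Second, your argument for $\gamma_{3,2}=0$ is circular. Theorem~\ref{corWsmoothc}(i), which you invoke, \emph{assumes} $\gamma=0$ (for $c=3-r$ this is $\gamma_{3,2}$) as a hypothesis and concludes the component structure; it does not deliver $\gamma_{3,2}=0$ from anywhere. Remark~\ref{remthm61}(2) gives an equivalence between $\gamma=0$ and the component/dimension statement, not an independent derivation of either, so chaining these two does not produce the needed vanishing. Finally, you correctly identify that the remaining core obstruction is Conjecture~\ref{conjdepthNb}(ii), but the ``reasonable attack'' via mapping cones of Lascoux resolutions is a proposal, not an argument; the proof stops where the paper stops. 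In summary, the reduction skeleton is the one the authors had in mind, but the two middle steps ($(*)$ automatic; $\gamma_{3,2}=0$) are not supported, so this is not a proof of the conjecture.
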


Unfortunately it seems difficult to compute $\kappa$, but
Theorems~\ref{dimW1} and \ref{corWsmoothnew2} give some answers
under restrictive assumptions.

  \begin{conj} \label{conj2}
  Fix integers  $r \ge 1$, $c > 2-r$,  $t\ge 2$,  $a_1\le a_2\le \cdots \le a_{t+c-1}$ and $b_1\le b_2\le \cdots ..\le b_{t}$. Let $\cA$ be a  homogeneous
  $t \times (t+c-1)$ matrix with entries forms of degree $a_j-b_i$.
  Let $A=R/I_{t-r+1}(\cA)$ and suppose
  $\Proj(A) \in W(\underline{b};\underline{a};r)$,
   $\dim A \ge 4$ for $c = 1$ and $\dim A\ge 3$ for $c \ne 0$.  If  $a_1 >
   b_t$ then
   $ \overline{W(\underline{b};\underline{a};r)}$ is a generically smooth
   irreducible component of $\Hilb ^{p_X}(\PP^n)$ and every deformation of $A$
   comes from deforming $\cA$.
  \end{conj}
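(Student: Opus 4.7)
The plan is to proceed by induction on $c$, starting from the standard determinantal base case $c=2-r$, where Theorem~\ref{Amodulethm5}(ii) combined with Lemma~\ref{tr} already gives both the generically smooth component statement and the ``every deformation comes from deforming $\cA$'' property, under the weaker assumption $a_{i+1-\min\{3,t\}} \ge b_i$ which is implied by $a_1 > b_t$. For the inductive step, we work with the flag \eqref{fullflag} and apply Theorem~\ref{Wsmooth}: once we know the statement for $B = A_{c-1}$ (deformations of $B$ come from deforming $\cB$), we need only verify the condition $\gamma = 0$ at the top of the flag. The dimension/fiber formula then follows from Proposition~\ref{propo8} and the component/smoothness from Lemma~\ref{unobst} and Remark~\ref{remthm61}(2).

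The main task is therefore to establish the vanishing
\[
_0\!\Ext^1_B(I_B/I_B^2, I_{A/B}) = 0
\]
(which implies $\gamma = 0$ and is in fact what we expect to hold for $\dim A \ge 3$ when $c \ne 1$ and $\dim A \ge 4$ when $c=1$). The recursive reduction in Corollary~\ref{fiberpr2}(i), together with Remark~\ref{remcor83}(2), reduces this to proving
\[
_0\!\Ext^1_{A_{3-r}}(I_{A_{3-r}}/I_{A_{3-r}}^2, I_j) = 0 \quad \text{for } 3-r \le j \le c-1,
\]
together with the vanishing of the composed map $\gamma_{3,2}$ at the bottom of the flag (the exceptional $c = 3-r$ case, cf.\ Example~\ref{rem74ex}). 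By Corollary~\ref{corWsmoothnew1} this packages the inductive step neatly, so the whole conjecture is reduced to these two inputs at levels $2-r$ and $3-r$ of the flag.

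The principal obstacle is bridging these required $\Ext^1$ vanishings to a depth/codepth computation that one can actually prove. The natural route is via Conjecture~\ref{conjdepthNb}(ii) on the codepth of $\Hom_R(I_B, I_{A/B})$: by Proposition~\ref{conj3ext}(ii) this codepth bound yields exactly the needed $\Ext^1$ vanishing once $c+r \ge 4$ (resp.\ $r \ge 4$ if $c=1$), i.e.\ once the dimension assumptions of the conjecture hold. To prove Conjecture~\ref{conjdepthNb}(ii) one would use the exact sequence \eqref{section}
\[
0 \to B(-a_{t+c-1}) \to N \otimes B \to MI \otimes B \to 0
\]
from Theorem~\ref{mainthm}, dualize, and combine with the maximal Cohen--Macaulayness of $M = (N\otimes B)^*$ and $MI \otimes A$ from Proposition~\ref{vanish}, together with the first few terms of the Lascoux resolution of $I_B$ à la Lemma~\ref{mdr_mdg}. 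This mimics the codepth table in the proof of Proposition~\ref{genericdetring} but in the non-generic setting; the hard part is that one loses the rigidity available in the generic case and must instead track the depth of $\Hom_R(I_B, I_{A/B})$ directly by bounding the depth of the cokernel of $\Hom_R(I_B,B) \to \Hom_R(I_B,A)$, which by Remark~\ref{remthm61}(1) is controlled by how far \eqref{thm61cond1} is from being exact.

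For the remaining case $c = 3-r$, where the rightmost $\Ext^1$-group is generically nonzero by Example~\ref{rem74ex}, one cannot use the $\Ext^1$-vanishing route and must instead verify $\gamma_{3,2} = 0$ directly; Theorem~\ref{corWsmoothc}(i) reduces this to proving \eqref{thm61cond} at this level, which by Corollary~\ref{cornormdim} and Remark~\ref{remcornormdim} becomes the exactness of
\[
0 \to {}_0\!\Hom_R(I_{A_{3-r}}, I_{2-r}) \to {}_0\!\Hom_R(I_{A_{3-r}}, A_{2-r}) \to {}_0\!\Hom_R(I_{A_{3-r}}, A_{3-r}) \to 0,
\]
a dimension count that in all examples computed with Macaulay2 turns out correctly and should follow from a careful degree analysis of the Lascoux resolution combined with the sequence \eqref{liftsectionI}. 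Once both the level $2-r$ (handled directly) and level $3-r$ (via the above explicit exactness) inputs are in place, Corollary~\ref{corWsmoothnew1} propagates the conclusion up the entire flag, delivering the conjecture.
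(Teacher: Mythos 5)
The statement you were asked to prove is labelled as a \emph{conjecture} in the paper (Conjecture~\ref{conj2} $=$ Conjecture~\ref{conjWsmooth}); the authors state it as open, so there is no proof in the paper to compare your write-up against. What your proposal does is faithfully reconstruct the authors' own reduction strategy — induction up the flag \eqref{fullflag} via Theorem~\ref{Wsmooth}, Corollary~\ref{fiberpr2} and Corollary~\ref{corWsmoothnew1}, with the exceptional $c=3-r$ step treated separately because Example~\ref{rem74ex} shows $_0\!\Ext^1_B(I_B/I^2_B,I_{A/B})$ need not vanish there — and you correctly isolate the two inputs that are missing: the vanishing $_0\!\Ext^1_B(I_B/I^2_B,I_{A/B})=0$ for $c\ge 4-r$, and $\gamma_{3,2}=0$ at the base level $c=3-r$.

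However, the proposal stops short of supplying either input, so it is not a proof; it reduces one conjecture to another conjecture plus an unverified exactness. Concretely: you propose to get the $\Ext^1$-vanishing from Conjecture~\ref{conjdepthNb}(ii) via Proposition~\ref{conj3ext}(ii), but Conjecture~\ref{conjdepthNb}(ii) is itself only proven in the paper for generic determinantal rings (Proposition~\ref{genericdetring}); your suggested route (dualize the regular-section sequence \eqref{section}, track the codepth of $\Hom_R(I_B,I_{A/B})$) imitates that proof but, as you yourself flag, loses the surjectivity of $\Hom_R(I_B,B)\to\Hom_R(I_B,A)$ that the generic-case argument relies on. Similarly, for $c=3-r$ you reduce $\gamma_{3,2}=0$ to the exactness of a degree-$0$ $\Hom$-sequence and remark that it ``should follow from a careful degree analysis of the Lascoux resolution''; but that analysis only forces a vanishing when the degrees $a_{t-r+1}<a_{t-r+2}<\cdots$ increase enough (this is precisely the content of Corollary~\ref{corWsmooth3}), and Conjecture~\ref{conj2} carries no such hypothesis — only $a_1>b_t$ and a dimension bound. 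So what you have is an accurate map of the paper's partial results and remaining obstacles, not a closed argument; if you could actually establish either Conjecture~\ref{conjdepthNb}(ii) in full or the exactness needed at level $3-r$ without numerical hypotheses, you would have a genuinely new contribution.
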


  Related to the smoothness of the Hilbert scheme $\Hilb ^{p_X}(\PP^n)$ along
  $ W(\underline{b};\underline{a};r)$ and to whether any deformation of a
  determinantal scheme comes from deforming its associated homogeneous matrix,
  we have the following conjecture involving the codepth of the normal
  modules:

  \begin{conj} \label{conj3}
 Let $r \ge 1$, $c \ge 2-r$ and let $A=R/I_A$
    (resp. $B=R/I_B$ if $c > 2-r$) be defined by the vanishing of the
    $(t-r+1)\times (t-r+1)$ minors of a general $t\times (t+c-1)$ matrix $\cA$
    with $a_1 > b_t$ (resp. of $\cB$ obtained by deleting a column of
    $\cA$). Let $N_A:=\Hom_R(I_A,A)$ and suppose that $\dim A \ge 3$.
\begin{itemize}
\item[{\rm (i)}] If $c \notin \{0,1,2\}$ then \ $\codepth(N_A)= 1\,.$
 If $c=1$ (resp. $c =0, 2$), then
    $\codepth(N_A)= \min\{ 2,\dim A - 2\}$ (resp. $\codepth(N_A)= 0$).
    \item[{\rm (ii)}] Let $(r,c)\ne (1,2)$, $c\ge 2$ (resp. $3-r\le c\le 1$)
      then \
    $\codepth\,(\Hom_R(I_B,I_{A}/I_{B})) = r \ (resp.\ r+1).$
   \end{itemize}
    \end{conj}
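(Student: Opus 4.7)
The plan is to follow the strategy already successful in Proposition~\ref{genericdetring}, which verifies Conjecture~\ref{conj3} in the generic case, and then to transfer the result to general $\cA$ satisfying the hypotheses. The proof divides naturally into two parallel inductive arguments, one on the flag \eqref{fullflag} for (i) and one on a related flag for (ii), with the base cases being the standard determinantal and Gorenstein cases that are essentially treated in \cite{K2014} and Theorem~\ref{Amodulecor2}.

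For part (i), I would argue by induction on $c$ along the flag $A_{2-r} \twoheadrightarrow \cdots \twoheadrightarrow A_c = A$ obtained by successively deleting columns. The base case $c = 2-r$ is standard determinantal (the result for $r=1$ being essentially \cite[Theorem 3.11]{KM2015} and its references, cf.\ Remark~\ref{conjr=1}). For the inductive step from $A_{c-1} = B$ to $A_c = A$, apply Proposition~\ref{vanish}(iii)(iv) to obtain the exact sequence
\begin{equation*}
0 \longrightarrow \Hom_R(I_{A/B}, A) \longrightarrow \Hom_R(I_A, A) \longrightarrow \Hom_R(I_B, A) \longrightarrow 0
\end{equation*}
(valid when the depth on $J_A$ is large enough, i.e.\ when $\dim A$ is sufficiently large relative to the case at hand), together with the isomorphism $\Hom_R(I_{A/B},A) \cong MI \otimes A(a_{t+c-1})$ from Proposition~\ref{vanish}(iii). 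Since $MI \otimes A$ is maximally Cohen-Macaulay for $c \ge 1$ by Proposition~\ref{vanish}(i) (and of $\codepth 1$ for $c \le 0$), and since one knows by the inductive hypothesis the codepth of $N_B = \Hom_R(I_B,B)$ (and hence an upper bound for $\Hom_R(I_B, A)$ via the exact sequence $\Hom_R(I_B, B) \twoheadrightarrow \Hom_R(I_B, A)$ arising from $\gamma=0$ under the deformation hypotheses of Theorem~\ref{teo3}), the mapping cone / depth lemma exactly as in the proof of Proposition~\ref{genericdetring} gives the claimed codepth for $N_A$ in each residue class of $c \pmod{\text{cases}}$.

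For part (ii), the strategy is to use the regular section $\sigma^*: B(-a_{t+c-1}) \to N \otimes B$ produced by Theorem~\ref{mainthm}. Taking $B$-duals one obtains compatible exact sequences relating $\Hom_R(I_B, B)$, $\Hom_R(I_B, A)$ and $\Hom_R(I_B, I_{A/B})$, and the codepth of $\Hom_R(I_B, I_{A/B})$ is then controlled by that of the maximally Cohen-Macaulay modules $M = (N \otimes B)^*$ and $N \otimes B$ from Proposition~\ref{vanish}(i), together with the depth of $I_{A/B}$ itself (which one computes from $0 \to I_{A/B} \to B \to A \to 0$, giving $\codepth(I_{A/B}) = r - 1$). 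The splitting of cases $c \ge 2$ versus $3-r \le c \le 1$ reflects whether $N \otimes B$ and $M$ are maximally CM or only of $\codepth 1$, which is exactly the dichotomy captured in Proposition~\ref{vanish}(i); this explains the $r$ versus $r+1$ in the conjectured codepth.

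The hardest point will be proving sharpness of the codepth (i.e.\ that the Ext vanishing does not extend one step further than predicted). For the generic case this uses the rigidity statement \cite[Theorem 15.15]{b-v} which provides the non-vanishing at the critical step (cf.\ the exact sequence~\eqref{rigid}). For general $\cA$ satisfying our numerical hypotheses, one would first deduce sharpness from the generic case via the deformation-theoretic flatness argument of Corollary~\ref{gendetrem}: codepth is semicontinuous, so the codepth in the general-$\cA$ case is at most that of the generic case, and equality on the tangent-space side (e.g.\ via $\psi$ in Lemma~\ref{lemma2}) forces equality of codepth. The main obstacle I foresee is the Gorenstein case $c = 1$ of (i), where the jump from $\codepth(N_A) = 2$ (expected for $\dim A \ge 4$) down to $\min\{2, \dim A - 2\}$ in low dimension reflects subtle behaviour of $\Ext^2_R(I_A, A)$ that is not covered by Proposition~\ref{vanish}; here one would need the additional self-duality afforded by the Gorenstein structure, probably via the identification $I_A/I_A^2 \cong K_A^*$-twist, to pin down the exact depth rather than merely an inequality.
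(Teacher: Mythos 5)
The statement you are trying to prove is in fact a \emph{conjecture} in the paper (Conjecture~\ref{conj3}, which is identical to Conjecture~\ref{conjdepthNb}), not a theorem. The paper does not prove it in the stated generality; it only establishes the generic case in Proposition~\ref{genericdetring} and notes in Remark~\ref{conjr=1} that the $r=1$ case is essentially known from \cite{KM2015}. So there is no paper proof to compare against, and your write-up is necessarily a research plan rather than a verifiable proof --- which you yourself acknowledge by flagging sharpness and the Gorenstein case as open obstacles.

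That said, there are concrete gaps in the plan as written. First, the semicontinuity argument is stated backwards. Projective dimension is upper semicontinuous in flat families, so depth is lower semicontinuous, so $\codepth$ is \emph{upper} semicontinuous: under specialization from the generic matrix $(x_{ij})$ to a general matrix $\cA$ with entries in a smaller polynomial ring $R$, the codepth can only go \emph{up}, not down. Thus the generic computation of Proposition~\ref{genericdetring} gives a \emph{lower} bound on $\codepth$ for general $\cA$, not an upper bound, and it does not by itself rule out the possibility that the codepth jumps for non-generic $\cA$ with small $\dim A$ --- which is exactly what the conjecture asserts does not happen for $\dim A \ge 3$. Second, the inductive step you sketch for part~(i) (the exact sequence for $\Hom_R(I_{A/B},A)\to\Hom_R(I_A,A)\to\Hom_R(I_B,A)$ plus mapping cone) only gives equality of codepth when you already know the surjectivity of the full (not merely degree-zero) map $\tilde p\colon \Hom_R(I_B,B)\to\Hom_R(I_B,A)$; in Proposition~\ref{genericdetring} this surjectivity comes from rigidity of generic determinantal rings (\cite[Theorem~15.15]{b-v}, cf.\ \eqref{rigid}), which is a property that genuinely fails for non-generic $\cA$. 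Without a replacement for this step the mapping-cone computation of $\pd(\Hom_R(I_B,I_{A/B}))$ collapses to an inequality. So the proposal, as it stands, reproduces the paper's proof of the generic case but leaves exactly the same open questions the paper leaves; it is not a proof of the conjecture.
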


    As we proved in Proposition \ref{genericdetring}, resp. in  Remark
    \ref{conjr=1}, this last conjecture essentially holds for generic
    determinantal algebras, resp.  for $r=1$.

    All results of this work deal with determinantal varieties of positive
    dimension and it will be very interesting generalize them to 0-dimensional
    schemes since so far only results for standard determinantal 0-dimensional
    schemes are known (see \cite{KM2011} and \cite{K2018}).

    \vskip 2mm We end this last chapter with three concrete problems that we
    have addressed in this work but unfortunately have not been able to
    completely solve. Their resolution will help to answer the above
    conjectures.

\begin{pblm} \rm
\begin{itemize} \hspace{0mm}
\item[(1)]  \ Show that:
$\dim\, _0\!\Hom_B(I_B/I^2_B, I_{A/B})=\sum _{j=1}^{t+c-2}
{a_j-a_{t+c-1}+n\choose n}$ if\  $\dim A \ge 3$\,.

\item[(2)] \ Show that: \ $_0\!\Ext^1_B(I_B/I^2_B, I_{A/B})= 0$
  if $ \dim A \ge 3$ (resp. $\dim A \ge 4$)  for $c \ne 1$ (resp. $c=1$)\,.

\item[(3)] \ Determine $ \dim (MI\otimes A)_{a_{t+c-1}}$ in the case:
  $a_{t+c-1} -b_1 \ge \sum _{i=1}^{t-r+1}(a_i-b_{r+i-1})$\,.
\end{itemize}
\end{pblm}


\newpage
\section{Appendix}

Many examples included in this work have been or can be computed using Macaulay2 \cite{Mac2}. We include the code of some of these examples  with the aim that the reader will be able to reproduce them.

\begin{example}\rm Set $R=k[x_0,\cdots ,x_{15}]$ and let
  $\PP ^{15}=\Proj (R)$. We consider a $4\times 4$ matrix $\cA=[\cB,v]$ with
  linear (resp. quadratic) entries in the first, second, third (resp. fourth)
  column. As we saw in Example \ref{ex1dimW},
  $$\dim \overline{ W(0^4;1^3,2;2)}=663=\dim _{(X)} \Hilb (\PP^{15})$$ where
  $X=\Proj(A)$ and $A=R/I_3(\cA)$. Let us check it with Macaulay2 .

\vskip 2mm
\begin{verbatim}
Macaulay2, version 1.9.2
with packages: ConwayPolynomials, Elimination, IntegralClosure, LLLBases,
PrimaryDecomposition, ReesAlgebra, TangentCone

i1 : kk=ZZ/3001;
i2 : R=kk[x0,x1,x2,x3,x4,x5,x6,x7,x8,x9,x10,x11,x12,x13,x14,x15]
i3 : IA3=minors(3,matrix{{x0,x1,x2,x3^2},{x4,x5,x6,x7^2},{x8,x9,x10,x11^2},
     {x12,x13,x14,x15^2}});
i4 : nA=Hom(IA3,R^1/IA3); apply(-3..5,i->hilbertFunction(i, nA))
o5 = (0, 4, 73, 663, 4087, 19418, 76153, 257315, 771393)
i6 : A=R/IA3;
i7 : IAIA=IA3*IA3; CoNA=IA3/IAIA;   %conormal module of A
i9 : Ext^1(CoNA**A,A)==0
o9 = true
\end{verbatim}

\vskip 2mm
\noindent
So the Hilbert scheme $\Hilb (\PP^{15})$ is smooth at $(X)$ by o9 of dimension
663 by o5. \vskip 2mm

\begin{verbatim}
i10 : use R
i11 : IB3=minors(3,matrix{{x0,x1,x2},{x4,x5,x6},{x8,x9,x10},{x12,x13,x14}});
i12 : nB=Hom(IB3,R^1/IB3);  apply(-3..5,i->hilbertFunction(i, nB))
o13 = (0, 0, 12, 168, 1260, 6720, 28560, 102816, 325584)
i14 : IAB=IA3/IB3;
i15 : Fibp2=Hom(IAB,R^1/IA3); apply(-3..5,i->hilbertFunction(i, Fibp2))
o16 = (0, 4, 61, 495, 2830, 12742, 47997, 157183, 459628)
i17 : Fibp1=Hom(IB3,IAB); apply(-3..5,i->hilbertFunction(i, Fibp1))
o18 = (0, 0, 0, 0, 3, 44, 404, 2684, 13819)
\end{verbatim}

\vskip 2mm
\noindent
So $\dim W(0^4;1^3,2;2) =  168$ (by o13) + 495 (by o16) - 0 (by o18) = 663\,.
\end{example}
\vskip 2mm
\noindent
\begin{example} \rm As in Example \ref{exgendet}(i) we consider a $3\times
  (c+2)$ generic matrix $\cA$, $1\le c\le  7$ and the ideal generated by all
  $2\times 2$ minors of $\cA$. Let us start with the case $c=1$.

\vskip 2mm

\begin{verbatim}
i55 : kk=ZZ/101;  %COMPUTED ALSO OVER : kk=ZZ/701; AND : kk=ZZ/3001;
i56 : R=kk[x0,x1,x2,x3,x4,x5,x6,x7,x8]
i59 : IB2=minors(2,matrix{{x0,x1},{x3,x4},{x6,x7}});
i60 : IA2=minors(2,matrix{{x0,x1,x2},{x3,x4,x5},{x6,x7,x8}});
i62 : codim IA2
o62 = 4
i63 : dim IA2
o63 = 5
i64 : nB=Hom(IB2,R^1/IB2); apply(-3..5,i->hilbertFunction(i, nB))
o65 = (0, 0, 6, 42, 168, 504, 1260, 2772, 5544)
i68 : nA=Hom(IA2,R^1/IA2); apply(-3..5,i->hilbertFunction(i, nA))
o69 = (0, 0, 9, 64, 225, 576, 1225, 2304, 3969)
i70 : IAB=IA2/IB2;
i71 : Fibp1=Hom(IB2,IAB); apply(-3..5,i->hilbertFunction(i, Fibp1))
o72 = (0, 0, 0, 2, 33, 168, 560, 1476, 3339)
\end{verbatim}
\vskip 1mm
\noindent
Only the value 2 in o72 is needed in Example \ref{exgendet} because then
(\ref{codgen})
holds. As we have seen in section 6, $\dim (nA)_0 = 8(c+2)^2-8$, which
is the number 64 in o69, so as a Macaulay2 check; $8(c+2)^2-8=64$ for c=1.
Similarly as  a Macaulay2 check; we also have $\dim (nB)_0 = 42$ which agrees
with the result obtained in the paper.

\vskip 2mm
For $c=2$, we have:

\begin{verbatim}
i1 : kk=ZZ/101;     % COMPUTED ALSO OVER : kk=ZZ/701; AND : kk=ZZ/3001;
i8 : R=kk[x0,x1,x2,x3,x4,x5,x6,x7,x8,x9,x10,x11]
i19 : IA2=minors(2,matrix{{x0,x1,x2,x9},{x3,x4,x5,x10},{x6,x7,x8,x11}});
i20 : IB2=minors(2,matrix{{x0,x1,x2},{x3,x4,x5},{x6,x7,x8}});
i27 : nA=Hom(IA2,R^1/IA2); apply(-3..5,i->hilbertFunction(i, nA))
o28 = (0, 0, 12, 120, 540, 1680, 4200, 9072, 17640)
i31 : IAB=IA2/IB2;
i32 : Fibp1=Hom(IB2,IAB); apply(-3..5,i->hilbertFunction(i, Fibp1))
o33 = (0, 0, 0, 3, 81, 525, 2103, 6450, 16614)
\end{verbatim}

\vskip 1mm
\noindent
Again only the value 3 in o33 is needed in Example \ref{exgendet} because then
(\ref{codgen}) holds. We use Macaulay2 to check:
$\dim \overline{W(0^3;1^4;2)}=8(c+2)^2-8=120$ (see value 120 in o28).
Analogously, for $3\le c\le 7$, we verify (not always over all three fields
above, and sometimes $kk=ZZ/13$ is used) with the code above that
(\ref{codgen}) holds and then check
$\dim \overline{W(0^3;1^4;2)}=8(c+2)^2-8=120$. All of Example \ref{exgendet}
is verified using (\ref{codgen}).

When the size of the matrices become large, it is much faster to verify
formula (\ref{exgenassump}) as we do in Example \ref{ex85}.
\end{example}
\vskip 2mm
\noindent

\begin{example} \label{ex623} \rm In Example \ref{dimW2} we consider a general
  $3\times 3$ matrix $\cA=[\cB,v]$ with degree matrix   $\left(\begin{smallmatrix}1 & 1  & 2 \\
      1 & 1 & 2 \\ 1 & 1 & 2 \end{smallmatrix}\right)$,
  and we let $A$ and $B$ be given by the ideal of all
  $2\times 2$ minors of $\cA$ and $\cB$.
\vskip 2mm
\noindent
\begin{verbatim}
GENERAL DETERMINANTAL,  dim A = 5  (A=R/IA2)
i1 : kk=ZZ/101;
i2 : R=kk[x0,x1,x2,x3,x4,x5,x6,x7,x8]
i3 : IA2=minors(2,matrix{{x0,x1,x2^2},{x3,x4,x5^2},{x6,x7,x8^2}});
i4 : IB2=minors(2,matrix{{x0,x1},{x3,x4},{x6,x7}});
i5 : nA=Hom(IA2,R^1/IA2); apply(-3..4,i->hilbertFunction(i, nA))
o6 = (0, 3, 31, 152, 502, 1286, 2776, 5312)
i7 : IAIA=IA2*IA2; CoNA=IA2/IAIA; A=R/IA2;
i10 : Ext^1(CoNA**A,A)
o10 = 0
o10 : A-module
\end{verbatim}
\vskip 2mm
\noindent
So the Hilbert scheme $\Hilb (\PP^{8})$ is smooth at $(X=\Proj(A))$ by o10 of dimension
152 by o6.
\vskip 2mm
\begin{verbatim}
i11 : use R
i12 : nB=Hom(IB2,R^1/IB2);  apply(-3..4,i->hilbertFunction(i, nB))
o13 = (0, 0, 6, 42, 168, 504, 1260, 2772)
i14 : IAB=IA2/IB2;
i15 : Fibp2=Hom(IAB,R^1/IA2); apply(-3..4,i->hilbertFunction(i, Fibp2))
o16 = (0, 3, 25, 110, 336, 815, 1693, 3150)
i5 : MI=coker matrix{{x0,x1,x2^2},{x3,x4,x5^2},{x6,x7,x8^2}};
i6 : Fibp2t=MI**R^1/IA2; apply(-3..5,i->hilbertFunction(i, Fibp2t))
o7 = (0, 0, 0, 3, 25, 110, 336, 815, 1693)
i17 : Fibp1=Hom(IB2,IAB); apply(-3..4,i->hilbertFunction(i, Fibp1))
o18 = (0, 0, 0, 0, 2, 33, 177, 610)
\end{verbatim}
\vskip 2mm
\noindent
By Proposition~\ref{propo8}, $\dim \overline{W(0^3;1^2,2;2)}$ = 42 (by o13) +
110 (by o16 or o7) - 0 (by o18) = 152, thus $\overline{W(0^3;1^2,2;2)}$ is a
generically smooth component of $\Hilb (\PP^{8})$ by o6. The case $\dim A = 4$
is computed in the same way.
\vskip 2mm
\noindent
Now $\dim A = 3:$
\begin{verbatim}
i18 : R=kk[x0,x1,x2,x3,x4,x5,x6]
i19 :  l0=random(1,R); l1=random(1,R);
i21 :  m0=random(2,R); m1=random(2,R);
i23 : IA2=minors(2,matrix{{x0,x1,x2^2},{x3,x4,x5^2},{x6,l0,m0}});
i24 : IB2=minors(2,matrix{{x0,x1},{x3,x4},{x6,l0}});
i25 : nA=Hom(IA2,R^1/IA2); apply(-3..4,i->hilbertFunction(i, nA))
o26 = (0, 3, 25, 94, 230, 434, 706, 1046)
i28 : nB=Hom(IB2,R^1/IB2);  apply(-3..4,i->hilbertFunction(i, nB))
o29 = (0, 0, 6, 30, 90, 210, 420, 756)
i30 : IAB=IA2/IB2;
i31 : Fibp2=Hom(IAB,R^1/IA2); apply(-3..4,i->hilbertFunction(i, Fibp2))
o32 = (0, 3, 19, 63, 141, 253, 399, 579)
i9 : MI=coker matrix{{x0,x1,x2^2},{x3,x4,x5^2},{x6,l0,m0}};
i10 : Fibp2t=MI**R^1/IA2; apply(-3..5,i->hilbertFunction(i, Fibp2t))
o11 = (0, 0, 0, 3, 19, 63, 141, 253, 399)
i33 : Fibp1=Hom(IB2,IAB); apply(-3..4,i->hilbertFunction(i, Fibp1))
o34 = (0, 0, 0, 0, 2, 29, 113, 289)
i35 : IAIA=IA2*IA2; CoNA=IA2/IAIA; A=R/IA2;
i38 : EA=Ext^5(CoNA,R);   % A is GORENSTEIN, so EA(-7)= Ext^1(CoNA**A,A)
i39 : apply(-11..-4,i->hilbertFunction(i, EA))
o39 = (0, 2, 2, 0, 0, 0, 0, 0)
\end{verbatim}
By Proposition~\ref{propo8} $\dim \overline{W(0^3;1^2,2;2)}$ = 30 (in o29) +
63 (in o32 or o11) = 93 while $\dim_{(X)} \Hilb (\PP^{6})$ = 94 by o26 and
o39, thus $\overline{W(0^3;1^2,2;2)}$ is of codimension 1 in
$\Hilb (\PP^{6})$. Using the isomorphism in o38 finding
$\Ext^1_A(I_A/I_A^2,A)$ allowed a much faster computation.

\noindent
The case $\dim A = 2$ is computed in the same way and shows
$\codim \overline{W(0^3;1^2,2;2)}= 3$ in $\Hilb (\PP^{5})$.
\end{example}

\vskip 2mm
\noindent
   \begin{example} \label{ex713} \rm In Example~\ref{examples712}(i)
    $\cA=[\cB,v]$ is a general $3\times 5$ matrix with linear entries, and let
    $A$ and $B$ be the quotients of $R$ defined by the $2 \times 2$ minors of
    $\cA$ and $\cB$, respectively. So $\dim A = 2$. Let us check that
    $\overline{W(0^3;1^5;2)}$ is not a generically smooth component of
    $\Hilb (\PP^{9})$ by using \eqref{thm61cond}. \vskip 1mm
\noindent
\begin{verbatim}

GENERAL DETERMINANTAL, A defined by 2 x 2 minors of a 3 x 5 matrix. dim A=2
i1 : kk=ZZ/13;
i3 : R=kk[x0,x1,x2,x3,x4,x5,x6,x7,x8,x9]
i4 : l0=random(1,R); l1=random(1,R); l2=random(1,R);
i7 : l3=random(1,R); l4=random(1,R);
i12 : IA2=minors(2,matrix{{x0,x1,x2,x3,l0},{x4,x5,x6,l4,l1},{x7,x8,x9,l3,l2}});
i13 : IB2=minors(2,matrix{{x0,x1,x2,x3},{x4,x5,x6,l4},{x7,x8,x9,l3}});
i14 : dim IA2
o14 = 2
i15 : dim IB2
o15 = 4
i16 : codim IA2
o16 = 8
i17 : nB=Hom(IB2,R^1/IB2); apply(-3..5,i->hilbertFunction(i, nB))
o18 = (0, 0, 12, 96, 312, 720, 1380, 2352, 3696)
i25 : IAB=IA2/IB2;
i18 : Fibp2=Hom(IAB,R^1/IA2); apply(-3..5,i->hilbertFunction(i, Fibp2))
o19 = (0, 0, 3, 25, 55, 85, 115, 145, 175)
i26 : Fibp1=Hom(IB2,IAB); apply(-3..5,i->hilbertFunction(i, Fibp1))
o27 = (0, 0, 0, 4, 127, 445, 1015, 1897, 3151)
i28 : nA=Hom(IA2,R^1/IA2); apply(-3..5,i->hilbertFunction(i, nA))
\end{verbatim}
\vskip 2mm
\noindent
Aborted, but we try again.
\vskip 2mm
\begin{verbatim}
i2 : kk=ZZ/3;
i3 : R=kk[x0,x1,x2,x3,x4,x5,x6,x7,x8,x9]
i4 : l0=random(1,R); l1=random(1,R); l2=random(1,R);
i7 : l3=random(1,R); l4=random(1,R);
i10 : IA2=minors(2,matrix{{x0,x1,x2,x3,l0},{x4,x5,x6,l4,l1},{x7,x8,x9,l3,l2}});
i11 : nA=Hom(IA2,R^1/IA2); apply(-3..5,i->hilbertFunction(i, nA))
o12 = (0, 0, 15, 120, 240, 360, 480, 600, 720)
\end{verbatim}
\vskip 2mm
\noindent
Hence $ \overline{W(0^3;1^5;2)}$ is not a generically smooth component because
120 (in o12) $\ne$ 96 (in o18) + 25 (in o19) - 4 (in o27), i.e.
\eqref{thm61cond} is not satisfied (because
$\dim (MI \otimes A)_1= \dim {\rm (Fibp2)}_0$). But to apply
Remark~\ref{remthm61}(2) we need to show that the conditions of
Theorem~\ref{Wsmooth} hold. Let us check that
$ _0\!\Ext^1_C(I_C/I_C^2,I_{B/C})=0$ because then every deformation of $B$
comes from deforming $\cB$.

\vskip 2mm
\begin{verbatim}
i10 : IB2=minors(2,matrix{{x0,x1,x2,x3},{x4,x5,x6,l4},{x7,x8,x9,l3}});
i13 : IC2=minors(2,matrix{{x0,x1,x2},{x4,x5,x6},{x7,x8,x9}});
i14 : IBC=IB2/IC2;
i15 : ICIC=IC2*IC2; CoNC=IC2/ICIC; C=R/IC2;
i19 : Ext^1(CoNC**C,IBC**C)==0
o19 = true
\end{verbatim}
\vskip 2mm
\noindent
The computations by Macaulay2 in Example~\ref{examples712} often check if
\eqref{thm61cond} holds, or even simpler for matrices in the {\it linear} case
where we only need to compute $\dim {\rm (Fibp1)}_0$ to see if
$\dim \overline{W(0^t;1^{t+c-1};r)}=\lambda_c$ holds. If so and if
$\lambda_c= \dim (nA)_0$, (or if \eqref{thm61cond} holds) and the other
conditions of Theorem~\ref{Wsmooth} hold, then $\overline{W(0^t;1^{t+c-1};r)}$
is a generically smooth component. And conversely for \eqref{thm61cond} by
Remark~\ref{remthm61}(2), as in the example above. Also in the linear case
verifying $\dim \overline{W(0^t;1^{t+c-1};r)}=\lambda_c < \dim (nA)_0$ and
$\Hilb (\PP^{n})$ smooth at $(X)$ imply a converse.
\end{example}
\vskip 2mm
\noindent
\begin{example} \rm In Example \ref{exbcn}  we consider $3\times (c+2)$
  linear and non-linear matrices $\cA=[\cB,v]$, $1\le c\le 4$ with
  number of variables of $R$ at least as in the generic case. The ideals of $A$
  and $B$ are generated by all $2\times 2$ minors of $\cA$ and $\cB$.
\vskip 2mm
\noindent
\begin{verbatim}
GENERIC 3 by 4 matrix \cA=[\cB,v] defining A
i1 : kk=ZZ/101;     %computed also over  kk=ZZ/701;
i2 : R=kk[x0,x1,x2,x3,x4,x5,x6,x7,x8,x9,x10,x11]
i3 : IA2=minors(2,matrix{{x0,x1,x2,x3},{x4,x5,x6,x7},{x8,x9,x10,x11}});
i4 : IB2=minors(2,matrix{{x0,x1,x2},{x4,x5,x6},{x8,x9,x10}});
i6 : IAB=IA2/IB2;
i7 : nB=Hom(IB2,R^1/IB2); pdim nB
o9 = 6
i10 : B=R^1/IB2; pdim B
o11 = 4
i12 : nA=Hom(IA2,R^1/IA2); pdim nA
o14 = 6
i15 : A=R^1/IA2; pdim A
o16 = 6
i62 : Fibp1=Hom(IB2,IA2/IB2); pdim Fibp1
o63 = 6
\end{verbatim}
\noindent

\begin{verbatim}
NON-LINEAR, thus NON-GENERIC A and B.
i3 : IA2=minors(2,matrix{{x0,x1,x2^2,x3^3},{x4,x5,x6^2,x7^3},{x8,x9,x10^2,x11^3}});
i4 : IB2=minors(2,matrix{{x0,x1,x2^2},{x4,x5,x6^2},{x8,x9,x10^2}});
i5 : IAB=IA2/IB2;
i6 : nB=Hom(IB2,R^1/IB2); pdim nB
o7 = 6
i8 : B=R^1/IB2; pdim B
o9 = 4
i10 : nA=Hom(IA2,R^1/IA2); pdim nA
o11 = 6
i12 : A=R^1/IA2; pdim A
o13 = 6
i14 : Fibp1=Hom(IB2,IA2/IB2); pdim Fibp1
o15 = 6
\end{verbatim}
\noindent

\begin{verbatim}
LINEAR, NON-GENERIC $A$ of $\dim A = 3$.
i1 : kk=ZZ/101;       % Computed also over   kk=ZZ/11;
i2 : R=kk[x0,x1,x2,x3,x4,x5,x6,x7,x8]
i3 :  l0=random(1,R); l1=random(1,R); l2=random(1,R);
i6 : IB2=minors(2,matrix{{x0,x1,x2},{x3,x4,x5},{x6,x7,x8}});
i7 : IA2=minors(2,matrix{{x0,x1,x2,l0},{x3,x4,x5,l1},{x6,x7,x8,l2}});
i8 : IAB=IA2/IB2;
i9 : nB=Hom(IB2,R^1/IB2); pdim nB
o10 = 6
i11 : B=R^1/IB2; pdim B
o12 = 4
i13 : nA=Hom(IA2,R^1/IA2); pdim nA
o14 = 6
i15 : A=R^1/IA2; pdim A
o16 = 6
i17 : Fibp1=Hom(IB2,IA2/IB2); pdim Fibp1
o18 = 6
\end{verbatim}

From the projective dimensions and Auslander-Buchsbaum's formula, we get that $\codepth nB = 2$ ($B$ is
Gorenstein) while  $\codepth nA = 0$ ($c=2$) and
$\codepth {\rm Fibp1} = 2$ ($r=2$) in the linear, as well as in the non-linear
case, confirming Conjecture~\ref{conjdepthNb} and
Proposition~\ref{genericdetring}. All examples in Example \ref{exbcn} are
computed in this way (or by using ``betti res'' in replacment of ``pdim'')
\end{example}
\vskip 2mm
\noindent
\begin{example} \label{ex85} \rm In Example \ref{exgendetcor83} $\cA=(x_{ij})$
  is a $3\times (c+2)$ generic matrix, $6\le c\le 18$ and $I_A=\ker(R \to A)$
  the ideal generated by all $2\times 2$ minors of $\cA$. For $c=6$ we get

\begin{verbatim}
i60 : kk=ZZ/101; % COMPUTED ALSO OVER : kk=ZZ/701; AND : kk=ZZ/3001;
i61 : R=kk[x0,x1,x2,x3,x4,x5,x6,x7,x8,x9,x10,x11,x12,x13,x14,x15,x16,x17,a,b,c,d,e,f]
i63 : IA2=minors(2,matrix{{x0,x1,x2,x3,x4,x5,a,d},{x6,x7,x8,x9,x10,x11,b,e},
      {x12,x13,x14,x15,x16,x17,c,f}});
i64 : codim IA2
o64 = 14
i66 : IB2=minors(2,matrix{{x0,x1,x2,x3,x4,x5,a},{x6,x7,x8,x9,x10,x11,b},
      {x12,x13,x14,x15,x16,x17,c}});
i67 : IAB=IA2/IB2;
i68 : IG2=minors(2,matrix{{x0,x1,x2},{x6,x7,x8},{x12,x13,x14}});
i69 : nGB=Hom(IG2,R^1/IB2); apply(-3..5,i->hilbertFunction(i, nGB))
o70 = (0, 0, 9, 187, 1689, 9891, 43903, 160173, 504231)
i73 : nGIAB=Hom(IG2,IAB); apply(-3..5,i->hilbertFunction(i, nGIAB))
o74 = (0, 0, 0, 3, 189, 2115, 13453, 61965, 229803)
i75 : nGA=Hom(IG2,R^1/IA2); apply(-3..5,i->hilbertFunction(i, nGA))
o76 = (0, 0, 9, 184, 1500, 7776, 30450, 98208, 274428)
\end{verbatim}

\vskip 2mm
\noindent
Only the value 3 in o74 is needed in Example \ref{exgendetcor83} to get
$\dim \overline{W(0^3;1^{c+2};2)}=\lambda_c$ because then (\ref{exgenassump})
holds. For a Macaulay2 check, to see if $\overline{W(0^3;1^{c+2};2)}$ is a
generically smooth component (which we know is true by
Proposition~\ref{genericdetring}), note that \eqref{thm61cond} is equivalent
to (\ref{a3r}). To check (\ref{a3r}) we subtract: 187 (in o70) - 184 (in o76)
= 3 (in o74). Analogously we compute the cases $7\le c\le 18$ and the
remainder of Example \ref{exgendetcor83} by verifying (not always over all
three fields above) that (\ref{exgenassump}) holds. Thus we get
$\dim \overline{W(0^3;1^{c+2};2)}=\lambda_c$. As a Macaulay2 check we also
verify (\ref{a3r}) which should hold by our results.
\end{example}

\vskip 2mm
\begin{example} \label{ex817} \rm In Example \ref{exgendetWit}(i) $\cA$ is a
  $3\times 6$ linear matrix, and we successively delete columns to get a flag
  of determinantal rings. In such a situation it is much faster to verify
  (\ref{a3r}) than using \eqref{thm61cond} even though they are equivalent.

\begin{verbatim}
i2 : kk=ZZ/101;
i3 : R=kk[x0,x1,x2,x3,x4,x5,x6,x7,x8,x9,x10,x11,x12]
i4 : l0=random(1,R); l1=random(1,R); l2=random(1,R); l3=random(1,R); l4=random(1,R);
i9 : IB2=minors(2,matrix{{x0,x1,x2,x3,x12},{x4,x5,x6,x7,l0},{x8,x9,x10,x11,l4}});
i10 : IG2=minors(2,matrix{{x0,x1,x2},{x4,x5,x6},{x8,x9,x10}});
i11 : nGB=Hom(IG2,R^1/IB2); apply(-3..2,i->hilbertFunction(i, nGB))
o12 = (0, 0, 9, 94, 385, 1072)
i13 : IC2=minors(2,matrix{{x0,x1,x2,x3},{x4,x5,x6,x7},{x8,x9,x10,x11}});
i14 : IBC=IB2/IC2;
i15 : nGC=Hom(IG2,R^1/IC2); apply(-3..2,i->hilbertFunction(i, nGC))
o16 = (0, 0, 9, 97, 487, 1687)
i17 : nGIBC=Hom(IG2,IBC); apply(-3..2,i->hilbertFunction(i, nGIBC))
o18 = (0, 0, 0, 3, 102, 615)
i19 : IA2=minors(2,matrix{{x0,x1,x2,x3,x12,l1},{x4,x5,x6,x7,l0,l2},
      {x8,x9,x10,x11,l4,l3}});
i20 : dim IA2
o20 = 3
i21 : IAB=IA2/IB2;
i22 : nGA=Hom(IG2,R^1/IA2); apply(-3..2,i->hilbertFunction(i, nGA))
o23 = (0, 0, 9, 91, 265, 523)
i24 : nGIAB=Hom(IG2,IAB); apply(-3..2,i->hilbertFunction(i, nGIAB))
o25 = (0, 0, 0, 3, 120, 549)
\end{verbatim}

\vskip 2mm
\noindent
To check (\ref{a3r}) for $C \to B$: 94 (in o12) + 3 (in o18) = 97 (in o16),
whence $\dim \overline{W(0^3;1^{5};2)}=\lambda_3$ and every deformation of $C$
comes from deforming its matrix by Theorem~\ref{corWsmoothcnew}. Similarly to
check (\ref{a3r}) for $B \to A$ we add: 91 (in o23) + 3 (in o25) = 94 (in
o12), whence $\dim \overline{W(0^3;1^{6};2)}=\lambda_4$ and
$\overline{W(0^3;1^{6};2)}$ is a generically smooth component of $\Hilb
  (\PP^{12})$ by Theorem~\ref{corWsmoothcnew}.
\end{example}
\vskip 2mm
\begin{example} \label{ex910} \rm In Example \ref{exex910}, $\cA$ is a
  $4\times 4$ matrix, and five smooth deformation functors are considered. By
  Theorem~\ref{defpower} the deformation functors of $M_r:=\bigwedge^rM$ and
  of $R/I_{t-r+1}(\varphi ^* )$ are isomorphic for fixed $r$. By
  Theorem~\ref{defWed} they should further be isomorphic to the deformation
  functor of $MI$ (and if $c > 1$ also to the deformation functor of
  $R/I_{t}(\varphi ^* )$, but in our example $c=1$). Theorem~\ref{defWed} are,
  however, true under the assumption: the compared functors have the same
  dimension of tangent spaces. Let us check this assumption by using
  Macaulay2. \vskip 2mm
\begin{verbatim}
GENERIC DETERMINANTAL, five isomorphic deformation functors
i1 : kk=ZZ/101;
i2 : R=kk[x0,x1,x2,x3,x4,x5,x6,x7,x8,x9,x10,x11,x12,x13,x14,x15]
i4 : MI=coker matrix{{x0,x1,x2,x3},{x4,x5,x6,x7},{x8,x9,x10,x11},
     {x12,x13,x14,x15}};
i5 : M2=exteriorPower(2,MI);
i6 : M3=exteriorPower(3,MI);
i11 : WED1=Ext^1(MI,MI); apply(-3..3,i->hilbertFunction(i, WED1))
o12 = (0, 0, 16, 225, 1696, 9096, 38896, 140963)
i13 : WED2=Ext^1(M2,M2); apply(-3..3,i->hilbertFunction(i, WED2))
o14 = (0, 0, 16, 225, 1696, 8996, 37600, 131882)
i15 : WED3=Ext^1(M3,M3); apply(-3..3,i->hilbertFunction(i, WED3))
o16 = (0, 0, 16, 225, 1296, 4900, 14400, 35721)
i18 : IA2=minors(2, matrix{{x0,x1,x2,x3},{x4,x5,x6,x7},{x8,x9,x10,x11},
      {x12,x13,x14,x15}});
i19 : IA3=minors(3, matrix{{x0,x1,x2,x3},{x4,x5,x6,x7},{x8,x9,x10,x11},
      {x12,x13,x14,x15}});
i20 : nA2=Hom(IA2,R^1/IA2); apply(-3..3,i->hilbertFunction(i, nA2))
o21 = (0, 0, 16, 225, 1296, 4900, 14400, 35721)
i22 : nA3=Hom(IA3,R^1/IA3); apply(-3..3,i->hilbertFunction(i, nA3))
o23 = (0, 0, 16, 225, 1696, 8996, 37600, 131882)
\end{verbatim}
\noindent
So the five deformation functors we consider have all tangent space of dimension
225 (see o12, o14, o16, o21, o23), so their corresponding deformation functors
are isomorphic.
\vskip 4mm
\noindent
In the non-linear case not all 5 deformation functors are necessarily isomorphic because
when we take all $b_i=0$, $a_1=a_2=1$ and $a_3=a_4=2$ we get
$s_2=\sum_{i=1}^{(5-2)}a_i =4$ and $s_3=\sum_{i=1}^{5-3}a_i=2$, whence
$a_4 < s_2$ is satisfied while $a_4 < s_3$ is not. So according to
Theorem~\ref{defWed} and Remark~\ref{rema97} the deformation functors of
$A_2=R/I_{3}(\varphi ^* )$ and $M_2$ which are isomorphic by
Theorem~\ref{defpower}, should further be isomorphic the deformation functor
of $MI$, while those of $A_3=R/I_{2}(\varphi ^* )$ and $M_3$ are isomorphic by
Theorem~\ref{defpower}. Let us check it \vskip 1mm
\noindent
\begin{verbatim}
NON-LINEAR : not all 5 deformation functors are isomorphic
i22 : MI=coker matrix{{x0,x1,x2^2,x3^2},{x4,x5,x6^2,x7^2},{x8,x9,x10^2,x11^2},
      {x12,x13,x14^2,x15^2}};
i23 : M2=exteriorPower(2,MI);
i24 : M3=exteriorPower(3,MI);
i25 : WED1=Ext^1(MI,MI); apply(-3..3,i->hilbertFunction(i, WED1))
o26 = (0, 8, 132, 1129, 6708, 31216, 121440)
i27 : WED2=Ext^1(M2,M2); apply(-3..3,i->hilbertFunction(i, WED2))
o28 = (0, 8, 132, 1129, 6708, 31186, 120944)
i29 : WED3=Ext^1(M3,M3); apply(-3..3,i->hilbertFunction(i, WED3))
o30 = (0, 8, 132, 1089, 5972, 24594, 81740)
i35 : IA2=minors(2, matrix{{x0,x1,x2^2,x3^2},{x4,x5,x6^2,x7^2},{x8,x9,x10^2,x11^2},
      {x12,x13,x14^2,x15^2}});
i36 : IA3=minors(3, matrix{{x0,x1,x2^2,x3^2},{x4,x5,x6^2,x7^2},{x8,x9,x10^2,x11^2},
      {x12,x13,x14^2,x15^2}});
i37 : nA2=Hom(IA2,R^1/IA2); apply(-3..3,i->hilbertFunction(i, nA2))
o38 = (0, 8, 132, 1089, 5972, 24594, 81740)
i39 : nA3=Hom(IA3,R^1/IA3); apply(-3..3,i->hilbertFunction(i, nA3))
o40 = (0, 8, 132, 1129, 6708, 31186, 120944)
\end{verbatim}
\vskip 2mm
\noindent
So three of the deformation functors we consider have the same tangential
dimension, namely equal to 1129 (see o26, o28, o40). Thus their corresponding
deformation functors are isomorphic. Moreover, the tangent space dimensions in
o30 and o38 coincide, so their deformation functors are isomorphic.

\vskip
4mm
\noindent
In the non-linear case, again 5 isomorphic deformation functors are expected
in the case
that all $b_i=0$, $a_1=1$ and $a_2=a_3=a_4=2$. We have $s_2=5$ and $s_3=3$,
whence both $a_4 < s_3$ and $a_4 < s_2$ are satisfied and Theorem~\ref{defWed}
predicts that all five deformation functors are isomorphic. Let us check it
\vskip 2mm
\begin{verbatim}
NON-LINEAR, but again all 5 isomorphic deformation functors are isomorphic
i45 : MI=coker matrix{{x0,x1^2,x2^2,x3^2},{x4,x5^2,x6^2,x7^2},{x8,x9^2,x10^2,x11^2},
      {x12,x13^2,x14^2,x15^2}};
i46 : M2=exteriorPower(2,MI);
i47 : M3=exteriorPower(3,MI);
i48 : WED1=Ext^1(MI,MI); apply(-3..3,i->hilbertFunction(i, WED1))
o49 = (0, 12, 193, 1623, 9529, 43956, 169852)
i50 : WED2=Ext^1(M2,M2); apply(-3..3,i->hilbertFunction(i, WED2))
o51 = (0, 12, 193, 1623, 9529, 43956, 169792)
i52 : WED3=Ext^1(M3,M3); apply(-3..3,i->hilbertFunction(i, WED3))
o53 = (0, 12, 193, 1623, 9369, 41344, 148212)
i55 : IA2=minors(2, matrix{{x0,x1^2,x2^2,x3^2},{x4,x5^2,x6^2,x7^2},
      {x8,x9^2,x10^2,x11^2},{x12,x13^2,x14^2,x15^2}});
i56 : IA3=minors(3, matrix{{x0,x1^2,x2^2,x3^2},{x4,x5^2,x6^2,x7^2},
      {x8,x9^2,x10^2,x11^2},{x12,x13^2,x14^2,x15^2}});
i57 : nA2=Hom(IA2,R^1/IA2); apply(-3..3,i->hilbertFunction(i, nA2))
o58 = (0, 12, 193, 1623, 9369, 41344, 148212)
i59 : nA3=Hom(IA3,R^1/IA3); apply(-3..3,i->hilbertFunction(i, nA3))
o60 = (0, 12, 193, 1623, 9529, 43956, 169792)

% 5 functors with the same tangential dimension = 1623 (o49, o51, o53, o58, o60)
\end{verbatim}
\end{example}
\newpage

\end{document}